\DeclareMathAlphabet{\mathpgoth}{OT1}{pgoth}{m}{n}
\DeclareMathAlphabet{\mathesstixfrak}{U}{esstixfrak}{m}{n}
\DeclareMathAlphabet{\mathboondoxfrak}{U}{BOONDOX-frak}{m}{n}
\numberwithin{equation}{section}
\definecolor{darkred}{rgb}{0.5,0,0}
\definecolor{darkgreen}{rgb}{0,0.5,0}
\definecolor{darkblue}{rgb}{0,0,0.5}
\numberwithin{equation}{section}
\newtheorem{thma}{Theorem}
\newtheorem{thm}{Theorem}[section]
\newtheorem{cor}[thm]{Corollary}
\newtheorem{conj}[thm]{Conjecture}
\newtheorem{prop}[thm]{Proposition}
\newtheorem{lemma}[thm]{Lemma}
\theoremstyle{definition}
\newtheorem{defn}[thm]{Definition}
\theoremstyle{remark}
\newtheorem{rem}[thm]{Remark}
\newtheorem{example}[thm]{Example}
\newtheorem{convention}[thm]{Convention}
\newcommand{\hh}{{\it HH}}
\newcommand{\CC}{{\it CC}}
\renewcommand{\L}{{\bm L}}
\newcommand{\e}{{\bf e}}
\newcommand{\f}{{\bf f}}
\newcommand{\vhf}{{\it VHF}}
\newcommand{\vcf}{{\it VCF}}
\newcommand{\qcf}{{\it QCF}}
\newcommand{\qhf}{{\it QHF}}
\newcommand{\glsm}{{\rm GLSM}}
\newcommand{\llangle}{{\langle\hspace{-0.2pt}\langle}}
\newcommand{\rrangle}{{\rangle\hspace{-0.2pt}\rangle}}
\newcommand{\beq}{\begin{equation}}
\newcommand{\eeq}{\end{equation}}
\newcommand{\beqn}{\begin{equation*}}
\newcommand{\eeqn}{\end{equation*}}
\newcommand{\ov}{\overline}
\newcommand{\mb}{\mathbb}
\newcommand{\mc}{\mathcal}
\newcommand{\mf}{\mathfrak}
\newcommand{\bb}{{\mf b}}
\newcommand{\wt}{\widetilde}
\newcommand{\wh}{\widehat}
\newcommand{\ham}{{\rm Ham}}
\newcommand{\tham}{{\rm H}{\widetilde{\rm am}}}
\newcommand{\uds}[1]{\underline{\smash{#1}}}
\newcommand{\ev}{{\rm ev}}
\title[Franks' dichotomy for toric manifolds]{Franks' dichotomy for toric manifolds, Hofer--Zehnder conjecture, and gauged linear sigma model}
\author{Shaoyun Bai}
\address{Columbia University, 2990 Broadway, New York, NY 10027, USA}
\email{sb4841@columbia.edu}
\author{Guangbo Xu}
\address{Department of Mathematics, Rutgers University, Hill Center--Busch Campus, 110 Frelinghuysen Road, Piscataway, NJ 08854-8019, USA}
\email{guangbo.xu@rutgers.edu}
\thanks{The second author is supported by NSF DMS-2345030.}
\date{\today}
\begin{document}

\maketitle

\begin{abstract}
We prove that for any compact toric symplectic manifold, if a Hamiltonian diffeomorphism admits more fixed points, counted homologically, than the total Betti number, then it has infinitely many simple periodic points. This provides a vast generalization of Franks' famous two or infinity dichotomy for periodic orbits of area-preserving diffeomorphisms on the two-sphere, and establishes a conjecture attributed to Hofer--Zehnder in the case of toric manifolds. The key novelty is the application of gauged linear sigma model and its bulk deformations to the study of Hamiltonian dynamics of symplectic quotients. 
\end{abstract}

\setcounter{tocdepth}{1}
\tableofcontents



\section{Introduction}

\subsection{Main result}

A celebrated theorem of Franks \cite{Franks_1, Franks_2} says that any area-preserving diffeomorphism on $S^2$ has either 2 or infinitely many periodic points. An extension of this dichotomy to all compact symplectic toric manifolds is obtained in this paper.


\begin{thma}\label{thm:main}
Let $X$ be a compact symplectic toric manifold and $\phi: X \to X$ be a Hamiltonian diffeomorphism with isolated fixed points. If the number of fixed points of $\phi$, counted homologically, is greater than the total rank of homology of $X$, 
then $\phi$ has infinitely many simple periodic points.
\end{thma}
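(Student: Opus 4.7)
The plan is to argue by contradiction and deduce infinitely many simple periodic points from a Smith-type inequality for $p$-th iterates, in the style of Shelukhin's proof of the Hofer--Zehnder conjecture; the novelty lies in carrying out this Floer-theoretic argument inside gauged linear sigma model (GLSM) theory with bulk deformations, rather than in the ordinary Hamiltonian Floer theory of $X$ itself.

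Assume $\phi$ has only finitely many simple periodic points. Then for any $N$ sufficiently divisible, $\#\mathrm{Fix}(\phi^N)$ remains bounded as $N$ grows, so it suffices to exhibit a prime $p$ for which $\#\mathrm{Fix}(\phi^{p^k})$ is unbounded in $k$. Writing $X=\mathbb{C}^n/\!/T$ as a symplectic quotient by a torus, I would set up a GLSM Floer complex $CF^{*}(\phi;\Lambda_{\mathbb{F}_p},\mathfrak{b})$ counting solutions to the symplectic vortex equations on $\mathbb{R}\times S^{1}$ with target $\mathbb{C}^n$, twisted by a Hamiltonian lift of $\phi$ and deformed by a bulk class $\mathfrak{b}$ supported away from the stable locus; its homology is the bulk-deformed quantum cohomology $QH^{*}(X;\Lambda_{\mathbb{F}_p})_{\mathfrak{b}}$, whose total dimension equals the total Betti number of $X$.

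The heart of the argument is a Smith-type inequality
\[
\#\mathrm{Fix}(\phi^{p})\;\geq\;\dim HF^{*}(\phi;\mathbb{F}_p,\mathfrak{b})+(p-1)\bigl(\#\mathrm{Fix}(\phi)-\dim HF^{*}(\phi;\mathbb{F}_p,\mathfrak{b})\bigr),
\]
derived from a $\mathbb{Z}/p$-equivariant enhancement of GLSM Floer theory of the $p$-fold iterate, together with the identification of the equivariant fixed locus, at the level of chains over $\mathbb{F}_p$, with the bulk-deformed Floer complex of $\phi$. Since the hypothesis gives $\#\mathrm{Fix}(\phi)>\dim H^{*}(X)=\dim HF^{*}(\phi;\mathbb{F}_p,\mathfrak{b})$ for an appropriate choice of $(p,\mathfrak{b})$, iteration of this inequality up the tower $\phi,\phi^{p},\phi^{p^{2}},\ldots$ forces $\#\mathrm{Fix}(\phi^{p^{k}})\to\infty$, contradicting the standing assumption, and thereby proving the theorem.

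The hardest step will be the construction of this $\mathbb{Z}/p$-equivariant GLSM Floer theory with $\mathbb{F}_p$-coefficients and bulk deformation: one needs $\mathbb{Z}/p$-equivariant virtual perturbations compatible with the gauge symmetry and the cyclic action on loops, a vortex-style compactness statement that controls bubbling against the bulk support inside $\mathbb{C}^n$, and a cochain-level localization theorem identifying the Smith fixed locus with the undeformed complex of $\phi$. Transporting the localization past the quantum corrections coming from the bulk class, while keeping track of the non-compactness of $\mathbb{C}^n$ through the moment-map constraint, is where the GLSM framework becomes essential and where the main technical work lies.
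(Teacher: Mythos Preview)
Your overall shape—assume finitely many periodic points, run a Smith-type argument for $p$-th iterates in a GLSM Floer theory over $\mathbb{F}_p$ with bulk deformation, derive a contradiction—is right. But the specific inequality you propose,
\[
\#\mathrm{Fix}(\phi^{p})\;\geq\;\dim HF^{*}(\phi)+(p-1)\bigl(\#\mathrm{Fix}(\phi)-\dim HF^{*}(\phi)\bigr),
\]
is not what $\mathbb{Z}/p$-equivariant Floer theory actually produces, and the argument cannot be closed this way. The equivariant pants product and Tate localization (Seidel, Shelukhin--Zhao, and Theorem~\ref{thm_total_length_growth} here) yield a \emph{filtered} statement: the total bar length satisfies $\tau_{(p)}^{\mf b}(\phi^{p})\geq p\cdot\tau_{(p)}^{\mf b}(\phi)$. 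This controls the sum of lengths of finite bars, not their number. Under your standing assumption $\mathrm{Fix}(\phi^{p^k})=\mathrm{Fix}(\phi)$, the \emph{number} of bars stays constant; the Smith inequality only forces individual bars to get longer. Nothing here bounds fixed-point counts from below, so your iteration does not yield growth of $\#\mathrm{Fix}(\phi^{p^k})$.

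The missing half of the argument is a uniform upper bound on the boundary depth (longest finite bar) that is independent of the iterate; this is Theorem~\ref{thmc}. It is here, and only here, that the choice of bulk deformation matters: one picks $\mf b$ so that the GLSM quantum ring $\vhf_\bullet^{\mf b}(V;\Lambda_{\ov{\mathbb{F}}_p})$ is semisimple (Theorem~\ref{thm:main-2} plus reduction mod $p$), and semisimplicity forces $\beta_{(p)}^{\mf b}(\psi)\leq C$ for all $\psi$. Then bounded number of bars plus bounded bar length gives bounded $\tau$, contradicting $\tau_{(p)}^{\mf b}(\phi^{p^k})\geq p^{k}\tau_{(p)}^{\mf b}(\phi)>0$. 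Your proposal never invokes semisimplicity or boundary depth, so it has no mechanism to convert the filtered Smith inequality into a fixed-point statement. A secondary point: the role of GLSM is not that one needs ``$\mathbb{Z}/p$-equivariant virtual perturbations''—it is the opposite. Because the upstairs target $V$ is aspherical, transversality is achieved by ordinary geometric perturbations and all counts are integers, which is precisely what makes the mod~$p$ theory and hence the Smith argument available for a general (non-monotone) toric $X$.
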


Here a point $x \in X$ is called a simple periodic point (of period $k$) if $\phi^k(x) = x$ for some positive integer $k$ and $\phi^l(x) \neq x$ for all $l < k$. The homological count of the number of fixed points of $\phi$, denoted by $N(\phi)$, is defined as 
\beqn
N(\phi):= \sum_{x \in {\rm Fix}(\phi)} {\rm dim} HF^{\rm loc}(\phi, x)
\eeqn
where $HF^{\rm loc}(\phi, x)$ is the local Floer homology at a fixed point $x$. In particular, when $\phi$ is nondegenerate, meaning that for all $x \in {\rm Fix}(\phi)$, the linearization $\phi$ at $x$ has no eigenvalue $1$, then $N(\phi)$ is equal to the naive count of fixed points. 

Theorem \ref{thm:main} also resolves the following visionary conjecture set forth by Hofer and Zehnder \cite[Page 263]{hofer-zehnder-book} in the case of toric manifolds.

\begin{conj}[Hofer--Zehnder conjecture]
Let $\phi$ be a Hamiltonian diffeomorphism on a compact symplectic manifold. If $\phi$ has more fixed points than the lower bound provided by the Arnold conjecture, then $\phi$ has infinitely many simple periodic points.
\end{conj}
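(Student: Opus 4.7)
The plan is to argue by contradiction, adapting the Shelukhin-type strategy for Hofer--Zehnder dichotomies to the non-monotone toric setting via the gauged linear sigma model. Suppose $\phi$ has isolated fixed points, $N(\phi) > \dim_{\Lambda} H^{*}(X;\Lambda)$, but only finitely many simple periodic points; the goal is to produce a contradiction.

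First, I would pass to a high iteration $\phi^{p}$ for a large prime $p$. Under the finiteness-of-periodic-points hypothesis, $\mathrm{Fix}(\phi^{p}) = \mathrm{Fix}(\phi)$ for all sufficiently large $p$. By standard local Floer theory, $p$ can additionally be chosen \emph{admissible} in the sense that $\dim HF^{\rm loc}(\phi^{p}, x) = \dim HF^{\rm loc}(\phi, x)$ for every $x \in \mathrm{Fix}(\phi)$, so that $N(\phi^{p}) = N(\phi)$. It will then suffice to prove the a priori bound $N(\phi^{p}) \leq \dim H^{*}(X;\Lambda)$ for such admissible primes.

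Next, using the GLSM presentation $X = \mathbb{C}^{N} /\!\!/ T^{k}$, I would construct a bulk class $\mathfrak{b}$ so that the bulk-deformed quantum cohomology $QH^{*}_{\mathfrak{b}}(X)$ is semisimple over the Novikov field $\Lambda$, and remains semisimple after reduction mod $p$ for almost every prime $p$. This refines standard descriptions of toric quantum cohomology by letting the bulk corrections come from GLSM (vortex/quasimap) moduli, which is what enables compatibility with the Floer-theoretic operations below. With this bulk in hand, I would construct bulk-deformed Hamiltonian Floer cohomology $HF^{*}_{\mathfrak{b}}(\phi^{p^{k}})$ together with the PSS isomorphism $HF^{*}_{\mathfrak{b}}(\phi^{p^{k}}) \cong QH^{*}_{\mathfrak{b}}(X)$ and the $p$-th power quantum Steenrod operation, all within the GLSM framework. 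Combining the semisimplicity of $QH^{*}_{\mathfrak{b}}$ with a Smith-type localization argument on the $\mathbb{Z}/p$-equivariant Floer complex of $\phi^{p}$ then yields
$$N(\phi^{p}) = \sum_{x \in \mathrm{Fix}(\phi^{p})} \dim HF^{\rm loc}_{\mathfrak{b}}(\phi^{p}, x; \mathbb{F}_{p}) \leq \dim_{\mathbb{F}_{p}} QH^{*}_{\mathfrak{b}}(X; \mathbb{F}_{p}) = \dim H^{*}(X; \mathbb{F}_{p}) \leq \dim H^{*}(X; \Lambda),$$
which combined with $N(\phi)=N(\phi^{p})$ from the first step gives the desired contradiction.

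The hard part will be setting up the GLSM-based bulk-deformed Hamiltonian Floer package rigorously: constructing the relevant vortex/gauged moduli spaces in families over iteration parameters and primes, achieving coherent virtual fundamental chains in characteristic $p$ and in the $\mathbb{Z}/p$-equivariant setting, and verifying that the resulting semisimplicity and $p$-th power Steenrod operation match the toric quantum geometry. Once these analytic foundations are in place, the Smith-type rank bound and the admissibility step combine essentially formally to conclude the theorem.
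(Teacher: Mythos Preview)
The statement you are asked to prove is the general Hofer--Zehnder \emph{conjecture}, which the paper does not prove; the paper establishes only the toric case (Theorem~A). Your proposal tacitly restricts to the toric setting by invoking the GLSM presentation, so I will evaluate it as an attempt at Theorem~A.

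Your overall architecture---contradiction, large admissible primes, GLSM-based bulk deformation to force semisimplicity, $\mathbb{Z}/p$-equivariant Floer theory---matches the paper's. But the core mechanism you propose is not the one that works, and I believe it contains a genuine gap. You claim that semisimplicity together with a Smith-type localization yields the rank bound
\[
N(\phi^{p}) \;=\; \sum_{x} \dim HF^{\rm loc}_{\mathfrak b}(\phi^{p},x;\mathbb{F}_{p}) \;\leq\; \dim QH^{*}_{\mathfrak b}(X;\mathbb{F}_{p}).
\]
No such inequality is available. The Smith/Tate localization in this setting (the paper's Theorem~D, following Seidel and Shelukhin--Zhao) is a statement about \emph{filtered} invariants: it says the total bar length satisfies $\tau^{\mathfrak b}_{(p)}(\phi^{p}) \geq p\cdot \tau^{\mathfrak b}_{(p)}(\phi)$. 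It does not bound the sum of local Floer ranks by the rank of quantum cohomology; indeed, that sum is exactly $N(\phi^{p}) = N(\phi)$, which by hypothesis \emph{exceeds} $\dim H^{*}(X)$, so the inequality you want is false from the outset. Semisimplicity enters in a completely different place: it gives a uniform upper bound on the \emph{boundary depth} (longest finite bar) of $\phi^{p^{k}}$, independent of $k$ (the paper's Theorem~C).

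The actual contradiction is quantitative in the barcode, not a rank count. From $N(\phi) > \dim H^{*}(X)$ one gets at least one finite bar, hence $\tau^{\mathfrak b}_{(p)}(\phi) > 0$. Iterating, $\tau^{\mathfrak b}_{(p)}(\phi^{p^{k}}) \geq p^{k}\tau^{\mathfrak b}_{(p)}(\phi) \to \infty$. But under the finitely-many-periodic-points hypothesis the number of finite bars of $\phi^{p^{k}}$ is bounded independently of $k$, so some individual bar length must go to infinity---contradicting the uniform boundary-depth bound from semisimplicity. Your proposal is missing this entire persistence-module layer (boundary depth, total bar length, and their interplay), which is where both semisimplicity and the equivariant pants product are actually used.
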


From a dynamical perspective, the Hofer--Zehnder conjecture is related to the problem of finding Hamiltonian diffeomorphisms with minimal number of invariant ergodic measures, i.e., pseudo-rotations. Such maps are usually very rare. For toric manifolds, such pseudo-rotations were recently constructed by Le Roux--Seyfaddini \cite{leroux_seyfaddini}  using the Anosov--Katok method \cite{Anosov_Katok}.

Our proof of Theorem \ref{thm:main} is built upon new connections between two distant stories:

\begin{enumerate}
    \item {\bf Quantitative Floer theory}. The energy filtration on Floer chain complexes leads to a powerful framework in studying Hamiltonian dynamics. Notions from topological data analysis, such as persistence module and barcode, have been incorporated with the quantitative symplectic geometry to prove many exciting results beyond the scope of the Arnold conjecture. Insights from \cite{Entov_Polterovich_1} and \cite{Shelukhin_2022} about the quantitative implication of the semisimplicity of quantum cohomology are particularly crucial ingredients.

\item {\bf Gauged linear sigma model (GLSM)}. The GLSM was introduced by Witten \cite{Witten_LGCY} in the physical context of 2-dimensional supersymmetric field theory, later becoming very impactful in enumerative algebraic geometry and mirror symmetry. The adaptation \cite{Xu_VHF}\cite{Wu_Xu} of GLSM to Hamiltonian Floer theory brings in three significant advantages in carrying out the proof of Theorem \ref{thm:main} along the lines of \cite{Shelukhin_2022}: a) all curve counts are integers even for general toric manifolds, b) transversality can be achieved without using the virtual technique, and c) the semisimplicity can be achieved using ``small'' bulk deformation, i.e., divisor classes.
\end{enumerate}
These new connections we build in this paper are perhaps equally interesting as our main result as a generalization of Franks' theorem and deserve further exploration.

\subsection{Pseudoholomorphic curves vs. vortices}

A crucial feature of our proof is the systematic adaptation of the GLSM to a symplecto-geometric problem. As mentioned above, this approach has tremendous advantages in various levels over the traditional, i.e., the nonlinear sigma model (NLSM) approach. Here we try to provide a brief sketch of the idea of GLSM and explain the advantageous effects on Floer theory.

\subsubsection{Pseudoholomorphic curves and the counting problem}

Since Gromov \cite{Gromov_1985} the theory of pseudoholomorphic curves has become the main infrastructure of symplectic geometry. A central problem about pseudoholomorphic curves, roughly speaking, is to define invariants by counting them. The difficulty to count lies in the fact that the moduli spaces are often very singular and that one needs to regularize the moduli spaces in order to define the contributions of the singularities. There are two main regularization methods which we recall here. 

The first one is to ``geometrically'' perturb the pseudoholomorphic curve equation, resembling the strategy of perturbing the Riemannian metric in Yang--Mills gauge theory. This method works for symplectic manifolds satisfying the topological condition called ``semipositivity,'' including the exact ($\omega = d\theta$), positively monotone ($c_1>0$), and Calabi--Yau ($c_1 = 0$) cases. Though it limits the  range of applications, due to its simplicity the method of geometric perturbations is still favored in many situations, with numerous distinguished works based upon.

The reason that the geometric perturbation method does not extend beyond the semipositive case is due to the issue of ``multiple covers of holomorphic spheres with negative Chern numbers.'' These multiple covers cause very wild singularities of the moduli spaces which cannot be perturbed away by geometric perturbations. The second regularization method, originally invented to deal with the general situation, is to embed the singular moduli spaces into regular spaces and apply ``abstract perturbations'' which no longer have concrete geometric meanings. The construction becomes substantially more involved and bulky. We mention one (but not the only) challenging task that is relevant in Floer theory. In the semipositive setting, as one can geometrically perturb the Floer equation to achieve transversality, moduli spaces with negative expected dimensions are automatically empty. As a result, one can work with individual moduli spaces. Beyond the semipositive setting, the regularization process must be taken simultaneously with all moduli spaces regardless of their expected dimensions. Greater care needs to be taken in order to obtain a coherent system of perturbations. 

Despite the technical complexities caused by the spherical multiple covers, such singularities of moduli spaces shall be regarded as a feature, not a bug, in Gromov--Witten theory. Numerically, these multiple covers can make certain GW invariants fractional numbers. For example, the famous Aspinwall--Morrison formula (see \cite{Aspinwall_Morrison_1993}\cite{Voison_Aspinwall_Morrison}) says that $k$-fold covers of a rigid holomorphic sphere in a Calabi--Yau threefold shall contribute $1/k^3$ to the genus zero Gromov--Witten invariant. From an abstract perspective, the non-integrality is due to the fact that moduli spaces are actually orbifolds/stacks rather than manifolds/spaces, while stacky points contribute fractionally to topological invariants such as the Euler number.

However, in Floer theory, regarding the multiple cover issue, at least the numerical non-integrality, the authors somehow view it as a bug rather than a feature. One way to justify this viewpoint is to look at Hamiltonian Floer theory. We know that Hamiltonian Floer homology should be isomorphic to the classical Morse homology which can be defined over integers. In the non-semipositive case, the spherical multiple covers, which appear in wrong dimensions, cannot be easily separated from Floer cylinders, although the latter can be made transverse by geometric perturbations. One must consider the fractional contributions from the stacky parts of moduli spaces. As a result, the Floer homology is only identified with the Morse homology over ${\mb Q}$, missing the information from the torsion part. This bug also makes the construction of interesting algebraic structures, such as equivariant pair-of-pants product \cite{Seidel_pants}\cite{Shelukhin-Zhao} and quantum Steenrod operation \cite{wilkins2020construction} impossible beyond the semipositive case. Recently the authors \cite{Bai_Xu_Arnold} constructed a Hamiltonian Floer chain complex over ${\mb Z}$ using the more refined abstract perturbation scheme discovered by Fukaya--Ono \cite{Fukaya_Ono_integer} and proved an integral version of the Arnold conjecture for general compact symplectic manifolds. This shows that the contributions from the ``stacky'' parts of moduli spaces can indeed be separated out in Floer theory. Still, the whole construction remains very involved as we need to work with infinitely many moduli spaces simultaneously, even if the recent invention of global Kuranishi charts by Abouzaid--McLean--Smith \cite{AMS, AMS2} (see also Hirschi--Swaminathan \cite{Hirschi_Swaminathan}) greatly reduced the technical burden.

\subsubsection{Mathematical theory of the GLSM}

It is clear that with the old infrastructure of pseudoholomorphic curves, the generality of applications and the simplicity of methodology hardly get along. The bug of numerical non-integrality is also hard, if not impossible, to remove. Remarkably, the new infrastructure of the gauged linear sigma model, which we systematically adopt in this paper, works for very general situations (i.e. beyond semipositive case) while only requiring the simple geometric perturbation method and provides an easy fix of the issue of spherical multiple covers.

Let us be slightly more technical. We would like to briefly discuss the {\bf vortex equation}, the GLSM counterpart of the pseudoholomorphic curve equation. We use the case of $X = \mb{CP}^n$ as an example. First, one views $\mb{CP}^n$, a nonlinear space, as the symplectic reduction of the linear space $V = {\mb C}^{n+1}$ by the group $U(1)$. The moment map is $\mu(x_0, \ldots, x_n) = |x_0|^2 + \cdots + |x_n|^2 - c$. Then, instead of considering maps from a Riemann surface $\Sigma$ to $\mb{CP}^n$ (the NLSM viewpoint), we consider the so-called ``gauged maps'' into the linear space $V$. They are nothing but triples $(L, A, u)$, where $L \to \Sigma$ is a Hermitian line bundle, $A$ is a Hermitian connection on $L$, and $u = (u_0, \ldots, u_n)$ is an $(n+1)$-tuple of sections of $L$. The vortex equation reads
\begin{align}\label{vortex1}
&\ \ov\partial_A u_i = 0,\ &\  F_A = 2\pi {\bf i} \mu(u) {\rm dvol}_\Sigma = 2\pi {\bf i} ( |u_0|^2 + \cdots + |u_n|^2 - c) {\rm dvol}_\Sigma.
\end{align}
Up to gauge transformations, this defines a holomorphic line bundle with $n+1$ holomorphic sections. Notice that the equation depends on a volume form ${\rm dvol}_\Sigma$ on the surface, losing the conformal invariance of pseudoholomorphic curves. But this is a very useful feature, as one can manipulate the volume form to adjust the position of the section $u$ relative to the moment map. For example, if one needs to define an evaluation map to the quotient space $\mb{CP}^n$ at a marked point, then one take a volume form which is cylindrical around the point. The finiteness of energy forces any solution to approach to the level set $\mu = 0$ at the cylindrical infinity. 

The reason that we can avoid sphere bubbles is simply because the target space $V$ has only constant holomorphic spheres. We can view this feature as giving a different way of compactifying the space of pseudoholomorphic curves downstairs. Notice that an $(n+1)$-tuple of holomorphic sections $(u_0, \ldots, u_n)$ defines a holomorphic map into $\mb{CP}^n$ precisely when these sections have no common zeroes. Sphere bubbles form in a sequential limit when some of their distinct zeroes collide. However, in the vortex equation, there is no restriction on where the sections can vanish.

The absence of sphere bubbles make the geometric perturbation method possible, even if the target space downstairs is not semipositive. For example, one can perturb the standard almost complex structure on the linear space $V$ as long as the perturbation is invariant under the gauge group action. As the solution intersect the region where the group action is free, the invariance restriction does not make the perturbation argument more difficult than the non-equivariant case.

We want to convince the reader that although the GLSM approach is only available for symplectic reductions, the drop of generality is small in a practical sense. Indeed, most target manifolds which can be explicitly defined have some ``linear'' feature, such as toric manifolds, flag manifolds, or subvarieties inside them. They are the targets for which explicit calculations can be expected.

The mathematical theory of the GLSM develops in two different directions. The first one is constructing gauged Gromov--Witten type invariants, relating them with classical GW invariants, and verifying physicists' predictions such as classical mirror symmetry or LG/CY correspondence. Although there are substantial developments on the symplectic side (see \cite{Mundet_thesis, Mundet_2003, Mundet_Tian_2009} \cite{Gaio_Salamon_2005} \cite{Ziltener_thesis, Ziltener_book} \cite{Tian_Xu, Tian_Xu_2, Tian_Xu_3, Tian_Xu_geometric}), the progress has been far more successful on the algebraic side (\cite{CKM_quasimap}\cite{FJR_GLSM}\cite{Chen_Janda_Ruan_2021}\cite{CLLL_16, CLLL_2019}\cite{CGLL_2021}) with significant applications \cite{CK_2020} \cite{CGL_2018, CGL_2021}\cite{Guo_Janda_Ruan_2018} etc.

The second direction, which only exists in the symplectic setting, is Floer-theoretic applications of the GLSM technique, including \cite{Frauenfelder_thesis, Frauenfelder_2004}\cite{Woodward_toric}\cite{Xu_VHF}\cite{Wu_Xu}. The key novelty is to take advantage of the simple topology upstairs to reduce the cost of regularization. Meanwhile, the moduli spaces can sometimes be explicitly identified, making certain computations such as the disk potential calculation available.

\subsubsection{Exact relation between upstairs and downstairs}

To see that the GLSM approach is an ideal alternative of the pseudoholomorphic curve approach, but not anything distantly different, we explain a concrete relation between their numerical or algebraic outputs. Physically, this relation is about the renormalization process in GLSM. Similar phenomena show up in different situations, such as the Atiyah--Floer conjecture \cite{Atiyah_Floer_conjecture}.  Mathematically, this relation can be revealed by considering the limit of the vortex equation when we blow up the volume form on the surface. In the context of \eqref{vortex1}, consider the $\epsilon \to 0$ limit of the following variant:
\begin{align*}
&\ \ov\partial_A u_i = 0,\ &\ \epsilon^2 F_A = 2\pi {\bf i} \mu(u) {\rm dvol}_\Sigma.
\end{align*}
Since the vortex theory is supposed to be independent of the parameters, then as $\epsilon \to 0$, we shall obtain invariant countings, chain homotopy equivalent complexes, etc. On the other hand, if $\epsilon \to 0$, the equation indicates that $\mu(u) \to 0$. Modulo gauge symmetry, in the limit the vortices become pseudoholomorphic curves in $X = \mu^{-1}(0)/K$. However, by looking more carefully at the $\epsilon \to 0$ limit, one can find that vortices do not converge to pseudoholomorphic curves entirely, as certain gauge-theoretic bubbles may appear. Consequently, one should expect
\beq\label{eqn12}
{\rm GLSM} \approx {\rm NLSM} + {\rm correction}.
\eeq
The correction term is intimately related to mirror symmetry. For example, when $X$ is Calabi--Yau, the correction can be viewed as a coordinate change on the K\"aher moduli on the A-side, which should match the coordinates on the complex moduli on the B-side. 

To finish, we remark that although the work conducted in this paper does not rely on a rigorous verification of \eqref{eqn12} in the Floer setting, this picture shows that the GLSM approach of the Floer theory is equivalent to the ordinary one up to certain deformation.

\subsection{Outline of the proof}\label{subsec:intro-2}

Our proof is greatly indebted to the argument of Shelukhin \cite{Shelukhin_2022}, which boils down to two key elements: 1) the uniform upper bound of the boundary depth under the semisimplicity condition, and 2) the linear growth of the total bar length under prime iterations. The reason why this argument can be applied to the toric case is because the quantum cohomology of toric manifolds are ``generically semisimple,'' a fact which can be easily deduced from mirror symmetry (see below). 
For 2), it is crucial for us to use the GLSM approach which leads to Floer chain complexes over ${\mb Z}$, hence all their mod $p$ reductions.


\subsubsection{Hamiltonian Floer theory via GLSM}

(See Section \ref{section4} for more details.) Let $X$ be a compact toric manifold. For any nondegenerate 1-periodic family of Hamiltonians $H_t: X \to {\mb R}$ ($t \in S^1$) 
in \cite{Xu_VHF} the second named author constructed a Floer chain complex over ${\mb Z}$ 
following the suggestion of \cite{Cieliebak_Gaio_Salamon_2000} using the GIT presentation of the toric manifold. Basically, using the moment polytope $P\subset {\mb R}^n$ one can realize $X$ as the symplectic reduction of a vector space $V \cong {\mb C}^N$ by a Hamiltonian action of $T^{N-n}$. Upon choosing a $T^{N-n}$-invariant lift $\wh H_t: V \to {\mb R}$ of $H_t$ and $T^{N-n}$-invariant almost complex structure $\wh J_t$, one can define the {\bf vortex Floer complex}
\beqn
\vcf_\bullet( \wh H, \wh J)
\eeqn
whose generators are still 1-periodic orbits ``downstairs'' in $X$, but with differentials counting gauge equivalence classes of Hamiltonian perturbed vortices ``upstairs'' in $V$. As the target manifold $V$ is contractible, the compactness problem about the moduli spaces of vortices becomes very simple, similary to the symplectically aspherical case in ordinary Hamiltonian Floer theory. As a result, one can achieve transversality by just perturbing $(\wh H, \wh J)$; moreover, the complex can be defined over the Novikov ring with integer coefficients:
\beqn
\Lambda_{\mb Z}:= \left\{ \sum_{i=1}^\infty a_i T^{\lambda_i}\ |\ a_i \in {\mb Z},\ \lambda_i \to +\infty \right\}.
\eeqn
The continuation maps and pair-of-pants product can also be constructed, resulting in a quantum algebra $\vhf_\bullet(V)$ over $\Lambda_{\mb Z}$ which is independent of the data $(\wh H, \wh J)$.\footnote{The Piunikhin--Salamon--Schwarz (PSS) construction for the vortex Floer theory, however, do not work over ${\mb Z}$. However, by the special topology of toric manifolds, the homology group $\vhf_\bullet(V)$ is still isomorphic to the ordinary homology of $X$.}

The filtered Floer theory can also be developed using the vortex equation. Indeed, an equivariant action functional makes the vortex Floer complex a Floer--Novikov complex in the sense of Usher \cite{Usher_2008}, hence leads to spectral invariants as defined in \cite{Wu_Xu}. By working over a Novikov field $\Lambda_{\mb K}$, one can also associate to each Hamiltonian $H_t$ a {\bf barcode} in the sense of Usher--Zhang \cite{Usher_Zhang_2016} and an infinite-dimensional persistence module.

\subsubsection{Mirror symmetry via GLSM and generic semisimplicity}

We recall certain folklore knowledge about toric mirror symmetry. Instead of classical mirror symmetry for Calabi--Yau manifolds \cite{COGP}, the mirror object of a compact toric manifold $X$ is a {\bf Landau--Ginzburg model} $(Y, W)$, where $Y$ is an algebraic torus with coordinates $y = (y_1, \ldots, y_n)$ and $W$ is a holomorphic function in $y$. This mirror correspondence implies the ring isomorphism 
\beq\label{eqn11}
QH^\bullet(X) \cong {\rm Jac}(W)
\eeq
where the right hand side, the Jacobian ring, is roughly the ring of functions on the critical locus of $W$. When $W$ is a Morse function, mirror symmetry implies that $QH^\bullet(X)$ is semisimple. 

The work of Fukaya--Oh--Ohta--Ono \cite{FOOO_toric_1, FOOO_toric_2, FOOO_mirror} provides a geometric description of the mirror superpotential and the isomorphism \eqref{eqn11}. Indeed, $W$ can be viewed as the count of (stable) holomorphic disks with boundary in Lagrangian tori and the isomorphism \eqref{eqn11} is induced from the {\bf closed-open map}. The mirror superpotential $W$ can be Morsified by certain deformations, which can be realized by {\bf bulk deformations}. Roughly speaking, cohomology classes ${\mf b}$ in $X$ can deform the ring $QH^\bullet(X)$ to a family $QH^\bullet_{\mf b}(X)$, deform the mirror superpotential $W$ to a family $W_{\mf b}$, and deform the closed-open map to a family of isomorphisms
\beq\label{eqn12}
QH^\bullet_{\mf b}(X) \cong {\rm Jac}(W_{\mf b}).
\eeq
Moreover, for a generic ${\mf b}$, the superpotential $W_{\mf b}$ a Morse function, implying the semisimplicity of $QH^\bullet_{\mf b}(X)$. Notice that, the rigorous mathematical proof of \eqref{eqn12} by Fukaya {\it et.al.} relies heavily on the machinery of (equivariant) Kuranishi structures.

Using the GLSM, one can reproduce the above picture with significantly reduced technicality. Indeed, viewing $X$ as the GIT quotient of $V$, there is a GLSM version of disk potential, firstly discussed by Givental \cite{Givental_potential} and Hori--Vafa \cite{Hori_Vafa} and rigorously defined by Woodward \cite{Woodward_toric}. We denote temporarily the GLSM version of the disk potential by $W^{\rm GLSM}$ and the previous disk potential defined by Fukaya--Oh--Ohta--Ono by $W^{\rm NLSM}$. We remind the reader that when $X$ is Fano, $W^{\rm GLSM} = W^{\rm NLSM}$ (see \cite{Cho}\cite{Cho_Oh}). In the general situation, schematically, one shall have the following commutative diagram
\beqn
\vcenter{ \xymatrix{ \vhf_\bullet^{\mf b}(V) \ar[r] \ar[d] & {\rm Jac}( W_{\mf b}^{\rm GLSM})  \ar[d]\\
           QH^\bullet_{{\mf b}'}(X) \ar[r] & {\rm Jac}( W_{{\mf b}'}^{\rm NLSM})} }.
\eeqn
Here the left vertical arrow and the correspondence ${\mf b} \mapsto {\mf b}'$, often called the {\bf quantum Kirwan map}, has been discussed in \cite{Gaio_Salamon_2005}\cite{Ziltener_thesis, Ziltener_book}\cite{Woodward_15}, the right vertical arrow, which is realized by a coordinate change on the variables $y = (y_1, \ldots, y_n)$, has been discussed in various cases in \cite{FOOO_mirror}\cite{CLLT}\cite{Woodward_Xu}. This picture allows us to work entirely upstairs.

\begin{thma}\label{thm:main-2}
There exists a bulk deformation of the form 
\beq\label{small_bulk}
{\mf b} = \sum_{j=1}^N \log c_j V_j
\eeq
with $c_j \in {\mb Z}[{\bf i}] \cong {\mb Z} \oplus {\bf i} {\mb Z}$ such that the quantum algebra $\vhf_\bullet^{\mf b}(V)$ is semisimple over $\Lambda_{\ov{\mb Q}}$. 
\end{thma}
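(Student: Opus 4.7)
The plan is to use the bulk-deformed GLSM mirror isomorphism $\vhf_\bullet^{{\mf b}}(V) \cong {\rm Jac}(W_{{\mf b}}^{\glsm})$ sketched in the introduction to reduce semisimplicity of the quantum algebra to the algebraic statement that the GLSM disk potential can be made a Morse function by an appropriate choice of $c_j \in {\mb Z}[{\bf i}]$.

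First I would recall the form of the GLSM disk potential. For $X$ presented as the GIT quotient of $V \cong {\mb C}^N$ by a subtorus of $T^N$, the potential $W^{\glsm}$ is a Laurent series (in fact polynomial in the Fano case) in variables $(y_1, \ldots, y_n)$ whose leading monomials $T^{\tau_j} y^{v_j}$ are indexed by the toric divisors $V_j$, with $v_1, \ldots, v_N \in {\mb Z}^n$ the primitive inward normals of the moment polytope and $T^{\tau_j}$ the corresponding Novikov weights. Since each $V_j$ is a divisor class, the divisor equation for GLSM disk invariants implies that the bulk ${\mf b} = \sum_j \log c_j V_j$ rescales the $j$-th leading monomial by $c_j$ (and in general multiplies a term of toric degree $\sum k_j V_j$ by $\prod_j c_j^{k_j}$):
\beqn
W_{{\mf b}}^{\glsm}(y) = \sum_{j=1}^{N} c_j T^{\tau_j} y^{v_j} + (\text{higher Novikov order}).
\eeqn

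Next, I would show that $W_{{\mf b}}^{\glsm}$ is a Morse function on the algebraic torus over $\Lambda_{\ov{\mb Q}}$ for generic $(c_1, \ldots, c_N)$. Because the rays $v_j$ span ${\mb Z}^n$ (the toric fan is complete), the Newton polytope of the leading part is full dimensional, and the leading polynomial is Morse for a Zariski-open set of $(c_1, \ldots, c_N)$; higher-order Novikov corrections then perturb the critical points $T$-adically while preserving non-degeneracy of the Hessian, so Morse-ness passes to $W_{{\mf b}}^{\glsm}$. The ``non-Morse locus'' $D \subset {\mb C}^N$ is cut out by the vanishing of a discriminant-type polynomial and hence is a proper Zariski-closed subvariety. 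To realize a point of its complement with Gaussian-integer coordinates, I would invoke the elementary Zariski density of ${\mb Z}[{\bf i}]^N$ in ${\mb C}^N$: a nonzero polynomial in $N$ variables cannot vanish on all of ${\mb Z}[{\bf i}]^N$, as one sees by iterating the one-variable fact that a nonzero polynomial has only finitely many roots while ${\mb Z}[{\bf i}]$ is infinite. Hence there exists $(c_1, \ldots, c_N) \in {\mb Z}[{\bf i}]^N \setminus D$, and for such a choice $W_{{\mf b}}^{\glsm}$ is Morse. Its Jacobian ring then decomposes into a finite product of finite extension fields of $\Lambda_{\ov{\mb Q}}$, one Galois orbit of critical points per factor, and is therefore semisimple.

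The main obstacle will be the first step: making rigorous the bulk-deformed mirror isomorphism $\vhf_\bullet^{{\mf b}}(V) \cong {\rm Jac}(W_{{\mf b}}^{\glsm})$ with the precise divisor-scaling behavior, over a Novikov base containing ${\mb Z}[{\bf i}]$. The undeformed GLSM closed-open and quantum Kirwan framework is already in place from \cite{Woodward_toric, Wu_Xu}, but incorporating small bulk deformations by divisor classes and checking that they act as the advertised monomial rescaling on the Jacobian side requires pushing the GLSM divisor equation through at the chain level---a computation that, by the integer-valued enumerative features of vortex moduli stressed in the introduction, avoids the fractional-contribution difficulties that plague the NLSM approach. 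Once the mirror isomorphism with the asserted divisor-scaling is established, the semisimplicity conclusion reduces to the standard algebraic genericity argument above.
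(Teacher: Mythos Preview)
Your overall strategy---reduce semisimplicity to Morse-ness of the bulk-deformed potential and pick Gaussian-integer coefficients by Zariski density---matches the paper's approach to finding a convenient bulk (Theorem \ref{thm_good_bulk}). But the central step, the isomorphism $\vhf_\bullet^{\mf b}(V)\cong{\rm Jac}(W_{\mf b}^{\glsm})$, is not merely a technical obstacle: it is \emph{false} beyond the Fano case. The GLSM potential is exactly the Givental--Hori--Vafa Laurent polynomial $W_{\mf b}=\sum_j c_j T^{-\lambda_j} y^{v_j}$ with no higher-order Novikov corrections (this is the point of working upstairs), and its Jacobian ring typically has rank strictly larger than ${\rm dim}\,H_\bullet(X)$. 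The Hirzebruch surface example in Section~\ref{section9} makes this explicit: $W_{\mf b}$ has $n+2$ critical points, only four of which lie inside the moment polytope, so ${\rm Jac}(W_{\mf b})$ has the wrong dimension to be isomorphic to $\vhf_\bullet^{\mf b}(V)$.

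What the paper actually does is construct the closed-open map into the Hochschild cohomology of only those Lagrangian branes $\L$ corresponding to critical points \emph{inside} the moment polytope, ${\rm Crit}_X W_{\mf b}$. Each such Hochschild cohomology is one-dimensional (a Clifford algebra computation), and the rank matching $\#{\rm Crit}_X W_{\mf b}={\rm dim}\,H_\bullet(X)$ is not automatic---it is imported from Fukaya--Oh--Ohta--Ono via Proposition~\ref{prop_same_rank}. Surjectivity of the closed-open map is then established by factoring a Kodaira--Spencer map through it (Proposition~\ref{prop_commutative_diagram} and Theorem~\ref{thm_ks_surjective}), and multiplicativity via a quilted/balanced-disk argument. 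Your proposal would need to be rewritten to target only the interior critical points and to supply an independent argument for the rank equality and for the ring-map property; as stated, the Jacobian-ring shortcut does not go through.
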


Using purely algebraic argument one can derive the semisimplicity in finite characteristics.

\begin{cor}
Let ${\mf b}$ be given as in Theorem \ref{thm:main-2}. Then for any sufficiently large prime $p$, the quantum algebra $\vhf_\bullet^{\mf b}(V)$ is semisimple over the Novikov field $\Lambda_{\ov{\mb F}_p}$, where $\ov{\mb F}_p$ is an algebraic closure of ${\mb F}_p$. 
\end{cor}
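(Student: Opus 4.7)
The plan is to detect semisimplicity via the nonvanishing of a single discriminant and then transfer this from characteristic zero to characteristic $p$ by an elementary integrality argument. Once Theorem \ref{thm:main-2} is in hand, this passage is purely algebraic.

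First I would verify that every structure constant of $\vhf_\bullet^{\mf b}(V)$, with respect to a fixed integral basis, lies in the Novikov ring
\[
\Lambda_{{\mb Z}[{\bf i}]} = \Big\{\sum_i a_i T^{\lambda_i} : a_i \in {\mb Z}[{\bf i}],\ \lambda_i \to +\infty\Big\}.
\]
The vortex Floer counts of \cite{Xu_VHF} are by construction integers, and the bulk class ${\mf b} = \sum_j \log c_j\, V_j$ of \eqref{small_bulk} enters only through exponential weights $\exp\!\big(\sum_j k_j \log c_j\big) = \prod_j c_j^{k_j}$ attached to nonnegative integer intersection multiplicities $k_j$ with the toric divisors $V_j$; since $c_j \in {\mb Z}[{\bf i}]$, each such weight lies in ${\mb Z}[{\bf i}]$. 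Consequently $\vhf_\bullet^{\mf b}(V)$ is a free commutative $\Lambda_{{\mb Z}[{\bf i}]}$-algebra of finite rank, and the trace-form discriminant $\Delta \in \Lambda_{{\mb Z}[{\bf i}]}$ is intrinsically defined and compatible with arbitrary base change.

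Next I would invoke Theorem \ref{thm:main-2}: after base change to $\Lambda_{\ov{\mb Q}}$, the algebra is semisimple. Since $\Lambda_{\ov{\mb Q}}$ has characteristic zero, any finite-dimensional commutative semisimple algebra over it is a product of separable field extensions, hence étale, and so its trace form is nondegenerate. Therefore the image of $\Delta$ in $\Lambda_{\ov{\mb Q}}$ is nonzero, which means $\Delta$ itself is nonzero in $\Lambda_{{\mb Z}[{\bf i}]}$.

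Finally, write $\Delta = \sum_i a_i T^{\lambda_i}$, let $a_{i_0} \in {\mb Z}[{\bf i}]\setminus\{0\}$ be the coefficient attached to the smallest $\lambda_i$ with nonzero coefficient, and set $N = |a_{i_0}|^2 \in {\mb Z}_{>0}$. For every prime $p\nmid N$ the image of $a_{i_0}$ in $\ov{\mb F}_p$ is still nonzero, and hence so is the image of $\Delta$ in $\Lambda_{\ov{\mb F}_p}$. A nonzero discriminant forces $\vhf_\bullet^{\mf b}(V) \otimes_{\Lambda_{{\mb Z}[{\bf i}]}} \Lambda_{\ov{\mb F}_p}$ to be étale, in particular semisimple, for all but finitely many $p$. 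The main, and really only nonroutine, obstacle I anticipate is the integrality bookkeeping in the first step---one must check that in the GLSM setup the ``$\log c_j$'' of \eqref{small_bulk} appear only exponentiated in the structure constants, so that nothing transcendental over ${\mb Z}[{\bf i}]$ can leak in; the rest of the argument is standard linear algebra.
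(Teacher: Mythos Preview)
Your argument is correct and is in fact more self-contained than the paper's for the bare semisimplicity statement. Both routes reduce to ``a discriminant in $\Lambda_{\ov{\mb Z}}$ (or $\Lambda_{{\mb Z}[{\bf i}]}$) is nonzero, hence nonzero mod $p$ for $p\gg 0$,'' but the two discriminants are different. You use the intrinsic trace-form discriminant of the algebra, so that semisimplicity of $\vhf_\bullet^{\mf b}(V;\Lambda_{\ov{\mb Q}})$ alone (Theorem~\ref{thm:main-2}) already forces $\Delta\neq 0$; étaleness over the algebraically closed field $\Lambda_{\ov{\mb F}_p}$ then gives the paper's restrictive notion of semisimplicity (Definition~\ref{defn_semisimple}). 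The paper instead singles out a distinguished element ${\mc U}$ --- the bulk-deformed first Chern class --- and uses the discriminant of the characteristic polynomial of ${\mc U}\ast(-)$; this requires the additional geometric input (Theorem~\ref{thm_CO}(2)) that in characteristic zero this operator has pairwise distinct nonzero eigenvalues, after which Lemma~\ref{semisimple_criterion} concludes.

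The payoff of the paper's extra input is not in the Corollary itself but downstream: working with eigenvectors of a fixed integral operator lets them track valuations explicitly and obtain the uniform bound $\ell_p(e_{l,(p)})\leq C$ on the idempotents (Theorem~\ref{thm_semisimple_finite_characteristic}(2)), which is exactly the hypothesis feeding into the boundary-depth estimate (Theorem~\ref{thm_boundary_depth}). Your trace-form route gives the qualitative statement cleanly but does not by itself produce that $p$-independent valuation control; one could presumably extract it, but the eigenvector approach makes it transparent. One small point: you assert the intersection weights $\prod_j c_j^{k_j}$ lie in ${\mb Z}[{\bf i}]$; this is what the paper claims in establishing $\partial^{\mf b}$ over $\Lambda_{{\mb Z}[{\bf i}]}$, but the $k_j=[\mf u]\cap V_j$ are topological intersection numbers of Floer cylinders with coordinate hyperplanes and are not manifestly nonnegative --- you are right to flag the integrality bookkeeping, and you should either invoke positivity of intersections for the relevant almost complex structures or simply cite the paper's assertion that the complex is defined over $\Lambda_{{\mb Z}[{\bf i}]}$.
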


\subsubsection{Boundary depth estimate and linear growth of total bar length}

The bulk-deformed vortex Floer theory still provides us powerful quantitative invariants of Hamiltonian diffeomorphisms. Basically, over a Novikov field $\Lambda_{\mb K}$ the bulk-deformed complex $\vcf_\bullet^{\mf b}(\wh H, \wh J)$ induces a barcode, whose longest finite bar has length equal to the {\bf boundary depth} introduced by Usher \cite{Usher_boundary_depth}. The boundary depth only depends on the induced Hamiltonian diffeomorphism $\phi$ on $X$ and we denote it by $\beta_{\mb K}^{\mf b}(\phi)$; it also extends to possibly degenerate Hamiltonian diffeomorphisms. We prove the following uniform upper bound on boundary depth, similar to that of \cite[Theorem B]{Shelukhin_2022}.

\begin{thma}\label{thmc}
Let ${\mf b}$ be given as in Theorem \ref{thm:main-2}. Then there exists $C>0$ such that for all sufficiently large prime $p$, for all Hamiltonian diffeomorphism $\phi$ on $X$, one has 
\beq\label{eqn:uniform-bound}
\beta_{\ov{\mb F}_p}^{\mf b}(\phi) \leq C.
\eeq
\end{thma}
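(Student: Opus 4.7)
The plan is to adapt Shelukhin's proof of \cite[Theorem B]{Shelukhin_2022} to the bulk-deformed vortex Floer setting, with the semisimplicity from Theorem \ref{thm:main-2} and its corollary serving as the algebraic input. The first step is to set up the idempotent decomposition. Once $\vhf_\bullet^{\mf b}(V)$ is semisimple over $\Lambda_{\ov{\mb F}_p}$ for all sufficiently large $p$, the algebra splits as a direct product of field extensions of $\Lambda_{\ov{\mb F}_p}$, producing a complete system of orthogonal idempotents $e_1^{(p)}, \ldots, e_k^{(p)}$ summing to the unit. Because the bulk class ${\mf b}$ in Theorem \ref{thm:main-2} has coefficients in ${\mb Z}[{\bf i}]$, one can fix an $S$-integer model of the whole quantum algebra so that, after inverting finitely many bad primes, each $e_i^{(p)}$ is the mod-$p$ reduction of a single idempotent $e_i$ defined in the corresponding $\Lambda_{\ov{\mb Q}}$-algebra from the proof of Theorem \ref{thm:main-2}; in particular, their Novikov valuations $\nu(e_i^{(p)})$, as well as the valuations of their inverses inside each field factor, become independent of $p$.

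The second step is to develop the standard package of spectral invariants $c^{\mf b}(\alpha, H)$ for the bulk-deformed theory, using the action filtration on $\vcf_\bullet^{\mf b}(\wh H, \wh J)$ coming from the equivariant action functional. The machinery of \cite{Wu_Xu} already supplies all the required features --- Hofer-Lipschitz continuity, the triangle inequality $c^{\mf b}(\alpha \cdot \beta, H \# K) \leq c^{\mf b}(\alpha, H) + c^{\mf b}(\beta, K)$, Poincaré duality, and the valuation bound $c^{\mf b}(\alpha, H) \leq \|H\|_+ + \nu(\alpha)$ --- all of which pass to $\Lambda_{\ov{\mb F}_p}$ coefficients without modification because the relevant vortex moduli counts are defined over ${\mb Z}$. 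Applying Shelukhin's algebraic lemma for semisimple Frobenius algebras to the idempotent decomposition then yields
\[
\beta_{\ov{\mb F}_p}^{\mf b}(\phi) \;\leq\; 2 \max_{i}\bigl( c^{\mf b}(e_i^{(p)}, H) + c^{\mf b}(e_i^{(p)}, \bar H)\bigr),
\]
and the Frobenius pairing, combined with the triangle inequality and Poincaré duality, then bounds the right-hand side by a quantity depending only on the Novikov valuations of the idempotents and of their field-factor inverses. Together with the $p$-independence of these valuations from Step 1, this produces a constant $C$ depending only on $X$ and ${\mf b}$ that works for all sufficiently large primes, which is precisely \eqref{eqn:uniform-bound}.

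The principal obstacle is controlling the idempotent decomposition uniformly across characteristics. To guarantee that the $\Lambda_{\ov{\mb Q}}$-idempotents survive reduction mod $p$ without colliding or producing nilpotents, one needs the discriminant of the characteristic polynomial of multiplication by a chosen generator of $\vhf_\bullet^{\mf b}(V)$ to remain a unit in $\ov{\mb F}_p$ for all but finitely many $p$; this is a purely algebraic statement about the integral model underlying Theorem \ref{thm:main-2} and is the real source of the qualifier ``sufficiently large prime $p$''. A secondary concern, easier but not automatic, is to verify that all filtered Floer-theoretic ingredients --- PSS, pair-of-pants product, and Poincaré duality --- remain compatible with bulk deformations by divisor classes of the form \eqref{small_bulk} within the vortex Floer framework. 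The geometric simplicity of vortex moduli spaces, in particular the absence of sphere bubbles upstairs, should reduce both of these checks to careful bookkeeping rather than fundamentally new constructions.
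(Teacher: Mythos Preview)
Your overall strategy matches the paper's (Section~\ref{sec:beta}): decompose via idempotents, use the $\beta \leq \gamma$ bound on each summand, and control $\gamma_{e_l}$ by the valuations of the idempotents. But Step~1 contains a real gap. You claim that in an $S$-integer model each $e_i^{(p)}$ is the mod-$p$ reduction of a fixed $\Lambda_{\ov{\mb Q}}$-idempotent $e_i$. This fails: the coordinates of $e_i$ in any $\Lambda_{\ov{\mb Z}}$-basis are \emph{infinite} series in $\Lambda_{\ov{\mb Q}}$, and there is no reason the $\ov{\mb Q}$-coefficients appearing in those series have denominators supported on a fixed finite set of primes, so no finite localization makes $e_i$ integral. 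Your discriminant remark in the last paragraph gestures at the right obstruction, but the phrase ``survive reduction mod $p$'' still presupposes a reduction that does not exist.

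The paper's resolution is Theorem~\ref{thm_semisimple_finite_characteristic}, and it requires an input beyond semisimplicity: pair-of-pants multiplication by the bulk-deformed first Chern class (Definition~\ref{defn_first_Chern_class}) has \emph{distinct nonzero} eigenvalues over $\Lambda_{\ov{\mb Q}}$ (this is Theorem~\ref{thm_CO}(2), proved via the closed-open map, and does not follow from semisimplicity alone). With this in hand one \emph{truncates} the characteristic-zero eigenvectors at a large action level $Z$, clears the now finitely many denominators via Lemma~\ref{lemma46}, reduces mod $p$, and then runs a non-Archimedean Newton-type iteration to correct these approximate eigenvectors to genuine ones in $A_{(p)}$; the correction has valuation $\leq C - Z$, which yields $\ell_p(e_{l,(p)}) \leq C_0$ uniformly in $p$. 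This truncate-reduce-correct procedure is the actual technical heart of the uniformity in $p$, and is more than the ``careful bookkeeping'' you anticipate. As a side remark on your secondary concern: the paper deliberately avoids PSS (noting it does not work integrally in the vortex setting) and instead builds everything directly on the abstract Floer--Novikov formalism and a perfect Morse model (Proposition~\ref{computation}).
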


On the other hand, the total bar length of the bulk-deformed vortex Floer complex can be extended to Hamiltonian diffeomorphisms with isolated fixed points. Denote it (over $\Lambda_{\mb K}$) by
\beqn
\tau_{\mb K}^{{\mf b}}(\phi) \in {\mb R}.
\eeqn
Following \cite{Shelukhin-Zhao} and \cite{Shelukhin_2022}, via the algebraic Tate construction and geometrically constructed equivariant pair-of-pants product (extending Seidel's construction \cite{Seidel_pants} in characteristic 2), one obtains the following analogue of \cite[Theorem D]{Shelukhin_2022}.

\begin{thma}\label{thm_total_length_growth}
For any bulk ${\mf b}$ for the form \eqref{small_bulk} and any odd prime $p$, we have the inequality
\beq\label{eqn:linear-grow}
\tau_{\ov{\mb F}_p}^{\mf b} (\phi^p) \geq p \cdot \tau_{\ov{\mb F}_p}^{\mf b} (\phi).
\eeq
\end{thma}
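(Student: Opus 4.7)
The plan is to transplant Shelukhin's proof of \cite[Theorem D]{Shelukhin_2022} into the vortex Floer setting, using the equivariant pair-of-pants product for odd primes as in \cite{Shelukhin-Zhao} and Seidel's original construction \cite{Seidel_pants} for $p=2$. The GLSM framework of Section \ref{section4} is essential here: integer coefficients permit reduction mod $p$ to $\ov{\mb F}_p$, and the absence of sphere bubbles upstairs in the linear target $V$ allows equivariant transversality to be achieved by purely geometric, $T^{N-n}$-invariant perturbations of $(\wh H, \wh J)$, rather than via virtual methods.

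\textbf{Step 1: Equivariant vortex pair-of-pants product.} Let $\Sigma_p$ denote the $p$-input pair of pants, realized as the $p$-fold cyclic cover of the standard pair of pants branched over the output puncture, with its natural $\mb Z/p$-action permuting the inputs. Choose a $\mb Z/p$-invariant area form on $\Sigma_p$ with cylindrical ends, together with $\mb Z/p$-equivariant Hamiltonian and almost complex perturbations matching $\wh H$ on each input cylinder and the iterated lift $\wh H^{(p)}$ on the output. Counting gauge equivalence classes of Hamiltonian-perturbed vortices on $\Sigma_p$ with bulk insertions determined by $\mf b$ yields a chain-level map
\beqn
\Psi_p \colon \vcf_\bullet^{\mf b}(\wh H, \wh J)^{\otimes p} \longrightarrow \vcf_\bullet^{\mf b}(\wh H^{(p)}, \wh J^{(p)})
\eeqn
that intertwines the cyclic $\mb Z/p$-action on the domain with the trivial action on the target. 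Equivariant compactness and transversality follow from the $\mb Z/p$-invariant versions of the analysis of \cite{Xu_VHF, Wu_Xu}, since the target $V$ admits no nonconstant holomorphic spheres.

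\textbf{Step 2: Tate construction and Smith-type quasi-isomorphism.} Over $\ov{\mb F}_p$, the cyclic tensor power $\vcf_\bullet^{\mf b}(\phi)^{\otimes p}$ and the target $\vcf_\bullet^{\mf b}(\phi^p)$ both admit Tate constructions, each a filtered module over the Tate cohomology ring $\wh H^*(\mb Z/p;\ov{\mb F}_p) \cong \ov{\mb F}_p[u,u^{-1}] \otimes \Lambda(\theta)$. The map $\Psi_p$ is $\mb Z/p$-equivariant and decreases the equivariant action functional (which assigns value $pa$ to a $p$-fold tensor of a generator of action $a$), so it induces a filtered map $\wh\Psi_p$ on Tate constructions. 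Filtered Smith theory, as used in \cite{Seidel_pants, Shelukhin-Zhao, Shelukhin_2022}, then shows that $\wh\Psi_p$ is a quasi-isomorphism of persistence modules up to a uniformly bounded shift in action. A direct bar calculation, parallel to that in \cite{Shelukhin_2022}, shows that the Tate construction of the $p$-fold cyclic tensor of a finite persistence module with bars $\ell_1,\dots,\ell_k$ has total finite bar length $p(\ell_1 + \cdots + \ell_k)$. Combined, these yield \eqref{eqn:linear-grow} for nondegenerate $\phi$.

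\textbf{Step 3: Extension to isolated fixed points.} If $\phi$ has isolated but possibly degenerate fixed points (so that the same holds for $\phi^p$), one $C^\infty$-approximates $\phi$ by nondegenerate Hamiltonian diffeomorphisms and passes to the Hofer limit, invoking lower semicontinuity of the total bar length $\tau^{\mf b}$ under Hofer limits, which follows from the stability of barcodes \cite{Usher_Zhang_2016} together with the local vortex Floer homology accounting at each isolated fixed point.

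\textbf{Main obstacle.} The substantive difficulty is Step 1: constructing the $\mb Z/p$-equivariant pair-of-pants product in the gauged setting for odd $p$. One must assemble a coherent $\mb Z/p$-equivariant family of surface data with cylindrical ends on the branched cover $\Sigma_p$, prove equivariant compactness and gluing in the presence of the branch point, and verify the filtered Smith-type quasi-isomorphism property of $\wh\Psi_p$. Each of these is a direct vortex-theoretic adaptation of the corresponding step in \cite{Shelukhin-Zhao}, considerably simplified by the linear target, but the equivariant analysis of vortex moduli on branched-cover domains still constitutes the principal technical burden.
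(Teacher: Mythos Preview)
Your overall strategy matches the paper's exactly: transplant Shelukhin's argument to the vortex setting via an equivariant pants product and Tate localization, exploiting integer coefficients and the absence of sphere bubbles upstairs. Two points, however, are imprecise enough to count as genuine gaps.

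First, Step 1 as written does not produce the structure needed for Step 2. A single chain map $\Psi_p$ obtained from $\mb Z/p$-invariant perturbation data on the branched cover $\Sigma_p$ will not in general be both transverse and strictly $\mb Z/p$-equivariant at the chain level, and even if it were, a bare equivariant chain map is not what feeds into the Tate comparison. The paper (following \cite{Seidel_pants, Shelukhin-Zhao}) instead builds the full Borel package: moduli of vortices on $\Sigma_p$ coupled to negative gradient flow lines of an explicit Morse function on $S^\infty = E\mb Z/p$, producing a map ${\mc P}$ from the algebraic Tate complex $C_{\mathrm{Tate}}(\mb Z/p, \vcf_\bullet^{\mf b}(\wh H)^{\otimes p})$ to a geometrically defined equivariant complex $\vcf_{\mathrm{Tate}}(\wh H^{(p)})$. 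Your ``Main obstacle'' paragraph gestures at families, but Step 1 itself needs to be rewritten in these terms.

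Second, the assertion that ``filtered Smith theory \ldots shows that $\wh\Psi_p$ is a quasi-isomorphism'' hides the actual mechanism. The paper proves this by equivariant localization: one constructs an equivariant coproduct ${\mc C}$ in the reverse direction, reduces to local Floer theory near each iterated orbit $x^{(p)}$, and computes ${\mc C}^{\rm loc}_x \circ {\mc P}^{\rm loc}_x = (-1)^n u^{n(p-1)}$ by a Morse-theoretic degeneration. Combined with the vanishing of simple (non-iterated) orbits in the Tate complex and an energy-filtration argument, this forces ${\mc P}$ to be an isomorphism. The inequality \eqref{eqn:linear-grow} then follows from the three-step torsion-exponent comparison exactly as in \cite[Section 7]{Shelukhin_2022}: quasi-Frobenius gives the factor $p$, the isomorphism ${\mc P}$ matches torsion, and a Borel spectral sequence bounds the Tate torsion by $\tau(\tilde\phi^p)$.

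Your Step 3 is fine in spirit; the paper carries it out via homological perturbation as in \cite{Shelukhin_2022}, or alternatively by a direct barcode-limit argument.
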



Finally, Theorem \ref{thm:main} follows by a simple contradiction between Theorem \ref{thmc} and Theorem \ref{thm_total_length_growth}, if we assume the number of fixed points is great than the total rank of homology (see Section \ref{section_proof}).

\subsection{Outlook and speculations}

We are very surprised to find out that classical considerations from mirror symmetry can be quite useful for investigations in Hamiltonian dynamics. We expect such a connection could open up new avenues for future research. As mentioned above, GLSM can more generally be used to study symplectic topology and Hamiltonian dynamics of other symplectic/GIT quotients or complete intersections in them, the latter of which requires studing the gauged Witten equation (see \cite{Tian_Xu, Tian_Xu_geometric}) with Hamiltonian perturbations. It is conceivable that one could resolve the Hofer--Zehnder conjecture for a broader class of symplectic quotients, provided that certain form of closed string mirror symmetry can be established.

On a different note, except for deploying tools like GLSM, as mentioned earlier, there are some recent advances \cite{Bai_Xu_2022, Bai_Xu_Arnold} on defining Hamiltonian Floer theory over integers for general symplectic manifolds. The methods from \emph{loc. cit.} are general enough for us to expect that a version of symplectic Smith-type inequality should hold using such a theory. Deriving dynamical applications using such a toolkit, including proving the Hofer--Zehnder conjecture in more general settings, is another topic for future research.

\subsection{Outline of the paper}
The following provides an outline of this paper.

\begin{itemize}

\item Basic notions related to toric manifolds are recalled in Section \ref{sec:prelim}, which also includes an introduction to the symplectic vortex equations arising from GLSM.

\item In Section \ref{sec:alg-prelim}, various algebraic preliminaries relevant for our purpose, including semisimple algebras over Novikov rings defined over fields with possibly positive characteristics, abstract setups for filtered Floer theories, persistence modules, and $A_\infty$ algebras and their Hochschild cohomology, are recalled systematically.

\item A (filtered) Hamiltonian Floer theory package in the vortex setting is recorded in Section \ref{sec:ham-package}. Most notably, we introduce bulk deformations in vortex Hamiltonian Floer theory which allow us to incorporate ideas from generic semisimplicity of quantum homology to derive applications in quantitative symplectic topology.

\item In Section \ref{sec:local-Floer}, we introduce local Floer theory in the vortex setting in order to establish Theorem \ref{thm:main} for Hamiltonian diffeomorphisms with isolated but degenerate fixed points.

\item The main purpose of Section \ref{sec:beta} is to prove Theorem \ref{thm_boundary_depth} = Theorem \ref{thmc}, which ensures a uniform upper bound on the boundary depth of the bulk-deformed vortex Hamiltonian Floer persistence module of any Hamiltonian diffeomorphism provided that the bulk-deformed vortex quantum homology is semisimple.

\item In Section \ref{sec:equiv}, we develop ${\mb Z}/p$-equivariant vortex Hamiltonian Floer theory by adapting the work \cite{Seidel_pants, Shelukhin-Zhao} in the GLSM setting. Theorem \ref{thm_total_length_growth} = Theorem \ref{thm:smith} is proven as a consequence by appealing to the work of Shelukhin \cite{Shelukhin_2022}.

\item In Section \ref{section8} we wrap up the preparations and give the proof of Theorem \ref{thm:main}.

\item We turn our attention to Lagrangian Floer theory in Section \ref{section9}. The key result is to demonstrate the existence of  a ``convenient" bulk deformation (cf. Definition \ref{defn:convenient}) whose associated Fukaya category (in the GLSM setting) takes a very simple form, such that its Hochschild cohomology is a semisimple algebra.

\item Lastly, in Section \ref{section10}, Theorem \ref{thm_CO} = Theorem \ref{thm:main-2} is proven by showing that the closed-open string map is a unital ring isomorphism.

\end{itemize}

\subsection*{Acknowledgements}: 

We thank Marcelo Atallah, Hiroshi Iritani, Han Lou, Egor Shelukhin, Nick Sheridan, Michael Usher, and Chris Woodward for useful discussions and email correspondences. The first-named author is grateful to the Simons Center for Geometry and Physics for its warm hospitality during Spring 2023.

During the preparation of our paper we learned the then ongoing work of Atallah--Lou \cite{Atallah_Lou} on the Hofer--Zehnder conjecture for semipositive symplectic manifolds; we also communicated with them on methods of deriving semipositivity from characteristic zero to positive characteristics, although we have developed a completely different approach using the quantum multiplication by the first Chern class.

\section{Geometric preliminaries}\label{sec:prelim}

We recall basic notions about toric symplectic manifolds and symplectic vortex equations.

\subsection{Toric manifolds as symplectic quotients}\label{subsection_toric}

We briefly recall the notion of symplectic reduction/quotients. Let $K$ be a compact Lie group with Lie algebra ${\mf k}$. Let $(V, \omega_V)$ be a symplectic manifold having a Hamiltonian $K$-action with moment map
\beqn
\mu: V \to {\mf k}^*.
\eeqn

The {\bf symplectic reduction} of $V$ (with respect to the $K$-action and the moment map) is
\beqn
X:= \mu^{-1}(0)/K.
\eeqn
We always assume that $0$ is a regular value of $\mu$ and the $K$-action on $\mu^{-1}(0)$ is free. This assumption implies that $X$ is a smooth manifold with a naturally induced symplectic form $\omega_X$. 

When $V$ has a $K$-invariant integrable almost complex structure $J_V$, the $K$-action can be extended to its complexification $K^{\mb C}$ as a holomorphic action. When this is the case, (under certain extra conditions), the Kempf--Ness theorem says that the symplectic reduction can be identified with the geometric invariant theory (GIT) quotient.

Symplectic toric manifolds can be realized as symplectic quotients of a vector space. We provide a minimal description of symplectic toric manifolds necessary for this paper. An $2n$-dimensional compact symplectic toric manifold $X$ is described by a convex polytope $P \subset {\mb R}^n$ satisfying the following conditions.
\begin{enumerate}
    \item For each face $\partial_j P$ of $P$, there are ${\bf v}_j \in {\mb Z}^n$ and $\lambda_j\in {\mb R}$ such that ${\bf v}_j$ is an inward normal vector and the face is defined by 
    \beqn
    \partial_j P = \{ {\bf u}\in {\mb R}^n\ |\ \langle {\bf v}_j, {\bf u}\rangle = \lambda_j \}.
    \eeqn

    \item For each vertex of $P$, the normal vectors ${\bf v}_{j_1}, \ldots, {\bf v}_{j_n}$ of all adjacent faces form a ${\mb Z}$-basis of ${\mb Z}^n$. 
\end{enumerate}

In this paper, denote by $N$ the number of faces of $P$. We can realize $X$ as the symplectic quotient of ${\mb C}^N$ (with the standard symplectic form) by the $N-n$ dimensional torus $K = T^{N-n} = (S^1)^{N-n}$. The collection of vectors ${\bf v}_1, \ldots, {\bf v}_N$ defines a linear map 
\beqn
\tilde \pi_P: {\mb R}^N \to {\mb R}^n
\eeqn
which sends ${\mb Z}^N$ onto ${\mb Z}^n$. Hence it induces a surjective group homomorphism
\beqn
\pi_P: T^N \to T^n.
\eeqn
Let $K$ be the kernel of $\pi_P$. Hence $K$ acts on ${\mb C}^N$ as a subgroup of $T^N$. Notice that for the standard $\wh K = T^N$-action, the moment map can be written as 
\beqn
\wh \mu(x_1, \ldots, x_N) = \left( \pi |x_1|^2 - \lambda_1, \ldots, \pi |x_N|^2 - \lambda_N \right) \in {\mb R}^N \cong \wh {\mf k}^*.
\eeqn
Then the moment map of the $K$-action is simply the composition
\beqn
\xymatrix{ V \ar[r]^{\wh \mu} & \wh{\mf k}^* \ar[r] & {\mf k}^*}.
\eeqn
On the other hand, there is a residual torus action on $X$ by the quotient $T^n$ which is the torus action usually appear in the discussion of toric manifolds. Denote the associated moment map by
\beq
\pi_X: X \to {\mb R}^n
\eeq
whose range is actually the moment polytope $P$.

Notice that if one translates the moment polytope $P$ in ${\mb R}^N$ by a vector in ${\mb R}^n$, then it does not change the moment map $\mu$ and hence the symplectic form on $X$.

\subsection{Symplectic vortex equation}

The symplectic vortex equation was originally introduced by Cieliebak--Gaio--Salamon \cite{Cieliebak_Gaio_Salamon_2000} and Mundet \cite{Mundet_thesis}. It is a generalization of the pseudoholomorphic curve equation to the equivariant setting. Here we briefly recall its setup and some basic analytical result. We restrict to the toric setting described above. 

\subsubsection{Gauged maps and vortex equation}

Vortices are special ``gauged maps'' in a similar way as holomorphic curves are special ordinary maps. Let $\Sigma$ be a Riemann surface. A {\bf gauged map} from $\Sigma$ to the vector space $V$ is a triple ${\mf u} = (P, A, u)$ where $P\to \Sigma$ is a principal $K$-bundle, $A\in {\mc A}(P)$ is a connection on $P$, and $u$ is a section of the associated vector bundle $P(V):= P\times_K V$. The group of gauge transformations ${\mc G}(P)$, which, in the abelian case, is the group of smooth maps 
\beqn
g: \Sigma \to K,
\eeqn
which acts on gauged maps by 
\beqn
g^* {\mf u} = g^* (P, A, u) = (P, g^* A, g^* u) = (P, A + g^{-1} dg, g^{-1} u).
\eeqn

We need three quantities to define the vortex equation. First the covariant derivative of $u$ is a section 
\beqn
d_A u \in \Omega^1(P, u^* TV)
\eeqn
which descends to an element in $\Omega^1(\Sigma, u^* TV / K)$.
There are also the curvature and the moment potential
\begin{align*}
    &\ F_A \in \Omega^2(\Sigma, {\rm ad} P),\ &\ \mu(u) \in \Omega^0(\Sigma, {\rm ad}P^* ).
\end{align*}
By choosing an invariant inner product on the Lie algebra ${\mf k}$ one can identify ${\rm ad}P\cong {\rm ad} P^*$; by choosing a volume form $\nu_\Sigma$ one can identify $\Omega^2 \cong \Omega^0$. The gauged map ${\mf u}$ is called a {\bf vortex} if 
\begin{align}\label{vortex_equation}
&\ \ov\partial_A u = 0,\ &\ *F_A + \mu(u) = 0.
\end{align}
Here $\ov\partial_A u$ is the $(0,1)$-part of the covariant derivative $d_A u$. Both equations are invariant under gauge transformations. The energy of a vortex is defined to be 
\beqn
E({\mf u}) = \frac{1}{2} \int_\Sigma \left( \| d_A u \|^2 + \| F_A \|^2 + \|\mu(u)\|^2 \right) \nu_\Sigma.
\eeqn

Analogous to pseudoholomorphic curves, vortices satisfy an energy identity. Suppose $\Sigma$ is closed. Then each gauged map ${\mf u}$ represents an equivariant homology class $[{\mf u}] \in H_2^K(V; {\mb Z})$ defined as follows. The section $u: \Sigma \to P(V)$ can be identified with a $K$-equivariant map $\tilde u: P \to V$. Let $EK \to BK$ be the universal $K$-bundle. The classifying map of $P \to \Sigma$ is a map $\iota: \Sigma \to BK$ which is covered by a bundle map $\tilde \iota: P \to EK$. Then the equivariant map $(\tilde \iota, \tilde u): P \to EK \times V$ descends to a continuous map from $\Sigma$ to $(EK \times V)/K$, which represents a class $[{\mf u}] \in H_2^K(V; {\mb Z})$. In the toric case, this class is just the degree of the principal bundle $P\to \Sigma$. Then for any gauged map ${\mf u} = (P, A, u)$, one has 
\beqn
E({\mf u}) = \langle \omega^K, [{\mf u}] \rangle + \| \ov\partial_A u \|_{L^2(\Sigma)}^2 + \| * F_A + \mu(u) \|_{L^2(\Sigma)}^2. 
\eeqn
Here $\omega^K\in H_K^2(V; {\mb R})$ is the equivariant class represented by the equivariant 2-form $\omega - \mu$ (see\cite[Proposition 3.1]{Cieliebak_Gaio_Salamon_2000} and \cite[Lemma 14]{Mundet_2003}). 

\begin{rem}
An important feature of the symplectic vortex equation in the toric setting is that no bubbling happens as the space $V$ is symplectically aspherical. In general, energy concentration could cause bubbling of holomorphic spheres as shown in \cite{Mundet_thesis, Mundet_2003, Cieliebak_Gaio_Mundet_Salamon_2002, Ott_compactness}.
\end{rem}

One can introduce Hamiltonian perturbations. Given a 1-form
\beqn
{\mc H} \in \Omega^1(\Sigma, C^\infty(V)^K)
\eeqn
with coefficients in the space of $K$-invariant smooth functions on $V$, we can define a family of Hamiltonian vector fields
\beqn
X_{\mc H} \in \Gamma( \Sigma \times V, \pi_{\Sigma}^* T^* \Sigma \otimes TV)
\eeqn
which is $K$-invariant, where $\pi_{\Sigma}: \Sigma \times V \to \Sigma$ is the projection to the first factor. Hence for any principal $K$-bundle $\pi_P: P \to \Sigma$, the vector field $X_{\mc H}$ induces a section on the total space  of the vector bundle $\pi_{P(V)}: P(V)\to \Sigma$
\beqn
X_{\mc H} \in \Gamma( P(V), \pi_{P(V)}^* T^* \Sigma \otimes P(TV)),
\eeqn
where $P(TV) := P \times_K u^* TV$.
The perturbed symplectic vortex equation is 
\begin{align}
&\ \ov\partial_{A, {\mc H}} u = 0,\ &\ * F_A + \mu(u) = 0.
\end{align}
where 
\beqn
\ov\partial_{A, {\mc H}} u = (d_A u)^{0,1} + (X_{\mc H}(u))^{0,1}.
\eeqn
For our applications, ${\mc H}$ is obtained by extending the pullback of Hamiltonian connections in $\Omega^1(\Sigma, C^{\infty}(X)) = \Omega^1(\Sigma, C^{\infty}(\mu^{-1}(0) / K))$.

\subsubsection{Compactness}

Although in aspherical targets vortices cannot bubble off holomorphic spheres, in general holomorphic curves can bubble off. It is the case when one considers Lagrangian boundary conditions. Let $L \subset V$ be a $K$-invariant Lagrangian submanifold. One can impose the Lagrangian boundary condition for gauged maps ${\mf u} = (P, A, u)$ from $\Sigma$ to $V$ with $u|_{\partial \Sigma} \subset P(L)$. Given a sequence of solutions ${\mf u}_i$ to the vortex equation on $\Sigma$ subject to the Lagrangian boundary condition, if $u_i$ has uniformly bounded image and $E({\mf u}_i)$ is uniformly bounded, the energy density could blow up near a boundary point. The boundedness of the images of $u_i$ implies that the curvatures $F_{A_i}$ do not blowup. Moreover, if one scales by the rate of energy concentration, the sequence of connections $A_i$ converge subsequentially (up to gauge transformation) to a flat connection. All Hamiltonian perturbations and variations of almost complex structures will also be scaled off. Hence a subsequece can bubble off a (stable) holomorphic disk in $V$ with boundary in $L$ with respect to a fixed almost complex structure. See details in \cite{Wang_Xu}.

\section{Algebraic preliminaries}\label{sec:alg-prelim}

\subsection{Novikov rings}

We set up the notations for our coefficient rings. In this paper, $R$ always denotes a commutative ring with a unit, hence comes with a canonical ring map 
\beqn
{\mb Z} \to R.
\eeqn
Let $\Lambda = \Lambda_R$ be the (upward) {\bf Novikov ring}
\beqn
\Lambda_R = \Big\{ \sum_{i=1}^\infty a_i T^{g_i}\ |\ g_i \in {\mb R},\ a_i \in R,\  \lim_{i \to \infty} g_i = +\infty \Big\}
\eeqn
The {\bf valuation} on ${\mf v}: \Lambda \to {\mb R} \cup \{+\infty\}$ is defined by 
\beqn
{\mf v}\left( \sum_{i=1}^\infty a_i T^{g_i} \right) = \inf \big\{ g_i\ |\ a_i \neq 0 \big\}\ \ \ {\rm and}\ \ \ {\mf v}(0) = +\infty.
\eeqn
We will also need the following version
\beqn
\Lambda_{0, R} = \Big\{ \sum_{i=1}^\infty a_i T^{g_i}\ |\ g_i \in {\mb R}_{\geq 0},\ a_i \in R,\  \lim_{i \to \infty} g_i = +\infty \Big\},
\eeqn
which also comes with a valuation by restricting the above valuation. When $R$ is a field, $\Lambda_R$ is also a field, and it is the field of fraction of $\Lambda_{0, R}$.

In many cases we can restrict to a Novikov ring of series $\sum a_i T^{g_i}$ where $g_i$ are restricted to a finitely generated additive group $\Gamma \subsetneq {\mb R}$. In this paper $\Gamma$ is fixed and actually determined by the GIT presentation of a toric manifold. Indeed, the discrete monoid $\Gamma$ associated with the toric manifold $X(\Sigma)$ is defined to be the image of effective $1$-cycles in ${\mb R}$ defined from pairing with the cohomology class represented by the symplectic form, see Subsection \ref{subsection_toric}. Denote 
\beqn
\Lambda_R^\Gamma:= \Big\{ \sum_{i=1}^\infty a_i T^{g_i}\in \Lambda_R \ |\ g_i \in \Gamma \Big\}
\eeqn
and 
\beqn
\Lambda_{0, R}^\Gamma:= \Lambda_{0, R} \cap \Lambda_R^\Gamma.
\eeqn
However $\Lambda_R^\Gamma$ does not enjoy certain algebraic properties of $\Lambda_R$. For example, when $R = {\mb K}$ is an algebraically closed field, $\Lambda_{\mb K}$ is algebraically closed but $\Lambda_{\mb K}^\Gamma$ is not.

\subsubsection{Modules and algebras over Novikov rings}

\begin{defn}\label{defn_nonarchimedean_module}
A {\bf non-Archimedean normed free module} over $\Lambda_R$ is a pair $(C, \ell)$ where $C$ is a free $\Lambda_R$-module endowed with a function $\ell: C \to {\mb R} \cup \{-\infty\}$ satisfying 
    \begin{enumerate}
        \item {\bf (Nondegeneracy)} $\ell(x) = -\infty$ if and only if $x = 0$.
        \item {\bf (Homogeneity)} For all $\lambda \in \Lambda_R$ and $x \in C$, $\ell(\lambda x) = \ell(x) - {\mf v}(\lambda)$.

        \item {\bf (Subadditivity)} For $x, y \in C$, $\ell(x +y ) \leq \max \{ \ell(x), \ell(y)\}$; if $\ell(x) \neq \ell(y)$, then $\ell(x + y) = \max \{ \ell(x), \ell(y)\}$.\footnote{The second part is a consequence of the first, see \cite[Proposition 2.3]{Usher_Zhang_2016}.}
        \end{enumerate}
\end{defn}

Now suppose ${\mb K}$ is an $R$-module which is also a field. Then one can extend the function $\ell$ to $C \otimes_{\Lambda_R} \Lambda_{\mb K}$ via {\bf (Homogeneity)}. Then one obtains a non-Archimedean normed vector space in the sense of \cite[Definition 2.2]{Usher_Zhang_2016} (except that the coefficient field was $\Lambda_{\mb K}^\Gamma$ rather than $\Lambda_{\mb K}$).

We also need to consider multiplicative structures compatible with the non-Archimedean norm. 

\begin{defn}
A {\bf non-Archimedean normed algebra} over $\Lambda_R$ is a non-Archimedean normed free module $(C, \ell)$ together with a commutative $\Lambda_R$-algebra structure (with multiplication denoted by $x \ast y$) satisfying
\begin{itemize}
    \item {\bf (Triangle inequality)} For all $x, y \in C$, 
    \beqn
    \ell( x \ast y) \leq \ell(x) + \ell(y).
    \eeqn
\end{itemize}
\end{defn}

\subsubsection{Specific coefficients and mod $p$ reductions}

In this paper we need to use certain non-traditional coefficient ring or fields. Here we briefly summarize them and set up the notations. First, let $\ov{\mb Q}$ be the algebraic closure of ${\mb Q}$, which is viewed as a subfield of ${\mb C}$. Inside $\ov{\mb Q}$ there is the subring of algebraic integers $\ov{\mb Z}$, which is the set of algebraic numbers which are solutions to monic polynomials with integer coefficients. Further, in characteristic $p$ (where $p$ is an odd prime), let ${\mb F}_p\cong {\mb Z}/ p {\mb Z}$ be the smallest field with characteristic $p$. Let $\ov{\mb F}_p$ be the algebraic closure of ${\mb F}_p$, which is only well-defined up to isomorphism of field extensions.

Notice that the notion of non-Archimedean normed algebras can be transferred between different coefficient rings via tensor products. A crucial feature of the geometric construction of this paper is that, as long as one has a counting theory over ${\mb Z}$ (or $\ov{\mb Z}$), it automatically induces a theory over any ring $R$ (or $\ov{\mb Z}$-algebra). In particular, one needs to perform the ``mod $p$ reduction'' which is roughly associated to the ring map ${\mb Z} \to {\mb F}_p$. In our situation, one needs the corresponding extension to the algebraic closure of ${\mb F}_p$.

\begin{lemma}
For each prime $p$, there exists a unital ring map 
\beqn
\ov\pi_p: \ov{\mb Z} \to \ov{\mb F}_p.
\eeqn
\end{lemma}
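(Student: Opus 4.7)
The plan is to obtain $\ov{\pi}_p$ as the quotient map by a maximal ideal of $\ov{\mb Z}$ sitting above $p$, then identify the residue field with $\ov{\mb F}_p$.

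First I would check that the principal ideal $p\ov{\mb Z}$ is a proper ideal of $\ov{\mb Z}$. Indeed, if $1 = p\alpha$ for some $\alpha \in \ov{\mb Z}$, then $\alpha = 1/p$ would be an algebraic integer, contradicting the fact that the minimal polynomial of $1/p$ over ${\mb Q}$ is $x - 1/p$, which is not a monic polynomial with integer coefficients. By Zorn's lemma, we may therefore fix a maximal ideal ${\mf m} \subset \ov{\mb Z}$ containing $p\ov{\mb Z}$, and let ${\mb k}:= \ov{\mb Z}/{\mf m}$, a field of characteristic $p$. The composition
\beqn
\ov{\pi}_p : \ov{\mb Z} \twoheadrightarrow {\mb k}
\eeqn
is a unital ring homomorphism, so it remains only to identify ${\mb k}$ with $\ov{\mb F}_p$.

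Next I would verify two properties of ${\mb k}$. Firstly, ${\mb k}$ is algebraic over its prime subfield ${\mb F}_p$: any $\bar\alpha \in {\mb k}$ is the image of some $\alpha \in \ov{\mb Z}$, and $\alpha$ satisfies a monic polynomial $f \in {\mb Z}[x]$; reducing modulo ${\mf m}$ gives a monic polynomial $\bar f \in {\mb F}_p[x]$ annihilating $\bar\alpha$. Secondly, ${\mb k}$ is algebraically closed: given any non-constant monic polynomial $g \in {\mb k}[x]$, lift $g$ to a monic polynomial $\tilde g \in \ov{\mb Z}[x]$; since $\ov{\mb Q}$ is algebraically closed, $\tilde g$ splits in $\ov{\mb Q}[x]$, and because $\tilde g$ is monic with coefficients in $\ov{\mb Z}$, each root is itself an algebraic integer, hence lies in $\ov{\mb Z}$. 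Applying $\ov\pi_p$ to this factorization shows that $g$ splits over ${\mb k}$.

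Combining these two properties, ${\mb k}$ is an algebraic closure of ${\mb F}_p$, and by the uniqueness of algebraic closures up to (non-canonical) isomorphism of field extensions, we may identify ${\mb k} \cong \ov{\mb F}_p$, completing the construction of $\ov{\pi}_p$. The one subtle point is the verification of algebraic closure of ${\mb k}$, which crucially uses that $\ov{\mb Z}$ is the integral closure of ${\mb Z}$ in $\ov{\mb Q}$ so that monic lifts of polynomials have all their $\ov{\mb Q}$-roots automatically in $\ov{\mb Z}$; I anticipate this to be the only step requiring care, the rest being standard commutative algebra.
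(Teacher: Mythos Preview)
Your proof is correct and follows essentially the same route as the paper: choose a maximal ideal ${\mf m}\supset p\ov{\mb Z}$, then show $\ov{\mb Z}/{\mf m}$ is both algebraic over ${\mb F}_p$ and algebraically closed. The only cosmetic difference is that you verify algebraic closure for arbitrary monic $g\in{\mb k}[x]$ (using that $\ov{\mb Z}$ is integrally closed in $\ov{\mb Q}$), whereas the paper checks it only for $g\in{\mb F}_p[x]$ and relies on the fact that an algebraic extension in which every ${\mb F}_p$-polynomial splits is already an algebraic closure; both are valid.
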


\begin{proof}
(Following the mathoverflow post \cite{Fp_closure}) $p\in {\mb Z} \subset \ov{\mb Z}$ is not an invertible element. Hence there exists a maximal ideal ${\mf m} \subset \ov{\mb Z}$ containing $p$. Consider the quotient field $\ov{\mb Z}/ {\mf m}$, which is an extension of ${\mb F}_p$. We prove that it is an algebraic closure of ${\mb F}_p$. First, each element $x\in \ov{\mb Z} / {\mf m}$ has a lift $\tilde x\in \ov{\mb Z}$ which is the solution to an algebraic equation with integer coefficients. Hence $x$ is an algebraic element over ${\mb F}_p$. Therefore $\ov{\mb Z}/{\mf m}$ is algebraic over ${\mb F}_p$. Second, for any monic polynomial $f$ with ${\mb F}_p$-coefficients, one can find a monic integral lift $\tilde f$ and all roots of $\tilde f$ are in $\ov{\mb Z}$. Hence $f$ has roots in the field $\ov{\mb Z}/ {\mf m}$. Therefore $\ov{\mb Z}/ {\mf m}$ is algebraically closed. Hence $\ov{\mb Z}/ {\mf m}$ is an algebraic closure of ${\mb F}_p$. Therefore, $\ov{\mb Z}/ {\mf m} \cong \ov{\mb F}_p$ as fields. Precomposing with $\ov{\mb Z} \to \ov{\mb Z}/ {\mf m}$ one obtains the desired ring map.
\end{proof}

\begin{rem}
The map $\ov\pi_p$ is not unique. But we fix one for each prime $p$. 
\end{rem}

The ``mod $p$ reduction'' also makes sense for any rational number as long as $p$ is greater than the denominator. We need the following lemma to extend this simple fact to the algebraic closure.

\begin{lemma}\label{lemma46}
For each $y \in \ov{\mb Q}$, there exists $m \in {\mb Z}\setminus \{ 0\}$ such that $my \in \ov{\mb Z}$. 
\end{lemma}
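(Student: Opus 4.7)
The plan is to use the standard fact from algebraic number theory that any algebraic number can be cleared of denominators to obtain an algebraic integer. Concretely, since $y\in \ov{\mb Q}$, there exists a monic polynomial with rational coefficients
\beqn
y^n + a_{n-1} y^{n-1} + \cdots + a_1 y + a_0 = 0, \qquad a_i \in {\mb Q},
\eeqn
witnessing algebraicity of $y$ over ${\mb Q}$.

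First, I would choose $m\in {\mb Z}\setminus \{0\}$ to be a common denominator of $a_0, a_1, \ldots, a_{n-1}$, so that $m a_i \in {\mb Z}$ for all $i$. Next, multiply the above polynomial identity by $m^n$ and rewrite in terms of $z:= my$. Each coefficient $m^k a_{n-k}$ that appears can be rewritten as $m^{k-1}\cdot (m a_{n-k})$ for $k\geq 1$, which lies in ${\mb Z}$ by the choice of $m$. This shows that $z = my$ satisfies a monic polynomial with integer coefficients, namely
\beqn
z^n + (m a_{n-1}) z^{n-1} + (m^2 a_{n-2}) z^{n-2} + \cdots + (m^n a_0) = 0.
\eeqn
Hence $my\in \ov{\mb Z}$ by the very definition of algebraic integer recalled before Lemma \ref{lemma46}.

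This is a routine verification and I do not expect any genuine obstacle; the only mild point of care is to choose the substitution $z = my$ (rather than $z = y/m$) and to keep track that the resulting polynomial is indeed monic in $z$, with each coefficient an integer. The argument is completely elementary and does not require the full strength of the existence of a minimal polynomial --- any monic rational polynomial annihilating $y$ suffices.
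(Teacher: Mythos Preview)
Your proof is correct and is essentially the same standard ``clear denominators'' argument as the paper's: the paper starts from an integer polynomial $a_k u^k + \cdots + a_0$ annihilating $y$ and multiplies by $a_k^{k-1}$ to make $a_k y$ an algebraic integer, while you start from a monic rational polynomial and multiply through by a common denominator $m$ --- both amount to the substitution $z = my$ producing a monic integer polynomial.
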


\begin{proof}
If $y\in \ov{\mb Q}$ is an algebraic number, then it is a root of a polynomial with integer coefficients $P = a_k u^k + \cdots + a_1 u + a_0$ with $a_k \neq 0$. Then 
\beqn
0 = a_k^{k-1} P(y) = (a_k y)^k + a_{k-1} (a_k y )^{k-1} + a_{k-2} a_k (a_k y)^{k-2} + \cdots + a_1 a_k^{k-2} (a_k u) + a_0 a_k^{k-1}.
\eeqn
So $a_k y$ is a root of a monic polynomial with integer coefficients, hence is in $\ov{\mb Z}$.
\end{proof}

\subsection{Semisimple algebras over Novikov fields}

In this paper we use a more restrictive notion of semisimplicity of algebras over Novikov fields. 

\begin{defn}\label{defn_semisimple}
Let ${\mb F}$ be a field. A unital ${\mb F}$-algebra $(A, \ast)$ is called {\bf semisimple} if it splits as a direct sum of rings
\beqn
A = F_1 \oplus \cdots \oplus F_k
\eeqn
where $F_i \cong {\mb F}$ as a ring. Each summand $F_i$ is called an {\bf idempotent summand} of $A$ and the splitting is called the {\bf idempotent splitting}.
\end{defn}

\begin{rem}
In many papers such as \cite{Entov_Polterovich_1, Shelukhin_2022, FOOO_toric_1}, the meaning of semisimplicity is more general: for example, each summand $F_i$ is allowed to be a finite extension of the field ${\mb F}$. The number of idempotent summands also depends on the choice of the field. In our situation, one can achieve the above stronger semisimplicity of a version of quantum cohomology algebra by turning on bulk deformations and taking a sufficiently large field.
\end{rem}

Suppose $A$ is semisimple. Then for each idempotent summand $F_i$, there is a unique generator $e_i \in F_i$ such that $e_i \ast e_i = e_i$. We call $e_i$ the {\bf idempotent generator}. Then $(e_1, \ldots, e_k)$ is a basis of $A$. Given any element $\alpha = \lambda_1 e_1 + \cdots + \lambda_k e_k$, one can see that the linear map 
\beqn
\alpha \ast: A \to A
\eeqn
has eigenspace decomposition $F_1 \oplus \cdots \oplus F_k$ with eigenvalues $\lambda_1, \ldots, \lambda_k$. The following statement shows that the converse also holds under additional assumptions.

\begin{lemma}\label{semisimple_criterion}
Let $A$ be a $k$-dimensional commutative unital ${\mb F}$-algebra and $\alpha \in A$. Suppose $\alpha\ast:A \to A$ has $k$ distinct \emph{nonzero} eigenvalues $\lambda_1, \ldots, \lambda_k$. Then $A$ is semisimple.
\end{lemma}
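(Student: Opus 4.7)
The plan is to identify $A$ with the quotient polynomial algebra $\mathbb{F}[t]/\prod_{i=1}^k (t-\lambda_i)$ and then invoke the Chinese Remainder Theorem to obtain the desired splitting into copies of $\mathbb{F}$. Concretely, I would work with the unital $\mathbb{F}$-algebra homomorphism
\[
\phi: \mathbb{F}[t] \longrightarrow A, \qquad t \longmapsto \alpha.
\]
The key observation linking the algebra $A$ with the linear operator $\alpha\ast$ is that for any $p \in \mathbb{F}[t]$, one has $p(\alpha) = 0$ in $A$ if and only if the $\mathbb{F}$-linear endomorphism $p(\alpha\ast)$ of $A$ is zero: indeed, $p(\alpha\ast)(x) = p(\alpha) \ast x$ for every $x \in A$, so evaluating at $x = 1$ recovers $p(\alpha)$, while conversely $p(\alpha) = 0$ forces $p(\alpha\ast)$ to act trivially on every element. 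Consequently, $\ker \phi$ is precisely the ideal generated by the minimal polynomial of $\alpha\ast$.

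Since $\alpha\ast$ has $k$ distinct eigenvalues $\lambda_1, \ldots, \lambda_k$ on a $k$-dimensional space, it is diagonalizable, and its minimal polynomial equals $m(t) := \prod_{i=1}^k (t - \lambda_i)$. Therefore $\phi$ descends to an injective $\mathbb{F}$-algebra map $\mathbb{F}[t]/(m) \hookrightarrow A$, and since both sides have dimension $k$ over $\mathbb{F}$, this map is an isomorphism.

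Because the $\lambda_i$ are pairwise distinct, the factors $(t-\lambda_i)$ are coprime in $\mathbb{F}[t]$, so the Chinese Remainder Theorem yields
\[
A \;\cong\; \mathbb{F}[t]/(m) \;\cong\; \prod_{i=1}^k \mathbb{F}[t]/(t-\lambda_i) \;\cong\; \mathbb{F}^k,
\]
which realizes the semisimple splitting required by Definition~\ref{defn_semisimple}; under this isomorphism the idempotent generators are the Lagrange interpolators $e_i = \prod_{j \neq i} (\alpha - \lambda_j)/(\lambda_i - \lambda_j) \in A$. There is no genuine obstacle in the argument, and I would expect the write-up to be only a few lines. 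The nonzero hypothesis on the eigenvalues plays no role in this implication and is presumably imposed to guarantee additionally that $\alpha$ is a unit in $A$ for later use.
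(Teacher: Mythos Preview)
Your argument is correct and is a genuinely different route from the paper's. The paper works directly with an eigenbasis $(\varepsilon_1,\ldots,\varepsilon_k)$ of $\alpha\ast$: from $\alpha\ast(\varepsilon_i\ast\varepsilon_j)=\lambda_i(\varepsilon_i\ast\varepsilon_j)=\lambda_j(\varepsilon_i\ast\varepsilon_j)$ it gets $\varepsilon_i\ast\varepsilon_j=0$ for $i\neq j$, and then, writing $\alpha=\sum\mu_i\varepsilon_i$, uses $\lambda_i\neq 0$ to force $\mu_i\neq 0$ and rescales $\varepsilon_i$ to an idempotent $e_i=\lambda_i^{-1}\mu_i\varepsilon_i$. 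Your approach via $\mathbb{F}[t]/(m)\cong A$ and CRT is cleaner and, as you observe, shows the nonvanishing hypothesis is irrelevant for the lemma itself (one could equally rescale using the expansion of $1$ rather than of $\alpha$ in the paper's argument). The paper's hands-on version has one small payoff downstream: the proof of Theorem~\ref{thm_semisimple_finite_characteristic}(2) explicitly invokes ``by the proof of Lemma~\ref{semisimple_criterion}'' to say each idempotent is a scalar multiple of an eigenvector of $\alpha\ast$. Your Lagrange interpolators give the same conclusion, since $(\alpha-\lambda_i)e_i=m(\alpha)/\prod_{j\neq i}(\lambda_i-\lambda_j)=0$, so nothing is lost.
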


\begin{proof}
Let $(\varepsilon_1, \ldots, \varepsilon_k)$ be an eigen-basis of $\alpha \ast$ and write $\alpha = \mu_1 \varepsilon_1 + \cdots + \mu_k \varepsilon_k$. Then we see 
\beqn
\alpha \ast ( \varepsilon_i \ast \varepsilon_j ) = \lambda_i \varepsilon_i \ast \varepsilon_j = \lambda_j \varepsilon_i \ast \varepsilon_j. 
\eeqn
As $\lambda_i$ are all distinct, one has $\varepsilon_i \ast \varepsilon_j = 0$ whenever $i \neq j$. Then one obtains
\beqn
\alpha \ast \varepsilon_i = \mu_i \varepsilon_i \ast\varepsilon_i = \lambda_i \varepsilon_i.
\eeqn
As $\lambda_i \neq 0$, one can see $\mu_i \neq 0$. Define $e_i = \lambda_i^{-1} \mu_i \varepsilon_i$. Then 
\beqn
e_i \ast e_i = (\lambda_i^{-1} \mu_i)^2 \varepsilon_i \ast \varepsilon_i = \lambda_i^{-1} \mu_i \varepsilon_i = e_i.
\eeqn
Hence $A$ is semisimple.
\end{proof}

\subsubsection{Semi-simplicity and different characteristics}

Here we prove a useful algebraic fact which allows us to derive semi-simplicity in finite characteristics from semi-simplicity in characteristic zero. We set up the problem as follows. Let $(A, \ell)$ be a non-Archimedean normed (free) algebra over the Novikov ring $\Lambda_{\ov{\mb Z}}$. Denote 
\beqn
A_{(0)}:= A \otimes_{\Lambda_{\ov{\mb Z}}} \Lambda_{\ov{\mb Q}}
\eeqn
and for each prime $p$
\beqn
A_{(p)}:= A \otimes_{\Lambda_{\ov{\mb Z}}} \Lambda_{\ov{\mb F}_p}.
\eeqn
Denote the induced valuations by 
\begin{align*}
    &\ \ell_0: A_{(0)} \to {\mb R} \cup \{-\infty\},\ &\ \ell_p: A_{(p)} \to {\mb R} \cup \{ -\infty \}.
\end{align*}
Moreover, let ${\mc U} \in A$ be a distinguished nonzero element (which will be the first Chern class in quantum homology in our later discussions), and let ${\mc U}_{(0)} \in A_{(0)}$, ${\mc U}_{(p)} \in A_{(p)}$ be the corresponding induced element. They induce linear operators 
\beqn
E_{(m)}: A_{(m)} \to A_{(m)},\ x \mapsto {\mc U}_{(m)} \ast x,\ m = 0, p.
\eeqn

\begin{thm}\label{thm_semisimple_finite_characteristic}
Suppose $A_{(0)}$ is semisimple over $\Lambda_{\ov{\mb Q}}$ and all eigenvalues of $E_{(0)}$ are nonzero and distinct. Then there exist $p_0>0$ and $C>0$ such that for all prime $p \geq p_0$, the following conditions hold.
\begin{enumerate}
    \item $A_{(p)}$ is semisimple over $\Lambda_{\ov{\mb F}_p}$.

    \item If $e_{1, p}, \ldots, e_{m, p}$ are idempotent generators of $A_{(p)}$, then 
    \beqn
    \ell(e_{l, p}) \leq C.
    \eeqn
\end{enumerate}
\end{thm}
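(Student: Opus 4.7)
The plan is to reduce the statement to control of the characteristic polynomial of $E$. Fix a $\Lambda_{\ov{\mb Z}}$-basis of $A$ and let $\chi(x) = \det(xI - E) \in \Lambda_{\ov{\mb Z}}[x]$; it is monic of degree $m := \mathrm{rank}_{\Lambda_{\ov{\mb Z}}} A$. The semisimplicity hypothesis on $A_{(0)}$ forces $\chi$ to split over $\Lambda_{\ov{\mb Q}}$ as $\prod_{i=1}^m(x-\lambda_i)$ with $\lambda_i \in \Lambda_{\ov{\mb Q}}$, and the distinctness and non-vanishing of the $\lambda_i$ translate exactly to $\chi(0) \neq 0$ and $\mathrm{disc}(\chi) \neq 0$ as elements of $\Lambda_{\ov{\mb Z}}$.

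For part (1), I first argue that the reduction $\chi_p := \ov\pi_p(\chi) \in \Lambda_{\ov{\mb F}_p}[x]$ retains these properties for $p$ large. The leading (lowest-exponent) coefficients of $\chi(0)$ and $\mathrm{disc}(\chi)$ are nonzero elements of $\ov{\mb Z}$ whose absolute norms down to $\mb Z$ are nonzero integers, so they are killed by $\ov\pi_p$ only for the finitely many primes dividing those norms; outside this set, $\chi_p$ is monic of degree $m$, has nonzero constant term, and is separable. To obtain $m$ distinct nonzero roots of $\chi_p$ actually inside $\Lambda_{\ov{\mb F}_p}$ (and not merely in an algebraic closure, which would not suffice for Definition \ref{defn_semisimple}), I invoke Hensel's lemma in the complete valued field $\Lambda_{\ov{\mb F}_p}$: truncating each $\lambda_i \in \Lambda_{\ov{\mb Q}}$ at a sufficiently high exponent yields a finite expression whose coefficients lie in some $\ov{\mb Z}[1/N]$ by Lemma \ref{lemma46}, which reduces mod $p$ (for $p \nmid N$) to an approximate root of $\chi_p$ whose defect beats twice the valuation of the derivative; Hensel then produces a genuine root $\mu_i^{(p)} \in \Lambda_{\ov{\mb F}_p}$. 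With $m$ distinct nonzero eigenvalues of $E_{(p)}$ secured inside $\Lambda_{\ov{\mb F}_p}$, Lemma \ref{semisimple_criterion} yields (1).

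For part (2), I would write each idempotent generator via Lagrange interpolation,
\[
e_{l,p} \;=\; \prod_{j\neq l} \frac{{\mc U}_{(p)} - \mu_j^{(p)} \cdot 1}{\mu_l^{(p)} - \mu_j^{(p)}},
\]
which satisfies $e_{l,p}\ast e_{l,p} = e_{l,p}$ and $\sum_l e_{l,p} = 1$ directly from ${\mc U}_{(p)} \ast e_{l,p} = \mu_l^{(p)} e_{l,p}$. The subadditivity and homogeneity axioms of $\ell$ give
\[
\ell(e_{l,p}) \;\leq\; \sum_{j \neq l} \Bigl( \max\{\ell({\mc U}_{(p)}),\; \ell(1) - {\mf v}(\mu_j^{(p)})\} + {\mf v}(\mu_l^{(p)} - \mu_j^{(p)}) \Bigr).
\]
The root valuations ${\mf v}(\mu_j^{(p)})$ are read off the Newton polygon of $\chi_p$, which coincides with that of $\chi$ for $p$ large, so they are uniformly bounded. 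The pairwise differences are controlled via $\sum_{j\neq l}{\mf v}(\mu_l^{(p)} - \mu_j^{(p)}) = {\mf v}(\chi_p'(\mu_l^{(p)}))$ and $\sum_l {\mf v}(\chi_p'(\mu_l^{(p)})) = {\mf v}(\mathrm{disc}(\chi_p))$, the latter being stable for $p$ large, combined with the ultrametric lower bounds ${\mf v}(\mu_l^{(p)} - \mu_j^{(p)}) \geq \min\{{\mf v}(\mu_l^{(p)}), {\mf v}(\mu_j^{(p)})\}$; solving for one term in the discriminant identity yields a uniform upper bound on each individual ${\mf v}(\mu_l^{(p)} - \mu_j^{(p)})$, and hence the desired constant $C$.

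The main obstacle I anticipate is executing the Hensel step cleanly: one must pin down a single finite list of bad primes that simultaneously accounts for the denominators of truncated eigenvalues arising from Lemma \ref{lemma46}, the primes at which the leading terms of $\chi(0)$ or $\mathrm{disc}(\chi)$ are killed, and enough accuracy in the truncation to beat twice the valuation of $\mathrm{disc}(\chi_p)$. Because the Novikov field $\Lambda_{\ov{\mb F}_p}$ is not in general algebraically closed, one cannot bypass this by working in an algebraic closure; the Hensel input really is needed to confine the eigenvalues to $\Lambda_{\ov{\mb F}_p}$ itself, which is essential for the strong splitting required by Definition \ref{defn_semisimple}.
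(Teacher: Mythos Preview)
Your proposal is correct and takes a genuinely different route from the paper, especially for part (2). For part (1) both arguments open identically—control $\chi(0)$ and $\mathrm{disc}(\chi)$ so that their leading coefficients survive mod $p$—and your Hensel step to locate the roots inside $\Lambda_{\ov{\mb F}_p}$ itself is well placed: the paper's one-paragraph proof of (1) glosses over this, but the point is retroactively handled by the explicit eigenpair construction carried out in its proof of (2). The substantive divergence is in (2). The paper works with eigen\emph{vectors} rather than the characteristic polynomial: it truncates the characteristic-zero idempotents $e_{l,(0)}$ at a large level $Z$, reduces mod $p$, shows that in this approximate basis the matrix of $E_{(p)}$ is diagonal up to valuation $Z-C$, and then runs a Newton-type iteration to obtain exact eigenvectors $\varepsilon_{l,(p)}$ and eigenvalues $\lambda_{l,(p)}\in\Lambda_{\ov{\mb F}_p}$ with $\ell_p(\varepsilon_{l,(p)}-e_{l,(p)}^Z)\le C-Z$; a short computation then shows the idempotent normalization $\mu_l$ satisfies ${\mf v}(\mu_l)=0$, giving the bound. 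Your Lagrange-interpolation formula $e_{l,p}=\prod_{j\neq l}({\mc U}_{(p)}-\mu_j^{(p)})/(\mu_l^{(p)}-\mu_j^{(p)})$ together with Newton-polygon stability and the discriminant identity $\sum_l{\mf v}(\chi_p'(\mu_l^{(p)}))={\mf v}(\mathrm{disc}\,\chi_p)$ is more closed-form and avoids any iteration on the algebra side; the paper's approach, on the other hand, yields the extra information that the characteristic-$p$ idempotents are valuation-close to the reductions of the characteristic-zero ones. The bookkeeping you flag as the ``main obstacle'' is routine: a single truncation level $Z$ large enough to beat $2{\mf v}(\mathrm{disc}\,\chi)$, together with the finitely many primes dividing the norms of the relevant leading coefficients and the denominator $N$ from Lemma~\ref{lemma46}, suffices.
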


\begin{proof}[Proof of Theorem \ref{thm_semisimple_finite_characteristic} (1)]
Consider the operator $E: A \to A$ defined by the multiplication with ${\mc U}$ and its characteristic polynomial $f_E$. Notice that $\Lambda_{\ov{\mb Q}}$ is the field of fraction of $\Lambda_{\ov{\mb Z}}$. Hence $f_E$ has $m$ distinct roots in $\Lambda_{\ov{\mb Q}}$ and so the discriminant of $f_E$, denoted by $D(f_E)\in \Lambda_{\ov{\mb Z}}$, is nonzero. Hence for sufficiently large prime $p$, the discriminant of $f_{E_{(p)}}$, which is the mod $p$ reduction of $D(f_E)$, is nonzero. It follows that $E_{(p)}$ also has $m$ distinct eigenvalues. Moreover, as all eigenvalues of $E_{(0)}$ are nonzero, $f_E(0) \neq 0$. Hence $f_{E_{(p)}}(0) \neq 0$ when $p$ is sufficiently large. Hence $E_{(p)}$ is invertible and has no zero eigenvalue. By Lemma \ref{semisimple_criterion}, $A_{(p)}$ is semisimple for sufficiently large $p$. 
\end{proof}

\subsubsection{Proof of Theorem \ref{thm_semisimple_finite_characteristic} (2)}

To prove the quantitative statement of Theorem \ref{thm_semisimple_finite_characteristic}, we introduce the notion of truncation. First, given an element 
\beqn
\lambda = \sum_{i =1}^\infty a_i T^{g_i} \in \Lambda_{\ov{\mb Q}}, 
\eeqn
and $Z \in {\mb R}$, define the $Z$-truncation of $\lambda$ to be the element 
\beqn
\lambda^Z:= \sum_{g_i \leq Z} a_i T^{g_i},
\eeqn
which has only finitely many terms. Then it follows easily 
\beq\label{truncation_1}
{\mf v} ( \lambda - \lambda^Z) \geq Z.
\eeq
For an element in a module over $\Lambda_{\ov{\mb Q}}$, its truncations are not canonically defined. We fix, throughout the proof, a basis ${\mf x}_1, \ldots, {\mf x}_m$, of the $\Lambda_{\ov{\mb Z}}$-module $A$. Without loss of generality, we can choose the basis such that 
\beqn
\ell( {\mf x}_1) = \cdots = \ell({\mf x}_m) = 0.
\eeqn
By abuse of notations, denote the induced basis of $A_{(0)}$ and $A_{(p)}$ still by ${\mf x}_1, \ldots, {\mf x}_m$. Then for each $\alpha \in A_{(0)}$, we can write 
\beqn
\alpha = \sum_{j=1}^m \alpha_j {\mf x}_j
\eeqn
where $\alpha_j \in \Lambda_{\ov{\mb Q}}$. Then define the $Z$-truncation
\beqn
\alpha^Z = \sum_{j=1}^B \alpha_j^Z {\mf x}_j.
\eeqn
Then by \eqref{truncation_1} we have the estimate
\begin{multline}\label{truncation_2}
\ell_0 ( \alpha - \alpha^Z) = \ell_0 \left( \sum_{l=1}^m (\alpha_l - \alpha_l^Z) {\mf x}_l \right) \leq \max_{1 \leq l \leq m} \ell_0 \left( (\alpha_l - \alpha_l^Z) {\mf x}_l \right) \\
= \max_{1\leq l \leq m} \left( \ell_0 ({\mf x}_l) - {\mf v}(\alpha_l - \alpha_l^Z) \right) \leq \max_{1 \leq j \leq m} \ell ({\mf x}_j) - Z = - Z.
\end{multline}

{\bf Running convention.} Within this proof, $Z$ is a large real number which can be fixed from the beginning. The lower bound of $p_0$ which is valid for the statement of Theorem \ref{thm_semisimple_finite_characteristic} depends on the choice of $Z$. The letter $C>0$ denotes a real number which is independent of $Z$ and $p\geq p_0$ but whose value is allowed to change from line to line. 

\begin{lemma}
Suppose $e_{1,(0)}, \dots, e_{m, (0)}$ constitute the idempotent generators of $A_{(0)}$, with valuations $\lambda_{1, (0)}, \dots, \lambda_{m, (0)}$. Then for $Z$ sufficiently large, $e_{1,(0)}^Z, \dots, e_{m, (0)}^Z$ form a basis of $A_{(0)}$, and $\lambda_{1, (0)}^Z, \dots, \lambda_{m, (0)}^Z$ are all nonzero and distinct. 
\end{lemma}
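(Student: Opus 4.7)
The plan is a standard non-Archimedean perturbation argument, built on the estimate \eqref{truncation_2}, which guarantees that truncation is a uniformly ``small'' operation in the ultrametric sense. I would dispose of the two assertions separately.

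For the basis claim, write each idempotent as $e_{i,(0)} = \sum_{j=1}^m c_{ij} {\mf x}_j$ with $c_{ij} \in \Lambda_{\ov{\mb Q}}$, assembling an invertible change-of-basis matrix $C = (c_{ij}) \in GL_m(\Lambda_{\ov{\mb Q}})$. The truncated vectors $e_{i,(0)}^Z$ correspond to $C^Z := (c_{ij}^Z)$, and writing $C^Z = C + M$ one has ${\mf v}(M_{ij}) \geq Z$ for all $i,j$ by \eqref{truncation_1}. Expanding $\det(C^Z) = \det(C+M)$ by multilinearity, the only term with no $M$-factor is $\det(C)$; every other term contains at least one entry of $M$ together with $m-1$ entries of $C$, hence has valuation at least $Z + (m-1) V$, where $V := \min_{i,j} {\mf v}(c_{ij})$ is finite. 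Once $Z$ is chosen so that $Z + (m-1)V > {\mf v}(\det C)$, the non-Archimedean triangle inequality forces ${\mf v}(\det C^Z) = {\mf v}(\det C) < +\infty$, so $\det C^Z \neq 0$ and $(e_{1,(0)}^Z, \ldots, e_{m,(0)}^Z)$ is a basis of $A_{(0)}$.

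For the second assertion, I interpret the $\lambda_{i,(0)}$ as the eigenvalues of $E_{(0)}$, i.e.\ the coefficients in ${\mc U}_{(0)} = \sum_i \lambda_{i,(0)} e_{i,(0)}$; this is the only reading that makes the symbol $\lambda_{i,(0)}^Z$ meaningful. By the hypothesis of Theorem \ref{thm_semisimple_finite_characteristic}, all $\lambda_{i,(0)}$ and all pairwise differences $\lambda_{i,(0)} - \lambda_{j,(0)}$ ($i \neq j$) are nonzero, hence have finite valuations. Setting
\beqn
Z_0 := \max\bigl\{ {\mf v}(\lambda_{i,(0)}),\ {\mf v}(\lambda_{i,(0)} - \lambda_{j,(0)}) \ :\ 1 \leq i \neq j \leq m \bigr\},
\eeqn
for any $Z > Z_0$ each of the finitely many scalars in question has its leading monomial retained under truncation. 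Since scalar truncation is additive, $\lambda_{i,(0)}^Z - \lambda_{j,(0)}^Z = (\lambda_{i,(0)} - \lambda_{j,(0)})^Z \neq 0$ for $i \neq j$, and similarly $\lambda_{i,(0)}^Z \neq 0$ for all $i$. Enlarging $Z$ further if necessary to also accommodate the bound from the basis claim, both assertions hold simultaneously.

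I do not foresee a serious obstacle: the lemma essentially asserts that ``being a basis'' and ``being distinct nonzero scalars'' are open conditions in the ultrametric topology, and \eqref{truncation_2} makes the required smallness uniform and explicit. The only mild bookkeeping is tracking that $Z + (m-1)V \to +\infty$ as $Z \to \infty$ even when $V$ is negative, which is automatic since $V$ is a fixed finite real number.
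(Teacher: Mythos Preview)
Your proposal is correct and follows essentially the same approach as the paper's own proof: the paper also identifies the $e_{l,(0)}$ with their coordinate vectors, observes the change-of-basis determinant is nonzero, and argues that truncation preserves this for $Z$ large, then handles the $\lambda_{l,(0)}$ by the same ``leading term survives'' reasoning. Your version is in fact more complete --- you spell out the multilinear determinant estimate and explicitly treat distinctness of the truncated eigenvalues via $(\lambda_{i,(0)} - \lambda_{j,(0)})^Z$, which the paper's terse proof leaves implicit --- and your reading of ``valuations'' as ``eigenvalues'' is the intended one.
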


\begin{proof}
With respect to the basis $({\mf x}_{1, (0)}, \ldots, {\mf x}_{m, (0)})$ of $A_{(0)}$, we identify $e_{l, (0)}$ with its coordinate vector in $(\Lambda_{\ov{\mb Q}})^m$. Then the $m \times m$ matrix with columns $e_{l, (0)}$ is invertible with a nonzero determinant. Then when $Z$ is sufficiently large, the corresponding determinant with $e_{l, (0)}$ replaced by $e_{l, (0)}^Z$ is also nonzero. On the other hand, as all $\lambda_{l,(0)}$ are nonzero, $\lambda_{l, (0)}^Z\neq 0$ when $Z$ is large. 
\end{proof}

We would like to construct, for large primes $p$, eigenvectors and eigenvalues of ${\mb E}_{(p)}$ over the field $\Lambda_{\ov{\mb F}_p}$. The basic idea is to take some truncation $e_{l, (0)}^Z$ of the idempotent generators and their mod $p$ reductions as an appropriate eigenbasis and then to apply certain corrections. 

By Lemma \ref{lemma46}, for each $Z\in {\mb R}$, there exists $m^Z \in {\mb Z}$ such that 
\begin{align*}
&\ m^Z \lambda_{l, (0)}^Z \in \Lambda_{\ov{\mb Z}},\ &  m^Z e_{l, (0)}^Z \in A_{\ov{\mb Z}}.
\end{align*}
This allows us to define the ``mod $p$ reduction'' of $\lambda_{l, (0)}^Z$ and $e_{l, (0)}^Z$ as follows. Fixing $m^Z$, by choosing a sufficiently large $p$ so that it cannot divide $m^Z$, the quantity $m^Z$ has a nonzero reduction $[m^Z]_p \in {\mb F}_p$. Moreover, $m^Z \lambda_{l, (0)}^Z$ has a mod $p$ reduction $[m^Z \lambda_{l, (0)}^Z]_p \in \Lambda_{\ov{\mb F}_p}$ and $m^Z e_{l, (0)}^Z$ has a mod $p$ reduction $[m^Z e_{l, (0)}^Z]_p \in A_{(p)}$ (defined via the integral basis ${\mf x}_1, \ldots, {\mf x}_m$). Then define 
\begin{align*}
&\ \lambda_{l, (p)}^Z:= [m^Z]_p^{-1} [m^Z \lambda_{l, (0)}^Z]_p,\ &\ e_{l, (p)}^Z:= [m^Z]_p^{-1} [m^Z e_{l, (0)}^Z]_p.
\end{align*}

\begin{lemma}
There exists $C>0$ such that for any sufficiently large $Z$, upon choosing $m^Z$ as above, there exists $p^Z > 0$ such that whenever $p \geq p^Z$, $e_{l, (p)}^Z$ is a basis of $A_{(p)}$ and all $\lambda_{l, (p)}^Z$ are nonzero and distinct. Moreover, for some constant $C>0$ one has
\begin{align*}
&\ \ell_p ( e_{l, (p)}^Z) \geq  - C,\ &\ {\mf v}(\lambda_{l, (p)}^Z) \leq C.
\end{align*}
Moreover, for all $k \neq l$
\beqn
{\mf v}( \lambda_{l, (p)}^Z - \lambda_{k, (p)}^Z) \leq C.
\eeqn
\end{lemma}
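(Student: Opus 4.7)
The plan is to transfer all the required nonvanishing and distinctness statements from the characteristic-zero side (where they hold by assumption and by the previous lemma) to their mod $p$ reductions, exploiting two elementary facts: truncation at level $Z$ preserves the valuation of any element whose valuation is less than $Z$, and mod $p$ reduction preserves the valuation of a nonzero element whose leading $T$-coefficient does not reduce to zero in $\ov{\mb F}_p$.

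First I would fix the constant $C$ purely from characteristic-zero data. Since the eigenvalues $\lambda_{l,(0)}$ are nonzero and pairwise distinct and the idempotents $e_{l,(0)}$ are nonzero, the numbers
\[
C_0 := \max_l \mf{v}(\lambda_{l,(0)}), \quad C_1 := \max_{k\neq l}\mf{v}(\lambda_{l,(0)}-\lambda_{k,(0)}), \quad C_2 := \max_l \bigl(-\ell_0(e_{l,(0)})\bigr)
\]
are all finite. Set $C := \max\{C_0,C_1,C_2\}$, which is manifestly independent of $Z$ and $p$. For any $Z > C$, truncation at level $Z$ leaves the valuation of each $\lambda_{l,(0)}$, $\lambda_{l,(0)}-\lambda_{k,(0)}$, and $e_{l,(0)}$ unchanged -- indeed, it preserves the leading $T$-coefficient -- because the discarded tail has $T$-exponents strictly larger than $Z > C$.

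Next I would pass to characteristic $p$. Each of the truncated objects above is a finite sum $\sum_i a_i T^{g_i}$ with $a_i\in\ov{\mb Q}$, and its leading coefficient $a_{i_0}$ is nonzero. After multiplication by the integer $m^Z$ fixed in the excerpt, $m^Z a_{i_0}$ becomes a nonzero algebraic integer. There are only finitely many such leading coefficients (coming from all $\lambda_{l,(0)}^Z$, all pairwise differences $\lambda_{l,(0)}^Z-\lambda_{k,(0)}^Z$, all components of each $e_{l,(0)}^Z$, and the determinant treated below), so a single threshold $p^Z$ exists such that whenever $p\geq p^Z$, $p$ divides neither $m^Z$ nor the absolute norm over $\mb Q$ of any of these finitely many algebraic integers; equivalently, $\ov\pi_p$ sends each leading coefficient to a nonzero element of $\ov{\mb F}_p$. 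This immediately yields $\mf{v}(\lambda_{l,(p)}^Z)=\mf{v}(\lambda_{l,(0)}^Z)\leq C$, $\mf{v}(\lambda_{l,(p)}^Z-\lambda_{k,(p)}^Z)\leq C$, and $\ell_p(e_{l,(p)}^Z)\geq -C$, while the nonvanishing of the leading coefficients also guarantees that the $\lambda_{l,(p)}^Z$ are themselves nonzero and that their pairwise differences are nonzero, i.e., they are pairwise distinct.

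For the basis property, let $M\in\mathrm{Mat}_m(\Lambda_{\ov{\mb Q}})$ be the matrix expressing $e_{l,(0)}$ in the basis ${\mf x}_1,\ldots,{\mf x}_m$. The previous lemma guarantees $\det M^Z\neq 0$ for $Z$ large; applying the leading-coefficient argument above to $\det M^Z$ shows that the $m\times m$ matrix $[m^Z]_p^{-1}[m^Z M^Z]_p$ over $\Lambda_{\ov{\mb F}_p}$ is invertible for $p\geq p^Z$, hence the $e_{l,(p)}^Z$ form a basis of $A_{(p)}$. The main obstacle -- and really the whole content of the statement -- is ensuring that a \emph{single} constant $C$ works uniformly in both $Z$ and $p$; this is what forces us to extract $C$ from the untruncated characteristic-zero data at the outset, and then to push all $Z$- and $p$-dependence into the choice of $m^Z$ and the threshold $p^Z$, the sizes of which play no role in the final bound.
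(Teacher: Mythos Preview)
Your proposal is correct and is precisely the straightforward argument the paper has in mind (the paper's entire proof is the single word ``Straightforward''). You have carefully spelled out the two key mechanisms: that truncation at level $Z>C$ preserves leading $T$-terms of the characteristic-zero data, and that for $p$ avoiding the finitely many primes dividing the norms of the relevant leading coefficients in $\ov{\mb Z}$, the map $\ov\pi_p$ preserves those leading terms as well, so all valuations and nonvanishing conditions transfer intact.
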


\begin{proof}
Straightforward.
\end{proof}

\begin{prop}
There exists $C>0$ such that given any sufficiently large $Z$, for all sufficiently large prime $p$, there exist eigenvectors $\varepsilon_{l, (p)}$ of $E_{(p)}$ with corresponding distinct eigenvalues $\lambda_{l, (p)} \in \Lambda_{\ov{\mb F}_p}$ such that
\beqn
\ell_p ( e_{l, (p)}^Z - \varepsilon_{l, (p)} ) \leq  -Z + C
\eeqn
and 
\beqn
{\mf v} ( \lambda_{l,(p)}^Z - \lambda_{l, (p)} ) \geq Z - C.
\eeqn
\end{prop}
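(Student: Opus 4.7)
The plan is a non-Archimedean Hensel/Newton argument: treat $\{e_{l,(p)}^Z\}_{l=1}^m$ as an approximate eigenbasis of $E_{(p)}$ and then correct to a genuine eigenbasis by a contraction mapping in the ultrametric topology. The first task is to verify that $(e_{l,(p)}^Z, \lambda_{l,(p)}^Z)$ satisfies the eigenvalue equation approximately with residual of size $-Z + O(1)$. Starting from the exact identity $E_{(0)}(e_{l,(0)}) = \lambda_{l,(0)} e_{l,(0)}$, subtracting the truncated version, and applying the truncation estimate \eqref{truncation_2} to both $e_{l,(0)} - e_{l,(0)}^Z$ and $\lambda_{l,(0)} - \lambda_{l,(0)}^Z$, a short expansion yields
\[
\ell_0\bigl( E_{(0)}(e_{l,(0)}^Z) - \lambda_{l,(0)}^Z\, e_{l,(0)}^Z \bigr) \leq -Z + C.
\]
Since the left-hand side involves only finitely many Novikov terms, this bound descends unchanged to $A_{(p)}$ for every prime $p$ larger than the denominators appearing in the truncation.

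Next I will look for the honest eigenpair in the form $\varepsilon_{l,(p)} = e_{l,(p)}^Z + \sum_{k \neq l} x_k\, e_{k,(p)}^Z$ and $\lambda_{l,(p)} = \lambda_{l,(p)}^Z + y$. Expanding $E_{(p)}(\varepsilon_{l,(p)}) = \lambda_{l,(p)}\, \varepsilon_{l,(p)}$ in the basis $\{e_{k,(p)}^Z\}$ and decomposing the residues $E_{(p)} e_{k,(p)}^Z - \lambda_{k,(p)}^Z e_{k,(p)}^Z = \sum_j \rho_k^{(j)}\, e_{j,(p)}^Z$, the eigen-equation decouples into
\[
y = \rho_l^{(l)} + \sum_{k \neq l} x_k\, \rho_k^{(l)}, \qquad x_j = \frac{\rho_l^{(j)} + \sum_{k \neq l} x_k\, \rho_k^{(j)}}{\lambda_l^Z - \lambda_j^Z + y} \qquad (j \neq l).
\]
Combining the initial residual bound with the uniform estimate $\ell_p(e_{k,(p)}^Z) \geq -C$ from the preceding lemma (plus the orthogonality, up to bounded defect, of $\{e_{k,(p)}^Z\}$ relative to the fixed integral basis $\{{\mf x}_i\}$) gives ${\mf v}(\rho_k^{(j)}) \geq Z - C'$, while the same lemma controls the denominators through ${\mf v}(\lambda_l^Z - \lambda_j^Z) \leq C$. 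Consequently, once $Z$ is sufficiently large, the right-hand side defines a strict ultrametric contraction on the closed ball of tuples $(x, y)$ with $\min_{j,k}\bigl({\mf v}(x_j), {\mf v}(y)\bigr) \geq Z - C''$.

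The non-Archimedean fixed-point theorem then yields a unique solution $(x^*, y^*)$ in this ball, producing a genuine eigenpair $(\varepsilon_{l,(p)}, \lambda_{l,(p)})$ of $E_{(p)}$. Translating the valuation bounds on $(x^*, y^*)$ back to $A_{(p)}$ via $\ell_p(e_{k,(p)}^Z) \geq -C$ delivers the two claimed inequalities. Distinctness of the $\lambda_{l,(p)}$ follows at once because, for $Z$ larger than the gap constant of the preceding lemma, the correction $y^*$ (of valuation at least $Z - C''$) cannot close the a priori gap ${\mf v}(\lambda_l^Z - \lambda_k^Z) \leq C$ between different approximate eigenvalues.

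The main technical hurdle is not the mechanics of the iteration itself but the \emph{uniformity} of the constant $C$ in the prime $p$. The contraction would degenerate if either the spectral gaps ${\mf v}(\lambda_l^Z - \lambda_k^Z)$ or the change-of-basis matrix from $\{{\mf x}_i\}$ to $\{e_{l,(p)}^Z\}$ behaved badly as $p$ varied. This uniformity is precisely what the preceding lemma supplies: its bounds originate from characteristic-zero data with coefficients in $\Lambda_{\ov{\mb Z}}$, and after clearing the finite list of denominators that appear in the truncations, they survive mod-$p$ reduction for all $p$ beyond an explicit $p$-threshold depending only on $Z$.
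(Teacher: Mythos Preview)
Your proof is correct and follows essentially the same Hensel/Newton strategy as the paper: establish the approximate eigen-equation $\ell_p\bigl(E_{(p)}(e_{l,(p)}^Z)-\lambda_{l,(p)}^Z e_{l,(p)}^Z\bigr)\le C-Z$ by truncation and mod-$p$ descent, then correct by an ultrametric iteration using the uniform spectral-gap and basis bounds from the preceding lemma. The only cosmetic difference is packaging---the paper rewrites the eigen-equation as a linear system with a quadratic remainder and inverts the coefficient matrix, whereas you solve for each $x_j$ explicitly and phrase the iteration as a contraction fixed point---but the underlying argument and the constants involved are identical.
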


\begin{proof}
In $A_{(0)}$, one has
\beqn
(m^Z)^{-1} E_{(0)} ( m^Z e_{l, (0)}) = ((m^Z)^{-1} \lambda_{l, (0)})(m^Z e_{l, (0)}).
\eeqn
Using \eqref{truncation_2}, it follows that
\beq\label{approx}
\begin{split}
&\ \ell_0 \Big( (m^Z)^{-1} E_{(0)} (m^Z e_{l, (0)}^Z) - ((m^Z)^{-1} \lambda_{l, (0)}^Z) (m^Z e_{l, (0)}^Z) \Big)\\
= &\ \ell_0 \Big( E_{(0)} (e_{l, (0)}^Z) - \lambda_{l, (0)}^Z e_{l, (0)}^Z \Big)\\
= &\ \ell_0 \Big( E_{(0)} \big(e_{l, (0)}^Z - e_{l, (0)} \big) +  ( \lambda_{l, (0)} - \lambda_{l, (0)}^Z) e_{l, (0)} + \lambda_{l, (0)}^Z (e_{l, (0)} - e_{l, (0)}^Z) \Big) \\
\leq &\ \max \Big\{ \ell_0 ( c_{1, (0)} \ast (e_{l, (0)}^Z - e_{l, (0)})),\ \ell_0 ( (\lambda_{l, (0)} - \lambda_{l, (0)}^Z) e_{l, (0)} ),\ \ell_0 (\lambda_{l, (0)}^Z (e_{l, (0)} - e_{l, (0)}^Z) ) \Big\}\\
\leq &\ \max \Big\{ \ell_0 (c_{1, (0)}) + C - Z,\ \ell_0(e_{l, (0)}) - Z,\ - {\mf v}(\lambda_{l, (0)}) + C - Z \Big\}\\
 \leq &\ C- Z.
\end{split}
\eeq

Now take a sufficiently large $p$. Notice that the left hand side of \eqref{approx} is the valuation of an integral element, hence descends to $\ov{\mb F}_p$. Then \eqref{approx} implies 
\beq\label{approximate_eigenvalue}
\ell_p \left( E_{(p)} (e_{l, (p)}^Z) - \lambda_{l, (p)}^Z e_{l, (p)}^Z \right) \leq C-Z.
\eeq
We would like to correct $e_{l, (p)}^Z$ by adding higher order terms. 

With respect to the basis $(e_{1, (p)}^Z, \ldots, e_{m, (p)}^Z)$, let the matrix of $E_{(p)}$ be $T_{(p)}$. For a matrix $S = (S_{ij})$ with entries in $\Lambda_{\mb K}$, we write 
\beqn
{\mf v}(S) = \min_{i, j} {\mf v}(S_{ij}) \in {\mb R}.
\eeqn

\begin{lemma}\label{lemma_matrix_valuation}
One has 
\beqn
{\mf v} \left( T_{(p)} - \left[ \begin{array}{cccc} \lambda_{1, (p)}^Z & 0 & \cdots & 0 \\
0 & \lambda_{2, (p)}^Z & \cdots & 0 \\
\vdots & \vdots & \ddots & \vdots \\
0 & 0 & \cdots & \lambda_{m, (p)}^Z \end{array} \right] \right) \geq Z - C.
\eeqn
\end{lemma}

\begin{proof}
We can write 
\beqn
E_{(p)}(e_{l, (p)}^Z) - \lambda_{l, (p)}^Z e_{l, (p)}^Z = \sum_{l=1}^m a_l e_{l, (p)}^Z.
\eeqn
By \eqref{approximate_eigenvalue} and the subadditivity property of the function $\ell$ (see Definition \ref{defn_nonarchimedean_module}), one has ${\mf v}(a_l) \geq Z- C$ for some appropriate $C$.
\end{proof}

For each $l$, consider the following equation for 
\begin{align*}
&\ x = \sum_{k \neq l} x_k e_{k, (p)}^Z,\ &\ \delta \in \Lambda_{\ov{\mb F}_p}
\end{align*}
which is 
\beqn
E_{(p)} \big( e_{l, (p)}^Z + x \big) = \big( \lambda_{l,(p)}^Z + \delta \big) \big(  e_{l, (p)}^Z + x \big)
\eeqn
which is equivalent to 
\beqn
E_{(p)} \left( \sum_{k \neq l} x_k e_{k, (p)}^Z \right) - \sum_{k\neq l} \lambda_{l, (p)}^Z x_k e_{k, (p)}^Z - \delta e_{l, (p)}^Z = \delta \sum_{k \neq l} x_k e_{k, (p)}^Z + \rho_l.
\eeqn
Here $\rho_l$ is the error term with ${\mf v}(\rho_l) \leq -Z + C$ by \eqref{approximate_eigenvalue}. To simplify notations, assume $l = 1$. Then using the basis $e_{1, (p)}^Z, \ldots, e_{m, (p)}^Z$, this equation is equivalent to the linear system 
\beqn
\left( \left[ \begin{array}{cccc} 1 & 0 & \cdots  & 0 \\
                                   & & T_{(p)}' & \end{array}\right] - \left[ \begin{array}{cccc} 0 & 0 & \cdots & 0 \\
0 & \lambda_{1, (p)}^Z & \cdots & 0 \\
\vdots & \vdots & \ddots & \vdots \\
0 & 0 & \cdots & \lambda_{1, (p)}^Z \end{array} \right] \right) \left[ \begin{array}{c} \delta \\ x_2 \\ \vdots \\ x_m \end{array} \right] = Q( \delta, x_2, \ldots, x_m) + \rho_1.
                                   \eeqn
Here the left hand side is linear and $Q$ is quadratic. Let the matrix on the left hand side be $F_1$. Lemma \ref{lemma_matrix_valuation} implies that $F_1$ is invertible with 
\beqn
{\mf v}(F_1) \leq C
\eeqn
where $C$ is independent of $Z$ and $p$. Then one can use an iteration argument to solve the equation term by term. The correction term has valuation at least $Z - C$ for some constant $C$. So the theorem follows. 
\end{proof}

We continue the proof of (2) of Theorem \ref{thm_semisimple_finite_characteristic}. By the proof of Lemma \ref{semisimple_criterion}, each idempotent generator of $A_{(p)}$ is a multiple of $\varepsilon_{l, (p)}$. Indeed, if 
\beqn
\varepsilon_{l, (p)} \ast \varepsilon_{l, (p)} = \mu_l \varepsilon_{l, (p)}
\eeqn
then the corresponding idempotent generator is
\beqn
e_{l, (p)} = \mu_l^{-1} \varepsilon_{l, (p)}.
\eeqn
So we need to estimate the valuation of $\mu_l$. In characteristic zero one has 
\beqn
e_{l, (0)} \ast e_{l, (0)} = e_{l, (0)}.
\eeqn
Taking truncation at $Z$ one has 
\beqn
\ell_0 \big( e_{l, (0)}^Z \ast e_{l, (0)}^Z - e_{l, (0)}^Z \big) \leq C-Z.
\eeqn
Taking mod $p$ reduction, one obtains
\beqn
\ell_p \big( e_{l, (p)}^Z \ast e_{l, (p)}^Z - e_{l, (p)}^Z \big) \leq C-Z.
\eeqn
Then 
\begin{multline*}
\ell_p \big( \mu_l \varepsilon_{l, (p)} - e_{l, (p)}^Z \big) = \ell_p \big( \varepsilon_{l, (p)} \ast \varepsilon_{l, (p)} - e_{l, (p)}^Z \ast e_{l, (p)}^Z + e_{l, (p)}^Z \ast e_{l, (p)}^Z - e_{l, (p)}^Z \big) \\
\leq \max \Big\{ \ell_p ( \varepsilon_{l, (p)} - e_{l, (p)}^Z ) + \ell_p (\varepsilon_{l, (p)} + e_{l, (p)}^Z), C-Z \Big\} \leq C-Z.
\end{multline*}
As we have $\ell_p( e_{l, (p)}^Z ) \geq -C$, it follows that $\ell_p( \mu_l \varepsilon_{l, (p)}) = \ell_p (e_{l, (p)}^Z) = \ell_p( \varepsilon_{l, (p)})$. Hence ${\mf v}(\mu_l) = 0$ and hence 
\beqn
\ell_p( e_{l, (p)}) = \ell_p ( \mu_l^{-1} \varepsilon_{l, (p)}) = \ell_p (\varepsilon_{l, (p)}) + {\mf v}(\mu_l) = \ell_p (e_{l,(p)}^Z) = \ell_0 (e_{l, (0)})
\eeqn
which is independent of $p$. This finishes the proof of Theorem \ref{thm_semisimple_finite_characteristic}.

\subsection{Floer--Novikov complexes}

Let $\Gamma \subsetneq {\mb R}$ be a proper additive subgroup. 

\begin{defn}[Floer--Novikov complex] (cf. \cite[Definition 1.1]{Usher_2008})\label{defn_fn} A {\bf ${\mb Z}_2$-graded filtered Floer--Novikov package} over a commutative unital ring $R$ consists of data 
\beqn
{\mf c} = \Big(  P, {\mc A}, gr, n_R \Big)
\eeqn
where 
\begin{enumerate}
    \item $P$ is a $\Gamma$-torsor with $P/ \Gamma$ finite.

    \item ${\mc A}: P \to {\mb R}$ is the ``action functional'' and $gr: P \to {\mb Z}_2$ is the ``grading.''

    \item For $p \in P$ and $g \in \Gamma$, one has 
    \begin{align*}
        &\ {\mc A}(g p) = {\mc A}(p) - g,\ &\ gr(g p) = gr (p)
    \end{align*}

    \item $n_R: P \times P \to R$ is a function such that
    \begin{itemize}
    \item $n_R (p, q) \neq 0 \Longrightarrow gr(p) = gr(q) + 1,\ {\mc A}(p) > {\mc A}(q)$;
    \item for all $p \in P$ and $C\in {\mb R}$, the set
    \beqn
    \{ q \in P\ |\ n_R (p, q) \neq 0,\ {\mc A}(q) \geq C \}
    \eeqn
    is finite;
    \item for any $g \in \Gamma$, we have $n_R (gp, gq) = n_R (p,q)$;
    \item the $\Lambda_R$-linear map $\partial$ defined in \eqref{eqn:boundary} satisfies $\partial^2 = 0$.
    \end{itemize}
\end{enumerate}
\end{defn}

Given a Floer--Novikov package one can construct the associated Floer chain complex. Define 
\beqn
CF_\bullet ({\mf c}) = \Big\{ \sum_{p \in P} a_p p\ |\ a_p \in R,\ \forall C\in {\mb R}, \# \{ p \in P \ |\ a_p \neq 0,\ {\mc A}(p)\geq C \} < \infty \Big\}
\eeqn
which is ${\mb Z}_2$-graded. The $\Lambda_R^\Gamma$-module structure is induced from the $\Gamma$-action on $P$. Define 
\beqn
\partial: CF_\bullet({\mf c}) \to CF_{\bullet -1} ({\mf c})
\eeqn
by 
\beq\label{eqn:boundary}
\partial \left( \sum_{p \in P} a_p p \right) = \sum_{q\in P} \left( \sum_{ p \in P} a_p n_R (p, q)\right) q.
\eeq
We also define the function 
\beq\label{valuation_function}
\ell: CF_\bullet({\mf c}) \to {\mb R}\cup \{-\infty\},\ \ell\left( \sum_{p \in P} a_p p \right) = \sup \big\{ {\mc A}(p)\ |\ a_p \neq 0 \Big\}. 
\eeq

Given a Floer--Novikov package ${\mf c}$ over $R$, if $\iota: R \to \wt R$ is a ring map, then one can extend ${\mf c}$ to a Floer--Novikov package ${\mf c}\otimes_R \wt R$ by simply defining $n_{\wt R}:= \iota \circ n: P \times P \to \wt R$

\begin{prop}
If $R = {\mb K}$ is a field, the triple $(CF_\bullet({\mf c}), \partial, \ell)$ is a Floer-type complex over $\Lambda_{\mb K}^\Gamma$ in the sense of \cite[Definition 4.1]{Usher_Zhang_2016}.
\end{prop}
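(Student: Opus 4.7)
The plan is to verify each axiom of Usher--Zhang's Definition 4.1 by translating the axioms of Definition \ref{defn_fn} into the $\Lambda_{\mb K}^\Gamma$-module structure on $CF_\bullet({\mf c})$. No step is conceptually hard; the only care needed is converting between $P$-indexed formal sums and $\Lambda_{\mb K}^\Gamma$-linear combinations of a finite basis, so I expect the main ``obstacle'' to be purely bookkeeping.

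First I would establish the free module structure. Since $P$ is a $\Gamma$-torsor with $P/\Gamma$ finite, pick representatives $p_1,\ldots,p_N$ of the $\Gamma$-orbits. Every $p \in P$ is uniquely of the form $g \cdot p_i$; identifying the $\Gamma$-action with multiplication by $T^g$ (so that ${\mc A}(g p_i) = {\mc A}(p_i) - {\mf v}(T^g)$ matches the intended homogeneity), any formal sum $x = \sum_p a_p p$ rewrites uniquely as $\sum_{i=1}^N \lambda_i p_i$ with $\lambda_i := \sum_{g\in\Gamma} a_{g\cdot p_i} T^g$. The required Novikov convergence of each $\lambda_i$ is precisely the condition that $\{g : a_{g\cdot p_i}\neq 0,\ {\mc A}(p_i)-g \geq C\}$ be finite for every $C$, which follows from the support axiom on $x$. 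Hence $(p_1,\ldots,p_N)$ is a $\Lambda_{\mb K}^\Gamma$-basis of $CF_\bullet({\mf c})$, which is thereby finitely generated, free, and ${\mb Z}_2$-graded.

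Next I would verify the three non-Archimedean norm axioms for $\ell$ in Definition \ref{defn_nonarchimedean_module}: nondegeneracy is built into \eqref{valuation_function}, homogeneity follows from ${\mc A}(gp) = {\mc A}(p) - g$, and subadditivity is immediate from the supremum definition. To check orthogonality of the basis $(p_1,\ldots,p_N)$, the expansion $\sum_i \lambda_i p_i = \sum_{i,g} a_{g,i}(g \cdot p_i)$ involves pairwise distinct elements of $P$, so the supremum defining $\ell$ is attained with no cancellation, giving $\ell(\sum_i \lambda_i p_i) = \max_i \ell(\lambda_i p_i)$. Consequently $CF_\bullet({\mf c})$ is orthogonalizable in the sense of \cite{Usher_Zhang_2016}.

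I would then check that $\partial$ is a well-defined $\Lambda_{\mb K}^\Gamma$-linear map of ${\mb Z}_2$-degree $-1$ that strictly decreases filtration. The coefficient $(\partial x)_q = \sum_p a_p n_R(p,q)$ is a finite sum because $n_R(p,q)\neq 0$ forces ${\mc A}(p) > {\mc A}(q)$ and $x$ has finite support above any given action level. For the Novikov condition on $\partial x$, fix $C$: any $q$ with $(\partial x)_q \neq 0$ and ${\mc A}(q) \geq C$ must admit some $p$ with $a_p \neq 0$, $n_R(p,q) \neq 0$, and ${\mc A}(p) > {\mc A}(q) \geq C$. Finitely many such $p$ exist by the support condition on $x$, and by the second bullet of Definition \ref{defn_fn} each contributes only finitely many such $q$; their union is finite. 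Linearity over $\Lambda_{\mb K}^\Gamma$ follows from the equivariance $n_R(gp,gq) = n_R(p,q)$, and the grading shift by $-1$ from the parity axiom on $n_R$. To deduce $\ell(\partial x) < \ell(x)$ whenever $\partial x \neq 0$: the supremum $\ell(x)$ is attained at some $p_0$ by the support finiteness, and likewise $\ell(\partial x)$ is attained at some $q_0$ whose appearance in $\partial x$ provides $p_{q_0}$ in the support of $x$ with ${\mc A}(q_0) < {\mc A}(p_{q_0}) \leq \ell(x)$. Finally, $\partial^2 = 0$ is the last bullet of Definition \ref{defn_fn}. Together these checks reproduce \cite[Definition 4.1]{Usher_Zhang_2016} verbatim.
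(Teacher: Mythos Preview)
Your proposal is correct and follows the same approach as the paper: verify each axiom of Usher--Zhang's Definition 4.1 directly from the Floer--Novikov package axioms. The paper's proof is essentially a two-sentence sketch (``it follows directly from the definitions''; name the non-Archimedean/orthogonalizable properties; note $\ell(\partial x)\le\ell(x)$), whereas you have spelled out every verification in detail, including the explicit basis from orbit representatives, the orthogonality computation, and the well-definedness of $\partial$. One minor remark: you prove the strict inequality $\ell(\partial x)<\ell(x)$, which is stronger than the $\le$ that Usher--Zhang's Definition 4.1 actually requires and that the paper records; this is harmless (indeed the strict inequality does hold here), but worth being aware of.
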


\begin{proof}
It follows directly from the definitions of Floer-type complexes. The proof serves rather as a brief clarification about this concept. First, for each $k \in {\mb Z}_2$, the pair $(CF_k({\mf c}), \ell|_{CF_k({\mf c})})$ is a non-Archimedean normed vector space over $\Lambda_{\mb K}^\Gamma$ (see \cite[Definition 2.2]{Usher_Zhang_2016}. In addition it is an orthogonalizable $\Lambda_{\mb K}^\Gamma$-space (see \cite[Definition 2.7]{Usher_Zhang_2016}. The last requirement for being a Floer-type complex is the inequality
\beqn
\ell(\partial(x)) \leq \ell(x)\ \forall x \in CF_\bullet({\mf c}),
\eeqn
which is a consequence of the property of the function $n_{\mb K}$ in the data ${\mf c}$. 
\end{proof}

\subsubsection{Spectral invariants}

Following Usher \cite{Usher_2008}, one can also define spectral invariants in an abstract way. First, define the ``energy filtration'' on the complex $CF_\bullet({\mf c})$: for each $\tau \in {\mb R}$, define 
\beqn
CF_\bullet^{\leq \tau}({\mf c}):= \left\{ \sum_{p \in P} a_p p \in CF_\bullet ({\mf c})\ |\ a_p \neq 0 \Longrightarrow {\mc A}(p) \leq \tau \right\}.
\eeqn
Then since the differential decreases the action, it is a subcomplex with homology 
\beqn
HF_\bullet^{\leq \tau}({\mf c}) 
\eeqn
and natural maps when $\tau \leq \kappa$
\beq\label{eqn:iota-map}
\iota^{\tau, \kappa}: HF_\bullet^{\leq \tau}({\mf c}) \to HF_\bullet^{\leq \kappa}({\mf c}).
\eeq
For $\alpha \in HF_\bullet ({\mf c})$, define 
\beqn
\rho (\alpha):= \inf \Big\{ \tau \in {\mb R}\ |\ \alpha \in {\rm Im} \left( \iota^\tau: HF_\bullet^{\leq \tau}({\mf c}) \to HF_\bullet ({\mf c}) \right) \Big\}\in {\mb R} \cup \{-\infty\}
\eeqn

\begin{thm}\cite[Theorem 1.3, 1.4]{Usher_2008}\label{Usher_thm}
Given a Floer--Novikov package ${\mf c}$ (over a Noetherian ring $R$) and $\alpha \in HF({\mf c}) \setminus \{0\}$, $\rho (\alpha) > -\infty$ and $\alpha \in {\rm Im} (\iota_{\rho(\alpha)})$. 
\end{thm}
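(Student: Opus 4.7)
The plan is to prove finiteness and attainment together via a limit construction in the non-Archimedean filtration on $CF_\bullet({\mf c})$ induced by $\ell$, using the Noetherian hypothesis on $R$ to make the limit well-defined.

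The upper bound $\rho(\alpha) < +\infty$ is immediate: any cycle representative $c$ of $\alpha \neq 0$ satisfies $\ell(c) \in {\mb R}$ by the nondegeneracy of $\ell$, so $\alpha \in {\rm Im}(\iota^{\ell(c)})$. For the substantive statements, I would fix a strictly decreasing sequence $\tau_n \searrow \rho(\alpha)$ and choose cycle representatives $c_n \in CF_\bullet^{\leq \tau_n}({\mf c})$ of $\alpha$. Since the $c_n$ are all homologous, their pairwise differences $c_n - c_m$ are boundaries. The strategy is to modify each $c_n$ by adding a boundary (not changing the class) so that the resulting sequence converges termwise at every generator $p \in P$; the termwise limit $c_\infty$ will satisfy $\ell(c_\infty) \leq \rho(\alpha)$ and either be a cycle representative of $\alpha$ (proving attainment when $\rho(\alpha)$ is finite) or equal zero (leading to a contradiction when $\rho(\alpha) = -\infty$).

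To execute the termwise stabilization, I would fix a decreasing cofinal sequence of action levels $\sigma_1 > \sigma_2 > \cdots \to \rho(\alpha)$ and proceed inductively: at stage $k$, adjust $c_n$ (for all $n$ large enough) by an appropriate boundary so that the coefficients of $c_n$ at all $p \in P$ with ${\mc A}(p) \geq \sigma_k$ become independent of $n$. This relies on the fact that each chain has only finitely many nonzero coefficients above any fixed action threshold (by the definition of $CF_\bullet({\mf c})$) together with the Noetherian hypothesis on $R$, which guarantees that the finitely generated $R$-submodules of possible adjustments at each action slice stabilize after finitely many steps. A diagonal extraction then produces the desired termwise limit $c_\infty \in CF_\bullet({\mf c})$. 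When $\rho(\alpha) = -\infty$, the same scheme shows that $c_1$ lies in the termwise closure of ${\rm Im}(\partial)$; closedness of this image, itself a consequence of the same Noetherian stabilization applied to bounding cochains, forces $c_1 \in {\rm Im}(\partial)$, contradicting $\alpha \neq 0$.

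The main obstacle is the bookkeeping needed to preserve $\partial c_n = 0$ while modifying the $c_n$ level by level, since an adjustment $c_n \mapsto c_n + \partial b$ can perturb coefficients at many generators at once. The cleanest way to handle this is to choose the adjustments $b$ within $CF_{\bullet+1}^{\leq \sigma_k}({\mf c})$ at stage $k$, ensuring that the stabilization achieved at action level $\geq \sigma_k$ is not disturbed by later adjustments (which all land below $\sigma_k$). The Noetherian hypothesis is precisely what makes this descent procedure terminate at each action slice, and it is also what ultimately plays the role analogous to closedness of the boundary submodule in a finite-dimensional setting.
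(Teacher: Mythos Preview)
The paper does not prove this statement at all: it is stated as a citation of \cite[Theorem 1.3, 1.4]{Usher_2008} and no argument is given in the text. There is therefore no ``paper's own proof'' to compare your proposal against.

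For what it is worth, your sketch is in the spirit of Usher's original argument, which also hinges on the Noetherian hypothesis to show that the image of $\partial$ is closed in the $\ell$-adic topology and that a minimizing sequence of representatives can be made to converge. The main hazard in your outline is the level-by-level stabilization: adjusting $c_n$ by $\partial b$ with $b \in CF_{\bullet+1}^{\leq \sigma_k}$ does keep previously stabilized coefficients fixed, but you have not said why at a fixed action slice the Noetherian condition forces eventual stabilization --- the relevant module is the quotient $CF^{\leq \sigma_k}/CF^{\leq \sigma_{k+1}}$ and you need that the projection of ${\rm Im}(\partial)$ there is a finitely generated $R$-submodule, which uses that $P/\Gamma$ is finite. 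Usher's proof organizes this more cleanly by first proving that ${\rm Im}(\partial)$ is $\ell$-closed (his Theorem 1.3) and then deducing attainment (his Theorem 1.4) from that; you have essentially merged the two steps, which is fine but makes the bookkeeping you flag as the ``main obstacle'' genuinely delicate.
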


\subsubsection{Boundary depth}

\begin{defn}\cite{Usher_boundary_depth}\label{defn_boundary_depth_1} 
Let ${\mf c}$ be a Floer--Novikov package and let $CF_\bullet^{\leq \lambda}({\mf c})$ be the associated filtered Floer--Novikov complex over $\Lambda_{\mb K}^\Gamma$. Then the {\bf boundary depth} of the filtered complex is the infimum of $\beta > 0$ such that for all $\lambda \in {\mb R}$
\beqn
CF^{\leq \lambda} ({\mf c}) \cap {\rm Im} \partial \subset \partial( CF^{\leq \lambda + \beta} ({\mf c}) ).
\eeqn
\end{defn}

\begin{thm}\cite[Theorem 1.3]{Usher_2008} Given a Floer--Novikov package ${\mf c}$, the boundary depth of the associated Floer--Novikov complex is finite.    
\end{thm}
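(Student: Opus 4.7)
The plan is to apply the Usher--Zhang singular value decomposition (SVD) to the differential $\partial$ in each degree. First I would observe that since $P/\Gamma$ is finite, the complex $CF_\bullet({\mf c})$ is a free $\Lambda_{\mb K}^\Gamma$-module of finite rank: choosing representatives $p_1, \ldots, p_N \in P$ of the $\Gamma$-orbits, the formula \eqref{valuation_function} shows that $(p_1, \ldots, p_N)$ is an orthogonal basis with respect to $\ell$. Thus each graded component $CF_k({\mf c})$ ($k \in {\mb Z}_2$) is an orthogonalizable non-Archimedean normed $\Lambda_{\mb K}^\Gamma$-space, and the differential $\partial_k: CF_k({\mf c}) \to CF_{k-1}({\mf c})$ is filtration-decreasing by the properties of $n_{\mb K}$ in the Floer--Novikov package.

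Next I would invoke the Usher--Zhang existence theorem (\cite[Theorem 3.4]{Usher_Zhang_2016}) to choose, for each $k$, orthogonal ordered bases $(y_1^{(k)}, \ldots, y_{n_k}^{(k)})$ of $CF_k({\mf c})$ and $(x_1^{(k-1)}, \ldots, x_{m_k}^{(k-1)})$ of $CF_{k-1}({\mf c})$ giving a singular value decomposition of $\partial_k$. In particular, there is an integer $r_k$ (the rank of $\partial_k$) such that $\partial y_i^{(k)} = x_i^{(k-1)}$ for $1 \le i \le r_k$, the vectors $x_1^{(k-1)}, \ldots, x_{r_k}^{(k-1)}$ span $\operatorname{Im} \partial_k$, and $y_{r_k+1}^{(k)}, \ldots, y_{n_k}^{(k)}$ span $\ker \partial_k$. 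Define
\[
\beta := \max_{k \in {\mb Z}_2} \, \max_{1 \le i \le r_k} \bigl( \ell(y_i^{(k)}) - \ell(x_i^{(k-1)}) \bigr),
\]
which is finite because each maximum is taken over a finite set.

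Finally I would verify that $\beta$ bounds the boundary depth. Fix $\lambda \in {\mb R}$ and let $y \in CF_\bullet^{\le \lambda}({\mf c}) \cap \operatorname{Im} \partial$, say $y \in CF_{k-1}({\mf c})$. Since $\operatorname{Im}\partial_k$ is spanned by $x_1^{(k-1)}, \ldots, x_{r_k}^{(k-1)}$, we may write $y = \sum_{i=1}^{r_k} c_i x_i^{(k-1)}$; orthogonality of the $x_i^{(k-1)}$ yields $\ell(y) = \max_i \ell(c_i x_i^{(k-1)}) \le \lambda$. Set $z := \sum_{i=1}^{r_k} c_i y_i^{(k)}$. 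Then $\partial z = y$, and using orthogonality of the $y_i^{(k)}$ together with the homogeneity of $\ell$,
\[
\ell(z) = \max_i \bigl( \ell(c_i) + \ell(y_i^{(k)}) \bigr) = \max_i \bigl( \ell(c_i x_i^{(k-1)}) + (\ell(y_i^{(k)}) - \ell(x_i^{(k-1)})) \bigr) \le \ell(y) + \beta \le \lambda + \beta.
\]
Hence $y \in \partial\bigl(CF_\bullet^{\le \lambda + \beta}({\mf c})\bigr)$, so the boundary depth is at most $\beta < \infty$.

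The only real subtlety — which I would expect to be the main point to double-check rather than an obstacle — is confirming that the hypotheses of the Usher--Zhang SVD are in force over $\Lambda_{\mb K}^\Gamma$: finite rank and orthogonalizability of $CF_k({\mf c})$ (handled by the basis from $P/\Gamma$), and the filtration-decreasing property of $\partial$ (immediate from Definition \ref{defn_fn}, since $n_{\mb K}(p,q) \neq 0$ forces ${\mc A}(p) > {\mc A}(q)$). In the generality of a Noetherian ring $R$ that is not a field, one can either reduce to the field case by tensoring with a residue field (losing nothing for the finiteness statement) or appeal directly to Usher's original argument from \cite{Usher_2008}.
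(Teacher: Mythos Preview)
The paper does not supply its own proof of this theorem; it is simply cited as \cite[Theorem 1.3]{Usher_2008}. Your argument is correct and self-contained: the complex is finite rank over $\Lambda_{\mb K}^\Gamma$ because $P/\Gamma$ is finite, the Usher--Zhang SVD applies, and the maximum singular value bounds the boundary depth exactly as you compute. (Minor notational quibble: you write $\ell(c_i)$ for a scalar $c_i \in \Lambda_{\mb K}^\Gamma$, where the paper's convention would be $-{\mf v}(c_i)$; the identity $\ell(c_i y_i^{(k)}) = \ell(c_i x_i^{(k-1)}) + (\ell(y_i^{(k)}) - \ell(x_i^{(k-1)}))$ you actually use is correct.)

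Your route via the 2016 Usher--Zhang SVD is anachronistic relative to the cited source but entirely in the spirit of the surrounding paper: it is essentially the statement that the boundary depth equals the longest finite bar (the paper's Proposition~\ref{prop:beta-equal}), combined with the observation that a finite-rank complex has only finitely many bars. Usher's original 2008 argument predates the barcode formalism and proceeds by a more hands-on filtration argument, but the SVD approach is cleaner and fits better with how the paper organizes the filtered theory. Your closing remark about the Noetherian case is apt: the paper's Definition~\ref{defn_boundary_depth_1} is stated over a field $\Lambda_{\mb K}^\Gamma$, so the field case is all that is needed here.
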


\subsubsection{Quasiequivalence distance}

We rephrase the notion of quasiequivalences between Floer--Novikov complexes, which was originally introduced in \cite{Usher_Zhang_2016} for the more general situation of Floer-type complexes. 

\begin{defn}(cf. \cite[Definition 1.3]{Usher_Zhang_2016}) \label{defn_quasiequivalence}
Let $(CF_\bullet({\mf c}_i), \partial_i)$, $i = 1, 2$, be two Floer--Novikov complexes associated to Floer--Novikov packages ${\mf c}_i$ over a field ${\mb K}$. Let $\ell_i$ be the valuation function on the two complexes defined by \eqref{valuation_function}. Let $\delta \geq 0$. A {\bf $\delta$-quasiequivalence} between $CF_\bullet({\mf c}_1)$ and $CF_\bullet({\mf c}_2)$ is a quadruple $(\Phi, \Psi, K_C, K_D)$ where 
\begin{enumerate}
    \item $\Phi: CF_\bullet({\mf c}_1) \to CF_\bullet({\mf c}_2)$ and $\Psi: CF_\bullet({\mf c}_2) \to CF_\bullet({\mf c}_1)$ are chain maps with 
    \begin{align*}
        &\ \ell_2(\Phi(x_1)) \leq \ell_1(x_1) +\delta,\ &\ \ell_1(\Psi(x_2)) \leq \ell_2(x_2) + \delta
    \end{align*}
    for all $x_1 \in CF_\bullet({\mf c}_1)$ and $x_2 \in CF_\bullet({\mf c}_2)$.

    \item $K_i: CF_\bullet({\mf c}_i) \to CF_{\bullet+1}({\mf c}_i)$, $i = 1, 2$, obey the homotopy equations
    \begin{align*}
    &\ \Psi \circ \Phi - {\rm Id}_{CF_\bullet({\mf c}_1)} = \partial_1 K_1 + K_1 \partial_1,\ &\ \Phi \circ \Psi - {\rm Id}_{CF_\bullet({\mf c}_2)} = \partial_2 K_2 + K_2 \partial_2
    \end{align*}
    and for all $x_i \in CF_\bullet({\mf c}_i)$, $i = 1, 2$, one has 
    \beqn
    \ell_i(K_i(x_i)) \leq \ell_i(x_i) + 2\delta.
    \eeqn
    \end{enumerate}
The {\bf quasiequivalence distance} between $CF_\bullet({\mf c}_1)$ and $CF_\bullet({\mf c}_2)$, denoted by $d_Q(CF_\bullet({\mf c}_1), CF_\bullet({\mf c}_2))$, is the infimum of $\delta$ such that there exists a $\delta$-quasiequivalence between them.    
\end{defn}

\subsection{Persistence modules and stability of boundary depth}\label{subsection_persistence}

\begin{defn}\label{defn_persistence}
Let ${\mb K}$ be a field. 
\begin{enumerate}

\item A {\bf persistence module} ${\bm V}$ is a family of ${\mb K}$-vector spaces 
\beqn
{\bm V} = (V^s)_{s \in {\mb R}}
\eeqn
together with linear maps (called the {\bf structural maps} of ${\bm V}$)
\beqn
\iota^{s, t}:= \iota^{s, t}_{\bm V}: V^s \to V^t\ \forall s \leq t
\eeqn
such that $\iota^{s, s} = {\rm Id}_{V^s}$ for all $s$ and $\iota^{t, r} \circ \iota^{s, t} = \iota^{s, r}$ for all $s \leq t \leq r$. 

\item Let ${\bm V}$ be a persistence module and $\delta\in {\mb R}$. The $\delta$-shift of ${\bm V}$ is the persistence module ${\bm V}[\delta]$ with $V[\delta]^s = V^{s + \delta}$ and $\iota[\delta]^{s, t} = \iota^{s + \delta, t + \delta}$.

\item Let ${\bm V}$ and ${\bm W}$ be two persistence modules. A morphism from ${\bm V}$ to ${\bm W}$ is a collection of linear maps ${\bm f} = (f^s: V^s \to W^s)_{s \in{\mb R}}$ such that for all $s \leq t$ the following diagram commutes.
\beqn
\xymatrix{ V^s \ar[r]^{f^s} \ar[d]_{\iota^{s, t}_{\bm V}} & W^s \ar[d]^{\iota^{s, t}_{\bm W}} \\
           V^t  \ar[r]_{f^t}               & W^t   }
\eeqn

\item The direct sum of persistence modules is defined in a natural way.
\end{enumerate}
\end{defn}

\begin{rem}
The notion of persistence modules first appeared in topological data analysis and then was adopted to symplectic topology (see \cite{Polterovich_Shelukhin_2016, Usher_Zhang_2016}). In many papers, notably in the symplectically aspherical or monotone setting such as \cite{Polterovich_Shelukhin_2016, Shelukhin_2022},the Floer persistence modules are usually finite-dimensional and hence generate {\it barcodes} as in the original situation of topological data analysis. However, in more general situations such as \cite{Usher_Zhang_2016} where the Floer persistence modules are infinite-dimensional, the notion of barcodes becomes more complicated. See Subsection \ref{subsection_barcodes} for more details.
\end{rem}

\subsubsection{Interleaving distance}

\begin{defn}
Let $\delta \geq 0$. Two persistence modules ${\bm V}$, ${\bm W}$ are called {\bf $\delta$-interleaved} if there are ${\mb K}$-linear maps 
\begin{align*}
&\ f^s: V^s \to W^{s+\delta},\ &\ g^s: W^s \to V^{s+\delta}
\end{align*}
for all $s \in {\mb R}$ such that for all $s \leq t$ the following diagram commutes. 
\beqn
\xymatrix{ V^{s-\delta} \ar[r]_{f^{s-\delta}} \ar[d]  \ar@/^2.0pc/[rr]^{\iota^{s-\delta, s+\delta}_{\bm V}} & W^s \ar[r]_{g^s} \ar[d] & V^{s+\delta} \ar[r]_{f^{s + \delta}} \ar[d] & W^{s + 2\delta} \ar[d]\\
           V^{t-\delta} \ar[r]_{f^{t-\delta}} &        W^t \ar[r]_{g^{t}} \ar@/_2.0pc/[rr]_{\iota^{t, t+2\delta}_{\bm W}}  & V^{t+\delta} \ar[r]_{f^{t+\delta}} & W^{ t+ 2\delta} }
\eeqn
Here all vertical arrows are the structural maps in the persistence modules.

Define the {\bf interleaving distance} between ${\bm V}$ and ${\bm W}$ to be the infimum of all $\delta\geq 0$ such that ${\bm V}$ and ${\bm W}$ are $\delta$-interleaved; if such $\delta$ does not exist, define the interleaving distance to be $+\infty$. Here ends this definition.
\end{defn}

\subsubsection{Boundary depth of persistence modules and stability}

We observe that one can generalize the notion of boundary depth to persistence modules. 

\begin{defn}\label{defn_boundary_depth_2}
Let ${\bm V}$ be a persistence module over ${\mb K}$. The {\bf boundary depth} of ${\bm V}$, denoted by $\beta({\bm V})$, is the infimum of $\beta>0$ such that for all $s \in {\mb R}$, $x \in V^s$, if $\iota^{s, t}(x) = 0$ for some $t > s$, then $\iota^{s, s+\beta}(x) = 0$. 
\end{defn}

As we allow persistent modules to be infinite-dimensional, we reprove the stability result of boundary depth. 

\begin{prop}\label{prop_boundary_depth_stability}
Suppose ${\bm V}$, ${\bm W}$ are $\delta$-interleaved persistence modules. Suppose ${\bm V}$ has finite boundary depth. Then ${\bm W}$ has finite boundary depth and 
\beqn
\beta({\bm W}) \leq \beta({\bm V}) + 2\delta.
\eeqn
\end{prop}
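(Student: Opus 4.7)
The plan is to pass through the interleaving maps: transport a test element from $\bm W$ to $\bm V$, apply the bound coming from $\beta(\bm V)$, and transport the resulting vanishing back to $\bm W$, with a $\delta$-shift lost on each transport.

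Concretely, I fix $s \in {\mb R}$ and $x \in W^s$ with $\iota^{s,t}_{\bm W}(x) = 0$ for some $t > s$, and aim to show that $\iota^{s, s + 2\delta + \beta(\bm V) + \epsilon}_{\bm W}(x) = 0$ for every $\epsilon > 0$. First I would set $y := g^s(x) \in V^{s+\delta}$. Using the interleaving diagram (the left square), the map $g$ intertwines the structural maps of $\bm V$ and $\bm W$, so $\iota^{s+\delta, t+\delta}_{\bm V}(y) = g^t(\iota^{s,t}_{\bm W}(x)) = 0$. Thus $y$ is eventually killed by the structural maps of $\bm V$, so by Definition \ref{defn_boundary_depth_2} applied to $\bm V$, for every $\epsilon > 0$ one has $\iota^{s+\delta, s+\delta + \beta(\bm V) + \epsilon}_{\bm V}(y) = 0$.

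Next I would push forward by $f^{s+\delta + \beta(\bm V) + \epsilon}$. The interleaving commutativity (the relevant square involving $f$) gives
\beqn
f^{s+\delta + \beta(\bm V) + \epsilon} \circ \iota^{s+\delta, s+\delta + \beta(\bm V) + \epsilon}_{\bm V} = \iota^{s+2\delta, s+2\delta + \beta(\bm V) + \epsilon}_{\bm W} \circ f^{s+\delta}.
\eeqn
The key identity from the $\delta$-interleaving is $f^{s+\delta} \circ g^s = \iota^{s, s+2\delta}_{\bm W}$, so evaluating the displayed equation at $y = g^s(x)$ yields
\beqn
0 = \iota^{s+2\delta, s+2\delta + \beta(\bm V) + \epsilon}_{\bm W}\big(\iota^{s, s+2\delta}_{\bm W}(x)\big) = \iota^{s, s + 2\delta + \beta(\bm V) + \epsilon}_{\bm W}(x).
\eeqn
Since $\epsilon > 0$ was arbitrary, this proves $\beta(\bm W) \leq \beta(\bm V) + 2\delta$, which in particular is finite.

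I do not expect a significant obstacle here; the argument is essentially symmetric shuffling around an interleaving diagram. The only small subtlety to watch is that the boundary depth is defined as an infimum rather than a minimum, so I should run the bound from $\bm V$ with an extra slack $\epsilon$ and then let $\epsilon \to 0^+$ at the end, as above. No other technical care is needed.
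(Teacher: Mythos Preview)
Your proof is correct and follows essentially the same approach as the paper's: push the test element through the interleaving to $\bm V$, invoke $\beta(\bm V)$, and push back. The paper phrases the same push--pull argument by contradiction (assuming $\beta(\bm W) \geq \beta(\bm V) + 2\delta + 2\epsilon$ and deriving an element of $\bm V$ that violates $\beta(\bm V)$), whereas you give the direct version; your explicit handling of the $\epsilon$-slack from the infimum is cleaner.
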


\begin{proof}
Suppose on the contrary that $\beta({\bm W}) \geq \beta({\bm V}) + 2\delta + 2\epsilon$ for some $\epsilon>0$. Then there exist $s \in {\mb R}$ and $x \in W_s$ such that $\iota^{s, s + \beta({\bm V}) + 2\delta +  \epsilon}(x) \neq 0$. Then by the definition of $\delta$-interleaving, one has $y: = f^{s, s+ \delta}(x) \neq 0$ and 
\beqn
\iota^{s+\delta, s+\delta + \beta({\bm V}) + \epsilon}(y) \neq 0
\eeqn
but $y$ cannot survive eventually. This contradicts the definition of $\beta({\bm V})$.
\end{proof}

\subsubsection{Persistence modules associated to filtered Floer--Novikov complexes}\label{subsubsec:floer-persistent}

Fix a field ${\mb K}$. Let ${\mf c}$ be a Floer--Novikov package (see Definition \ref{defn_fn}) and $CF_\bullet({\mf c})$ be the associated filtered Floer--Novikov complex. Then the collection of homology groups
\beqn
V^s({\mf c}):= HF_\bullet^{\leq s}({\mf c}; \Lambda_{\mb K}^\Gamma)
\eeqn
together with the natural maps $\iota^{s, t}$ (cf. Equation \eqref{eqn:iota-map}) is a persistence module over ${\mb K}$, denoted by ${\bm V}({\mf c})$.

It is easy to derive from definitions the following stability results of persistence modules coming from Floer--Novikov complexes.

\begin{prop}
Let $CF_\bullet({\mf c}_i)$, $i = 1, 2$ be two Floer--Novikov complexes over a field ${\mb K}$ and ${\bm V}({\mf c}_i)$ be the associated persistence module. Then the interleaving distance between ${\bm V}({\mf c}_1)$ and ${\bm V}({\mf c}_2)$ is no greater than the quasiequivalence distance between $CF_\bullet({\mf c}_1)$ and $CF_\bullet({\mf c}_2)$.
\end{prop}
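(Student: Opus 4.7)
The plan is to convert a $\delta$-quasiequivalence between the chain complexes directly into a $\delta$-interleaving between the associated persistence modules, and then take the infimum over $\delta$. The conditions in Definition \ref{defn_quasiequivalence} have been designed precisely so that this conversion is essentially a bookkeeping exercise on filtered subcomplexes.

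First I would fix a $\delta$-quasiequivalence $(\Phi, \Psi, K_1, K_2)$ between $CF_\bullet({\mf c}_1)$ and $CF_\bullet({\mf c}_2)$. The energy bounds $\ell_2(\Phi(x)) \leq \ell_1(x) + \delta$ and $\ell_1(\Psi(y)) \leq \ell_2(y) + \delta$ imply that $\Phi$ restricts to chain maps $CF_\bullet^{\leq s}({\mf c}_1) \to CF_\bullet^{\leq s + \delta}({\mf c}_2)$, and analogously for $\Psi$; passing to homology gives linear maps
\[
f^s: V^s({\mf c}_1) \to V^{s+\delta}({\mf c}_2), \qquad g^s: V^s({\mf c}_2) \to V^{s+\delta}({\mf c}_1).
\]
Naturality of these maps with respect to the filtration (they commute with inclusions of sublevel sets) gives the commutativity of the vertical/horizontal squares in the interleaving diagram automatically, since both compositions in each square are induced by the same chain-level map.

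The heart of the argument is showing that the two triangular compositions in the interleaving diagram equal the structural maps $\iota^{s-\delta, s+\delta}_{\bm V}$ and $\iota^{t, t+2\delta}_{\bm W}$. For the first, take $[x] \in V^{s-\delta}({\mf c}_1)$ represented by a cycle $x \in CF_\bullet^{\leq s-\delta}({\mf c}_1)$. Then $g^s \circ f^{s-\delta}([x]) = [\Psi\Phi(x)]$, while the structural map sends $[x]$ to $[x]$ in $V^{s+\delta}({\mf c}_1)$. The homotopy equation $\Psi\Phi - \mathrm{Id} = \partial_1 K_1 + K_1 \partial_1$ combined with $\ell_1(K_1(x)) \leq \ell_1(x) + 2\delta \leq s + \delta$ shows that $\Psi\Phi(x) - x = \partial_1(K_1(x))$ is a boundary inside $CF_\bullet^{\leq s+\delta}({\mf c}_1)$; hence $[\Psi\Phi(x)] = [x]$ in $V^{s+\delta}({\mf c}_1)$, as required. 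The symmetric argument with $K_2$ handles the other triangle.

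This shows that $(f^s, g^s)$ is a $\delta$-interleaving, so the interleaving distance is at most $\delta$. Taking the infimum over all $\delta$ for which a $\delta$-quasiequivalence exists yields the desired inequality. I do not anticipate a genuine obstacle; the only point requiring a little care is to verify that $K_1$ indeed lands in the correct sublevel set so that the equality $\Psi\Phi(x) - x = \partial_1(K_1(x))$ takes place in filtration at most $s+\delta$, which is exactly where the quantitative hypothesis on $K_i$ in Definition \ref{defn_quasiequivalence} enters.
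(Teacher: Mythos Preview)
Your argument is correct and is exactly the standard verification that the definitions have been set up to make work. The paper does not actually give a proof of this proposition; it simply states that it is ``easy to derive from definitions'' and moves on, so your write-up supplies the details the authors chose to omit.
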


Moreover, the two notions of boundary depths (Definition \ref{defn_boundary_depth_1} and Definition \ref{defn_boundary_depth_2}) agree.

\begin{prop}\label{prop_boundary_depth_equivalence}
Let ${\mf c}$ be a Floer--Novikov package over ${\mb K}$. Then the boundary depth of the filtered Floer--Novikov complex $CF_\bullet({\mf c})$ and the boundary depth of the persistence module ${\bm V}({\mf c})$ coincide. 
\end{prop}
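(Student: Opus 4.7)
The plan is to prove the two inequalities $\beta(CF_\bullet({\mf c})) \leq \beta({\bm V}({\mf c}))$ and $\beta({\bm V}({\mf c})) \leq \beta(CF_\bullet({\mf c}))$ separately by directly translating each defining condition through the homology functor. The key observation, which makes everything work, is tautological: an element $x\in V^s({\mf c}) = HF_\bullet^{\leq s}({\mf c})$ satisfies $\iota^{s,t}(x) = 0$ if and only if any cycle $\tilde x\in CF_\bullet^{\leq s}({\mf c})$ representing $x$ lies in $\partial(CF_\bullet^{\leq t}({\mf c}))$.

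For the inequality $\beta({\bm V}({\mf c})) \leq \beta(CF_\bullet({\mf c}))$, fix $\beta > \beta(CF_\bullet({\mf c}))$ and suppose $x\in V^s({\mf c})$ with $\iota^{s,t}(x)=0$ for some $t>s$. Choose a representative $\tilde x \in CF_\bullet^{\leq s}({\mf c})$; then $\tilde x \in CF_\bullet^{\leq s}({\mf c}) \cap {\rm Im}\,\partial$ since $\tilde x = \partial \tilde y$ for some $\tilde y \in CF_\bullet^{\leq t}({\mf c})$. By Definition \ref{defn_boundary_depth_1} applied at $\lambda = s$, we have $\tilde x = \partial \tilde z$ for some $\tilde z \in CF_\bullet^{\leq s + \beta}({\mf c})$, hence $\iota^{s, s+\beta}(x) = 0$. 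Taking the infimum over such $\beta$ gives the desired bound.

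For the reverse inequality $\beta(CF_\bullet({\mf c})) \leq \beta({\bm V}({\mf c}))$, fix $\beta > \beta({\bm V}({\mf c}))$ and let $\tilde x \in CF_\bullet^{\leq \lambda}({\mf c}) \cap {\rm Im}\,\partial$. Then $\tilde x = \partial \tilde y$ for some $\tilde y \in CF_\bullet({\mf c})$. If $\ell(\tilde y) \leq \lambda + \beta$ we are done, so assume $t := \ell(\tilde y) > \lambda + \beta$; in particular $\tilde x$ is a cycle in $CF_\bullet^{\leq \lambda}({\mf c})$ representing a class $x \in V^\lambda({\mf c})$ with $\iota^{\lambda, t}(x) = 0$. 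Definition \ref{defn_boundary_depth_2} then yields $\iota^{\lambda, \lambda + \beta}(x) = 0$, which exactly says $\tilde x = \partial \tilde z$ for some $\tilde z \in CF_\bullet^{\leq \lambda + \beta}({\mf c})$. Taking infima finishes the proof.

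There is no real obstacle here; the statement is essentially a bookkeeping identity between a chain-level condition and the corresponding homological condition, and the only substantive ingredient is the elementary fact that the homology $HF_\bullet^{\leq s}({\mf c})$ is computed by cycles modulo boundaries internal to $CF_\bullet^{\leq s}({\mf c})$, so the vanishing of a class under $\iota^{s,t}$ is equivalent to the existence of a bounding chain at filtration level $t$. The mild technical point to be careful about is that both boundary depths are defined as infima, so the argument must be phrased for arbitrary $\beta$ strictly exceeding the respective infimum; the inequalities for the infima then follow by passage to the limit.
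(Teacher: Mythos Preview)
Your proof is correct and follows essentially the same approach as the paper's: both arguments prove the two inequalities separately by translating between the chain-level condition (an exact element bounds at a given filtration level) and the homological condition (a class dies under the structural map at that level). The only cosmetic differences are that the paper writes $\beta_i + \epsilon$ for arbitrary $\epsilon>0$ where you fix an arbitrary $\beta$ strictly exceeding the infimum, and you include a harmless but unnecessary case split on whether $\ell(\tilde y)\le \lambda+\beta$.
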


\begin{proof}
Let $\beta_1$ be the boundary depth of $CF_\bullet({\mf c})$ and $\beta_2$ be the boundary depth of ${\bm V}({\mf c})$. Suppose $[x]\in HF^{\leq s}_\bullet({\mf c})$ which does not survive eventually. Let $x\in CF^{\leq s}_\bullet({\mf c})$ be a representative. Then $x$ is exact. Then by Definition \ref{defn_boundary_depth_1}, for all $\epsilon>0$, one has
\beqn
x \in \partial ( CF^{\leq s+\beta_1 + \epsilon}_\bullet({\mf c})).
\eeqn
Hence $\iota^{s, s+\beta_1 + \epsilon}([x]) = 0$. As $\epsilon$ is arbitrary, this implies that $\beta_2 \leq \beta_1$. 

On the other hand, for all $s \in {\mb R}$ and all exact $x \in CF_\bullet^{\leq s} ({\mf c})$, the class $[x]\in HF_\bullet^{\leq s}({\mf c})$ does not survive eventually. Then by Definition \ref{defn_boundary_depth_2}, for any $\epsilon>0$, one has $\iota^{s, s+\beta_2 + \epsilon}([x]) = 0$. This implies that 
\beqn
x \in \partial ( CF^{\leq s + \beta_2 + \epsilon}_\bullet({\mf c})).
\eeqn
It follows that $\beta_1 \leq \beta_2$. Hence $\beta_1 = \beta_2$.
\end{proof}

\subsection{Reduced barcodes}\label{subsection_barcodes}

Barcodes, originally used in topological data analysis, have become a very convenient tool in quantitative symplectic geometry. In this paper, as we only need to use the lengths of bars rather than the positions of the bars, we define the following notion of reduced barcodes.

\begin{defn}\label{defn:bar}(cf. \cite[Definition 8.13, 8.14]{Usher_Zhang_2016})
\begin{enumerate}


\item A {\bf reduced barcode} is a finite multiset ${\mc B}$ of elements of $(0, +\infty]$.\footnote{To match with the traditional notion of barcode, one can regard a member of ${\mc B}$ the length of either a finite interval or a semi-infinite interval.} Although ${\mc B}$ is not a set in general and a member $L$ of ${\mc B}$ may appear multiple times, we still use the same notations as if $L$ is an element of a set ${\mc B}$, such as $L \in {\mc B}$, without confusion. Let ${\mc B}_{\rm finite}\subset {\mc B}$ denote the submultiset of finite bars, i.e., those with $L < +\infty$. 

\item The {\bf total bar length} of a reduced barcode ${\mc B}$ is 
\beqn
\tau ({\mc B} ):= \sum_{L_i \in {\mc B}_{\rm finite}} L_i.
\eeqn



\item The {\bf reduced bottleneck distance} between two reduced barcodes ${\mc B}$ and ${\mc B}'$, denoted by $d_B({\mc B}, {\mc B}')$, is the infimum of $\delta>0$ such that, after removing certain submultisets ${\mc B}_{\rm short}\subset {\mc B}$ and ${\mc B}_{\rm short}' \subset {\mc B}'$ whose members all have length at most $2\delta$, there is a bijection between ${\mc B}\setminus {\mc B}_{\rm short}$ and ${\mc B}' \setminus {\mc B}_{\rm short}'$ such that the differences of the corresponding bar lengths are all bounded by $\delta$.

\end{enumerate}
\end{defn}

The bottleneck distance is symmetric and satisfies the triangle inequality. It is not a metric in the usual sense as it may take infinite value. Indeed, $d_B({\mc B}, {\mc B}')< \infty$ if and only if ${\mc B}$ and ${\mc B}'$ has the same number of infinite bars. 





\begin{prop}(cf. \cite[Proposition 20]{Roux_Seyfaddini_Viterbo_2021}) 
For any $k  \geq 0$, the completion of the set of reduced barcodes having $k$ infinite bars is the set of possibly infinite reduced barcodes (with $k$ infinite bars) such that for all $\epsilon>0$, the number of finite bars with length greater than $\epsilon$ is finite. 
\end{prop}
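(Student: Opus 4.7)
My plan is to identify the claimed space as the completion by establishing two facts. Denote by $\mathcal{X}$ the set of finite reduced barcodes with exactly $k$ infinite bars, and by $\mathcal{X}_\infty$ the claimed target space of (possibly infinite) reduced multisets with $k$ infinite bars satisfying the tameness condition. I first note that $d_B$ as defined in Definition \ref{defn:bar}(3) extends verbatim to $\mathcal{X}_\infty$: the $k$ infinite bars on each side must match one another, and under tameness all bars of length $>2\delta$ form a \emph{finite} sub-multiset on each side, so any $\delta$-matching reduces to a bijection of these finite sub-multisets with every shorter bar absorbed into $\mathcal{B}_{\mathrm{short}}$ or $\mathcal{B}'_{\mathrm{short}}$. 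With this extension in hand, the proof splits into density of $\mathcal{X}$ in $\mathcal{X}_\infty$ and completeness of $\mathcal{X}_\infty$; since the completion is the unique complete space in which the original sits densely, these two together suffice.

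For density, given $\mathcal{B}_\infty \in \mathcal{X}_\infty$, I truncate by defining $\mathcal{B}_n \subset \mathcal{B}_\infty$ as the sub-multiset consisting of the $k$ infinite bars together with every finite bar of length $>1/n$. Tameness makes $\mathcal{B}_n$ finite, so $\mathcal{B}_n \in \mathcal{X}$. Applying Definition \ref{defn:bar}(3) with $\mathcal{B}_{\mathrm{short}} = \varnothing$, $\mathcal{B}'_{\mathrm{short}} = \mathcal{B}_\infty \setminus \mathcal{B}_n$, and the inclusion-induced identity bijection on the remaining bars yields $d_B(\mathcal{B}_n, \mathcal{B}_\infty) \leq 1/(2n) \to 0$, because every discarded bar has length $\leq 1/n = 2\cdot 1/(2n)$ and every matched pair differs by $0$.

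For completeness, let $\{\mathcal{B}_n\} \subset \mathcal{X}_\infty$ be Cauchy. After passing to a subsequence I assume $d_B(\mathcal{B}_n, \mathcal{B}_{n+1}) < 2^{-n-2}$ and pick witnessing $2^{-n-2}$-matchings $\sigma_n$. Finiteness of the pairwise distances forces each $\mathcal{B}_n$ to contain exactly $k$ infinite bars, necessarily matched to one another by the $\sigma_n$. I construct $\mathcal{B}_\infty$ by declaring a ``persistent bar'' to be an equivalence class of bars $b_n \in \mathcal{B}_n$ related by the $\sigma_n$ from some stage onward, with length equal to the telescoping limit of their lengths (a Cauchy series with increments bounded by $2^{-n-2}$, convergent in $(0,+\infty]$). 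Tameness of $\mathcal{B}_\infty$ is then automatic: any bar of $\mathcal{B}_\infty$ of length $>\epsilon$ arises from a bar of length $>\epsilon/2$ in $\mathcal{B}_n$ for $n$ large, of which there are only finitely many. Convergence $d_B(\mathcal{B}_n, \mathcal{B}_\infty) \to 0$ follows from summing the telescoping shifts $\sum_{j \geq n} 2^{-j-2} = 2^{-n-1}$.

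The main technical obstacle is the bookkeeping in the completeness argument: the matchings $\sigma_n$ are only partial bijections, and bars of length near the threshold $2\cdot 2^{-n-2} = 2^{-n-1}$ may be discarded at one stage and re-enter the matching later. I would handle this by organizing the ``persistent bar'' equivalence classes as a direct limit of the system of partial bijections $\sigma_n$, and then using the observation that a bar eventually of length $>\epsilon$ in $\mathcal{B}_\infty$ can be discarded by $\sigma_j$ only at those finitely many stages with $2^{-j-1} \geq \epsilon$, after which it is forced to persist forever. This is the Wasserstein-type completeness argument standard in persistence theory, adapted to the reduced barcode setting.
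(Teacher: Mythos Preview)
The paper does not give its own proof of this proposition; it is stated with a ``cf.'' to \cite[Proposition 20]{Roux_Seyfaddini_Viterbo_2021} and then used without further justification. So there is nothing in the paper to compare your argument against.

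That said, your outline is the standard two-step identification of a completion (density plus completeness), and the density half is clean and correct as written. In the completeness half your instincts are right --- pass to a geometrically summable subsequence, thread bars through the partial matchings $\sigma_n$, and take limits of lengths --- but the sketch as it stands has a real gap: you have not actually constructed the limiting multiset $\mathcal{B}_\infty$. Declaring a ``persistent bar'' to be an ``equivalence class of bars related by the $\sigma_n$ from some stage onward'' is not a definition until you specify the equivalence relation precisely and show it is well-posed. The difficulty you flag (bars near threshold dropping out and re-entering) is exactly what makes this delicate: the $\sigma_n$ are not maps $\mathcal{B}_n \to \mathcal{B}_{n+1}$ but partial bijections between sub-multisets, so there is no direct system in the categorical sense to take a limit of. One clean way to make this rigorous is to fix $\epsilon>0$, restrict to bars of length $>\epsilon$ in each $\mathcal{B}_n$ for $n$ large enough that $2^{-n} < \epsilon$ (so such bars are never discarded thereafter and the $\sigma_n$ restrict to honest bijections on them), build the limit of that finite piece, and then check that these $\epsilon$-level limits are compatible as $\epsilon \to 0$. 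Your final paragraph gestures at this, but the argument would need to be written out to count as a proof rather than a plan.
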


\subsubsection{Reduced barcodes associated to Floer--Novikov complexes}

Usher--Zhang \cite{Usher_Zhang_2016} defined for each ${\mb Z}_2$-graded Floer-type complexes over $\Lambda_{\mb K}^\Gamma$ and each $k \in {\mb Z}_2$ the associated barcodes (which allows bars of length zero). As Floer--Novikov complexes are all Floer-type complexes, one has an associated reduced barcode. Let the reduced barcode associated to a Floer--Novikov complex $CF_\bullet({\mf c})$ be ${\mc B}({\mf c})$. As the differential strictly decreases the action, there are no bars of length zero (which was allowed in the abstract setting of \cite{Usher_Zhang_2016}). We do not recall the detail of the definition here.

\begin{prop}\label{prop:beta-equal}
Let ${\bm V}({\mf c})$ be the persistence module induced from a filtered Floer--Novikov complex $CF_\bullet({\mf c})$ over $\Lambda_{\mb K}^\Gamma$. Then the boundary depth of ${\bm V}({\mf c})$ (see Definition \ref{defn_boundary_depth_1} and Definition \ref{defn_boundary_depth_2}) coincides with the length of the longest finite bar in ${\mc B}({\mf c})$. In particular, the boundary depth is zero if and only if ${\mc B}({\mf c})$ has no finite bar.
\end{prop}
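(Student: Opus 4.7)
The plan is to invoke the singular value decomposition (SVD) of \cite[Theorem 3.4]{Usher_Zhang_2016} applied to the differential of $CF_\bullet({\mf c})$, and to combine it with Proposition \ref{prop_boundary_depth_equivalence}, which already identifies the boundary depth of the persistence module ${\bm V}({\mf c})$ with the boundary depth of the filtered complex as in Definition \ref{defn_boundary_depth_1}. Denote the latter by $\beta$ and denote the length of the longest finite bar of ${\mc B}({\mf c})$ by $L_{\max}$, with the convention $L_{\max} = 0$ if ${\mc B}({\mf c})$ has no finite bar; the goal is to show $\beta = L_{\max}$.

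First, applying the SVD degree by degree to $\partial: CF_{\bullet+1}({\mf c}) \to CF_\bullet({\mf c})$, one obtains orthogonal ordered bases $(y_1, \ldots, y_n)$ and $(x_1, \ldots, x_m)$ of the source and target such that $(y_{r+1}, \ldots, y_n)$ is a basis of $\ker \partial$, $(x_1, \ldots, x_r)$ is a basis of ${\rm Im}\,\partial$, $\partial y_i = x_i$ for $i \le r$, and $\ell(y_i) - \ell(x_i)$ is non-increasing in $i$. By the construction of the concise barcode in \cite[\S 6]{Usher_Zhang_2016}, the multiset of finite bar lengths of ${\mc B}({\mf c})$ is exactly $\{\ell(y_i) - \ell(x_i) : 1 \le i \le r\}$, so $L_{\max} = \ell(y_1) - \ell(x_1)$ when $r \geq 1$.

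For the inequality $\beta \leq L_{\max}$, fix $\lambda \in {\mb R}$ and $z \in CF_\bullet^{\leq \lambda}({\mf c}) \cap {\rm Im}\,\partial$. Expand $z = \sum_{i \leq r} c_i x_i$ with $c_i \in \Lambda_{\mb K}^\Gamma$; orthogonality of $(x_1,\ldots,x_r)$ gives $\ell(c_i x_i) \leq \lambda$ for each $i$ with $c_i \neq 0$. Setting $w = \sum_{i \leq r} c_i y_i$ gives $\partial w = z$, while orthogonality of $(y_1, \ldots, y_n)$ together with the homogeneity property of $\ell$ yields
\beqn
\ell(w) = \max_{c_i \neq 0} \ell(c_i y_i) = \max_{c_i \neq 0}\big(\ell(c_i x_i) + (\ell(y_i) - \ell(x_i))\big) \leq \lambda + L_{\max}.
\eeqn
For the reverse inequality $\beta \geq L_{\max}$, assuming $r \geq 1$, take $z = x_1 \in CF_\bullet^{\leq \ell(x_1)}({\mf c}) \cap {\rm Im}\,\partial$. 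Any primitive $w$ of $z$ has the form $y_1 + w'$ with $w' \in \ker \partial$, so orthogonality forces $\ell(w) \geq \ell(y_1) = \ell(x_1) + L_{\max}$; consequently no primitive lies in $CF_{\bullet+1}^{\leq \ell(x_1) + \eta}({\mf c})$ for $\eta < L_{\max}$, so $\beta \geq L_{\max}$. The final clause then follows: since the differential strictly decreases action, there are no bars of length zero, and thus $\beta = L_{\max} = 0$ occurs precisely when ${\mc B}({\mf c})$ contains no finite bar.

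The only nontrivial ingredient — really the crux of the argument — is the identification of the concise barcode with the multiset $\{\ell(y_i) - \ell(x_i)\}_{i \leq r}$ coming from any singular value decomposition, which is exactly the substance of the barcode construction in \cite[\S 6]{Usher_Zhang_2016}. Once this identification is in hand, both inequalities are direct consequences of orthogonality as computed above, and no additional compactness or finiteness arguments are required beyond those already built into Floer--Novikov packages.
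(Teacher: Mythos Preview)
Your proof is correct and follows essentially the same approach the paper indicates: the paper's own proof reads in full ``It follows from the definitions of boundary depth and barcodes (via singular value decompositions, see \cite{Usher_Zhang_2016}). The details are left to the reader.'' You have supplied exactly those details, invoking Proposition \ref{prop_boundary_depth_equivalence} to reduce to Definition \ref{defn_boundary_depth_1} and then using the SVD orthogonal bases to verify both inequalities directly.
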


\begin{proof}
It follows from the definitions of boundary depth and barcodes (via singular value decompositions, see \cite{Usher_Zhang_2016}). The details are left to the reader.
\end{proof}

We also have the following stability result about reduced barcode.

\begin{thm}\cite[Theorem 8.17]{Usher_Zhang_2016}\label{thm:bottle-neck-Hofer}
Let $(CF_\bullet({\mf c}_1), \partial_1, \ell_1)$ and $(CF_\bullet({\mf c}_2), \partial_2, \ell_2)$ be two Floer--Novikov complexes associated to Floer--Novikov data ${\mf c}_1$, ${\mf c}_2$ over a field ${\mb K}$. Suppose the quasiequivalence distance between $CF_\bullet({\mf c}_1)$ and $CF_\bullet({\mf c}_2)$ is finite. Then 
\beqn
d_B ({\mc B}({\mf c}_1), {\mc B} ({\mf c}_2)) \leq 2 d_Q ( CF_\bullet({\mf c}_1), CF_\bullet({\mf c}_2)).
\eeqn
\end{thm}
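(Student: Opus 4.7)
The plan is to reduce the theorem to a statement about persistence modules and then apply the algebraic stability theorem via singular value decompositions of the boundary maps. I would proceed as follows.

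First, given a $\delta$-quasiequivalence $(\Phi, \Psi, K_1, K_2)$ between $CF_\bullet({\mf c}_1)$ and $CF_\bullet({\mf c}_2)$, I would observe that $\Phi$ and $\Psi$, being chain maps that shift the valuation by at most $\delta$, induce morphisms of persistence modules
\[
f^s: V^s({\mf c}_1) \to V^{s+\delta}({\mf c}_2),\qquad g^s: V^s({\mf c}_2) \to V^{s+\delta}({\mf c}_1).
\]
The homotopy estimates $\ell_i(K_i(x)) \leq \ell_i(x)+2\delta$ guarantee that $g^{s+\delta}\circ f^s$ and $\iota^{s,s+2\delta}_{\bm V({\mf c}_1)}$ agree on homology (and symmetrically), so $\bm V({\mf c}_1)$ and $\bm V({\mf c}_2)$ are $\delta$-interleaved. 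By Proposition \ref{prop_boundary_depth_stability} this already gives the stability of boundary depth; what is required here is the full statement at the level of barcodes.

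Second, I would construct the reduced barcodes ${\mc B}({\mf c}_i)$ from singular value decompositions of $\partial_i$, in the spirit of \cite{Usher_Zhang_2016}. Each matched pair $(y_j, x_j = \partial_i y_j)$ in an SVD contributes a finite bar of length $\ell_i(y_j) - \ell_i(x_j)$, and each element of a complementary basis of $\ker \partial_i / \mathrm{im}\, \partial_i$ contributes an infinite bar. The key algebraic lemma to prove (or invoke) is that the multiset of these lengths does not depend on the choice of SVD; this is proved by a non-Archimedean Gram--Schmidt argument together with the uniqueness statement in \cite[Theorem 6.1]{Usher_Zhang_2016}. Once barcodes are identified with multisets of singular-value gaps, the factor of $2$ in the conclusion becomes transparent: the chain homotopies $K_i$ force the interleaving to be controlled at scale $2\delta$ rather than $\delta$.

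Third, to promote the $\delta$-interleaving to a partial matching of bars, I would argue as follows. Working with the SVDs simultaneously, one shows that $\Phi$ sends an SVD-basis of $CF_\bullet({\mf c}_1)$ to a system of elements in $CF_\bullet({\mf c}_2)$ whose filtration levels are close (within $\delta$) to those of an SVD-basis of $CF_\bullet({\mf c}_2)$, and similarly for $\Psi$. Bars of length strictly greater than $4\delta$ must then be matched by the combination $\Psi\circ\Phi$ back to bars of comparable length, giving a bijection on the long-bar parts. Bars of length at most $4\delta = 2\cdot(2\delta)$ may remain unmatched; these constitute the submultisets ${\mc B}_{\rm short}$, ${\mc B}_{\rm short}'$ allowed by Definition \ref{defn:bar}(3). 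The matched pairs then satisfy a bar-length difference bound of $2\delta$. Taking infimum over $\delta$ gives $d_B({\mc B}({\mf c}_1),{\mc B}({\mf c}_2)) \leq 2\, d_Q(CF_\bullet({\mf c}_1), CF_\bullet({\mf c}_2))$.

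The main obstacle I anticipate is handling the possibility of infinitely many short bars in these a priori infinite-dimensional Floer--Novikov complexes. Unlike the finite-dimensional cases treated in \cite{Polterovich_Shelukhin_2016}, one must check that an SVD exists in this generality and that the matching procedure is carried out compatibly with the $\Gamma$-action (so that bars repeat with the correct multiplicity within each $\Gamma$-orbit). Both points are handled by the orthogonalizability established via the generating set $P$ and the non-Archimedean Gram--Schmidt process, which restricts attention to a single $\Gamma$-fundamental domain and then extends equivariantly. Modulo these technicalities, the stability argument follows the template of \cite[\S 8]{Usher_Zhang_2016}.
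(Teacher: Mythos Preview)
The paper does not prove this theorem; it is quoted directly from \cite[Theorem 8.17]{Usher_Zhang_2016} and used as a black box, so there is no in-paper proof to compare against. Your sketch is a reasonable outline of the Usher--Zhang argument: pass from a $\delta$-quasiequivalence to a $\delta$-interleaving of the associated persistence modules, build barcodes via singular value decompositions, and then match long bars while discarding short ones, with the factor $2$ coming from the $2\delta$ bound on the homotopies $K_i$.

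One point worth tightening: your third step (``$\Phi$ sends an SVD-basis to elements close to an SVD-basis on the other side'') is the delicate part and is not quite how Usher--Zhang proceed. They do not directly transport SVD bases across $\Phi$; instead they work with the verbose/concise barcode machinery and prove a matching theorem at the level of filtered chain isomorphism classes, using the mapping cylinder construction to relate the two complexes. The bound on bar-length differences is $2\delta$ (not $\delta$) and the threshold for unmatched bars is $2\delta$ (not $4\delta$) in the definition of bottleneck distance, so the numerology in your sketch should be rechecked against Definition~\ref{defn:bar}(3). But since the paper itself treats this as a citation, these refinements are beside the point for the present manuscript.
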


\subsection{$A_\infty$ algebras and Hochschild cohomology}

Let ${\mb K}$ be a field of characteristic zero. We recall the notion of ${\mb Z}_2$-graded $A_\infty$ algebras over the Novikov field $\Lambda_{\mb K}$. 

\begin{defn}[Curved $A_\infty$ algebra]
\begin{enumerate}

\item A ${\mb Z}_2$-graded {\bf curved $A_\infty$ algebra} over $\Lambda_{\mb K}$ consists of a ${\mb Z}_2$-graded $\Lambda_{\mb K}$-vector space ${\mc A}$ (the degree of a homogeneous element $a$ is denoted by $|a|$) and for all positive integers $k \geq 0$ {\bf higher composition maps}
\beqn
m_k: {\mc A}^{\otimes k} \to {\mc A}\ ({\rm where}\ m_0: \Lambda_{\mb K} \to A)
\eeqn
(which are $\Lambda_{\mb K}$-linear and have degree $k\ {\rm mod}\ 2$). The higher composition maps need to satisfy the following $A_\infty$ composition law: for all $k \geq 1$ and $a_k, \ldots, a_1 \in {\mc A}$,\footnote{There are two different conventions: the variables are either ordered as $a_1, \ldots, a_k$ or ordered as $a_k, \ldots, a_1$.}
\beqn
\sum_{i=0}^k \sum_{j=0}^{k-i} (-1)^{\maltese_1^j} m_{k-i+1} \left( a_k, \ldots, a_{i+j+1}, m_i ( a_{i+j}, \ldots, a_{j+1}), a_j, \ldots, a_1 \right) = 0
\eeqn
where the symbol $\maltese_a^b$ for all $a < b$ is defined as 
\beq\label{maltese}
\maltese_a^b = \sum_{a \leq i \leq b} \| a_i \| \quad {\rm where}\ \| a_i \| = |a_i| + 1.
\eeq

\item The {\bf curvature} of a curved $A_\infty$ algebra is the element 
\beqn
m_0(1) \in {\mc A}.
\eeqn
If $m_0 = 0$, then we say that the $A_\infty$ algebra is {\bf flat}.

\item Given a (curved or flat) $A_\infty$ algebra ${\mc A}$, a {\bf cohomological unit} is an even element $e\in {\mc A}$ such that $m_1(e) = 0$ and that for all homogeneous $x\in {\mc A}$
\beqn
(-1)^{|x|} m_2(e, x) = m_2(x, e) = x.
\eeqn
$e$ is called a {\bf strict unit} if in addition
\beqn
m_k(\ldots, e, \ldots) = 0\ \forall k \geq 3.
\eeqn
In these two cases we call $({\mc A}, e)$ a cohomologically unital (resp. strictly unital) $A_\infty$ algebra.

\item When ${\mc A}$ is flat, $A_\infty$ composition law implies that $m_1\circ m_1 = 0$. The {\bf cohomology algebra} of $A$, denoted by $H^\bullet({\mc A})$, is the ${\mb Z}_2$-graded associative $\Lambda_{\mb K}$ algebra whose underlying space is $H^\bullet({\mc A}) = {\rm ker} m_1/ {\rm Im} m_1$ and whose multiplication is induced from $m_2$.
\end{enumerate}
\end{defn}

Because of bubbling of holomorphic disks, $A_\infty$ algebras associated to a Lagrangian brane is generally curved. There is a way to turn certain curved $A_\infty$ algebras to flat ones. 

\begin{defn}
Let $({\mc A}, e)$ be a strictly unital $A_\infty$ algebra. A {\bf weakly bounding cochain} of $({\mc A}, e)$ is an odd element $b \in {\mc A}^{\rm odd}$ such that 
\beqn
m(b):= \sum_{k \geq 0} m_k( b, \ldots, b) = W(b) e\ {\rm where}\ W(b) \in \Lambda_{\mb K}.
\eeqn
\end{defn}

Suppose $b$ is a weakly bounding cochain of $({\mc A}, e)$. Then define ${\mc A}^\flat$ (which depends on the weakly bounding cochain $b$) to be the flat $A_\infty$ algebra whose underlying space is the same as ${\mc A}$ and whose composition maps $m_k^\flat$ is defined by 
\beqn
m_k^\flat( x_k, \ldots, x_1) := \sum_{l_0, \ldots, l_k \geq 0} m_{k + l_0 + \cdots + l_k} (\underbrace{b, \ldots, b}_{l_k}, x_k, \cdots, x_1, \underbrace{b,\ldots, b}_{l_0} ).
\eeqn

\begin{lemma}
${\mc A}^\flat$ is a flat $A_\infty$ algebra. \qed
\end{lemma}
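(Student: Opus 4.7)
The plan is to verify the flat $A_\infty$ relations for $(m_k^\flat)_{k \geq 1}$ by expanding every occurrence of $m^\flat$ into the underlying $m_k$'s, reorganizing the result as the curved $A_\infty$ relation for $\mathcal{A}$ applied to strings in which copies of $b$ have been inserted around the $x_\ell$'s, and then absorbing the residual contributions using the Maurer--Cartan identity $m(b) = W(b) e$ together with the strict unitality of $e$. Flatness itself is a matter of convention: the formal $k=0$ value of the defining formula would be $\sum_{l \geq 0} m_l(b, \ldots, b) = W(b) e$, which is a scalar multiple of the strict unit; following the Fukaya--Oh--Ohta--Ono convention, this is absorbed into the superpotential $W(b)$ rather than retained as curvature, so one sets $m_0^\flat := 0$ and $\mathcal{A}^\flat$ is genuinely flat.

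For the main computation, I would expand
\[
R_k := \sum_{i \geq 1}\sum_{j=0}^{k-i}(-1)^{\maltese_1^j}\, m_{k-i+1}^\flat\bigl(x_k, \ldots, x_{i+j+1}, m_i^\flat(x_{i+j}, \ldots, x_{j+1}), x_j, \ldots, x_1\bigr)
\]
by substituting the definition of each $m^\flat$. Every resulting summand has the form $\pm m_N(\ldots, m_M(\ldots), \ldots)$ applied to a total string $\sigma = (b,\ldots,b, x_k, b, \ldots, b, \ldots, x_1, b, \ldots, b)$, with the inner $m_M$ acting on a consecutive substring of $\sigma$ that contains a nonempty consecutive block of the $x_\ell$'s. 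Fixing $\sigma$ and its pattern of $b$-insertions and summing over $i$, $j$, and the precise extent of the inner substring, one recovers all terms in the curved $A_\infty$ relation for $\mathcal{A}$ applied to $\sigma$ \emph{except} those in which the inner substring consists entirely of $b$'s. Since the $A_\infty$ relation for $\mathcal{A}$ vanishes on $\sigma$, the quantity $R_k$ equals the negative sum of these excluded pure-$b$ terms.

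Applying the Maurer--Cartan identity $\sum_M m_M(b, \ldots, b) = W(b) e$ to collapse each excluded inner substring, the remaining contributions reassemble into
\[
R_k = -\,W(b)\sum_{j=0}^{k}(-1)^{\maltese_1^j}\, m_{k+1}^\flat\bigl(x_k, \ldots, x_{j+1}, e, x_j, \ldots, x_1\bigr),
\]
where $e$ is inserted in each of the $k+1$ slots in the sequence $x_k, \ldots, x_1$. For $k \geq 2$, every expansion of $m_{k+1}^\flat(\ldots, e, \ldots)$ into the $m_N$'s has $N \geq k+1 \geq 3$ inputs including $e$, so strict unitality of $e$ in $\mathcal{A}$ forces each summand to vanish individually. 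For $k = 1$, only the $N = 2$ expansions contribute, giving $\pm W(b)\bigl(m_2(e, x_1) + (-1)^{\|x_1\|} m_2(x_1, e)\bigr)$, which collapses to zero via $m_2(e, x_1) = (-1)^{|x_1|} x_1$ and $m_2(x_1, e) = x_1$. In either case $R_k = 0$.

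The main obstacle is the sign bookkeeping in the last step: one must check that the Koszul signs $(-1)^{\maltese_1^j}$ attached to the $(k+1)$ slot-insertions of $e$ propagate correctly under the shift $\|e\| = 1$, producing the pairwise cancellation for $k=1$ and remaining compatible with strict-unit vanishing for $k \geq 2$. The same sign computation also shows that $e$ remains a strict unit for $m^\flat$, so the two sanity checks reinforce each other. This is the standard verification carried out in the theory of weak bounding cochains and applies verbatim here.
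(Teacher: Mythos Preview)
The paper does not actually give a proof of this lemma: it is stated with a \qed\ symbol and treated as a standard fact about weak bounding cochains. Your proposal supplies the standard argument, and it is correct. The reorganization of $R_k$ as the curved $A_\infty$ relation on $b$-decorated strings minus the pure-$b$ inner terms is exactly how this is usually done, and your use of the Maurer--Cartan identity together with strict unitality of $e$ to kill the residual $m_{k+1}^\flat(\ldots, e, \ldots)$ contributions is the right mechanism. The $k=1$ sign check you sketch does work out: with $\maltese_1^0 = 0$ and $\maltese_1^1 = \|x_1\|$, the two surviving terms $m_2(x_1,e)=x_1$ and $(-1)^{\|x_1\|}m_2(e,x_1)=(-1)^{\|x_1\|}(-1)^{|x_1|}x_1=-x_1$ cancel as claimed.
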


\subsubsection{Hochschild cohomology for associative algebras}

Let $A$ be a ${\mb Z}_2$-graded associative algebra over ${\mb K}$. Hochschild cohomology $\hh^\bullet(A, M)$ can be defined for all ${\mb Z}_2$-graded bimodules $M$ of $A$. Here we only consider the case when $M = A$. The Hochschild cochain complex (with coefficients in $A$ itself) is defined by
\beqn
\CC^{\bullet, n}(A):= \CC^{\bullet, n}(A, A):= {\rm Hom}_{\Lambda_{\mb K}}^\bullet (A^{\otimes n}, A[n]).
\eeqn
Here the bullet is the ${\mb Z}_2$-grading on linear maps and $A[n]$ is the ${\mb Z}_2$-graded vector space $A$ with the ${\mb Z}_2$-grading shifted by $n$ (modulo $2$). Denote the ${\mb Z}_2$-degree of a homogeneous element $\phi \in \CC^{\bullet, \bullet}(A)$ by $|\phi| \in {\mb Z}_2$ and the {\it reduced} degree by 
\beqn
\| \phi \|:= |\phi| + 1 \in {\mb Z}_2.
\eeqn

A Hochschild cochain is represented by a sequence $\tau = (\tau_n)_{n \geq 0}$ of such multi-linear maps. The differential $\delta_{\CC}$, which raises the length grading $n$ by $1$, is defined by 
\begin{multline}\label{dcc}
(\delta_{\CC}(\tau))(x_{n+1}, \ldots, x_1)  =  x_{n+1} \tau_n (x_n, \ldots, x_1) + (-1)^{\| \tau \| \| x_1 \|} \tau_n(x_{n+1}, \ldots, x_2)x_1\\
- \sum_{0 \leq i < n} (-1)^{\| \tau \| + \maltese_1^i } \tau_n(x_{n+1}, \ldots, x_{i+2} x_{i+1}, x_i, \ldots, x_1).
\end{multline}
The cohomology defined by $\delta_{\CC}$ is called the {\bf Hochschild cohomology} of $A$ (with coefficients in $A$). As the simplest example, via a straightforward calculation one obtains (for $A = {\mb K}$ trivially graded)
\beqn
\hh^{\bullet, n} ({\mb K}) = \left\{ \begin{array}{cc} {\mb K},\ &\ n = 0 \text{ and } n \text{ even},\\
                               0,\ &\ \text{ otherwise}, \end{array}\right.
\eeqn
where the superscript $n$ comes from the length filtration of Hochschild cochains.

\begin{rem}
The formula \eqref{dcc} differs from the usual version of Hochschild differential, see for example \cite[(1.5.1.1)]{Loday_cyclic}. Indeed, suppose $A$ is ungraded, i.e. all elements are even. Then the ${\mb Z}_2$-grading of a length $n$ cochain is $n$ mod $2$. In this case \eqref{dcc} reduces to 
\begin{multline*}
(\delta_{\CC}(\tau))(x_{n+1}, \ldots, x_1) = x_{n+1} \tau_n(x_n, \ldots, x_1) + (-1)^{n+1} \tau_n(x_{n+1}, \ldots, x_2) x_1 \\
+ (-1)^{n+i} \tau_n(x_{n+1}, \ldots, x_{i+2} x_{i+1}, x_i, \ldots, x_1).
\end{multline*}
If we replace $A$ by the opposite algebra $A^{\rm op}$ (i.e. the same set with multiplication reversed), and identify a length $n$ Hochschild cochain $\tau$ on $A$ with $\tau^{\rm op}$ on $A^{\rm op}$ defined by $\tau^{\rm op}(x_1, \ldots, x_n) =\tau(x_n, \ldots, x_1)$. Then the above formula differ from the standard Hochschild differential on $\tau^{\rm op}$ up to a sign $(-1)^{n+1}$.
\end{rem}

\subsubsection{Hochschild cohomology for $A_\infty$ algebras}

Now let ${\mc A}^\flat$ be a flat ${\mc A}_\infty$ algebra. Define the length $n$-part of Hochschild cochain complex of $A^\flat$ to be 
\beqn
\CC^{\bullet, n} ({\mc A}^\flat) = \CC^{\bullet, n} ({\mc A}^\flat, {\mc A}^\flat) = {\rm Hom}_{\Lambda_{\mb K}}^\bullet ( ({\mc A}^\flat)^{\otimes n}, {\mc A}^\flat[n]).
\eeqn
Here $\bullet$ denotes the ${\mb Z}_2$-grading and ${\mc A}^\flat[n]$ denote the super vector space ${\mc A}^\flat$ with grading shifted by $n$ (mod $2$).

On the Hochschild cochain complex there is the {\bf Gerstenhaber product} (which is graded with respect to the reduced grading $\| \cdot \|$) defined by 
\beqn
(\phi \circ \psi)(x_s, \ldots, x_1) = \sum_{i+j+k = s} (-1)^{\| \psi \| \cdot \maltese_1^i} \phi(x_s, \ldots, \psi(x_{i+j}, \cdots, ), x_i, \ldots, x_1)
\eeqn
as well as the {\bf Gerstenhaber superbracket}
\beqn
[\phi, \psi]:= \phi \circ \psi - (-1)^{\| \phi \| \cdot \| \psi \| } \psi \circ \phi.
\eeqn
Then the $A_\infty$-structure on ${\mc A}^\flat$ is equivalent to an even Hochschild cochain $m^\flat$ with $m^\flat_0 = 0$ with the $A_\infty$ relation being equivalent to
\beqn
[m^\flat, m^\flat] = 2 m^\flat \circ m^\flat = 0.
\eeqn
We define the Hochschild differential $\delta_\CC$ on 
\beqn
\CC^\bullet({\mc A}^\flat) = \prod_{n \geq 0} \CC^{\bullet, n} ({\mc A}^\flat)
\eeqn
by the formula
\beqn
\delta_\CC(\phi):= [ m^\flat, \phi].
\eeqn
Notice that if $m^\flat_k \neq 0$ only when $k = 2$, ${\mc A}^\flat$ is a ${\mb Z}_2$-graded associative algebra with the Hochschild differential reduces to the differential \eqref{dcc}. The Hochschild cohomology of ${\mc A}^\flat$ is defined by 
\beqn
\hh^\bullet({\mc A}^\flat ):= {\rm ker} \delta_\CC/ {\rm im} \delta_\CC.
\eeqn

On $\CC^\bullet({\mc A}^\flat )$ there is also an $A_\infty$ structure whose composition maps start with $\delta_\CC$. We only need the 2-fold composition map, i.e., the Yoneda product.

\begin{defn}
The {\bf Yoneda product} on $\CC^\bullet({\mc A}^\flat)$, denoted by $\star$, is defined by 
\begin{multline}\label{Yoneda}
(\phi \star \psi) (a_k, \ldots, a_1) \\
= \sum (-1)^\clubsuit m_k^\flat \left( a_k, \ldots, \phi_r (a_{i+r}, \ldots, a_{i+1}), \cdots, \psi_s (a_{j+s}, \ldots, a_{j+1}), \ldots, a_1 \right)
\end{multline}
where the sum is taken over all $i, j, r, l$ such that each summand makes sense. The sign is defined by (see \eqref{maltese} for the definition of $\maltese$)
\beq\label{clubsuit}
\clubsuit:= \|\phi\| \cdot \left( \maltese_1^i + |\psi| \right) + \| \psi\| \cdot  \maltese_1^j.
\eeq
\end{defn}


The proof of the following can be found in \cite{Ganatra_thesis} and \cite{Mescher_ainfinity}.

\begin{prop}
The map $\star: \CC^\bullet({\mc A}^\flat) \otimes \CC^\bullet({\mc A}^\flat) \to \CC^\bullet({\mc A}^\flat)$ is a cochain map of even degree.
\end{prop}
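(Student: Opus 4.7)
The proposition has two claims: even $\mathbb{Z}_2$-degree and compatibility with $\delta_{\CC}$. The first claim is the quick part. Each $m_k^\flat$, viewed as an element of $\CC^{\bullet,k}({\mc A}^\flat)$, sits in degree $0$ because the target shift ${\mc A}^\flat[k]$ exactly absorbs the intrinsic parity $k$ of the $k$-fold product. Inserting $\phi\in\CC^{|\phi|,r}$ and $\psi\in\CC^{|\psi|,s}$ into two distinct argument slots of $m_k^\flat$ produces a map of $({\mc A}^\flat)^{\otimes(k+r+s-2)}$ into ${\mc A}^\flat[k+r+s-2]$ whose underlying degree is $|\phi|+|\psi|$. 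Summing over the insertion positions in \eqref{Yoneda} therefore yields $|\phi\star\psi|\equiv|\phi|+|\psi|\pmod 2$, so $\star$ preserves the $\mathbb{Z}_2$-grading.

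The substantive task is the Leibniz rule
\[
\delta_{\CC}(\phi\star\psi)\;=\;(\delta_{\CC}\phi)\star\psi\;+\;(-1)^{|\phi|}\,\phi\star(\delta_{\CC}\psi).
\]
The cleanest way to organise the computation is to introduce the brace operations $\tau\{\sigma_1,\ldots,\sigma_n\}$ on $\CC^\bullet({\mc A}^\flat)$: the sum over all ways of inserting $\sigma_1,\ldots,\sigma_n$, in order, into distinct argument slots of $\tau$, with sign rule generalising \eqref{clubsuit}. In this language, $\phi\circ\psi=\phi\{\psi\}$, $\delta_{\CC}\phi=m^\flat\{\phi\}-(-1)^{\|\phi\|}\phi\{m^\flat\}$, and $\phi\star\psi=m^\flat\{\phi,\psi\}$. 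Expanding both sides of the Leibniz identity in brace notation and invoking the higher pre-Jacobi relations of Gerstenhaber--Voronov, every ``triple'' term of the form $m^\flat\{\ldots,m^\flat,\ldots,\phi,\ldots,\psi,\ldots\}$ appears on both sides and cancels pairwise, leaving a single residual term proportional to $(m^\flat\circ m^\flat)\{\phi,\psi\}$. This vanishes because $m^\flat\circ m^\flat=\tfrac12[m^\flat,m^\flat]=0$ is precisely the $A_\infty$ master equation on ${\mc A}^\flat$.

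The main obstacle is sign bookkeeping. Every re-ordering of $m^\flat$, $\phi$, or $\psi$ past another input contributes a Koszul sign in the reduced grading $\|\cdot\|$, and the exponent \eqref{clubsuit} must be matched exactly against the signs produced by the Gerstenhaber bracket and by the brace pre-Jacobi identities. I would manage this by declaring from the outset that all brace operations are graded with respect to $\|\cdot\|$ rather than $|\cdot|$, so that $\|m^\flat\|=1$ and the $A_\infty$ Maurer--Cartan equation reads $\tfrac12[m^\flat,m^\flat]=0$. Once the signs are uniformly tracked with this convention, the Leibniz rule becomes a direct algebraic consequence of the brace identities together with the Maurer--Cartan equation, and the proposition follows.
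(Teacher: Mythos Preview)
Your approach is correct. The paper's published proof simply cites \cite{Ganatra_thesis} and \cite{Mescher_ainfinity}, but the source file contains a commented-out detailed proof that proceeds quite differently from yours: it fixes the parity of $\phi$ and $\psi$ (taking both odd to simplify signs), then expands $m^\flat\circ(\phi\star\psi)+(\phi\star\psi)\circ m^\flat$ and the four terms on the right-hand side as explicit sums of ``four-fold compositions'' of the shape $m^\flat(-,m^\flat(-),-,\phi(-),-,\psi(-),-)$ and its variants, tracking the $\maltese$ signs by hand, and observes that the resulting identity is exactly the $A_\infty$ relation $m^\flat\circ m^\flat=0$ applied with $\phi(-)$ and $\psi(-)$ among the arguments.

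Your route via brace operations is more structural: rather than matching terms one by one, you invoke the Gerstenhaber--Voronov pre-Jacobi identity to expand $(m^\flat\{m^\flat\})\{\phi,\psi\}$ and $(m^\flat\{\phi,\psi\})\{m^\flat\}$, so that the three-input braces $m^\flat\{-,-,-\}$ cancel between the two sides and what remains is the Leibniz rule. This buys conceptual clarity and generalises immediately (e.g.\ to the full $A_\infty$ structure on $\CC^\bullet$), at the cost of importing the brace formalism and its sign conventions as a black box. The paper's direct computation is self-contained but more laborious, and its restriction to the odd--odd case leaves the remaining parities to the reader. Both approaches ultimately rest on $m^\flat\circ m^\flat=0$; your acknowledgement that the sign bookkeeping is the main obstacle is honest, and with the reduced-grading convention you adopt the signs do work out.
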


Therefore the Yoneda product descends to Hochschild cohomology. We still call the induced one by Yoneda product and denote it by the same symbol $\star$. As the Yoneda product can be extended to an $A_\infty$ structure on $CC^\bullet({\mc A}^\flat)$, the Yoneda product on the Hochschild cohomology is associative. 

The Yoneda product has a chain-level unit in the strictly unital case. 

\begin{prop}
Suppose ${\mc A}^\flat$ has a strict unit $e$. Then the Hochschild cochain ${\bm 1}_{{\mc A}^\flat}$ defined by 
\beqn
{\bm 1}_{{\mc A}^\flat}(x_k, \ldots, x_1) = \left\{ \begin{array}{cc} 0,\ &\ k \geq 1,\\
                                   e,\ &\ k = 0. \end{array}\right.
\eeqn
is a unit with respect to the Yoneda product.
\end{prop}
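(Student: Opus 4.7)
The plan is to verify the chain-level identities $\tau \star {\bm 1}_{{\mc A}^\flat} = \tau = {\bm 1}_{{\mc A}^\flat} \star \tau$ for any Hochschild cochain $\tau$ by unpacking the Yoneda product formula \eqref{Yoneda} and exploiting the vanishing forced by strict unitality of $e$. Since ${\bm 1}_{{\mc A}^\flat}$ is supported in arity zero, one or both of the inner insertions in \eqref{Yoneda} is forced to be a single copy of $e$; because ${\mc A}^\flat$ is flat (so $m_0^\flat = 0$) and $e$ is a strict unit (so $m_l^\flat(\ldots, e, \ldots) = 0$ for $l \geq 3$ and $m_1^\flat(e) = 0$), the only outer operation that can contribute is $m_2^\flat$. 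This collapses the Yoneda sum to a single term in each case, and the rest of the argument reduces to identifying that term and checking signs.

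For the right unit, setting $\psi = {\bm 1}_{{\mc A}^\flat}$ forces $s = 0$, and requiring the outer $m^\flat$ to have arity $2$ then forces $r = k$, $i = 0$, and $j = 0$. The surviving term is $m_2^\flat(\tau_k(a_k, \ldots, a_1), e)$, which equals $\tau_k(a_k, \ldots, a_1)$ by the unit equation $m_2(x, e) = x$. The Koszul sign $\clubsuit$ in \eqref{clubsuit} is $0$ in this case since $\maltese_1^0 = 0$ and $|{\bm 1}_{{\mc A}^\flat}| = 0$, so no cancellation is needed.

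For the left unit, the analogous discussion with $\phi = {\bm 1}_{{\mc A}^\flat}$ forces $r = 0$, $i = k$, $j = 0$, $s = k$, leaving only $m_2^\flat(e, \tau_k(a_k, \ldots, a_1))$, which produces an extra sign $(-1)^{|\tau_k(a_k, \ldots, a_1)|}$ via $m_2(e, x) = (-1)^{|x|} x$. The main (routine but error-prone) obstacle will be checking that this sign is cancelled exactly by $\clubsuit$. The key bookkeeping observation is that the Hochschild convention makes $\tau_k$ a linear map into ${\mc A}^\flat[k]$, so the ${\mc A}^\flat$-degree of $\tau_k(a_k, \ldots, a_1)$ equals $|\tau| + k + \sum_{i=1}^k |a_i|$ modulo $2$, while $\clubsuit = \|{\bm 1}_{{\mc A}^\flat}\| \cdot (\maltese_1^k + |\tau|) = \maltese_1^k + |\tau| = k + \sum_{i=1}^k |a_i| + |\tau|$ modulo $2$ (using $\maltese_1^k = k + \sum |a_i|$). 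The two expressions are equal modulo $2$, so their sum vanishes and the extra sign cancels. No idea beyond strict unitality and a careful tracking of the shifted ${\mb Z}_2$-grading is required, and the identities hold at the chain level, so ${\bm 1}_{{\mc A}^\flat}$ is a strict Yoneda unit on $\CC^\bullet({\mc A}^\flat)$.
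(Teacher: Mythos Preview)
Your proof is correct and follows exactly the approach the paper intends: the published proof is simply ``Straightforward,'' and your computation matches the detailed chain-level verification (reducing the Yoneda sum via strict unitality to a single $m_2^\flat$ term on each side and checking the sign via $\maltese_1^k + |\tau| \equiv |\tau_k(a_k,\ldots,a_1)| \pmod 2$). Nothing needs to be added.
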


\begin{proof}
Straightforward.
\end{proof}

Finally, we remark that the Yoneda product on $\hh^\bullet({\mc A}^\flat )$ is graded commutative. It is compatible with the Gerstenhaber bracket, which makes $\hh^\bullet({\mc A}^\flat )$ into a Gerstenhaber algebra.

\subsubsection{Clifford algebras}

The Lagrangian Floer cohomology ring of a torus is often isomorphic to a Clifford algebra. Hence the Hochschild cohomology of Clifford algebras are one of the most important cases related to symplectic geometry and mirror symmetry. Recall that given a finite-dimensional ${\mb K}$-vector space $W$ equipped with a quadratic form $q$, the Clifford algebra $Cl(W, q)$ is the tensor algebra of $W$ modulo the relation 
\beqn
w \otimes w' + w' \otimes w + 2 q(w, w') {\rm Id} = 0.
\eeqn
We only care about the case when $q$ is nondegenerate and when ${\mb K}$ has characteristic zero and is algebraically closed. In this case, all nondegenerate quadratic forms are equivalent to the standard one. When $W$ has dimension $n$, we abbreviate $Cl(W, q)$ by $Cl_n$. 

\begin{prop}\label{HH_computation_1}
For all $n \geq 0$, the Hochschild cohomology of $Cl_n$ is 
\beqn
\hh^k( Cl_n, Cl_n) = \left\{ \begin{array}{cc} {\mb K},\ &\ k = 0,\\
                                               0,\ &\ k \geq 1. \end{array} \right.
\eeqn
In particular, $\hh^0(Cl_n, Cl_n)$ is generated by the identity.
\end{prop}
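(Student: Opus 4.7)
The plan is to combine a direct computation of the $\mb{Z}_2$-graded center (giving $\hh^0$) with a separability argument that kills the higher Hochschild groups. Throughout I would work with the super-conventions built into the formula \eqref{dcc}.

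First I would identify $\hh^0(Cl_n, Cl_n)$ with the super-center of $Cl_n$: the space of $z \in Cl_n$ such that $zw = (-1)^{|z||w|} wz$ for all homogeneous $w$, which is what the length-zero cocycle condition reads off from $\delta_\CC$ after one unwinds the degree shift. Choosing standard odd generators $e_1,\ldots,e_n$ with $e_ie_j + e_je_i = -2\delta_{ij}$, any monomial $e_I = e_{i_1}\cdots e_{i_k}$ either super-commutes or super-anticommutes with each $e_j$, with signs depending on $|I|$ and on whether $j \in I$. Requiring consistency for every $j$ forces $I = \emptyset$, so $\hh^0(Cl_n, Cl_n) = \mb{K}\cdot 1$, generated by the identity.

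Next, to prove $\hh^k(Cl_n, Cl_n) = 0$ for $k \geq 1$, the strategy is to establish that $Cl_n$ is \emph{super-separable}, i.e.\ projective as a module over its super-enveloping algebra $Cl_n^e := Cl_n \widehat{\otimes} Cl_n^{\mathrm{sop}}$, where $\mathrm{sop}$ denotes super-opposite. I would exhibit this via an explicit super-central Casimir element
\[
\pi \;=\; \frac{1}{2^n}\sum_{I \subseteq \{1,\ldots,n\}} \varepsilon_I\, e_I \otimes e_I \;\in\; Cl_n^e,
\]
with signs $\varepsilon_I$ dictated by the super-opposite multiplication, satisfying (i) $m(\pi) = 1$ for the multiplication $m: Cl_n^e \to Cl_n$, and (ii) $(a \otimes 1)\pi = (1 \otimes a)\pi$ in the super-sense for every $a \in Cl_n$. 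Granting (i) and (ii), the assignment $a \mapsto a\cdot \pi$ provides a bimodule splitting of $m$, exhibiting $Cl_n$ as a direct summand of a free $Cl_n^e$-module. Since $(\CC^\bullet(Cl_n), \delta_\CC)$ computes $\mathrm{Ext}_{Cl_n^e}^{*}(Cl_n, Cl_n)$ in the super-sense, projectivity kills all $\hh^k$ for $k \geq 1$.

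An alternative, more structural route is to use $Cl_n \cong Cl_1^{\widehat{\otimes} n}$ as $\mb{Z}_2$-graded algebras together with a super-Künneth formula for Hochschild cohomology, reducing everything to the case $n = 1$; there one can write down an explicit two-term periodic super-Koszul resolution of $Cl_1$ over $Cl_1^e$ and verify vanishing by hand. The main obstacle in either approach is careful bookkeeping of Koszul signs arising from the super-opposite and super-tensor conventions in \eqref{dcc}, particularly in verifying the super-centrality condition (ii) for $\pi$; this is elementary but sign-heavy, and in the Künneth approach one must additionally check that the super-tensor product of resolutions remains a resolution after accounting for graded signs.
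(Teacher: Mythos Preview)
Your proposal is correct but follows a genuinely different route from the paper. The paper's argument is structural: it invokes Morita invariance of Hochschild cohomology together with Bott periodicity for Clifford algebras to reduce to the cases $n=0$ and $n=1$; the case $n=0$ is immediate, and the case $n=1$ is deferred to an external reference (J.~Smith) via reduced Hochschild cohomology. You instead attack the problem directly: a super-center computation pins down $\hh^0$, and super-separability kills $\hh^{\geq 1}$. The separability step is sound---indeed $Cl_n^e = Cl_n\,\widehat{\otimes}\,Cl_n^{\mathrm{sop}}$ is again a nondegenerate Clifford algebra, hence graded-semisimple in characteristic zero, so every graded module (in particular $Cl_n$ itself) is projective and the derived $\mathrm{Ext}^{>0}$ vanishes; your explicit Casimir is one way to witness this, though the abstract semisimplicity already suffices. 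Your approach is more self-contained and works uniformly in $n$, at the cost of the sign bookkeeping you acknowledge (be aware that in the paper's conventions the unit axiom $(-1)^{|x|}m_2(e,x)=m_2(x,e)=x$ means the $A_\infty$ operation $m_2$ and the associative product differ by a Koszul sign, so your identification of the $0$-cocycle condition with the super-center requires unwinding this; the conclusion is correct). The paper's approach is terser but relies on heavier inputs and ultimately outsources the base case. Your K\"unneth alternative is also valid and is the tensor-product analogue of the paper's Morita reduction.
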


\begin{proof}
The calculation was provided by Sheridan \cite{Sheridan_2016} and we recall it here. First, Hochschild cohomology is Morita invariant (see \cite[1.5.6]{Loday_cyclic}). Second, there are only two Morita equivalence classes among Clifford algebras, the even ones and the odd ones (Bott periodicity). Hence we only need to calculate for $n = 0$ and $n=1$. When $n= 0$,  $Cl_0 \cong {\mb K}$, giving $\hh^\bullet({\mb K}, {\mb K}) = {\mb K}$. When $n = 1$ \footnote{The original calculation of \cite{Sheridan_2016} for the $n=1$ case has a mistake resulting in a false result. We thank Nick Sheridan for clarifying it.}, the calculation can be deduced from the more general case of J. Smith \cite[Section 5]{Smith_2019} using reduced Hochschild cohomology.
\end{proof}

When the Floer cohomology algebra of a Lagrangian brane is isomorphic to a Clifford algebra, the argument via {\it formality} shows that the Hochschild cohomology of the corresponding $A_\infty$ algebra is the same as the Hochschild cohomology of the cohomology algebra. Recall that an $A_\infty$ algebra is called {\it formal} if it is $A_\infty$ quasi-isomorphic to its cohomology algebra. An associated algebra $A$ is called {\it intrinsically formal} if any ${\mb Z}_2$-graded $A_\infty$ algebra whose cohomology algebra is isomorphic to $A$ is formal. It was shown in \cite[Corollary 6.4]{Sheridan_2016} that $Cl_n$ is intrinsically formal. Due to the Morita invariance of Hochschild cohomology, the following statement is immediate.

\begin{cor}\label{HH_computation_2}
If ${\mc A}^\flat$ is a flat $A_\infty$ algebra over ${\mb K}$ whose cohomology algebra is isomorphic to $Cl_n$, then 
\beqn
\hh^\bullet ({\mc A}^\flat) = {\mb K}.
\eeqn 
\end{cor}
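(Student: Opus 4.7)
The plan is to combine two ingredients already mentioned just before the corollary: the intrinsic formality of $Cl_n$ and the invariance of Hochschild cohomology under quasi-isomorphism. By Sheridan's result (\cite[Corollary 6.4]{Sheridan_2016}), the Clifford algebra $Cl_n$ is intrinsically formal as a ${\mb Z}_2$-graded algebra. This means: for any ${\mb Z}_2$-graded flat $A_\infty$ algebra ${\mc A}^\flat$ with $H^\bullet({\mc A}^\flat) \cong Cl_n$ as associative algebras, there exists an $A_\infty$ quasi-isomorphism
\[
{\mc A}^\flat \simeq (Cl_n, m_2^{Cl}, 0, 0, \ldots),
\]
where the right-hand side is viewed as a formal $A_\infty$ algebra with $m_2$ given by the Clifford multiplication and all higher $m_k = 0$.

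Next I would invoke the standard fact (rather than Morita invariance, which applies to associative algebras) that Hochschild cohomology of $A_\infty$ algebras is an invariant of the $A_\infty$ quasi-isomorphism class. More precisely, a quasi-isomorphism ${\mc A}^\flat \to {\mc B}^\flat$ of flat $A_\infty$ algebras induces an isomorphism $\hh^\bullet({\mc A}^\flat) \cong \hh^\bullet({\mc B}^\flat)$ of Gerstenhaber (in particular Yoneda) algebras; see e.g.\ \cite{Ganatra_thesis}. Applying this to the quasi-isomorphism above yields
\[
\hh^\bullet({\mc A}^\flat) \cong \hh^\bullet(Cl_n, Cl_n).
\]

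Finally, I invoke Proposition \ref{HH_computation_1}, which gives $\hh^\bullet(Cl_n, Cl_n) = {\mb K}$ concentrated in degree zero and generated by the unit. Chasing through the isomorphism shows that the generator on the ${\mc A}^\flat$-side is the Hochschild cochain ${\bm 1}_{{\mc A}^\flat}$, so the identification is as unital Yoneda algebras over ${\mb K}$. The only step that could be considered nontrivial is the invariance of $\hh^\bullet$ under $A_\infty$ quasi-isomorphism; however, this is a standard statement in the $A_\infty$ literature and no new argument is needed here. The result then follows immediately.
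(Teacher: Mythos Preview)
Your proof is correct and follows essentially the same approach as the paper: use Sheridan's intrinsic formality of $Cl_n$ to obtain an $A_\infty$ quasi-isomorphism ${\mc A}^\flat \simeq Cl_n$, then invoke invariance of Hochschild cohomology and Proposition~\ref{HH_computation_1}. The paper phrases the invariance step as ``Morita invariance'' whereas you invoke invariance under $A_\infty$ quasi-isomorphism; your phrasing is more precise for this step (quasi-isomorphism being a special case of derived Morita equivalence), but the underlying argument is the same.
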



Notice that if in addition ${\mc A}^\flat$ is strictly unital, $1_{{\mc A}^\flat} \neq 0$ and it generates the Hochschild cohomology.

\section{Vortex Hamiltonian Floer theory}\label{sec:ham-package}\label{section4}

We review the construction of vortex Hamiltonian Floer theory developed by the second author \cite{Xu_VHF} following the proposal of Cieliebak--Gaio--Salamon \cite{Cieliebak_Gaio_Salamon_2000}.

\subsection{Floer chain complexes}

\subsubsection{Equivariant action functional}

Our convention for Hamiltonian vector field is fixed as follows. Let $(M, \omega)$ be a symplectic manifold and $H: M \to {\mb R}$ be a smooth function. The associated Hamiltonian vector field $X_H$ is specified by 
\beqn
dH = \omega( X_H, \cdot).
\eeqn

We would like to consider the Hamiltonian dynamics upstairs in the gauged linear sigma model. Let $X$ be the toric manifold we are considering. Let $H: S^1 \times X \to {\mb R}$ be a smooth Hamiltonian function. Let ${\rm Per}(H)$ be the set of $1$-periodic orbits \footnote{They must be contractible as $X$ is simply connected.} of $ H$ whose elements are maps $x: S^1 \to X$. The Hamiltonian $ H$ lifts to a $K$-invariant function on $S^1\times \mu^{-1}(0)$. Choose an arbitrary $K$-invariant extension $\wh H: S^1 \times V \to {\mb R}$ whose support is compact and disjoint from the unstable locus $V^{\rm us}$ under the $K^{\mb C}$-action. Consider the set of {\bf equivariant loops}
\beq
L^K (V):= \Big\{ {\mf x} = (\wh x, \zeta): S^1 \to V \times {\mf k}.
\Big\}
\eeq
Here function $\zeta: S^1 \to {\mf k}$ can be viewed as a gauge field on $S^1$. Notice that as $V$ is contractible, the loop $x$ is contractible and there is only one homotopy class of cappings. The loop group $L K$ acts on the set of capped equivariant orbits by 
\beqn
g\cdot {\mf x} = ( g\cdot \wh x, g\cdot \xi)\ {\rm where}\ (g \cdot \wh x)(t) = g(t) \wh x (t),\ (g \cdot \xi)(t) = \zeta (t) - \frac{d}{dt} \log g(t).
\eeqn
Define the action functional
\beq
\wh {\mc A}_H: L^K(V) \to {\mb R},\ {\mf x}\mapsto - \int_{\mb D} u^* \omega_V + \int_{S^1} \left( \langle \mu( \wh x (t)), \zeta (t) \rangle - \wh H_t( \wh x(t))  \right) dt
\eeq
where $u: {\mb D}^2 \to V$ is any capping. Critical points are solutions 
\begin{align}
&\ \mu(\wh x (t)) \equiv 0,\ &\ \wh x'(t) = X_{\wh H_t}(\wh x(t)) - {\mc X}_{\zeta (t)}(\wh x(t)).
\end{align}
Here ${\mc X}_{\zeta (t)}$ is the Hamiltonian vector field of the function $\langle \mu, \zeta (t) \rangle$. 

The action functional satisfies the following transformational law with respect to the loop group action. Indeed, for $g \in LK$ and ${\mf x} \in L^K(V)$, one has 
\beq\label{deck}
\wh {\mc A}_H ( g {\mf x}) = - \omega^K (g) + {\mc A}_H ({\mf x}).
\eeq
Denote 
\beqn
{\mf L}^K(V):= L^K(V)/ {\rm ker} \omega^K.
\eeqn
Its elements are denoted by $[{\mf x}]$. Then $\Gamma \cong LK/ {\rm ker} \omega^K$ acts on ${\mf L}^K(V)$. We denote the action by $g \cdot [{\mf x}]$. Then $\wh {\mc A}_H$ induces a functional on ${\mf L}^K(V)$, denoted by
\beqn
{\mc A}_H: {\mf L}^K(V) \to {\mb R}.
\eeqn
Each critical point of ${\mc A}_H$ is called an {\bf equivariant 1-periodic Hamiltonian orbit}.

There is a correspondence between ordinary Hamiltonian orbits downstairs and equivariant Hamiltonian orbits upstairs. More precisely, let $\widetilde {\rm Per}(H)$ be the covering of ${\rm Per}(H)$ consisting of equivalence classes $[u, x]$ of capped 1-periodic orbits of $H$: the equivalence relation $(u, x) \sim (u', x')$ is defined by the equality of action values:
\beqn
(u, x) \sim (u', x') \Longleftrightarrow x = x' \in {\rm Per}(H)\ {\rm and}\ \int_{{\mb D}^2} u^* \omega_X = \int_{{\mb D}^2} (u')^* \omega_X.
\eeqn
Then there is a map
\beq\label{downtoup}
\iota: \widetilde{\rm Per}(H) \to {\rm crit} {\mc A}_H \subset {\mf L}^K(V).
\eeq
Indeed, suppose $ x: S^1 \to X$ is a contractible 1-periodic orbit of the Hamiltonian flow of $  H$ and $ u: {\mb D}^2 \to X$ is a capping of $ x$. View $\mu^{-1}(0) \to X$ as a principal $K$-bundle $P$. The Euclidean metric on $V$ induces a connection on $P$ whose horizontal distribution is the orthogonal complement of tangent planes of $K$-orbits; equivalently, this gives a connection 1-form $\theta \in \Omega^1(\mu^{-1}(0)) \otimes {\mf k}$. The pullback $u^* P \to {\mb D}^2$ is trivial and different trivializations differ by a smooth map $g: {\mb D}^2 \to K$. Any trivialization of this pullback bundle induces a connection matrix $u^* \theta$ whose boundary restriction is $\zeta (t) dt$. A trivialization also induces a map $\wh u: {\mb D}^2 \to \mu^{-1}(0)$ lifting $u$. Let the boundary restriction of $\wh u$ be $\wh x$. Then ${\mf x} = (\wh x, \zeta)$ is an equivariant 1-periodic orbit, well-defined up to $L_0 K$-actions. Furthermore, if $u'$ is a different capping with the same resp. different action value, then the correspondence we just described gives the same resp. different element in ${\mf L}^K(V)$. 

\begin{lemma}
In the toric case the map \eqref{downtoup} is bijective.
\end{lemma}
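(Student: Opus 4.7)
The plan is to identify both $\widetilde{\rm Per}(H)$ and $\mathrm{crit}({\mc A}_H)$ as principal $\Gamma$-bundles over ${\rm Per}(H)$ and to verify that $\iota$ is a morphism of such bundles covering the identity on ${\rm Per}(H)$; bijectivity will then be automatic. On the downstairs side, I would first observe that, since $X$ is simply connected, every $x \in {\rm Per}(H)$ admits a capping, and any two cappings of a fixed $x$ differ by a sphere in $X$. The toric presentation ensures $\Gamma$ is exactly the image of $\omega_X$ on $H_2(X;{\mb Z})$, so the fibers of $\widetilde{\rm Per}(H) \to {\rm Per}(H)$ are principal $\Gamma$-sets.

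For the upstairs side, I would first check that $[{\mf x}] = [(\wh x, \zeta)] \mapsto \pi \circ \wh x$ lands in ${\rm Per}(H)$: the constraint $\mu(\wh x) \equiv 0$ places $\wh x$ in the zero locus, and the evolution equation projects under $\pi$ to $x'(t) = X_{H_t}(x(t))$ because $X_{\wh H_t}$ is tangent to $\mu^{-1}(0)$ by $K$-invariance of $\wh H$ and ${\mc X}_{\zeta(t)}$ is vertical. Conversely, given $x \in {\rm Per}(H)$, the pullback principal $K$-bundle $x^* \mu^{-1}(0)$ is trivializable (since $x$ is contractible in the simply connected toric manifold $X$), yielding a lift $\wh x: S^1 \to \mu^{-1}(0)$; the variable $\zeta(t)$ is then uniquely determined using the freeness of the $K$-action, which identifies ${\mf k}$ with the vertical subspace of $T_{\wh x(t)} \mu^{-1}(0)$. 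Two lifts differ by an element of $LK$, and the transformation law \eqref{deck} implies that $LK/\ker \omega^K \cong \Gamma$ acts freely and transitively on the fibers, so $\mathrm{crit}({\mc A}_H) \to {\rm Per}(H)$ is also a principal $\Gamma$-bundle.

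With these two principal $\Gamma$-bundle structures in place, bijectivity of $\iota$ reduces to $\Gamma$-equivariance, since any morphism between principal $\Gamma$-torsors covering the identity is a bijection. The hard part will be this equivariance check: a change of capping by a sphere $v: S^2 \to X$ of area $g \in \Gamma$ must correspond upstairs to the action of a loop in $K$ whose $\omega^K$-value equals $g$. Concretely, gluing two cappings $u, u'$ of $x$ along their common boundary produces a sphere $v$, and comparing the two induced trivializations of $\mu^{-1}(0)|_v$ along the equator yields a transition loop $g_{u, u'}: S^1 \to K$; what must be verified is $\int_{S^2} v^* \omega_X = \omega^K(g_{u, u'})$. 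This is a Chern--Weil type identity saying that $\omega^K$ computes the equivariant symplectic pairing via the connection $\theta$ on $\mu^{-1}(0) \to X$, which in the toric setting is transparent from the explicit description of the principal $K$-bundle and its curvature.
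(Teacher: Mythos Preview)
Your approach is correct but takes a more structural route than the paper. The paper argues surjectivity directly: given $[{\mf x}] \in {\rm crit}\,{\mc A}_H$, project $\wh x$ down to obtain a $1$-periodic orbit $x$ of $H$ in $X$ (contractible since $X$ is simply connected); choose any capping $u$ and any representative ${\mf x}' = (\wh x', \zeta')$ of $\iota([u,x])$; then freeness of the $K$-action on $\mu^{-1}(0)$ produces a gauge transformation $g \in LK$ with $g \cdot \wh x' = \wh x$, and the evolution equation $\wh x'(t) = X_{\wh H_t}(\wh x(t)) - {\mc X}_{\zeta(t)}(\wh x(t))$ then forces $\zeta = \zeta'$. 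Injectivity, and the compatibility of $\iota$ with the $\Gamma$-actions, are already folded into the construction of $\iota$ preceding the lemma, where it is asserted that cappings with equal action value give the same class in ${\mf L}^K(V)$ and those with different action values give different classes.

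Your principal-$\Gamma$-bundle formulation is more conceptual and makes explicit a point the paper leaves implicit: the bijection ultimately rests on the identification of the two groups acting on the fibers (symplectic periods of spheres in $X$ versus $\omega^K$-values of loops in $K$) together with the Chern--Weil identity $\int_{S^2} v^*\omega_X = \omega^K(g_{u,u'})$ that you isolate as the ``hard part.'' In the toric case these verifications are routine (surjectivity of the boundary map $\pi_2(X)\to\pi_1(K)$ follows from simple connectivity of $\mu^{-1}(0)$), and your framework would port more readily to other symplectic quotients. The cost is that you must check two auxiliary facts that the paper's hands-on argument simply bypasses with a single appeal to freeness of the $K$-action.
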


\begin{proof}
Given any equivariant Hamiltonian orbit ${\mf x}$ upstairs, the map $\wh x: S^1 \to \mu^{-1}(0)$ projects down to a 1-periodic orbit $x: S^1 \to X$. As $X$ is simply connected, $x$ is contractible. Choose a capping $u: {\mb D}^2 \to X$ and let ${\mf x}' = (\wh x', \zeta')$ be equivariant Hamiltonian orbit lifting $[u, x]$. As the $K$-action on $\mu^{-1}(0)$ is free, there is a gauge transformation on the circle making $\wh x' = \wh x$. The condition 
\beqn
\wh x'(t) = X_{H_t}(\wh x (t)) - {\mc X}_{\zeta(t)} (\wh x(t))
\eeqn
implies that $\zeta = \zeta'$. 
\end{proof}

\begin{defn}\label{Conley-Zehnder}
The {\bf Conley--Zehnder index} of an equivariant 1-periodic orbit ${\mf x} \in {\rm crit} {\mc A}_H$ is the usual Conley--Zehnder index of the capped 1-periodic orbit $\iota^{-1}({\mf x}) \in \wt {\rm Per}(H)$, denoted by ${\rm CZ}({\mf x}) \in {\mb Z}$.
\end{defn}

\subsubsection{Floer trajectories}

Similar to the standard Hamiltonian Floer theory, one considers the equation for the gradient flow of the equivariant action functional. Choose a 1-periodic $K$-invariant $\omega_V$-compatible almost complex structure $\wh J_t$ on $V$. Formally the negative gradient flow equation of $\wh {\mc A}_H$ is the following equation for pairs $(u, \eta): {\mb R}\times S^1 \to V \times {\mf k}$
\begin{align*}
&\ \partial_s u + \wh J_t \left( \partial_t u + {\mc X}_\eta (u) - X_{\wh H_t}(u) \right) = 0,\ &\ \partial_s \eta + \mu(u) = 0.
\end{align*}
This is in fact the symplectic vortex equation on the cylinder ${\mb R}\times S^1$ for the trivial $K$-bundle and the standard cylindrical volume form, written in temporal gauge $A = d + \eta dt$. In general, for $A = d + \xi ds + \eta dt$, the vortex equation \eqref{vortex_equation} reads
\begin{align}\label{vortex_equation_2}
&\ \partial_s u + {\mc X}_{\xi}(u) + \wh J_t \left( \partial_t u + {\mc X}_{\eta} (u) - X_{\wh H_t}(u) \right)  = 0,\ &\ \partial_s \eta - \partial_t \xi + \mu(u) = 0.
\end{align}
It was shown in \cite{Xu_VHF} that any finite energy solution converges up to gauge transformation to critical points of ${\mc A}_H$. 

\begin{thm}\cite[Theorem 3.1, Corollary 4.3]{Xu_VHF} 
\begin{enumerate}
    \item Given a bounded solution ${\mf u} = (u, \xi, \eta)$ (i.e. finite energy solution with $u({\mb R}\times S^1)$ bounded) to \eqref{vortex_equation_2}, there exist a gauge equivalent solution, still denoted by $(u, \xi, \eta)$, as well as equivariant 1-periodic orbits ${\mf x}_\pm = (\wh x_\pm, \zeta_\pm)$ such that uniformly for $t \in S^1$
\beq\label{asymptotic}
\lim_{s \to \pm \infty} (u(s, \cdot), \xi(s, \cdot), \eta(s, \cdot)) = (\wh x_\pm, 0, \zeta_\pm).
\eeq

\item If ${\mf x}_\pm'$ are another pair of equivariant 1-periodic orbits satisfying \eqref{asymptotic} with ${\mf u}$ replaced by any gauge equivalent solution, then there exists $g_\pm \in LK$ with $g_- g_+^{-1} \in L_0 K$ such that ${\mf x}_\pm' = g_\pm {\mf x}_\pm$. 

\item If $H$ is a nondegenerate Hamiltonian downstairs, then one can make the convergence \eqref{asymptotic} exponentially fast by choosing suitable gauge equivalent solutions. More precisely, there exist $C>0$ and $\delta>0$ such that
\beqn
d_V(u(s, t), \wh x_\pm(t)) + | \xi(s, t)| + |\eta(s, t) - \zeta(t)| \leq C e^{-\delta|s|}.
\eeqn
Here $d_V$ is the Euclidean distance on $V$. Similar exponential decay estimates hold for covariant derivatives of arbitrary higher orders.\footnote{Exponential decay type estimates for vortices can also be found in \cite{Ziltener_Decay}\cite{Venugopalan_quasi}\cite{Chen_Wang_Wang}.}
\end{enumerate}
\end{thm}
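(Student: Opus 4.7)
The plan is to adapt the standard compactness and asymptotic analysis for Floer cylinders while keeping careful track of the gauge freedom. First I would exploit that the target $V$ is contractible, hence symplectically aspherical: since $u({\mb R}\times S^1)$ is bounded and the energy $E({\mf u})$ is finite, no holomorphic sphere bubbles can form, and the curvature equation $* F_A + \mu(u) = 0$ gives pointwise control of $F_A$ from the boundedness of $u$. On any compact subcylinder I would put $A$ in temporal gauge $\xi \equiv 0$ (since $K$ is abelian this reduces to an ODE in $s$ valued in $K$), after which the system \eqref{vortex_equation_2} becomes a genuinely parabolic/elliptic PDE for $(u, \eta)$ amenable to standard bootstrapping, yielding uniform $C^k_{\mathrm{loc}}$ bounds on the $s$-translates ${\mf u}_k = {\mf u}(\cdot + s_k, \cdot)$.

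For part (1), the finiteness of energy forces the $L^2$-norm of the vortex equation's gradient vector field on the slices $\{s\} \times S^1$ to tend to $0$ as $s \to \pm\infty$. Hence any $C^\infty_{\mathrm{loc}}$-subsequential limit of the translates is $s$-independent, and (being still a vortex solution with $\xi = 0$) is a critical point of $\wh{\mc A}_H$, i.e.\ an equivariant $1$-periodic orbit ${\mf x}_\pm$. To upgrade subsequential limits to the full uniform limit \eqref{asymptotic} one uses the isolation of critical points of ${\mc A}_H$ modulo the $\Gamma$-action together with a monotonicity argument on the action along the cylinder, ruling out oscillation between distinct limits.

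For part (2), any two gauge choices realizing the asymptotic behavior are related by a global smooth gauge transformation $h: {\mb R}\times S^1 \to K$. Its asymptotic restrictions $g_\pm := \lim_{s \to \pm\infty} h(s, \cdot) \in LK$ are the desired elements implementing ${\mf x}_\pm' = g_\pm {\mf x}_\pm$ via the action described in the text. The containment $g_- g_+^{-1} \in L_0 K$ is automatic: $h$ itself provides a path in $LK$ connecting $g_+$ to $g_-$, so they lie in the same connected component. Combined with the transformation law \eqref{deck}, this exhibits the pair $({\mf x}_-, {\mf x}_+)$ as a well-defined element of ${\mf L}^K(V) \times {\mf L}^K(V)$.

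For part (3), nondegeneracy of $H$ downstairs implies, after passage to the Coulomb slice for the $K$-action, that the Hessian of $\wh{\mc A}_H$ at each critical point has trivial kernel. Choosing a gauge in which the linearized vortex operator takes the asymptotic form $\partial_s + L_\pm$ with $L_\pm$ a self-adjoint Fredholm operator on $S^1$ possessing a spectral gap around $0$, one applies a standard Robbin--Salamon-type argument combined with energy decay to obtain exponential decay in $C^0$; iterating with elliptic interior estimates bootstraps this to all higher covariant derivatives. The principal obstacle I anticipate is gauge-theoretic rather than analytic: one must construct a single gauge transformation on the whole cylinder whose asymptotic Coulomb slice matches the linearization at both ends simultaneously, so that the Fredholm theory of $\partial_s + L_\pm$ applies in a common weighted Sobolev setting. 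This can be handled by patching temporal gauge on the ends with an interpolating gauge transformation on a compact middle region.
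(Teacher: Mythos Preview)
The paper does not give a proof of this theorem: it is stated as a citation to \cite[Theorem 3.1, Corollary 4.3]{Xu_VHF} and used as a black box. So there is nothing in the present paper to compare your argument against directly.

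That said, your sketch is the standard route for such asymptotic results and is essentially what one expects the cited proof to do: uniform local estimates from ellipticity plus curvature control, subsequential convergence of $s$-translates to critical points via the vanishing of the $L^2$-gradient on slices, gauge-tracking for the well-definedness statement in (2), and the spectral gap argument for exponential decay in (3). One point to watch in part (1): you invoke ``isolation of critical points of ${\mc A}_H$ modulo the $\Gamma$-action'' to upgrade subsequential convergence to full convergence, but the theorem as stated does not assume $H$ is nondegenerate until part (3). Without nondegeneracy the critical set at a fixed action level need not be discrete, so isolation alone is not available; the usual fix is to combine the monotone convergence of the action with a connectedness argument for the $\omega$-limit set, or to note that in the toric setting the relevant orbit sets are still sufficiently controlled. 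This is a genuine technical point you would need to address if writing out the proof in full.
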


Therefore, one can use a pair of elements ${\mf x}_\pm\in {\rm crit}{\mc A}_H \subset {\mf L}^K(V)$ to label solutions. Let 
\beqn
{\mc M}({\mf x}_-, {\mf x}_+)
\eeqn
be the set of gauge equivalence classes of bounded solutions ${\mf u}$ to \eqref{vortex_equation_2} modulo the ${\mb R}$-translation. One has the energy identity (\cite[Proposition 3.8]{Xu_VHF})
\beqn
E({\mf u}) = {\mc A}_H ({\mf x}_-) - {\mc A}_H ({\mf x}_+).
\eeqn

\begin{rem}\label{rem_admissible}
To achieve transversality, one has to avoid certain ``bad'' $K$-equivariant lifts of a given Hamiltonian $H$ downstairs and choose almost complex structures appropriately. In \cite[Section 6]{Xu_VHF} the second author used the notion of {\it admissible} almost complex structures and {\it admissible} $K$-invariant lifts of a Hamiltonian downstairs. We briefly recall the precise meanings of them adapted to the toric case. First, in the stable locus $V^{\rm st}$ there is the projection $\pi: V^{\rm st} \to X$ which is invariant under the complex torus $G$. Hence there is a splitting 
\beqn
TV|_{V^{\rm st}} \cong \pi^* TX \oplus ({\mf k}\otimes {\mb C}).
\eeqn
Throughout this paper we fix a $K$-invariant (small) open neighborhood $U$ of $\mu^{-1}(0)$ and consider only $K$-invariant, $\omega_V$-compatible almost complex structures $\wh J$ on $V$ which agrees with $J_V$ outside $U$ (this is necessary to guarantee the $C^0$-compactness in \cite{Xu_VHF}). Moreover, given a {\it nondegenerate} Hamiltonian $H: S^1 \times X \to {\mb R}$, an $S^1$-family of almost complex structures $\wh J_t$ is said to be {\it admissible} with respect to the $H$ downstairs if for any loop $\wh x: S^1 \to \mu^{-1}(0)$ that projects to a 1-periodic orbit downstairs, one imposes some conditions on the $1$-jet of $\wh J_t$ along $\wh x$ (see \cite[Definition 6.2]{Xu_VHF}. Then the notion of admissibility of $K$-invariant lifts of $H$ was defined (see \cite[Definition 6.5]{Xu_VHF}), which is a condition on the infinitesimal behavior of the lifts $\wh H_t$ along 1-periodic orbits given in terms of the Hessian of the equivariant action functional. 
\end{rem}


\begin{thm}
Given a nondegenerate Hamiltonian $H_t$ downstairs, for a generic admissible pair $(\wh H_t, \wh J_t)$, the following is true. 
\begin{enumerate}
\item Each moduli space ${\mc M}({\mf x}, {\mf y})$ is regular and has dimension ${\rm CZ}({\mf x}) - {\rm CZ}({\mf y}) - 1$. 

\item Moduli spaces with bounded energy are compact up to breaking as in the usual setting for the Uhlenbeck--Gromov--Floer compactification.

\item If ${\rm CZ}({\mf x}) - {\rm CZ}({\mf y}) = 1$, then the moduli space consists of finitely many points.

\item When ${\rm CZ}({\mf x}) - {\rm CZ}({\mf y}) = 2$, the compactified moduli space is a compact 1-dimensional manifold with boundary.
\end{enumerate}
\end{thm}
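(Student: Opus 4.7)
The plan is to follow the standard Floer-theoretic playbook -- Fredholm setup, regularity via Sard--Smale, compactness via energy and exponential decay, and one-dimensional cobordism from the compactified moduli -- adapted to the gauged setting where the target is the linear space $V$ instead of $X$. First, I would set up a Banach manifold $\scrB({\mf x},{\mf y})$ of configurations $(u,\xi,\eta)$ on ${\mb R}\times S^1$ converging exponentially to representatives of ${\mf x}$ and ${\mf y}$, taken modulo a fixed slice for the gauge action (for instance temporal gauge combined with a Coulomb-type condition). The vortex operator from \eqref{vortex_equation_2}, supplemented by infinitesimal gauge fixing, yields a Fredholm section of the obvious Banach bundle on suitable weighted $W^{k,p}_\delta$ spaces, where exponential weights are allowed by nondegeneracy of $H$. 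The asymptotic operators at $\pm\infty$ are conjugate, via the bijection \eqref{downtoup}, to the Hessian of the downstairs action functional at the corresponding honest 1-periodic orbit of $H$; this identifies the index of the linearization with the spectral-flow index of the ordinary Hamiltonian Floer operator on $X$, giving ${\rm CZ}({\mf x}) - {\rm CZ}({\mf y})$, and quotienting by ${\mb R}$-translation produces the dimension in (1). Transversality is then obtained by a Sard--Smale argument on the universal moduli space, perturbing within the class of admissible pairs $(\wh H_t, \wh J_t)$ from Remark \ref{rem_admissible}.

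For (2), the essential simplification is that $V$ is affine and hence symplectically aspherical, so no holomorphic sphere bubbling can arise. A uniform $C^0$ bound on $u$ follows from energy control together with the equation $*F_A + \mu(u) = 0$ which forces the solution to approach $\mu^{-1}(0)$ at the ends, the compact support of $\wh H_t$ away from the unstable locus and properness of $\mu$ preventing any escape to infinity. The curvature bound is built into the equation itself. An Uhlenbeck-type gauge fixing combined with standard elliptic bootstrapping then produces subsequential convergence modulo breaking into a chain of broken vortex Floer cylinders whose total energy matches ${\mc A}_H({\mf x}) - {\mc A}_H({\mf y})$; the exponential convergence established in the preceding theorem supplies the decay needed to prevent energy loss in the neck regions.

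Items (3) and (4) follow formally from (1) and (2): a compact $0$-dimensional manifold is finite, while a $1$-dimensional moduli space is completed to a compact manifold with boundary by adjoining the broken configurations, using a gluing construction that runs the implicit function theorem on weighted Sobolev spaces much as in ordinary Hamiltonian Floer theory, with the additional bookkeeping that one pregluesboth the sections $u$ and the connection forms $(\xi,\eta)$.

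The main technical obstacle, I expect, is the transversality step (1), precisely because the perturbations $\wh J_t$ and $\wh H_t$ are required to be $K$-invariant and to have support disjoint from the unstable locus. One must verify that this restricted class of perturbations is nevertheless rich enough to make the universal linearization surjective; this is exactly what the admissibility conditions from Remark \ref{rem_admissible} are engineered for, exploiting the fact that any solution meets the free locus of the $K$-action (since $\mu(u)\to 0$ at the ends) so that $K$-invariant perturbations separate sufficiently many directions. The careful formulation and verification of these admissibility conditions, as carried out in \cite[Section 6]{Xu_VHF}, is where the real work lies.
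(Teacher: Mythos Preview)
Your proposal is correct and matches the paper's approach: the paper itself gives only a brief sketch, pointing to the free-locus argument (finite energy on an infinite-volume cylinder forces the solution near $\mu^{-1}(0)$, hence into the locus where $K$ acts freely) and then invoking an equivariant Floer--Hofer--Salamon argument to perturb $\wh J_t$ there, with full details deferred to \cite[Section 6]{Xu_VHF}. One small correction to your compactness discussion: the uniform $C^0$ bound on $u$ does not come from properness of $\mu$ (which need not hold for the $K$-moment map in the general toric case) but rather from the requirement in Remark \ref{rem_admissible} that $\wh J$ agree with the standard integrable $J_V$ outside the fixed neighborhood $U$ of $\mu^{-1}(0)$, which enables a maximum-principle type argument.
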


We briefly explain the reason for transversality. Indeed, as the total energy is finite and the volume on the cylinder is infinite, near infinity any solution is contained in the neighborhood $U$ of $\mu^{-1}(0)$ fixed in Remark \ref{rem_admissible}. Therefore, there is a nonempty open subset of the cylinder whose image is contained in the free locus of the $K$-action. Then using an equivariant version of the argument of Floer--Hofer--Salamon \cite{Floer_Hofer_Salamon} one can achieve transversality by perturbing $\wh J_t$ in a neighborhood of $\mu^{-1}(0)$. 

It is a standard procedure to construct a coherent system of orientations on the moduli spaces (see \cite{Floer_Hofer_Orientation}). Then for $R = {\mb Z}$, there is a well-defined count
\beqn
n({\mf x}, {\mf y}) \in {\mb Z}
\eeqn
which is the signed count of the number of Floer trajectories in $0$-dimensional components of ${\mc M}({\mf x}, {\mf y})$. When $R$ is any commutative ring with a unit, $n({\mf x}, {\mf y})$ induces an element
\beqn
n_R({\mf x}, {\mf y}) \in R
\eeqn
by the induced count via the map ${\mb Z} \to R$. 

\subsubsection{Floer homology} 


We first define the Floer chain group for a smaller Novikov ring. Recall that one has the finitely generated abelian group 
\beqn
\Gamma:= LK/ {\rm ker} \omega^K
\eeqn
which naturally embeds into ${\mb R}$. For any commutative ring $R$, introduce
\beqn
\Lambda_R^\Gamma:= \Big\{ \sum_{i=1}^\infty a_i T^{g_i}\in \Lambda_R\ |\ g_i \in \Gamma \Big\}.
\eeqn
We define the Floer chain group $\vcf_\bullet(\wh H)$ to be the ``downward'' completion: 
\beqn
\vcf_\bullet(\wh H; \Lambda_R^\Gamma ) = \Big\{ \sum_{i=1}^\infty b_i {\mf x}_i\ |\ b_i \in R,\ {\mf x}_i \in {\rm crit} {\mc A}_H,\ \lim_{i \to \infty} {\mc A}_H ({\mf x}_i) = -\infty \Big\}.
\eeqn
It is graded by the Conley--Zehnder index (modulo 2). The $\Lambda_R^\Gamma$-module structure is defined by 
\beqn
\Big( \sum_{i=1}^\infty a_i T^{g_i} \Big) \Big( \sum_{j=1}^\infty b_j {\mf x}_j \Big) = \sum_{i, j = 1}^\infty a_i b_j (g_i \cdot {\mf x}_j).
\eeqn
By \eqref{deck}, the right hand side is in $\vcf_\bullet(\wh H; \Lambda_R^\Gamma)$ and this is a well-defined action. Define 
\beqn
\vcf_\bullet(\wh H; \Lambda_R):= \vcf_\bullet( \wh H; \Lambda_R^\Gamma) \otimes_{\Lambda_R^\Gamma} \Lambda_R.
\eeqn

The Floer differential $\partial_{\wh J}: \vcf_\bullet (\wh H; \Lambda_R^\Gamma) \to \vcf_{\bullet -1}(\wh H; \Lambda_R^\Gamma)$ is defined by the counts $n_R ({\mf x}, {\mf y})$. More precisely, on generators,
\beqn
\partial_{\wh J} {\mf x} = \sum_{{\mf y}} n_R ({\mf x}, {\mf y}) {\mf y}.
\eeqn
One has $\partial_{\wh J}^2 = 0$, resulting in the {\bf vortex Floer homology}
\beqn
\vhf_\bullet( \wh H, \wh J; \Lambda_R^\Gamma).
\eeqn
Notice that the differential $\partial_{\wh J}$ {\it decreases} the action.

\subsubsection{Adiabatic limit}

The adiabatic limit argument allows us to relate the gauged linear sigma model with holomorphic curves in the symplectic quotient. While we do not need a complete analysis of such a correspondence, we do need to consider the family of vortex equations related to the adiabatic limit argument. Indeed, if on the infinite cylinder we choose, instead of the standard area form $ds dt$, a rescaled one $\lambda^2 ds dt$, then the corresponding vortex Floer equation reads 
\begin{align}\label{vortex_lambda}
&\ \partial_s u + {\mc X}_\xi (u) + \wh J_t \left( \partial_t u + {\mc X}_\eta(u) - X_{\wh H_t}(u) \right) = 0, &\ \partial_s \eta - \partial_t \xi + \lambda^2 \mu(u) = 0.
\end{align}
One can define a vortex Floer chain complex for the triple $(\lambda, \wh H, \wh J)$ in completely the same way as the $\lambda = 1$ case, once transversality holds, which can be achieved via perturbation. We denote the vortex Floer chain complex by $\vcf_\bullet^\lambda( \wh H, \wh J; \Lambda_R^\Gamma)$. The corresponding homology is denoted by 
\beqn
\vhf^\lambda_\bullet(\wh H, \wh J; \Lambda_R^\Gamma).
\eeqn
There are a few subtleties. First, given a nondegenerate Hamiltonian $H$ downstairs on $X$, the notion of admissible almost complex structures (\cite[Definition 6.2]{Xu_VHF}) is independent of $\lambda$; the notion of admissible lifts, however, depends on $\lambda$. 

\begin{defn}
A triple $(\lambda, \wh H, \wh J)$ is called a {\bf regular triple} if 
\begin{enumerate}
    \item The descent Hamiltonian $H$ on $X$ is nondegenerate.

    \item $(\wh H, \wh J)$ is admissible with respect to $H$.

    \item Moduli spaces of gauge equivalence classes of finite energy solutions to \eqref{vortex_lambda} are all regular.
\end{enumerate}
\end{defn}

\subsubsection{Continuation map}

Given two regular triples $(\lambda_\pm, \wh H_\pm, \wh J_\pm)$, one compares the two associated vortex Floer complexes via continuation maps. By an {\bf interpolation} between these two triples, we mean a triple $(\lambda_s, \wh H_s, \wh J_s)$ where $\lambda_s \in {\mb R}_+$ is a smooth function in $s\in {\mb R}$ which agrees with $\lambda_\pm$ near $\pm \infty$, $\wh H_s$ is a smooth family of $K$-invariant compactly supported functions parametrized by $(s, t) \in {\mb R} \times S^1$ which agrees with $\wh H_\pm$ near $\pm \infty$, and $\wh J_s$ is a smooth family of $K$-invariant $\omega_V$-compatible almost complex structures parametrized by $(s, t)\in {\mb R}\times S^1$ which agrees with $\wh J_\pm$ near $\pm \infty$. 

Choosing a generic interpolation $(\lambda_s, \wh H_s, \wh J_s)$, by considering moduli spaces of gauge equivalence classes of solutions to the equation 
\begin{align}\label{vortex_continuation}
&\ \partial_s u + {\mc X}_\xi + \wh J_{s, t} \left( \partial_t u + {\mc X}_\eta - X_{\wh H_{s, t}}(u) \right) = 0,\ &\ \partial_s \eta - \partial_t \xi + \lambda_s^2 \mu(u) = 0,
\end{align}
one can define a continuation map 
\beqn
\mf{cont}: \vcf_\bullet^{\lambda_-} (\wh H_-, \wh J_-; \Lambda_R^\Gamma) \to \vcf_\bullet^{\lambda_+} ( \wh H_+, \wh J_+; \Lambda_R^\Gamma)
\eeqn
completely analogous to the case of classical Hamiltonian Floer theory. The map $\mf{cont}$ is a chain homotopy equivalence, inducing an isomorphism on Floer homology
\beqn
\vhf_\bullet^{\lambda_-} ( \wh H_-, \wh J_-; \Lambda_R^\Gamma) \cong \vhf_\bullet^{\lambda_+} ( \wh H_+, \wh J_+; \Lambda_R^\Gamma).
\eeqn
Completely analogous to the classical situation, these isomorphisms are natural, hence the resulting homology groups define a common object called the {\bf vortex Hamiltonian Floer homology} of $V$, denoted by 
\beq
\vhf_\bullet(V; \Lambda_R^\Gamma).
\eeq
Define 
\beqn
\vhf_\bullet(V; \Lambda_R):= \vhf_\bullet( V; \Lambda_R^\Gamma) \otimes_{\Lambda_R^\Gamma} \Lambda_R.
\eeqn

In order to consider effects on the filtered theories, one needs to estimate the energy of solutions contributing to the continuation maps. 

\begin{prop}\label{prop_continuation_energy}
Given any solution ${\mf u} = (u, \xi, \eta)$ to \eqref{vortex_continuation} which converges to ${\mf x}_\pm\in {\rm crit} {\mc A}_{H_\pm}$ at $\pm \infty$, one has 
\beq\label{continuation_energy_identity}
\int_{{\mb R}\times S^1} \Big( |\partial_s u + {\mc X}_\xi(u)|^2 + \lambda_s^2 |\mu(u)|^2 \Big) ds dt
= {\mc A}_{H_-}({\mf x}_-) - {\mc A}_{H_+}({\mf x}_+) - \int_{{\mb R}\times S^1} \frac{\partial \wh H_{s, t}}{\partial s}(u) ds dt.
\eeq
In particular, if $\wh H_{s, t} = (1-\chi(s)) \wh H_- + \chi(s) \wh H_+$ for some non-decreasing function $\chi: {\mb R} \to [0, 1]$, then one has 
\beq\label{continuation_energy_inequality}
{\mc A}_{H_+}({\mf x}_+) \leq {\mc A}_{H_-}({\mf x}_-) + \int_0^1 \max_V \left( \wh H_- - \wh H_+ \right) dt.
\eeq
\end{prop}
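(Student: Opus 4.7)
The statement is essentially the familiar ``$\int \|\partial_s\|^2 = $ change of action'' identity of Floer theory, adapted to the symplectic vortex equation with an $s$-dependent family $(\lambda_s, \wh H_s, \wh J_s)$.  My plan is to view the solution as a path in equivariant loop space, compute the derivative of the (equivariant) action along this path, and integrate.  Concretely, define
\[
f(s):= \wh{\mc A}_{H_s}\big( u(s,\cdot),\, \eta(s,\cdot) \big),
\]
understood via a fixed reference capping that is then continuously swept by $u(s,\cdot)$, so that $f(\pm\infty)={\mc A}_{H_\pm}({\mf x}_\pm)$ by the asymptotic convergence \eqref{asymptotic}. The result follows once we show
\beq\label{plan_eq}
f'(s) \;=\; -\int_{S^1}\Big( |\partial_s u + {\mc X}_\xi|^2 + \lambda_s^2|\mu(u)|^2 + \tfrac{\partial \wh H_{s,t}}{\partial s}(u) \Big)\,dt
\eeq
and integrate in $s$.

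Both sides of \eqref{plan_eq} are manifestly invariant under $s$-dependent gauge transformations in $K$ (the left side because $\wh {\mc A}_H$ is $L_0K$-invariant, the right side because $\partial_s u+{\mc X}_\xi$ and $\mu(u)$ are gauge covariant). So I would first reduce to the temporal gauge $\xi\equiv 0$, in which the continuation equation reads $\partial_s u = -\wh J_{s,t}(\partial_t u + {\mc X}_\eta - X_{\wh H_{s,t}})$ and $\partial_s \eta = -\lambda_s^2\mu(u)$.  Differentiating $f(s)$ term by term then gives three contributions:
\begin{itemize}
\item from the symplectic area, $-\int_{S^1}\omega_V(\partial_s u,\partial_t u)\,dt = \int_{S^1}\omega_V(\partial_t u,\partial_s u)\,dt$;
\item from the moment-map pairing, $\int_{S^1}\big(\omega_V({\mc X}_\eta,\partial_s u) + \langle\mu(u),\partial_s\eta\rangle\big)\,dt$, using $d\langle\mu,\eta\rangle=\omega_V({\mc X}_\eta,\cdot)$;
\item from the Hamiltonian, $-\int_{S^1}\big(\omega_V(X_{\wh H_{s,t}},\partial_s u)+\tfrac{\partial \wh H_{s,t}}{\partial s}(u)\big)dt$.
\end{itemize}
Combining the first three groups gives $\int_{S^1}\omega_V(\partial_t u+{\mc X}_\eta-X_{\wh H_{s,t}},\,\partial_s u)\,dt$, which by the vortex equation equals $\int_{S^1}\omega_V(-\wh J_{s,t}\partial_s u,\partial_s u)\,dt = -\int_{S^1}|\partial_s u|^2\,dt$ using $\omega_V(v,\wh J v)=|v|^2$.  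The $\partial_s\eta$ piece contributes $-\lambda_s^2\int_{S^1}|\mu(u)|^2\,dt$.  This establishes \eqref{plan_eq} in temporal gauge and hence in general.

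The hardest step is the bookkeeping of the derivative of the capping term and the justification of interchanging $d/ds$ with the integral over the capped disk; this is standard but must use the exponential decay of bounded solutions at $s=\pm\infty$ to ensure $f$ is $C^1$ with the expected boundary values.  Once \eqref{plan_eq} is integrated over ${\mb R}$, the first assertion \eqref{continuation_energy_identity} is immediate.  For \eqref{continuation_energy_inequality}, note that under the interpolation $\wh H_{s,t}=(1-\chi(s))\wh H_{-,t}+\chi(s)\wh H_{+,t}$ with $\chi'\geq 0$ one has $\tfrac{\partial \wh H_{s,t}}{\partial s}=\chi'(s)(\wh H_{+,t}-\wh H_{-,t})$, so that pointwise $-\tfrac{\partial \wh H_{s,t}}{\partial s}(u)\leq \chi'(s)\max_V(\wh H_{-,t}-\wh H_{+,t})$; since the left hand side of \eqref{continuation_energy_identity} is nonnegative and $\int_{\mb R}\chi'(s)ds=1$, Fubini yields the desired bound.
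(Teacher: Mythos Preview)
Your argument is correct and is exactly the standard computation that the paper defers to \cite[Proposition~7.5]{Xu_VHF}; the paper's own proof merely cites that reference for constant $\lambda_s$ and remarks that the $s$-dependent $\lambda_s$ changes nothing, while you have written out the underlying derivative-of-action calculation in full. One cosmetic slip: from $\partial_s u + \wh J(\partial_t u + {\mc X}_\eta - X_{\wh H_{s,t}})=0$ one gets $\partial_t u + {\mc X}_\eta - X_{\wh H_{s,t}} = +\,\wh J\,\partial_s u$ (not $-\wh J\,\partial_s u$), and then $\omega_V(\wh J v,v)=-|v|^2$; your two sign errors cancel, so the conclusion $-\int_{S^1}|\partial_s u|^2\,dt$ is right, but you should clean up that intermediate step.
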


\begin{proof}
When $\lambda_s$ is a constant, \eqref{continuation_energy_identity} is \cite[Proposition 7.5]{Xu_VHF}. The general case is the same as the area form on the domain does not affect the topological nature of the energy. 
As the left-hand-side of \eqref{continuation_energy_identity} is nonnegative, \eqref{continuation_energy_inequality} follows.
\end{proof}

\subsubsection{Computation of $\vhf$}

It is expected that the vortex Floer homology is isomorphic to the Hamiltonian Floer homology of the symplectic quotient, and hence its singular homology (in appropriate coefficients). However, such a calculation relies involved technical constructions. The Piunikhin--Salamon--Schwarz (PSS) approach forces one to deal with multiple covers of equivariant Floer cylinders with $H \equiv 0$ which may have negative equivariant Chern number. The adiabatic limit approach (similar to \cite{Gaio_Salamon_2005}) requires the study of affine vortices for a general toric manifold. In particular, for general symplectic quotients both approaches require the use of the virtual technique. 

However, in the toric case, even without having the PSS map, it is rather easy to compute the rank of $\vhf (V)$ as one can find a perfect Morse function. 

\begin{prop}\label{computation}
For any commutative ring $R$, as $\Lambda_R^\Gamma$-modules, $\vhf_\bullet (V; \Lambda_R^\Gamma)$ is isomorphic to $H_\bullet (X; \Lambda_R^\Gamma)$ (with the reduced ${\mb Z}_2$-grading) up to a degree shifting.
\end{prop}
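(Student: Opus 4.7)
The plan is to exploit the fact that a compact toric manifold $X$ admits a perfect Morse function and that the Floer complex therefore collapses for parity reasons, with no appeal to a PSS isomorphism or an adiabatic limit argument.

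First, pick a perfect Morse function $f \colon X \to {\mb R}$ -- for example, a generic component of the moment map $\pi_X \colon X \to P$. Since the toric manifold $X$ admits a CW decomposition with only even-dimensional cells, every critical point of $f$ has even Morse index and the number of critical points equals the total rank of $H_\bullet(X; {\mb Z})$. For a sufficiently small constant $\epsilon > 0$, the Hamiltonian $H := \epsilon f$ is nondegenerate as a time-independent $1$-periodic Hamiltonian: its set of $1$-periodic orbits is exactly the set of critical points of $f$, each counted once, since the linearized return map is $\exp(\epsilon \, {\rm Hess}_p f \circ J)$, which has no eigenvalue $1$ for generic small $\epsilon$. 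Choose any $K$-invariant lift $\wh H$ of $H$ compactly supported away from $V^{\rm us}$.

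Next, I would arrange admissibility. By Remark \ref{rem_admissible}, we need only perturb $(\wh H, \wh J)$ in $1$-jets along the loops $\wh x$ covering the critical points; since the $1$-periodic orbits downstairs are already nondegenerate, these perturbations can be made arbitrarily $C^0$-small and do not change the set $\mathrm{Per}(H)$. Thus for a generic admissible pair $(\wh H, \wh J)$ the vortex Floer complex $\vcf_\bullet(\wh H, \wh J; \Lambda_R^\Gamma)$ is well-defined, and the map $\iota$ from \eqref{downtoup} identifies its generators with pairs $(p, \gamma)$ where $p \in {\rm Crit}(f)$ and $\gamma \in \Gamma$.

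The key observation is a parity count of Conley--Zehnder indices. By Definition \ref{Conley-Zehnder}, ${\rm CZ}({\mf x})$ equals the usual Conley--Zehnder index of the capped orbit $\iota^{-1}({\mf x})$. For a critical point $p$ of a small Morse Hamiltonian with the trivial capping, the standard computation gives
\beqn
{\rm CZ}(p, \mathrm{triv}) \equiv {\rm ind}_{\rm Morse}(p) + n \pmod 2,
\eeqn
and acting by $\gamma \in \Gamma$ changes the index by $2 \langle c_1(TX), A_\gamma\rangle$, an even integer. Since all Morse indices of $f$ are even, every generator of $\vcf_\bullet(\wh H, \wh J; \Lambda_R^\Gamma)$ has the same parity $n \pmod 2$. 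Hence the complex is concentrated in a single ${\mb Z}_2$-degree, the differential $\partial_{\wh J}$ (which shifts degree by $1 \pmod 2$) is forced to vanish, and
\beqn
\vhf_\bullet(\wh H, \wh J; \Lambda_R^\Gamma) \;=\; \vcf_\bullet(\wh H, \wh J; \Lambda_R^\Gamma) \;\cong\; \bigoplus_{p \in {\rm Crit}(f)} \Lambda_R^\Gamma \cdot p
\eeqn
as a $\Lambda_R^\Gamma$-module, concentrated in degree $n \pmod 2$. Finally, because $H_\bullet(X; {\mb Z})$ is free (as $X$ has only even cells), $H_\bullet(X; \Lambda_R^\Gamma)$ is a free $\Lambda_R^\Gamma$-module of rank equal to $|{\rm Crit}(f)|$, so the two modules are isomorphic up to the shift by $n \pmod 2$.

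The main potential obstacle is purely bookkeeping: checking that a perfect Morse $f$ on $X$ indeed admits a $K$-invariant lift $\wh H$ satisfying all the admissibility constraints of \cite{Xu_VHF} without destroying nondegeneracy or altering the set $\mathrm{Per}(H)$. Since the admissibility conditions only constrain $1$-jets transverse to the orbit loops $\wh x$, arbitrarily small perturbations suffice, and the parity argument above then forces $\partial_{\wh J} = 0$.
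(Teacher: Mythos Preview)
Your proof is correct and follows essentially the same approach as the paper: choose a perfect Morse function on $X$ via a generic component of the moment map, scale it small, lift and perturb to an admissible pair, and observe that since all Morse indices are even the vortex Floer complex has no generators in adjacent ${\mb Z}_2$-degrees, forcing $\partial_{\wh J}=0$. The paper records the same parity computation via ${\rm CZ}(x)=n-{\rm index}_f(x)$, which agrees with your formula modulo $2$.
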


\begin{proof}
Recall that the $2n$-dimensional toric manifold $X$ carries a Hamiltonian $T^n$-action. For a generic circle $S^1 \subset T^n$, the induced moment map $f: X \to {\mb R}$ is a perfect Morse function whose critical points are the toric fixed points. In particular, the Morse indices are all even. Then for $\epsilon$ small, $\epsilon f$ is a nondegenerate time-independent Hamiltonian. After a small perturbation and $K$-invariant lift to $V$, the corresponding vortex Floer chain complex has no two generators with adjacent degrees. Hence the $\vhf_\bullet (V; \Lambda_R^\Gamma)$ has the same rank as $H_\bullet(X; \Lambda_R^\Gamma)$. Lastly, the usual normalization of the Conley--Zehnder index is taken in such a way that if $x$ is a critical point of $\epsilon f$ viewed as a 1-periodic orbit with a constant capping, then 
\beqn
{\rm CZ}(x) = n - {\rm index}_f(x)
\eeqn
where $2n = {\rm dim} X$ and ${\rm index}_f(x)$ is the Morse index of $x$ (see \cite[(12.1.7)]{McDuff_Salamon_2004}). 
\end{proof}




\subsection{Small bulk deformations}

Here we define a family of deformations of the vortex Floer homology parametrized by ``small'' bulk deformations. Recall that the toric manifold $X$ has $N$ toric divisors $D_j$ corresponding to the $N$ faces of the moment polytope. These divisors are GIT quotients of the coordinate hyperplanes 
\beqn
V_j= \{ (x_1, \ldots, x_N) \in V\ |\ x_j = 0 \}.
\eeqn
Introduce a small bulk deformation of the form 
\beq\label{small_bulk_deformation_general}
{\mf b} = \sum_{j=1}^N c_j V_j\ {\rm where}\ c_j \in \Lambda_{0, R}.
\eeq
The ${\mf b}$-deformed vortex Floer complex is the complex generated by equivariant 1-periodic orbits upstairs whose differential counts gauge equivalence classes of solutions to the vortex equation in a different way: for each rigid (modulo gauge transformation) solution ${\mf u} = (u, \xi, \eta)$, we weight the count by the factor 
\beqn
\exp \left( \sum_{j=1}^N c_j (u \cap V_j) \right) \in \Lambda_R
\eeqn
where $u \cap V_j$ is the intersection number between the cylinder $u$ and the divisor $V_j$. Formally, this count coincides with the count of solutions on the cylinder with markings mapped to $V_j$.

\begin{rem}
The use of bulk deformations in Lagrangian Floer theory was invented by Fukaya--Oh--Ohta--Ono \cite{FOOO_toric_2, FOOO_mirror} which resembles the notion of {\it big quantum cohomology} in Gromov--Witten theory. Bulk deformations are adapted to Hamiltonian Floer theory in \cite{Usher_2011} and \cite{FOOO_spectral}. In gauged linear sigma model it was discussed in \cite{Woodward_toric}. The term ``small'' used here comes from the terminology in Gromov--Witten theory where small means deforming Gromov--Witten invariants by divisor classes and ``big'' means deforming by classes with arbitrary degrees.
\end{rem}

\subsubsection{Bulk-avoiding Hamiltonians}

One can only have a well-defined topological intersection number between Floer cylinders and the divisors if periodic orbits do not intersect these toric divisors. We introduce the following type of Hamiltonians on the toric manifold. 

\begin{defn}[Bulk-avoiding Hamiltonians] \label{bulk_avoiding}\hfill
\begin{enumerate}
\item A Hamiltonian $H$ on the toric manifold $X$ is called {\bf bulk-avoiding} if all 1-periodic orbits of $nH$ for all $n\geq 1$ do not intersect the divisor $D_1 \cup \cdots \cup D_N$. 


\item A bulk-avoiding admissible pair is an admissible pair $(\wh H, \wh J)$ such that $\wh H$ descends to a bulk-avoiding Hamiltonian downstairs.
\end{enumerate}
\end{defn}

It is easy to see that one can perturb any Hamiltonian to a bulk-avoiding one by arbitrarily $C^2$-small perturbation. Now we can define the topological intersection numbers. Let ${\mf u} = (u, \xi, \eta)$ be a solution to \eqref{vortex_equation_2} which converges to equivariant 1-periodic orbits ${\mf x}$ resp. ${\mf y}$ at $-\infty$ resp. $+\infty$. Then a generic compactly supported perturbation $\tilde u$ intersects transversely with $V_j$. Define 
\beqn
[{\mf u}] \cap V_j = \tilde u \cap V_j \in {\mb Z}
\eeqn
which counts transverse intersection points with signs. Notice that this number is well-defined: first, if $\tilde u'$ is another perturbation, then $\tilde u \cap V_j = \tilde u' \cap V_j$; second, if ${\mf u}' = (u', \xi', \eta')$ is gauge equivalent to ${\mf u}$ via a gauge transformation $g$, then $\tilde u':= g \tilde u$ is a perturbation of $u'$. As $V_j$ is $K$-invariant, $\tilde u'$ still intersect transversely with $V_j$ and the intersection number is the same. 

\subsubsection{Bulk-deformed vortex Floer complex}

For our application, we only consider small bulk deformations of the form 
\beqn
{\mf b} = \sum_{j=1}^N \log c_j\  V_j\ {\rm where}\ c_j \in {\mb Z}[{\bf i}] = {\mb Z}\oplus {\bf i}{\mb Z}.
\eeqn
Here ${\bf i} = \sqrt{-1}$ and one can regard ${\mb Z}[{\bf i}]\subset {\mb C}$. The weighted counts eventually only depend on $c_j$ so we allow $c_j$ to be zero and the ambiguity of taking logarithm does not affect further discussions. Consider the vortex Floer chain complex
\beqn
\vcf_\bullet( \wh H, \wh J; \Lambda_{{\mb Z}[{\bf i}]}).
\eeqn
Due to the special behavior of the bulk ${\mf b}$, the weighted counts of cylinders are still integral. Define the bulk-deformed vortex differential 
\beqn
\partial^{\mf b}: \vcf_\bullet (\wh H, \wh J; \Lambda_{{\mb Z}[{\bf i}]})  \to \vcf_\bullet ( \wh H, \wh J; \Lambda_{{\mb Z}[{\bf i}]})
\eeqn
by 
\beq\label{bulk_differential}
\partial^{\mf b}({\mf x}) = \sum_{{\mf y}\atop {\rm CZ}({\mf x})- {\rm CZ}({\mf y}) = 1} \left( \sum_{[{\mf u}]\in {\mc M}^{\rm cyl}({\mf x}, {\mf y})} \epsilon([{\mf u}]) \exp \left( \sum_{j=1}^N \log c_j\ [{\mf u}] \cap V_j \right) \right) {\mf y}.
\eeq
Here $\epsilon([{\mf u}]) \in \{\pm 1\}$ is the sign of the rigid solution $[{\mf u}]$. In particular, when ${\mf b} = 0$, the above coincides with the original differential map $\partial$. 

\begin{lemma} 
$\partial_{\mf b}$ is a legitimate linear map and $(\partial^{\mf b})^2 = 0$.
\end{lemma}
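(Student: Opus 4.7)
I would establish both assertions by adapting the standard arguments from undeformed vortex Hamiltonian Floer theory; the only extra input is the behavior of the intersection numbers $[{\mf u}]\cap V_j$ under gauge transformation, compactly supported perturbation, and gluing.

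\emph{Well-definedness.} First I would check that $\partial^{\mf b}({\mf x})$ genuinely lies in $\vcf_\bullet(\wh H,\wh J;\Lambda_{{\mb Z}[{\bf i}]})$. Because $(\wh H,\wh J)$ is bulk-avoiding in the sense of Definition \ref{bulk_avoiding}, no $1$-periodic orbit downstairs meets any toric divisor; hence for a Floer cylinder ${\mf u}$ asymptotic to equivariant orbits $({\mf x},{\mf y})$, a generic compactly supported perturbation $\tilde u$ is transverse to $V_j$, and the signed intersection number $[{\mf u}]\cap V_j\in{\mb Z}$ is independent of the perturbation (relative cobordism argument), of the gauge representative (since $V_j$ is $K$-invariant), and depends only on the lifted homology class carried by the pair $({\mf x},{\mf y})\in{\mf L}^K(V)$. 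Therefore the weight $\prod_j c_j^{[{\mf u}]\cap V_j}$ depends only on the pair $({\mf x},{\mf y})$ and not on the specific $[{\mf u}]\in{\mc M}^{\rm cyl}({\mf x},{\mf y})$. When ${\rm CZ}({\mf x})-{\rm CZ}({\mf y})=1$ the moduli ${\mc M}^{\rm cyl}({\mf x},{\mf y})$ is finite, so the inner sum in \eqref{bulk_differential} is a finite element of ${\mb Z}[{\bf i}]$. The outer Novikov-finiteness (i.e.\ that $\{{\mf y}:{\mc A}_H({\mf y})\geq C\}$ contributes finitely many nonzero terms) is inherited from the undeformed differential $\partial$, since the bulk merely reweights the existing nonzero coefficients.

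\emph{Proof that $(\partial^{\mf b})^2=0$.} I would fix ${\mf x}$ and ${\mf z}$ with ${\rm CZ}({\mf x})-{\rm CZ}({\mf z})=2$ and analyze the compactified $1$-dimensional moduli space $\overline{\mc M}^{\rm cyl}({\mf x},{\mf z})$. Its boundary decomposes as
\[
\partial\,\overline{\mc M}^{\rm cyl}({\mf x},{\mf z})=\bigsqcup_{{\mf y}}{\mc M}^{\rm cyl}({\mf x},{\mf y})\times{\mc M}^{\rm cyl}({\mf y},{\mf z}),
\]
the union being taken over intermediate ${\mf y}$ with ${\rm CZ}({\mf y})={\rm CZ}({\mf x})-1$. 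The new ingredient, compared to the undeformed case, is the additivity of intersection numbers under gluing,
\[
[{\mf u}_1\#{\mf u}_2]\cap V_j=[{\mf u}_1]\cap V_j+[{\mf u}_2]\cap V_j,
\]
which holds by a standard pregluing/perturbation argument: since ${\mf y}$ avoids $V_j$ by the bulk-avoiding hypothesis, one can preglue on a neck on which the map is uniformly bounded away from $V_j$, and choose the transverse perturbations supported outside this neck. Multiplicativity of the exponential weight then gives
\[
\prod_j c_j^{[{\mf u}_1\#{\mf u}_2]\cap V_j}=\Bigl(\prod_j c_j^{[{\mf u}_1]\cap V_j}\Bigr)\Bigl(\prod_j c_j^{[{\mf u}_2]\cap V_j}\Bigr).
\]
Combined with the compatibility of the coherent orientations with gluing, this identifies the coefficient of ${\mf z}$ in $(\partial^{\mf b})^2({\mf x})$ with the signed, weighted count of the boundary points of the compact oriented $1$-manifold $\overline{\mc M}^{\rm cyl}({\mf x},{\mf z})$, which vanishes. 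Summing over ${\mf z}$ yields $(\partial^{\mf b})^2=0$.

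\emph{Main potential difficulty.} There is no substantial obstacle; the argument is the standard Floer square-zero argument, and the only point requiring care is the additivity of $[{\mf u}]\cap V_j$ under gluing, which is routine once one invokes the bulk-avoiding hypothesis to localize the perturbations away from the breaking neck.
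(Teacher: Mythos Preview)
Your proposal is correct and follows essentially the same approach as the paper: verify that the weighted coefficients lie in ${\mb Z}[{\bf i}]$ and that Novikov-finiteness is inherited from the undeformed complex, then use the boundary of the compactified one-dimensional moduli space together with additivity of the intersection numbers $[{\mf u}]\cap V_j$ under gluing to conclude $(\partial^{\mf b})^2=0$. The only cosmetic difference is that the paper phrases the key topological fact as ``the intersection number is constant on each connected component of $\overline{\mc M}^{\rm cyl}({\mf x},{\mf z})$,'' whereas you observe the slightly stronger (and equally valid, since $V$ is contractible) statement that the weight depends only on the pair $({\mf x},{\mf y})$.
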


\begin{proof}
First, as $c_j \in {\mb Z}[{\bf i}]$, the weights
\beqn
\exp \left( \sum_{j=1}^N \log c_j \ [{\mf u}] \cap V_j \right) = \prod_{j=1}^N c_j^{[{\mf u}] \cap V_j}\in {\mb Z}[{\bf i}].
\eeqn
Hence the coefficients on the right hand side of \eqref{bulk_differential} are still in ${\mb Z}[{\bf i}]$. Second, by Gromov compactness, the sum \eqref{bulk_differential} is still in the module $\vcf_\bullet ( \wh H, \wh J; \Lambda_{{\mb Z}[{\bf i}]})$. Hence $\partial^{\mf b}$ is a well-defined linear map. To prove that its square is zero, consider for each ${\mf x}$ and ${\mf z}$ with Conley--Zehnder indices differing by $2$ and consider the $1$-dimensional components of the moduli space $\ov{\mc M}{}^{\rm cyl}({\mf x}, {\mf z})$. It can be further decomposed into connected components. Within each connected components, the topological intersection number for each cylinder with each $V_j$ is a constant. Moreover, for the concatenation of two cylinders $[{\mf u}_1]$ and $[{\mf u}_2]$ which is in the boundary of such a component, this intersection number with $V_j$ is equal to the sum $[{\mf u}_1]\cap V_j  + [{\mf u}_2] \cap V_j$. It follows that $(\partial^{\mf b})^2 = 0$.
\end{proof}

Hence for each regular admissible bulk-avoiding pair $(\wh H, \wh J)$, one can define the ${\mf b}$-deformed vortex Floer homology by
\beqn
\vhf_\bullet^{\mf b}(\wh H, \wh J; \Lambda_{{\mb Z}[{\bf i}]}):= {\rm ker} \partial^{\mf b}/ {\rm im} \partial^{\mf b}.
\eeqn
Below we summarize its properties.

\begin{thm}[Properties of bulk-deformed vortex Floer complex]\hfill \label{thm_vhf_bulk}
\begin{enumerate}
    \item For each regular bulk-avoiding admissible pair $(\wh H, \wh J)$, the complex $\vcf_\bullet (\wh H, \wh J; \Lambda_{{\mb Z}[{\bf i}]})$ with differential $\partial^{\mf b}$ is a ${\mb Z}_2$-graded filtered Floer--Novikov complex (see Definition \eqref{defn_fn}).

    \item For each two regular admissible bulk-avoiding pairs $(\wh H_1, \wh J_1)$ and $(\wh H_2, \wh J_2)$, there is a continuation map 
    \beqn
    \mf{cont}: \vcf_\bullet^{\mf b} (\wh H_1, \wh J_1; \Lambda_{{\mb Z}[{\bf i}]} ) \to \vcf_\bullet^{\mf b} (\wh H_2, \wh J_2; \Lambda_{{\mb Z}[{\bf i}]} )
    \eeqn
    which is canonical up to chain homotopy. Hence there is a ${\mb Z}_2$-graded $\Lambda_{{\mb Z}[{\bf i}]}$-module $\vhf_\bullet^{\mf b} (V; \Lambda_{{\mb Z}[{\bf i}]})$, called the ${\mf b}$-deformed vortex Floer homology, with canonical isomorphisms
    \beqn
    \vhf_\bullet^{\mf b} (\wh H, \wh J; \Lambda_{{\mb Z}[{\bf i}]}) \cong \vhf_\bullet^{\mf b} (V; \Lambda_{{\mb Z}[{\bf i}]})
    \eeqn
    for all regular admissible bulk-avoiding pairs $(\wh H, \wh J)$.

    \item There is a linear isomorphism 
    \beqn
    \vhf_\bullet^{\mf b} (V; \Lambda_{{\mb Z}[{\bf i}]}) \cong H_\bullet(X; \Lambda_{{\mb Z}[{\bf i}]}).
    \eeqn
\end{enumerate}
\end{thm}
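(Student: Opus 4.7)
The plan is to verify the three claims in turn, adapting the standard template of vortex Floer theory so that the bulk weights are tracked through every step. For Part (1), I would take the underlying Floer--Novikov package to have $P = \mathrm{crit}\,{\mc A}_H \subset {\mf L}^K(V)$ with the $\Gamma$-action by deck transformations coming from $LK/\ker\omega^K$, action functional ${\mc A}_H$ satisfying the translation identity \eqref{deck}, and grading given by the mod-$2$ Conley--Zehnder index (Definition \ref{Conley-Zehnder}); finiteness of $P/\Gamma$ follows from nondegeneracy of $H$ downstairs. The counting function $n_R({\mf x},{\mf y})$ is the parenthesized weighted sum in \eqref{bulk_differential}; it lies in ${\mb Z}[{\bf i}]$ because each rigid solution contributes an integer intersection number $[{\mf u}]\cap V_j$ and the $c_j\in {\mb Z}[{\bf i}]$. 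Strict action decrease along nontrivial trajectories is the energy identity, action-finiteness within each window is Gromov compactness in the vortex setting, gauge invariance of the weighted count uses $K$-invariance of the $V_j$, and $(\partial^{\mf b})^2=0$ is the preceding lemma.

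For Part (2), I would pick a generic interpolation $(\lambda_s,\wh H_s,\wh J_s)$ between the two regular admissible bulk-avoiding pairs and define $\mf{cont}$ by weighting each rigid solution ${\mf u}$ of \eqref{vortex_continuation} by $\exp\bigl(\sum_j \log c_j \cdot [{\mf u}]\cap V_j\bigr)$. The essential bookkeeping point is that topological intersection numbers are additive under broken-trajectory concatenation, so the standard compactness/gluing analysis of the $1$-dimensional components of parametric moduli spaces yields the chain map identity; the requisite energy bounds come from Proposition \ref{prop_continuation_energy}. Chain homotopies then arise from $1$-parameter families of interpolations, giving a well-defined common invariant $\vhf^{\mf b}_\bullet(V;\Lambda_{{\mb Z}[{\bf i}]})$.

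For Part (3), I would appeal to (2) and compute with a favorable Hamiltonian. Following Proposition \ref{computation}, take a generic circle $S^1\subset T^n$ whose moment map $f:X\to{\mb R}$ is a perfect Morse function with Bialynicki--Birula complex stable manifolds, forcing all Morse indices to be even. For $\epsilon>0$ small, $\epsilon f$ is nondegenerate with $1$-periodic orbits equal to $\mathrm{crit}(f)$. A generic $C^2$-small $K$-invariant perturbation then produces a bulk-avoiding admissible pair $(\wh H,\wh J)$ whose $1$-periodic orbits are in bijection with $\mathrm{crit}(f)$ and whose Conley--Zehnder indices $n-\mathrm{index}_f(x)$ all share a single parity modulo $2$. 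Hence $\vcf^{\mf b}_\bullet(\wh H,\wh J;\Lambda_{{\mb Z}[{\bf i}]})$ is concentrated in one ${\mb Z}_2$-degree, so $\partial^{\mf b}=0$ for degree reasons, and the resulting free module has rank $|\mathrm{crit}(f)|=\sum_k b_k(X)$.

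The hardest step will be producing the bulk-avoiding perturbation required in Part (3): for all $n\geq 1$, the iterates $nH$ must keep their $1$-periodic orbits off $D_1\cup\cdots\cup D_N$, which is a codimension-two transversality condition to be achieved simultaneously across countably many $n$. I expect this to succumb to a Baire-category argument in the space of $T^n$-equivariant $C^2$-small perturbations supported in a neighborhood of the open toric orbit, using that each divisor $D_j$ is $T^n$-invariant of real codimension two, so that for each fixed $n$ the locus of perturbations whose $n$-periodic orbits meet $\bigcup_j D_j$ is of infinite codimension.
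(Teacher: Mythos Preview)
Your overall strategy for all three parts is correct and matches the paper's (implicit) argument: Part~(1) is Proposition~\ref{prop_obvious}, Part~(2) is the standard continuation-map argument with bulk weights tracked via additivity of intersection numbers under concatenation, and Part~(3) follows the template of Proposition~\ref{computation}. Note in particular that once all Conley--Zehnder indices share a parity, the differential $\partial^{\mf b}$ vanishes for degree reasons regardless of the bulk weights, so Part~(3) really is just a rank count.

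There is, however, a genuine problem with the perturbation scheme you sketch in the last paragraph. The critical points of the moment map $f$ are precisely the toric fixed points, which lie on the toric divisors (indeed, each is an intersection of $n$ of the $D_j$). A $T^n$-equivariant perturbation cannot move them: at a $T^n$-fixed point the isotropy representation on the tangent space has no nonzero fixed vectors, so the gradient of any $T^n$-invariant function vanishes there. Likewise, a perturbation supported in (a neighborhood of) the open orbit and vanishing near the divisors leaves the fixed points as critical points. Either constraint therefore defeats the purpose.

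The fix is to drop both restrictions. For a nondegenerate $C^2$-small Morse function, the $1$-periodic orbits of $n\epsilon f$ are exactly the critical points for every $n\geq 1$ (no nonconstant closed orbits appear for small enough $\epsilon$), so it suffices to perturb $f$ itself, by a generic \emph{non-equivariant} $C^2$-small function, so that its critical points move off the real-codimension-two set $\bigcup_j D_j$; this is a single finite-dimensional transversality condition, and the Conley--Zehnder indices are unchanged by continuity. This is what the paper has in mind when it asserts that bulk-avoidance is achieved by an arbitrarily small $C^2$ perturbation.
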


\subsubsection{Poincar\'e duality}


In Morse--Floer theory one can define the Poincar\'e duality on the chain-level by ``reversing'' the Morse function or the symplectic action functional. We recall this construction in the setting of vortex Floer theory. If $\wh H: S^1\times V \to {\mb R}$ is a $K$-invariant Hamiltonian, define $\wh H^{\rm op}: S^1 \times V \to {\mb R}$ by 
\beqn
\wh H^{\rm op}(t, v) = - \wh H(-t, v).
\eeqn
Then similar to the case of the ordinary Floer homology (see \cite[Section 12.3]{McDuff_Salamon_2004}), there is a one-to-one correspondence between ${\rm crit} {\mc A}_H$ and ${\rm crit} {\mc A}_{H^{\rm op}}$. More precisely, if ${\mf x} = (\wh x, \eta) \in L^K(V)$ is an equivariant 1-periodic orbit, then 
\beqn
{\mf x}^{\rm op}:= (\wh x^{\rm op}, \eta^{\rm op})\ {\rm where}\ \wh x^{\rm op}(t) = \wh x(-t),\ \eta^{\rm op}(t) = - \eta(-t)
\eeqn
solves
\beqn
\frac{d}{dt} \wh x^{\rm op} (t)  + {\mc X}_{\eta^{\rm op}(t)}(\wh x^{\rm op}(t)) - X_{\wh H^{\rm op}}(\wh x^{\rm op}(t)) = 0
\eeqn
and hence is an equivariant 1-periodic orbits for $H^{\rm op}$. The map ${\mf x} \mapsto {\mf x}^{\rm op}$ induces a one-to-one correspondence
\beqn
{\rm crit} {\mc A}_H \cong {\rm crit} {\mc A}_{H^{\rm op}}
\eeqn
with critical values and Conley--Zehnder indices reversed.

Similarly, if $\wh J_t$ is an $S^1$-family of $K$-invariant almost complex structures on $V$, then define 
\beqn
(\wh J^{\rm op})_t = \wh J_{-t}.
\eeqn
One can verify easily that if $(\wh H, \wh J)$ is admissible, so is $(\wh H^{\rm op}, \wh J^{\rm op})$. 

Now we define a Poincar\'e pairing on the vortex Floer homology. Let $(\wh H_1, \wh J_1)$ and $(\wh H_2, \wh J_2)$ be two regular bulk-avoiding admissible pairs on $V$. Consider the genus zero curve with two incoming cylindrical ends, denoted by $\Sigma_{\supset}$. Choose an area form with cylindrical ends on $\Sigma_{\supset}$. Define a $K$-invariant Hamiltonian perturbation $\wh H_{\supset}$ on $\Sigma_{\supset}$ which is equal to $\wh H_1 dt$ on the first cylindrical end and which is equal to $\wh H_2^{\rm op} dt$ on the second cylindrical end. Choose a domain-dependent $K$-invariant almost complex structure $\wh J_{\supset}$ which agrees on $\wh J_1$ on the first cylindrical end and which is equal to $\wh J_2^{\rm op}$ on the second cylindrical end. Consider the $\wh H_{\supset}$-perturbed symplectic vortex equation on $\Sigma_{\supset}$ with respect to the family of almost complex structures $\wh J_{\supset}$. Finite energy solutions converge to critical points of ${\mc A}_{H_1}$ resp. ${\mc A}_{H_2^{\rm op}}$ at the two cylindrical ends. Then given ${\mf x} \in {\rm crit}{\mc A}_{H_1}$ and ${\mf y}^{\rm op} \in {\rm crit} {\mc A}_{H_2^{\rm op}} \cong {\rm crit} {\mc A}_{H_2}$, one can obtain a well-defined count 
\beqn
{\mf n}_{\supset}^{\mf b} ({\mf x}, {\mf y}) \in {\mb Z}
\eeqn
by looking at rigid solutions.
Define a bilinear pairing 
\beqn
\langle \cdot, \cdot \rangle^{\mf b}: \vcf_\bullet^{\mf b} ( \wh H_1, \wh J_1; \Lambda_R^\Gamma) \otimes \vcf_\bullet^{\mf b} ( \wh H_2^{\rm op}, \wh J_2^{\rm op}; \Lambda_R^\Gamma) \to R
\eeqn
by 
\beqn
\langle \sum_{i=1}^\infty a_i {\mf x}_i, \sum_{j=1}^\infty b_j {\mf y}_j^{\rm op} \rangle^{\mf b}:= \sum_{i, j} a_i b_j {\mf n}_{\supset}({\mf x}_i, {\mf y}_j^{\rm op}).
\eeqn
An argument via energy inequality shows that the above form is finite and well-defined; by considering 1-dimensional moduli spaces one can show that the above pairing descends to homology
\beqn
\langle \cdot, \cdot \rangle^{\mf b}: \vhf_\bullet^{\mf b} ( \wh H_1, \wh J_1; \Lambda_R^\Gamma) \otimes \vhf_\bullet^{\mf b} ( \wh H_2, \wh J_2; \Lambda_R^\Gamma) \to R.
\eeqn
One can also show that the pairing is compatible with respect to the continuation map. Hence it induces a pairing 
\beqn
\langle \cdot, \cdot \rangle^{\mf b}: \vhf_\bullet^{\mf b} (V; \Lambda_R^\Gamma) \otimes \vhf_\bullet^{\mf b} (V; \Lambda_R^\Gamma) \to R.
\eeqn 

Now we specialize to the case when $\wh H_1 = \wh H_2 = \wh H$ and $\wh J_1 = \wh J_2 = \wh J$. In this case the pairing takes a simple form on the chain level. Indeed, if we choose $\wh H_\supset$ and $\wh J_\supset$ to be the trivial ones, then the countings $n_\supset^{\mf b} ({\mf x}, {\mf y}^{\rm op})$ is 1 if ${\mf x} = {\mf y}$ and zero otherwise. Then if  
\begin{align*}
&\ \alpha = \sum a_i {\mf x}_i \in \vcf_\bullet^{\mf b} (\wh H, \wh J; \Lambda_R^\Gamma),\ &\ \beta = \sum b_j {\mf x}_j^{\rm op} \in \vcf_\bullet^{\mf b} (\wh H^{\rm op}, \wh J^{\rm op}; \Lambda_R^\Gamma)
\end{align*}
one has 
\beqn
\langle \alpha, \beta \rangle^{\mf b} = \sum_i a_i b_i \in R.
\eeqn
This sum is finite as ${\mc A}_H({\mf x}_i) \to -\infty$ and ${\mc A}_{H^{\rm op}}({\mf x}_j^{\rm op}) = - {\mc A}_H({\mf x}_j) \to -\infty$.

\subsubsection{Pair-of-pants products}

A TQFT type construction allows us to define a multiplicative structure on the vortex Floer homology. In particular, using any volume form on the pair-of-pants with cylindrical ends, one can define the pair-of-pants product
\beqn
\ast_{\mf b}: \vhf_\bullet^{\mf b} (V; \Lambda_R^\Gamma ) \otimes \vhf_\bullet^{\mf b} ( V; \Lambda_R^\Gamma ) \to \vhf_\bullet^{\mf b} (V; \Lambda_R^\Gamma )[n]
\eeqn
which is associative. Here $2n = {\rm dim} X$. The details were given in \cite{Wu_Xu}.

There is also an identity element in the vortex Floer homology. Fix a regular bulk-avoiding admissible pair $(\wh H, \wh J)$. Consider a once-punctured sphere $\Sigma_{\rm cigar}$ which is biholomorphic to the complex plane. View the puncture as an output. Equip $\Sigma_{\rm cigar}$ with a cylindrical volume form $\nu_{\rm cigar}$ so that one has has the isometric identification 
\beqn
{\mb C} \setminus B_1 \cong [0, +\infty) \times S^1.
\eeqn
Turn on the Hamiltonian perturbation on this cylindrical end, meaning that one has a Hamiltonian perturbation 
\beqn
{\mc H} \in \Omega^1(\Sigma_{\rm cigar}, C^\infty_c (V)^K)\ {\rm s.t.}\ {\mc H}|_{[S, +\infty) \times S^1} = H_t dt\ {\rm for}\  S \gg 0.
\eeqn
Choose a domain-dependent $K$-invariant $\omega_V$-compatible almost complex structure ${\mc J}$ parametrized by $z \in \Sigma_{\rm cigar}$ such that over the cylindrical end it agrees with $J_t$. Consider the Hamiltonian perturbed symplectic vortex equation 
\begin{align*}
&\ \ov\partial_{A, {\mc H} } u = 0,\ &\ F_A + \mu(u) \nu^{\rm cigar} = 0.
\end{align*}
Each finite energy solution ${\mf u} = (A, u)$ converges to an equivariant 1-periodic orbit and hence represents an element ${\mf x} \in {\rm crit} {\mc A}_H$. Hence for each ${\mf x}$ there is a moduli space 
\beqn
{\mc M}^{\rm cigar}({\mf x}).
\eeqn
Elements in this moduli space have a uniform energy bound by $- {\mc A}_H ({\mf x}) + C$ where $C$ depends on the perturbation data on the cigar which is uniformly bounded. The virtual dimension is $n - {\rm CZ}({\mf x})$. Counting elements (with signs) of index zero moduli spaces ${\mc M}^{\rm cigar}({\mf x})$ defines an element 
\beqn
{\bm 1}_{{\mf b}, \wh H}^{\glsm} = \sum_{\mf x} {\mf n}_{\rm cigar}^{\mf b} ({\mf x}) {\mf x} \in \vcf_n^{\mf b} (\wh H, \wh J; \Lambda_R^\Gamma ).
\eeqn
Standard TQFT argument shows that ${\bm 1}_{{\mf b}, \wh H}^{\glsm}$ is closed, induces a well-defined element in $\vhf_\bullet^{\mf b} (V; \Lambda_R^\Gamma)$, and is the multiplicative identity of $\vhf_\bullet^{\mf b} (V; \Lambda_R^\Gamma)$. Denote this element by 
\beq
{\bm 1}_{\mf b}^{\glsm} \in \vhf_n^{\mf b} (V; \Lambda_R^\Gamma).
\eeq

\begin{lemma}
The element ${\bm 1}_{\mf b}^{\rm GLSM}$ is nonzero.
\end{lemma}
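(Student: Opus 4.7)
My plan is to reduce the claim to the non-triviality of $\vhf_\bullet^{\mf b}(V;\Lambda_R^\Gamma)$ as a module, which is already recorded in Theorem~\ref{thm_vhf_bulk}(3). First I would invoke the fact, stated just above the lemma, that $\bm{1}_{\mf b}^{\glsm}$ is a two-sided identity for the pair-of-pants product $\ast_{\mf b}$. By Theorem~\ref{thm_vhf_bulk}(3), $\vhf_\bullet^{\mf b}(V;\Lambda_R^\Gamma) \cong H_\bullet(X;\Lambda_R^\Gamma)$ as $\Lambda_R^\Gamma$-modules, which is nonzero (e.g.\ the class of a point is nonzero in top degree). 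Choose any $\alpha \neq 0$ in $\vhf_\bullet^{\mf b}(V;\Lambda_R^\Gamma)$; then $\bm{1}_{\mf b}^{\glsm} \ast_{\mf b} \alpha = \alpha \neq 0$, and hence $\bm{1}_{\mf b}^{\glsm}$ cannot itself be zero, since otherwise the product would vanish. This route takes only three lines once the unit property is granted.

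\textbf{A more concrete alternative.} In case one wants to avoid appealing to the unit property and instead verify nontriviality directly on the chain level, I would use the perfect-Morse-function setup from the proof of Proposition~\ref{computation}: take $H = \epsilon f$, where $f$ is the moment map of a generic circle in $T^n$, then $C^2$-perturb slightly to make $H$ nondegenerate and bulk-avoiding while preserving that every fixed point has even Morse index. By the normalization $\text{CZ}(x) = n - \mathrm{ind}_f(x)$, all Conley--Zehnder indices have the same parity as $n$, so $\partial^{\mf b} \equiv 0$ by parity and $\vhf_n^{\mf b} = \vcf_n^{\mf b}$. Let ${\mf x}_0$ be the equivariant orbit lifting the unique minimum $x_0$ of $f$ with trivial capping; then $\text{CZ}({\mf x}_0) = n$, matching the degree of $\bm{1}_{\mf b}^{\glsm}$, so that ${\mc M}^{\rm cigar}({\mf x}_0)$ has virtual dimension $0$. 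The constant solution $u \equiv \tilde x_0$ with trivial connection solves the cigar equation; its linearization splits along $TV|_{\mu^{-1}(0)} \cong TX \oplus ({\mf k} \otimes {\mb C})$, is invertible in each factor by nondegeneracy of the critical point together with submersivity of $\mu$, and hence is regular. For $\epsilon$ small, the topological energy budget in the trivial equivariant class excludes all non-constant representatives. Since $x_0 \notin \bigcup_j D_j$ by bulk-avoidance, the bulk weight is $\prod_j c_j^{0} = 1$, so the coefficient of ${\mf x}_0$ in $\bm{1}_{\mf b,\wh H}^{\glsm}$ is $\pm 1$, and, combined with $\partial^{\mf b}\equiv 0$, the class survives to homology.

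\textbf{Main obstacle.} The first route is essentially formal and I do not foresee any obstacle beyond the already-quoted module isomorphism. In the direct route, the hard part will be showing that the constant cigar is the only rigid representative in the equivariant class of ${\mf x}_0$ and that its linearization is surjective: both are standard consequences of the smallness of $\epsilon$ together with the explicit form of the vortex linearization at a critical point of a perfect Morse function, but would have to be written out with care to make sure no unexpected contribution from equivariant orbits in nontrivial cappings cancels the constant one—a worry that is in fact ruled out because such orbits give coefficients carrying strictly positive powers of $T$, which cannot cancel the leading $\pm 1$.
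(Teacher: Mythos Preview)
Your first route is correct and is exactly the paper's own argument: the paper simply observes that $\vhf_\bullet^{\mf b}(V;\Lambda_R^\Gamma)$ is a nonzero algebra (via Proposition~\ref{computation}/Theorem~\ref{thm_vhf_bulk}(3)) and that the identity of a nonzero unital algebra cannot vanish. Your direct chain-level alternative is a valid independent verification but is not needed.
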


\begin{proof}
In the undeformed case this was proved using the closed-open map in \cite{Wu_Xu} and the fact that some Lagrangian Floer theory is nontrivial. Here as we know that the algebra $\vhf_\bullet^{\mf b} (V; \Lambda_R^\Gamma)$ is nonzero (see Lemma \ref{computation}) for any ring $R$, one must have ${\bm 1}_{\mf b}^\glsm \neq 0$.
\end{proof}

\begin{lemma}\label{lem:pairing-product}
One has 
\beqn
\langle \alpha, \beta \rangle^{\mf b} \neq 0 \Longrightarrow \langle \alpha \ast_{\mf b} \beta, {\bm 1}_{\mf b}^\glsm \rangle^{\mf b}  \neq 0.
\eeqn
\end{lemma}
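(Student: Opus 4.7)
The plan is to establish the stronger Frobenius-type identity
\beqn
\langle \alpha, \beta \rangle^{\mf b} \;=\; \langle \alpha \ast_{\mf b} \beta,\ {\bm 1}_{\mf b}^\glsm \rangle^{\mf b},
\eeqn
from which the lemma follows immediately by contrapositive. The proof mirrors the familiar closed-string TQFT identity in ordinary Hamiltonian Floer theory, with the bulk-deformed vortex counts replacing the ordinary ones.

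Geometrically, I interpret the right-hand side as counting solutions on a genus-zero surface obtained by gluing three pieces that have already been constructed in the preceding subsections: the pair-of-pants $\Sigma_{\rm pop}$ with two incoming and one outgoing cylindrical end (defining $\ast_{\mf b}$), the cigar $\Sigma_{\rm cigar}$ with one outgoing end (producing ${\bm 1}_{\mf b}^\glsm$), and the Poincar\'e cap $\Sigma_\supset$ with two incoming ends (realizing $\langle\cdot,\cdot\rangle^{\mf b}$). Attaching the outgoing ends of $\Sigma_{\rm pop}$ and $\Sigma_{\rm cigar}$ to the two incoming ends of $\Sigma_\supset$ yields a sphere with exactly two incoming punctures, which is precisely the surface used to define $\langle \alpha, \beta \rangle^{\mf b}$ directly. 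I would then deform the volume form together with the Hamiltonian and almost complex perturbations through a one-parameter family on this sphere, interpolating between a ``long-neck'' configuration (whose rigid solutions, via the standard gluing theorem for the perturbed symplectic vortex equation, biject with matching triples of rigid solutions on the three building blocks) and a ``short-neck'' configuration that directly computes $\langle \alpha, \beta \rangle^{\mf b}$. The associated one-dimensional cobordism of zero-dimensional moduli spaces yields the sought equality of signed counts. Crucially, the identity respects bulk deformations because topological intersection numbers are additive under gluing,
\beqn
[{\mf u}_1 \# {\mf u}_2] \cap V_j \;=\; [{\mf u}_1] \cap V_j + [{\mf u}_2] \cap V_j,
\eeqn
so the weights $\prod_j c_j^{[{\mf u}]\cap V_j}$ factor multiplicatively through concatenation and the identity lifts from the undeformed to the ${\mf b}$-deformed counts.

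The main obstacle is the careful assembly of the TQFT compactness and gluing package on the composite surface with admissible, bulk-avoiding perturbations. All the needed inputs are nonetheless already available from the preceding sections: exponential decay of finite-energy vortices to equivariant 1-periodic orbits, transversality of admissible data as in Remark \ref{rem_admissible}, the absence of sphere bubbles since $V$ is symplectically aspherical, and coherent orientations as set up in \cite{Xu_VHF, Wu_Xu}. Granted these, the neck-stretching/shortening cobordism argument is standard, and the identity, hence the lemma, follows. As a sanity check, when $\alpha$ and $\beta$ are Poincar\'e-dual homology classes the left-hand side is nonzero, and the right-hand side correctly reduces to pairing $\alpha \ast_{\mf b} \beta$ against ${\bm 1}_{\mf b}^\glsm$, recovering the Frobenius pairing on $\vhf_\bullet^{\mf b}(V)$.
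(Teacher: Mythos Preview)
Your proposal is correct and follows exactly the approach the paper sketches: the paper's proof consists of the sentence ``This theorem follows from the standard TQFT and cobordism argument'' together with a figure illustrating the very gluing you describe (pair-of-pants plus cigar into the two-input cap, deformed to the plain two-input cap). You have simply supplied the details the paper leaves to the reader, including the additivity of intersection numbers under gluing that makes the bulk weights multiplicative.
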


\begin{proof}
This theorem follows from the standard TQFT and cobordism argument. See Figure \ref{figure1}. The details are left to the reader. 
\begin{figure}[h]
    \centering
    \includegraphics[scale = 0.5]{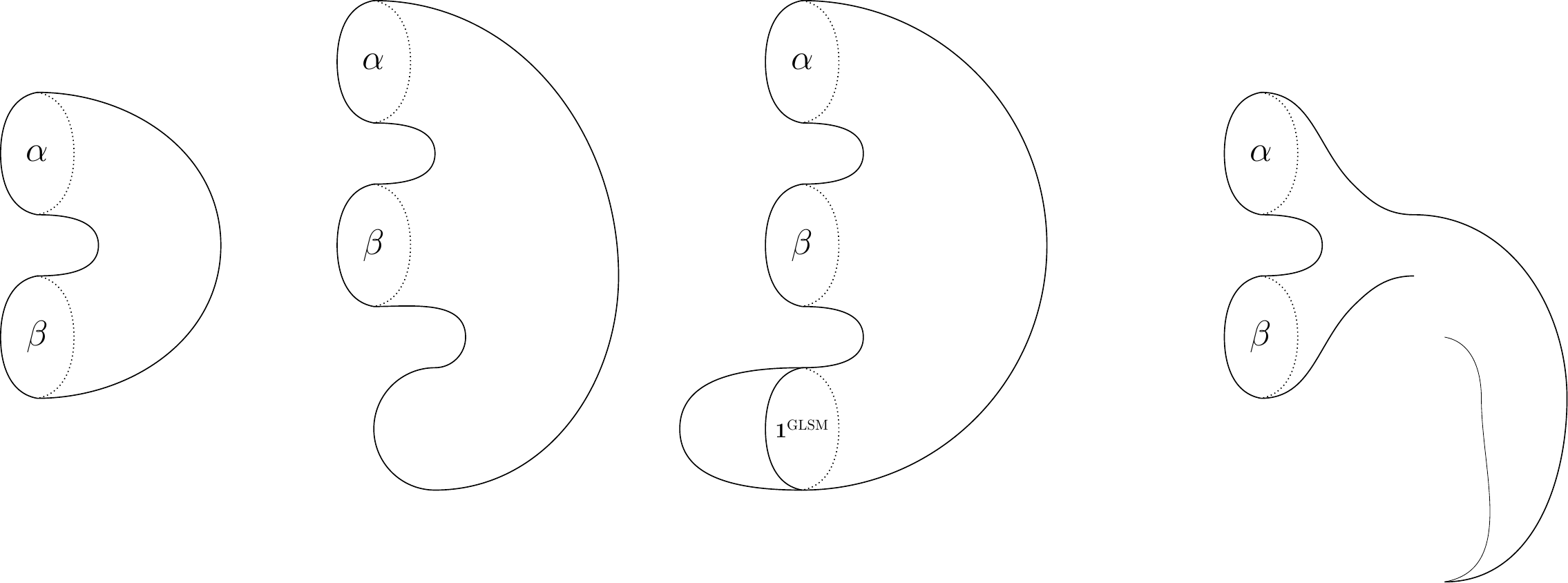}
    \caption{}
    \label{figure1}
\end{figure}
\end{proof}

Before we end this part, we state a major step towards our proof of the Hofer--Zehnder conjecture. 

\begin{thma}
There exists a bulk-deformation ${\mf b}$ of the form 
\beqn
{\mf b} = \sum_{j=1}^N \log c_j V_j
\eeqn
with $c_j \in {\mb Z}[{\bf i}]$ such that the algebra $\vhf_\bullet^{\mf b}(V; \Lambda_{\ov{\mb Q}})$ is semisimple in the sense of Definition \ref{defn_semisimple}.
\end{thma}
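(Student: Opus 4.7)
The strategy is to establish a version of toric mirror symmetry entirely within the GLSM framework, reducing semisimplicity of $\vhf_\bullet^{\mf b}(V;\Lambda_{\ov{\mb Q}})$ to Morseness of the deformed Hori--Vafa potential $W^{\glsm}_{\mf b}$. Working upstairs in $V \cong {\mb C}^N$ rather than in $X$ has the key advantage that all disk counts in the GLSM are Fredholm--regular and integer-valued, so a ``small'' bulk of the form ${\mf b} = \sum_{j=1}^N \log c_j \, V_j$ with $c_j\in{\mb Z}[{\bf i}]$ simply multiplies each $j$-th fundamental disk class contribution by $c_j$. Schematically, if the undeformed GLSM potential on the algebraic torus $Y \cong ({\mb G}_m)^n$ with coordinates $y = (y_1, \dots, y_n)$ is $W^{\glsm}(y) = \sum_{j=1}^N T^{\lambda_j} y^{v_j}$, then the bulk-deformed potential is $W^{\glsm}_{\mf b}(y) = \sum_{j=1}^N c_j\, T^{\lambda_j} y^{v_j}$.

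The first step is to choose $c_j \in {\mb Z}[{\bf i}]$ such that $W^{\glsm}_{\mf b}$ is a Morse function on $Y$ over $\Lambda_{\ov{\mb Q}}$ with distinct critical values. Since Morseness is an open algebraic condition and ${\mb Z}[{\bf i}]$ is dense in ${\mb C}$, one shows that generic $c_j$ chosen from a sufficiently large ball in ${\mb Z}[{\bf i}]$ makes $W^{\glsm}_{\mf b}$ Morse: the discriminant of the system $\partial_{y_i} W^{\glsm}_{\mf b} = 0$ is a nonzero polynomial in the $c_j$'s (this uses the complete/polytope combinatorics of $\Sigma$), so its zero locus is a proper Zariski-closed subset. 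Hence the Jacobian ring $\mathrm{Jac}(W^{\glsm}_{\mf b})$ becomes a product of copies of $\Lambda_{\ov{\mb Q}}$, one for each critical point, and is therefore semisimple in the sense of Definition \ref{defn_semisimple}.

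The core geometric input is then a closed-open map
\beqn
\mathcal{CO}^{\mf b}: \vhf_\bullet^{\mf b}(V; \Lambda_{\ov{\mb Q}}) \longrightarrow HH^\bullet(\fuk^{\mf b}(V))
\eeqn
from the bulk-deformed vortex Hamiltonian Floer homology into the Hochschild cohomology of a suitable GLSM-Fukaya category whose objects are moment-map Clifford tori upstairs, equipped with weak bounding cochains $b$ that are critical points of $W^{\glsm}_{\mf b}$. For each such $b$ the endomorphism $A_\infty$-algebra has cohomology a Clifford algebra $Cl_n$, which by the intrinsic formality of $Cl_n$ (Corollary \ref{HH_computation_2}) has Hochschild cohomology equal to $\Lambda_{\ov{\mb Q}}$. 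The decomposition of $\fuk^{\mf b}(V)$ into orthogonal summands indexed by critical points of $W^{\glsm}_{\mf b}$ then gives
\beqn
HH^\bullet(\fuk^{\mf b}(V)) \cong \bigoplus_{\mathrm{crit}(W^{\glsm}_{\mf b})} \Lambda_{\ov{\mb Q}} \cong \mathrm{Jac}(W^{\glsm}_{\mf b}),
\eeqn
which is semisimple. The remaining task, which is the main technical burden (carried out in Sections \ref{section9}--\ref{section10} of the paper), is to show that $\mathcal{CO}^{\mf b}$ is a unital ring homomorphism and, moreover, an \emph{isomorphism}. Unitality follows from the GLSM cigar-moduli identification of ${\bm 1}^{\glsm}_{\mf b}$ with the Hochschild unit, multiplicativity from the standard TQFT cobordism between the pair-of-pants and a concatenation of closed-open disks, and bijectivity from a rank count: both sides have total rank equal to the number of critical points of $W^{\glsm}_{\mf b}$, which by generic Morseness matches the total Betti number of $X$, agreeing with the rank of $\vhf_\bullet^{\mf b}(V)$ by Proposition \ref{computation}.

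The principal obstacle is the ring-isomorphism property of $\mathcal{CO}^{\mf b}$. Because the target is a priori just a module, showing it preserves products requires a careful degeneration of the pair-of-pants with an interior marked point constrained to a toric divisor $V_j$, together with a weighted count by $\exp(\sum \log c_j \cdot \sharp)$. Establishing that the GLSM moduli spaces involved admit transverse perturbations compatible with the chosen bulk, and that the multiplicative identity on the Fukaya side is precisely the strict unit of each Clifford summand, is where most of the work lies. Once this is in place, semisimplicity of $\vhf_\bullet^{\mf b}(V; \Lambda_{\ov{\mb Q}})$ follows immediately from the generic Morseness of $W^{\glsm}_{\mf b}$.
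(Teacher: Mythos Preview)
Your overall strategy matches the paper's: choose ${\mf b}$ so that $W_{\mf b}$ is Morse (Theorem~\ref{thm_good_bulk}), build a closed-open map into the Hochschild cohomology of the quasimap Fukaya category, use Clifford formality (Corollary~\ref{HH_computation_2}) to identify the target with a product of fields, and conclude. However, two steps in your sketch are genuine gaps.

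First, your rank count is incorrect as stated. You claim the number of critical points of $W_{\mf b}^{\glsm}$ equals the total Betti number of $X$ ``by generic Morseness,'' but for non-Fano toric manifolds $W_{\mf b}$ typically has \emph{more} critical points than $\dim H_\bullet(X)$; only those whose valuation lies in the interior of the moment polytope correspond to actual Lagrangian branes, and it is $\#{\rm Crit}_X W_{\mf b}$, not $\#{\rm Crit}\,W_{\mf b}$, that appears on the Fukaya side. The equality $\#{\rm Crit}_X W_{\mf b}=\dim H_\bullet(X)$ is Proposition~\ref{prop_same_rank}, and its proof is not elementary: the paper invokes the coordinate-change theorem of Fukaya--Oh--Ohta--Ono \cite{FOOO_mirror} to identify the GLSM potential with an NLSM potential for some other bulk ${\mf b}'$, and then appeals to their result that the latter has the correct number of interior critical points.

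Second, even granting equal ranks, a unital ring homomorphism between finite-dimensional algebras of the same dimension need not be an isomorphism (e.g.\ $\Lambda[x]/(x^2)\to\Lambda\times\Lambda$, $x\mapsto 0$), so ``bijectivity from a rank count'' is circular: you cannot use semisimplicity of the source, since that is what you are proving. The paper supplies the missing surjectivity via a separate construction, the Kodaira--Spencer map $\mf{ks}_{\mf b}$ (Section~10.4), which is shown to be surjective by an explicit computation (Theorem~\ref{thm_ks_surjective}) and to factor through ${\rm CO}_{\mf b}$ via the quantum Kirwan map (Proposition~\ref{prop_commutative_diagram}). Only then does the rank equality finish the argument.
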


The proof occupies Section \ref{section9} and Section \ref{section10}, using the closed-open string map in the vortex setting.




\subsection{Bulk-deformed spectral invariants, persistence modules, and barcodes}

We fit the bulk-deformed vortex Floer theory into the abstract packages developed by Usher etc. Let ${\mf b}$ be a bulk-deformation of the form \eqref{small_bulk_deformation_general}.

\begin{prop}\label{prop_obvious}
Given a regular bulk-avoiding pair $(\wh H, \wh J)$, the quadruple 
\beqn
{\mf c}^{\mf b}(\wh H, \wh J):= ( P_H, {\mc A}_H, {\rm CZ}_{(2)}, n^{\mf b})
\eeqn
is a ${\mb Z}_2$-graded Floer--Novikov package over $R$ (see Definition \ref{defn_fn}).
\end{prop}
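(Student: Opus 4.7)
The plan is to verify, one by one, the four items required by Definition \ref{defn_fn}, since essentially all the geometric inputs have already been established earlier in Section \ref{section4}.

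First I would check the structural data $(P_H, \mathcal{A}_H, \mathrm{CZ}_{(2)})$. Take $P_H = \mathrm{crit}\,\mathcal{A}_H \subset \mathfrak{L}^K(V)$. The bijection \eqref{downtoup} identifies $P_H$ with $\widetilde{\mathrm{Per}}(H)$, the set of equivalence classes of capped $1$-periodic orbits of $H$ downstairs, and $\Gamma = LK/\ker\omega^K$ acts on $P_H$ by deck transformations. Since $H$ is nondegenerate, $\mathrm{Per}(H)$ is finite, so $P_H/\Gamma$ is finite, and freeness of the $\Gamma$-action makes $P_H$ a $\Gamma$-torsor. The required transformation $\mathcal{A}_H(g\cdot\mathfrak{x}) = \mathcal{A}_H(\mathfrak{x}) - g$ is precisely \eqref{deck} after descending to $\mathfrak{L}^K(V)$, and the Conley--Zehnder index (Definition \ref{Conley-Zehnder}) depends only on the linearization of the flow along the orbit downstairs, hence is $\Gamma$-invariant; we take $gr = \mathrm{CZ}\ \mathrm{mod}\ 2$.

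Next I would verify the four properties of the counting function $n^{\mathfrak{b}}: P_H \times P_H \to R$. The function is defined as in \eqref{bulk_differential}, namely the weighted signed count of rigid gauge equivalence classes of bulk-avoiding Floer cylinders, with weights $\prod_j c_j^{[\mathfrak{u}]\cap V_j}\in R$ (the bulk-avoiding hypothesis makes the intersection numbers well-defined). The grading shift $gr(\mathfrak{x}) = gr(\mathfrak{y}) + 1$ for nonzero $n^{\mathfrak{b}}(\mathfrak{x},\mathfrak{y})$ holds because only index-one moduli spaces contribute. Strict action decrease follows from the energy identity $E(\mathfrak{u}) = \mathcal{A}_H(\mathfrak{x}) - \mathcal{A}_H(\mathfrak{y})$ together with the fact that a rigid trajectory has strictly positive energy (otherwise it would be constant in the gauge slice, contradicting $\mathfrak{x}\neq\mathfrak{y}$). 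The finiteness condition
\[
\#\bigl\{\mathfrak{y}\in P_H \ |\ n^{\mathfrak{b}}(\mathfrak{x},\mathfrak{y})\neq 0,\ \mathcal{A}_H(\mathfrak{y})\geq C\bigr\} < \infty
\]
follows from Gromov--Uhlenbeck compactness for bounded-energy vortices (which holds since $V$ is symplectically aspherical, so no sphere bubbling occurs), combined with the energy identity bounding $E(\mathfrak{u})\leq \mathcal{A}_H(\mathfrak{x}) - C$. The $\Gamma$-equivariance $n^{\mathfrak{b}}(g\mathfrak{x},g\mathfrak{y}) = n^{\mathfrak{b}}(\mathfrak{x},\mathfrak{y})$ comes from the fact that translating the cappings by $g\in\Gamma$ produces a bijection of moduli spaces preserving the topological intersection numbers with the $K$-invariant divisors $V_j$ (each $V_j$ is $K$-invariant, so $g\cdot\tilde u$ is a legitimate perturbation of $g\cdot u$ with the same intersection count).

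The final and only nontrivial item is $(\partial^{\mathfrak{b}})^2 = 0$. This is exactly the content of the lemma immediately preceding Proposition \ref{prop_obvious}: one analyzes the oriented boundary of compactified $1$-dimensional moduli spaces $\overline{\mathcal{M}}{}^{\mathrm{cyl}}(\mathfrak{x},\mathfrak{z})$, shows that the topological intersection number with each $V_j$ is locally constant on each component and additive under breaking $[\mathfrak{u}] = [\mathfrak{u}_1]\#[\mathfrak{u}_2]$, so that the weights multiply as
\[
\prod_{j=1}^N c_j^{[\mathfrak{u}]\cap V_j} = \Bigl(\prod_{j=1}^N c_j^{[\mathfrak{u}_1]\cap V_j}\Bigr)\Bigl(\prod_{j=1}^N c_j^{[\mathfrak{u}_2]\cap V_j}\Bigr),
\]
and the usual coherent orientations cause boundary contributions to cancel in pairs. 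This is the main (but already completed) step. Combining all four items yields the Floer--Novikov package structure.
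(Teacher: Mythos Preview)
Your proposal is correct and follows exactly the approach the paper intends; the paper's own proof reads simply ``Straightforward,'' and you have spelled out precisely the routine verifications that word stands in for. One minor wording quibble: the lemma establishing $(\partial^{\mathfrak{b}})^2=0$ is not literally the statement immediately preceding Proposition~\ref{prop_obvious} (Theorem~\ref{thm_vhf_bulk} sits between them), but the content you cite is indeed already proved earlier in Section~\ref{section4}.
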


\begin{proof}
Straightforward.
\end{proof}

Next we consider the quantitative dependence of the vortex Floer chain complex on the Hamiltonian. We restrict to the case where $R = {\mb K}$ is a field. The vortex Floer chain complex $\vcf_\bullet^{\mf b}(\wh H, \wh J; \Lambda_{\mb K}^\Gamma)$ is the associated Floer--Novikov complex. 

\begin{prop}\label{prop_quasiequivalence_distance}
Given two regular bulk-avoiding pairs $(\wh H_1, \wh J_1)$ and $(\wh H_2, \wh J_2)$, the quasi-equivalence distance (see Definition \ref{defn_quasiequivalence}) between $\vcf_\bullet^{\mf b}(\wh H_1, \wh J_1; \Lambda_{\mb K}^\Gamma)$ and $\vcf_\bullet^{\mf b}( \wh H_2, \wh J_2; \Lambda_{\mb K}^\Gamma)$ is no greater than the Hofer distance between the induced Hamiltonians $H_1, H_2$ downstairs, i.e.
\beqn
d_Q \Big( \vcf_\bullet^{\mf b}(\wh H_1, \wh J_1; \Lambda_{\mb K}^\Gamma), \vcf_\bullet^{\mf b}( \wh H_2, \wh J_2; \Lambda_{\mb K}^\Gamma) \Big) \leq \max \Big\{ \int_0^1 \max_X (H_2 - H_1) dt,\ \int_0^1 \max_X (H_1 - H_2) dt \Big\}.
\eeqn
\end{prop}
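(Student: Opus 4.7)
The plan is to exhibit a $\delta$-quasiequivalence $(\Phi,\Psi,K_1,K_2)$ explicitly, with
\[
\delta := \max\Big\{\int_0^1 \max_X(H_1-H_2)\,dt,\ \int_0^1 \max_X(H_2-H_1)\,dt\Big\},
\]
built entirely from bulk-deformed continuation maps in vortex Hamiltonian Floer theory. The action bounds will come directly from Proposition \ref{prop_continuation_energy}.

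First I would choose a nondecreasing cutoff $\chi:{\mb R}\to[0,1]$ with $\chi(s)=0$ for $s\ll 0$ and $\chi(s)=1$ for $s\gg 0$, together with a family of admissible interpolating almost complex structures $\wh J^{12}_{s,t}$ equal to $\wh J_i$ at $\pm\infty$. Set $\wh H^{12}_{s,t} = (1-\chi(s))\wh H_{1,t}+\chi(s)\wh H_{2,t}$, which is a monotone interpolation in the sense of Proposition \ref{prop_continuation_energy}. For generic such data, counting rigid solutions of the associated continuation vortex equation, each weighted by $\prod_j c_j^{[{\mf u}]\cap V_j}$ exactly as in the definition of $\partial^{\mf b}$, defines a chain map
\[
\Phi:\vcf_\bullet^{\mf b}(\wh H_1,\wh J_1;\Lambda_{\mb K}^\Gamma)\to \vcf_\bullet^{\mf b}(\wh H_2,\wh J_2;\Lambda_{\mb K}^\Gamma).
\]
The energy inequality \eqref{continuation_energy_inequality} immediately gives $\ell_2(\Phi(x))\leq \ell_1(x)+\int_0^1\max_X(H_1-H_2)\,dt\leq \ell_1(x)+\delta$. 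The reverse map $\Psi$ is built identically from $\wh H^{21}_{s,t}=(1-\chi(s))\wh H_{2,t}+\chi(s)\wh H_{1,t}$, giving the symmetric bound $\ell_1(\Psi(y))\leq \ell_2(y)+\delta$.

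To produce $K_1$, I would interpolate through a one-parameter family $\{(\wh H^R,\wh J^R)\}_{R\geq 0}$ of continuation data on ${\mb R}\times S^1$ connecting the constant data $(\wh H_1,\wh J_1)$ at $R=0$ (whose induced continuation map is chain-homotopic to the identity) to the long concatenation as $R\to\infty$ of $(\wh H^{12},\wh J^{12})$ followed by $(\wh H^{21},\wh J^{21})$ (whose induced continuation map is chain-homotopic to $\Psi\circ \Phi$ by the usual gluing argument). A generic such family yields regular $1$-parameter moduli spaces whose signed, bulk-weighted count defines $K_1:\vcf_\bullet^{\mf b}(\wh H_1,\wh J_1;\Lambda_{\mb K}^\Gamma)\to \vcf_{\bullet+1}^{\mf b}(\wh H_1,\wh J_1;\Lambda_{\mb K}^\Gamma)$ satisfying $\Psi\Phi-{\rm Id}=\partial_1 K_1+K_1\partial_1$ by the standard cobordism argument counting boundary components of $1$-dimensional strata in the parametrized moduli space. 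Applying the full energy identity \eqref{continuation_energy_identity} at each fixed $R$, the action shift is controlled by $-\int \partial_s \wh H^R_{s,t}\,ds\,dt$, which along the non-monotone family is bounded by the total positive variation
\[
\int_0^1\max_X(H_1-H_2)\,dt+\int_0^1\max_X(H_2-H_1)\,dt\leq 2\delta,
\]
yielding $\ell_1(K_1(x))\leq \ell_1(x)+2\delta$. The homotopy $K_2$ is constructed symmetrically.

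The main obstacle is ensuring that the bulk weighting remains well-defined across the parametric constructions. For this it suffices that the asymptotic orbits at the cylindrical ends avoid the toric divisors $V_j$, which holds by the bulk-avoiding hypothesis on $(\wh H_i,\wh J_i)$ (Definition \ref{bulk_avoiding}); the topological intersection number $[{\mf u}]\cap V_j$ is then well-defined via a generic compactly supported perturbation as in \eqref{bulk_differential}. Transversality for the parametric moduli spaces is achieved by the same equivariant Floer--Hofer--Salamon argument used for $\partial^{\mf b}$ in \cite{Xu_VHF}, since finite-energy solutions enter the free $K$-locus near the cylindrical ends. Once these technicalities are arranged, the four inequalities of Definition \ref{defn_quasiequivalence} follow, proving $d_Q\leq \delta$.
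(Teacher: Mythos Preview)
Your argument has a genuine gap. Proposition \ref{prop_continuation_energy}, specifically inequality \eqref{continuation_energy_inequality}, gives
\[
{\mc A}_{H_+}({\mf x}_+)\leq {\mc A}_{H_-}({\mf x}_-)+\int_0^1 \max_{V}\big(\wh H_- - \wh H_+\big)\,dt,
\]
with the maximum taken over $V$, not over $X$. So your continuation map $\Phi$ only satisfies $\ell_2(\Phi(x))\leq \ell_1(x)+\int_0^1\max_V(\wh H_1-\wh H_2)\,dt$, and similarly for $\Psi,K_1,K_2$. This proves only the weaker bound $d_Q\leq \wh d_{\rm Hofer}(\wh H_1,\wh H_2)$, which is exactly Lemma \ref{quasi-equivalence_weak} in the paper. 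Since the lifts $\wh H_i$ are $K$-invariant extensions of $H_i\circ\pi$ from $\mu^{-1}(0)$ to all of $V$, the difference $\wh H_1-\wh H_2$ can be arbitrarily large away from $\mu^{-1}(0)$, and there is no reason for $\max_V(\wh H_1-\wh H_2)$ to be controlled by $\max_X(H_1-H_2)$.

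The paper closes this gap in two further steps. First, by multiplying by a cutoff $\rho_\epsilon$ supported near $\mu^{-1}(0)$ one can replace $\wh H_i$ by lifts whose upstairs Hofer distance is within $\epsilon$ of the downstairs one. Second, and this is the real content, one must show that changing the lift of a fixed downstairs Hamiltonian does not change the complex up to $0$-quasiequivalence. The paper does this via an adiabatic limit: working with the $\lambda$-rescaled vortex equation \eqref{vortex_lambda} and letting $\lambda\to\infty$ forces solutions contributing to the continuation map to stay close to $\mu^{-1}(0)$ (away from finitely many bubble points), so that the integral of $\partial_s\wh H_{s,t}$ becomes arbitrarily small because $\wh H_-=\wh H_+$ on $\mu^{-1}(0)$. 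Your proposal contains no mechanism of this kind, and without it the statement as written does not follow.
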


\begin{proof}
This follows from the quantitative analysis of the continuation map. As the bulk ${\mf b}$ and the coefficient field are fixed, we drop it from notations. To show that the complex only depends on the induced Hamiltonian downstairs (measured by quasiequivalence distance), we need to introduce the parameter $\lambda$ (see \eqref{vortex_lambda}). For each regular bulk-avoiding triple $(\lambda, \wh H, \wh J)$, there is a Floer--Novikov package ${\mf c}^\lambda( \wh H, \wh J)$ defined from (${\mf b}$-deformed) counts of solutions to \eqref{vortex_lambda}. Denote the associated Floer--Novikov complex by $\vcf_\bullet^\lambda(\wh H, \wh J; \Lambda_{\mb K}^\Gamma)$ with valuation denoted by $\ell^\lambda$. 

\begin{lemma}\label{quasi-equivalence_weak}
The quasi-equivalence distance between $\vcf_\bullet^{\lambda_1}(\wh H_1, \wh J_1)$ and $\vcf_\bullet^{\lambda_2}( \wh H_2, \wh J_2)$ is bounded by 
\beqn
\wh d_{\rm Hofer}( \wh H_1, \wh H_2):= \max \left\{ \int_0^1 \max_V ( \wh H_2 - \wh H_1) dt,\ \int_0^1 \max_V (\wh H_1 - \wh H_2) \right\}.
\eeqn
\end{lemma}

\begin{proof}
Indeed, this follows from the energy calculation for the continuation maps (see Proposition \ref{prop_continuation_energy}). One can construct chain homotopy equivalences $\Phi$, $\Psi$ between these two complexes and maps $K_1$, $K_2$ as in the diagram
\beqn
\xymatrix{ \vcf_\bullet^{\lambda_1} (\wh H_1, \wh J_1)  \ar@(ul,ur)^{K_1} \ar@/^2.0pc/[r]^{\Phi} & \vcf_\bullet^{\lambda_2} ( \wh H_2, \wh J_2) \ar@/^2.0pc/[l]_{\Psi}  \ar@(ul,ur)^{K_2}}.
\eeqn
The first item of Definition \ref{defn_quasiequivalence} follows directly from \eqref{continuation_energy_inequality}. Using the same method, the second item of Definition \ref{defn_quasiequivalence} can be verified for the maps $K_1$, $K_2$.
\end{proof}

We fix the two regular bulk-avoiding pairs $(\wh H_\pm, \wh J_\pm)$. For each $\epsilon>0$, one can find a $K$-invariant cut-off function $\rho_\epsilon: V \to [0, 1]$ supported near $\mu^{-1}(0)$ such that if we define $\wh H_\pm^\epsilon:= \rho_\epsilon \wh H_\pm$, then 
\beqn
\wh d_{\rm Hofer}( \wh H_-^\epsilon, \wh H_+^\epsilon) \leq d_{\rm Hofer}(H_-, H_+) + \epsilon.
\eeqn
Hence in view of Lemma \ref{quasi-equivalence_weak} above, we only need to prove the following.

\begin{lemma}
Suppose $(\wh H_\pm, \wh J_\pm)$ are two regular bulk-avoiding pairs such that $\wh H_+$ and $\wh H_-$ descend to the same Hamiltonian $H$ downstairs. Then 
\beqn
d_Q ( \vcf_\bullet( \wh H_-, \wh J_-), \vcf_\bullet( \wh H_+, \wh J_+)) = 0.
\eeqn
\end{lemma}

\begin{proof}
We prove that the quasi-equivalence distance is less than $\epsilon$ for all $\epsilon>0$. Notice that the potential failure of this assertion comes from the difference between $\wh H_-$ and $\wh H_+$ which is {\it a priori} large outside $\mu^{-1}(0)$. We use the adiabatic limit argument to push solutions contributing to the continuation maps near the level set $\mu^{-1}(0)$. 

Choose a sequence $\lambda_i \to \infty$. For each $\lambda_i$, one can choose a $\lambda_i$-admissible lift $\wh H_\pm^{\lambda_i}$ of $H$. As the admissible condition is only about the infinitesimal behaviors of the lifts $\wh H_\pm^{\lambda_i}$ near lifts of 1-periodic orbits of $H$, we may require that 
\beqn
\| \wh H_\pm^{\lambda_i} - \wh H_\pm \|_{C^0} \leq \epsilon.
\eeqn
Hence by Lemma \ref{quasi-equivalence_weak}, one only needs to consider the quasi-equivalence 
\beqn
d_Q \left( \vcf_\bullet^{\lambda_i} ( \wh H_-^{\lambda_i}, \wh J_-^{\lambda_i}), \vcf_\bullet^{\lambda_i}( \wh H_+^{\lambda_i}, \wh J_+^{\lambda_i}) \right).
\eeqn
We claim that the above sequence (in $i$) converges to zero. 

We set up the moduli spaces for the continuation maps. Choose a non-decreasing function $\chi: {\mb R} \to [0, 1]$ which is zero on $(-\infty, 0]$ and $1$ on $[1, +\infty]$. Consider the equation with
\beqn
\wh H_{s, t}^{\lambda_i} = ( 1- \chi(s)) \wh H_-^{\lambda_i} + \chi(s) \wh H_+^{\lambda_i}.
\eeqn
We claim that, for all $\epsilon>0$, there exists $i_\epsilon>0$ such that when $i \geq i_\epsilon$, for all finite energy solutions to \eqref{vortex_continuation}, if the limit at $\pm \infty$ is ${\mf x}_\pm$, then one has 
\beqn
{\mc A}_H({\mf x}_{+, i}) - {\mc A}_H({\mf x}_{-, i}) \leq \epsilon.
\eeqn
This would establish item (1) of Definition \ref{defn_quasiequivalence}. 

Suppose on the contrary that this is not true. Then there exist $\delta>0$, a subsequence (still indexed by $i$), a sequence of solutions ${\mf u}_i = (u_i, \xi_i, \eta_i)$ to the equation connecting ${\mf x}_{-, i}$ and ${\mf x}_{+, i}$ such that 
\beqn
{\mc A}_H({\mf x}_{+, i}) - {\mc A}_H({\mf x}_{-, i}) \geq \delta > 0.
\eeqn
By the energy identity \eqref{continuation_energy_identity}, one has a uniform bound which is independent of $\lambda_i$:
\beqn
E_{\lambda_i} ({\mf u}_i) = {\mc A}_H({\mf x}_{-, i}) - {\mc A}_H({\mf x}_{+, i}) - \int_{[0, 1]\times S^1} \partial_s \wh H_{s, t}^{\lambda_i} (u) ds dt \leq C.
\eeqn

Now one can apply the adiabatic limit argument. Notice that although we cannot guarantee the convergence of $\wh H_{s, t}^{\lambda_i}$, but we may require that $\wh J_{s, t}^{\lambda_i}$ converges in sufficiently high order to a fixed almost complex structure $\wh J$ outside a compact subset of $V$. In the $\lambda_i \to \infty$ limit, {\it a priori} there are three types of bubbles (see \cite[Section 11]{Gaio_Salamon_2005}): holomorphic spheres in $V$, holomorphic spheres in $X$, and {\it affine vortices}, which are solutions to the vortex equation over ${\mb C}$ (without Hamiltonian term). The three kind of bubbles can be classified by the rate of energy concentration compared to the rate of the divergence $\lambda_i \to \infty$. As there is a lower bound on the energy of these bubbles, the uniform bound on energy implies that, after passing to a subsequence (still indexed by $i$), except near a finite subset $Z \subset [0, 1]\times S^1=:Q$ at which bubbling could occur, the energy density 
\beqn
|\partial_s u_i + {\mc X}_{\xi_i}(u_i)|^2 + \lambda_i^2 |\mu(u_i)|^2
\eeqn
stays bounded. In particular, the map $u_i|_Q$ stays arbitrarily close to $\mu^{-1}(0)$ except near $Z$ as $i \to \infty$. More precisely, for any $r>0$, there exists $i_r>0$ such that for all $i \geq i_r$, 
\beq
\sup_{z\in [0, 1]\times S^1 \setminus B_r(Z)} |\mu(u_i(z))| \leq r.
\eeq
Then one has 
\begin{multline*}
{\mc A}_H({\mf x}_{+, i}) -{\mc A}_H({\mf x}_{-, i}) \leq  \int_{Q} |\partial_s \wh H_{s, t}^{\lambda_i} (u_i)| ds dt\\
\leq \int_{Q \setminus B_r(Z)} |\partial_s \wh H_{s, t}^{\lambda_i}(u_i)| ds dt + \int_{B_r(Z)} |\partial_s \wh H_{s, t}^{\lambda_i}(u_i)| ds dt.
\end{multline*}
As $\wh H_-^{\lambda_i} = \wh H_+^{\lambda_i}$ on $\mu^{-1}(0)$, the first item is bounded by $Cr$; the second term is bounded by $C{\rm Area}(B_r(Z))$ which can be arbitrarily small. This contradicts the assumption that ${\mc A}_H({\mf x}_{+, i}) - {\mc A}_H({\mf x}_{-, i}) \geq \delta > 0$. 

Therefore, we established item (1) of Definition \ref{defn_quasiequivalence}. The case of item (2) is similar and hence omitted. 
\end{proof}
Now the proof of Proposition \ref{prop_quasiequivalence_distance} is complete.
\end{proof}

\subsubsection{Spectral invariants}

Spectral numbers of Hamiltonian diffeomorphisms were introduced by Oh \cite{Oh_spectral_1}, Schwarz \cite{Schwarz_spectral} and enhanced by Entov--Polterovich \cite{Entov_Polterovich_1, Entov_Polterovich_2, Entov_Polterovich_3}. In \cite{Wu_Xu} Wu and the second author constructed the analogue in the vortex Floer theory. 

By Theorem \ref{Usher_thm} and Proposition \ref{prop_obvious}, one can define the spectral numbers
\beqn
\rho^{\mf b} (\alpha; \wh H, \wh J):= \rho_{{\mf c}^{\mf b}(\wh H, \wh J)}(\alpha)\in {\mb R} \cup \{-\infty\},\ \forall \alpha \in \vhf_\bullet^{\mf b}(V; \Lambda_{{\mb Z}[{\bf i}]}^\Gamma).
\eeqn
One can establish the following properties of these spectral numbers, which were proved in \cite{Wu_Xu} in the undeformed (${\mf b} = 0$) case.

\begin{thm}(cf. \cite[Proposition 3.6]{Wu_Xu})
The spectral numbers $\rho^{\mf b} (\alpha; \wh H, \wh J)$ have the following properties.
\begin{enumerate}

\item {\bf (Independence of lifting and almost complex structure)}
The number $\rho^{\mf b} (\alpha; \wh H, \wh J)$ only depends on the induced Hamiltonian $H$ downstairs. Denote this number by 
\beqn
c^{\mf b} (\alpha, H) \in {\mb R}.
\eeqn

\item {\bf (Homogeneity)} Given $\alpha \in \vhf(V; \Lambda_{{\mb Z}[{\bf i}]}^\Gamma)$ and $\lambda \in \Lambda_{{\mb Z}[{\bf i}]}^\Gamma$, for any $H$, one has 
\beqn
c^{\mf b} (\lambda \alpha, H) = c^{\mf b} (\alpha, H) - {\mf v}(\lambda).
\eeqn
One uses this formula to extend the spectral numbers to classes in 
\beqn
\vhf_\bullet^{\mf b} (V; \Lambda_{{\mb Z}[{\bf i}]} ) = \vhf_\bullet^{\mf b} ( V; \Lambda_{{\mb Z}[{\bf i}]}^\Gamma) \otimes_{\Lambda_{{\mb Z}[{\bf i}]}^\Gamma} \Lambda_{{\mb Z}[{\bf i}]}.
\eeqn

\item {\bf (Lipschitz continuity)} Given any two nondegenerate Hamiltonians $H_1, H_2$ downstairs, one has 
\beqn
\int_{S^1} \min_{X} ( H_1 - H_2) dt \leq  c^{\mf b} (\alpha, H_1) - c^{\mf b} (\alpha, H_2) \leq \int_{S^1} \max_X (H_1 - H_2) dt.
\eeqn
This implies that $c^{\mf b} (\alpha, H)$ is defined for all Hamiltonians.

\item {\bf (Invariance)} $c^{\mf b} (\alpha, H)$ only depends on the homotopy class of the Hamiltonian path $\tilde \phi_H$ on $X$. Let ${\rm Ham}(X)$ be the group of Hamiltonian diffeomorphisms on $X$ and let $\tham(X) \to {\rm Ham}(X)$ be the covering of homotopy classes of Hamiltonian isotopies on $X$. Then we can define 
\beqn
c^{\mf b} (\alpha, \tilde \phi ) \in {\mb R} \cup \{-\infty\}\ \forall \alpha \in \vhf_\bullet^{\mf b}(V; \Lambda_R),\ \tilde\phi \in \tham(X).
\eeqn

\item {\bf (Triangle inequality)} For any $\alpha_1, \alpha_2 \in \vhf (V; \Lambda_R)$ and $\tilde \phi_1, \tilde \phi_2\in \tham(X)$ one has 
\beqn
c^{\mf b} ( \alpha_1 \ast \alpha_2, \tilde \phi_1 \tilde \phi_2) \leq c^{\mf b} ( \alpha_1, \tilde \phi_1) + c^{\mf b} ( \alpha_2, \tilde \phi_2).
\eeqn

\end{enumerate}
\end{thm}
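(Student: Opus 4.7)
The plan is to follow the template of Oh--Schwarz and Entov--Polterovich as adapted to the vortex setting in Wu--Xu \cite{Wu_Xu}, upgrading each step to accommodate the small bulk deformation ${\mf b}$. Throughout, I exploit that the ${\mf b}$-deformed differentials, continuation maps, pair-of-pants products, and the associated energy identities all retain the same action-filtration behavior as their undeformed counterparts, because the weights $\prod_j c_j^{[\mathfrak u]\cap V_j}$ are determined by topological intersection numbers and therefore only regroup the contributions within each homotopy class of Floer/vortex trajectories.

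First I would dispose of items (1)--(3) in one package via continuation maps. Given two regular bulk-avoiding pairs $(\wh H_1,\wh J_1)$ and $(\wh H_2,\wh J_2)$ descending to $H_1,H_2$ on $X$, Proposition \ref{prop_continuation_energy} and the construction of the continuation chain map $\mf{cont}$ show that
\beqn
\ell_2(\mf{cont}(x))\leq \ell_1(x)+\textstyle\int_0^1\max_X(H_1-H_2)\,dt,
\eeqn
which passes to homology and yields $c^{\mf b}(\alpha,H_1)\geq c^{\mf b}(\alpha,H_2)-\int_0^1\max(H_1-H_2)$. Symmetrizing gives the Lipschitz estimate (3). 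Specializing to $H_1=H_2=H$ with two different lifts and almost complex structures, Proposition \ref{prop_quasiequivalence_distance} (via its adiabatic-limit argument) shows $d_Q=0$, so $c^{\mf b}$ only depends on $H$, giving (1). Homogeneity (2) is immediate from ${\mc A}_H(g\cdot{\mf x})={\mc A}_H({\mf x})-g$ and the valuation identity $\ell(\lambda x)=\ell(x)-{\mf v}(\lambda)$ in the Floer--Novikov package of Proposition \ref{prop_obvious}.

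For invariance (4) I would observe that by Lipschitz continuity the spectral number extends to arbitrary smooth Hamiltonians, and that two normalized Hamiltonians generating homotopic paths in ${\rm Ham}(X)$ with fixed endpoints differ by an exact deformation whose max-min integrals over $[0,1]$ vanish in the limit, by a standard mean-value argument (cf.\ \cite[Proposition 3.6]{Wu_Xu}). For the triangle inequality (5), I would set up the pair-of-pants moduli used to define $\ast_{\mf b}$: given Hamiltonians $H_1,H_2$ and cycles $\beta_1,\beta_2$ representing $\alpha_1,\alpha_2$ with $\ell_i(\beta_i)\leq c^{\mf b}(\alpha_i,H_i)+\epsilon$, the same TQFT argument in Figure \ref{figure1} combined with the energy identity on the pants surface (with cylindrical ends carrying $H_1$, $H_2$ and $H_1\#H_2$ respectively, and with the profile of the Hamiltonian perturbation chosen so that the correction integral vanishes) produces a representative of $\alpha_1\ast_{\mf b}\alpha_2$ at action level at most $\ell_1(\beta_1)+\ell_2(\beta_2)$. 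Letting $\epsilon\to 0$ gives (5), which then descends to $\tham(X)$ using (4) and the compatibility of $\ast_{\mf b}$ with the product structure on Hamiltonian paths.

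The main obstacle I anticipate is the triangle inequality (5), specifically guaranteeing that the pair-of-pants moduli used to define $\ast_{\mf b}$ can be set up with Hamiltonian perturbations for which the analogue of the error term $\int(\partial_s\wh H)(u)\,ds\,dt$ in \eqref{continuation_energy_identity} is controlled by $\int_0^1\max(H_1\#H_2-H_1-H_2)\,dt=0$ after suitable normalization. In the bulk-deformed setting one must additionally ensure bulk-avoidance throughout the homotopy of surface data; since the weights depend only on the topological intersection with each $V_j$, generic perturbation keeps the incident cycles bulk-avoiding and does not change the weighted counts, so this is a technical but not conceptual issue. The second subtlety is invariance under $\tham(X)$ rather than ${\rm Ham}(X)$: one must check that spectral numbers see the lift of the path, which is automatic because equivariant cappings upstairs correspond bijectively to cappings downstairs via the map $\iota$ of \eqref{downtoup}, so the $\Gamma$-action on generators translates precisely into the covering action of $\tham\to{\rm Ham}$.
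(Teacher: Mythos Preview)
Your proposal is correct and follows essentially the same approach that the paper indicates: the paper does not actually supply a proof for this theorem, but instead remarks just before it that ``these spectral numbers \ldots were proved in \cite{Wu_Xu} in the undeformed (${\mf b}=0$) case,'' and cites \cite[Proposition~3.6]{Wu_Xu} in the statement itself. Your sketch---continuation energy estimates and Proposition~\ref{prop_quasiequivalence_distance} for (1) and (3), the Floer--Novikov valuation identity for (2), Lipschitz continuity plus normalization for (4), and the filtered pair-of-pants energy estimate for (5)---is precisely the Wu--Xu argument transported to the bulk-deformed setting, with the correct observation that the ${\mf b}$-weights are topological and do not alter any action-filtration inequality.

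One small point worth tightening: in your derivation of (3) you write $\ell_2(\mathfrak{cont}(x))\leq \ell_1(x)+\int_0^1\max_X(H_1-H_2)\,dt$ directly from Proposition~\ref{prop_continuation_energy}, but that proposition only gives $\max_V(\wh H_1-\wh H_2)$. Passing from $\max_V$ to $\max_X$ is exactly what the adiabatic-limit argument inside the proof of Proposition~\ref{prop_quasiequivalence_distance} accomplishes, so you should invoke that proposition (or its proof) already for (3), not only for the special case $H_1=H_2$ in (1). You do cite it, but the logical order should be: first use it to get the Hofer-distance bound on $d_Q$, then read off both (1) and (3) simultaneously.
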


\begin{defn}
The {\bf valuation} of a class $\alpha \in \vhf_\bullet^{\mf b} (V; \Lambda_R)$ is defined to be 
\beqn
{\mc A}^{\mf b} (\alpha):= c^{\mf b} (\alpha, \tilde {\rm Id})\in {\mb R} \cup \{+\infty\}.
\eeqn
\end{defn}

\subsubsection{Poincar\'e duality}

One useful property of the spectral numbers is related to the Poincar\'e duality map. 

\begin{prop}\label{prop413}
Let ${\mb K}$ be a field.
\begin{enumerate}
\item For any $\alpha \in \vhf_\bullet^{\mf b}(V; \Lambda_{\mb K})$ and $\tilde \phi \in \tham(X)$, there holds
\beqn
c^{\mf b} (\alpha, \tilde \phi) = - \inf \Big\{ c^{\mf b} ( \beta, \tilde \phi^{-1} )\ |\ \langle \alpha, \beta \rangle^{\mf b} \neq 0 \Big\}.
\eeqn

\item If $\langle \alpha, \beta \rangle^{\mf b} \neq 0$, then 
\beqn
{\mc A}^{\mf b} (\alpha) + {\mc A}^{\mf b} (\beta)\geq 0.
\eeqn
\end{enumerate}
\end{prop}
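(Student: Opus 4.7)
The plan is to adapt the standard Poincar\'e duality argument for spectral invariants (Entov--Polterovich \cite{Entov_Polterovich_1}, Schwarz \cite{Schwarz_spectral}, Usher \cite{Usher_2008}) to the vortex setting, using the chain-level formula for the Poincar\'e pairing derived just above the statement. The key input is the action-reversing bijection ${\rm crit}\,{\mc A}_H \cong {\rm crit}\,{\mc A}_{H^{\rm op}}$, ${\mf x}\leftrightarrow {\mf x}^{\rm op}$ with ${\mc A}_{H^{\rm op}}({\mf x}^{\rm op}) = -{\mc A}_H({\mf x})$, together with the fact that under trivial continuation data on $\Sigma_{\supset}$ the pairing reduces to $\langle \sum_i a_i{\mf x}_i,\, \sum_j b_j {\mf x}_j^{\rm op}\rangle^{{\mf b}} = \sum_i a_i b_i$. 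Both the Hamiltonian invariance of spectral numbers (via continuation maps) and the homogeneity property $c^{\mf b}(\lambda\alpha, \tilde\phi) = c^{\mf b}(\alpha,\tilde\phi) - {\mf v}(\lambda)$ extend the argument from classes in $\vhf_\bullet^{\mf b}(V;\Lambda_{{\mb Z}[{\bf i}]}^\Gamma)$ to arbitrary classes in $\vhf_\bullet^{\mf b}(V;\Lambda_{\mb K})$.

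For the easy direction $c^{\mf b}(\alpha,\tilde\phi) \geq -\inf\{c^{\mf b}(\beta,\tilde\phi^{-1}): \langle\alpha,\beta\rangle^{\mf b}\neq 0\}$ in (1), fix a bulk-avoiding Hamiltonian $H$ generating $\tilde\phi$, so that $H^{\rm op}$ generates $\tilde\phi^{-1}$. Given any $\beta$ with $\langle\alpha,\beta\rangle^{\mf b}\neq 0$, Theorem \ref{Usher_thm} applied to the Floer--Novikov package ${\mf c}^{\mf b}(\wh H,\wh J)$ and its opposite produces, for every $\varepsilon>0$, cycle representatives
\[
\alpha_{\rm rep} = \textstyle\sum_i a_i {\mf x}_i \in \vcf^{\leq c^{\mf b}(\alpha,\tilde\phi) + \varepsilon},\qquad \beta_{\rm rep} = \textstyle\sum_j b_j {\mf x}_j^{\rm op} \in \vcf^{{\rm op},\,\leq c^{\mf b}(\beta,\tilde\phi^{-1}) + \varepsilon}.
\]
Since $\langle \alpha_{\rm rep},\beta_{\rm rep}\rangle^{\mf b} = \sum_i a_i b_i \neq 0$, some common ${\mf x}$ must appear, forcing ${\mc A}_H({\mf x}) \leq c^{\mf b}(\alpha,\tilde\phi) + \varepsilon$ and $-{\mc A}_H({\mf x}) = {\mc A}_{H^{\rm op}}({\mf x}^{\rm op}) \leq c^{\mf b}(\beta,\tilde\phi^{-1}) + \varepsilon$. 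Summing the two inequalities and letting $\varepsilon\to 0$ yields the bound.

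For the reverse inequality, the infimum is (arbitrarily closely) attained; this is where I would invoke the non-Archimedean duality encoded in the Usher--Zhang singular value decomposition applied to the Floer-type complex $\vcf_\bullet^{\mf b}(\wh H,\wh J;\Lambda_{\mb K})$. The involution ${\mf x}\mapsto {\mf x}^{\rm op}$ turns an SVD orthogonal basis of $\vcf_\bullet^{\mf b}(\wh H,\wh J;\Lambda_{\mb K})$ into an orthogonal basis of $\vcf_\bullet^{\mf b}(\wh H^{\rm op},\wh J^{\rm op};\Lambda_{\mb K})$ that is chain-level dual under $\langle\cdot,\cdot\rangle^{\mf b}$, with opposite filtration values; matching this dual basis against the action-level characterization of $c^{\mf b}(\alpha,\tilde\phi)$ produces a cycle $\beta$ on the opposite complex realizing $c^{\mf b}(\beta,\tilde\phi^{-1}) \leq -c^{\mf b}(\alpha,\tilde\phi) + \varepsilon$ with $\langle\alpha,\beta\rangle^{\mf b}\neq 0$. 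I expect this algebraic step to be the main (though purely formal) obstacle, to be handled exactly as in \cite{Usher_2008, Usher_Zhang_2016}. Part (2) then follows by specializing (1) to $\tilde\phi = \tilde{\mathrm{Id}}$: since ${\mc A}^{\mf b}(\gamma) = c^{\mf b}(\gamma,\tilde{\mathrm{Id}})$, non-vanishing of $\langle\alpha,\beta\rangle^{\mf b}$ gives ${\mc A}^{\mf b}(\beta) \geq -{\mc A}^{\mf b}(\alpha)$, i.e., ${\mc A}^{\mf b}(\alpha) + {\mc A}^{\mf b}(\beta) \geq 0$.
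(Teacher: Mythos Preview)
Your proposal is correct and follows the same underlying approach as the paper: both derive (1) from the abstract Poincar\'e duality property of filtered Floer--Novikov complexes, using the action-reversing involution ${\mf x}\mapsto {\mf x}^{\rm op}$ and the diagonal form of the chain-level pairing, and then obtain (2) by specializing to $\tilde\phi = \tilde{\mathrm{Id}}$. The only difference is presentational: the paper simply observes that the bulk-deformed vortex complex is an abstract Floer--Novikov complex (Proposition~\ref{prop_obvious}) and then cites Usher \cite{Usher_2010} for the fact that (1) holds in that generality, whereas you sketch the actual content of that argument via representatives and an SVD/dual-basis construction.
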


\begin{proof}
Notice that one only needs to prove this proposition for coefficient field being $\Lambda_{\mb K}^\Gamma$. In the case of ordinary Hamiltonian Floer theory, the proof of (1) uses the PSS map and the correspondence between the pairing $\langle \cdot, \cdot \rangle$ and the intersection pairing on the singular homology of the manifold (see \cite{Entov_Polterovich_1}\cite{Ostrover_2006}\cite{FOOO_spectral}). It was pointed in \cite{Usher_2010} that (1) holds for abstract filtered Floer--Novikov complexes. As the complex $\vhf_\bullet^{\mf b} ( \wh H, \wh J; \Lambda_{\mb K}^\Gamma)$ is an abstract filtered Floer--Novikov complex over $\Lambda_{\mb K}^\Gamma$ (see Proposition \ref{prop_obvious}), (1) follows. For (2), take $\tilde \phi = {\rm Id}$. Then
\beqn
{\mc A}^{\mf b}(\alpha) = c^{\mf b} (\alpha, {\rm Id}) = - \inf \Big\{ {\mc A}^{\mf b}(\beta) \ |\ \langle \alpha, \beta \rangle^{\mf b} \neq 0 \Big\}.
\eeqn
Hence if $\langle \alpha, \beta \rangle^{\mf b} \neq 0$, ${\mc A}^{\mf b}(\beta) \geq - {\mc A}^{\mf b}(\alpha)$.
\end{proof}

\subsubsection{Persistence modules and barcodes}

Recall that (see Subsection \ref{subsection_persistence}) to any filtered Floer--Novikov complex $CF_\bullet({\mf c})$ over the Novikov field $\Lambda_{\mb K}^\Gamma$ one can associate a persistence module ${\bm V}({\mf c})$. In particular, for each regular bulk-avoiding admissible pair $(\wh H, \wh J)$, the bulk-deformed vortex Floer complex $\vcf_\bullet^{\mf b}(\wh H, \wh J; \Lambda_{\mb K}^\Gamma)$ gives a persistence module, denoted by 
\beqn
{\bm V}^{\mf b} (\wh H, \wh J; \Lambda_{\mb K}^\Gamma).
\eeqn
We omit the dependence on the bulk deformation ${\mf b}$ most of the time. One can check easily that we can extend the coefficient field to the universal Novikov field $\Lambda_{\mb K}$, obtaining a persistence module ${\bm V} (\wh H, \wh J; \Lambda_{\mb K})$ with 
\beqn
V^s(\wh H, \wh J; \Lambda_{\mb K}):= HF_\bullet^{\leq s} (\wh H, \wh J; \Lambda_{0, {\mb K}}^\Gamma) \otimes_{\Lambda_{0, {\mb K}}^\Gamma} \Lambda_{0, {\mb K}}.
\eeqn
When the ground field ${\mb K}$ is clear from the context, we often abbreviate this persistence module by ${\bm V}(\wh H, \wh J)$. One can prove, using the continuation map, that up to isomorphism, this persistence module is independent of the choice of the almost complex structure $\wh J$. Hence denote the persistence module by ${\bm V}(\wh H)$. One can also use the same idea of proving Proposition \ref{prop_quasiequivalence_distance} that, for different lifts $\wh H_1, \wh H_2$ of the same Hamiltonian $H$ downstairs, the interleaving distance between ${\bm V}(\wh H_1)$ and ${\bm V}(\wh H_2)$ is zero. By identifying persistence modules with zero interleaving distance, the persistence module only depends on the Hamiltonian path $\tilde \phi \in \tham(X)$ generated by $H$. Hence we loosely denote the object by ${\bm V}(\tilde\phi)$.

Recall also that to any Floer--Novikov complex one can associate a barcode (and hence a reduced barcode). The reduced barcode corresponding to a regular bulk-avoiding admissible pair $(\wh H, \wh J)$ is denoted by ${\mc B}(\wh H, \wh J)$. One can prove that (similar to the case of ordinary Floer barcodes, see \cite[Proposition 5.3]{Usher_2013}) the reduced barcode only depends on the time-1 map $\phi = \phi_H$ on the toric manifold $X$. Hence we also denote it by ${\mc B}(\phi)$.

\section{Local Floer theory}\label{sec:local-Floer}

To extend the Hofer--Zehnder conjecture to degenerate Hamiltonian diffeomorphisms, one needs to have a good notion of counts of fixed points. Following \cite{Shelukhin_2022}, we will use the rank of a local version of the vortex Floer homology (with bulk deformation), which is ultimately isomorphic to the local Floer homology in the classical sense, to define a homological count of fixed points. This section can be skipped at first reading, especially if the reader is mainly interested in the nondegenerate case. 

\subsection{Local vortex Floer homology with bulk}

We adapt the definition of local Floer homology in the vortex setting, possibly with bulk deformations. 
Let ${\mf b}$ be a small bulk deformation. Let $\phi: X \to X$ be a Hamiltonian diffeomorphism and $x \in X$ be an isolated fixed point. 
Let $H_t$ be a 1-periodic family of Hamiltonian on $X$ generating the Hamiltonian isotopy $\phi_t$ with $\phi_1 = \phi$. Let $x(t) = \phi_t(p)$ be the corresponding 1-periodic orbit of $H$, which may intersect the bulk divisor $D\subset X$. Choose a small perturbation $H_1$ of $H$ supported near $x(t)$ such that all nearby 1-periodic orbits are nondegenerate and are disjoint from the bulk divisor $D$. 

Let $\wh H$ be a $K$-invariant lift of $H$ and $\wh H_1$ be a $K$-invariant admissible lift of $H_1$. Then the 1-periodic orbit $x(t)$ lifts to a gauge equivalence class of equivariant 1-periodic orbits. Let ${\mf x}(t) = (x(t), \eta(t))$ be a representative. There are also gauge equivalence classes of equivariant 1-periodic orbits of $\wh H_1$ which are near ${\mf x}$. Indeed, fixing the $L_0 K$-orbit of ${\mf x}(t)$, there are well-defined $L_0 K$-orbits of equivariant 1-periodic orbits which are nearby. Then for each pair of nearby equivariant 1-periodic orbits ${\mf x}_1$, ${\mf y}_1$ of $\wh H_1$, there is a canonical homotopy class of (small) cylinders connecting ${\mf x}_1$ and ${\mf y}_1$. Consider the moduli space of solutions to the vortex equation over the cylinder connecting ${\mf x}_1$ and ${\mf y}_1$. The energy of these solutions is 
\beqn
{\mc A}_{H_1}({\mf x}_1) - {\mc A}_{H_2}({\mf y}_1)
\eeqn
which can be arbitrarily small. Then similar to the case of ordinary local Floer homology, these moduli spaces can be used to define a chain complex over any coefficient filed ${\mb K}$. As the orbits are disjoint from $D$, one can also use topological intersection numbers with the bulk divisor and associated weighted counts to define the bulk-deformed version. Denote the resulting homology by 
\beqn
\vhf^{{\mf b}, {\rm loc}} (H, x; {\mb K}).
\eeqn
The continuation argument shows that the homology is independent of the data $(\wh H_1, \wh J_1)$. On the other hand, {\it a priori} the homology depends on the bulk ${\mf b}$. When ${\mf b} = 0$, denote this homology by $\vhf_{\rm loc}(H, x; {\mb K})$.

\begin{prop}\label{prop:indep-bulk}
One has 
\beq\label{local_independent}
\vhf^{{\mf b},{\rm loc}} ( H, x; {\mb K}) \cong \vhf^{\rm loc}( H, x; {\mb K}).
\eeq
\end{prop}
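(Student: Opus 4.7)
The plan is to construct a chain-level isomorphism between the undeformed and the bulk-deformed local vortex Floer complex by a diagonal change of basis, using the fact that in the local setting all relevant cylinders live in a topologically simple neighborhood of the orbit ${\mf x}(t)$, so that intersection numbers with the divisors $V_j$ are determined by the asymptotes alone.

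First I would establish the following geometric claim: there exist a nondegenerate perturbation $(\wh H_1,\wh J_1)$ and a constant $\delta>0$ such that every solution ${\mf u}=(u,A)$ to the Floer--vortex equation for $(\wh H_1,\wh J_1)$ whose asymptotes ${\mf x}_1,{\mf y}_1$ lie in the preassigned small neighborhood of ${\mf x}(t)$ and whose energy is less than $\delta$ has its (gauge-equivalence class of) image contained in a tubular neighborhood $U$ of the gauge orbit $K\cdot {\mf x}(t)$ in $V$, chosen so that $U/K$ deformation retracts onto the image of $x(t)$ in $X$. This is the standard compactness underlying local Floer theory (cf.\ the arguments of Ginzburg--G\"urel), carried over to the GLSM setting. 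Sphere bubbling is impossible because $V$ is aspherical, and disk/affine-vortex bubbling does not enter because the domain is a closed cylinder; energy decay forces the asymptotes to be close to ${\mf x}(t)$, and standard $C^0$ bootstrapping from small energy gives the confinement to $U$.

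Next, since $U/K$ is homotopy equivalent to the loop $x(t)\subset X$, its $\pi_2$ vanishes. Therefore any two small cylinders ${\mf u}_1,{\mf u}_2$ in $U$ with the same pair of asymptotes $({\mf x}_1,{\mf y}_1)$ are gauge homotopic through cylinders in $U$, and since $V_j$ is $K$-invariant the topological intersection number
\[
I_j({\mf x}_1,{\mf y}_1):= [{\mf u}]\cap V_j\in{\mathbb Z}
\]
is independent of the cylinder ${\mf u}$ contributing to ${\mc M}^{\rm cyl}_{\rm loc}({\mf x}_1,{\mf y}_1)$. Fixing a basepoint orbit ${\mf x}_\ast$ in each connected component of the local Floer graph and a reference family of small cylinders from ${\mf x}_\ast$ to every other nearby orbit ${\mf x}_1$, define $I_j({\mf x}_1)\in{\mathbb Z}$ to be the intersection number of such a reference cylinder with $V_j$. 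The cocycle identity
\[
I_j({\mf x}_1)-I_j({\mf y}_1)=[{\mf u}]\cap V_j
\]
holds for any Floer cylinder ${\mf u}\in{\mc M}^{\rm cyl}_{\rm loc}({\mf x}_1,{\mf y}_1)$ by gluing reference cylinders and applying the previous paragraph.

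Finally, assume (without loss of generality, in the relevant coefficient field) that each $c_j$ is invertible in ${\mb K}$; define the diagonal ${\mb K}$-linear map
\[
\Psi:\vcf^{\rm loc}(H,x;{\mb K})\longrightarrow \vcf^{{\mf b},{\rm loc}}(H,x;{\mb K}),\qquad \Psi({\mf x}_1)=\prod_{j=1}^N c_j^{-I_j({\mf x}_1)}\,{\mf x}_1.
\]
The bulk-deformed matrix coefficient $\langle\partial^{\mf b}{\mf x}_1,{\mf y}_1\rangle$ equals $\prod_j c_j^{I_j({\mf x}_1)-I_j({\mf y}_1)}$ times the undeformed coefficient, by the cocycle identity; a direct substitution then gives $\Psi\circ\partial=\partial^{\mf b}\circ\Psi$. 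Since $\Psi$ is an invertible change of basis, this is the required chain isomorphism and descends to the claimed isomorphism \eqref{local_independent} on homology. The main obstacle is the compactness/confinement statement of the first step: one must verify, in the vortex setting with a small Hamiltonian perturbation, that low-energy solutions genuinely stay gauge-equivalent to a map with image in the chosen tubular neighborhood of $K\cdot{\mf x}(t)$; this uses asphericity of $V$ together with an equivariant version of the standard local Floer compactness.
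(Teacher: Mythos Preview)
Your approach is genuinely different from the paper's, and the geometric core of your argument is sound: because $V$ is contractible, any two small Floer cylinders with the same asymptotes glue to a null-homotopic sphere, so the intersection number $[{\mf u}]\cap V_j$ depends only on the pair $({\mf x}_1,{\mf y}_1)$, hence is a coboundary $I_j({\mf x}_1)-I_j({\mf y}_1)$. The diagonal rescaling is then the natural way to undo the bulk weights.

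The gap is the clause ``assume (without loss of generality \ldots) that each $c_j$ is invertible in $\mb K$.'' The paper explicitly allows $c_j=0$ (see the passage following the introduction of ${\mf b}=\sum\log c_j\,V_j$), and even when $c_j\in{\mb Z}[{\bf i}]\setminus\{0\}$ it need not be invertible in the relevant field---for instance in $\ov{\mb F}_p$ when $p$ divides the norm of $c_j$. In those cases your map $\Psi({\mf x}_1)=\prod_j c_j^{-I_j({\mf x}_1)}{\mf x}_1$ is not defined, and no shift of the potentials $I_j$ repairs this: if $c_j=0$ and the function ${\mf x}_1\mapsto I_j({\mf x}_1)$ is nonconstant, neither direction of the rescaling is invertible. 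So your argument, as written, does not cover the full statement.

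The paper sidesteps this entirely by a different, shorter maneuver: if the orbit $x(t)$ is disjoint from the toric divisor $D$, the confinement you describe forces all local cylinders to miss each $V_j$, so every weight is $1$ and $\partial^{\mf b}=\partial$ on the nose; if $x(t)$ meets $D$, one conjugates by a small loop of Hamiltonian diffeomorphisms $\psi(t)$ supported near $x(t)$ so that the new orbit $y(t)=\psi(t)(x(t))$ avoids $D$, and invokes the invariance of local Floer homology under this isotopy (for every ${\mf b}$) to reduce to the first case. This argument never needs to invert any $c_j$. Your approach would be a clean alternative under the additional hypothesis that each $c_j$ is a unit in $\mb K$, but the reduction ``move the orbit off the divisor'' is what makes the proposition hold unconditionally.
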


\begin{proof}
First, suppose $x$ does not intersect the bulk divisor $D \subset X$. Then for any small perturbation $H_1$ of $H$, all cylinders contributing to the definition of the local Floer homology have zero topological intersection number with the divisor upstairs. Hence \eqref{local_independent} holds in this case. 

Now suppose $x$ intersects the bulk divisor $D$. One can find a loop of Hamiltonian diffeomorphisms $\psi(t)\in \ham(X)$ supported near $x(t)$ such that $y(t):= \psi(t) (x(t))$ is disjoint from $D$. Moreover, define
\beqn
y(t) = (\psi(t) \phi(t) \psi(0)^{-1})(\psi(0) (q)) = (\psi(t) \phi(t) \psi(0)^{-1})(y(0)),
\eeqn
then $y(t)$ is a 1-periodic orbit of the Hamiltonian isotopy $\psi(t) \phi(t) \psi(0)^{-1}$. Let the generating Hamiltonian function of this new family be $G$, which can be made sufficiently close to $H$. Then $y(t)$ is also an isolated 1-periodic orbit of $G$. Then a generic perturbation of $G$ also serves as a perturbation of $H$. Hence for all bulk ${\mf b}$,
\beqn
\vhf^{{\mf b}, {\rm loc}} (H, x; {\mb K}) \cong \vhf^{{\mf b}, {\rm loc}} (G, y; {\mb K}).
\eeqn
However, as $y$ is disjoint from $D$, the right hand side is isomorphic to $\vhf^{\rm loc}(G, y; {\mb K})$, which is also isomorphic to $\vhf^{\rm loc}(H, x; {\mb K})$. 
\end{proof}

Now we prove that the local vortex Floer homology is isomorphic to the local Floer homology inside the symplectic quotient.

\subsection{Isomorphism with ordinary local Floer homlogy}

We prove the following result. 

\begin{prop}\label{prop:iso-local}
There is an isomorphism
\beqn
\vhf^{\rm loc}(H, x; {\mb K}) \cong HF^{\rm loc}(H, x; {\mb K}).
\eeqn
\end{prop}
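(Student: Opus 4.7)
The proof I would give proceeds via an adiabatic limit argument, identifying the local vortex Floer complex at $x$ with the ordinary local Floer complex once the rescaling parameter $\lambda$ in \eqref{vortex_lambda} is taken sufficiently large. By the displacement trick used in Proposition \ref{prop:indep-bulk} I may first replace $(H, x)$ by a Hamiltonian-conjugate pair so that the orbit $x(t)$ lies entirely in the free locus of the $K$-action, where $\pi:\mu^{-1}(0) \to X$ is a principal $K$-bundle. Choose a $C^2$-small nondegenerate perturbation $H_1$ of $H$ supported in a small contractible neighborhood $U$ of the trace of $x(t)$; every 1-periodic orbit of $H_1$ sitting in $U$ is nondegenerate, and each lifts canonically to an equivariant 1-periodic orbit upstairs (within a fixed $L_0 K$-orbit of ${\mf x}(t)$). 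The generators of the two local complexes are therefore put in canonical bijection, and by construction none of these orbits intersect the bulk divisors, so signs and weightings are the standard ones on both sides.

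To match the differentials, fix a $K$-invariant admissible lift $(\wh H_1, \wh J_1)$ of an admissible pair $(H_1, J_1)$ downstairs, and consider the rescaled vortex equation \eqref{vortex_lambda} for $\lambda > 0$. For nearby lifted orbits ${\mf x}_1, {\mf y}_1$, the energy of any connecting solution equals ${\mc A}_{H_1}({\mf x}_1) - {\mc A}_{H_1}({\mf y}_1)$, which is bounded by the Hofer size of the perturbation and may be made smaller than any prescribed threshold. In the $\lambda \to \infty$ limit three bubbles could in principle form: holomorphic spheres in $V$ (ruled out by asphericity), holomorphic spheres in $X$, and affine vortices in $V$; the last two carry a fixed positive minimum energy depending only on $(V, X, J_1)$, so by taking the perturbation small enough all bubbles are excluded. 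A standard adiabatic compactness theorem in the spirit of \cite{Gaio_Salamon_2005, Ziltener_thesis, Ziltener_book} then shows that any sequence of such vortex solutions converges, modulo gauge, to a Floer cylinder in $X$ for $(H_1, J_1)$ connecting the projected orbits. Conversely, each transversely cut-out Floer cylinder downstairs can be uniquely glued to a vortex solution upstairs for $\lambda$ sufficiently large, by the usual implicit function argument over the free locus of $K$. The resulting oriented bijection between zero-dimensional moduli spaces induces an isomorphism of the two local chain complexes, and hence of their homologies.

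The principal technical obstacle is to carry out the adiabatic compactness and gluing in this purely local setting: we cannot use finiteness of the global orbit set to confine trajectories, and we must verify that a vortex connecting two lifts of orbits in $U$ stays in a prescribed small neighborhood of $\mu^{-1}(0) \cap \pi^{-1}(U)$. I would prove this by combining the standard monotonicity inequality for the moment-map component $\lambda^2|\mu(u)|^2$ of the energy density with the exponential decay to the asymptotic orbits guaranteed by nondegeneracy, giving a uniform $C^0$ bound that decouples the interior behaviour from the contractible domain $U$. Once this localization is established, the vortex operator restricted to a tubular neighborhood of $\mu^{-1}(0) \cap \pi^{-1}(U)$ is a small gauge-theoretic perturbation of the Floer operator downstairs, so the gluing and regularity match those of ordinary Floer theory, completing the proof.
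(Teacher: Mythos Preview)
Your proposal is correct and follows essentially the same adiabatic limit strategy as the paper: both pass to the rescaled equation \eqref{vortex_lambda}, use small energy to exclude holomorphic sphere and affine vortex bubbles, invoke adiabatic compactness (in the spirit of \cite{Gaio_Salamon_2005}) to obtain Floer trajectories downstairs in the $\lambda\to\infty$ limit, and then establish an orientation-preserving bijection of rigid moduli spaces via a gluing/implicit function argument. Your additional discussion of the $C^0$-confinement of trajectories is more explicit than the paper's treatment, which simply absorbs this into the small-energy hypothesis and the cited compactness results; the displacement step you borrow from Proposition~\ref{prop:indep-bulk} is harmless but not needed here since orbits already live in $\mu^{-1}(0)$ where $K$ acts freely.
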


\begin{proof}
It follows from the adiabatic limit argument in the same spirit as in \cite{Dostoglou_Salamon, Gaio_Salamon_2005} and \cite{Lagrange_multiplier}. Let $H_1$ be a nondegenerate Hamiltonian on $X$ which is arbitrarily close to $H$. Let $J_1$ be an $S^1$-family of almost complex structures on $X$ such that the local Floer homology at the fixed point $x$ can be defined via the Floer equation for $(H_1, J_1)$. Let $\wh H_1$ be an admissible lift and $\wh J_1$ be a lift of $J_1$. Consider the local vortex Floer homology defined by critical points of ${\mc A}_{H_1}$ which are close to the fixed point $x \in {\rm Fix}(\phi)$ whose differential counts rigid solutions to the equation \eqref{vortex_lambda} (with $(\wh H, \wh J)$ replaced by $(\wh H_1, \wh J_1)$). For all $\lambda$, we need to perturb $\wh J_1$ to $\wh J_\lambda$ to achieve transversality. We assume that as $\lambda \to +\infty$, $\wh J_\lambda$ converges to $\wh J_1$ in $C^\infty$. Using continuation maps we can show that the resulting homology is independent of $\lambda$. Moreover, the energy of relevant solutions can be arbitrarily small. Then consider the $\lambda \to \infty$ limit. For any sequence $\lambda_i \to \infty$ and any sequence of solutions to \eqref{vortex_lambda} for $\lambda = \lambda_i$ which contributes to the local vortex Floer differential, there is an upper bound of the energy of these solutions. Then by the adiabatic limit compactness theorem (see \cite{Gaio_Salamon_2005}\cite{Wang_Xu} in similar settings) a subsequence converges to a possibly broken ordinary Floer trajectory inside $X$ modulo bubbling. As there is a lower bound for the energy of bubbles (the minimal energy of $J_1$-holomorphic spheres in $X$ and affine affine vortices in $V$), we can choose the perturbation $H_1$ sufficiently close to $H$ so that bubbles can be ruled out. Then if we are considering the zero-dimensional moduli spaces, then the possible limits must be unbroken trajectories in $X$.

Now we claim that for $\lambda$ sufficiently large, there is an orientation-preserving bijection between index zero solutions to \eqref{vortex_lambda} (modulo gauge transformation) and index zero solutions to the ordinary Floer equation in $X$. Indeed, using the same kind of estimates as in \cite{Dostoglou_Salamon}\cite{Gaio_Salamon_2005} (and the much simpler case in \cite{Lagrange_multiplier}) one can construct a gluing map from the limiting moduli space to the vortex moduli space with sufficiently large parameter $\lambda$. The compactness result explained above shows that the gluing map is surjective, while via the implicit function theorem one can show that the gluing map is injective. The fact that the gluing map preserves orientation follows from the explicit comparison of the linearized Fredholm operators (they differ by, roughly speaking, an invertible operator). 
\end{proof}

In view of Proposition \ref{prop:iso-local} and the properties of local Floer homology as proved in, e.g., \cite{Ginzburg_2010}, one can derive the following property of the local vortex Floer homology.

\begin{cor}\label{cor:property-vortex-local}
The local vortex Floer homology has the following properties.
\begin{enumerate}
\item (Up to isomorphism) $\vhf^{\rm loc}(H, x; {\mb K})$ only depends on the fixed point $x$ and the time-1 map $\phi\in \ham(X)$. Hence we denote the local vortex Floer homology by 
\beqn
\vhf^{\rm loc} (\phi, x; {\mb K}).
\eeqn

\item If $\phi^k$ is an admissible iteration of $\phi$ at $x$, then 
\beqn
\vhf^{\rm loc} ( \phi, x; {\mb K}) \cong \vhf^{\rm loc} (\phi^k, x; {\mb K}).
\eeqn
\end{enumerate}
\end{cor}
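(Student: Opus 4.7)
The plan is to reduce both statements to their well-established analogues in the classical Hamiltonian Floer theory via the identification provided by Proposition \ref{prop:iso-local}, and then invoke results from \cite{Ginzburg_2010} (and Ginzburg--G\"urel). Throughout, by Proposition \ref{prop:indep-bulk}, we may work in the undeformed setting (${\mf b} = 0$), so the only input from the GLSM side is the adiabatic-limit identification
\beqn
\vhf^{\rm loc}(H, x; {\mb K}) \cong HF^{\rm loc}(H, x; {\mb K}).
\eeqn

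For statement (1), let $H^0$ and $H^1$ be two time-dependent Hamiltonians on $X$ generating the same time-1 map $\phi$, with $x\in {\rm Fix}(\phi)$ an isolated fixed point. After possibly re-parametrizing, we connect $H^0$ and $H^1$ through a smooth family $\{H^\tau\}_{\tau \in [0,1]}$ all of whose time-1 maps coincide with $\phi$ in a fixed neighborhood of $x$; the corresponding 1-periodic orbits $x^\tau(t) = \phi^\tau_t(x)$ all remain in a small neighborhood of the constant orbit at $x$. Choose $K$-invariant admissible lifts $\wh H^\tau$ and apply Proposition \ref{prop:iso-local} to each $\tau$, yielding $\vhf^{\rm loc}(H^\tau, x; {\mb K}) \cong HF^{\rm loc}(H^\tau, x; {\mb K})$. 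On the right-hand side, the classical invariance of local Floer homology under such deformations (see \cite[Section 3.2]{Ginzburg_2010}) gives $HF^{\rm loc}(H^0, x; {\mb K}) \cong HF^{\rm loc}(H^1, x; {\mb K})$, and transporting this isomorphism through the adiabatic identification proves (1).

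For statement (2), let $\phi^k$ be an admissible iteration at $x$ in the sense of \cite{Ginzburg_2010}, i.e., $x$ remains an isolated fixed point of $\phi^k$ and the local Floer homology is non-trivially defined. Pick a generating Hamiltonian $H$ for $\phi$ and note that its $k$-fold time-reparametrization $H^{(k)}_t := k H_{kt}$ generates $\phi^k$, with $x$ still giving an isolated 1-periodic orbit. Apply Proposition \ref{prop:iso-local} to both $(H, x)$ and $(H^{(k)}, x)$ to obtain
\beqn
\vhf^{\rm loc}(\phi, x; {\mb K}) \cong HF^{\rm loc}(\phi, x; {\mb K}), \qquad \vhf^{\rm loc}(\phi^k, x; {\mb K}) \cong HF^{\rm loc}(\phi^k, x; {\mb K}).
\eeqn
The iteration invariance of the classical local Floer homology at admissible iterates, established in the Ginzburg--G\"urel theory (see \cite[Theorem 1.1]{Ginzburg_2010} and its proof), yields an isomorphism between the two right-hand sides, which then transfers back to the vortex setting.

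The main technical point that needs care is not the algebraic quoting but the compatibility of the adiabatic limit identification in Proposition \ref{prop:iso-local} with the relevant homotopies: one must verify that the $\lambda \to \infty$ gluing/compactness argument is uniform in the deformation parameter $\tau$ (for (1)) and respects the ${\mb Z}_k$-equivariance coming from the natural iteration (for (2)). Both uniformities follow from the same a~priori bounds on energy and on the distance of solutions to $\mu^{-1}(0)$ that were used in the proof of Proposition \ref{prop:iso-local}, together with the fact that the Hamiltonian perturbations $H^\tau$ (resp.\ $H^{(k)}$) remain in a compact family with isolated orbits clustered near $x$. This is the only step that requires real work; once it is in place, both (1) and (2) follow directly from the classical results.
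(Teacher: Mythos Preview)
Your proposal is correct and takes essentially the same approach as the paper: reduce to the classical local Floer homology via Proposition~\ref{prop:iso-local} and then invoke the known invariance properties from \cite{Ginzburg_2010}. The paper's own treatment is in fact just the one-sentence observation preceding the corollary, with no displayed proof.

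One minor remark: your final paragraph about the uniformity of the adiabatic-limit identification in the deformation parameter $\tau$ (and its ${\mb Z}_k$-equivariance) is not actually needed. The statements in the corollary concern only isomorphism types of ${\mb K}$-vector spaces, so the bare existence of an isomorphism $\vhf^{\rm loc}(H,x;{\mb K}) \cong HF^{\rm loc}(H,x;{\mb K})$ for each individual $H$ already suffices to transfer the classical invariance results; no naturality or compatibility with homotopies is required.
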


\subsection{Reduced barcodes of degenerate Hamiltonians}

Recall that one can associate to each nondegenerate Hamiltonian on a closed symplectic manifold a (finite) barcode. As this association is Lipschitz continuous with respect to the bottleneck distance for barcodes and Hofer metric for Hamiltonians, we hope one can define barcodes for all Hamiltonians using this Lipschitz continuity. However, the bottleneck distance is not complete. Therefore, {\it a priori}, the barcode for a general Hamiltonian only exists in the completion. 

\begin{thm}\label{thm_degenerate_barcode} Let ${\mb K}$ be a field. Let $\phi \in \ham(X)$ be a Hamiltonian diffeomorphism with isolated fixed points. Let ${\mc B}_{\mb K}^{\mf b} (\phi)$ be the ({\it a priori} infinite) reduced barcode of $\phi$ (in coefficient field $\Lambda_{\mb K}^\Gamma$). Then ${\mc B}_{\mb K}^{\mf b} (\phi)$ has finitely many bars whose number of end points is equal to $N(\phi; {\mb K})$. 
\end{thm}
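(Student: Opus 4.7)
The plan is to approximate $\phi$ by nondegenerate Hamiltonian diffeomorphisms and control the limit barcode via stability together with a local-to-global action-window analysis. Fix a Hamiltonian $H$ generating $\phi$ and choose a sequence of nondegenerate Hamiltonians $H_n$ with $H_n \to H$ in the Hofer metric. Each $\phi_{H_n}$ has a finite reduced barcode ${\mc B}_n := {\mc B}_{\mb K}^{\mf b}(\phi_{H_n})$, and by combining Proposition~\ref{prop_quasiequivalence_distance} with Theorem~\ref{thm:bottle-neck-Hofer}, the sequence ${\mc B}_n$ is Cauchy in the reduced bottleneck distance and converges to ${\mc B}_{\mb K}^{\mf b}(\phi)$ in the completion. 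In particular, ${\mc B}_{\mb K}^{\mf b}(\phi)$ is a possibly infinite reduced barcode whose finite bars of length exceeding any fixed $\epsilon > 0$ are finite in number.

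Since $\mathrm{Fix}(\phi)$ is finite, fix pairwise disjoint Darboux balls $U_x$ around each $x \in \mathrm{Fix}(\phi)$. For $n$ sufficiently large, all $1$-periodic orbits of $H_n$ lie in $\bigsqcup_x U_x$ and partition into clusters ${\mc C}_n^x$ indexed by $\mathrm{Fix}(\phi)$. Moreover, for each ${\mf x}\in\mathrm{crit}\,{\mc A}_H$ lifting $x$, every orbit in ${\mc C}_n^x$ lifts to an equivariant $1$-periodic orbit whose action differs from ${\mc A}_H({\mf x})$ by at most $\delta_n$, with $\delta_n \to 0$ as $n \to \infty$. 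By the vortex local Floer theory developed in this section (Propositions~\ref{prop:indep-bulk},~\ref{prop:iso-local} and Corollary~\ref{cor:property-vortex-local}), the local subquotient of $\vcf^{\mf b}(\wh H_n, \wh J; \Lambda_{\mb K}^\Gamma)$ supported on equivariant lifts of orbits in ${\mc C}_n^x$ (taken within an action window of size $O(\delta_n)$) has homology of dimension $d_x := \dim HF^{\mathrm{loc}}(\phi, x; {\mb K})$.

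The decisive point is a length-dichotomy for the bars of ${\mc B}_n$. For $n$ large, any bar whose birth and death lie in the same action window around some ${\mc A}_H({\mf x})$ has length $O(\delta_n)$; every remaining bar is ``long'' in the sense that its endpoints lie in distinct action windows and its length is bounded below by the inter-cluster action gap. Within the cluster at $x$, the local persistence module contributes $|{\mc C}_n^x|$ endpoints to ${\mc B}_n$, of which $d_x$ survive as persistent endpoints (going into infinite bars or into long inter-cluster finite bars of ${\mc B}_n$) and the remaining $|{\mc C}_n^x| - d_x$ are paired into short intra-cluster finite bars of length $O(\delta_n)$. Passing to the limit $n \to \infty$, the short bars shrink to zero and are discarded in the reduced bottleneck limit, while the long bars persist; the surviving total endpoint count is
$\sum_{x \in \mathrm{Fix}(\phi)} d_x = N(\phi; {\mb K})$, which is finite. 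Hence ${\mc B}_{\mb K}^{\mf b}(\phi)$ has finitely many bars whose total number of endpoints equals $N(\phi; {\mb K})$.

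The main obstacle is making the local-to-global splitting precise: one must show that vortex Floer trajectories responsible for bar pairings between orbits in distinct clusters contribute only bars of length bounded below by the inter-cluster action gap, so the filtered persistence module effectively decouples at scale $\delta_n$ into the direct sum of local cluster persistence modules up to error $O(\delta_n)$. This reduces to the identification of the local vortex Floer homology with the classical local Floer homology (Proposition~\ref{prop:iso-local}), together with an action-filtered cobordism argument matching the small-scale singular value decomposition of each cluster subcomplex with the corresponding sub-bars of ${\mc B}_n$, in the spirit of Usher--Zhang's non-Archimedean Gram--Schmidt process.
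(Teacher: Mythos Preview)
Your overall strategy matches the paper's: approximate by nondegenerate Hamiltonians, separate short bars (which vanish in the limit) from long bars (which survive), and identify the surviving endpoint count with $N(\phi;{\mb K})$ via local Floer homology. However, the key step is missing, and your phrasing of it is misleading.

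The phrase ``inter-cluster action gap'' is the problem. Two distinct fixed points $x\neq y$ of $\phi$ can have equivariant lifts ${\mf x},{\mf y}\in{\rm crit}\,{\mc A}_H$ with $|{\mc A}_H({\mf x})-{\mc A}_H({\mf y})|$ arbitrarily small (or zero): the action spectrum is a union of $\Gamma$-cosets, and nothing prevents different cosets from overlapping. So there is no positive ``action gap'' separating clusters, and your length-dichotomy as stated does not follow. What is actually needed is a \emph{crossing-energy} bound: there exists $\delta>0$, depending only on the degenerate data $(\wh H,\wh J)$, such that any (possibly broken) vortex Floer trajectory connecting equivariant orbits lying over \emph{distinct} $1$-periodic orbits downstairs has energy at least $\delta$. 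This is the content of Lemma~\ref{no_short_trajectory} in the paper, proved by a Gromov-type compactness argument for the degenerate Hamiltonian; it does not follow from any naive action-value consideration.

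Once that lemma is in hand, the paper's argument is cleaner than your sketched Gram--Schmidt route: for the approximating $(\wh H_k,\wh J_k)$ one writes $\partial=\partial_{\rm short}+\partial_{\rm long}$ according to whether the trajectory energy is below or above $\delta$, observes $\partial_{\rm short}^2=0$ with homology equal to $\bigoplus_x \vhf^{\rm loc}(\phi,x)$, and reads off the barcode decomposition directly. Your last paragraph correctly flags that the local-to-global splitting is the crux, but the resolution is the compactness-based crossing-energy lemma, not an action-gap or cobordism argument.
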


\begin{cor}
The total bar length is defined for all $\phi\in \ham(X)$ with isolated fixed points.
\end{cor}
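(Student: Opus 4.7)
The plan is to derive the corollary as an immediate consequence of Theorem \ref{thm_degenerate_barcode} together with the definition of total bar length (Definition \ref{defn:bar}(2)). Given $\phi \in \ham(X)$ with isolated fixed points, the reduced barcode ${\mc B}_{\mb K}^{\mf b}(\phi)$ is a priori only defined as an element of the completion of the space of reduced barcodes under the reduced bottleneck distance, obtained by taking any sequence of nondegenerate $\phi_n$ converging to $\phi$ in Hofer metric and passing to the limit of ${\mc B}_{\mb K}^{\mf b}(\phi_n)$; the convergence in the completion is guaranteed by the stability result Theorem \ref{thm:bottle-neck-Hofer} together with Proposition \ref{prop_quasiequivalence_distance}. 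The task is to verify that
\beqn
\tau({\mc B}_{\mb K}^{\mf b}(\phi)) = \sum_{L_i \in ({\mc B}_{\mb K}^{\mf b}(\phi))_{\rm finite}} L_i
\eeqn
is a finite real number.

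The key step is simply to invoke Theorem \ref{thm_degenerate_barcode}, which asserts that ${\mc B}_{\mb K}^{\mf b}(\phi)$ has only finitely many bars, with total number of endpoints equal to $N(\phi; {\mb K})$. Since $\phi$ has isolated fixed points and each local vortex Floer homology $\vhf^{\rm loc}(\phi, x; {\mb K})$ is finite-dimensional (by Proposition \ref{prop:iso-local} and the corresponding classical fact for $HF^{\rm loc}$), the quantity $N(\phi; {\mb K})$ is finite. Hence ${\mc B}_{\mb K}^{\mf b}(\phi)$ consists of finitely many bars, each having a well-defined length in $(0, +\infty]$, and among these the finite-length ones contribute a finite sum of finite positive real numbers. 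This gives $\tau({\mc B}_{\mb K}^{\mf b}(\phi)) < \infty$, proving the corollary.

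The main obstacle is therefore not in the corollary itself but in establishing Theorem \ref{thm_degenerate_barcode}, which requires a genuine argument: one must show that under Hofer-small nondegenerate perturbations, the endpoints of bars in ${\mc B}_{\mb K}^{\mf b}(\phi_n)$ cluster in groups whose cardinality is controlled by the local vortex Floer homology of $\phi$ at each isolated fixed point, preventing the accumulation of short bars that would otherwise produce infinitely many endpoints in the limit. This control is expected to come from the additivity of local contributions to $N(\phi; {\mb K})$ (via Corollary \ref{cor:property-vortex-local}) combined with a standard neck-stretching/localization argument for vortex Floer trajectories near the fixed points of $\phi$. Once Theorem \ref{thm_degenerate_barcode} is in hand, the present corollary requires no further work.
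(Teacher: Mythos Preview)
Your proposal is correct and matches the paper's approach: the paper states the corollary immediately after Theorem \ref{thm_degenerate_barcode} without separate proof, treating it as the obvious consequence that a barcode with finitely many bars has a well-defined (finite) total bar length. Your additional discussion of what goes into proving Theorem \ref{thm_degenerate_barcode} is extraneous to the corollary itself, but you correctly flag this.
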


Now we prove Theorem \ref{thm_degenerate_barcode}. Suppose $\phi \in \ham(X)$ has only isolated fixed points. Let $H$ be a Hamiltonian whose time one map is $\phi$. Let $\wh H$ be any $K$-invariant lift of $H$ and let $\wh J$ be a $K$-invariant $\omega_V$-compatible almost complex structure. Notice that in general $(\wh H, \wh J)$ is not an admissible pair so does not have a vortex Floer complex. However, one can still consider the vortex equation with the data $(\wh H, \wh J)$. 

\begin{lemma}\label{no_short_trajectory}
There exists $\delta>0$ which only depends on $(\wh H, \wh J)$ satisfying the following condition. Let $x(t) \neq y(t)$ be two different 1-periodic orbits of $H$ downstairs. Let ${\mf u}$ be a possibly broken solution to \eqref{vortex_equation_2} which connects $x(t)$ and $y(t)$ (without conditions on capping). Then the energy of ${\mf u}$ is at least $\delta$. 
\end{lemma}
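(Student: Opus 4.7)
The plan is a contradiction-via-compactness argument tailored to the possibly non-admissible pair $(\wh H, \wh J)$ and to the fact that the 1-periodic orbits of $H$ are only assumed isolated, not nondegenerate. First I would reduce to the unbroken case: a broken solution from $x$ to $y$ with $x \neq y$ decomposes as a concatenation ${\mf u}_1 \# \cdots \# {\mf u}_k$ joining 1-periodic orbits $x = z_0, z_1, \ldots, z_k = y$ downstairs, and since $x \neq y$ at least one segment ${\mf u}_i$ connects distinct orbits $z_{i-1} \neq z_i$, with $E({\mf u}) \geq E({\mf u}_i)$. Thus it suffices to produce a positive lower bound $\delta$, depending only on $(\wh H, \wh J)$, on the energy of every unbroken solution of \eqref{vortex_equation_2} whose downstairs asymptotes are two distinct 1-periodic orbits of $H$.

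Since $\phi$ has only isolated fixed points and $X$ is compact, there are only finitely many 1-periodic orbits of $H$, hence only finitely many ordered pairs $(x, y)$ with $x \neq y$; it therefore suffices to fix one such pair and produce $\delta(x, y) > 0$. Suppose, toward contradiction, that a sequence ${\mf u}_n = (u_n, \xi_n, \eta_n)$ of unbroken solutions with downstairs asymptotes $x$ at $s = -\infty$ and $y$ at $s = +\infty$ satisfies $E({\mf u}_n) \to 0$. Because $x \neq y$ and the downstairs trajectory $\pi \circ u_n$ is continuous, after translating each ${\mf u}_n$ in the $s$-variable I may arrange that the slice at $s = 0$ stays at a definite distance from both $x$ and $y$. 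I would then apply the Gromov-type compactness theorem for symplectic vortices in the toric (upstairs symplectically aspherical) setting of \cite{Cieliebak_Gaio_Salamon_2000, Xu_VHF}: a uniform $C^0$ bound on $u_n$ follows from properness of $\mu$ and finite energy, while $E({\mf u}_n) \to 0$ precludes sphere or affine-vortex bubbling, each of which carries a positive minimal energy. After gauge transformation and passing to a subsequence, ${\mf u}_n$ converges in $C^\infty_{\rm loc}$ to a smooth finite-energy solution ${\mf u}_\infty$ with $E({\mf u}_\infty) = 0$.

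But a zero-energy solution is, up to gauge, $s$-independent with image in $\mu^{-1}(0)$, i.e.\ a single equivariant 1-periodic orbit, so its downstairs projection is a single 1-periodic orbit of $H$. Yet its $s = 0$ slice still lies at a definite distance from both $x$ and $y$, whereas its asymptotes at $s = \pm\infty$ must be $x$ and $y$ respectively, a contradiction. The hard part will be the asymptotic analysis at $\pm\infty$ for the limit ${\mf u}_\infty$: because $H$ is only assumed to have isolated but possibly degenerate 1-periodic orbits, the exponential-decay statement of \cite[Theorem 3.1(3)]{Xu_VHF} is unavailable, and one must recover the $\pm\infty$ asymptotes of ${\mf u}_\infty$ from finite energy alone together with the isolatedness of $x$ and $y$—a standard but careful adaptation of the Floer--Salamon--Zehnder asymptotic analysis to the vortex setting.
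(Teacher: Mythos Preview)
Your argument is correct and follows exactly the compactness-by-contradiction route the paper uses; the paper's own proof simply cites \cite[Proposition~5.5]{Xu_VHF} for the admissible case and observes that running the compactness argument and making sense of convergence to a 1-periodic orbit require neither nondegeneracy of $H$ nor admissibility of $(\wh H,\wh J)$. The step you flag as the ``hard part''---recovering the $\pm\infty$ asymptotes of ${\mf u}_\infty$ in the degenerate setting---can in fact be bypassed: instead translate so that the $s=0$ slice lies at distance exactly $\epsilon$ from $x$ with $\epsilon$ smaller than the minimum gap between distinct orbits, so that the zero-energy limit, being a single orbit at distance $\epsilon$ from $x$, would have to equal $x$ yet cannot.
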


\begin{proof}
For admissible $(\wh H, \wh J)$ this statement is proved as \cite[Proposition 5.5]{Xu_VHF} using a compactness argument. Notice that to run the compactness argument and to have the notion of converging to a 1-periodic orbit, one does not really need to require that the Hamiltonian is nondegenerate or the pair $(\wh H, \wh J)$ is admissible.
\end{proof}

\begin{cor}
The lengths of all bars in ${\mc B}_{\mb K}^{\mf b} (\phi)$ are no less than $\delta$.
\end{cor}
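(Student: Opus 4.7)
The plan is to approximate $\phi$ by nondegenerate Hamiltonian diffeomorphisms and propagate the energy gap of Lemma \ref{no_short_trajectory} into a bar-length gap using the Usher--Zhang bottleneck stability.

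I would first choose a sequence of bulk-avoiding nondegenerate Hamiltonians $H_n \to H$ in $C^0$-norm, together with $K$-invariant admissible lifts $\wh H_n \to \wh H$ and admissible almost complex structures $\wh J_n \to \wh J$ in $C^\infty$ on compact subsets of $V$. By Proposition \ref{prop_quasiequivalence_distance} combined with Theorem \ref{thm:bottle-neck-Hofer}, the reduced barcodes ${\mc B}_{\mb K}^{\mf b}(H_n)$ converge to ${\mc B}_{\mb K}^{\mf b}(\phi)$ in the reduced bottleneck distance; this is essentially the way the right-hand barcode is defined for degenerate $\phi$ in Theorem \ref{thm_degenerate_barcode}. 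Since ${\rm Per}(H) = \{x_1, \dots, x_k\}$ is a finite set of isolated orbits, the 1-periodic orbits of $H_n$ partition into clusters $G_{n,j}$, with every orbit in $G_{n,j}$ lying $\epsilon_n$-close to $x_j$ in the $C^0$-topology and $|{\mc A}_{H_n}(y) - {\mc A}_H(x_j)| \le \epsilon_n$, for some $\epsilon_n \to 0$; in particular the ${\mc A}_{H_n}$-spread within each cluster is at most $2\epsilon_n$.

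Next, I would adapt the compactness argument underlying Lemma \ref{no_short_trajectory} to the perturbed data: for $n$ large, any possibly broken finite-energy vortex trajectory for $(\wh H_n, \wh J_n)$ whose endpoints lie in distinct clusters $G_{n,i}, G_{n,j}$ with $i \ne j$ has energy at least $\delta - \epsilon_n'$, where $\epsilon_n' \to 0$. Indeed, otherwise a subsequence of such low-energy trajectories would Gromov--Floer converge to a broken vortex trajectory for $(\wh H, \wh J)$ between two distinct orbits of $H$ of total energy strictly less than $\delta$ (sphere bubbling is excluded because $V$ is contractible, and the energy bound rules out small affine vortex bubble trees), contradicting Lemma \ref{no_short_trajectory}. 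Combined with the vortex energy identity ${\mc A}_{H_n}({\mf x}_-) - {\mc A}_{H_n}({\mf x}_+) = E({\mf u})$, this shows that the action-filtered differential of $\vcf^{\mf b}_\bullet(\wh H_n, \wh J_n; \Lambda_{\mb K}^\Gamma)$ has a two-scale structure: its between-cluster components lower the action by at least $\delta - \epsilon_n'$, while its within-cluster components change the action by at most $2\epsilon_n$.

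Finally, I would constrain the barcode using this two-scale structure. Replacing the original valuation $\ell$ by the collapsed valuation $\bar\ell$ that assigns every generator in $G_{n,j}$ the common value $\lambda_j := {\mc A}_H(x_j)$ changes every generator's filtration by at most $\epsilon_n$, so by Theorem \ref{thm:bottle-neck-Hofer} the resulting Floer-type complex $\bar C_n$ has reduced barcode within bottleneck distance $2\epsilon_n$ of ${\mc B}_{\mb K}^{\mf b}(H_n)$. In $\bar C_n$ within-cluster differentials preserve $\bar\ell$ and hence can only contribute length-zero bars (which are not counted in the reduced barcode), while every positive-length finite bar has length of the form $\lambda_j - \lambda_i \ge \delta - \epsilon_n'$ with $i<j$. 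Consequently every finite bar of ${\mc B}_{\mb K}^{\mf b}(H_n)$ has length either at most $4\epsilon_n$ or at least $\delta - \epsilon_n' - 4\epsilon_n$; letting $n\to\infty$ and invoking Theorem \ref{thm_degenerate_barcode} (finiteness of bars in the limit barcode), every finite bar of ${\mc B}_{\mb K}^{\mf b}(\phi)$ must have length $\ge \delta$. The main obstacle will be the compactness argument in the second step, where one must verify that the bubbling analysis of \cite{Xu_VHF} extends uniformly under the $H_n$-perturbation with a uniform energy threshold; the combinatorial argument via the collapsed filtration in the final step is a formal consequence of the Usher--Zhang bottleneck stability.
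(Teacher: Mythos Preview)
Your overall strategy matches the paper's: approximate by nondegenerate data, transfer the energy gap of Lemma~\ref{no_short_trajectory} to the approximating complexes via compactness, and pass to the limit using barcode stability. The paper's proof is considerably shorter and proceeds by contradiction: assuming a bar of length $\delta'\in(0,\delta)$ in the limiting barcode, barcode continuity produces for large $k$ a bar of length in $[\delta'/2,\delta-\epsilon]$ in the barcode of $H_k$; the paper then asserts that this forces a rigid vortex trajectory for $(\wh H_k,\wh J_k)$ of energy in the same interval, whose Gromov--Floer limit is a broken $(\wh H,\wh J)$-trajectory of energy strictly between $0$ and $\delta$, violating Lemma~\ref{no_short_trajectory}. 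Your route through the explicit two-scale decomposition and the collapsed filtration $\bar\ell$ is a genuinely different technical device, and it makes the passage from trajectory-energy gaps to bar-length gaps more transparent.

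There is, however, a real gap in your step~4. You assert that in $\bar C_n$ ``every positive-length finite bar has length of the form $\lambda_j-\lambda_i\ge\delta-\epsilon_n'$'', but this does not follow from what you have established. Lemma~\ref{no_short_trajectory} (and your step~2) bounds between-cluster \emph{trajectory energies}, not arbitrary pairwise action differences $\lambda_j-\lambda_i$ of orbits of $H$; two distinct isolated orbits of $H$ can perfectly well have $|\lambda_j-\lambda_i|<\delta$ provided no trajectory connects them. Moreover a bar length need not equal the $\bar\ell$-drop of any single matrix entry of $\partial$: already with four generators at levels $4,3,2,0$ and $\partial a=c$, $\partial b=c+d$ one finds a bar of length $4$ although no trajectory has energy $4$. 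What you actually need is the lemma: \emph{if every nonzero entry $n(p,q)$ of $\partial$ satisfies $\bar\ell(p)-\bar\ell(q)\in\{0\}\cup[\gamma,\infty)$, then every positive-length bar has length $\ge\gamma$.} This is true --- if $\partial y=x$ realises a bar of length $L\in(0,\gamma)$ with $s=\bar\ell(x)$, write $y=y^{>s}+y^{\le s}$; the hypothesis forces the level-preserving part $\partial_0 y^{>s}$ to vanish, so $x-\partial y^{\le s}=\partial_1 y^{>s}$ sits at level $<s$, and bounding this (using the same gap hypothesis iteratively) by a chain of level $<s+L$ contradicts optimality of $y$ --- but it is not the tautology your phrasing suggests. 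Once this lemma is supplied your argument goes through, and indeed it clarifies the very step (``by the definition of barcodes by Usher--Zhang, there exists a rigid solution $\ldots$ whose energy is between $\delta'/2$ and $\delta-\epsilon$'') that the paper's proof treats rather tersely.
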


\begin{proof}
Suppose on the contrary ${\mc B}_{\mb K}^{\mf b} (\phi)$ has a finite bar whose length $\delta'$ is positive and smaller than $\delta$. Let $(\wh H_k, \wh J_k)$ be a sequence of regular bulk-avoiding pairs such that $(\wh H_k, \wh J_k)$ converges to $(\wh H, \wh J)$. Consider the reduced barcode associated to $\phi_{H_k}$. By the continuous dependence of barcodes on the Hamiltonian, for $k$ sufficiently large, there exists a finite bar in ${\mc B}(\phi_{H_k})$ whose length is between $\frac{\delta'}{2}$ and $\delta - \epsilon$ for some small $\epsilon$. By the definition of barcodes by Usher--Zhang, there exists a rigid solution ${\mf u}_k$ to \eqref{vortex_equation_2} with data $(\wh H_k, \wh J_k)$ whose energy is between $\frac{\delta'}{2}$ and $\delta-\epsilon$. Via the compactness argument, there is a subsequence, still indexed by $k$, such that ${\mf u}_k$ converges to a possibly broken trajectory with data $(\wh H, \wh J)$ whose total energy is between $\frac{\delta'}{2}$ and $\delta -\epsilon$. This contradicts Lemma \ref{no_short_trajectory}.
\end{proof}

\begin{proof}[Proof of Theorem \ref{thm_degenerate_barcode}]
Choose a sequence of regular bulk-avoiding admissible pair $(\wh H_k, \wh J_k)$ converging to $(\wh H, \wh J)$. Notice that one can choose the sequence such that the number of fixed points of $\phi_{H_k}$ is bounded. Consider the complex $\vcf_\bullet^{\mf b}(\wh H_k, \wh J_k; \Lambda_{\mb K}^\Gamma)$. One can write 
\beqn
\partial = \partial_{\rm short} + \partial_{\rm long}
\eeqn
where $\partial_{\rm short}$ counts rigid trajectories whose energy is smaller than $\delta$ and $\partial_{\rm long}$ counts rigid trajectories whose energy is bigger than $\delta$. Then $\partial_{\rm short}^2 = 0$ and its homology coincides with the direct sum of all local vortex Floer homology of $\phi$. Moreover, one can decompose the reduced barcode of $\phi_{H_k}$ as 
\beqn
{\mc B}_{\mb K}^{\mf b} (\phi_{H_k}) = {\mc B}_{\rm short}( \phi_{H_k}) \sqcup {\mc B}_{\rm long}( \phi_{H_k}) \sqcup {\mc B}_\infty(\phi_{H_k})
\eeqn
where the first component consists of finite bars of lengths at most $\delta$ and the second component consists of other finite bars. As $\partial_{\rm short}^2 = 0$, one can also define a barcode ${\mc B}_{\rm local}(\phi_{H_k})$ by modifying the definition of Usher--Zhang, whose finite part coincides with ${\mc B}_{\rm short}(\phi_{H_k})$. Then by the definition, 
\beqn
\begin{split}
 N(\phi_{H_k}; {\mb K}) = &\ \# {\rm End}({\mc B}_{\rm short}(\phi_{H_k})) + \sum_{x \in {\rm Fix}(\phi)} {\rm dim} \vhf^{\rm loc}(\phi, x)\\
= &\ \# {\rm End}({\mc B}_{\rm short}(\phi_{H_k})) + \# {\rm End}({\mc B}_{\rm long}(\phi_{H_k})) + {\rm dim} \vhf_\bullet^{\mf b} (V).
\end{split}
\eeqn
As in the limit, all short bars disappear and long bars survive with respect to the bottleneck distance, the theorem follows. 
\end{proof}

\section{Boundary depth}\label{sec:beta}

In this section we prove Theorem \ref{thmc}, namely, under the semisimple condition, the boundary depth of the vortex Floer complex of any Hamiltonian diffeomorphism is uniformly bounded from above. 

\subsection{Vortex Floer persistence modules}

Recall that from Section \ref{subsubsec:floer-persistent} we know that any Floer--Novikov complex over a Novikov field $\Lambda_{\mb K}^\Gamma$ induces a persistence module over ${\mb K}$. Given a regular bulk-avoiding admissible pair $(\wh H, \wh J)$ and a bulk deformation
\beqn
{\mf b} = \sum_{j=1}^N \log c_j V_j\ {\rm where}\ c_j \in {\mb Z}[{\bf i}],
\eeqn
the persistence module induced from the complex $\vcf_\bullet^{\mf b}(\wh H, \wh J; \Lambda_{\ov{\mb F}_p}^\Gamma)$ is denoted by 
\beqn
{\bm V}_{(p)} (\wh H, \wh J).
\eeqn
Recall that each filtered Floer--Novikov complex has a finite boundary depth which coincides with the boundary depth of the associated persistence module. We denote the boundary depth of ${\bm V}_{(p)}(\wh H, \wh J)$ by 
\beqn
\beta_{(p)}( \wh H, \wh J)\in [0, +\infty),
\eeqn
which is equal to the length of the longest finite bar in the associated barcode (Proposition \ref{prop:beta-equal}).

\begin{prop}\label{prop_boundary_depth_continuity}
Given any two regular bulk-avoiding admissible pairs $(\wh H_1, \wh J_1)$ and $(\wh H_2, \wh J_2)$, for any prime $p$, one has 
\beq\label{boundary_depth_continuity}
| \beta_{(p)}( \wh H_1, \wh J_1) - \beta_{(p)} (\wh H_2, \wh J_2)| \leq 2 d_{\rm Hofer} (H_1, H_2).
\eeq
In particular, the boundary depth only depends on the descent Hamiltonian downstairs.
\end{prop}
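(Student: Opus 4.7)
The plan is to chain together three results already established earlier in the paper. First, by Proposition \ref{prop_quasiequivalence_distance} applied to the bulk-deformed vortex Floer complexes with coefficients in $\Lambda_{\ov{\mb F}_p}^\Gamma$, the quasiequivalence distance satisfies
\beqn
d_Q\big(\vcf_\bullet^{\mf b}(\wh H_1, \wh J_1; \Lambda_{\ov{\mb F}_p}^\Gamma),\, \vcf_\bullet^{\mf b}(\wh H_2, \wh J_2; \Lambda_{\ov{\mb F}_p}^\Gamma)\big) \leq d_{\rm Hofer}(H_1, H_2).
\eeqn
Here the bulk deformation ${\mf b}$ has integer Gaussian coefficients, so the continuation chain maps and homotopies constructed in the proof of Proposition \ref{prop_quasiequivalence_distance} reduce verbatim to $\Lambda_{\ov{\mb F}_p}^\Gamma$-linear maps with the same action-shift estimates.

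Next, I would invoke the stability result recorded in Subsection \ref{subsubsec:floer-persistent}: the interleaving distance between the associated persistence modules is bounded by the quasiequivalence distance between the Floer--Novikov complexes. Applied here, this gives that ${\bm V}_{(p)}(\wh H_1, \wh J_1)$ and ${\bm V}_{(p)}(\wh H_2, \wh J_2)$ are $\delta$-interleaved for any $\delta > d_{\rm Hofer}(H_1, H_2)$.

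Finally, by Proposition \ref{prop_boundary_depth_stability}, $\delta$-interleaved persistence modules with finite boundary depth satisfy
\beqn
|\beta({\bm V}_{(p)}(\wh H_1, \wh J_1)) - \beta({\bm V}_{(p)}(\wh H_2, \wh J_2))| \leq 2\delta.
\eeqn
Taking the infimum over admissible $\delta$ and using Proposition \ref{prop_boundary_depth_equivalence} to identify $\beta_{(p)}(\wh H_i, \wh J_i)$ with the boundary depth of the associated persistence module yields the claimed bound \eqref{boundary_depth_continuity}. The last sentence of the proposition is then immediate: if $\wh H_1, \wh H_2$ are two regular bulk-avoiding admissible lifts of the \emph{same} Hamiltonian $H$ downstairs, then $d_{\rm Hofer}(H, H) = 0$, and the estimate forces $\beta_{(p)}(\wh H_1, \wh J_1) = \beta_{(p)}(\wh H_2, \wh J_2)$; independence of the almost complex structure $\wh J$ for a fixed $\wh H$ is the special case $H_1 = H_2$, $\wh H_1 = \wh H_2$. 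The main point to verify carefully is that Proposition \ref{prop_quasiequivalence_distance}, stated for $\Lambda_{\mb K}^\Gamma$ with ${\mb K}$ a field of arbitrary characteristic, applies equally to ${\mb K} = \ov{\mb F}_p$; this holds because the continuation-map energy estimate (Proposition \ref{prop_continuation_energy}) and the adiabatic-limit argument used in its proof are purely geometric and only the resulting integer weighted counts need to be reduced mod $p$.
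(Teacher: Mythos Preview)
Your proof is correct and follows essentially the same three-step chain as the paper: bound the quasiequivalence distance by the Hofer distance via Proposition~\ref{prop_quasiequivalence_distance}, pass to interleaving distance of the persistence modules, then apply Proposition~\ref{prop_boundary_depth_stability}. Your citation of the unnumbered proposition in Subsection~\ref{subsubsec:floer-persistent} for the second step is in fact more precise than the paper's own reference to Theorem~\ref{thm:bottle-neck-Hofer} (which is about barcodes rather than interleaving distance), and your explicit mention of Proposition~\ref{prop_boundary_depth_equivalence} and the coefficient-field check are welcome clarifications that the paper leaves implicit.
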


\begin{proof}
This is a consequence of the stability of the persistence module and the boundary depth. Indeed, Proposition \ref{prop_quasiequivalence_distance} implies that the quasi-equivalence distance between $\vcf_\bullet^{\mf b}(\wh H_1, \wh J_1; \Lambda_{\ov{\mb F}_p})$ and $\vcf_\bullet^{\mf b}(\wh H_2, \wh J_2; \Lambda_{\ov{\mb F}_p})$ is at most equal to the Hofer distance $d_{\rm Hofer}(H_1, H_2)$. Using Theorem \ref{thm:bottle-neck-Hofer}, it implies that the interleaving distance between the two associated persistence modules is no greater than the same bound. By Proposition \ref{prop_boundary_depth_stability}, one can conclude \eqref{boundary_depth_continuity}.
\end{proof}

Using typical arguments, one can also show that the boundary depth only depends on the induced (nondegenerate) Hamiltonian isotopy $\tilde \phi_H$ on the toric manifold $X$. Then Proposition \ref{prop_boundary_depth_stability} implies that $\beta_{(p)}$ descends to a Hofer continuous function 
\beqn
\beta_{(p)}: \tham(X) \to [0, +\infty).
\eeqn

Below is the main theorem of this section.

\begin{thm}\label{thm_boundary_depth}
Suppose there exist $p_0>0$ and $C_0>0$ such that for all prime $p \geq p_0$, the algebra $\vhf_\bullet^{\mf b}(V; \Lambda_{\ov{\mb F}_p})$ is a semisimple $\Lambda_{\ov{\mb F}_p}$-algebra with idempotent generators $e_{l, (p)}, \ldots, e_{m, (p)}$ satisfying 
\beqn
\ell_p( e_{l, (p)}) \leq C_0.
\eeqn
Then there exists $C>0$ such that for all prime $p \geq p_0$ and all ${\tilde\phi} \in \tham(X)$, one has 
\beqn
\beta_{(p)} (\tilde \phi) \leq C.
\eeqn
\end{thm}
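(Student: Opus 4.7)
The plan is to adapt Shelukhin's semisimple boundary-depth estimate \cite[Theorem B]{Shelukhin_2022} to the bulk-deformed vortex Floer setting over $\Lambda_{\ov{\mb F}_p}$, with all constants tracked uniformly across primes $p \geq p_0$. The argument is fundamentally algebraic and uses only the formal properties of the filtered theory put in place in Section \ref{section4}: spectral invariants with their triangle inequality, Poincar\'e duality (Proposition \ref{prop413}), the pair-of-pants product with cohomological unit ${\bm 1}_{\mf b}^{\glsm}$, and the idempotent decomposition of $\vhf_\bullet^{\mf b}(V; \Lambda_{\ov{\mb F}_p})$ supplied by the hypothesis.

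Given $\tilde\phi \in \tham(X)$, I first pick a regular bulk-avoiding admissible pair $(\wh H, \wh J)$ representing $\tilde\phi$, which by Proposition \ref{prop_boundary_depth_continuity} suffices to compute $\beta_{(p)}(\tilde\phi)$ up to a uniformly bounded error. For each $p \geq p_0$, I lift the idempotent generators $e_{1,(p)}, \dots, e_{m,(p)}$ to action-efficient chain-level cycles $\tilde e_{l,(p)} \in \vcf_\bullet^{\mf b}(\wh H, \wh J; \Lambda_{\ov{\mb F}_p})$ realizing the spectral invariants $c^{\mf b}(e_{l,(p)}, \tilde\phi)$. The chain-level pair-of-pants product by each $\tilde e_{l,(p)}$ produces filtration-controlled endomorphisms of $\vcf_\bullet^{\mf b}$ whose homology-level classes are the projections onto the idempotent summands $F_{l,p}$. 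A persistence-module splitting argument parallel to \cite[Section 6]{Shelukhin_2022} then expresses the barcode ${\mc B}_{\ov{\mb F}_p}^{\mf b}(\phi)$ as a disjoint union of sub-barcodes indexed by $l = 1, \dots, m$, with the length of any finite bar in the $l$-th sub-barcode bounded by the spectral quantity
\begin{equation*}
\gamma_l(\tilde\phi) := c^{\mf b}(e_{l,(p)}, \tilde\phi) + c^{\mf b}(e_{l,(p)}, \tilde\phi^{-1}) - {\mc A}^{\mf b}(e_{l,(p)}).
\end{equation*}

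The key quantitative step is to bound $\gamma_l(\tilde\phi)$ uniformly. Using the relation $e_{l,(p)} \ast e_{l,(p)} = e_{l,(p)}$ together with the triangle inequality for spectral invariants, and combining with Poincar\'e duality applied to the nondegenerate pairing $\langle e_{l,(p)}, \cdot \rangle^{\mf b}$ on $F_{l,p}$, one bounds $\gamma_l(\tilde\phi)$ above by an expression depending only on the valuation ${\mc A}^{\mf b}(e_{l,(p)}) = \ell_p(e_{l,(p)}) \leq C_0$, independently of $\tilde\phi$ and of $p$. By Proposition \ref{prop:beta-equal}, the boundary depth equals the length of the longest finite bar, yielding
\begin{equation*}
\beta_{(p)}(\tilde\phi) \leq \max_{1 \leq l \leq m} \gamma_l(\tilde\phi) \leq C,
\end{equation*}
for a constant $C$ depending only on $C_0$ and $m$.

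The main obstacle will be the chain-level execution of the idempotent splitting: the semisimple decomposition is given only at the homology level, and upgrading it to a persistence-module splitting with quantitative control on the action filtration requires careful bookkeeping of the homotopies comparing the chain endomorphisms $\tilde e_{k,(p)} \ast \tilde e_{l,(p)} \ast -$ with $\delta_{kl}\, \tilde e_{l,(p)} \ast -$. Shelukhin handles this in ordinary Hamiltonian Floer theory using only formal properties of the pair-of-pants product and spectral invariants, and these properties transfer verbatim to the vortex setting over $\Lambda_{\ov{\mb F}_p}$ by Theorem \ref{thm_vhf_bulk}. The single new input is the uniform bound $\ell_p(e_{l,(p)}) \leq C_0$ supplied by Theorem \ref{thm_semisimple_finite_characteristic}, which is precisely what ensures that the resulting constant $C$ is independent of $p \geq p_0$.
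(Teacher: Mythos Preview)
Your proposal is essentially correct and follows the same approach as the paper, which likewise adapts Shelukhin's argument from \cite{Shelukhin_2022} to the bulk-deformed vortex setting. The paper's execution differs from your sketch in two minor but clarifying ways. First, rather than obtaining a literal barcode splitting, the paper constructs for each idempotent $e_l$ an auxiliary persistence module ${\bm W}_{e_l}(\tilde\phi)_\epsilon$ (the image of quantum multiplication by $e_l$) and shows that ${\bm V}(\tilde\phi)$ is $(C_0+\epsilon)$-interleaved with $\bigoplus_l {\bm W}_{e_l}(\tilde\phi)_\epsilon$; this costs an additive $2C_0$ but avoids the delicate chain-level homotopies you flag as the main obstacle. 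Second, rather than bounding $\beta({\bm W}_{e_l}(\tilde\phi)_\epsilon)$ directly by your $\gamma_l(\tilde\phi)$, the paper proves the \emph{relative} inequality $|\beta({\bm W}_{e_l}(\tilde\phi)_\epsilon) - \beta({\bm W}_{e_l}(\tilde\psi)_\epsilon)| \leq \gamma_{e_l}(\tilde\phi\tilde\psi^{-1})$ with $\gamma_{e_l}(\cdot) := c^{\mf b}(e_l,\cdot) + c^{\mf b}(e_l,(\cdot)^{-1})$, then anchors at a single fixed nondegenerate $\tilde\psi$ and bounds $\gamma_{e_l}$ uniformly by $4C_0$ via the Poincar\'e duality argument you describe. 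Both routes reach the same conclusion; the paper's version is slightly cleaner in that it sidesteps any computation at the identity.
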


\subsection{Action by quantum multiplication}

Recall how we define pair-of-pants product on the vortex Floer homology (see \cite{Wu_Xu}). On the pair-of-pants $\Sigma^{\rm pop}$, equip the two inputs bulk-avoiding admissible pairs $(\wh H_1, \wh J_1)$ and $(\wh H_2, \wh J_2)$ and equip the output another bulk-avoiding admissible pair $(\wh H_3, \wh J_3)$. Extend these data to a domain-dependent Hamiltonian perturbation and a domain-dependent almost complex structure on $\Sigma^{\rm pop}$. By counting solutions to the Hamiltonian perturbed vortex equation on $\Sigma^{\rm pop}$ (with appropriate weights coming from the bulk deformation ${\mf b}$), one can define a chain map
\beqn
\vcf_\bullet^{\mf b}(\wh H_1, \wh J_1; \Lambda_{\mb K}) \otimes \vcf_\bullet^{\mf b} (\wh H_2, \wh J_2; \Lambda_{\mb K}) \to \vcf_\bullet^{\mf b}(\wh H_3, \wh J_3; \Lambda_{\mb K}).
\eeqn

We fix a class $\alpha \in \vhf_\bullet^{\mf b}( V; \Lambda_{\mb K})$. For each $\delta>0$, let $\wh H_\delta$ be a bulk-avoiding admissible Hamiltonian on $V$ with $\| \wh H_\delta \|_{C^2} \leq \delta$. We temporarily omit the dependence on the almost complex structure and the coefficient field from the notations. For notational simplicity, we also omit the bulk ${\mf b}$ in the formulas at the moment. Consider the chain-level map
\beqn
\vcf_\bullet (\wh H_\delta) \otimes \vcf_\bullet (\wh H) \to \vcf_\bullet (\wh H).
\eeqn
By using the energy inequality, one can show that there exists a constant $C > 0$, such that for all $s, \tau \in {\mb R}$, the above multiplication induces a bilinear map
\beq\label{chain_multiplication}
\vhf_\bullet^{\leq \tau} (\wh H_\delta) \otimes \vhf_\bullet^{\leq s}(\wh H) \to \vhf_\bullet^{\leq s + \tau + C \delta} (\wh H).
\eeq
Denote
\beqn
{\mc A}^{\mf b} (\alpha): = c^{\mf b} (\alpha, 0) = \lim_{\delta \to 0} c^{\mf b} (\alpha, H_\delta).
\eeqn
Then one has the linear map for all $\epsilon>0$, one can choose $\delta$ sufficiently small so that by setting $\tau = {\mc A}^{\mf b}(\alpha) + \delta$ and inserting a representative of $\alpha$ in $\vhf_\bullet^{\leq \tau}(\wh H_\delta)$ in \eqref{chain_multiplication}, one obtains a well-defined map
\beqn
m_\epsilon (\alpha): \vhf^{\leq s}_\bullet ( \wh H) \to \vhf_\bullet^{\leq s + {\mc A}^{\mf b} (\alpha) + \epsilon} ( \wh H).
\eeqn
Using the standard argument one can show that this map only depends on the class $\alpha$. By applying any positive shift, the above operation defines a family of operations which are recorded in the following statement.

\begin{prop}
For all $\epsilon>0$, the maps $m_{\epsilon}(\alpha)$ define a morphism of persistence modules
\beqn
m_\epsilon (\alpha): {\bm V}({\tilde\phi}) \to {\bm V}({\tilde \phi})[{\mc A}(\alpha) + \epsilon]\ \forall \epsilon>0
\eeqn
satisfying for all $\epsilon < \epsilon'$, one has 
\beqn
m_{\epsilon'}(\alpha) = {\rm shift}_{\epsilon'-\epsilon} \circ m_{\epsilon}(\alpha).
\eeqn
\end{prop}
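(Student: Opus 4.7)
The plan is to observe that both claims reduce to the strict naturality of the chain-level pair-of-pants product with respect to the energy filtration, together with a bit of care in choosing chain-level representatives witnessing the spectral value of $\alpha$.

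First I would fix, for each $\epsilon>0$, a convenient representative of $\alpha$. Choose $\delta>0$ with $C\delta<\epsilon$, where $C$ is the constant appearing in \eqref{chain_multiplication}. By the very definition ${\mc A}^{\mf b}(\alpha)=c^{\mf b}(\alpha,0)=\lim_{\delta\to 0} c^{\mf b}(\alpha,H_\delta)$, together with Theorem \ref{Usher_thm} (which states that a class is realized by a cycle at its spectral filtration level), one can find a bulk-avoiding admissible $\wh H_\delta$ and a cycle $c_\alpha\in \vcf_\bullet^{\leq \tau}(\wh H_\delta)$ with $\tau:={\mc A}^{\mf b}(\alpha)+\delta$ that represents $\alpha$. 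The chain map
\beqn
\Phi_{c_\alpha}: \vcf_\bullet^{\leq s}(\wh H) \longrightarrow \vcf_\bullet^{\leq s+\tau+C\delta}(\wh H) \subset \vcf_\bullet^{\leq s+{\mc A}^{\mf b}(\alpha)+\epsilon}(\wh H),\quad x\mapsto c_\alpha \ast_{\mf b} x,
\eeqn
is then defined at the chain level and induces $m_\epsilon(\alpha)$ on homology.

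Next I would verify the morphism property. For $s\leq t$, the chain-level operation $\Phi_{c_\alpha}$ fits into the strictly commutative square
\beqn
\xymatrix{ \vcf_\bullet^{\leq s}(\wh H) \ar[r]^-{\Phi_{c_\alpha}} \ar@{^{(}->}[d] & \vcf_\bullet^{\leq s+{\mc A}^{\mf b}(\alpha)+\epsilon}(\wh H) \ar@{^{(}->}[d] \\
\vcf_\bullet^{\leq t}(\wh H) \ar[r]_-{\Phi_{c_\alpha}} & \vcf_\bullet^{\leq t+{\mc A}^{\mf b}(\alpha)+\epsilon}(\wh H)}
\eeqn
simply because $c_\alpha$ is independent of $s,t$ and the vertical arrows are inclusions of subcomplexes. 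Passing to homology gives the required commutation with the structural maps $\iota^{s,t}$ of the persistence module ${\bm V}(\tilde\phi)$, so $m_\epsilon(\alpha)$ is a morphism ${\bm V}(\tilde\phi)\to {\bm V}(\tilde\phi)[{\mc A}^{\mf b}(\alpha)+\epsilon]$.

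Finally I would verify the shift relation. For $\epsilon<\epsilon'$ one can use the \emph{same} representative $c_\alpha$ in the definitions of both $m_\epsilon(\alpha)$ and $m_{\epsilon'}(\alpha)$; the only difference is the filtration level of the target. The two chain-level maps then differ precisely by the inclusion $\vcf_\bullet^{\leq s+{\mc A}^{\mf b}(\alpha)+\epsilon}(\wh H)\hookrightarrow \vcf_\bullet^{\leq s+{\mc A}^{\mf b}(\alpha)+\epsilon'}(\wh H)$, which on homology induces the structural map realizing ${\rm shift}_{\epsilon'-\epsilon}$. If for some reason different auxiliary data $\wh H_\delta$ must be used, a standard continuation argument bounded via Proposition \ref{prop_continuation_energy} produces a chain homotopy landing in the target filtration level, so the two induced maps on homology still agree. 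The only mildly delicate step is keeping track of the filtration-level bookkeeping in this continuation comparison, but since the pair-of-pants product in the vortex setting satisfies the same filtered energy identity as its classical counterpart and the bulk weights $\exp(\sum \log c_j\,[{\mf u}]\cap V_j)$ are locally constant on moduli, no new difficulty arises beyond the usual filtered Floer homotopy.
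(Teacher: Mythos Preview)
Your proof is correct and is precisely the ``standard argument'' the paper alludes to immediately before the proposition; the paper itself states the result without proof, merely remarking that the map depends only on the class $\alpha$ and that ``by applying any positive shift'' one obtains the stated family. Your filling-in of the details---fixing a single chain-level cycle $c_\alpha$ at filtration level $\tau={\mc A}^{\mf b}(\alpha)+\delta$, observing that $\Phi_{c_\alpha}$ strictly commutes with the filtration inclusions, and noting that the shift relation is automatic when the same representative is used for both $\epsilon$ and $\epsilon'$---is exactly what is needed and matches the paper's intent.
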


\begin{defn}
Given $\alpha \in \vhf_\bullet^{\mf b}(V; \Lambda_{\mb K}) \setminus \{0\}$ and $\epsilon>0$, the persistence module ${\bm W}_\alpha(\tilde \phi)_{\epsilon}$ is defined by 
\beqn
W_\alpha(\tilde \phi)_{\epsilon}^s = {\rm Im} \left( m_{\epsilon}(\alpha): \vhf_\bullet^{\leq s - {\mc A}^{\mf b} (\alpha)}(\tilde H) \to \vhf_\bullet^{\leq s +\epsilon} (\tilde H) \right) \subset V(\tilde \phi)^{s+ \epsilon}.
\eeqn
\end{defn}

\begin{rem}
Our notion of persistence modules (Definition \ref{defn_persistence}) is very different from the traditionally used ones (see for example \cite{Polterovich_Shelukhin_Stojisavljevic} where similar operators were firstly defined for Floer persistence modules in the monotone case); notably we allow each piece $V^s$ of a persistence module ${\bm V}$ to be infinite-dimensional. Hence it is not straightforward, though not necessarily difficult, to prove that when $\epsilon \to 0$, the above persistence modules ``converges,'' giving a limiting object similar to the one used in \cite{Shelukhin_2022}. However, we could also carry the $\epsilon$ everywhere as we are doing here.
\end{rem}

\subsection{Proof of Theorem \ref{thm_boundary_depth}}

We prove Theorem \ref{thm_boundary_depth} following the strategy of \cite{Shelukhin_2022}. This theorem is the consequence of Lemma \ref{lemma_interleaving_bound}, Lemma \ref{lemma_split_boundary_depth}, and Lemma \ref{spectral_norm_estimate} below.

We first introduce and simplify the notations. As we work with an individual prime, we drop the dependence on the prime $p$ in most notations. Let $e_1, \ldots, e_m$ be the idempotent generators of $\vhf_\bullet^{\mf b}(V; \Lambda_{\ov{\mb F}_p})$. For each nondegenerate $\tilde \phi \in \tham(X)$, consider the direct sum persistence module
\beqn
{\bm W} (\tilde \phi)_{\epsilon} = \bigoplus_{l=1}^m {\bm W}_{e_l} (\tilde \phi)_{\epsilon}.
\eeqn

\begin{lemma}\label{lemma_interleaving_bound}
The interleaving distance between ${\bm V} (\tilde \phi)$ and ${\bm W} (\tilde \phi)_{\epsilon}$ is at most $C_0 + \epsilon$.
\end{lemma}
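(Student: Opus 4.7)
The plan is to construct explicit interleaving maps of shift $\delta = C_0 + \epsilon$ between ${\bm V}(\tilde\phi)$ and ${\bm W}(\tilde\phi)_\epsilon$, using the idempotent decomposition $\mathbf{1} = \sum_{l=1}^m e_l$ in $\vhf_\bullet^{\mf b}(V; \Lambda_{\ov{\mb F}_p})$ and the bound ${\mc A}^{\mf b}(e_l) \leq \ell_p(e_l) \leq C_0$. The key input is that on the chain level the quantum product satisfies $m_\epsilon(e_l) \circ m_\epsilon(e_k) = m_\epsilon(e_l \ast_{\mf b} e_k)$ up to chain homotopy; in particular, on homology one has $m_\epsilon(e_l) \circ m_\epsilon(e_k) = \delta_{lk}\, m_\epsilon(e_l)$ and $\sum_l m_\epsilon(e_l) = m_\epsilon(\mathbf{1})$, while $m_\epsilon(\mathbf{1})$ is (up to the necessary structural inclusion) the structural map $\iota^{s,s+{\mc A}^{\mf b}(\mathbf{1})+\epsilon}$ of the persistence module, since multiplication by the unit is the identity on $\vhf$.

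First I would define the ``inclusion'' map $g^s: W(\tilde\phi)^s_\epsilon \to V(\tilde\phi)^{s+\delta}$ componentwise: since $W^s_\epsilon = \bigoplus_l \mathrm{Im}\big(m_\epsilon(e_l): V^{s-{\mc A}^{\mf b}(e_l)} \to V^{s+\epsilon}\big) \subset \bigoplus_l V^{s+\epsilon}$, I sum the components in $V^{s+\epsilon}$ and apply the structural map $\iota^{s+\epsilon,s+\delta}$ (which makes sense since $\delta \geq \epsilon$). Next I would define $f^s: V(\tilde\phi)^s \to W(\tilde\phi)^{s+\delta}_\epsilon$ by, in the $l$-th component, using the composition
\[
V^s \xrightarrow{m_\epsilon(e_l)} V^{s+{\mc A}^{\mf b}(e_l)+\epsilon} \xrightarrow{\iota} V^{s+\delta+\epsilon},
\]
and check by naturality that this factors through $W_{e_l}^{s+\delta}_\epsilon = \mathrm{Im}\big(m_\epsilon(e_l): V^{s+\delta-{\mc A}^{\mf b}(e_l)} \to V^{s+\delta+\epsilon}\big)$, using that $\delta - {\mc A}^{\mf b}(e_l) \geq \epsilon \geq 0$.

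Then I would verify the two interleaving squares. For $g^{s+\delta} \circ f^s$, chasing the definitions and using naturality of $m_\epsilon(e_l)$ with respect to the structural maps, the composition on $x \in V^s$ becomes
\[
\iota^{s+\delta+\epsilon,\, s+2\delta}\Bigl(\sum_l \iota^{s+{\mc A}^{\mf b}(e_l)+\epsilon,\, s+\delta+\epsilon} m_\epsilon(e_l)(x)\Bigr),
\]
which by the chain-level identity $\sum_l m_\epsilon(e_l) = m_\epsilon(\mathbf{1})$ and the identification of $m_\epsilon(\mathbf{1})$ with the structural map collapses to $\iota^{s,s+2\delta}(x)$, as required. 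For $f^{s+\delta} \circ g^s$ on $y = (y_l)$ with $y_l = m_\epsilon(e_l)(z_l)$, a similar computation together with the orthogonality $e_l \ast_{\mf b} e_k = \delta_{lk} e_l$ in $\vhf$ shows $m_\epsilon(e_l)\bigl(\sum_k y_k\bigr)$ reduces to $\iota(y_l)$ in the appropriate filtration, yielding $\iota^{s+\epsilon,\,s+2\delta+\epsilon}(y_l)$ in each component, which is precisely the structural map of ${\bm W}(\tilde\phi)_\epsilon$.

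The main subtle point, which I would have to justify carefully, is the translation between chain-level products of cycles representing the $e_l$ (where $e_l \ast_{\mf b} e_k = \delta_{lk}e_l$ only holds up to $\partial$-exact terms) and the resulting identities of persistence module morphisms at homology level. This is standard provided one chooses cycle representatives of $e_l$ with $\ell \leq {\mc A}^{\mf b}(e_l) + \epsilon'$ for arbitrarily small $\epsilon'$ (absorbed into $\epsilon$) and uses a chain homotopy between $m_\epsilon(e_l) \circ m_\epsilon(e_k)$ and $m_\epsilon(e_l \ast_{\mf b} e_k)$ whose action shift is controlled by $\epsilon$ via the energy estimate for the pair-of-pants with two ``small'' Hamiltonian inputs, which is standard in the vortex setting (cf.\ the quantitative continuation estimates behind Proposition \ref{prop_quasiequivalence_distance}).
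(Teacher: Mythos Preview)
Your proof is correct and follows essentially the same approach as the paper: both construct the interleaving maps $f^s$ (splitting via the $m_\epsilon(e_l)$) and $g^s$ (summing the components and including via the structural map), and both rely on $\sum_l e_l = {\bm 1}_{\mf b}^{\glsm}$ together with the idempotent orthogonality $e_l \ast_{\mf b} e_k = \delta_{lk} e_l$ to verify the interleaving identities. You actually spell out the two verifications and flag the one genuinely delicate point---that associativity of the pair-of-pants product and the identity $m_\epsilon({\bm 1}) \simeq \iota$ must hold at the filtered level with action shift controlled by $\epsilon$---whereas the paper simply asserts the check is ``straightforward.''
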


For all ${\tilde\phi}\in \tham(X)$, define
\beqn
\gamma ({\tilde\phi}):= \max_{1\leq l \leq m} \gamma_{e_l} ({\tilde\phi}) := \max_{1 \leq l \leq m} \left( c^{\mf b} ( e_l, {\tilde\phi}) + c^{\mf b}  ( e_l, {\tilde \phi}^{-1}) \right).
\eeqn
Temporarily let ${\rm pr}: \tham(X) \to \ham(X)$ be the canonical projection. Define for $\phi \in \ham(X)$
\beqn
\gamma (\phi):= \inf_{{\rm pr}({\tilde \phi}) = \phi} \gamma ({\tilde\phi}).
\eeqn

The following is an analogue of \cite[Proposition 12]{Shelukhin_2022}.

\begin{lemma}\label{lemma_split_boundary_depth}
The boundary depth of the persistence module ${\bm W}_{e_l} ({\tilde\phi})_{\epsilon}$ is finite. 
Moreover, given nondegenerate ${\tilde \phi}, {\tilde \psi} \in \tham(X)$, for each $l = 1, \ldots, m$, one has 
\beq\label{boundary_depth_variation}
\left| \beta({\bm W}_{e_l} ({\tilde\phi})_\epsilon ) - \beta({\bm W}_{e_l} ({\tilde\psi})_\epsilon ) \right| \leq \gamma_{e_l} ( \tilde\phi \tilde\psi^{-1}).
\eeq
\end{lemma}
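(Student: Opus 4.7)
The plan is to treat the two assertions in turn. For finiteness: by construction $W_{e_l}(\tilde\phi)_\epsilon^s \subseteq V(\tilde\phi)^{s+\epsilon}$, and since $m_\epsilon(e_l)$ is a morphism of persistence modules it commutes with the structural maps, so $\iota^{s,t}_W$ is just the restriction of $\iota^{s+\epsilon,t+\epsilon}_V$. Hence ${\bm W}_{e_l}(\tilde\phi)_\epsilon$ is, up to a fixed shift, a sub-persistence module of ${\bm V}(\tilde\phi)$. Combining Proposition \ref{prop_boundary_depth_equivalence} with the abstract finiteness of boundary depth for Floer--Novikov complexes yields $\beta({\bm V}(\tilde\phi)) < \infty$, and passing to a sub-persistence module never increases boundary depth, so $\beta({\bm W}_{e_l}(\tilde\phi)_\epsilon) \le \beta({\bm V}(\tilde\phi)) < \infty$.

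For the Lipschitz estimate, set $\tilde\theta := \tilde\phi\tilde\psi^{-1}$ and fix $\eta > 0$. The strategy is to construct persistence-module morphisms $\bar\Phi, \bar\Psi$ between ${\bm W}_{e_l}(\tilde\phi)_\epsilon$ and ${\bm W}_{e_l}(\tilde\psi)_\epsilon$ whose composition in either direction is identified with multiplication by $e_l \ast_{\mf b} e_l = e_l$, acting as the identity on each ${\bm W}_{e_l}$-summand. By definition of the spectral invariants I will pick chain-level representatives of $e_l$ in the vortex Floer complexes associated to suitable lifts generating $\tilde\theta$ and $\tilde\theta^{-1}$, of actions at most $c^{\mf b}(e_l,\tilde\theta) + \eta$ and $c^{\mf b}(e_l,\tilde\theta^{-1}) + \eta$, respectively. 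Inserting these at one incoming puncture of a pair-of-pants Riemann surface (as in Section \ref{section4}) will then yield chain maps $\bar\Phi\colon \vcf_\bullet^{\mf b}(\tilde\psi) \to \vcf_\bullet^{\mf b}(\tilde\phi)$ and $\bar\Psi\colon \vcf_\bullet^{\mf b}(\tilde\phi) \to \vcf_\bullet^{\mf b}(\tilde\psi)$ with action shifts bounded by $c^{\mf b}(e_l,\tilde\theta)+\eta$ and $c^{\mf b}(e_l,\tilde\theta^{-1})+\eta$ respectively. Since the image of multiplication by $e_l$ is (a shift of) ${\bm W}_{e_l}$, these restrict to morphisms between the corresponding ${\bm W}_{e_l}$-persistence modules.

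The final step will be a TQFT cobordism argument relating two successive pair-of-pants operations to a single pair-of-pants, which identifies $\bar\Psi \circ \bar\Phi$ with multiplication by $e_l \ast_{\mf b} e_l$ at an appropriate action level; by idempotence this agrees with the identity on ${\bm W}_{e_l}(\tilde\psi)_\epsilon$ up to a persistence shift of order $O(\epsilon) + O(\eta)$, and analogously for $\bar\Phi \circ \bar\Psi$. A direct argument in the spirit of Proposition \ref{prop_boundary_depth_stability}---chase an eventually vanishing element of ${\bm W}_{e_l}(\tilde\phi)_\epsilon$ through $\bar\Phi$, use the finiteness of $\beta({\bm W}_{e_l}(\tilde\psi)_\epsilon)$, and pull back through $\bar\Psi$---then delivers
\beqn
\bigl|\beta({\bm W}_{e_l}(\tilde\phi)_\epsilon) - \beta({\bm W}_{e_l}(\tilde\psi)_\epsilon)\bigr|
\le c^{\mf b}(e_l,\tilde\theta) + c^{\mf b}(e_l,\tilde\theta^{-1}) + O(\eta)
= \gamma_{e_l}(\tilde\theta) + O(\eta),
\eeqn
and sending $\eta \to 0^+$ will prove \eqref{boundary_depth_variation}.

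The hardest step will be the quantitative TQFT comparison: one must arrange the Hamiltonian interpolations and almost complex structures on the four-punctured sphere, together with the neck-stretching degeneration into two pair-of-pants, so that the resulting chain homotopy has action shift \emph{exactly} $c^{\mf b}(e_l,\tilde\theta) + c^{\mf b}(e_l,\tilde\theta^{-1})$ up to $\eta$, without accumulating extra constants from either the homotopy or the $\epsilon$ parameter. This will be controlled via the energy identity of Proposition \ref{prop_continuation_energy} applied to each stage of the degeneration.
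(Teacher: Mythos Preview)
Your approach is essentially the same as the paper's, which in turn explicitly defers to \cite[Proposition 12]{Shelukhin_2022}: pick chain-level representatives of $e_l$ in the vortex Floer complex of the difference Hamiltonian $\tilde\theta = \tilde\phi\tilde\psi^{-1}$ (and its inverse) near the spectral level, use pair-of-pants products to build filtered chain maps between the two complexes, and invoke idempotence $e_l \ast_{\mf b} e_l = e_l$ together with a TQFT degeneration to see that the compositions act as the identity on the ${\bm W}_{e_l}$-summands up to controlled shift; the paper phrases the conclusion as a $\tfrac{1}{2}\gamma_{e_l}(\tilde\phi\tilde\psi^{-1}) + O(\epsilon)$-interleaving and then appeals to Proposition \ref{prop_boundary_depth_stability}.

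One small difference: for finiteness you argue directly that ${\bm W}_{e_l}(\tilde\phi)_\epsilon$ is, after a shift, a sub-persistence module of ${\bm V}(\tilde\phi)$ and that boundary depth is monotone under passing to sub-persistence modules. The paper instead routes through Lemma \ref{lemma_interleaving_bound} (the $C_0$-interleaving between ${\bm V}$ and the direct sum ${\bm W}$) and then observes that each summand of ${\bm W}$ inherits finite boundary depth. Both arguments are valid; yours is marginally more direct.
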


The following is analogous to \cite[Proposition 5.4]{Usher_2011} and \cite[Proposition 13]{Shelukhin_2022}.

\begin{lemma}\label{spectral_norm_estimate}
For all ${\tilde \phi}\in \tham(X)$, one has
\beqn
c^{\mf b} (e_l, {\tilde\phi}) + c^{\mf b} (e_l, {\tilde\phi}^{-1}) \leq 4C_0.
\eeqn
\end{lemma}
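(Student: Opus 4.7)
The argument adapts \cite[Proposition 5.4]{Usher_2011} and \cite[Proposition 13]{Shelukhin_2022} to the bulk-deformed vortex setting. Write $c(\cdot,\cdot)$ for $c^{\mf b}(\cdot,\cdot)$ throughout; the prime $p$ is fixed and plays no role beyond the standing hypothesis $\ell_p(e_l) \leq C_0$.

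The plan is to combine Poincar\'e duality (Proposition \ref{prop413}) with the idempotent relation $e_l \ast_{\mf b} e_l = e_l$ and the bound ${\mc A}^{\mf b}(e_l) = \ell_p(e_l) \leq C_0$. First I would establish the nondegeneracy fact $\langle e_l, e_l\rangle^{\mf b} \neq 0$, needed to apply Poincar\'e duality with $e_l$ as a test class. Writing ${\bm 1}_{\mf b}^{\glsm} = \sum_k e_k$ and using the Frobenius relation $\langle a \ast_{\mf b} b, c\rangle^{\mf b} = \langle a, b \ast_{\mf b} c\rangle^{\mf b}$ (a standard TQFT cobordism identity between two degenerations of the four-punctured sphere, carried out in the vortex setting in the same spirit as Lemma \ref{lem:pairing-product}), orthogonality $e_l \ast_{\mf b} e_k = \delta_{lk} e_l$ forces $\langle e_l, e_k\rangle^{\mf b} = \delta_{lk} \langle e_l, {\bm 1}_{\mf b}^{\glsm}\rangle^{\mf b}$. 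Nondegeneracy of the Poincar\'e pairing then forces $\langle e_l, {\bm 1}_{\mf b}^{\glsm}\rangle^{\mf b} \neq 0$, hence $\langle e_l, e_l\rangle^{\mf b} \neq 0$.

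The easy direction is then immediate: Proposition \ref{prop413}(1) applied with test class $\beta = e_l$ gives $c(e_l, \tilde\phi) \geq -c(e_l, \tilde\phi^{-1})$, so $\gamma_{e_l}(\tilde\phi) \geq 0$. The content of the lemma is therefore the upper bound. For this, Proposition \ref{prop413}(1) applied to $e_l$ and $\tilde\phi^{-1}$ produces, for every $\epsilon > 0$, a class $\beta_\epsilon$ with $\langle e_l, \beta_\epsilon\rangle^{\mf b} \neq 0$ and $c(\beta_\epsilon, \tilde\phi) \leq -c(e_l, \tilde\phi^{-1}) + \epsilon$. Decomposing $\beta_\epsilon = \sum_k \lambda_k e_k$ in the idempotent basis, the nondegeneracy analysis above forces $\lambda_l \neq 0$ and $e_l \ast_{\mf b} \beta_\epsilon = \lambda_l e_l$. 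The triangle inequality applied to $\tilde\phi = \tilde{\rm Id} \cdot \tilde\phi$ then gives $c(\lambda_l e_l, \tilde\phi) = c(e_l \ast_{\mf b} \beta_\epsilon, \tilde\phi) \leq c(e_l, \tilde{\rm Id}) + c(\beta_\epsilon, \tilde\phi) \leq C_0 + c(\beta_\epsilon, \tilde\phi)$, so by homogeneity $c(e_l, \tilde\phi) - {\mf v}(\lambda_l) \leq C_0 - c(e_l, \tilde\phi^{-1}) + \epsilon$. It remains to bound ${\mf v}(\lambda_l)$ from below uniformly. Since $\lambda_l \cdot \langle e_l, {\bm 1}_{\mf b}^{\glsm}\rangle^{\mf b} = \langle e_l, \beta_\epsilon\rangle^{\mf b}$ is a nonzero element of the base ring (hence of valuation $\leq 0$), one obtains ${\mf v}(\lambda_l) \geq -{\mf v}(\langle e_l, {\bm 1}_{\mf b}^{\glsm}\rangle^{\mf b})$, and the right-hand side is in turn bounded below by $-2C_0$ by a parallel mod-$p$ reduction argument to that of Theorem \ref{thm_semisimple_finite_characteristic}(2). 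Letting $\epsilon \to 0$ gives $c(e_l, \tilde\phi) + c(e_l, \tilde\phi^{-1}) \leq 4C_0$.

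The main obstacle is ensuring that all valuation bookkeeping produces a constant independent of $p$. This hinges on obtaining a uniform bound on ${\mf v}(\langle e_l, {\bm 1}_{\mf b}^{\glsm}\rangle^{\mf b})$, parallel to the bound $\ell_p(e_l) \leq C_0$ furnished by Theorem \ref{thm_semisimple_finite_characteristic}(2). Once that uniformity is established, the rest of the argument is a uniform-in-$p$ application of the triangle inequality and Poincar\'e duality.
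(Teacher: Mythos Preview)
Your overall strategy coincides with the paper's: both start from Proposition~\ref{prop413}(1), pass to $e_l\ast_{\mf b}\beta_\epsilon=\lambda_l e_l$ via the idempotent relation, and reduce everything to a valuation estimate on $\lambda_l$ using the triangle inequality. The execution, however, has a genuine gap at the valuation step.

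First, a direction error: from $c(\lambda_l e_l,\tilde\phi)\le C_0+c(\beta_\epsilon,\tilde\phi)$ and homogeneity you get
\[
c(e_l,\tilde\phi)+c(e_l,\tilde\phi^{-1})\;\le\;C_0+{\mf v}(\lambda_l)+\epsilon,
\]
so you need an \emph{upper} bound on ${\mf v}(\lambda_l)$, not a lower one.

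Second, the identity $\lambda_l\cdot\langle e_l,{\bm 1}_{\mf b}^{\glsm}\rangle^{\mf b}=\langle e_l,\beta_\epsilon\rangle^{\mf b}$ does not even type-check in this paper's setup: the Poincar\'e pairing is $R$-valued and only $R$-bilinear (see the definition preceding Lemma~\ref{lem:pairing-product}), not $\Lambda_R$-bilinear, so one cannot pull the Novikov scalar $\lambda_l$ through. For the same reason, the deduction ``$\lambda_l\neq 0$ because $\langle e_l,e_k\rangle^{\mf b}=\delta_{lk}\langle e_l,{\bm 1}\rangle^{\mf b}$'' is not justified as written. No parallel mod-$p$ reduction argument is relevant here.

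The correct replacement is exactly what the paper uses and stays entirely within the stated tools: from $\langle e_l,\beta_\epsilon\rangle^{\mf b}\neq 0$, Lemma~\ref{lem:pairing-product} gives $\langle e_l\ast_{\mf b}\beta_\epsilon,{\bm 1}_{\mf b}^{\glsm}\rangle^{\mf b}=\langle \lambda_l e_l,{\bm 1}_{\mf b}^{\glsm}\rangle^{\mf b}\neq 0$ (in particular $\lambda_l\neq 0$), and then Proposition~\ref{prop413}(2) yields
\[
{\mc A}^{\mf b}(\lambda_l e_l)+{\mc A}^{\mf b}({\bm 1}_{\mf b}^{\glsm})\ge 0,
\qquad\text{i.e.}\qquad
{\mf v}(\lambda_l)\le {\mc A}^{\mf b}(e_l)+{\mc A}^{\mf b}({\bm 1}_{\mf b}^{\glsm})\le 2C_0.
\]
Plugging this into the displayed inequality above and letting $\epsilon\to 0$ gives the bound (in fact $3C_0$, hence certainly $4C_0$). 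The uniformity in $p$ is automatic because the bound is expressed purely in terms of ${\mc A}^{\mf b}(e_l)\le C_0$ and ${\mc A}^{\mf b}({\bm 1}_{\mf b}^{\glsm})\le C_0$.
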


\begin{proof}[Proof of Theorem \ref{thm_boundary_depth}]
As the boundary depth depends continuously on the Hamiltonian isotopy $\tilde \phi$, one only needs to prove the theorem for nondegenerate ones. First, by Lemma \ref{lemma_interleaving_bound}, the interleaving distance between ${\bm V} (\tilde \phi)$ and ${\bm W} (\tilde \phi)_{\epsilon}$ is bounded by $C_0$. Hence by  Proposition \ref{prop_boundary_depth_stability}, it suffices to bound the boundary depth of ${\bm W}(\tilde\phi)_{\epsilon}$. As ${\bm W} (\tilde \phi)_{\epsilon}$ is the direct sum of ${\bm W}_{e_l}(\tilde \phi)_{\epsilon}$, it suffices to bound the boundary depth of ${\bm W}_{e_l}(\tilde \phi)_{\epsilon}$ for all idempotent generators $e_l$. Then applying Lemma \ref{lemma_split_boundary_depth}, one obtains
\beqn
\beta({\bm W}_{e_l}(\tilde \phi)_{\epsilon} ) \leq \gamma_{e_l} (\tilde\phi \tilde\psi^{-1}) + \beta( {\bm W}_{e_l}(\tilde \psi)_{\epsilon}) \leq 4 C_0 + \beta( {\bm W}_{e_l}(\tilde \psi)_{\epsilon})
\eeqn
where $\tilde\psi \in \tham(X)$ is an arbitrary fixed nondegenerate Hamiltonian isotopy on $X$. Then the right hand side is finite and independent of $\tilde\phi$.
\end{proof}

\subsection{Proofs of the technical lemmas}

In this subsection we drop all dependence on the bulk deformation from notations. 

\begin{proof}[Proof of Lemma \ref{lemma_interleaving_bound}]
We construct maps between persistence modules ${\bm f}_{\epsilon}: {\bm V}(\tilde \phi) \to {\bm W}(\tilde \phi)_{\epsilon}[C_0]$ and ${\bm g}_{\epsilon}: {\bm W}(\tilde \phi)_{\epsilon} \to {\bm V}(\tilde \phi)[C_0]$ as follows. For $s \in {\mb R}$, define 
\beqn
f_{\epsilon}^s: V(\tilde\phi)^s \to \bigoplus_{l=1}^m W_{e_l}(\tilde\phi)^{s+C_0}_{\epsilon}
\eeqn
to be the composition of 
\begin{align*}
&\ V(\tilde\phi)^s \to \bigoplus_{l=1}^m W_{e_l}(\tilde \phi)_{\epsilon}^{s + {\mc A}(e_l)},\ &\ \alpha \mapsto (e_1 \ast \alpha, \ldots, e_m \ast \alpha)
\end{align*}
and the natural map
\beqn
\bigoplus_{l=1}^m W_{e_l}(\tilde\phi)_{\epsilon}^{s + {\mc A} (e_l)} \to \bigoplus_{l=1}^m W_{e_l}(\tilde\phi)_{\epsilon}^{s + C_0}.
\eeqn
Define 
\begin{align*}
&\ g_{+\epsilon}^s: \bigoplus_{l=1}^m W_{e_l}(\tilde\phi)_{\epsilon}^s \to V(\tilde \phi)^{s + C_0},\ &\ (\alpha_1, \ldots, \alpha_m) \mapsto \iota^{s + \epsilon, s+C_0} (\alpha_1 + \cdots + \alpha_m).
\end{align*}
It is straightforward to check, using the fact that $e_1 + \cdots  + e_m = {\bm 1}^{\glsm}_{\mf b}$ and that $e_l$ are idempotent generators, that ${\bm f}_{\epsilon}$, ${\bm g}_{\epsilon}$ provide $C_0$-interleaving between ${\bm V}(\tilde \phi)$ and ${\bm W}(\tilde\phi)_{\epsilon}$. 
\end{proof}

\begin{proof}[Proof of Lemma \ref{lemma_split_boundary_depth}]
The detailed proof would be almost identical to the part of the proof of \cite[Proposition 12]{Shelukhin_2022} corresponding to this lemma. Hence we only briefly sketch it. First we show the finiteness of the boundary depth. The boundary depth of ${\bm V}(\tilde\phi)$ is finite because it coincides of the boundary depth of the associated Floer--Novikov complex (see Proposition \ref{prop_boundary_depth_equivalence}). Hence by Lemma \ref{lemma_interleaving_bound} and Proposition \ref{prop_boundary_depth_stability}, ${\bm W}(\tilde\phi)_{\epsilon}$ has finite boundary depth. Therefore each summand ${\bm W}_{e_l}(\tilde\phi)_{\epsilon}$ has finite boundary depth.

Now we prove the inequality \eqref{boundary_depth_variation}. Let $F, G$ be Hamiltonians downstairs with time-1 maps $\tilde \phi$ and $\tilde \psi$ respectively. Choose bulk-avoiding admissible lifts $\wh F$, $\wh G$ upstairs and let $(\wh F, \wh J_F)$, $(\wh G, \wh J_G)$ be regular pairs. Let $\ell_F$ resp. $\ell_G$ be the non-Archimedean valuation on the complex $\vcf_\bullet ( \wh F, \wh J_F)$ resp. $\vcf_\bullet ( \wh G, \wh J_G)$. Let $\Delta_{\wh F, \wh G} = G \# \ov{F}$ be the difference Hamiltonian upstairs with descent difference Hamiltonian $\Delta_{F, G}$ downstairs. Let $\wh J_{F, G}$ be an admissible almost complex structure so that the pair $(\Delta_{\wh F, \wh G}, \wh J_{F, G})$ is regular. One can obtain a pair $(\Delta_{\wh G, \wh F}, \wh J_{G, F})$ with the roles of $\wh F$ and $\wh G$ reversed.

Now fix $\epsilon>0$. Choose a cycle $c_{\wh F, \wh G, \epsilon}\in \vcf_\bullet (\Delta_{\wh F, \wh G}, \wh J_{F, G})$ representing $e_l$ such that 
\beqn
\ell ( c_{\wh F, \wh G, \epsilon}) \leq c (e_l, \Delta_{\wh F, \wh G}) + \epsilon.
\eeqn
We also choose a cycle $c_{\wh G, \wh F, \epsilon}\in \vcf_\bullet ( \Delta_{\wh G, \wh F}, \wh J_{G, F})$ representing $e_l$ with 
\beqn
\ell( c_{\wh G, \wh F, \epsilon}) \leq c  ( e_l, \Delta_{\wh G, \wh F}) + \epsilon.
\eeqn

Now  after choosing perturbation data on the pair-of-pants, one can define a chain map 
\beqn
C_{\wh F, \wh G, \epsilon}: \vcf_\bullet ( \wh F, \wh J_F) \to \vcf_\bullet ( \wh G, \wh J_G),\ x \mapsto c_{\wh F, \wh G, \epsilon} \ast x.
\eeqn
satisfying
\beqn
\ell_G ( C_{\wh F, \wh G, \epsilon}(x)) \leq c  (e_l, \Delta_{\wh F, \wh G}) + \ell_F(x) + 2 \epsilon.
\eeqn
Similarly, by using the cycle $c_{\wh G, \wh F, \epsilon}$ one can also define a chain map 
\beqn
C_{\wh G, \wh F, \epsilon}: \vcf_\bullet ( \wh G, \wh J_G) \to \vcf_\bullet  (\wh F, \wh J_F)
\eeqn
satisfying 
\beqn
\ell_F( C_{\wh G, \wh F, \epsilon}(y)) \leq c (e_l, \Delta_{G, F}) + \ell_G(y) + 2\epsilon.
\eeqn
The lemma will follow from the following claim.

{\bf Claim.} $C_{\wh F, \wh G, \epsilon}$ and $C_{\wh G, \wh F, \epsilon}$ induce a $\frac{1}{2} \gamma_{e_l}(\tilde \phi \tilde \psi^{-1}) + 4\epsilon$-interleaving between ${\bm W}_{e_l}(\wh F)_{\epsilon}$ and ${\bm W}_{e_l}(\wh G)_{\epsilon}$.

The detailed proof would also be almost identical to that of \cite{Shelukhin_2022} except for notations. We omit the details.
\end{proof}

\begin{rem}
	As one can infer from the above proof, the inequality ``$\beta \leq \gamma$" is a consequence of studying filtered continuations maps in terms of taking the pair-of-pants product with the filtered continuation elements, which in particular does not depend on the semi-simplicity assumption.
\end{rem}

\begin{proof}[Proof of Lemma \ref{spectral_norm_estimate}]
Using Proposition \ref{prop413}, Lemma \ref{lem:pairing-product} and the triangle inequality for spectral invariants, one has
\beqn
\begin{split}
- c (e_l, {\tilde\phi}^{-1}) = &\ \inf \Big\{ c (\alpha, {\tilde \phi})\ |\ \langle e_l, \alpha \rangle \neq 0 \Big\}\\ 
\geq &\ - {\mc A} (e_l) + \inf \Big\{ c  ( e_l \ast \alpha, {\tilde \phi})\ |\ \langle e_l \ast \alpha, {\bm 1}^\glsm \rangle \neq 0 \Big\}\\
\geq &\ - {\mc A} (e_l) + \inf \Big\{ c (e_l, {\tilde\phi}) - {\mc A} ( (e_l \ast \alpha)^{-1}) \ |\ \langle e_l \ast \alpha, {\bm 1}^\glsm \rangle \neq 0 \Big\}\\
\geq &\ - {\mc A} (e_l) + c(e_l, {\tilde \phi}) + \inf \Big\{ - {\mc A} ( e_l \ast \alpha) - {\mc A} ( (e_l \ast \alpha)^{-1}) \ |\ e_l \ast \alpha \neq 0 \Big\} \\
&\ + \inf \Big\{ {\mc A} ( e_l \ast \alpha) \ |\ \langle e_l \ast \alpha, {\bm 1}^\glsm \rangle \neq 0 \Big\}
\end{split}
\eeqn
Here the quantum product and the Poincar\'e pairing are both the bulk-deformed versions. Notice that as $e_l$ is an idempotent generator, $e_l \ast \alpha = \lambda(\alpha) e_l$ and $(e_l \ast \alpha)^{-1} = \lambda(\alpha)^{-1} e_l$. Hence
\beqn
{\mc A} ( e_l \ast \alpha) + {\mc A} ( (e_l \ast \alpha)^{-1}) = 2 {\mc A} (e_l) - {\mf v}(\lambda(\alpha)) - {\mf v}(\lambda(\alpha)^{-1}) = 2 {\mc A} (e_l)
\eeqn
which is uniformly bounded. Moreover, by Proposition \ref{prop413}
\beqn
\inf \Big\{ {\mc A} ( e_l \ast \alpha) \ |\ \langle e_l \ast \alpha, {\bm 1}^\glsm \rangle \neq 0 \Big\} \geq - {\mc A} ( {\bm 1}^\glsm).
\eeqn
Therefore
\beqn
c (e_l, {\tilde\phi}) + c (e_l, {\tilde\phi}^{-1}) \leq 3 {\mc A} (e_l ) +  {\mc A} ( {\bm 1}^\glsm).
\eeqn
Lemma \ref{spectral_norm_estimate} follows by using the assumption ${\mc A} (e_l) \leq C_0$ and noticing  
\beqn
{\mc A}({\bm 1}^\glsm) = {\mc A} ( e_1 + \cdots + e_m) \leq \max_{1 \leq l \leq m } {\mc A} (e_l) \leq C_0.\qedhere
\eeqn
\end{proof}

\begin{rem}
	The above argument crucially relies on the semi-simplicity assumption, which allows us to take advantage of the feature that any nonzero element in a field summand of the quantum homology is invertible. Note that such a phenomenon is ultimately due to the abundance of rational curves in toric manifolds.
\end{rem}

\section{${\mb Z}/p$-equivariant vortex Floer theory}\label{sec:equiv}

Following \cite{Seidel_pants, Shelukhin-Zhao}, we develop ${\mb Z}/p$-equivariant Hamiltonian Floer theory in the vortex setting. Using equivariant pair of pants operations, we show that the following analogue of \cite[Theorem D]{Shelukhin_2022} about the total bar length holds in our setting.

\begin{thm}\label{thm:smith}
Let $\phi$ be a Hamiltonian diffeomorphism on the toric symplectic manifold $(X, \omega)$ with lift $\tilde\phi \in \tham (X)$. Then for any odd prime $p$, if ${\rm Fix}(\phi)$ and ${\rm Fix}(\phi^p)$ are finite, then 
    \beq
    \tau_{(p)}^{\mf b}( \tilde\phi^p) \geq p \cdot \tau_{(p)}^{\mf b}( \tilde\phi)
    \eeq
\end{thm}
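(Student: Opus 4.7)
The plan is to adapt Shelukhin's strategy from \cite{Shelukhin_2022} to the GLSM setting, using the equivariant pair-of-pants operations constructed in Section \ref{sec:equiv}. The central idea is that the $p$-fold pair-of-pants operation, viewed equivariantly under the cyclic rotation of inputs, produces a chain-level map that identifies (up to appropriate equivariant completions) the mod-$p$ vortex Floer complex of $\tilde\phi$ with a subcomplex of that of $\tilde\phi^p$. A Smith-type inequality for filtered ${\mb Z}/p$-equivariant complexes then gives the desired estimate on total bar length.

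First I would set up the geometric $p$-fold pair-of-pants surface $\Sigma^{(p)}$ with $p$ inputs and one output, equipped with a ${\mb Z}/p$-symmetry rotating the inputs, together with a ${\mb Z}/p$-invariant area form, Hamiltonian perturbation, and almost complex structure. Choosing the asymptotics so that each input carries a bulk-avoiding admissible pair $(\wh H, \wh J)$ lifting $H$ and the output carries an admissible pair lifting the generator of $\tilde\phi^p$, the moduli of solutions to the perturbed symplectic vortex equation on $\Sigma^{(p)}$ defines the chain-level $p$-fold product. As in Section \ref{section4}, aspherical-target compactness allows one to work with geometric perturbations only; crucially, the ${\mb Z}/p$-symmetry can be preserved by these perturbations outside the fixed locus (where one appeals to the Borel construction as in \cite{Seidel_pants}). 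One then assembles these operations into a chain-level equivariant pair-of-pants map
\[
\mc{P}_{\rm eq}: \vcf_\bullet^{\mf b}(\wh H, \wh J; \Lambda_{\ov{\mb F}_p})^{\otimes p}_{{\mb Z}/p} \longrightarrow \vcf_\bullet^{\mf b}(\wh H^{(p)}, \wh J^{(p)}; \Lambda_{\ov{\mb F}_p})
\]
equipped with a compatible energy filtration: a solution contributing to $\mc{P}_{\rm eq}$ with inputs of actions $a_1, \ldots, a_p$ yields an output of action at most $a_1 + \cdots + a_p + C$ for a uniform $C$ depending only on the perturbation data.

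Next I would invoke the Smith-type inequality in the form used in \cite{Shelukhin-Zhao, Shelukhin_2022}. The key algebraic input is that for the Tate construction on the $p$-fold tensor power with the cyclic rotation action, the localization with respect to the equivariant parameter yields a quasi-isomorphism
\[
\bigl( \vcf_\bullet^{\mf b}(\wh H; \Lambda_{\ov{\mb F}_p})^{\otimes p} \bigr)^{\tau, \mathrm{loc}} \simeq \vcf_\bullet^{\mf b}(\wh H; \Lambda_{\ov{\mb F}_p})^{\mathrm{loc}}[u, u^{-1}]
\]
where $\tau$ denotes the cyclic generator. The equivariant pair-of-pants $\mc{P}_{\rm eq}$, after localization, induces a quasi-isomorphism between the Tate construction and a localized version of $\vcf_\bullet^{\mf b}(\wh H^{(p)}; \Lambda_{\ov{\mb F}_p})$. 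Translating this into the language of persistence modules and (reduced) barcodes via the singular value decomposition machinery of Usher--Zhang, one extracts the total bar length inequality: each finite bar of length $L$ in the barcode of $\tilde\phi$ produces, via the $p$-fold construction, a corresponding bar of length at least $pL$ (up to uniformly bounded error absorbed into the limit) in the barcode of $\tilde\phi^p$. Summing over bars yields $\tau_{(p)}^{\mf b}(\tilde\phi^p) \geq p\cdot \tau_{(p)}^{\mf b}(\tilde\phi)$ when $\phi$ is nondegenerate.

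Finally, to handle the case where $\phi$ or $\phi^p$ is only assumed to have isolated (possibly degenerate) fixed points, I would combine the local vortex Floer theory developed in Section \ref{sec:local-Floer} with the continuity of reduced barcodes: approximate $H$ by a sequence $H_k$ of nondegenerate Hamiltonians whose number of fixed points is uniformly bounded (using that $\mathrm{Fix}(\phi)$ and $\mathrm{Fix}(\phi^p)$ are finite), apply the nondegenerate Smith inequality to each $H_k$ and $H_k^{(p)}$, and pass to the limit using Theorem \ref{thm_degenerate_barcode} together with the fact that the total length of bars whose lengths exceed the injectivity threshold $\delta$ of Lemma \ref{no_short_trajectory} is continuous under Hofer perturbation. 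The main obstacle, and the place where the GLSM framework is indispensable, is the construction of the $\mc{P}_{\rm eq}$ operation with integral (hence well-defined mod $p$) counts together with its compatibility with the action filtration: the absence of sphere bubbles upstairs is what permits geometric ${\mb Z}/p$-equivariant transversality without virtual perturbations, which is precisely what makes the Smith inequality work over $\ov{\mb F}_p$ rather than only rationally.
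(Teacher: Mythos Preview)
Your proposal is correct and follows essentially the same strategy as the paper: construct the ${\mb Z}/p$-equivariant pair-of-pants operation in the vortex setting, invoke the Tate localization (quasi-Frobenius) isomorphism, and deduce the Smith-type inequality following \cite{Shelukhin_2022}; the degenerate case is then handled by approximation. The paper organizes the nondegenerate argument into three explicit steps---quasi-Frobenius gives $p\cdot\tau_{(p)}^{\mf b}(\tilde\phi)$ as the total torsion of the Tate complex, the equivariant localization isomorphism transfers this to $\vhf_{\rm Tate}(\wh H^{(p)})$, and a Borel spectral sequence argument bounds this by $\tau_{(p)}^{\mf b}(\tilde\phi^p)$---which is more precise than your informal ``each bar of length $L$ yields a bar of length $\geq pL$'' picture, but the substance is the same.
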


Here we work over $\Lambda_{\ov{\mb F}_p}$, which is omitted from the notations above. Given the arguments from \cite[Section 6]{Shelukhin_2022}, the only missing ingredient for establishing Theorem \ref{thm:smith} is the package of ${\mb Z}/p$ Borel equivariant vortex Floer theory with bulk deformation. As demonstrated in other parts of the paper, one salient feature of vortex Floer theory is the absence of sphere bubbles due to the contractibility of symplectic vector space, which allows us to achieve transversality in many settings by only perturbing the almost complex structure. Specializing to the Borel equivariant theory, except for the necessity to deal with the symplectic vortex equations and the appearance of Novikov coefficients, our theory is quite similar to the exact setting as from the original reference \cite{Seidel_pants, Shelukhin-Zhao}, at least for bulk-avoiding Hamiltonians, which suffice for our purpose via a limiting argument. Therefore, unless there is anything special in our situation, we will be brief and refer the reader to the original references for full proofs.


In this section the bulk deformation ${\mf b}$ is fixed. All curve counts are weighted by the bulk term. We often drop it in order to shorten the notations.

\subsection{The Borel construction}

We take the following model of $E {\mb Z}/p$: the ambient space is 
\beqn
S^{\infty} := \{ (z_0, z_1, \dots) \ | \ z_k \in {\mb C} \text{ for } k \in {\mb Z}_{\geq 0},  \sum |z_k|^2 = 1, \text{ only finitely many } z_k \text{'s are nonzero}\},
\eeqn
and the group ${\mb Z} / p$ freely acts on $S^{\infty}$ by multiplying each coordinate by $p$-th roots of unity. The quotient space of $S^\infty$ under this ${\mb Z}/p$-action is a model for the classifying space $B {\mb Z}/p$. The group cohomology of ${\mb Z}/p$ over ${\mb F}_p$ is recovered as the (graded-commutative) cohomology ring
\beqn
{\it H}^*(B{\mb Z}/p; {\mb F}_p) =  {\mb F}_p \llbracket u\rrbracket \langle \theta\rangle, \mathrm{deg}(u)=2 \text{ and } \mathrm{deg}(\theta)=1.
\eeqn
For $\epsilon > 0$ sufficiently small, $E {\mb Z}/p$ admits a ${\mb Z}/p$-invariant Morse function
\beqn
\tilde{F}(z) = \sum k|z_k|^2 + \epsilon \sum \mathrm{re}(z_k^p)
\eeqn
obtained by perturbing the standard Morse--Bott function $\sum k|z_k|^2$ on $S^{\infty}$ along the critical submanifolds. The function $\tilde{F}(z)$ has the following properties:
\begin{enumerate}
\item defining the map
\beq\label{borel_translation}
\begin{aligned}
\tilde{\tau}: S^{\infty} &\to S^{\infty} \\
(z_0, z_1, \dots) &\mapsto (0, z_0, z_1, \dots),
\end{aligned}
\eeq
then we have $\tilde{F} \circ \tilde{\tau} = \tilde{F} + 1$;
\item for any $l \in {\mb Z}_{\geq 0}$, the critical points of $\tilde{F}$ obtained from perturbing the critical submanifold $\{ |z_l| = 1 \}$ of $\sum k|z_k|^2$ can be indexed by
\beqn
Z_{2l}^{0}, \dots, Z_{2l}^{p-1}, \text{ and }, Z_{2l+1}^{0}, \dots, Z_{2l+1}^{p-1},
\eeqn
where each $Z_{2l}^{i}$ has Morse index $2l$ and each $Z_{2l+1}^{i}$ has Morse index $2l+1$;

\item the sets $\{Z_{2l}^{0}, \dots, Z_{2l}^{p-1}\}$ and $\{Z_{2l+1}^{0}, \dots, Z_{2l+1}^{p-1} \}$ respectively form an ${\mb Z}/p$-orbit of the ${\mb Z}/p$-action on $S^\infty$;

\item there exists a ${\mb Z}/p$-equivariant Riemannian metric $\tilde{g}$ on $S^\infty$ such that $(\tilde{f}, \tilde{g})$ is Morse--Smale, and the differential on the corresponding Morse cochain complex is
\beqn
\begin{aligned}
Z_{2l}^{m} &\mapsto Z_{2l+1}^{m} - Z_{2l+1}^{m+1}, \\
Z_{2l+1}^{m} &\mapsto Z_{2l+2}^0 + \cdots + Z_{2l+2}^{p-1},
\end{aligned}
\eeqn
where the index $m \in {\mb Z}/p$ is read cyclically.
\end{enumerate}

\subsection{The Tate construction}

Next, we review the Tate construction for cyclic groups of prime order. Let $R$ be a unital commutative ring which is an ${\mb F}_p$-algebra (later $R$ will become $\ov{\mb F}_p$). Suppose $(\hat{C}_\bullet, d_{\hat{C}})$ is a ${\mb Z}_2$-graded chain complex defined over the Novikov ring $\Lambda_{0, R}$. Note that $\Lambda_{0, R}$ is a module over $\Lambda_{0, {\mb F}_p}$. Introduce the graded field
\beqn
{\mc K} = {\mb F}_p [u^{-1}, u\rrbracket, \ \mathrm{deg}(u)=2.
\eeqn
Then the ${\mb Z} / p$-equivariant Tate complex 
\beqn
C_{\mathrm{Tate}}({\mb Z} / p, \hat{C}_\bullet^{\otimes p})
\eeqn
is a module over $\Lambda_{0, \mc K}\langle \theta \rangle$ where $\mathrm{deg}(\theta)=1, \theta^2=0$, explicitly given by
\beqn
\hat{C}_\bullet^{\otimes p} \otimes_{\Lambda_{0, {\mb F}_p}} \Lambda_{0, \mc K} \langle \theta \rangle.
\eeqn
The differential $d_{\mathrm{Tate}}$ is $\Lambda_{0,R} \otimes_{\Lambda_{0, {\mb F}_p}} \Lambda_{0, \mc K}$-linear, such that for $x_0 \otimes \cdots \otimes x_{p-1} \in \hat{C}_\bullet^{\otimes p}$, we have
\beqn
\begin{aligned}
d_{\mathrm{Tate}}(x_0 \otimes \cdots \otimes x_{p-1}) &= d_{\hat{C}}^{\otimes p}(x_0 \otimes \cdots \otimes x_{p-1}) + \theta (id - \zeta) (x_0 \otimes \cdots \otimes x_{p-1}), \\
d_{\mathrm{Tate}}(\theta (x_0 \otimes \cdots \otimes x_{p-1})) &= - \theta d_{\hat{C}}^{\otimes p}(x_0 \otimes \cdots \otimes x_{p-1}) + u (id + \zeta + \cdots + \zeta^{p-1})(x_0 \otimes \cdots \otimes x_{p-1}),
\end{aligned}
\eeqn
in which the $\zeta$ is the automorphism on $\hat{C}_\bullet^{\otimes p}$ defined by
\beqn
x_0 \otimes \cdots \otimes x_{p-1} \mapsto (-1)^{|x_{p-1}|(|x_0| + \cdots + |x_{p-2}|)} x_{p-1} \otimes x_0 \otimes \cdots \otimes x_{p-2}.
\eeqn
In other words, the Tate complex $(C_{\mathrm{Tate}}({\mb Z} / p, \hat{C}_\bullet^{\otimes p}), d_{\mathrm{Tate}})$ is obtained from the ${\mb Z}/p$ group cohomology of the chain complex $(\hat{C}_\bullet^{\otimes p}, d_{\hat{C}}^{\otimes p})$ by inverting the equivariant parameter $u$. Denote the homology of the Tate complex by
\beqn
H_{\rm Tate}({\mb Z}/p, \hat C_\bullet^{\otimes p}).
\eeqn
The following algebraic statement will be used in establishing the localization result proved later.

\begin{lemma}\label{lemma:quasi-frob}\cite[Lemma 21]{Shelukhin_2022}
Denote the homology of $(\hat{C}_\bullet, d_{\hat{C}})$ by $\hat{H}_\bullet$. The $p$-th power map
\beq\label{eqn:p-power}
\begin{aligned}
\hat{C}_\bullet &\to \hat{C}_\bullet^{\otimes p} \\
x &\mapsto x \otimes \cdots \otimes x
\end{aligned}
\eeq
induces an isomorphism of $\Lambda_{0,R} \otimes_{\Lambda_{0, {\mb F}_p}} \Lambda_{0,\mc K}$-modules
\beqn
r_p^* (\hat{H}_\bullet \otimes_{\Lambda_{0,{\mb F}_p}} \Lambda_{0,\mc K}) \to H_{\mathrm{Tate}}({\mb Z} / p, \hat{C}_\bullet^{\otimes p})
\eeqn
where $r_p$ is the operator on $\Lambda_{0,R} \otimes_{\Lambda_{0, {\mb F}_p}} \Lambda_{0, \mc K}$ defined by mapping the Novikov variable $T$ to $T^{1/p}$.
\end{lemma}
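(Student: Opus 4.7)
The assertion is a direct analogue of \cite[Lemma 21]{Shelukhin_2022} and the strategy is essentially algebraic: construct the $p$-th power map explicitly on chains, check it descends to a chain map into the Tate complex, and then verify that it is a quasi-isomorphism by reducing to the acyclic case. The Novikov completions enter only through the compatibility with the rescaling operator $r_p$, which simply tracks that a factor $T^a$ on a single tensor factor becomes $T^{pa}$ after the $p$-fold tensor product.

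Step 1 (Chain map at the level of cycles). Define $\Phi(T^a x) := T^{pa} \cdot (x \otimes \cdots \otimes x)$ and extend $\Lambda_{0,\mc K}$-linearly via the map \eqref{eqn:p-power}. Since $p$ is odd, the Koszul sign under $\zeta$ on $x^{\otimes p}$ equals $(-1)^{(p-1)|x|^2} = 1$ because $p-1$ is even, so $\zeta(x^{\otimes p}) = x^{\otimes p}$; consequently the $\theta(\mathrm{id}-\zeta)$ summand of $d_{\mathrm{Tate}}$ annihilates the image of $\Phi$. For a cycle $x$, the Leibniz expansion of $d_{\hat C}^{\otimes p}(x^{\otimes p})$ vanishes term by term, so $\Phi(x)$ is a Tate cycle. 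The twist by $r_p$ on the source is forced by the $T^a \mapsto T^{pa}$ behavior of $\Phi$ on the Novikov variable.

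Step 2 (Descent to homology). For a boundary $x = d_{\hat C} y$, one must exhibit a Tate primitive of $\Phi(x) = (d_{\hat C} y)^{\otimes p}$. The standard recipe, going back to Smith and adapted in \cite{Seidel_pants,Shelukhin-Zhao,Shelukhin_2022}, is to write
\[
(d_{\hat C} y)^{\otimes p} = d_{\mathrm{Tate}}\!\left( y^{\otimes p} + \theta \cdot \Xi_1 + u \cdot \Xi_2 + \cdots \right)
\]
where the correction chains $\Xi_i$ are explicit symmetric combinations of $y$ and $d_{\hat C} y$, assembled using the identities $(\mathrm{id}+\zeta+\cdots+\zeta^{p-1})(\mathrm{id}-\zeta)=0$ and the characteristic-$p$ vanishing of the cross-terms in $d_{\hat C}^{\otimes p}(y^{\otimes p})$ that are not ${\mb Z}/p$-invariant. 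This makes $\Phi$ into a well-defined map on homology.

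Step 3 (Quasi-isomorphism via reduction to the acyclic case). By the five lemma applied to the mapping cone of an arbitrary chain map, it suffices to show that if $(\hat C_\bullet, d_{\hat C})$ is acyclic then $C_{\mathrm{Tate}}({\mb Z}/p,\hat C_\bullet^{\otimes p})$ is also acyclic. Given a contracting homotopy $h$ of $\hat C_\bullet$, a standard symmetrization of $h^{\otimes p}$ (together with $\theta$- and $u$-corrections mirroring Step 2) produces a $\Lambda_{0,R}\otimes_{\Lambda_{0,{\mb F}_p}}\Lambda_{0,\mc K}$-linear contracting homotopy on the Tate complex. The Novikov topology plays no essential role here since all formal sums respect the downward valuation filtration and converge term by term; in particular the completeness of $\Lambda_{0,R}\otimes_{\Lambda_{0,{\mb F}_p}}\Lambda_{0,\mc K}$ with respect to its $T$-adic filtration guarantees that all the averaging sums are well-defined.

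The main difficulty lies in Step 2, where the explicit combinatorial structure of the Tate primitive must be spelled out to verify compatibility of signs in the ${\mb Z}_2$-graded setting. This computation is carried out in full detail in \cite{Shelukhin_2022}, and it transfers verbatim to our situation because all the manipulations are $\Lambda_{0,R}\otimes_{\Lambda_{0,{\mb F}_p}}\Lambda_{0,\mc K}$-linear and do not interact with the specific geometric origin of $(\hat C_\bullet,d_{\hat C})$.
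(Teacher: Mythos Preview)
The paper does not prove this lemma at all: it is stated with the citation \cite[Lemma 21]{Shelukhin_2022} in the header, and the only comment following it is the remark that this is called the quasi-Frobenius isomorphism in \cite[Section 7]{Shelukhin_2022}. So there is no ``paper's own proof'' to compare against; both you and the paper ultimately defer to Shelukhin.

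That said, your sketch contains a genuine imprecision worth flagging. The $p$-th power map $x\mapsto x^{\otimes p}$ is \emph{not} linear---for $\lambda\in R$ one has $(\lambda x)^{\otimes p}=\lambda^p x^{\otimes p}$, and $(x+y)^{\otimes p}$ differs from $x^{\otimes p}+y^{\otimes p}$ by mixed terms---so the phrase ``extend $\Lambda_{0,\mc K}$-linearly'' in Step~1 is incorrect as written, and the ``five lemma applied to the mapping cone'' in Step~3 does not literally make sense for a map that is neither linear nor a chain map on the nose. The mixed terms in $(x+y)^{\otimes p}$ lie in free ${\mb Z}/p$-orbits and are hence Tate-exact, which is what restores additivity on homology; this is the key algebraic point and it deserves to be made explicit rather than absorbed into Step~2. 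Likewise, the reduction to the acyclic case is better phrased as: over a field one may split $\hat C_\bullet\simeq \hat H_\bullet\oplus(\text{contractible})$ as complexes, and the functor $\hat C_\bullet\mapsto C_{\mathrm{Tate}}({\mb Z}/p,\hat C_\bullet^{\otimes p})$ sends chain homotopy equivalences to chain homotopy equivalences (since $f^{\otimes p}$ is ${\mb Z}/p$-equivariant for any chain map $f$). Your claim that a contracting homotopy $h$ ``symmetrizes'' to a Tate contraction is delicate in characteristic $p$ precisely because one cannot divide by $p$; the homotopy-equivalence argument sidesteps this. With these adjustments the outline is correct and matches the argument in the cited reference.
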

This is referred to as the quasi-Frobenius isomorphism in \cite[Section 7]{Shelukhin_2022}.

\subsection{${\mb Z}/p$-equivariant vortex Floer theory}

Given a 1-periodic Hamiltonian $H_t$ on $X$, its $p$-th iteration is the family $H^{(p)}_t:= H_{pt}$
If $\phi: X \to X$ is the time-1 map of $H$, then the time-1 map of $H^{(p)}$ is the iteration $\phi^p$. Following \cite{Seidel_pants, Shelukhin-Zhao}, we define the ${\mb Z}/p$-equivariant vortex Hamiltonian Floer homology for $H^{(p)}$ by using the family Floer homology coming from the Borel construction. For all the Floer-theoretic constructions involving moduli spaces, we always assume that the Hamiltonians involved in the discussion are nondegenerate.

Recall that the toric divisors of $X$ are given by $D_1, \cdots, D_N$, which are obtained as the symplectic quotient of the coordinate hyperplanes $V_1, \cdots, V_N$ in the symplectic vector space $V$. As in the definition of bulk-deformed Floer homology, we assume that the Hamiltonian $H$ is bulk-avoiding; in particular, for any odd prime $p$, $1$-periodic orbits of $H$ and $H^{(p)}$ are disjoint from $V_1\cup \cdots \cup V_N$. We also assume that both $H$ and $H^{(p)}$ are nondegenerate. Let $\wh H$ be an admissible lift of $H$ and $\wh H^{(p)}$ an admissible lift of $H^{(p)}$ (see Remark \ref{rem_admissible}). 
Let $\wh J^{(p)} = \{ \wh J_t^{(p)}\}_{t\in S^1}$ be a $1$-periodic family of compatible almost complex structures on $V$ such that the pair $(\wh H^{(p)}, \wh J^{(p)})$ is admissible and the Floer chain complex $\vcf_\bullet^{\mf b}( \wh H^{(p)}, \wh J^{(p)}; \Lambda_{0, R})$ has a well-defined bulk-deformed differential $\partial^{(p)}_{\mf b}$, where ${\mf b} = \sum_{i=1}^N \log c_i\  V_i$ is a chosen bulk in which $c_i \in {\mb Z}[{\bf i}]$. Note that we work over $\Lambda_{0,R}$ instead of $\Lambda_R$, which does not introduce any further difficulty due to the fact that $\partial^{(p)}_{\mf b}$ preserves the energy filtration on $\vcf_\bullet^{\mf b} ( \wh H^{(p)}, \wh J^{(p)}; \Lambda_{0, R})$. 

To define equivariant differentials, we include more parameters from the Borel construction. We choose an $S^{\infty} = E {\mb Z}/p$ family of time-dependent compatible \emph{admissible} almost complex structures 
\beqn
\wh J_\infty^{(p)} = \{ \wh J^{(p)}_{t,z} \}_{t \in S^1, z \in S^{\infty}} 
\eeqn
satisfying the following requirements:
\begin{enumerate}
    \item Near each critical point $Z^0_{i}, i \in {\mb Z}_{\geq 0}$ of the Morse function $\tilde{F}(z)$ on $S^{\infty}$, we have $\wh J^{(p)}_{t,z} = \wh J^{(p)}_{t}$;
    
    \item Regard ${\mb Z}/p \subset S^1$. For any $m \in {\mb Z}/p$ and $z \in S^{\infty}$, there holds the equivariance relation
    \beqn
    \wh J^{(p)}_{t - m,z} = \wh J^{(p)}_{t,m \cdot z};
    \eeqn
    
    \item $\wh J^{(p)}_{t,z}$ is invariant under the translation \eqref{borel_translation}. Namely
    \beqn
    \wh J^{(p)}_{t,\tilde{\tau}(z)} = \wh J^{(p)}_{t,z}.
    \eeqn
\end{enumerate}
After making such a choice, we can write down the following version of parametrized vortex Floer equation. Let ${\mf x}_\pm = (x_\pm, \eta_\pm) \in {\rm crit}{\mc A}_{H^{(p)}}$ be a pair of equivariant $1$-periodic orbits of $H^{(p)}$ (which do not depend on the lift $\wh H^{(p)}$). Given $i \in {\mb Z}_{\geq 0}, m \in {\mb Z}/p$ and $\alpha \in \{0,1\}$, the moduli space 
\beqn
{\mc M}^{i.m}_{\alpha}({\mf x}_-, {\mf x}_+)
\eeqn
consists of gauge equivalence classes of pairs of smooth maps (the gauge transformations act on the $(u, \phi, \psi)$-component)
\beqn
(u, \phi, \psi): {\mb R}_s\times S^1_t \to V \times {\mf k} \times {\mf k}, \quad \quad \quad w: {\mb R}_s \to S^{\infty}
\eeqn
which satisfy the equations and asymptotic conditions
\beq\label{eqn:equiv-Floer}
\left\{ \begin{array}{llll} \displaystyle 
\partial_s u + {\mc X}_{\phi}(u) + \wh J_{w(s), t}^{(p)} (\partial_t u + {\mc X}_\psi (u) - X_{\wh H_t^{(p)}}(u))  = 0,\ &\ \partial_s \psi - \partial_t \phi + \mu(u) = 0, \\
\partial_s w(s) - \nabla \tilde{F}(w) = 0, \\
\displaystyle \lim_{s \to -\infty}(u(s,\cdot), \phi(s,\cdot), \psi(s,\cdot), w(s)) = (x_-, 0, \eta_-, Z_{\alpha}^0), \\
\displaystyle \lim_{s \to + \infty}(u(s,\cdot), \phi(s,\cdot), \psi(s,\cdot), w(s)) = (x_+, 0, \eta_+, Z_i^m),
\end{array} \right.
\eeq
modulo the ${\mb R}$-translation action given by
\beqn
(u(s,\cdot), \phi(s,\cdot), \psi(s,\cdot), w(s)) \mapsto (u(s+r,\cdot), \phi(s+r,\cdot), \psi(s+r,\cdot), w(s+r)), \quad \quad \quad r \in {\mb R}.
\eeqn
Because of the absence of sphere bubbles, as the capped orbits impose an upper bound on energy, the moduli space ${\mc M}^{i.m}_{\alpha}({\mf x}_-, {\mf x}_+)$ admits a Uhlenbeck--Gromov--Floer compactification $\ov{{\mc M}}{}^{i.m}_{\alpha}({\mf x}_-, {\mf x}_+)$ by adding equivalence classes of solutions the above coupled equations defined over broken configurations. On the other hand, for a generic choice of $\{ J^{(p)}_{t,z} \}_{t \in S^1, z \in S^{\infty}}$, the moduli space ${\mc M}^{i.m}_{\alpha}({\mf x}_-, {\mf x}_+)$ is transversely cut out, and the dimension of the moduli space satisfies
\beqn
\dim {\mc M}^{i.m}_{\alpha}({\mf x}_-, {\mf x}_+) = {\rm CZ}({\mf x}_-) - {\rm CZ}({\mf x}_+) + i -\alpha - 1.
\eeqn
For a more detailed discussion of these facts, the reader may consult \cite[Section 4]{Seidel_pants}, \cite[Section 6]{Shelukhin-Zhao}, whose arguments apply to our case after using the setup from \cite[Section 6]{Xu_VHF}.

After achieving transversality, for each triple $i \in {\mb Z}_{\geq 0}, m \in {\mb Z}/p$ and $\alpha \in \{0,1\}$, we can define a $\Lambda_{0,R}$-linear map $\partial_{\alpha,{\mf b}}^{i,m}$ on $\vcf_\bullet (\wh H^{(p)}, \wh J^{(p)}; \Lambda_{0, R})$ of the form
\beqn
\partial_{\alpha,{\mf b}}^{i,m}({\mf x}) = \sum_{{\mf y}\atop {\rm CZ}({\mf x}) - {\rm CZ}({\mf y}) + i -\alpha = 1} \left( \sum_{[({\mf u},w)]\in {{\mc M}^{i.m}_{\alpha}({\mf x}, {\mf y})}} \epsilon([({\mf u},w)]) \exp \left( \sum_{i=1}^N \log c_i\ [{\mf u}] \cap V_i \right) \right) {\mf y},
\eeqn
where $\epsilon([({\mf u},w)]) \in \{\pm 1\}$ is the sign of the rigid solution $[{\mf u}]$, which is well-defined due to the existence of coherent orientations, and $[{\mf u}] \cap V_i$ is defined as before, coming from the topological intersection number. We further introduce the notation
\beqn
\partial_{\alpha,{\mf b}}^{i} = \partial_{\alpha,{\mf b}}^{i,0} + \cdots + \partial_{\alpha,{\mf b}}^{i,p-1}.
\eeqn

\begin{defn}
The ${\mb Z}/p$-equivariant ${\mf b}$-deformed vortex Floer chain complex
\beqn
\vcf_\bullet^{{\mb Z}/p}( \wh H^{(p)}, \wh J^{(p)}_\infty; \Lambda_{0, R})
\eeqn
is the ${\mb Z}_2$-graded $\Lambda_{0, R}$-module given by
\beqn
\vcf_\bullet ( \wh H^{(p)}, \wh J^{(p)}; \Lambda_{0, R})\llbracket u \rrbracket \langle \theta \rangle, \mathrm{deg}(u)=2, \mathrm{deg}(\theta)=1
\eeqn
with $\Lambda_{0, R}\llbracket u \rrbracket$-linear differential
\beqn
\begin{aligned}
\partial^{(p)}_{eq, {\mf b}}({\mf x} \otimes 1) &= \sum_{i \geq 0} \partial_{0,{\mf b}}^{2i}({\mf x}) \otimes u^i + \sum_{i \geq 0} \partial_{0,{\mf b}}^{2i+1}({\mf x}) \otimes u^i \theta, \\
\partial^{(p)}_{eq, {\mf b}}({\mf x} \otimes \theta) &= \sum_{i \geq 0} \partial_{1,{\mf b}}^{2i+1}({\mf x}) \otimes u^i \theta + \sum_{i \geq 1} \partial_{1,{\mf b}}^{2i}({\mf x}) \otimes u^i.
\end{aligned}
\eeqn
\end{defn}
The statement that $(\partial^{(p)}_{eq, {\mf b}})^2 = 0$ follows from the signed count of boundaries of the compactified $1$-dimensional moduli spaces $\ov{{\mc M}}{}^{i.m}_{\alpha}({\mf x}_-, {\mf x}_+)$. The differential is well-defined over $\Lambda_R^0$ because we only perturb the almost complex structure to achieve transversality. By continuation map considerations, the resulting homology group
\beqn
\vhf_\bullet^{{\mb Z}/p}( \wh H^{(p)}, \wh J_\infty^{(p)}; \Lambda_{0, R})
\eeqn
is independent of the choice of $\wh J_\infty^{(p)}$, and it is a module over $\Lambda_{0, R} \llbracket u \rrbracket \langle \theta \rangle$. By inverting $u$, we can define
\beqn
\vcf_{\mathrm{Tate}}( \wh H^{(p)}, \wh J_\infty^{(p)}; \Lambda_{0, R}) = \vcf^{{\mb Z}/p} ( \wh H^{(p)}, \wh J^{(p)}_\infty; \Lambda_{0, R})[u^{-1}, u \rrbracket \langle \theta \rangle
\eeqn
for which the differential is the $\Lambda_{0,R}[u^{-1}, u \rrbracket$-linear extension of $\partial^{(p)}_{eq, {\mf b}}$. The homology group is written as
\beqn
\vhf_{\mathrm{Tate}}( \wh H^{(p)}; \Lambda_{0, R}),
\eeqn
which is a module over $\Lambda^{0, R}  \otimes_{\Lambda_{0, {\mb F}_p}} \Lambda^{0, {\mc K}}\langle \theta \rangle$.

Here is some explanation of the definition of the equivariant differential. By definition, the leading order term $\partial_{0, {\mf b}}^0$ 
agrees with the differential $\partial^{(p)}_{\mf b}$ on the complex $\vcf_\bullet (\wh H^{(p)}, \wh J^{(p)}; \Lambda_{0, R})$, so does $\partial^1_{1,{\mf b}}$. The space of equivariant loops $L^{K}(V)$ admits an $S^1$-action by shifting the domain parameter, and the natural inclusion ${\mb Z}/ p \subset S^1$ defines a ${\mb Z}/ p$-action on $L^{K}(V)$ such that the action functional ${\mc A}_{H^{(p)}}$ is invariant under such an action. More concretely, the reparametrization
\beqn
{\mf x}(t) = (x(t), \eta(t)) \mapsto (x(t+\frac1p), \eta(t+\frac1p))
\eeqn
generates a ${\mb Z}/p$-action on the Floer homology 
\beqn
R_{1/p}: \vhf_\bullet^{\mf b} (\wh H^{(p)}; \Lambda_{0, R}) \to \vhf_\bullet^{\mf b} ( \wh H^{(p)}; \Lambda_{0, R})
\eeqn
which is realized by the composition
\beqn
\vcf_\bullet ( \wh H^{(p)}, \wh J^{(p)}; \Lambda_{0, R}) \xrightarrow[\text{pullback}]{\sim} \vcf_\bullet ( \wh H^{(p)}, \wh J^{(p)}_{\cdot - 1/p}; \Lambda_{0, R}) \xrightarrow{\text{continuation}} \vcf_\bullet ( \wh H^{(p)}, \wh J^{(p)}; \Lambda_{0, R})
\eeqn
after passing to homology. Here $\wh J_{\cdot - \frac{1}{p}}^{(p)}$ is the $S^1$-family of almost complex structures whose value at moment $t$ is $\wh J_{t- \frac{1}{p}}^{(p)}$. The action $R_{1/p}$ generates a ${\mb Z}/p$-action on homology; we denote 
\beqn
R_{m/p}:= (R_{1/p})^m: \vhf_\bullet^{\mf b}( \wh H^{(p)}; \Lambda_{0, R}) \to \vhf_\bullet^{\mf b}( \wh H^{(p)}; \Lambda_{0, R}).
\eeqn
Then the map $\partial^1_{0,{\mf b}}$ descends to 
\beqn
id - R_{1/p}: \vhf_\bullet^{\mf b} ( \wh H^{(p)}; \Lambda_{0, R}) \to \vhf_\bullet^{\mf b}( \wh H^{(p)}; \Lambda_{0, R})
\eeqn
on homology, while the map $\partial^2_{1,{\mf b}}$ descends to $
id + R_{1/p} + \cdots + R_{(p-1)/p}$. 
The higher order terms encodes the chain homotopies realizing relations of the form $(R_{1/p})^p = id$ on homology, and higher homotopy relations.

Finally, we observe that the degree filtration on the chain complex $\vcf_\bullet^{{\mb Z}/p}(\wh  H^{(p)}, \wh J_\infty^{(p)}; \Lambda_R^0)$ induced from variables $u$ and $\theta$ is preserved by the equivariant differential $\partial^{(p)}_{eq, {\mf b}}$, and such a filtration is complete and exhaustive. Therefore, we have a spectral sequence converging to $\vhf^{{\mb Z}/p}( \wh H^{(p)}; \Lambda_{0, R})$, whose first page can be identified with $\vhf_\bullet^{\mf b} ( \wh H^{(p)}; \Lambda_{0, R} )\llbracket u \rrbracket \langle \theta \rangle$. The same holds for the Tate version, which inverts the variable $u$.

\subsection{Equivariant $p$-legged pants operations}

In this subsection, we define equivariant ``$p$-legged" pants operations on vortex Hamiltonian Floer theory, which generalizes the constructions from \cite{Seidel_pants, Shelukhin-Zhao} to our situation. We use the homological convention, so the roles of the positive and negative cylindrical ends are the opposite of those from \emph{loc. cit.}. We will continue the setup from the previous subsection, and keep using the notations $H$, $\wh H$, $H^{(p)}$, $\wh H^{(p)}$, and $\wh J_{t, z}^{(p)}$. Furthermore, we choose a $1$-parameter family of compatible almost complex structures $\wh J$ on $V$ such that $(\wh H, \wh J)$ is regular and the Floer chain complex $\vcf_\bullet^{\mf b}(\wh H, \wh J; \Lambda_{0, R})$ is well-defined.

The equivariant pants operation  is defined over a particularly designed domain. Let $\pi: S_{\mc P} \to {\mb R} \times S^1$ be the $p$-fold branched cover with unique branch point $(0,0) \in {\mb R} \times S^1$ whose ramification point has maximal ramification order. Then $S_{\mc P}$ has $p+1$ punctures, regarded as $p$ negative ends and one positive ends. Suppose $S_{\mc P}$ are equipped with cylindrical ends
\beqn
\epsilon_i^{-}: (-\infty, -1] \times S^1 \to S_{\mc P}, \quad \epsilon_i^+: [1,\infty) \times S^1_p \to S_{\mc P}, \quad i \in {\mb Z} / p, 
\eeqn
subject to the conditions
\beqn
\begin{aligned}
\pi(\epsilon_i^{-}(s,t)) &= (s,t), \quad m \cdot (\epsilon_{i}^{-}(s,t)) = \epsilon_{i+m}^{-}(s,t) \\
\pi(\epsilon_i^{+}(s,t)) &= (s,t), \quad m \cdot (\epsilon_i^{+}(s,t)) = \epsilon_{i+m}^{+}(s,t) = \epsilon_i^{+}(s,t+m), \quad \text{ for } m \in {\mb Z}/p,
\end{aligned}
\eeqn
where $S^1_p := {\mb R} / p {\mb Z}$ is the $p$-fold cover of $S^1 = {\mb R} / {\mb Z}$. Note that all $\epsilon_i^+$ are obtained from shifting from each other by certain $m \in {\mb Z}/p$.

The domain-dependent almost complex structure needs to have particular symmetry. We consider 
almost complex structures $\wh J^+_\infty$ on $V$ parametrized by $z \in S^\infty$, $t \in S^1$, and $s \geq -1$, such that:

\begin{enumerate}

\item for $s \geq 2$ and $z \in S^{\infty}$, we have $\wh J^+_{s,t,z} = \wh J^{(p)}_{t,z}$;

\item for any $m \in {\mb Z}/p$ and $z \in S^{\infty}$, there holds the equivariance relation
\beqn
\wh J^+_{s,t - \frac{m}{p},z} = \wh J^+_{s,t,m \cdot z};
\eeqn

\item $\wh J^+_{s,t,z}$ is invariant under the translation:
    \beqn
    \wh J^+_{s,t,\tilde{\tau}(z)} = \wh J^+_{s,t,z}.
    \eeqn
\end{enumerate}
Given such a choice, we further look at almost complex structures $\wh J^{-,i}_\infty$ parametrized by with $s \leq 1$, $t\in S^1$, $z \in S^\infty$, and indexed by $i \in {\mb Z}/p$ (the label of negative ends) satisfying:
\begin{enumerate}

\item for $s \leq -2$ and any $w \in S^{\infty}$, we have $\wh J^{-,i}_{s,t,z} = \wh J_t$ for any $i \in {\mb Z}/p$;

\item for any $i \in {\mb Z}/p$ and $w \in S^{\infty}$, we have the equality $\wh J^{-,i}_{s,t,z} = \wh J^{+}_{s,t,z}$ hold for $-1 \leq s \leq 1$;

\item for any $m,i \in {\mb Z}/p$ and $z \in S^{\infty}$, there holds the equivariance relation
\beqn
\wh J^{-,i}_{s,t - \frac{m}{p},z} = \wh J^{-,i+m}_{s,t,z};
\eeqn

\item $\wh J^{-,i}_{s,t,z}$ is invariant under the translation:
    \beqn
    \wh J^{-,i}_{s,t,\tilde{\tau}(z)} = \wh J^{-,i}_{s,t,z}.
    \eeqn
\end{enumerate}
If $w: {\mb R} \to S^{\infty}$ is a parametrized negative gradient flow line of $\tilde{F}$, the above data specify a family of almost complex structures $\{ \wh J^{\mc P}_{v,w} \}_{v \in S_{\mc P}}$ given by:
\begin{enumerate}
    \item $\wh J^{\mc P}_{v,w} = \pi^* \wh J^{-,i}_{s,t,w(s)} = \pi^*J^{+}_{s,t,w(s)}$ for $v \in \pi^{-1}([-1,1] \times S^1)$ and $\pi(v) = (s,t)$;
    
    \item over the negative ends, $\wh J^{\mc P}_{v,w} = \pi^* \wh J^{-,i}_{s,t,w(s)}$ if $v = \epsilon_i^-(s,t)$ for all $i = 0,1, \dots, p-1$;
    
    \item over the positive end, $\wh J^{\mc P}_{v,w} = \wh J^+_{s,t, m \cdot w(s)}$ for all $m \in {\mb Z}/p$ and $z = \epsilon_m^+(s,t)$.
\end{enumerate}
We need to further introduce a Hamiltonian perturbation term 
\beqn
\wh {\mc H}^{\mc P} \in \Omega^1(S_{\mc P}, C^{\infty}(V)^K)
\eeqn
satisfying the following conditions.
\begin{enumerate}

\item For any $i \in {\mb Z}/p$, we have $\wh {\mc H}^{\mc P} (\epsilon_i^-(s,t)) = \wh H_t \otimes dt$;

\item On the positive end, for any $i \in {\mb Z}/p$, there holds $\wh{\mc H}^{\mc P} (\epsilon_i^+(s,t)) = \wh H_{t+i}^{(p)} \otimes dt$;

\item The ${\mb Z}/p$-equivariance condition $\wh {\mc H}^{\mc P}(m \cdot v) = \wh {\mc H}^{\mc P}(v)$ holds;

\item Let ${\mc H}^{\mc P}\in \Omega^1( S_{\mc P}, C^\infty(X))$ be the induced Hamiltonian perturbation term on $X$. Then the curvature of the Hamiltonian connection ${\mc H}^{\mc P}$ on $S_{\mc P}$ is $0$.
\end{enumerate}

Consider moduli spaces of perturbed vortex equation over the surface $S_{\mc P}$. Let $P \to S_{\mc P}$ be the trivial $K$-bundle. Given ${\mf x}_+ = (x_+, \eta_+) \in {\rm crit}{\mc A}_{H^{(p)}}$ and ${\mf x}_0 = (x_0, \eta_0), \dots, {\mf x}_{p-1} = (x_{p-1}, \eta_{p-1}) \in {\rm crit}{\mc A}_H$, for any $i \in {\mb Z}_{\geq 0}, m \in {\mb Z}/p$ and $\alpha \in \{0,1\}$ we can introduce the moduli space
\beqn
{\mc M}^{i.m}_{{\mc P}, \alpha}({\mf x}_0, \dots, {\mf x}_{p-1}; {\mf x}_+)
\eeqn
which parametrizes gauge equivalence classes of pairs
\beqn
(u, A) \in C^{\infty}(S_{\mc P}, V) \times {\mc A}(P), \quad \quad \quad w: {\mb R}_s \to S^{\infty}
\eeqn
which satisfy the equations and asymptotic conditions
\beqn
\left\{ \begin{array}{llll} \displaystyle 
\ov{\partial}_{A, \wh {\mc H}^{\mc P}, \wh J^{\mc P}_{v,w}} u= 0, \quad \quad \quad *F_A + \mu(u) = 0, \\
w' (s) - \nabla \tilde{F}(w) = 0, \\
\displaystyle \lim_{s \to -\infty}(u(\epsilon_j^-(s,\cdot)), A(\epsilon_j^-(s,\cdot)), w(\epsilon_j^-(s,\cdot))) = (x_j, 0, \eta_j, Z_{\alpha}^0), \quad \forall j \in {\mb Z}/p, \\
\displaystyle \lim_{s \to \infty}(u(\epsilon_0^+(s,\cdot)), A(\epsilon_0^+(s,\cdot)), w(\epsilon_0^+(s,\cdot))) = (x_+, 0, \eta_+, Z_i^m).\end{array} \right.
\eeqn
As expected, the moduli space ${\mc M}^{i.m}_{{\mc P}, \alpha}({\mf x}_0, \dots, {\mf x}_{p-1}; {\mf x}_+)$ admits an Uhlenbeck--Gromov--Floer compactification $\ov{{\mc M}}{}^{i.m}_{{\mc P}, \alpha}({\mf x}_0, \dots, {\mf x}_{p-1}; {\mf x}_+)$, whose detailed description can be found in \cite[Section (4c)]{Seidel_pants}. For a generic choice of almost complex structures and Hamiltonian connections, the moduli space ${\mc M}^{i.m}_{{\mc P}, \alpha}({\mf x}_0, \dots, {\mf x}_{p-1}; {\mf x}_+)$ is cut out transversely, whose dimension is given by
\beqn
{\rm CZ}({\mf x}_0) + \cdots + {\rm CZ}({\mf x}_{p-1}) - {\rm CZ}({\mf x}_+) + i -\alpha.
\eeqn

We define the pants operations using the above moduli spaces. For each $i \in {\mb Z}_{\geq 0}$, $m \in {\mb Z}/p$, and $\alpha \in \{0,1\}$, define
\beqn
\begin{aligned}
{\mc P}^{i,m}_{\alpha, {\mf b}}: \vcf_\bullet ( \wh H, \wh J; \Lambda_{0, R})^{\otimes p} &\to \vcf_\bullet ( \wh H^{(p)}, \wh J^{(p)}; \Lambda_{0, R}) \\
{\mf x}_0 \otimes \cdots \otimes {\mf x}_{p-1} \mapsto \sum_{{\mf x}\atop {\rm CZ}({\mf x}_0) + \cdots + {\rm CZ}({\mf x}_{p-1}) - {\rm CZ}({\mf x}) + i -\alpha = 0} &\left( \sum_{[({\mf u},w)]\in {{\mc M}^{i.m}_{\alpha}({\mf x}_0, \dots, {\mf x}_{p-1}; {\mf x})}} \epsilon([({\mf u},w)]) \exp \left( \sum_{i=1}^N \log c_i\ [{\mf u}] \cap V_i \right) \right) {\mf x},
\end{aligned}
\eeqn
where a discussion on the sign $\epsilon([({\mf u},w)])$ can be found in \cite[Appendix A]{Shelukhin-Zhao}.
\begin{defn}
Let ${\mc P}^i_{\alpha, {\mf b}} = {\mc P}^{i,0}_{\alpha, {\mf b}} + \cdots + {\mc P}^{i,p-1}_{\alpha, {\mf b}}$. The ${\mb Z}/p$-equivariant product is defined to be
\beqn
\begin{aligned}
{\mc P}: \vcf_\bullet (\wh H,\wh J; \Lambda_{0, R})^{\otimes p}\llbracket u \rrbracket \langle \theta \rangle &\to \vcf^{{\mb Z}/p}( \wh H^{(p)}, \wh J^{(p)}_\infty; \Lambda_{0, R} ) \\
{\mc P}(- \otimes 1) &= \sum_{i \geq 0}  {\mc P}^{2i}_{0,{\mf b}} \otimes u^i + \sum_{i \geq 0} {\mc P}^{2i+1}_{0, {\mf b}} \otimes u^i \theta, \\
{\mc P}(- \otimes \theta) &= \sum_{i \geq 1} {\mc P}^{2i}_{1, {\mf b}} \otimes u^i + \sum_{i \geq 0} {\mc P}^{2i+1}_{1,{\mf b}} \otimes u^i \theta.
\end{aligned}
\eeqn
\end{defn}

We can apply the Tate construction to the complex $\vcf_\bullet( \wh H, \wh J; \Lambda_{0, R})$, obtaining the Tate complex
\beqn
(C_{\mathrm{Tate}}({\mb Z} / p, \vcf_\bullet ( \wh H, \wh J; \Lambda_{0, R})^{\otimes p}), \partial_{\mathrm{Tate}}).
\eeqn
By inverting $u$, the ${\mb Z}/p$-equivariant product ${\mc P}$ induces a map
\beqn
C_{\mathrm{Tate}}({\mb Z} / p, \vcf_\bullet ( \wh H, \wh J; \Lambda_{0, R})^{\otimes p}) \to \vcf_{\mathrm{Tate}}( \wh H^{(p)},  \wh J^{(p)}_\infty; \Lambda_{0, R}),
\eeqn
which is also denoted by ${\mc P}$. By \cite[Section 8.3]{Shelukhin-Zhao}, ${\mc P}$ defines a chain map on the Tate chain complexes. In fact, the chain map property holds without inverting $u$, but we will not need such a statement.

We also need to define a ${\mb Z}/p$-equivariant coproduct operation 
\beqn
{\mc C}: \vcf^{{\mb Z}/p}( \wh H^{(p)}, \wh J^{(p)}_\infty; \Lambda_{0, R}) \to \vcf_\bullet (\wh H, \wh J; \Lambda_{0, R})^{\otimes p}\llbracket u \rrbracket \langle \theta \rangle
\eeqn
by counting solutions to Hamiltonian-perturbed vortex equations defined over the Riemann surface $S_{\mc C}$ which is a $p$-fold branched cover of ${\mb R} \times S^1$ with unique branch point $(0,0) \in {\mb R} \times S^1$ whose ramification point has maximal ramification order, coupled with negative gradient trajectory equations of $\tilde{F}: S^{\infty} \to {\mb R}$, but this time $S_{\mc C}$ has a single negative cylindrical end and $p$ positive cylindrical ends. By inverting the $u$-variable, ${\mc C}$ induces a map
\beqn
\vcf_{\mathrm{Tate}} (\wh H^{(p)}, \wh J^{(p)}_\infty; \Lambda_{0, R}) \to C_{\mathrm{Tate}}({\mb Z} / p, \vcf_\bullet (\wh H, \wh J; \Lambda_{0, R})^{\otimes p}).
\eeqn

\subsection{Local theories and equivariant localization}
In this subsection, we sketch the ingredients necessary for the proof of the following statement:

\begin{thm}
The equivariant ${\mb Z}/p$-equivariant product on homology
\beq\label{eqn:pants-iso}
{\mc P}: {\it H}_{\rm{Tate}}({\mb Z}/p, \vcf_\bullet^{\mf b}(\wh H, \wh J; \Lambda_{0, R})^{\otimes p}) \to \vhf_{\mathrm{Tate}} ( \wh H^{(p)}, \wh J_\infty^{(p)}; \Lambda_{0, R})
    \eeq
    is an isomorphism.
\end{thm}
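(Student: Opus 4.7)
The proof proposal follows the strategy of Seidel \cite{Seidel_pants} and Shelukhin--Zhao \cite{Shelukhin-Zhao}, transported to the vortex setting where the absence of sphere bubbles makes the transversality/compactness issues more tractable. First, I would set up an action filtration on both sides: the equivariant action functional ${\mc A}_{H^{(p)}}$ upstairs endows $\vcf^{{\mb Z}/p}(\wh H^{(p)}, \wh J_\infty^{(p)})$ with a filtration, and ${\mc A}_H$ tensored appropriately (using the additivity of the action under the $p$-th power operation) endows the Tate complex $C_{\rm Tate}({\mb Z}/p, \vcf_\bullet^{\mf b}(\wh H, \wh J)^{\otimes p})$ with a compatible one. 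The energy identity for solutions to the perturbed vortex equation on $S_{\mc P}$ (Proposition \ref{prop_continuation_energy} adapted to this domain) shows that ${\mc P}$ respects these filtrations up to a uniform shift controlled by the size of $\wh{\mc H}^{\mc P}$. By a completion argument, it suffices to prove that ${\mc P}$ induces an isomorphism on the associated graded/on each finite action window, which is a statement about finitely-generated $\Lambda_{0,\mc K}\langle\theta\rangle$-modules.

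The next step is a geometric reduction using fixed points of the ${\mb Z}/p$-reparametrization action. Observe that the ${\mb Z}/p$-action on ${\rm crit}\,{\mc A}_{H^{(p)}}$ induced by shifting the time variable by $1/p$ has fixed points exactly at the $p$-fold iterates ${\mf x}^{(p)}$ of orbits ${\mf x}\in{\rm crit}\,{\mc A}_H$, and its remaining orbits are free of size $p$. Accordingly I would split $\vcf_\bullet(\wh H^{(p)},\wh J^{(p)})$ into a direct sum of a subcomplex spanned by iterated orbits and a complementary one spanned by orbits of minimal period exactly $p$; with ${\mb Z}/p$-equivariant choices of perturbation data this splitting is preserved by $\partial^{(p)}_{eq,{\mf b}}$ up to higher-order equivariant corrections. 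A standard Smith-type argument (the Tate cohomology of a free ${\mb Z}/p$-module is zero) shows that the ``new orbit'' summand has vanishing Tate homology. On the Tate side of the source, Lemma \ref{lemma:quasi-frob} (the quasi-Frobenius isomorphism) identifies $H_{\rm Tate}({\mb Z}/p, \vcf_\bullet^{\mf b}(\wh H,\wh J)^{\otimes p})$ with $r_p^*(\vhf_\bullet^{\mf b}(\wh H,\wh J)\otimes_{\Lambda_{0,{\mb F}_p}}\Lambda_{0,\mc K})$. Thus both sides are canonically identified, module-wise, with $\vhf_\bullet^{\mf b}(\wh H,\wh J)$ tensored with the Tate coefficients and twisted by $T\mapsto T^{1/p}$, and I am reduced to verifying that ${\mc P}$ realizes the composition of these two identifications.

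The core analytic step is the local calculation: for each fixed point ${\mf x}\in{\rm crit}\,{\mc A}_H$ and each $\alpha, i, m$, one must evaluate the contribution of the moduli space ${\mc M}^{i,m}_{{\mc P},\alpha}({\mf x},\ldots,{\mf x};{\mf x}^{(p)})$ to the leading term of ${\mc P}$. Here the branched cover $\pi: S_{\mc P}\to{\mb R}\times S^1$ admits a canonical family of ``pull-back'' solutions: given a vortex cylinder $(u,A)$ with asymptotics $({\mf x},{\mf x})$, its pullback $(\pi^* u, \pi^* A)$ together with constant Morse flow line at $Z^0_0$ satisfies \eqref{eqn:equiv-Floer}. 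With ${\mb Z}/p$-equivariant $\wh J^{\mc P}$ at $Z^0_\alpha$ and a small non-equivariant perturbation controlled by a standard ${\mb Z}/p$-equivariant transversality argument (perturbations are allowed in the free locus of the $S^\infty$-parametrization, since the critical points $Z^0_\alpha$ are isolated), one shows via an implicit function theorem that the moduli space is regular and, in the rigid case, consists exactly of the pullbacks of rigid vortex cylinders counted by $\partial_{\mf b}$ on $\vcf_\bullet^{\mf b}(\wh H,\wh J)$. The bulk contribution $[\pi^* u]\cap V_j = p\cdot[u]\cap V_j$ reproduces the $p$-th-power twist in the Novikov parameter, matching the $T\mapsto T^{1/p}$ rescaling on the quasi-Frobenius side.

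The main obstacle, as usual, is the compatibility of equivariant transversality with the strict ${\mb Z}/p$-symmetry required to identify the pull-back solutions. In the original (exact) setting of \cite{Seidel_pants,Shelukhin-Zhao}, this is handled by choosing data that are equivariant at the critical points of $\tilde F$ and only break equivariance along their stable/unstable manifolds, together with a spectral-sequence argument that shows the leading contribution is exactly the quasi-Frobenius map. In the vortex setting the same scheme works because (i) there are no sphere bubbles in $V$, (ii) admissibility of $\wh J^{(p)}$ and $\wh J$ at iterated orbits ${\mf x}^{(p)}$ is controlled by admissibility at ${\mf x}$ (this needs a small argument using Remark \ref{rem_admissible} applied to the $p$-fold cover), and (iii) the bulk weights are multiplicative under the $p$-fold cover. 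Once these ingredients are in place, the local computation identifies ${\mc P}$ with the identity under the two identifications above, on the associated graded of the action filtration; a straightforward spectral sequence comparison then upgrades this to an isomorphism on $H_{\rm Tate}$, completing the proof.
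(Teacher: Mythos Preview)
Your overall architecture (action filtration, free-orbit summand vanishes in Tate, reduction to a local statement at iterated orbits) is sound and matches the paper. The gap is in the ``core analytic step,'' where you try to compute ${\mc P}^{\rm loc}_{\mf x}$ directly by identifying rigid elements of ${\mc M}^{i,m}_{{\mc P},\alpha}({\mf x},\ldots,{\mf x};{\mf x}^{(p)})$ with pullbacks $\pi^*(u,A)$ of vortex cylinders. This does not work as stated, for two linked reasons. First, the constant pullback solution at $Z^0_0$ lives precisely at the ${\mb Z}/p$-fixed locus of the parametrized family, where the data $\wh J^{\mc P}_{v,w}$ is forced to be ${\mb Z}/p$-equivariant; the linearized operator there is equivariant and you cannot regularize it with equivariant perturbations, while a non-equivariant perturbation destroys the pullback description you rely on. Your sentence ``perturbations are allowed in the free locus of the $S^\infty$-parametrization'' is exactly the point: the constant Morse flow line at $Z^0_0$ is \emph{not} in the free locus. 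Second, the Fredholm index of the constant pullback solution on $S_{\mc P}$ is not zero but $n(p-1)$ (the branch point of order $p$ contributes this shift to the Riemann--Roch count), so even formally it is not rigid; this is why the factor $u^{n(p-1)}$ must appear somewhere and your identification of ${\mc P}$ with ``the identity on the associated graded'' cannot be correct as a graded statement.

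The paper circumvents this by never computing ${\mc P}^{\rm loc}_{\mf x}$ directly. Following \cite[Section 10]{Shelukhin-Zhao}, it introduces the equivariant coproduct ${\mc C}$ (defined on the reversed branched cover $S_{\mc C}$) and an auxiliary operation ${\mc Z}^{\rm loc}_{\mf x}$, and proves the chain-level identity ${\mc C}^{\rm loc}_{\mf x}\circ{\mc P}^{\rm loc}_{\mf x} = {\mc Z}^{\rm loc}_{\mf x}$. The point is that ${\mc Z}^{\rm loc}_{\mf x}$ is defined over an \emph{unbranched} domain, so a deformation argument reduces its computation to local Morse theory (where Proposition \ref{prop:iso-local} lets one match vortex and ordinary moduli spaces), yielding ${\mc Z}^{\rm loc}_{\mf x} = (-1)^n u^{n(p-1)}\cdot{\rm id}$. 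Since $u$ is invertible in the Tate ring, this forces ${\mc P}^{\rm loc}_{\mf x}$ to be injective, hence an isomorphism by the rank count you already have from Lemma \ref{lemma:quasi-frob}. The global statement then follows from ${\mc P} = \sum_{\mf x}{\mc P}^{\rm loc}_{\mf x} + O(T^\delta)$ exactly as you outline. In short: keep your filtration/localization skeleton, but replace the direct pullback computation by the ${\mc C}\circ{\mc P}$ trick.
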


First, one needs an equivariant version of the local vortex Floer theory as discussed in Section \ref{sec:local-Floer}. Namely, if $x$ is a $1$-periodic orbit of the Hamiltonian $H$, and if we denote by $x^{(p)}$ the $p$-th iteration of $x$, which is necessarily a $1$-periodic orbit of $H^{(p)}$, then there is a well-defined (bulk-deformed) ${\mb Z}/p$-equivariant local Floer homology group
\beqn
\vhf^{{\mb Z}/p}_{\rm loc}(H^{(p)}, x^{(p)}; \Lambda_{0, R}),
\eeqn
which is defined by looking at contributions to $\partial^{(p)}_{eq, {\mf b}}$ from solutions to \eqref{eqn:equiv-Floer} which are contained in a $C^2$-small neighborhood of the equivariant lift of $x^{(p)}$. By inverting $u$, the Tate version is denoted by $\vhf_{\rm Tate}^{\rm loc}( H^{(p)}, x^{(p)}; \Lambda_{0, R})$. One can similarly define the local version of the ${\mb Z}/p$-equivariant product and coproduct operation localized near $x$
\beqn
\begin{aligned}
{\mc P}_{x}^{\rm loc}: H_{\rm{Tate}}({\mb Z}/p, \vcf^{\rm loc}_\bullet ( H, x; \Lambda_{0,R})^{\otimes p}) \to \vhf_{\rm{Tate}}^{\rm loc}( H^{(p)}, x^{(p)}; \Lambda_{0,R}), \\
{\mc C}_{x}^{\rm loc}: \vhf_{\rm{Tate}}^{\rm loc}( H^{(p)}, x^{(p)}; \Lambda_R^0) \to H_{\rm{Tate}}({\mb Z}/p, \vcf^{\rm loc}(H,x; \Lambda_{0, R} )^{\otimes p}).
\end{aligned}
\eeqn
Note that just as in the non-equivariant setting, equivariant local Floer theories can be defined for isolated but not necessarily nondegenerate iterations.

Second, the main result of \cite[Section 10]{Shelukhin-Zhao} shows that if $x$ and $x^{(p)}$ are nondegenerate, the composition satisfies
\beqn
{\mc C}_{x}^{\rm loc} \circ {\mc P}_{x}^{\rm loc} = (-1)^n u^{n(p-1)} \cdot id,
\eeqn
which is an isomorphism as $u$ is invertible in the ground ring of the Tate version, thus ${\mc P}_{x}^{\rm loc}$ is an isomorphism by rank considerations. The proof in \emph{loc. cit.} goes through an auxiliary operation ${\mc Z}_{x}^{\rm loc}$ satisfying ${\mc C}_{x}^{\rm loc} \circ {\mc P}_{x}^{\rm loc} = {\mc Z}_{x}^{\rm loc}$, which can be defined in our setting following \cite[Definition 10.1]{Shelukhin-Zhao}. On the other hand, the calculation ${\mc Z}_{x}^{\rm loc} = (-1)^n u^{n(p-1)} $ is based on reducing to local Morse theory by a deformation argument, which is also legitimate in the vortex setting. Then by virtue of the proof of Proposition \ref{prop:iso-local}, when the Hamiltonian $H$ is a $C^2$-small Morse function, we can match the upstairs and downstairs moduli spaces, so that the calculation also works in our setting. Note that by Proposition \ref{prop:indep-bulk}, the bulk deformation does not affect the argument.

Finally, we can write
\beqn
{\mc P} = \sum_{x} {\mc P}_{x}^{\rm loc} + O(T^{\delta}), \quad \quad \quad \delta > 0,
\eeqn
in which $x$ ranges over all $1$-periodic orbits of $H$ and $O(T^{\delta})$ denotes an operation with positive valuation. Because the local operations ${\mc P}_{x}^{\rm loc}$ are isomorphisms, and the contributions of the simple (i.e., non-iterated) $1$-periodic orbits of $H^{(p)}$ to the Tate construction are trivial, we see that ${\mc P}$ is an isomorphism over $\Lambda_{0, R} [u^{-1}, u \rrbracket = \Lambda_{0, R} \otimes_{\Lambda_{0, {\mb F}_p}} \Lambda_{0, \mc K}$. This finishes the sketch of the proof of the equivariant localization isomorphism.

\subsection{Growth of total bar length}

After demonstrating the existence of the equivariant Hamiltonian Floer package in the vortex setting, we are in the right position to prove the inequality of total bar length.

\begin{proof}[Proof of Theorem \ref{thm:smith}]
With equivariant Hamiltonian Floer theory and local Floer homology in our hands, the arguments from \cite[Section 7]{Shelukhin_2022} can easily be adapted to the current situation without much modification. Consequently, we will only provide a sketch of the proof, and refer the reader to \emph{loc. cit.} for complete arguments.

Firstly, we recall the following alternative characterization of the total bar length. Given a field ${\mb K}$, if we define the vortex Hamiltonian Floer homology over the Novikov ring
\beqn
\Lambda_{0, \mb K} := \Big\{ \sum_{i=1}^\infty a_i T^{g_i}\ |\ g_i \in {\mb R}_{\geq 0},\ a_i \in {\mb K},\  \lim_{i \to \infty} g_i = +\infty \Big\}
\eeqn
instead of its field of fractions $\Lambda_{\mb K}$, then finite bars are reflected as nontrivial \emph{torsion components}, which is the language used in \cite{FOOO-torsion}. If we denote the direct sum of the torsion components of the Floer homology $\vhf_\bullet^{\mf b}(\tilde\phi; \Lambda_{0, \mb K})$ by
\beq\label{eqn:torsion}
\Lambda_{0, \mb K} / T^{g_1} \Lambda_{0,\mb K} \oplus \cdots \oplus \Lambda_{0, \mb K} / T^{g_s} \Lambda_{0, \mb K}, \quad \quad \quad \text{ with }\ g_1 \geq \cdots \geq g_s \geq 0,
\eeq
then we can write the total bar length of $\tilde\phi$ over $\Lambda_{\mb K}$ as
\beqn
\tau_{(p)}^{\mf b}( \tilde\phi, \Lambda_{\mb K}) = g_1 + \cdots + g_s,
\eeqn
and the boundary depth is given by $g_1$, c.f. \cite[Section 4.4.4]{Shelukhin_2022}. Note that these torsion exponents correspond to \emph{verbose bar-length spectrum} in the sense of \cite{Usher_Zhang_2016}, which means that $g_i$ can be $0$, due to that fact that the Floer differential in our discussion may not strictly decrease the energy.


The claim is more easily to be proved when $\tilde\phi^p$ is nondegenerate and bulk-avoiding. We choose a generating Hamiltonian $H$ for the Hamiltonian isotopy $\tilde\phi$. The comparison between $\tau_{(p)}^{\mf b}( \tilde\phi^p)$ and $\tau_{(p)}^{\mf b}( \tilde\phi)$ is established in the following three steps.
\begin{enumerate}
    \item Using the quasi-Frobenius isomorphism from Lemma \ref{lemma:quasi-frob}, one can show that the total bar length of
    \beqn
    (C_{\mathrm{Tate}}({\mb Z} / p, \vcf_\bullet (\wh H, \wh J; \Lambda_{0, R})^{\otimes p}), \partial_{\mathrm{Tate}}),
    \eeqn
    i.e., the sum of torsion exponents of the homology group ${\it H}_{\rm{Tate}}({\mb Z}/p, \vcf_\bullet (\wh H, \wh J; \Lambda_{0, R})^{\otimes p})$, is equal to $p$ times the quantity $\tau_{(p)}^{\mf b}( \tilde\phi)$.
    
    \item By appealing to the isomorphism in Equation \ref{eqn:pants-iso} and an application of the homological perturbation lemma, it is shown in \cite[Section 7.3.1]{Shelukhin_2022} the total boundary depth, i.e., the sum of torsion exponents of ${\it H}_{\rm{Tate}}({\mb Z}/p, \vcf_\bullet (\wh H, \wh J; \Lambda_{0, R})^{\otimes p})$ agrees with that of $\vhf_{\mathrm{Tate}}( \wh H^{(p)}; \Lambda_{0, R})$.
    
    \item Using \cite[Proposition 17, Lemma 18]{Shelukhin_2022}, one can prove that the total boundary depth of $\vhf_{\mathrm{Tate}}( \wh H^{(p)}; \Lambda_{0, R})$ is bounded from above by $\tau_{(p)}^{\mf b}( \tilde\phi^p)$, which is a reminiscent of the Borel spectral sequence in the context of filtered Floer theory.
\end{enumerate}

Finally, to establish Theorem \ref{thm:smith} for $\tilde\phi$ and $\tilde\phi^p$ which are not necessarily bulk-avoiding and may admit isolated degenerate fixed points, an approximation argument and multiple applications of the homological perturbation lemma as in \cite[Section 7.4]{Shelukhin_2022} suffice.
\end{proof}

\begin{rem}
To prove Theorem \ref{thm:smith} for degenerate Hamiltonian diffeomorphisms with isolated fixed points assuming the corresponding result for nondegenerate ones, one can alternatively use the following more elementary argument. Suppose $H$ is not necessarily bulk-avoiding and may have isolated but degenerate fixed points and periodic points of period $p$. Let $H_i$ be a sequence of nondegenerate and bulk-avoiding Hamiltonians on $X$ which converges to $H$ under $C^2$-norm. We can choose the perturbations $H_i$ to be supported in an arbitrarily small neighborhood of the one period orbits of $H$ and $H^{(p)}$, over which the perturbation is modeled on $\epsilon_i f$ where $f$ is a Morse function. Then the above implies that 
\beqn
\tau_{(p)}^{\mf b}(H_i^{(p)}) \geq p \tau_{(p)}^{\mf b}( H_i).
\eeqn
Notice that the reduced barcode of $H_i$ resp. $H_i^{(p)}$ is a Cauchy sequence with respect to the bottleneck distance with a uniformly bounded number of bars. Our choice of perturbation also guarantees a uniform upper bound for the short bars. Moreover, we know that the barcode of the limit $H$ resp. $H^{(p)}$ is finite. Hence the total bar length of $H_i$ resp. $H_i^{(p)}$ converges to that of $H$ resp. $H^{(p)}$, which implies the desired result.
\end{rem}

\section{Proof of the main theorem}\label{section_proof}\label{section8}

We now can prove Theorem \ref{thm:main}. Let $\phi: X \to X$ be a Hamiltonian diffeomorphism with isolated fixed points satisfying 
\beq\label{eqn:exceeds}
N(\phi; {\mb Q}):= \sum_{x \in {\rm Fix}(\phi)} HF^{\rm loc}(\phi, x; {\mb Q}) > \sum_{i=0}^{{\rm dim} X} {\rm dim} H_i(X).
\eeq
By Proposition \ref{prop:iso-local}, one can replace the ordinary local Floer homology by the (bulk-deformed) local vortex Floer homology. One also knows from Proposition \ref{computation} and Theorem \ref{thm_vhf_bulk} that the total rank of rational homology of $X$ agrees with the rank of the bulk-deformed vortex Floer homology of $V$. Hence \eqref{eqn:exceeds} can be rewritten as 
\beqn
N(\phi; {\mb Q}) = \sum_{x \in {\rm Fix} (\phi)} {\rm dim}_{\ov{\mb Q}} \vhf^{\rm loc}(\phi, x; \ov{\mb Q}) > {\rm dim}_{\Lambda_{\mb Q}} \vhf_\bullet^{\mf b}(V; \Lambda_{\ov{\mb Q}}).
\eeqn
Because the (local) vortex Floer homology are defined over the integers (see Section \ref{sec:local-Floer}), by the universal coefficient theorem, for $p$ sufficiently large, 
\beqn
\sum_{x \in {\rm Fix}(\phi)} \dim_{\ov{\mb F}_p} \vhf^{\rm loc}(\phi, x; \ov{\mb F}_p) >  {\rm dim}_{\Lambda_{\ov{\mb F}_p}} \vhf_\bullet^{\mf b}(V; \Lambda_{\ov{\mb F}_p}).
\eeqn
Suppose on the contrary that $\phi$ only has only finitely many periodic points. Then for any sufficiently large prime $p$, for all $k \geq 1$, ${\rm Fix}(\phi^{p^k}) = {\rm Fix}(\phi)$. 
Then by Corollary \ref{cor:property-vortex-local}, one has 
\beqn
\sum_{x \in {\rm Fix}(\phi^{p^k})} {\rm dim}_{\ov{\mb F}_p} \vhf^{\rm loc}(\phi^{p^k}, x; \ov{\mb F}_p) = \sum_{x\in {\rm Fix}(\phi)} {\rm dim}_{\ov{\mb F}_p} \vhf^{\rm loc}(\phi, x; \ov{\mb F}_p) > {\rm dim}_{\Lambda_{\ov{\mb F}_p}} \vhf_\bullet^{\mf b}(V; \Lambda_{\ov{\mb F}_p}).
\eeqn
Consider the barcode of $\phi^{p^k}$ coming from the bulk-deformed vortex Floer theory (over the Novikov field $\Lambda_{\ov{\mb F}_p}$). The above implies that the number of finite bars is positive and independent of the iteration $p^k$. The uniform bound on the boundary depth (length of the longest finite bar) given by Theorem \ref{thmc} implies that the total bar length $\tau^{\mf b}_{(p)}(\phi^{p^k})$ is uniformly bounded. 

On the other hand, by Theorem \ref{thm_total_length_growth}, for any $k \geq 1$, the total bar length grows as
\beqn
\tau^{\mf b}_{(p)} (\phi^{p^k}) \geq p^k \cdot \tau^{\mf b}_{(p)} (\phi) \geq C p^k > 0.
\eeqn
This is a contradiction. Hence $\phi$ must have infinitely many periodic points. 

\begin{rem}
Because the above argument works for any $p \geq p_0$, we know that the number of periodic points of $\phi$ grows like $\frac{k}{\log(k)}$ as $k \to \infty$, as a result of the prime number theorem.
\end{rem}

\begin{rem}
    Arguments of the above form first appeared in \cite[Section 8]{Shelukhin_2022}, which we reproduce in our context for completeness. As noted above, Shelukhin's result on the Hofer--Zehnder conjecture relies on the assumptions that the ambient symplectic manifold is monotone and that the quantum homology is semisimple, which respectively account for the inequalities \eqref{eqn:linear-grow} (the monotonicity condition allows one to define Floer theory integrally using classical methods) and \eqref{eqn:uniform-bound} (which will be discussed in more detail in the body part of this paper). For general toric symplectic manifolds, traditional Hamiltonian Floer homology is only defined over the rationals, which sets difficulties for establishing symplectic Smith-type inequalities. Moreover, the quantum homology of toric symplectic manifolds fails to be semisimple in general, which is already the case even for Fano/monotone toric manifolds \cite{Ostrover-Tyomkin}.
\end{rem}

\section{Open string theory I. Quasimap Floer theory}\label{section9}

In this section we recall the construction of quasimap Lagrangian Floer theory developed by Woodward \cite{Woodward_toric}. The basic idea agrees with the philosophy of gauged linear sigma model \cite{Witten_LGCY}: one replaces the count of holomorphic curves in the toric manifold $X$ by an equivariant count of holomorphic curves upstairs. There are two significant consequences: first, one can achieve transversality of moduli spaces at a very low cost; second, the counts of curves are all integer-valued.

We use the Morse--Bott model for Lagrangians Floer theory to construct open-string theories and closed-open maps. We extend the use of domain-dependent perturbations for bulk-deformed vortex Floer cohomology to the open-string situation. We first need to fix certain notions and notations to describe the combinatorial data of various moduli spaces.

\subsection{Trees and treed disks}\label{subsec:trees}

We first set up the convention of trees used in this paper.

\begin{convention}[Convention for trees]
A tree, usually denoted by $\Gamma$, consists of a nonempty set of vertices $V_\Gamma$ and a nonempty set of edges $E_\Gamma$. The set of vertices is decomposed into the set of finite vertices and the set of vertices at infinity, and the decomposition is denote by
\beqn
V_\Gamma = V_\Gamma^{\rm finite} \sqcup V_\Gamma^\infty.
\eeqn
We always assume 
\begin{enumerate}

\item $V_\Gamma^\infty$ contains a distinguished vertex $v_{\rm root}$ called the {\it root}. 

\item The valence (degree) of any $v\in V_\Gamma^\infty$ is either one or two.
\end{enumerate}
The set $V_\Gamma$ is partially ordered in the following way: we denote by $v_\alpha \succ v_\beta$ if $v_\alpha$ and $v_\beta$ are adjacent and $v_\beta$ is closer to the root. In this way vertices at infinities are either {\it incoming} (called {\it inputs}) or {\it outgoing} (called {\it outputs}); in particular the output $v_{\rm root}$ is outgoing. 

Edges are decomposed into four groups: the set of finite edges $E_\Gamma^{\rm finite}$ consisting of edges connecting two finite vertices, the set of incoming semi-infinite edges $E_\Gamma^{\rm in}$ consisting of edges connecting $v_\alpha \in V_\Gamma^\infty$ with $v_\beta \in V_\Gamma^{\rm finite}$ with $v_\alpha \succ v_\beta$, the set of outgoing semi-infinite edges $E_\Gamma^{\rm out}$ consisting edges connecting $v_\alpha \in V_\Gamma^{\rm finite}$ and $v_\beta \in V_\Gamma^\infty$ with $v_\alpha \succ v_\beta$, and the set of infinite edges $E_\Gamma^\infty$ connecting two vertices at infinity. We also call incoming resp. outgoing semi-infinite edges inputs resp. outputs. 

A tree $\Gamma$ is called {\it unbroken} if all vertices $v \in V_\Gamma^\infty$ has valence $1$. A vertex $v \in V_\Gamma^\infty$ of valence $2$ is called a {\it breaking} of the tree $\Gamma$. Breakings separate $\Gamma$ into {\it unbroken components}.

A {\it ribbon tree} is a tree $\Gamma$ together with an isotopy class of embeddings $\Gamma \hookrightarrow {\mb R}^2$. Equivalently, it means for each vertex $v \in V_\Gamma$ the adjacent edges are cyclically ordered. As $\Gamma$ is rooted, it follows that all incoming edges are strictly ordered.

A ribbon tree is {\it stable} if the valence of each finite vertex is at least three.

\end{convention}

\subsubsection{Metric ribbon trees}

A {\it metric} on a ribbon tree $\Gamma$ is a function 
\beqn
{\bm l}: E_\Gamma^{\rm finite} \to [0, +\infty).
\eeqn
The underlying decomposition 
\beqn
E_{\Gamma}^{{\rm finite}}= E_{\Gamma}^{{\rm finite}, 0} \sqcup E_{\Gamma}^{{\rm finite}, +} = {\bm l}^{-1}(\{0\})\sqcup {\bm l}^{-1}((0, +\infty))
\eeqn
is called a {\it metric type}, denoted by $[{\bm l}]$. We often call the pair $(\Gamma, [{\bm l}])$ a {\bf domain type}. A {\bf metric ribbon tree} of type $(\Gamma, [{\bm l}])$ is a pair $(\Gamma, {\bm l})$ such that ${\bm l}$ has the metric type $[{\bm l}]$.

As in \cite[Section 3.3]{Woodward_toric}, one needs to work with unstable trees. We hence replace the usual stability condition by another {\it minimality} condition. We say that a metric ribbon trees $(\Gamma, {\bm l})$ (resp. domain type $(\Gamma, [{\bm l}])$) is {\bf minimal} if it has no finite edges of length zero or infinite edges. Hence for each domain type $\Gamma$, there is a canonical minimal one $\Gamma^{\rm min}$ obtained from $\Gamma$ by shrinking edges violating the minimality condition.

We define perturbations over the universal trees. Consider a minimal domain type $\Gamma = (\Gamma, [{\bm l}])$ (which is not necessarily stable). Then there is a moduli space of metric trees of type $\Gamma$, denoted by $\mc{MT}_\Gamma$, which is homeomorphic to $(0, +\infty)^{\# E_{\uds\Gamma}^{{\rm finite}, +}}$, whose elements parametrize the lengths of finite edges with positive lengths.  There is also a {\it universal tree}
\beqn
\mc{UT}_\Gamma \to \mc{MT}_\Gamma
\eeqn
whose fiber over a point $p \in \mc{MT}_\Gamma$ is homeomorphic to a metric tree representing $p$ (the infinities of semi-infinite or infinite edges are regarded as points in the metric tree). 

The above moduli spaces have natural compactifications. In fact, we can define a partial order among all minimal domain types. We say that a minimal domain type $\Gamma$ {\bf degenerates} to another minimal domain type $\Pi$, denoted by $\Pi \preceq \Gamma$, if $\Pi$ is obtained from $\Gamma$ by composing the following types of operations 
\begin{enumerate}

\item Shrinking the length of a finite edge in $\Gamma$ to zero and collapse this edge. 

\item Breaking a finite edge of positive length to a pair of semi-infinite edges joined at a new 
vertex at infinity.
\end{enumerate}
Notice that if $\Pi\preceq \Gamma$, then there is a canonical surjective map $\rho: V_\Gamma^{\rm finite} \to V_\Pi^{\rm finite}$. Then $\mc{MT}_\Gamma$ has the natural compactification
\beqn
\ov{\mc{MT}}_\Gamma:= \bigsqcup_{\Pi \preceq\Gamma} \mc{MT}_\Pi.
\eeqn
The universal tree is also extended to the compactification, which is denoted by 
\beqn
\ov{\mc{UT}}_\Gamma \to \ov{\mc{MT}}_\Gamma.
\eeqn
There is a special closed subset $\ov{\mc{UT}}{}_\Gamma^{\rm node} \subset \ov{\mc{UT}}_\Gamma$ corresponding to infinities or vertices. Notice that the complement of $\ov{\mc{UT}}{}_\Gamma^{\rm node}$ inside the interior $\mc{UT}_\Gamma$ is a smooth manifold.

\subsubsection{Treed disks}

\begin{defn}
Given a domain type $\Gamma = (\Gamma, [{\bm l}])$. A {\it treed disk} of type $\Gamma$, denoted by $C = S \cup T$, is the configuration given by the union of disk components $S_\alpha \cong {\mb D}$ for all vertices $v_\alpha\in V_{\Gamma}^{\rm finite}$, a metric ${\bm l}$ on $\Gamma$ of type $[{\bm l}]$ and an interval $I_e$ of length ${\bm l}(e)$ for each finite edge $e \in E_\Gamma^{\rm finite}$. The notion of isomorphisms between treed disks is standard and omitted.
\end{defn}






\subsection{Quasimap Floer theory for Lagrangians}

We recall the quasimap Floer theory developed by Woodward \cite{Woodward_toric}. Let ${\bf u} \in {\rm Int} P \subset {\mb R}^n$ be an interior point of the moment polytope $P$ of the toric manifold $X$. Recall that the number of faces $N$ of $P$ coincides with the dimension of $V$. Let $L = L({\bf u}) \subset X$ be the torus fiber over ${\bf u}$. Let $\wh L  = \wh L ({\bf u}) \subset \mu^{-1}(0) \subset V$ be the lift of $L({\bf u})$, which is a $K$-invariant Lagrangian torus in $V$. Explicitly, we have 
\beqn
\wh L = \prod_{i=1}^N \Big\{ z_i \in {\mb C}\ |\ |z_i|^2 = \tau_i \Big\}
\eeqn
where $\tau_i$ are determined by ${\bf u}$ and the constant term in the moment map $\mu$.

A {\bf holomorphic quasidisk} is an ordinary holomorphic map $u: ({\mb D}, \partial {\mb D}) \to (V, \wh L)$ (with respect to the standard complex structure $\wh J_V$). Two holomorphic quasidisks $u$ and $u'$ are $K$-equivalent if there exists $g \in K$ such that $g u = u'$. Each $K$-equivalence class of holomorphic quasidisks represents a disk class 
\beqn
\beta \in H_2( V, \wh L)/K \cong H_2(V, \wh L).
\eeqn
Each such class has a well-defined energy 
\beqn
\omega(\beta) = \omega_V (\beta) \in {\mb R}
\eeqn
and a well-defined Maslov index 
\beqn
i(\beta) \in 2{\mb Z}.
\eeqn
Given $k$ and $\beta \in H_2(V,  \wh L)$, let ${\mc M}_{k+1}^{\rm disk} (\beta)$ be the moduli space of $K$-equivalence classes of holomorphic quasidisks of class $\beta$ with $k+1$ boundary marked points, and let $\ov{\mc M}{}_{k+1}^{\rm disk}(\beta)$ be its compactification. Notice that as $V$ is aspherical, configurations in $\ov{\mc M}{}_{k+1}^{\rm disk} (\beta)$ have only disks bubbles but not sphere bubbles. The evaluation of a $K$-equivalence class of quasidisks at the last boundary marked point is well-defined as a point in the quotient Lagrangian $L \subset X$. Hence there is a continuous map 
\beqn
\ev: \ov{\mc M}{}_{k+1}^{\rm disk}(\beta) \to L.
\eeqn

\begin{thm}[Blaschke product]\label{thm_Blaschke}
Let $u: {\mb D}^2 \to V$ be a holomorphic quasidisk. Then there exist $\theta_1, \ldots, \theta_N \in [0, 2\pi)$ and $(a_{i, k})_{k = 1, \ldots, d_i} \subset {\mb D}^2 \subset {\mb C}$ for $i = 1, \ldots, N$ such that
\beq\label{Blaschke_formula}
u(z) = \left( \sqrt{\tau_1} e^{i \theta_1} \prod_{k=1}^{d_1} \frac{ z- a_{1, k}}{ 1 - \ov{a_{1,k}} z}, \ldots, \sqrt{\tau_N} e^{i \theta_N} \prod_{k=1}^{d_N} \frac{ z- a_{N, k}}{ 1 - \ov{a_{N,k}} z}    \right).
\eeq
Moreover, the Maslov index of $u$ is $2 (d_1 + \cdots + d_N)$. 
\end{thm}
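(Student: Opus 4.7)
The plan is to exploit the fact that both the target $V \cong {\mb C}^N$ and the Lagrangian $\wh L = \prod_{i=1}^N \{|z_i|^2 = \tau_i\}$ split as Cartesian products, so that the quasidisk equation decouples completely into $N$ independent one-variable problems. Writing $u = (u_1, \ldots, u_N)$ with each $u_i: {\mb D}^2 \to {\mb C}$ holomorphic and satisfying $|u_i(z)| = \sqrt{\tau_i}$ for $z \in \partial {\mb D}^2$, the analytic content of the theorem reduces to the classical characterization of holomorphic self-maps of the disk with unimodular boundary values.

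First I would apply elliptic boundary regularity (for the integrable $\wh J_V$, this is just Schwarz reflection across $\partial {\mb D}^2$, since $u_i/\sqrt{\tau_i}$ sends the boundary to the unit circle) to upgrade each $u_i$ to a function holomorphic in a neighborhood of $\ov{\mb D}{}^2$. Then by the classical theorem on inner functions of finite degree (maximum modulus plus induction: if $u_i/\sqrt{\tau_i}$ has zeros $a_{i,1}, \ldots, a_{i,d_i} \in {\mb D}^2$ counted with multiplicity, divide out the corresponding Blaschke factors to obtain a zero-free bounded holomorphic function on ${\mb D}^2$ whose modulus is $1$ on $\partial{\mb D}^2$; the maximum principle applied to this function and its reciprocal forces it to be a unimodular constant $e^{{\bf i}\theta_i}$), we obtain the factorization \eqref{Blaschke_formula}. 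This yields the first claim.

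For the Maslov index formula, I would use that $\wh L$ is a product of circles and hence its tangent bundle trivializes as a product of real line bundles. Consequently the determinant bundle of $T V|_{\wh L}$ splits as a tensor product, and the Maslov index is additive:
\beqn
i(u) = \sum_{i=1}^N i(u_i), \quad \text{where } i(u_i) \text{ is the Maslov index of } u_i: ({\mb D}^2, \partial {\mb D}^2) \to ({\mb C}, S^1_{\sqrt{\tau_i}}).
\eeqn
Since the tangent line to $S^1_{\sqrt{\tau_i}}$ at $\sqrt{\tau_i} e^{{\bf i}\phi}$ is ${\mb R}\cdot {\bf i} e^{{\bf i} \phi}$, a standard computation identifies $i(u_i)$ with twice the winding number of the loop $u_i|_{\partial {\mb D}^2}: S^1 \to S^1_{\sqrt{\tau_i}}$ around the origin. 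Each Blaschke factor $\frac{z - a_{i,k}}{1 - \ov{a_{i,k}}z}$ restricts to a degree-one self-map of $S^1$ (as is seen by counting its unique zero inside ${\mb D}^2$), while the phase $e^{{\bf i}\theta_i}$ contributes nothing. Hence the winding number of $u_i|_{\partial{\mb D}^2}$ is exactly $d_i$, giving $i(u_i) = 2 d_i$ and $i(u) = 2(d_1 + \cdots + d_N)$.

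I expect no substantive obstacle: the entire proof is classical complex analysis combined with the well-known product formula for the Maslov index of a product Lagrangian. The one point requiring minor care is verifying boundary regularity of the quasidisk so that the classical Blaschke characterization applies; this is immediate from Schwarz reflection in the integrable setting, but deserves an explicit mention. Everything else is bookkeeping.
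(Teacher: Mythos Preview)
The paper does not supply its own proof of this theorem; it is stated as a known result (the surrounding discussion and the next theorem cite \cite{Cho_Oh}, where the Blaschke classification for toric Lagrangians originates). Your argument is correct and is precisely the standard proof: decouple into coordinates, use Schwarz reflection and the maximum principle to obtain the finite Blaschke product for each factor, and compute the Maslov index additively via winding numbers. There is nothing to compare against in the paper itself, and your proposal matches the classical derivation.
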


In particular, there are $N$ ``basic'' Maslov two disk classes $\beta_1, \ldots, \beta_N \in H_2(V, L)$ where each $\beta_i$ is represented by a quasidisk given as above with $d_j = \delta_{ij}$. These Maslov two classes form a basis of $H_2(V, \wh L)$. 

\begin{thm}
The moduli space ${\mc M}_{k+1}^{\rm disk}(\beta)$ is regular of dimension $n + 2 i (\beta) + k-2$ and the evaluation map 
\beqn
\ev: {\mc M}_{k+1}^{\rm disk}(\beta) \to L
\eeqn
is a smooth submersion.
\end{thm}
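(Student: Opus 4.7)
The plan is to derive the statement as a nearly explicit computation from the Blaschke classification of Theorem \ref{thm_Blaschke}, using automatic transversality for regularity and the residual torus symmetry for the submersion property.

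\emph{Explicit parametrization and dimension count.} By Theorem \ref{thm_Blaschke}, fixing a class $\beta$ with degrees $(d_1,\ldots,d_N)$ identifies the space of holomorphic quasidisks of class $\beta$, via the Blaschke parameters $\bigl((\theta_j)_j,(a_{j,k})_{j,k}\bigr)$, with $T^N \times \prod_{j=1}^N \mathrm{Sym}^{d_j}(\mathbb{D})$, a smooth manifold of real dimension $N + 2\sum_j d_j$. Incorporating $k+1$ ordered boundary marked points adds $k+1$ dimensions; the free $K = T^{N-n}$-action on $\wh L$ (freeness follows from $0$ being a regular value of $\mu$) removes $N - n$ dimensions when quotiented; and the $\mathrm{Aut}(\mathbb{D}) \cong PSL(2,\mathbb{R})$-action removes another $3$. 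Under the paper's normalization $i(\beta) = \sum_j d_j$, the net real dimension is
\beqn
N + 2\textstyle\sum_j d_j + (k+1) - (N-n) - 3 \;=\; n + 2 i(\beta) + k - 2,
\eeqn
matching the claimed formula.

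\emph{Regularity via automatic transversality.} Since $V = \mathbb{C}^N$ and $\wh L = \prod_j S^1_{\sqrt{\tau_j}}$ is a product Lagrangian, the linearized Cauchy--Riemann operator at any $u = (u_1,\ldots,u_N)$ splits as a direct sum $D\ov\partial_u = \bigoplus_{j=1}^N \ov\partial_{u_j}$ of $N$ scalar operators on $u_j^* T\mathbb{C} \cong \mathbb{C}$ with totally real boundary condition given by the tangent line of the $j$-th circle. Each summand is a real Cauchy--Riemann operator on the disk of Maslov index $2d_j \geq 0$, hence surjective by automatic regularity in complex dimension one (see e.g. \cite{McDuff_Salamon_2004}); therefore $D\ov\partial_u$ is surjective and ${\mc M}_{k+1}^{\rm disk}(\beta)$ carries a smooth structure agreeing with the Blaschke parametrization.

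\emph{Submersion from residual toric symmetry.} The ambient torus $\wh K = T^N$ acts on $V$ preserving $\omega_V$, the standard complex structure, and the Lagrangian $\wh L$ setwise, acting on Blaschke quasidisks by rotating the phases $(\theta_j)$. Since $K \subset \wh K$ acts along the fibers of $\wh L \to L$, this descends to an action of $T^n = \wh K / K$ on ${\mc M}_{k+1}^{\rm disk}(\beta)$ with respect to which $\ev$ is equivariant; as the residual $T^n$-action on $L$ is transitive, $\ev$ is a surjective submersion.

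The main (and essentially only) subtle point will be ruling out orbifold singularities coming from the combined $K \times PSL(2,\mathbb{R})$-action on the unquotiented space of marked Blaschke disks: $K$-freeness is immediate, but $PSL(2,\mathbb{R})$-freeness requires case analysis, either via the observation that a Blaschke product of positive total degree has generically trivial Möbius symmetry, or via the fact that having $k+1 \geq 3$ boundary marked points kills all domain automorphisms. These cases together cover every situation in which the moduli is nonempty.
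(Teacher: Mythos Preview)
Your argument is exactly the Cho--Oh computation \cite[Section 6]{Cho_Oh} to which the paper defers its entire proof: the product splitting of the linearized operator into rank-one disk problems with partial indices $2d_j \geq 0$ gives automatic regularity, and the transitive residual $T^n$-action forces $\ev$ to be a submersion. Two minor points: the paper declares $i(\beta)$ to be the Maslov index $2\sum_j d_j$ (Theorem~\ref{thm_Blaschke}), so the ``$2$'' in the stated formula $n + 2i(\beta) + k - 2$ is evidently a typo that your line ``$i(\beta) = \sum_j d_j$'' silently absorbs (your computed $n + 2\sum_j d_j + k - 2$ is the correct value); and in the freeness discussion you should replace ``generically trivial'' by the sharp observation that any nontrivial $\phi \in \mathrm{Aut}(\mathbb D)$ fixing a boundary marked point is parabolic or hyperbolic, hence of infinite order, and therefore cannot permute the finite zero set of a positive-degree Blaschke coordinate---this, together with the $k+1 \geq 3$ case for $\beta = 0$, gives freeness on every stable configuration.
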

\begin{proof}
See \cite[Section 6]{Cho_Oh}.
\end{proof}

A consequence is that each stratum of the compactification $\ov{\mc M}{}_{k+1}^{\rm disk}(\beta)$ is regular. To be more precise, let $\Gamma$ denote a ribbon tree representing the combinatorial type of a nodal disk (with $k$ inputs and $1$ output) with each vertex labelled by a disk class whose sum is equal to $\beta$. Then there is a stratum ${\mc M}_\Gamma^{\rm disk}\subset \ov{\mc M}{}_{k+1}^{\rm disk}(\beta)$. 

\begin{cor}
Each stratum ${\mc M}{}_\Gamma^{\rm disk} \subset \ov{\mc M}_{k+1}(\beta)$ is regular and the evaluation map $\ev: {\mc M}_\Gamma^{\rm disk} \to L$ is a submersion.
\end{cor}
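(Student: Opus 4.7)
The plan is to realize each boundary stratum ${\mc M}_\Gamma^{\rm disk}$ as an iterated fiber product, over the Lagrangian $L$, of top-dimensional moduli spaces attached to the vertices of $\Gamma$, and then invoke the submersion property from the preceding theorem to obtain regularity and the submersion property of the total evaluation map.

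More concretely, let $\Gamma$ be a stable ribbon tree with vertices $v\in V_\Gamma^{\rm finite}$ labeled by disk classes $\beta_v \in H_2(V,\wh L)$ with $\sum_v \beta_v = \beta$, and with valences $k_v+1$ (counting the one distinguished outgoing half-edge at each vertex). For each vertex $v$ the theorem provides that ${\mc M}_{k_v+1}^{\rm disk}(\beta_v)$ is a smooth manifold of the expected dimension and that the evaluation map at each of its $k_v+1$ boundary marked points is a smooth submersion to $L$. First I would verify that each individual evaluation $\ev_i: {\mc M}_{k_v+1}^{\rm disk}(\beta_v) \to L$ is a submersion, not only the one at the last marked point. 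This follows from the $S^1$-symmetry of the Blaschke description in Theorem~\ref{thm_Blaschke} (any boundary marked point can be moved to the ``last'' position by a reparametrization of the disk), combined with the torus invariance of $\wh L$, which acts transitively on $L$ and commutes with the evaluation.

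Second, I would assemble ${\mc M}_\Gamma^{\rm disk}$ as the fiber product
\beqn
{\mc M}_\Gamma^{\rm disk} \;\cong\; \Big( \prod_{v \in V_\Gamma^{\rm finite}} {\mc M}_{k_v+1}^{\rm disk}(\beta_v) \Big) \; {}_{\ev}\!\times_{\ev}\; L^{E_\Gamma^{\rm finite}},
\eeqn
where the fiber product is taken over the evaluation maps at the two endpoints of each internal edge $e \in E_\Gamma^{\rm finite}$ (matching the node condition on $L$). Since each constituent evaluation is a submersion, the fiber product is transverse at every point, hence smooth, with dimension equal to the sum of vertex dimensions minus $n$ for each internal edge. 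A routine bookkeeping, using $i(\beta)=\sum_v i(\beta_v)$ and the Euler relation on the tree, yields exactly the expected dimension of ${\mc M}_\Gamma^{\rm disk}$.

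Third, for the global evaluation at the root marked point: this map factors as the projection of the fiber product to the factor ${\mc M}_{k_{v_{\rm root}}+1}^{\rm disk}(\beta_{v_{\rm root}})$ followed by $\ev$ on that factor. Both the projection from a transverse fiber product (onto a factor whose evaluation to the base is a submersion) and the submersion $\ev$ on the top stratum preserve the submersion property under composition; hence $\ev: {\mc M}_\Gamma^{\rm disk} \to L$ is a submersion. The only mildly delicate point is the transversality of the fiber product at nodes, but this is automatic from the submersivity established in the first step, so there is no substantive obstacle beyond careful bookkeeping.
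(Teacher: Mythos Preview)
Your argument is correct and is the standard fiber-product argument: the paper itself gives no proof beyond citing \cite[Corollary 6.2]{Woodward_toric}, and what you have written is precisely the content of that reference. One minor simplification you could use for the final submersion claim: since the $T^n$-action on $V$ preserves $\wh L$ and acts transitively on $L$, the evaluation $\ev: {\mc M}_\Gamma^{\rm disk} \to L$ is $T^n$-equivariant onto a homogeneous space, hence automatically a submersion wherever it is defined, which bypasses the need to trace submersivity through the fiber-product projections.
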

\begin{proof}
See \cite[Corollary 6.2]{Woodward_toric}.
\end{proof}

\subsubsection{Treed holomorphic quasimaps}

The idea of treed holomorphic disks goes back to Cornea--Lalonde \cite{Cornea_Lalonde_2005, Cornea_Lalonde}. We recall the adaptation by Woodward \cite{Woodward_toric} in order to define the quasimap $A_\infty$ algebras. Throughout our discussion, we fix a smooth perfect Morse function $f_L: L \to {\mb R}$ defined over the Lagrangian torus $L \subset X$, which has exactly $2^n$ critical points.


Given a treed disk $C = S \cup T$ of type $\Gamma$, suppose we have a domain-dependent perturbation $f$ of the Morse function $f_L: L \to {\mb R}$ parametrized by points $t$ on the tree part $T$, a {\bf treed holomorphic quasimap} on $C$ is a collection of objects 
\beqn
\Big( (u_v)_{v \in V_\Gamma^{\rm finite}}, (x_e)_{e \in E_{\Gamma}} \Big)
\eeqn
where for each finite vertex $v \in V_\Gamma^{\rm finite}$, we assign a smooth map $u_v: S_v \to V$ satisfying 
\beqn
\ov\partial u_v  = 0,\ u_v(\partial S_v) \subset \wh L,
\eeqn
$x_e: I_e \to L$ is a smooth map satisfying 
\beqn
x_e'(t) + \nabla f(x_e(t)) = 0;
\eeqn
moreover, the matching condition requires 1) for each node joining a boundary point $z$ of some surface component $S_v$ and a finite end of an edge $e$, the value of $x_e(z)$ lies in the same $K$-orbit as the value of $u_v(z)$; 2) for each infinite vertex $v\in V_\Gamma^\infty$ joining two (semi-)infinite edges $e_1$ and $e_2$, the limits of $x_{e_1}$ and $x_{e_2}$ at the corresponding infinities agree. Here to ensure the convergence of the maps $x_e$, we require that the perturbation $f$ is supported away from the infinities.

Two treed holomorphic quasimaps are regarded as {\bf equivalent} if after identifying domains, the maps on corresponding surfaces parts are $K$-equivalent (recall $K$ is the gauge group). 

To define the $A_\infty$ structure (or other structures) one would like to regularize the moduli spaces of equivalence classes of treed holomorphic quasimaps and their boundaries. One first needs to use coherent systems of perturbations to describe such moduli spaces. 

\subsubsection{Perturbations for the $A_\infty$ algebra}

To achieve transversality relevant for defining the $A_\infty$ algebra, we only need to perturb the Morse function on edges. Hence for a given {\it minimal} metric type $\Gamma$, a domain-dependent perturbation can be viewed as a map 
\beqn
P_\Gamma: \ov{\mc{UT}}_\Gamma \to C^\infty(L).
\eeqn
We require any such perturbation to vanish near infinities, i.e., vanish near the closed subset 
\beqn
\ov{\mc{UT}}{}_\Gamma^\infty \subset \ov{\mc{UT}}_\Gamma
\eeqn
corresponding to positions of vertices at infinity. Notice that if $\Gamma$ is not necessarily stable, a perturbation $P_{\Gamma^{\rm min}}$ for the minimal form is enough to determine the treed holomorphic map on any treed disks $C$ of type $\Gamma$. Indeed, on any infinite edges of $C$ (if any) the negative gradient flow equation is taken for the unperturbed Morse function $f_L$.

In order to establish the $A_\infty$ relation, we also need to require that, if $\Gamma$ degenerates to $\Pi$, then the restriction of $P_\Gamma$ to the stratum $\ov{\mc{UT}}_\Pi\subset \ov{\mc{UT}}_\Gamma$ must agree with the perturbation $P_\Pi$ which have been chosen for the minimal domain type $\Pi$. Hence we need to construct a {\it coherent} system of perturbations indexed for all minimal domain types $\Gamma$. To use the Sard--Smale theorem to prove that generic perturbations are regular, we also need to specify the neighborhood of $\ov{\mc{UT}}{}_\Gamma^\infty$ where we require the perturbation to vanish; such choices of neighborhoods need also be coherent. 

Another complexity in this procedure is that we need to work with unstable domains (as in \cite{Woodward_toric}, see also \cite{Abouzaid_plumbing}), unlike the cases of \cite{Charest_Woodward_2015}\cite{Woodward_Xu}\cite{Venugopalan_Woodward_Xu} where domains are always stable. Here we give a different way of writing Woodward's perturbation scheme for unstable trees (see Section \cite[Section 3]{Woodward_toric}). Given a minimal domain type $\Gamma$, an {\bf indexing function} is a map $\vec{n}: V_\Gamma^{\rm finite} \to {\mb Z}_{\geq 0}$, whose values are denoted by $n_v$, satisfying that $n_v \geq 1$ when $v$ is an unstable vertex. One should regard the values of $\vec{n}$ as one half of the Maslov indices of disk components. We consider perturbations which depend also on such indexing functions.

\begin{defn}\label{defn_coherent_perturbation}
A {\bf coherent family of domain-dependent perturbations} is a collection of continuous maps
\beqn
P_{\Gamma, \vec{n}}^{\rm qd}: \ov{\mc{UT}}_\Gamma \to C^\infty(L)
\eeqn
indexed by all minimal domain types $\Gamma$ and all indexing functions $\vec{n}: V_\Gamma^{\rm finite} \to {\mb Z}_{\geq 0}$ satisfying the following conditions.
\begin{enumerate}


    \item For $\Gamma$ the tree with a single vertex, no input, and one output, the Morse function on the outgoing edge is the unperturbed function $f_L$. 

    \item When $\Gamma$ degenerates to $\Pi$, there is a canonical surjective map $\rho: V_\Pi^{\rm finite} \to V_\Gamma^{\rm finite}$. Hence any indexing function 
    $\vec{n}_\Pi: V_\Pi \to {\mb Z}_{\geq 0}$ induces a partition $\vec{n}_\Gamma: V_\Gamma \to {\mb Z}_{\geq 0}$. We require that 
    \beqn
    P_{\Gamma, \vec{n}_\Gamma}^{\rm qd}|_{\ov{\mc U}_\Pi} = P_{\Pi, \vec{n}_\Pi}^{\rm qd}.
    \eeqn

    \item When $\Gamma$ is broken with unbroken components $\Gamma_1,\ldots, \Gamma_s$, the partition $\vec{n}$ on $\Gamma$ is defined by assembling the partitions $\vec{n}_1, \ldots, \vec{n}_s$ on $\Gamma_1, \ldots, \Gamma_s$. Then $P_{\Gamma, \vec{n}}^{\rm qd}$ should be naturally induced from $P_{\Gamma_i, \vec{n}_i}$. 
\end{enumerate}
\end{defn}

\subsubsection{Compactification and transversality}

Let $\Gamma$ be a possibly unstable, non-minimal domain type. A {\bf map type} over $\Gamma$, denoted by ${\bm \Gamma}$, assigns to each finite vertex $v_\alpha \in V_\Gamma^{\rm finite}$ a disk class $\beta_v$ (with nonnegative Maslov index) and to each vertex at infinity $v_\beta \in V_\Gamma^\infty$ a critical point $x_\beta \in {\rm crit} f_L$. A map type ${\bm \Gamma}$ induces an indexing function $\vec{n}$ on the minimal form $\Gamma^{\rm min}$ by setting $n_v$ to be half of the Maslov index of $\beta_v$ and adding together if several vertices are connected by finite edges of length zero. Then use the perturbation $P_{\Gamma^{\rm min}, \vec{n}}^{\rm qd}$ to define a moduli space ${\mc M}_{\bm \Gamma}$ of treed holomorphic disks. The topology of ${\mc M}_{\bm\Gamma}$ is defined in the usual way. 

Given a perturbation, the moduli space ${\mc M}_{\bm \Gamma}$ is the zero locus a Fredholm section on certain Banach manifold. We say that the moduli space ${\mc M}_{\bm \Gamma}$ is regular if the Fredholm section is transverse (it is independent of the corresponding Sobolev completions of the space of smooth maps). We say that a coherent system of perturbations is {\bf regular} if all moduli spaces ${\mc M}_{\bm \Gamma}$ are regular.

Now we consider possible degenerations of treed holomorphic disks. In general, a sequence of treed holomorphic disks of a fixed map type ${\bm \Gamma}$ can converge to a limit by breaking an edge, shrinking an edge to zero, or bubbling off holomorphic disks. Notice that because $V$ is a vector space and we do not have interior markings, there cannot be any sphere bubbles in the limit. The notion of convergence is standard and its definition is omitted here. As the perturbation system is coherent, any limiting object (of a possibly different map type ${\bm \Pi}$) is also a treed holomorphic disk defined using a corresponding perturbation $P_{\Pi^{\rm min}, \vec{n}}^{\rm qd}$, hence an element in ${\mc M}_{\bm \Pi}$. We denote 
\beqn
\ov{\mc M}_{\bm \Gamma}:= \bigsqcup_{{\bm \Pi} \preceq {\bm \Gamma}} {\mc M}_{\bm \Pi}
\eeqn
where by abuse of notation, $\preceq$ is the natural partial order among map types induced from the notion of convergence.

\begin{prop}
There exists a coherent system of perturbation data such that every moduli space ${\mc M}_{\bm \Gamma}$ is regular. 
\end{prop}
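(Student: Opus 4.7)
The plan is to build the perturbations inductively, exploiting the crucial feature of the GLSM setting: because the target $V$ is a vector space (hence aspherical) and every stratum ${\mc M}_\Gamma^{\rm disk}$ of the moduli space of quasidisks is already regular with submersive evaluation to $L$, no perturbation whatsoever is needed on the surface components. Transversality is therefore entirely a Morse-theoretic problem on the tree part, which is handled by choosing $P_{\Gamma,\vec n}^{\rm qd}$ generically via Sard--Smale in a suitable Banach space of perturbations supported away from the vertices at infinity.

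Concretely, I would totally order the minimal domain types $\Gamma$ (together with allowed indexing functions $\vec n$) by a complexity such as $(\dim \mc{MT}_\Gamma, \#V_\Gamma^{\rm finite}, \sum_v n_v)$ so that every strictly lower type that can appear as a boundary stratum of $\ov{\mc{UT}}_\Gamma$ sits below $(\Gamma,\vec n)$ in the order. The base case is the tree with a single output vertex, where Condition (1) of Definition \ref{defn_coherent_perturbation} fixes the perturbation to be $f_L$ itself; since $f_L$ is a perfect Morse function on the torus $L$ we can arrange its negative gradient flow to be Morse--Smale once and for all. For the inductive step, suppose regular perturbations have been chosen for all lower types; coherence (Conditions (2)--(3)) then determines $P_{\Gamma,\vec n}^{\rm qd}$ on a closed subset $\partial \ov{\mc{UT}}_\Gamma$, together with a specified neighborhood $\mc{N}_\infty$ of $\ov{\mc{UT}}{}_\Gamma^{\infty}$ on which it must vanish. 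Extend this partial datum arbitrarily to a continuous map into $C^\infty(L)$, and consider the Banach space of compactly supported perturbations on $\ov{\mc{UT}}_\Gamma \setminus (\partial \ov{\mc{UT}}_\Gamma \cup \mc{N}_\infty)$.

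For each map type ${\bm\Gamma}$ lying over $(\Gamma,\vec n)$, the associated moduli space ${\mc M}_{\bm\Gamma}$ is the fiber product of the disk moduli ${\mc M}_{\Gamma,v}^{\rm disk}(\beta_v)$ (already regular and submersive to $L$ at each boundary marked point) with the parametrized Morse trajectory spaces on the edges $e \in E_\Gamma^{\rm finite}$ and the unperturbed flow lines on semi-infinite or infinite edges terminating at the assigned critical points $x_\beta \in \mathrm{crit}\,f_L$. By the universal moduli space argument of Floer--Hofer--Salamon adapted to this treed setting, the fact that the evaluation maps from the disk factors are submersions implies that the universal section cutting out ${\mc M}_{\bm\Gamma}$ is transverse when varying the perturbation $P_{\Gamma,\vec n}^{\rm qd}$ along edges of positive length; the Sard--Smale theorem then produces a comeager set of perturbations for which ${\mc M}_{\bm\Gamma}$ is regular. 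Since only countably many map types lie over each $(\Gamma,\vec n)$, a countable intersection yields a single generic choice achieving regularity for all of them simultaneously.

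The main obstacle is ensuring that this inductive extension remains compatible with the two coherence conditions in the presence of unstable domain components. In particular, one must verify that the indexing function $\vec n$ -- which records the total Maslov index of disks that may collapse to a single vertex when an edge length degenerates to zero -- transforms correctly under the degeneration relation $\Pi \preceq \Gamma$, so that the restriction $P_{\Gamma,\vec n_\Gamma}^{\rm qd}|_{\ov{\mc{UT}}_\Pi} = P_{\Pi,\vec n_\Pi}^{\rm qd}$ from Condition (2) is consistent with the perturbations already fixed at earlier inductive stages. This is precisely the reason for enlarging the combinatorial indexing by $\vec n$ in Definition \ref{defn_coherent_perturbation}: it allows the perturbation to absorb the ambiguity introduced by edges of zero length joining disks of different Maslov indices. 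Once the bookkeeping is set up so that the complexity order respects this absorption, the inductive construction proceeds without obstruction, and the resulting coherent system $\{P_{\Gamma,\vec n}^{\rm qd}\}$ is regular in the sense required.
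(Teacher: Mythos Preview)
Your proposal is correct and follows essentially the same approach as the paper: an inductive construction over the partial order on minimal domain types together with indexing functions, with the base case handled by the unperturbed Morse function (whose transversality with the disk moduli relies on the Blaschke formula giving submersive evaluation), and the inductive step by Sard--Smale extension from the already-fixed boundary strata. The paper's own proof is terser---it simply invokes the Blaschke formula for the base case, then Sard--Smale for the extension, citing \cite[Corollary 6.2]{Woodward_toric} for details---but your more explicit treatment of the fiber-product structure and the role of the indexing function $\vec n$ in absorbing unstable components is exactly the mechanism at work.
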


\begin{proof}
The proof is an inductive construction with respect to the partial order $\Pi \preceq \Gamma$ among minimal domain types and the indexing function $\vec{n}$. First one can easily check, by the Blaschke formula Theorem \ref{thm_Blaschke} that the specification of item (1) in Definition \ref{defn_coherent_perturbation} can make the relevant configurations transverse. Then once regular perturbations on all boundary strata of $\mc{UT}_\Gamma$ have been fixed, one can use the Sard--Smale theorem to find regular extensions to the interior. See details in \cite[Corollary 6.2]{Woodward_toric}. \end{proof}

Now we consider the compactification of moduli spaces. A map type ${\bm \Gamma}$ is called {\bf essential} if it is unbroken and has no boundary edges of length zero. Given a collection ${\bm x} = (x_1, \ldots, x_k; x_\infty)$ of critical points of the Morse function $f_L$, for $i = 0, 1$, let 
\beqn
{\mc M}^{qd}(x_1, \ldots, x_k; x_\infty)_i := \bigcup_{{\bm \Gamma}} {\mc M}_{\bm \Gamma}
\eeqn
where the union is taken over all essential map types of index $i$ whose vertices at infinities are labelled by ${\bm x}$. 

\begin{lemma}
If $i = 0$, the moduli space ${\mc M}^{qd}(x_1, \ldots, x_k; x_\infty)_0$ is discrete and has finitely many points below any given energy bound. If $i = 1$, the compactified moduli space $\ov{\mc M}{}^{qd}(x_1, \ldots, x_k; x_\infty)_1$ is a 1-dimensional (topological) manifold with boundary, which is compact below any given energy bound. 
\end{lemma}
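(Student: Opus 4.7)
The plan rests on three now-standard pillars: transversality of the coherent perturbation system (just established above), Gromov--Floer compactness in the aspherical target $V$, and elementary gluing. I carry them out in turn.

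For the $i=0$ assertion, each essential map type ${\bm \Gamma}$ of virtual dimension zero determines, by regularity, a zero-dimensional smooth manifold ${\mc M}_{\bm \Gamma}$, so the union over such types is automatically discrete. For finiteness below an energy bound $E$, I would take an arbitrary sequence in ${\mc M}^{qd}(x_1,\ldots,x_k;x_\infty)_0$ with energies $\leq E$. Because $V$ is symplectically aspherical and $\wh L$ is compact, the Blaschke product description of Theorem~\ref{thm_Blaschke} gives uniform $C^0$-bounds on all disk components and caps the total Maslov index, ruling out sphere bubbles and limiting the combinatorics of disk bubbles to finitely many map types ${\bm \Gamma}$. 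A subsequence then converges to some treed holomorphic quasimap of type ${\bm \Pi}$ with ${\bm \Pi} \preceq {\bm \Gamma}$. Each elementary degeneration — edge breaking, edge collapse, or disk bubbling — strictly decreases the virtual dimension, so by transversality every strictly lower stratum is empty. Hence the limit already lies in ${\mc M}^{qd}(x_1,\ldots,x_k;x_\infty)_0$, forcing finiteness.

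For the $i=1$ assertion, regularity gives each essential stratum the structure of a smooth $1$-manifold. Its boundary in the compactification comes from precisely the two codimension-one degenerations: (a) breaking a finite boundary edge at a critical point $y$ of $f_L$, which splits the configuration into two essential pieces each of virtual dimension zero, joined across $y$; and (b) collapsing a finite edge to length zero, which in the passage to minimal form merges two adjacent disk vertices and adds their entries in the indexing function $\vec n$. All strata of codimension $\geq 2$ are empty by transversality and dimension counting against $\vec n$. Near each boundary configuration, standard Morse gluing in case (a) and treed-disk gluing in case (b) — both unobstructed thanks to regularity and the absence of sphere bubbles — furnish a half-open collar chart, realizing $\ov{\mc M}{}^{qd}(x_1,\ldots,x_k;x_\infty)_1$ as a topological $1$-manifold with boundary. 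Compactness below any energy bound is inherited from the same Gromov--Floer argument as in the $i=0$ case.

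The main technical subtlety — and where the quasimap framework differs from the more familiar stable-domain setting of \cite{Charest_Woodward_2015, Woodward_Xu} — is the presence of unstable disk components, such as the Maslov-two Blaschke disks in Theorem~\ref{thm_Blaschke}. Their missing automorphism freedom has been rigidified only by the domain-dependent perturbation $P^{\rm qd}_{\Gamma,\vec n}$ through the indexing function $\vec n$, so gluing and compactness have to be performed stratum by stratum in a way compatible with the inductive construction of the perturbation system dictated by Definition~\ref{defn_coherent_perturbation}. Once this bookkeeping is executed in the spirit of \cite[Section~6]{Woodward_toric}, the lemma follows.
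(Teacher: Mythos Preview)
Your $i=1$ boundary analysis has a gap. You list only edge breaking (a) and edge collapse (b) as the codimension-one degenerations and assert that each furnishes a half-open collar chart. But you have omitted disk bubbling: from an essential type ${\bm \Gamma}'$ with one fewer finite edge, a single disk component of class $\beta_1+\beta_2$ can Gromov-degenerate into a nodal pair of disks of classes $\beta_1,\beta_2$, and that limit lands on exactly the same stratum as your edge-collapse limit from ${\bm \Gamma}$. Since the index of an essential type depends only on the inputs, output, and total disk class, ${\bm \Gamma}'$ has index $1$ as well and also contributes to ${\mc M}^{qd}(x_1,\ldots,x_k;x_\infty)_1$. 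Thus the stratum in (b) is a \emph{fake} boundary where two top strata meet: gluing from the two sides yields an open-interval chart, not a half-open collar, and the point is interior to the $1$-manifold. Only (a) gives genuine boundary. Coherence condition (2) of Definition~\ref{defn_coherent_perturbation} is precisely what makes the perturbations on ${\bm \Gamma}$ and ${\bm \Gamma}'$ agree along this common stratum so that the identification is well posed.

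As written, then, your argument neither accounts for the bubbling limits of the fewer-edge strata nor supplies the correct local model at (b); both are needed before the $1$-manifold-with-boundary claim follows. The paper's own proof is terse (``transversality, compactness, and the standard gluing construction''), but the standard construction it invokes includes exactly this fake-boundary identification.
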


\begin{proof}
For the zero-dimensional moduli space, the claimed finiteness follows from the compactness argument and the transversality. For the one-dimensional moduli space, the fact that it is a 1-dimensional manifold with boundary follows from the transversality, compactness, as well as the standard gluing construction. 
\end{proof}

Moreover, the moduli spaces are all oriented. The orientation depends on choices of orientations of unstable manifolds of critical points of $f_L$ and the orientations of moduli spaces of quasidisks; the latter depends on the orientation of the Lagrangian torus and the spin structure, which we fix from the beginning. Notice that these choices can be made independent of the position ${\bf u}\in {\rm Int} P$ in the interior of the moment polytope.

\subsubsection{Quasimap Fukaya $A_\infty$ algebra}

We would like to define a (family of) cohomologically unital $A_\infty$ algebra(s) over $\Lambda_{\ov{\mb Q}}$ from the moment Lagrangian tori. Given a Lagrangian torus $L = L({\bf u}) \subset X$, a {\bf local system} on $L$ is a homomorphism 
\beqn
{\bf y}: H_1(L; {\mb Z}) \to \exp(\Lambda_{0, \ov{\mb Q}}).
\eeqn
Introduce the notation $\L = (L, {\bf y})$. We denote the corresponding bulk-deformed $A_\infty$ algebra of $\L$ by ${\mc F}_{\mf b}(\L)$, which is defined as follows. First, the underlying ${\mb Z}_2$-graded vector space is 
\beqn
\qcf_{\mf b}^\bullet ( \L; \Lambda_{\ov{\mb Q}}):= {\rm Span}_{\Lambda_{\ov{\mb Q}}} {\rm crit} f_L  \cong (\Lambda_{\ov{\mb Q}})^{2^n}
\eeqn
where the degree of a critical point $x \in {\rm crit} f_L$ is $|x| = n - {\rm index} (x)\ {\rm mod}\ 2$. Given critical points $x_1, \ldots, x_k$, define 
\beq\label{composition_defn}
m_k (x_k, \ldots, x_1) = \sum_{x_\infty} (-1)^\heartsuit \left( \sum_{ [u]\in {\mc M}^{qd}(x_1, \ldots, x_k; x_\infty)_0} {\mf b}([u]) T^{E([u])} {\bf y}^{\partial[u]} \epsilon([u])  \right) x_\infty.
\eeq
We explain the terms below.
\begin{enumerate}
\item The sign $\heartsuit$ is defined as 
\beq\label{heartsuit}
\heartsuit:= \sum_{i=1}^k i |x_i|\in {\mb Z}_2.
\eeq

\item For each disk $u$ with boundary on $\wh L$, as $\wh L$ does not intersect the bulk, there is a well-defined topological intersection number 
\beqn
{\mf b}([u]):= \prod_{j=1}^N c_j^{u \cap V_j}
\eeqn
which only depends on $K$-equivalence class $[u]$. Notice that if $c_j \in {\mb Z}[{\bf i}]$, so is ${\mf b}(u)$. 

\item $E([u])\in {\mb R}$ is the energy of $[u]$.

\item ${\bf y}^{\partial [u]} \in \exp (\Lambda_{0, \ov{\mb Q}})$ is the value of the local system ${\bf y}$ on the loop $\partial [u]\subset L$. 

\item $\epsilon([u]) \in \{\pm 1\}$ is determined by the orientation of the zero-dimensional moduli space.

\end{enumerate}
Similar to previous cases involving bulk deformations, the expression \eqref{composition_defn} is a legitimate element of $\qcf_{\mf b}^\bullet (\L; \Lambda_{\ov{\mb Q}})$. Extending linearly, one obtains a linear map 
\beqn
m_k: \qcf_{\mf b}^\bullet (\L; \Lambda_{\ov{\mb Q}} )^{\otimes k} \to \qcf_{\mf b}^\bullet (\L; \Lambda_{\ov{\mb Q}}).
\eeqn
Notice that when $k = 0$, this is a linear map 
\beqn
m_0: \Lambda_{\ov{\mb Q}} \to \qcf_{\mf b}^\bullet (\L; \Lambda_{\ov{\mb Q}}).
\eeqn

\begin{thm}[\cite{Woodward_toric}]\label{cohomological_unit} The collection of linear maps $m_0, m_1, \ldots$ defines a curved $A_\infty$ algebra structure on $\qcf_{\mf b}^\bullet (\L; \Lambda_{\mb Q})$, denoted by ${\mc F}_{\mf b}(\L)$. Moreover, if $x_{\max}$ is the unique maximal point of $f_L$, then ${\bf e} = x_{\rm min}$ is a cohomological unit of ${\mc F}_{\mf b}(\L)$, namely $m_1 ({\bf e}) = 0$ and
\beqn
(-1)^{|x|} m_2( {\bf e}, x) = m_2( x, {\bf e}) = x,\ \forall x\in \qcf_{\mf b}^\bullet(\L; \Lambda_{\ov{\mb Q}}).
\eeqn
\end{thm}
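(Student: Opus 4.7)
The plan is to deduce the curved $A_\infty$ relations from a standard analysis of the codimension-one boundary strata of the compactified one-dimensional moduli spaces $\ov{\mc M}{}^{qd}(x_1, \ldots, x_k; x_\infty)_1$, and to read off the cohomological unit property from a direct identification of the relevant zero- and one-dimensional strata when one input is $x_{\max}$.

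\textbf{Step 1: the $A_\infty$ relations.} For each tuple ${\bm x} = (x_1, \ldots, x_k; x_\infty)$ of critical points of $f_L$, I will enumerate the codimension-one boundary strata of $\ov{\mc M}{}^{qd}({\bm x})_1$. By the compactness/transversality lemma, this space is a compact oriented $1$-manifold with boundary, so its signed boundary count is zero. The coherence clauses in Definition \ref{defn_coherent_perturbation} ensure that only two kinds of codimension-one degenerations occur in the interior of a component: (i) breakings of a finite Morse edge, producing a concatenation of two essential configurations of map types $({\bm x}', x_{\rm int}; x_\infty)$ and $(x_{i+1}, \ldots, x_{i+j}; x_{\rm int})$; and (ii) shrinkings of a finite boundary node to length zero, producing an essential configuration whose minimal form is obtained by collapsing the edge and merging two disk components. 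Because the perturbation data were chosen to restrict compatibly under degenerations, each of the broken configurations in (i) is itself a rigid element contributing to some composition $m_{k-j+1}(\ldots, m_j(\ldots), \ldots)$. The shrinkings in (ii) cancel pairwise with breakings on the two sides of a length-zero edge by the same coherence, leaving the signed count of boundary points equal to the relevant $A_\infty$ sum. The only remaining input is the sign $\heartsuit$ in \eqref{heartsuit} and the spin-structure/orientation conventions for $\ov{\mc M}{}_{k+1}^{\rm disk}(\beta)$; after unpacking the Koszul rule coming from reordering the critical points and using that the bulk weight ${\mf b}([u])$ is multiplicative in glued configurations, the standard sign verification (as in \cite{Woodward_toric}) matches the $A_\infty$ signs. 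The bulk weight and local system contributions cause no trouble because both $u \cap V_j$ and $\partial [u] \in H_1(L; {\mb Z})$ are additive under gluing.

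\textbf{Step 2: cohomological unit.} Let ${\bf e}$ denote the critical point of $f_L$ corresponding to the unit (the maximum of $f_L$, so that its unstable manifold is open-dense in $L$ and its Morse index equals $n$, giving $|{\bf e}| = 0$). For $m_1({\bf e}) = 0$: any essential configuration contributing to $m_1({\bf e})(y)$ with $y \neq {\bf e}$ would be a rigid once-input quasidisk tree; by the Blaschke classification (Theorem \ref{thm_Blaschke}) and a dimension count, a rigid configuration with one input at the maximum and one output at $y$ forces a nontrivial disk bubble, and the evaluation-at-output map from such strata to $L$ is a submersion of positive-dimensional source, hence cannot hit a specific critical point $y$ for a generic perturbation; the boundary constant-disk case is excluded by the requirement that the configuration be essential. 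For $m_2({\bf e}, x) = x$ and $(-1)^{|x|} m_2(x, {\bf e}) = x$: I will argue that the only rigid contributions are the trivial ``constant disk plus forget the ${\bf e}$-input'' configurations, i.e.\ the Morse trajectories from $x$ to $x$ with an extra incoming edge limiting to ${\bf e}$ whose value is unconstrained because ${\bf e}$'s unstable manifold is dense. All other types of contributions vanish by the same dimension/transversality argument, and the sign $(-1)^{|x|}$ on the left-hand side arises from transporting ${\bf e}$ past $x$ in \eqref{heartsuit}.

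\textbf{Main obstacle.} The technical heart is not the $A_\infty$ equation {\it per se}, but the coherence of the perturbation scheme across unstable and non-minimal domain types (so that boundary strata of $\ov{\mc M}{}^{qd}({\bm x})_1$ are genuinely governed by the indexing-function compatibility of Definition \ref{defn_coherent_perturbation}), together with the sign verification. Once these are in place, both claims reduce to signed boundary counts on $1$-manifolds.
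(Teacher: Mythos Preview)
The paper gives no argument of its own --- it cites \cite[Theorem 3.6]{Woodward_toric} and notes that bulk deformation changes nothing --- so you are reconstructing Woodward's proof rather than matching the paper. Your Step 1 is the correct standard argument, modulo one garbled sentence: edge-length-zero strata do not ``cancel with breakings''; rather, an edge shrinking to zero and a disk bubbling off are the \emph{same} limit, so these are identified in the compactification as fake (interior) boundary and contribute nothing. Only edge breakings are true boundary, and those give exactly the $A_\infty$ terms.

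Step 2 is where the real content lies, and your treatment has a gap. For $m_1({\bf e}) = 0$ your submersion reasoning does work for the Maslov $\geq 2$ strata (the constrained moduli has dimension $m(\beta)-1+|x_\infty| \geq 1$, hence never rigid), but your handling of the remaining piece is wrong: constant disks and bare Morse trajectories are \emph{not} excluded by essentiality. The correct reason that piece vanishes is that $f_L$ is a perfect Morse function, so the Morse differential of the maximum is zero. For $m_2({\bf e}, x)$, however, the analogous dimension count gives rigidity when $m(\beta) = |x| - |x_\infty|$, so Maslov-$2$ (and higher) disks have index zero whenever $|x| \geq 2$; your submersion argument no longer excludes them, and ``vanish by the same dimension/transversality argument'' is simply unjustified here --- zero expected dimension does not mean empty. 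Ruling out these positive-Maslov contributions requires a further geometric input specific to the Blaschke classification and the structure of stable/unstable manifolds of the standard perfect Morse function on $T^n$. That is precisely what the citation to Woodward is carrying, and you have not supplied it.
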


\begin{proof}
See \cite[Theorem 3.6]{Woodward_toric} for the case without bulk deformation. One can verify that the case with bulk deformation can be proved in the same way.
\end{proof}

\begin{rem}
The $A_\infty$ algebra can be defined over ${\mb Z}$ as long as the bulk deformation has integer coefficients, though we do not need such a fact in our discussion.
\end{rem}

\subsubsection{Potential function and nontrivial Floer cohomology}

Although the quasimap Fukaya algebra is only cohomologically unital, one can still define the potential function. 

\begin{prop}
For the quasimap $A_\infty$ algebra $\qcf(\L; \Lambda_{\ov{\mb Q}})$, $m_0(1)$ is a multiple of ${\bf e}$.
\end{prop}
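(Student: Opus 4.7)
The plan is to show that $m_0(1)$ is supported only on the critical point ${\bf e}$ by a dimension count on the essential map types contributing to it. Unpacking \eqref{composition_defn} with $k = 0$,
\beqn
m_0(1) = \sum_{x_\infty \in {\rm crit} f_L} \left( \sum_{[u] \in {\mc M}^{qd}(;x_\infty)_0} {\mf b}([u])\, T^{E([u])}\, {\bf y}^{\partial[u]}\, \epsilon([u]) \right) x_\infty,
\eeqn
so it suffices to show that any essential map type ${\bm \Gamma}$ with no inputs, output $x_\infty$, and vanishing virtual dimension must have $x_\infty = {\bf e}$.

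Let ${\bm \Gamma}$ have $p \geq 1$ finite vertices. The underlying rooted ribbon tree then has $p-1$ internal finite edges and one outgoing semi-infinite edge, so the total number of boundary marked points distributed across the disk components is $2(p-1) + 1 = 2p-1$, i.e.\ $\sum_v (k_v+1) = 2p-1$. Combining the formula $\dim {\mc M}^{\rm disk}_{k_v+1}(\beta_v) = n + \mu_v + k_v - 2$ (with $\mu_v := 2 i(\beta_v)$), the codimension-$(n-1)$ matching condition at each internal node (a codimension-$n$ matching in $L$ minus the one-dimensional free length parameter of the edge), and the codimension-${\rm index}(x_\infty)$ constraint that the output evaluation lies in the stable manifold $W^s(x_\infty) \subset L$ of the negative gradient flow of $f_L$, one computes
\beqn
\dim {\mc M}_{\bm \Gamma}(;x_\infty) = \sum_v \mu_v + n - 2 - {\rm index}(x_\infty).
\eeqn
Setting this to zero gives $\sum_v \mu_v = 2 - n + {\rm index}(x_\infty) \leq 2$.

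To finish, I would invoke the indexing function condition preceding Definition \ref{defn_coherent_perturbation}, namely $n_v \geq 1$ (equivalently $\mu_v \geq 2$) at every unstable vertex $v$ of $\Gamma^{\rm min}$. A short combinatorial argument on rooted ribbon trees with zero inputs shows that there is always at least one unstable finite vertex, and at least two when $p \geq 2$: either the vertex adjacent to the output itself has valence $2$ and is unstable, or it has valence $\geq 3$ and supports at least two subtrees, each terminating in an unstable leaf. This yields $\sum_v \mu_v \geq 2$ always, and $\sum_v \mu_v \geq 4$ when $p \geq 2$. Combined with $\sum_v \mu_v \leq 2$, only $p = 1$ with a single Maslov-$2$ disk survives, forcing ${\rm index}(x_\infty) = n$. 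Hence $x_\infty$ is the unique critical point of $f_L$ of top Morse index, which is ${\bf e}$ by Theorem \ref{cohomological_unit}; accordingly $m_0(1) = W({\bf y}) \cdot {\bf e}$ with $W({\bf y}) = \sum_{\beta :\, i(\beta) = 1} n(\beta)\, {\mf b}(\beta)\, T^{\omega(\beta)}\, {\bf y}^{\partial \beta}$, the sum running over the Blaschke disk classes of Theorem \ref{thm_Blaschke}. The main technical point is enforcing the $\mu_v \geq 2$ bound at unstable vertices; without this convention, higher-order tree configurations could in principle contribute and a multiple of ${\bf e}$ would no longer be forced.
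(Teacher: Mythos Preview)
Your argument is correct and is precisely the dimension-count argument that the paper defers to \cite[Proposition 3.7]{Woodward_toric} for: the virtual dimension formula forces $\sum_v \mu_v \leq 2$, while the stability constraint $n_v \geq 1$ at unstable vertices (which is exactly the mechanism Woodward uses to handle unstable domain types) forces $\sum_v \mu_v \geq 2p$ via your leaf-counting lemma, leaving only the single Maslov-two disk configuration with output at the index-$n$ critical point. The bulk-deformed case needs no extra work, as the paper notes, since ${\mf b}$ only reweights the counts.
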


\begin{proof}
See \cite[Proposition 3.7]{Woodward_toric} for the case with ${\mf b}= 0$. When we use a nontrivial (small) bulk deformation, as we only change the weights in counting but do not modify the perturbation method, the same proof goes through.
\end{proof}

\begin{defn}
Define $W_{\mf b}(u): H_1(L( {\bf u} ); \exp (\Lambda_{0, \ov{\mb Q}}) ) \to \Lambda$ by 
\beqn
m_0(1) = W_{\mf b}(u)({\bf y}) {\bf e}
\eeqn
and call it the {\bf potential function} of the brane $\L = (L({\bf u}), y)$. By abuse of terminology, we also call $W_{\mf b}$ the bulk-deformed potential function of the Lagrangian $L({\bf u})$ or the toric manifold.

Let $({\mb C}^*)^n \cong X^* \subset X$ be the complement of toric divisors. Choose a trivialization
\beqn
\tau_X: {\rm Int}P \times T^n \to X^*
\eeqn
which is unique up to isotopy, which induces a well-defined trivialization
\beqn
\bigsqcup_{u \in {\rm Int} P} H_1(L({\bf u}); \exp (\Lambda_{0, \ov{\mb Q}})) = {\rm Int} P \times (\exp (\Lambda_{0, \ov{\mb Q}}))^n.
\eeqn
The bulk-deformed {\bf quasimap disk potential} 
of the toric manifold $X$ is defined by 
\beqn
\begin{split}
W_{\mf b}: {\rm Int} P \times (\exp \Lambda_0)^n & \to \Lambda\\
             ({\bf u}, {\bf y}) & \mapsto W_{\mf b}({\bf u})({\bf y}).
\end{split}
\eeqn
\end{defn}

Now we can define the quasimap Floer cohomology. By the $A_\infty$ relation, for any $x \in \qcf(\L; \Lambda_{\ov{\mb Q}})$, 
\beqn
m_1(m_1(x)) + (-1)^{\|x\|} m_2( m_0(1), x) + m_2(x,m_0(1)) = 0.
\eeqn
By Theorem \ref{cohomological_unit}, the last two terms cancel. Hence $m_1^2 = 0$. Hence one can define the ${\mf b}$-deformed {\bf quasimap Floer cohomology} of the brane $\L$ to be 
\beqn
\qhf_{\mf b}^\bullet (\L; \Lambda_{\ov{\mb Q}}):= {\rm ker} m_1/ {\rm im} m_1.
\eeqn

Following \cite{Cho} \cite{Cho_Oh} \cite{Woodward_toric}, to find nontrivial Floer cohomology, one needs to establish a version of the divisor equation. Recall that $ L \cong (S^1)^n$ with $H_1 (L; {\mb Z}) \cong {\mb Z}^n$. The perfect Morse function $f_L$ has exactly $n$ critical points of Morse index $1$, whose homology classes are identified with the $n$ standard generators of $H_1(L; {\mb Z})$. If $x_1, \ldots, x_n$ are these generators, then any local system ${\bf y}$ is determined by the values
\beqn
y_1 = {\bf y}(x_1), \ldots, y_n = {\bf y}(x_n).
\eeqn

\begin{thm}\label{thm_divisor_eqn}
If $x$ is a generator of $H_1(L; {\mb Z})$, then 
\beqn
m_1(x) = \partial_x W_{\mf b}({\bf u})(y_1, \ldots, y_n)
\eeqn
\end{thm}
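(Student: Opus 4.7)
The plan is to follow the classical divisor-equation template, originally due to Cho and adapted by Woodward in \cite{Woodward_toric} to the quasimap setting. Let $x=x_i$ denote the index-one critical point of $f_L$ corresponding to the $i$-th generator of $H_1(L;{\mb Z})\cong{\mb Z}^n$, so that $y_i={\bf y}(x_i)$, and interpret $\partial_x$ as the logarithmic derivative $y_i\partial_{y_i}$ (consistent with the coordinatization $y_i=e^{x_i}$). The goal is to rewrite both sides of the identity as signed counts weighted by the same local coefficients ${\mf b}([u])\,T^{E([u])}\,{\bf y}^{\partial[u]}\,\epsilon([u])$ and to show term by term that they differ only by the integer $d_i([u])$, produced on each side from a geometric and an algebraic mechanism respectively.

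First I would unpack $m_1(x)$ using \eqref{composition_defn} with $k=1$: it is a signed count of rigid treed quasimaps with one input gradient trajectory from $x$ attached to a quasidisk $u$ at some $z_{\rm in}\in\partial{\mb D}$, one output gradient trajectory to a critical point $x_\infty$ attached at $z_{\rm out}\in\partial{\mb D}$, and $u$ classified by Theorem \ref{thm_Blaschke}. The central moduli-theoretic step is a forgetful cobordism: fixing the output data and letting the input marked point $z_{\rm in}$ sweep around the circle $\partial{\mb D}$, the locus of valid positions (those compatible with the incoming Morse flow into $x$) is, for generic perturbations, a finite set whose cardinality equals the topological intersection number of $\partial u\subset L$ with the codimension-one cycle Poincar\'e-dual to $x$. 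Using the Blaschke formula \eqref{Blaschke_formula}, the boundary of a disk of class $(d_1,\ldots,d_N)$ projects to $L$ winding $d_i$ times around the $i$-th $S^1$-factor, so this intersection count equals exactly $d_i([u])$.

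On the side of the potential function, direct logarithmic differentiation of $W_{\mf b}({\bf u})({\bf y})=\sum_{[u]}{\mf b}([u])\,T^{E([u])}\,{\bf y}^{\partial[u]}\,\epsilon([u])$ yields $\partial_x W_{\mf b}=\sum_{[u]} d_i([u])\cdot{\mf b}([u])\,T^{E([u])}\,{\bf y}^{\partial[u]}\,\epsilon([u])$. Comparing with the moduli-theoretic count of $m_1(x)$ term by term over rigid quasidisks yields the claimed equality, under the implicit identification of the scalar $\partial_x W_{\mf b}$ with $\partial_x W_{\mf b}\cdot{\bf e}\in\qcf^\bullet(\L;\Lambda_{\ov{\mb Q}})$. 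One must also check that output critical points $x_\infty\neq{\bf e}$ contribute zero to the relevant component of $m_1(x)$; this follows from degree counting on the $A_\infty$ algebra together with the rigidity condition and the Morse-theoretic structure of the perfect Morse function $f_L$, whose only top-index critical point is ${\bf e}$.

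The principal technical obstacle I anticipate is sign reconciliation. The signs $\epsilon([u])$ carried by the rigid moduli spaces of quasidisks, the $\heartsuit$-sign from \eqref{heartsuit}, the orientation of the $S^1$-forgetful cobordism, and the orientations of the unstable manifolds of $f_L$ fixed by the chosen spin structure on $L$ must all conspire to produce the positive integer $d_i([u])$ on the $m_1$-side, matching the plain logarithmic derivative on the $W$-side without extra minus signs. This is a standard but notationally heavy bookkeeping exercise; its coherence across all disk classes is guaranteed by the coherent system of perturbations (Definition \ref{defn_coherent_perturbation}) together with the compatible indexing functions $\vec n$, which ensure that the local orientations glue uniformly in the parameter $[u]$.
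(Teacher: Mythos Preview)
Your approach is essentially the one the paper invokes: the paper's proof is a one-line citation to \cite[Section 3.6]{Woodward_toric}, and your sketch is precisely Woodward's divisor-equation argument (forget the input, count intersections of $\partial u$ with the hypersurface Poincar\'e-dual to $x_i$, compare with $y_i\partial_{y_i}$ on the potential), together with the observation that the bulk weight ${\mf b}([u])$ is carried along unchanged.

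One slip to fix: you conflate the $N$-torus $\wh L$ upstairs with the $n$-torus $L$ downstairs. A disk of class $\sum_j d_j\beta_j$ has boundary winding $d_j$ times around the $j$-th factor of $\wh L\cong(S^1)^N$, but after projecting to $L\cong(S^1)^n$ via the map induced by ${\mb Z}^N\to{\mb Z}^n$, $e_j\mapsto v_j$, the winding around the $i$-th factor of $L$ is $\sum_j d_j v_{j,i}$, not $d_i$. This is exactly the exponent of $y_i$ in ${\bf y}^{\partial[u]}=\prod_j y^{d_j v_j}$, so both sides of your comparison still match; only the displayed formula for the intersection count needs correcting. In practice only Maslov-two disks $\beta_j$ contribute, and then the count is $v_{j,i}$, matching $y_i\partial_{y_i}(c_jT^{l_j({\bf u})}y^{v_j})=v_{j,i}\cdot c_jT^{l_j({\bf u})}y^{v_j}$.
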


\begin{proof}
In the absence of bulk deformation, this is established in \cite[Section 3.6]{Woodward_toric}, which also carries over in our case.
\end{proof}

Lagrangian branes with nontrivial Floer cohomology can be identified with critical points of the potential function.

\begin{thm}(cf. \cite[Theorem 6.6]{Woodward_toric})
If ${\bf y} = (y_1, \ldots, y_n)$ is a critical point of $W_{\mf b}({\bf u})$, then the Floer cohomology of $\L({\bf u}) = (L({\bf u}),{\bf y})$ is isomorphic to $H^\bullet( L({\bf u}); \Lambda_{\ov{\mb Q}})$. 
\end{thm}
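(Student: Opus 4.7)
The plan is to reduce the statement to showing that the Floer differential $m_1$ on $\qcf_{\mf b}^\bullet(\L({\bf u});\Lambda_{\ov{\mb Q}})$ vanishes identically under the critical point hypothesis. Because $f_L$ is a perfect Morse function on $L({\bf u}) \cong T^n$, the underlying Novikov vector space has rank $2^n = \dim_{\Lambda_{\ov{\mb Q}}} H^\bullet(L({\bf u});\Lambda_{\ov{\mb Q}})$, so vanishing of $m_1$ immediately yields the desired isomorphism as $\Lambda_{\ov{\mb Q}}$-modules.

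First I would treat the generators. The cohomological unit ${\bf e} = x_{\max}$ satisfies $m_1({\bf e}) = 0$ by Theorem \ref{cohomological_unit}. For the $n$ index-$1$ critical points $x_1,\ldots,x_n$ of $f_L$, which represent a basis of $H^1(L({\bf u});{\mb Z})$, the divisor equation (Theorem \ref{thm_divisor_eqn}) gives $m_1(x_i) = (\partial_{y_i} W_{\mf b})({\bf u})({\bf y})$, and this vanishes precisely because ${\bf y}$ is a critical point of $W_{\mf b}({\bf u})$. The remaining $2^n - n - 1$ critical points of $f_L$ have degrees $\geq 2$ and, at the level of cohomology, correspond to wedge products $x_I = x_{i_1}\wedge\cdots\wedge x_{i_k}$ under the identification $H^\bullet(T^n) \cong \Lambda^\bullet H^1(T^n)$.

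To propagate the vanishing to those higher-degree classes, I would filter $\qcf_{\mf b}^\bullet(\L)$ by the energy/Novikov valuation and examine the associated graded $A_\infty$ structure. The leading-order $m_2$ agrees with the cup product on $H^\bullet(L({\bf u});\Lambda_{\ov{\mb Q}})$ (the exterior product on $\Lambda^\bullet H^1$), and the leading-order $m_1$ is zero since constant disks contribute nothing and every nonconstant quasidisk has Maslov index $\geq 2$ by Theorem \ref{thm_Blaschke}. The $A_\infty$ relation
\[
m_1\bigl(m_2(a,b)\bigr) + m_2\bigl(m_1(a),b\bigr) + (-1)^{\|a\|} m_2\bigl(a,m_1(b)\bigr) + \sum_{j} \pm m_3\bigl(\ldots, m_0(1), \ldots \bigr) = 0,
\]
together with $m_0(1) = W_{\mf b}({\bf y})\,{\bf e}$, then lets me inductively show order by order in the energy filtration that $m_1(x_I) = 0$: the Leibniz term vanishes by the base case, and the $m_0$-correction terms involve ${\bf e}$ in one input of $m_3$, which I would absorb using the (cohomological) unit property of ${\bf e}$.

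The main obstacle is that ${\bf e}$ is only a cohomological, not a strict, unit, so the $m_3$-corrections involving $m_0$ cancel only up to $m_1$-exact terms rather than on the nose, requiring care to close the induction without circularity. To handle this I would pass to a minimal model of ${\mc F}_{\mf b}(\L)$ via homological perturbation (legitimate over the characteristic-zero field $\Lambda_{\ov{\mb Q}}$), in which the unit becomes strict and the $A_\infty$ Leibniz argument becomes literal; alternatively, one can exploit the explicit Blaschke-product description of the contributing Maslov-two disks (Theorem \ref{thm_Blaschke}) to compute the leading-order $m_1$ on products directly, as in \cite[\S 6]{Woodward_toric} and the Fano case of \cite{Cho, Cho_Oh}. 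Either route yields $m_1 \equiv 0$, whence $\qhf^\bullet_{\mf b}(\L;\Lambda_{\ov{\mb Q}}) = \qcf^\bullet_{\mf b}(\L;\Lambda_{\ov{\mb Q}}) \cong H^\bullet(L({\bf u});\Lambda_{\ov{\mb Q}})$ by rank.
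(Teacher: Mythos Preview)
Your strategy matches the paper's: both rest on the divisor equation (Theorem~\ref{thm_divisor_eqn}), and the paper's proof is literally a citation to \cite{Woodward_toric} plus the remark that the bulk-deformed case follows identically once that equation is in hand.

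Where your execution falters is the inductive step for $|I|\ge 2$. The minimal-model fix is circular: by definition a minimal $A_\infty$ model has $m_1^{\min}=0$ and underlying space $H^\bullet(m_1)$, which is precisely the object you are trying to compute, so passing to it presupposes the answer. Nor does minimality make the unit strict; the higher $m_k^{\min}(\ldots,[{\bf e}],\ldots)$ need not vanish, so the Leibniz relation is no cleaner there. Without strictness, the $m_3(\ldots,W{\bf e},\ldots)$ terms in your $A_\infty$ identity remain uncontrolled: unwinding them re-introduces $m_1(x_K)$ for generators $x_K$ with $|K|$ not below $|I|$, and the double induction on energy and $|I|$ does not close.

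Your second route---direct computation via the Blaschke classification (Theorem~\ref{thm_Blaschke})---is the one that actually carries the argument in the cited references. There the disk contributions to $m_1$ are computed explicitly on every generator rather than reduced abstractly to the degree-one case, and the unit issue never arises. You should commit to that route and drop the minimal-model detour.
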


\begin{proof}
The case with ${\mf b} = 0$ is given by \cite[Theorem 6.6]{Woodward_toric}. When we have a nonzero small bulk deformation, it is still a consequence of the divisor equation (Theorem \ref{thm_divisor_eqn}).
\end{proof}

\subsection{Critical points of the Givental--Hori--Vafa potential}

In this subsection we study various properties of the deformed Givental--Hori--Vafa potential which arises from disk counting in gauged linear sigma model.

We first recall the expression of the Givental--Hori--Vafa potential in terms of the data of the moment polytope and explain its relation with the quasimap disk potential. Let $\Delta\subset {\mb R}^n$ be the moment polytope of $X$, described by 
\beqn
\Delta = \Big\{ u \in {\mb R}^n\ |\ l_j(u) = \langle u, v_j \rangle - \lambda_j \geq 0,\ j = 1, \ldots, N \Big\}.
\eeqn
Here $v_j = (v_{j, 1}, \ldots, v_{j, n}) \in {\mb Z}^n$, $j = 1, \ldots, N$ are the inward normal vectors of each codimension 1 face of $\Delta$ coming from the toric data and $\lambda_j \in {\mb R}$. The {\bf Givental--Hori--Vafa potential} of $X$ (or rather its moment polytope) is the element 
\beqn
W_0  = \sum_{j=1}^N T^{-\lambda_j} y^{v_j}:= \sum_{j=1}^N T^{-\lambda_j} y_1^{v_{j, 1}}\cdots y_n^{v_{j, n}}\in \Lambda[y_1, \ldots, y_n, y_1^{-1}, \ldots, y_n^{-1}]. 
\eeqn
More generally, given any small bulk deformation ${\mf b} = \sum_{j=1}^N \log c_j V_j$, the deformed Givental--Hori--Vafa potential is defined to be
\beqn
W_{\mf b} = \sum_{j=1}^N c_j T^{-\lambda_j} y^{v_j}.
\eeqn
Without loss of generality, we assume that the origin $0 \in {\mb R}^n$ is contained in the interior of $\Delta$. Hence all $\lambda_j$ are positive.

\begin{defn}\label{defn:log-derivative}
A point ${\bm \eta} = (\eta_1, \ldots, \eta_n) \in (\Lambda \setminus \{0\})^n$ is called a {\bf critical point} of $W_{\mf b}$ if 
\beqn
\left( y_1 \frac{\partial W_{\mf b}}{\partial y_1} \right) (\eta_1, \ldots, \eta_n) = \cdots = \left( y_n \frac{\partial W_{\mf b}}{\partial y_n} \right) (\eta_1, \ldots, \eta_n) = 0.
\eeqn
A critical point ${\bm \eta}$ is called {\bf nondegenerate} if 
\beqn
\det \left( \eta_i \eta_j \frac{\partial^2 W_{\mf b}}{\partial y_i \partial y_j}({\bm \eta}) \right) \neq 0.
\eeqn
$W_{\mf b}$ is called a {\bf Morse function} if all the critical points are nondegenerate.
\end{defn}

Observe that the Givental--Hori--Vafa potential is very similar to the quasidisk potential; the latter has a dependence on $u\in {\rm Int}\Delta$. Indeed, the disk potential of the Lagrangian $L(u)$ with a local system ${\bm y}\in (\exp(\Lambda_0))^n$ is 
\beqn
W_{\mf b}(T^{u_1} y_1, \ldots, T^{u_n} y_n).
\eeqn
This is proved by \cite[Corollary 6.4]{Woodward_toric} in the absence of bulk deformations, and the bulk-deformed version follows from the same argument by the Blaschke formula. 

Hence a critical point of $W_{\mf b}$ corresponds to a Floer nontrivial Lagrangian if the valuation of the coordinates of the critical point is in the interior of the moment polytope. On the other hand, in view of mirror symmetry, the Jacobian ring of the Givental--Hori--Vafa potential, or formally the ring of functions on the critical locus, is closely to related to the quantum cohomology under mirror symmetry. However, their ranks agree only in the Fano case. In general, certain critical points fall outside the moment polytope and do not correspond to cohomology classes of the toric manifold.

\begin{example}
Consider the $n$-th Hirzebruch surface $F_n$ ($n \geq 1$) whose moment polytope is 
\beqn
\Delta = \left\{ u =  (u_1, u_2) \in {\mb R}^2\ \left| \ \begin{array}{c} l_1(u) = u_1 \geq 0,\\ l_2(u) = u_2 \geq 0,\\ l_3(u) = 1-\alpha - u_2 \geq 0,\\ l_4 (u) = n -u_1 - n u_2 \geq 0.
\end{array}\right. \right\}
\eeqn
Here $\alpha\in (0, 1)$ is a parameter. The (undeformed) Givental--Hori--Vafa potential is 
\beqn
W_0 (y_1, y_2) = y_1 + y_2 + T^{1-\alpha} y_2^{-1} + T^n y_1^{-1} y_2^{-n}.
\eeqn
The equations for critical points are 
\begin{align*}
&\ y_1 = T^n y_1^{-1} y_2^{-n},\ &\ y_2 = T^{1-\alpha} y_2^{-1} + n T^n y_1^{-1} y_2^{-n}.
\end{align*}
Assume $n$ is even to simplify notations. Solving $y_1$ one obtains
\beqn
y_1 = \pm T^{\frac{n}{2}} y_2^{-\frac{n}{2}}
\eeqn
and hence 
\beq\label{Hirzebruch_surface_critical_point}
y_2 = T^{1-\alpha} y_2^{-1} \pm n T^{\frac{n}{2}} y_2^{-\frac{n}{2}} \Longrightarrow y_2^{\frac{n}{2}-1}( y_2^2 - T^{1-\alpha} ) = \pm T^{\frac{n}{2}}. 
\eeq
Each of the two equations has $\frac{n}{2}+1$ roots, providing $n+2$ critical points, much larger than the rank of homology (which is $4$).

Notice that there are two solutions to \eqref{Hirzebruch_surface_critical_point} of the form 
\beqn
y_2 = \pm T^{\frac{1-\alpha}{2}} + {\rm higher\ order\ terms}.
\eeqn
They give 4 critical points whose ``tropical'' positions are inside the moment polytope $\Delta$. There are also $n-2$ roots of \eqref{Hirzebruch_surface_critical_point} whose valuations are 
\beqn
\frac{\frac{n}{2}-(1-\alpha)}{ \frac{n}{2}-1}> 1-\alpha.
\eeqn
They correspond to critical points which are outside the moment polytope. This ends the example. 
\end{example}

\begin{defn}
We say that a critical point ${\bf \eta} = (\eta_1, \ldots, \eta_n)$ of $W_{\mf b}$ is {\bf inside the moment polytope} $\Delta$ if  
    \beqn
    \vec{\mf v}_T({\bm \eta}) = ({\mf v}_T(\eta_1), \ldots, {\mf v}_T(\eta_n)) \in {\rm Int} \Delta \subset {\mb R}^n.
    \eeqn
Denote by
    \beqn
    {\rm Crit}_X W_{\mf b} \subset {\rm Crit} W_{\mf b}
    \eeqn
    the set of critical points of $W_{\mf b}$ that are inside the moment polytope of $X$. 
\end{defn}

\begin{prop}\label{prop_same_rank}
Let ${\mf b}$ be an arbitrary small bulk deformation. When $W_{\mf b}$ is a Morse function, one has 
\beqn
\# {\rm Crit}_X W_{\mf b} = {\rm dim} H_\bullet(X).
\eeqn
\end{prop}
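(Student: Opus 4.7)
The approach is a tropical analysis of the critical equations followed by a Hensel-style lifting over the Novikov ring $\Lambda_{\ov{\mb Q}}$. Any critical point ${\bm \eta}$ of $W_{\mf b}$ has a tropical position $u^* := \vec{{\mf v}}_T({\bm \eta}) \in {\mb R}^n$, and writing $\eta_k = T^{u_k^*} z_k$ with each $z_k$ a Novikov unit (so ${\mf v}(z_k) = 0$), the $i$-th critical equation becomes
\begin{equation*}
\sum_{j=1}^N c_j\, v_{j,i}\, T^{l_j(u^*)}\, z^{v_j} \;=\; 0, \qquad i = 1, \ldots, n.
\end{equation*}
Since $l_j(u^*) > 0$ for every $j$ is precisely the condition $u^* \in {\rm Int}\Delta$, the ``interior'' condition defining ${\rm Crit}_X W_{\mf b}$ is intrinsically tropical. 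Extracting the leading $T$-order forces the residue-field equation $\sum_{j \in J(u^*)} c_j v_{j,i}\, \bar z^{v_j} = 0$, where $J(u^*) = \{j : l_j(u^*) = \min_k l_k(u^*)\}$; this has a nonzero unit solution only if the normals $\{v_j : j \in J(u^*)\}$ satisfy an appropriate linear relation.

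I would then carry out a local analysis near each vertex $v_{\rm vert}$ of $\Delta$. The $n$ facets $j_1, \ldots, j_n$ meeting at $v_{\rm vert}$ have normals forming a ${\mb Z}$-basis of ${\mb Z}^n$ by smoothness of $X$, so the unimodular change of variable $\tilde y_k := y^{v_{j_k}}$ makes the polytope combinatorics tractable; the primitive relations of the fan determine a unique tropical position $u^*(v_{\rm vert}) \in {\rm Int}\Delta$ attached to the vertex, and for generic bulk ${\mf b}$ the leading-order equation at $u^*(v_{\rm vert})$ has a unique unit residue-field solution, which lifts to a unique exact critical point ${\bm \eta}(v_{\rm vert})$ of $W_{\mf b}$ via Hensel's lemma over $\Lambda_{\ov{\mb Q}}$. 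The Morseness of $W_{\mf b}$ guarantees nondegeneracy. Conversely, every critical point with $u^* \in {\rm Int}\Delta$ arises from this construction, yielding a bijection between vertices of $\Delta$ and elements of ${\rm Crit}_X W_{\mf b}$. Since the number of vertices of $\Delta$ equals the number of toric fixed points of $X$, which equals $\dim H_\bullet(X)$ by the perfect Morse function argument used in the proof of Proposition \ref{computation}, the claim follows.

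The main obstacle is the combinatorial content of the second step: one must enumerate all tropical positions $u^* \in {\rm Int}\Delta$ supporting a critical point and show that this set is in bijection with the vertex set of $\Delta$, for generic ${\mf b}$. The Hirzebruch surface $F_n$ for even $n \geq 4$ already shows that $W_{\mf b}$ acquires $n - 2$ additional critical points in the non-Fano regime, and the heart of the matter is verifying that their valuations systematically lie outside ${\rm Int}\Delta$. The requisite analysis parallels Fukaya--Oh--Ohta--Ono's study of the NLSM superpotential; alternatively, on the overlap of the two convergence domains one may compare the GLSM and NLSM superpotentials by the expected unimodular coordinate change and import the corresponding result on the NLSM side.
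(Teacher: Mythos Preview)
Your proposal has a genuine gap at its central combinatorial step. The claim that each vertex of $\Delta$ determines a distinct tropical position $u^*(v_{\rm vert})$ supporting a \emph{unique} unit residue-field solution is false already in the paper's own Hirzebruch example. For $F_n$ with $n$ even, all four interior critical points share the \emph{same} tropical position $u^* = \bigl(n(1+\alpha)/4,\ (1-\alpha)/2\bigr)$: one checks that $l_1(u^*) = l_4(u^*)$ and $l_2(u^*) = l_3(u^*)$, and the residue-field system at $u^*$ has four unit solutions, not one. So the vertex-by-vertex bijection you describe does not exist in the form stated; the correct count of interior critical points emerges from a single tropical position supporting multiple solutions, and controlling that multiplicity is exactly the hard part. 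Your final paragraph effectively concedes this, proposing to ``import the corresponding result on the NLSM side.''

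The paper's proof does precisely that import, and nothing else. It invokes Fukaya--Oh--Ohta--Ono \cite[Theorem 2.8.1(2)]{FOOO_mirror}: for any small bulk ${\mf b}$ there exist a bulk ${\mf b}'$ and a coordinate change $y \mapsto y'$ with $W_{\mf b}(y') = \mf{PO}_{{\mf b}'}(y)$, and this coordinate change preserves both Morseness and the tropical positions of critical points. Then \cite[Theorem 1.1.3]{FOOO_mirror} gives $\#{\rm Crit}_X \mf{PO}_{{\mf b}'} = \dim H_\bullet(X)$ directly. There is no attempt at an independent tropical enumeration; the entire content is outsourced to FOOO. Your sketch, once its gap is acknowledged, reduces to the same citation, so the self-contained tropical/Hensel argument you outline does not actually avoid the black box. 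A further minor point: the statement is for \emph{arbitrary} ${\mf b}$ with $W_{\mf b}$ Morse, not generic ${\mf b}$, so the genericity hypotheses scattered through your argument would also need to be removed.
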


\begin{proof}
We use a result of Fukaya {\it et. al.} \cite[Theorem 2.8.1 (2)]{FOOO_mirror}. First, Fukaya {\it et. al.} defined their bulk-deformed Lagrangian Floer disk potential $\mf{PO}_{\mf b}$ by counting (stable) holomorphic disks inside the toric manifold (using $T^n$-equivariant Kuranishi structures). For our bulk-deformed Givental--Hori--Vafa potential function $W_{\mf b}$, their theorem shows that there exists a bulk deformation ${\mf b}'$ and a ``change of coordinate'' $y\mapsto y'$ such that
\beqn
W_{\mf b}(y') = \mf{PO}_{{\mf b}'}(y).
\eeqn
Notice that the change of coordinate does not change the Morse property and the tropical positions of the critical points. Hence one has 
\beqn
\# {\rm Crit}_X (W_{\mf b}) = \# {\rm Crit}_X (\mf{PO}_{{\mf b}'}).
\eeqn
On the other hand, by \cite[Theorem 1.1.3]{FOOO_mirror}, this number of critical points coincides with the rank of homology.
\end{proof}

Lastly we prove the following fact.

\begin{thm}\label{thm_good_bulk}
There exists a small bulk deformation ${\mf b} = \sum_{j=1}^N \log c_j V_j$ with $c_j \in {\mb Z}[{\bf i}]$ such that $W_{\mf b}$ is a Morse function and all critical values are distinct. 
\end{thm}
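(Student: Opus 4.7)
The plan is a three-step genericity argument: reformulate both conditions as the non-vanishing of certain polynomials in the parameters $(c_1, \ldots, c_N)$, verify that these polynomials are not identically zero, and then invoke the fact that a nonzero polynomial over the field $\Lambda_{\ov{\mb Q}}$ cannot vanish on all of $({\mb Z}[{\bf i}]\setminus\{0\})^N$. First, I would view $W_{\bm c} = \sum_j c_j T^{-\lambda_j} y^{v_j}$ as a Laurent polynomial in $y_1, \ldots, y_n$ over $\Lambda_{\ov{\mb Q}}[c_1^{\pm}, \ldots, c_N^{\pm}]$. The logarithmic critical equations $y_i \partial_{y_i} W_{\bm c} = 0$ form a system of $n$ Laurent polynomials; its solutions in $(\Lambda_{\ov{\mb Q}}^*)^n$ are organized tropically by their $T$-valuations, which must be vertices of the Newton subdivision induced by $\bm c$ on the polytope $\Delta$. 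The Morse condition at a critical point $\bm \eta$ is the non-vanishing of the determinant in Definition \ref{defn:log-derivative}, and hence over $\Lambda_{\ov{\mb Q}}[c^{\pm}]$ amounts to the non-vanishing of a single discriminant polynomial $P_1(\bm c)$. Similarly, the condition that critical values are pairwise distinct amounts to the non-vanishing of a resultant-type polynomial $P_2(\bm c)$ (built from the differences of critical values taken over all pairs).

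Second, I must show $P_1 P_2 \not\equiv 0$ in $\Lambda_{\ov{\mb Q}}[c_1^{\pm}, \ldots, c_N^{\pm}]$. The most conceptual route is to appeal to \cite[Theorem 2.8.1]{FOOO_mirror}: there exists a bulk deformation ${\mf b}'$ by cohomology classes of $X$ such that the associated Lagrangian disk potential $\mf{PO}_{{\mf b}'}$ is a Morse function with distinct critical values, and the coordinate change in that theorem identifies $\mf{PO}_{{\mf b}'}$ with some $W_{{\mf b}''}$ where ${\mf b}''$ is itself of the form $\sum \log c_j V_j$. This supplies a concrete $(c_1, \ldots, c_N) \in (\Lambda_{\ov{\mb Q}}^*)^N$ at which $P_1 P_2 \neq 0$. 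Alternatively, one can argue directly by a Bernstein--Kushnirenko--Khovanskii style non-degeneracy analysis at each tropical vertex: at a vertex $u$ the leading-order critical system is a sparse system over $\ov{\mb Q}$ whose coefficients are a subset of $\{c_j\}$, which is non-degenerate for generic choices; moreover, leading-order critical values at distinct tropical vertices generically have distinct valuations, while equal-valuation comparisons reduce to finitely many explicit polynomial conditions.

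Third, since $P_1 P_2$ is a nonzero element of the integral domain $\Lambda_{\ov{\mb Q}}[c_1^{\pm}, \ldots, c_N^{\pm}]$, it cannot vanish on every point of $({\mb Z}[{\bf i}] \setminus \{0\})^N$: otherwise the one-variable specialization $c_j \mapsto k$ with $k \in {\mb Z}\setminus\{0\}$ and all other $c_i$ fixed at generic Gaussian integer values would produce a nonzero univariate polynomial over $\Lambda_{\ov{\mb Q}}$ with infinitely many integer roots, which is impossible. Hence there exist $c_1, \ldots, c_N \in {\mb Z}[{\bf i}] \setminus \{0\}$ at which $P_1(\bm c) \neq 0$ and $P_2(\bm c) \neq 0$, giving the required bulk deformation. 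I expect the main technical obstacle to be step two: making rigorous that step-by-step tropical/leading-order analysis correctly produces globally nondegenerate critical points and distinct critical values over $\Lambda_{\ov{\mb Q}}$, and checking that the coordinate change of \cite{FOOO_mirror} indeed translates generic cohomological bulk deformations into generic deformations of the $c_j$ parameters appearing in the GLSM-side potential.
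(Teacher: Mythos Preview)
Your three-step outline is reasonable in spirit, but step two has a genuine gap and the approach differs substantially from the paper's. The appeal to \cite[Theorem~2.8.1]{FOOO_mirror} runs in the wrong direction: that result (as used in Proposition~\ref{prop_same_rank}) says that for a given $W_{\mf b}$ there exist a bulk ${\mf b}'$ and a coordinate change with $W_{\mf b}(y') = \mf{PO}_{{\mf b}'}(y)$; it does \emph{not} assert that every Morse $\mf{PO}_{{\mf b}'}$ with distinct critical values arises as some $W_{{\mf b}''}$ with ${\mf b}''$ of the required divisor form. So you cannot conclude $P_1 P_2 \not\equiv 0$ this way, and you flag this yourself at the end. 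Your fallback BKK/tropical analysis is only a sketch; in particular, forming $P_2$ as a product over pairs presupposes a constant generic count of critical points, which is exactly the content of a Kouchnirenko-type statement you would need to prove first.

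The paper takes a different and cleaner route that avoids working algebraically over $\Lambda_{\ov{\mb Q}}$. It observes that critical points of $W_{\mf b}$ lie in the convergent Novikov field $\Lambda_{\ov{\mb Q}}^{\rm conv}$ (being solutions of algebraic equations with finitely many terms), so one may \emph{evaluate} at $T = t$ for small nonzero complex $t$. This reduces both the finiteness and the nondegeneracy to classical results on complex Laurent polynomials: Kouchnirenko's theorem bounds the number of critical points of $W_{\mf b}^t$ for generic $c$, and \cite[Proposition~3.10]{Iritani_2009} gives nondegeneracy. The distinct-critical-values condition is handled by a short direct argument: the monomials $W_j$ separate points of $({\mb C}^*)^n$, so on the universal critical locus $\partial_{c_j}\bigl(W(y^{(1)}) - W(y^{(2)})\bigr) = W_j(y^{(1)}) - W_j(y^{(2)}) \neq 0$ for some $j$, making the coincidence locus a proper analytic subset. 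Since the good locus is then open and dense in complex $c$-space, one can pick $c_j \in {\mb Q}[{\bf i}]$ and rescale to land in ${\mb Z}[{\bf i}]$. This evaluation trick is the key idea your proposal is missing: it converts the Novikov-field genericity problem into a standard complex-analytic one where the needed theorems already exist.
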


\begin{proof}
We first show that the statement is true for generic ${\mf b}$ with complex coefficients. First, we relate the Givental--Hori--Vafa potential to a complex Laurent polynomial by evaluation at $T = t$ for some complex number $t$. To consider convergence issue, introduce 
\beqn
\Lambda^{\rm conv}_{0, \ov{\mb Q}}:= \left\{ \sum_{i=1}^\infty a_i T^{\lambda_i} \in \Lambda_{0, \ov{\mb Q}} \ |\ \sum_{i=1}^\infty |a_i| |t|^{\lambda_i}\ {\rm converges\ for\ } |t|\leq \epsilon\ {\rm for\ some\ }\epsilon>0 \right\}.
\eeqn
Let $\Lambda^{\rm conv}_{\ov{\mb Q}}$ be its field of fractions. By \cite[Proposition 8.5]{FOOO_toric_1}, $\Lambda_{\ov{\mb Q}}^{\rm conv}$ is algebraically closed. On the other hand, critical points of $W_{\mf b}$ are solutions to algebraic equations with coefficients in $\Lambda_{\ov{\mb Q}}^{\rm conv}$, as the convergence holds due to the fact that $W_{\mf b}$ has only finitely many terms. Hence critical points are in $(\Lambda_{\ov{\mb Q}}^{\rm conv})^n$.

On the other hand, if we regard $T$ as a complex number, then by Kouchnirenko's theorem \cite{Koushnirenko_1975}, there is a proper analytic subset $S \subset {\mb C}^n$ (which in particular has positive codimension) such that when 
\beqn
c(t) = (c_1 t^{-\lambda_1}, \ldots, c_N t^{-\lambda_N}) \notin S
\eeqn
the function $W_{\mf b}^t:=\sum_{j=1}^N c_j t^{-\lambda_j} y^{v_j}$ has finitely many critical points and the number of them is bounded by $n!$ times the volume of the Newton polytope of this Laurent polynomial (which only depends on the moment polytope). As proved by Iritani \cite[Proposition 3.10]{Iritani_2009}, we can also guarantee that all critical points are nondegenerate. Now take a generic point $(c_1, \ldots, c_N)$ \footnote{Within this proof, being generic means being in the complement of a proper complex analytic subset.} so that $c(1) \notin S$. We claim that such a point satisfies our requirement. 

Indeed, the map 
\beqn
c: {\mb C} \setminus (-\infty, 0] \to {\mb C}^n
\eeqn
is an analytic map. Hence the complement of $c^{-1}(S)$ contains points arbitrarily close to $0$. We first show that the number of critical points of $W_{\mf b}$ is no greater than Kouchnirenko's bound, temporarily denoted by $N_\Delta$. Indeed, if there are $N_\Delta +1$ critical points, then as the coordinates of them are in $\Lambda_{\ov{\mb Q}}^{\rm conv}$, we can evaluate them at $T = t$ with $|t|$ sufficiently small and $c(t)\notin S$, obtaining more critical points of $W_{\mf b}^t$ than possible. Similarly, as we can evaluate critical points at $|t|$ small, all critical points have to be nondegenerate. 

Lastly, we prove that for generic ${\mf b}$ all critical values of $W_{\mf b}$ are distinct. First notice that the complex monomials $W_1, \ldots, W_N$ separate points, i.e., given $y', y'' \in ({\mb C}^*)^n$, $y' \neq y''$, for some $W_j$, $W_j(y') \neq W_j(y'')$. This is because a subset of $n$ monomials among $W_1, \ldots, W_N$ are coordinates on the torus of $y_1, \ldots, y_n$. Now consider the universal critical locus
\beqn
\widetilde{\rm Crit} W:= \big\{ (c_1, \ldots, c_N, y_1, \ldots, y_n)\ |\ dW_{\mf b}(y_1, \ldots, y_n) = 0 \big\}.
\eeqn
Over the nondegenerate locus it is a smooth $N$-dimensional complex manifold and $c_1, \ldots, c_N$ are local parameters. Given a nondegenerate $c_1, \ldots, c_N$, let $y^{(1)}, y^{(2)}$ be two different critical points. Suppose $W_j(y^{(1)}) \neq W_j(y^{(2)})$. Then deforming $c$ along $(c_1, \ldots, c_j + s, \ldots, c_N)$ and let the two critical points deform as $y^{(1)}(s)$, $y^{(2)}(s)$. Then 
\beqn
\frac{d}{ds} \left( W_s(y^{(1)}(s)) - W_s(y^{(2)}(s)) \right) = W_j(y^{(1)}) - W_j(y^{(2)}) \neq 0.
\eeqn
This means that the locus of $c$ where two critical values coincide is cut out transversely. 

Now we have shown that for generic complex ${\mf b}$, $W_{\mf b}$ satisfies the requirement. As the set of such complex ${\mf b}$ is open and dense, one can actually find ${\mf b}$ such that $c_j \in {\mb Q}[\sqrt{-1}]$. Then by rescaling one can find the desired bulk deformation.
\end{proof}

\begin{defn}\label{defn:convenient}
A bulk-deformation ${\mf b} = \sum_{j=1}^N \log c_j V_j$ with $c_j \in {\mb Z}[{\bf i}]$ is called {\bf convenient} if $W_{\mf b}$ is a Morse function and all critical values are distinct. 
\end{defn}

\subsection{Homotopy units}\label{subsection_homotopy_unit}

The $A_\infty$ algebra constructed using our perturbation scheme only has cohomological units. In order to establish strict unitality one needs the system of perturbations to satisfy an additional property with respect to the operation of forgetting any boundary inputs and stabilize. This is difficult to achieve (in contrast to the case of \cite{FOOO_toric_1}). Here we use a typical method of constructing a homotopy unit which appeared in \cite{FOOO_Book}\cite{Ganatra_thesis}\cite{Sheridan_2016}\cite{Charest_Woodward_2015}\cite{Woodward_Xu}\cite{Venugopalan_Woodward_Xu} etc.

\begin{defn}\cite[Section 4.3]{Sheridan_2016}
Let $(A, {\bf e})$ be a cohomological unital $A_\infty$ algebra over $\Lambda_{\mb K}$. A {\bf homotopy unit structure} on $(A, \e)$ is an $A_\infty$ structure on the $\Lambda_{\mb K}$-module 
\beqn
A^+ = A \oplus \Lambda_{\mb K} \f [1] \oplus \Lambda_{\mb K} \e^+
\eeqn
such that the $A_\infty$ composition maps on $A^+$ restrict to the $A_\infty$ composition maps on $A$, $m_1(\f) = \e^+ - \e$, and such that $\e^+$ is a strict unit, i.e.
\begin{align*}
&\ (-1)^{|a|} m_2(\e^+, a) = m_2(a, \e^+) = a,\ &\ m_k(\cdots, \e^+, \cdots) = 0\ \forall k \neq 2.
\end{align*}
\end{defn}

To construct a homotopy unit, one needs to include a collection of extra moduli spaces. Consider {\bf weighted ribbon trees} $\Gamma$ whose vertices at infinity $v\in V_\Gamma^\infty$ are either {\bf unweighted} or {\bf weighted}. We require that when $v$ is an output or a breaking, it must be unweighted. Each weighted boundary input carries an additional parameter $\rho\in [0, 1]$. Therefore a moduli space of weighted metric ribbon trees has additional parameters from weighted inputs. We require that the perturbations $P_{\Gamma, \vec{n}}^{\rm disk}$ on any (minimal) tree $\Gamma$ also depend on these parameters. Moreover, we require that
\begin{enumerate}
    \item When all inputs are unweighted, the perturbation on this tree coincides with the perturbation we have chosen to define the cohomologically unital Fukaya algebra ${\mc F}_{\mf b} (\L)$.

    \item For each weighted input, when the parameter $\rho = 0$, the perturbation on this tree agrees with the perturbation for the tree $\Gamma'$ obtained by changing this weighted input to an unweighted input. 

    \item For each weighted input $v\in V_\Gamma^\infty$, when the parameter $\rho = 1$, the perturbation $P_{\Gamma, \vec{n}}^{\rm disk}$ on this tree agrees with the perturbation obtained by pulling back a perturbation $P_{\Gamma', \vec{n}'}^{\rm disk}$ via the forgetful map. Here $\Gamma'$ is defined as follows. Suppose $v$ is attached to a finite vertex $v'$. If $n_{v'} > 0$ or after forgetting $v$, $v'$ is still stable, then $\Gamma'$ is just obtained by $\Gamma$ by removing $v$; if $n_{v'} = 0$ and $v'$ becomes unstable after removing $v$, then $\Gamma'$ is obtained from $\Gamma$ by removing $v$ and contracting $v'$ to the next adjacent finite vertex. See Figure \ref{figure_forget_weighted} for illustration of this operation.

    \end{enumerate}

\begin{figure}[h]
    \centering
    \includegraphics{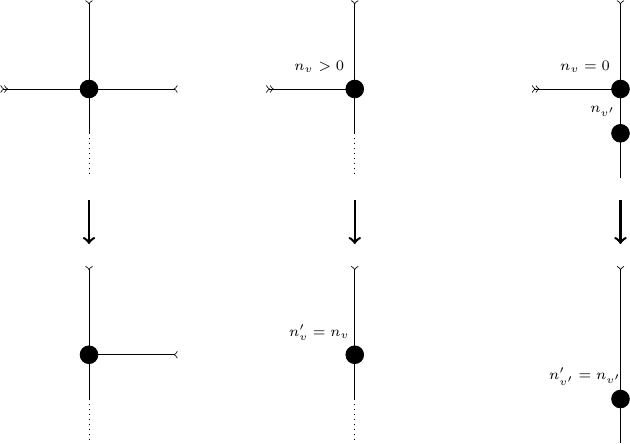}
    \caption{Forgetting a weighted input. }
    \label{figure_forget_weighted}
\end{figure}

Now we need to define the additional composition maps $m_k^+$ on $A^+$ when involves the new generators $f$ and $e^+$, and prove the $A_\infty$ relation for this enlarged set of compositions. We first define 
\beqn
m_k^+(\cdots, \e^+, \cdots)
\eeqn
according to the requirement of strict unit. Then we need to define $m_k^+$ for variables being either the original generators of $A$ or the element $\f$. To define this, we require that the incoming edges corresponding to weighted inputs converge to the unique maximal point of the Morse function $f_L: L \to {\mb R}$, and count $0$-dimensional moduli spaces. A consequence of the fact that all quasidisks have positive Maslov index is that
\beqn
m_k^+(\f, \cdots, \f) = 0\ \forall k \geq 2.
\eeqn

We need to verify the $A_\infty$ relation for all $m_k^+$. Recall that the $A_\infty$ relation reads
\beqn
\sum_{j=0}^k \sum_{i=0}^{k-j} (-1)^{\maltese_1^i} m_{k-j+1}^+ (x_k, \cdots, m_j^+ (x_{i+j+1}, \ldots, x_{i+1}), x_i, \ldots, x_1) = 0.
\eeqn
We only needs to verify for the case when all variables are generators of $A^+$. When all of them are old generators of $A$, this is the same as the original $A_\infty$ relation for $m_k$; when some variable is $\e^+$, this can be verified from the requirement that $\e^+$ satisfies the equations for a strict unit. Now assume that all variables are either old generators or $\f$. Consider $1$-dimensional moduli spaces with this fixed sequence of inputs and consider its boundary strata. In addition to the strata corresponding to boundary edge breakings, additional boundary strata corresponding to parameters $\rho$ on weighted inputs turn to $0$ or $1$. These strata correspond to the terms $m_k^+( \cdots, m_1^+(\f), \cdots)$ in the $A_\infty$ relation. Hence the $A_\infty$ relation for $m_k^+$ is verified. We summarize the above discussion as follows.

\begin{prop}
There exists a homotopy unit structure on the cohomologically unit $A_\infty$ algebra ${\mc F}_{\mf b} (\L)$. Denote the corresponding strictly unital $A_\infty$ algebra by ${\mc F}_{\mf b}^+(\L)$. Moreover, if we denote the element whose coboundary relates $\e$ and $\e^+$ by $\f_{\L}$, then one has 
\beqn
m_k^+\Big( \underbrace{\f_{\L}, \ldots, \f_{\L}}_{k} \Big) = 0,\ \forall k \geq 2.
\eeqn
\end{prop}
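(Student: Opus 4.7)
The plan is to construct ${\mc F}_{\mf b}^+(\L)$ by enriching the moduli problem with weighted boundary inputs, following the scheme sketched just before the statement and paralleling the exposition of \cite{Charest_Woodward_2015, Woodward_Xu, Venugopalan_Woodward_Xu, Sheridan_2016}. First I would enlarge the category of domain types to allow vertices $v \in V_\Gamma^\infty$ which are either \emph{unweighted} or \emph{weighted}, with the convention that the output and all breakings are unweighted. Each weighted input carries a weight parameter $\rho \in [0,1]$, so the universal tree $\ov{\mc{UT}}_\Gamma$ is replaced by a universal family $\ov{\mc{UT}}_\Gamma^{\rm wt}$ which is a $[0,1]^r$-bundle over $\ov{\mc{UT}}_\Gamma$, where $r$ is the number of weighted inputs. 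I would choose inductively a coherent system of domain-dependent perturbations $P_{\Gamma,\vec n}^{\rm wt}: \ov{\mc{UT}}_\Gamma^{\rm wt} \to C^\infty(L)$ extending the previously chosen system on the unweighted locus and satisfying the two boundary conditions at $\rho = 0$ and $\rho = 1$ spelled out just above the proposition. The inductive argument is the same as in the strictly unweighted case: at each step the restriction to the boundary of $\ov{\mc{UT}}_\Gamma^{\rm wt}$ is already regular by the induction hypothesis, and the Sard--Smale theorem produces a regular extension to the interior.

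Once the perturbations are fixed I would define the moduli spaces ${\mc M}^{qd,+}({\bm x}; x_\infty)_i$ of rigid/1-parameter treed quasimaps in which inputs labelled by $\f_\L$ are required to be weighted inputs whose associated orbits converge at $-\infty$ to the unique maximum $x_{\max}$ of $f_L$; inputs labelled by $\e^+$ are not counted by a moduli space at all. I then declare $m_k^+$ on tuples containing $\e^+$ by imposing the strict-unit relations by fiat, and on tuples in $A \oplus \Lambda_{\mb K} \f_\L$ by the usual signed count of $0$-dimensional moduli spaces weighted by bulk, energy, and holonomy factors, exactly as in \eqref{composition_defn}. The definition on purely unweighted inputs recovers the original $m_k$, so ${\mc F}_{\mf b}(\L)$ embeds as an $A_\infty$ subalgebra of ${\mc F}_{\mf b}^+(\L)$.

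The $A_\infty$ relation for $m_k^+$ is proved, as usual, by analyzing the boundary of compactified 1-parameter moduli spaces. On tuples involving $\e^+$ it follows tautologically from the way the strict-unit relations were imposed. On tuples involving only elements of $A \oplus \Lambda_{\mb K} \f_\L$, the boundary of a 1-dimensional moduli space has three kinds of codimension-one strata: (i) edge breakings, contributing terms $m^+_{k-j+1}(\ldots, m^+_j(\ldots), \ldots)$ with no $\e^+$; (ii) weight parameters $\rho \to 0$ at some $\f_\L$-input, which by the boundary condition at $\rho = 0$ cancel against strata of type (i) obtained by replacing $\f_\L$ with the cohomological unit $\e$; and (iii) weight parameters $\rho \to 1$ at some $\f_\L$-input, which by the boundary condition at $\rho = 1$ produce precisely the contribution of $\e^+$ to the $A_\infty$ relation, since the forgetful limit corresponds tautologically to replacing $\f_\L$ by $\e^+$. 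Summing the signed counts of these three types of boundary strata yields the full $A_\infty$ relation together with the identity $m_1^+(\f_\L) = \e^+ - \e$.

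Finally, the vanishing $m_k^+(\f_\L,\ldots,\f_\L) = 0$ for $k \geq 2$ follows from Maslov-index positivity, which is the real payoff of working upstairs. Indeed, by the Blaschke formula (Theorem \ref{thm_Blaschke}) every nonconstant holomorphic quasidisk with boundary on $\wh L$ has strictly positive Maslov index, and the ghost stratum with all disk components trivial is excluded by the requirement $k \geq 2$ and minimality of the domain type. A dimension count for ${\mc M}^{qd,+}(\f_\L,\ldots,\f_\L; x_\infty)_0$, using that each weighted input converging to $x_{\max}$ contributes $|x_{\max}| = 0$ to the grading while imposing an $n$-dimensional evaluation constraint at the maximum, forces the total Maslov index of the configuration to be nonpositive; hence all disk components must be constant, which in turn forces the configuration to be unstable and hence absent from the minimal moduli spaces being counted. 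The main obstacle in the whole argument is the inductive construction of the coherent perturbation system satisfying the two boundary conditions simultaneously with the earlier coherence conditions from Definition \ref{defn_coherent_perturbation}, but this is a standard extension of the inductive scheme already used to build ${\mc F}_{\mf b}(\L)$.
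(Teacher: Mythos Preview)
Your construction is exactly the one the paper carries out: enlarge to weighted domain types with parameters $\rho\in[0,1]$, extend the coherent perturbation system with the $\rho=0$ (unweighted) and $\rho=1$ (forgetful) boundary conditions, impose the strict-unit relations on $\e^+$ by fiat, define $m_k^+$ on $\f_\L$-inputs by counting weighted moduli with asymptotic $x_{\max}$, and read off the $A_\infty$ relation from boundaries of $1$-dimensional moduli (edge breakings plus the $\rho\to 0,1$ strata giving the $m_1^+(\f_\L)=\e^+-\e$ terms).

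One correction in your vanishing argument: the implication ``all disk components constant $\Rightarrow$ configuration unstable'' is false---a constant disk with $k+1\ge 3$ boundary markings is perfectly stable, so such configurations are present in the minimal moduli spaces. The actual reason the rigid moduli is empty is purely the index formula. With $k$ weighted inputs asymptotic to $x_{\max}$ (degree $0$, no evaluation constraint since the unstable manifold of the maximum is all of $L$) and $k$ extra $\rho$-parameters, the $0$-dimensional condition reads $|x_\infty| = 2 - 2k - i(\beta)$; for $k\ge 2$ and $i(\beta)\ge 0$ this is $\le -2$, which no critical point can satisfy. So the emptiness follows directly from the index bound, not from any instability of constant configurations.
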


\subsubsection{Canonical weakly bounding cochain}

Recall that a weakly bounding cochain is an odd element $b \in {\mc F}_{\mf b}^+ (\L)$ solving the weak Maurer--Cartan equation 
\beqn
\sum_{k\geq 0} m_k^+(b, \cdots, b) \in \Lambda \e^+.
\eeqn
In general, worrying about convergence, we require that $b$ has a positive Novikov valuation. In our case, we only use a special weakly bounding cochain.

\begin{defn}
The {\bf canonical} weakly bounding cochain of the strictly unital $A_\infty$ algebra ${\mc F}_{\mf b}^+(\L)$ is 
\beqn
b_{\L} = W_{\mf b} \f_{\L}.
\eeqn
\end{defn}

We check that, by the fact that $m_k^+( \f_{\L}, \cdots,  \f_{\L} ) = 0$ for $k \geq 2$ and $m_1^+ ( \f_{\L}) = \e_{\L}^+ - \e_{\L}$, one has 
\beqn
\sum_{k \geq 0} m_k^+(b_{\L}, \cdots, b_{\L}) = m_0^+(1) + m_1^+( W_{\mf b} \f_{\L}) = W_{\mf b} \e_{\L} + W_{\mf b} (\e_{\L}^+ - \e_{\L}) = W_{\mf b} \e_{\L}^+.
\eeqn
Hence indeed $b_{\L}$ is a weakly bounding cochain.

Now we can define the flat $A_\infty$ algebra ${\mc F}_{\mf b}^\flat(\L)$ with compositions being (for $k \geq 1$) 
\beqn
m_k^\flat (x_k, \ldots, x_1) = \sum_{l_0, \ldots, l_k \geq 0}  m_{k+l_0 + \cdots + l_k}^+ \Big( \underbrace{ b_{\L}, \ldots, b_{\L}}_{l_k}, x_k, \cdots, x_1, \underbrace{ b_{\L}, \ldots, b_{\L}}_{l_0} \Big). 
\eeqn
In particular, $m_1^\flat \circ m_1^\flat = 0$ and the cohomology of ${\mc F}_{\mf b}^\flat(\L)$ agrees with the quasimap Floer cohomology $\qhf_{\mf b}^\bullet(\L; \Lambda_{\ov{\mb Q}})$.

\subsubsection{Multiplicative structure}

We need to identify the multiplicative structures on the quasimap Floer cohomology. The second composition $m_2^\flat$ on ${\mc F}_{\mf b}^\flat (\L)$ induces a multiplication on $\qhf_{\mf b}^\bullet( \L; \Lambda_{\ov{\mb Q}})$. 

\begin{prop}\label{prop_ring_structure}
When ${\bf y}$ is a critical point of $W_{\mf b}({\bf u})$ and the Hessian of $W_{\mf b}({\bf u})$ is nondegenerate at ${\bf y}$, i.e.
\beqn
\det \left( \frac{\partial^2 W_{\mf b}({\bf u})}{\partial x_i \partial x_j} ({\bf y}) \right) \neq 0,
\eeqn
the quasimap Floer cohomology algebra $\qhf_{\mf b}^\bullet(\L; \Lambda_{\ov{\mb Q}})$ is isomorphic to a Clifford algebra over $\Lambda_{\ov{\mb Q}}$ associated to a nondegenerate quadratic form on an $n$-dimensional space.
\end{prop}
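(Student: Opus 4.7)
The plan is to identify $\qhf_{\mf b}^\bullet(\L;\Lambda_{\ov{\mb Q}})$ as the Clifford algebra of the Hessian quadratic form of $W_{\mf b}$ at $\bf y$, following the strategy pioneered by Cho and by Fukaya--Oh--Ohta--Ono in the NLSM setting and adapted to the GLSM picture. As the ambient vector space, I would use the underlying module
\[
\qcf_{\mf b}^\bullet(\L;\Lambda_{\ov{\mb Q}})\;\cong\;H^\bullet(L;\Lambda_{\ov{\mb Q}})\;\cong\;\Lambda_{\ov{\mb Q}}\langle x_1,\ldots,x_n\rangle\otimes \wedge^\bullet,
\]
where $x_1,\ldots,x_n\in{\rm crit}\,f_L$ are the index-$1$ critical points representing the standard basis of $H_1(L;{\mb Z})$, and show that the bulk- and $b_\L$-deformed product $m_2^\flat$ gives exactly the Clifford relations.

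First, I would observe that since ${\bf y}$ is a critical point of $W_{\mf b}$, Theorem \ref{thm_divisor_eqn} (together with the canonical weakly bounding cochain $b_\L=W_{\mf b}\f_\L$ and the vanishing $m_k^+(\f_\L,\ldots,\f_\L)=0$ for $k\ge2$ established in Subsection \ref{subsection_homotopy_unit}) implies $m_1^\flat(x_i)=\partial_{x_i}W_{\mf b}({\bf y})\cdot\e^+ =0$ for all $i$, and more generally $m_1^\flat\equiv 0$ on the subalgebra generated by the $x_i$. Therefore $\qhf_{\mf b}^\bullet(\L;\Lambda_{\ov{\mb Q}})$ is represented at the chain level by this subalgebra, and by Theorem 1.1.3-type rank counting combined with Proposition \ref{prop_same_rank} one sees the dimension is $2^n$, matching that of a Clifford algebra on $n$ generators.

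The key computation is the skew-symmetric part of the quadratic product. Using the divisor equation iterated twice (this is the GLSM analogue of the calculation in \cite{Cho}), I would show
\[
m_2^\flat(x_i,x_j)+(-1)^{|x_i||x_j|}m_2^\flat(x_j,x_i)\;=\;\bigl(y_iy_j\tfrac{\partial^2 W_{\mf b}}{\partial y_i\partial y_j}({\bf y})\bigr)\cdot\e^+,
\]
so that, writing $Q_{ij}=y_iy_j\partial^2_{y_iy_j}W_{\mf b}({\bf y})$, the $x_i$ satisfy Clifford relations with quadratic form $Q$. Because the change of variable $y_i=e^{z_i}$ (or its formal logarithmic analogue) gives $y_i\partial_{y_i}=\partial_{z_i}$, the matrix $(Q_{ij})$ is the Hessian of $W_{\mf b}$ in logarithmic coordinates at ${\bf y}$; nondegeneracy of the usual Hessian in the statement translates into nondegeneracy of $Q$. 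Combined with the fact that the symmetric generators $x_I=x_{i_1}\cdots x_{i_k}$ (for ordered multi-indices) span a $2^n$-dimensional space, this produces a surjective algebra homomorphism $Cl(Q)\twoheadrightarrow\qhf_{\mf b}^\bullet(\L;\Lambda_{\ov{\mb Q}})$ from the $2^n$-dimensional Clifford algebra, which must be an isomorphism by dimension count.

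The main obstacle is the second step: deriving the precise form of $m_2^\flat$ in terms of second derivatives of $W_{\mf b}$, because this requires a version of the divisor axiom one order beyond Theorem \ref{thm_divisor_eqn} that is sensitive to the ordering of the two inputs. In the NLSM setting this is handled via a careful bookkeeping of disk classes contributing to the pair-of-pants, organized by how many boundary markings carry $x_i$ versus $x_j$ insertions; in the GLSM setting the Blaschke formula (Theorem \ref{thm_Blaschke}) makes the disk moduli spaces explicit, so one can mimic Cho's calculation and track the combinatorial factors $v_{j,i}v_{j,k}$ appearing when two degree-one insertions are placed on a single Maslov-two quasidisk $\beta_j$. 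The contribution of a given $\beta_j$ to $m_2^\flat(x_i,x_k)+m_2^\flat(x_k,x_i)$ is then $c_j T^{-\lambda_j}{\bf y}^{v_j}v_{j,i}v_{j,k}$, and summing over $j$ recovers exactly the entry $Q_{ik}$. Once this identity is in hand the rest of the argument (intrinsic formality is not needed here; only the chain-level computation) is a direct consequence of Clifford algebra theory.
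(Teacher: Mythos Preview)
Your overall strategy---derive the second-order divisor equation
\[
m_2^\flat(x_i,x_j)+m_2^\flat(x_j,x_i)=\Bigl(y_iy_j\,\partial^2_{y_iy_j}W_{\mf b}({\bf y})\Bigr)\cdot\e
\]
and then read off the Clifford relations---is exactly what the paper does. The gap is in how you propose to establish this identity.

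You suggest computing $m_2^\flat(x_i,x_k)+m_2^\flat(x_k,x_i)$ directly from the Blaschke formula, asserting that each Maslov-two class $\beta_j$ contributes $c_jT^{-\lambda_j}{\bf y}^{v_j}v_{j,i}v_{j,k}$. But in the treed disk model used here, the inputs $x_i,x_k$ are critical points of the Morse function $f_L$, and the count is of perturbed gradient segments meeting the disk boundary, not of topological intersection numbers. The domain-dependent perturbations $P_{\Gamma,\vec n}^{\rm qd}$ on the tree with two inputs are \emph{not} invariant under swapping the two incoming edges, so the naive count does not produce the symmetric expression you want. The paper makes this failure explicit with the ${\mb P}^1$ example in the remark preceding the proof: with a single-valued perturbation one sees only one rigid configuration rather than the two that the divisor equation would require. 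So ``mimic Cho's calculation'' does not go through in this model without modification.

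The paper's fix, which is the idea you are missing, is to pass to \emph{multi-valued} perturbations that are symmetric under the ${\mb Z}_2$-action swapping the two inputs on the relevant tree $\Gamma_0$, and to average the counts. This produces an $A_\infty$ algebra homotopy equivalent to the original one (here working over ${\mb Q}$ is essential), and with the symmetric perturbation the chain-level divisor equation \eqref{divisor_equation_2} holds; the argument is taken from \cite[Lemma~5.12]{Venugopalan_Woodward_Xu}. Once that is in place, your final paragraph (Clifford relations plus dimension count) is correct. A minor point: your appeal to Proposition~\ref{prop_same_rank} for the rank $2^n$ is misplaced---that proposition counts critical points of $W_{\mf b}$, not the rank of $\qhf_{\mf b}^\bullet(\L)$; the relevant statement is the preceding theorem (cf.\ \cite[Theorem~6.6]{Woodward_toric}).
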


Note that the above nondegeneracy condition coincides with the one from Definition \ref{defn:log-derivative} because we are considering Laurent polynomials. The computation of the ring structure is carried out in a similar situation in \cite{Venugopalan_Woodward_Xu}. Here we only sketch it. The key of the computation is to establish another divisor equation
\beq\label{divisor_equation_2}
m_2^\flat (x_i,x_j) + m_2^\flat (x_j, x_i) = \frac{\partial^2 W_{\mf b}}{\partial x_i \partial x_j} \e.
\eeq
on cohomology. When the corresponding critical point of $W_{\mf b}$ is nondegenerate, it follows that the Floer cohomology is isomorphic to a Clifford algebra induced from the Hessian of the critical point.

\begin{rem}
We explain why the divisor equation \eqref{divisor_equation_2} fails on the chain level if one uses the naive way of perturbation. Consider $X = {\mb P}^1$. Fix a torus action. The (undeformed) potential function is
\beqn
W = T^u y + T^{1-u} \frac{1}{y}.
\eeqn
The two terms come from the contribution of two disks, one through the north pole and the other through the south pole. If the divisor equation \eqref{divisor_equation_2} holds, then there should be two configurations with two inputs labelled by the index $1$ critical point, however, once the perturbation is chosen, one can only see one configurations exist in the moduli space. This is because the perturbation is not symmetric with respect to flipping the two incoming semi-infinite edges.
\end{rem}

\begin{proof}[Proof of Proposition \ref{prop_ring_structure}]

Once the divisor equation \eqref{divisor_equation_2} is established, the calculation of the ring structure follows immediately. Hence we only explain how to achieve the divisor equation following the same idea as \cite{Venugopalan_Woodward_Xu}. Notice that the $A_\infty$ structure is independent of the perturbation up to homotopy equivalence. Hence the ring structure on the Floer cohomology is independent of the perturbation. Now we broaden the class of perturbations by considering multi-valued ones in order to achieve some symmetry, and use such perturbations to establish Equation \eqref{divisor_equation_2} on the chain level. A multi-valued perturbation is just a (finite) multi-set of perturbations on each tree. We consider a coherent family of multi-valued perturbations which still satisfy Definition \ref{defn_coherent_perturbation}. We say that a multi-valued perturbation is symmetric, if, when restricted to the tree $\Gamma_0$ with two inputs, one output, and one finite vertex, the perturbation $P_{\Gamma_0, \vec{n}}$ (where $\vec{n}$ on the only finite vertex is $1$, corresponding to Maslov index two disks) is invariant under the ${\mb Z}_2$-action on the universal tree $\ov{\mc{UT}}_{\Gamma_0}$ induced by switching the two incoming semi-infinite edges.

One can follow the same inductive argument to construct a symmetric coherent system of multi-valued perturbations and achieve transversality. Now when defining the counts, we need to count for each member of the multi-valued perturbation and then take an average. This still defines an $A_\infty$ algebra and it is homotopy equivalent to any one defined using single-valued perturbations, provided that we work over the rationals. Moreover, for any two critical points $x_i, x_j$ of Morse index $n-1$, the divisor equation \eqref{divisor_equation_2} holds.  For details, see \cite[Lemma 5.12]{Venugopalan_Woodward_Xu}.
\end{proof}

\subsubsection{Hochschild cohomology}

Now consider the Hochschild cohomology of the $A_\infty$ algebra ${\mc F}_{\mf b}^\flat ({\bm L})$. 

\begin{prop}
When $\L$ corresponds to a nondegenerate critical point of $W_{\mf b}$, one has
\beqn
\hh^\bullet( {\mc F}_{\mf b}^\flat({\bm L})) \cong \Lambda_{\ov{\mb Q}}
\eeqn
where the Hochschild cohomology is generated by the identity ${\bm 1}_{{\mc F}^\flat_{\mf b}({\bm L})}$.
\end{prop}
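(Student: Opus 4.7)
The plan is to assemble this proposition by combining three earlier ingredients: Proposition \ref{prop_ring_structure} on the Clifford algebra structure of the Floer cohomology, Corollary \ref{HH_computation_2} on the Hochschild cohomology of $A_\infty$ algebras whose cohomology is a Clifford algebra, and the strict unitality of the flat $A_\infty$ algebra built from the canonical weakly bounding cochain.

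First I would verify that ${\mc F}_{\mf b}^\flat({\bm L})$ is a flat, strictly unital $A_\infty$ algebra over $\Lambda_{\ov{\mb Q}}$. Flatness is immediate from its definition via the canonical weakly bounding cochain $b_\L = W_{\mf b} \f_\L$: the identity $\sum_{k \geq 0} m_k^+(b_\L, \ldots, b_\L) = W_{\mf b}\, \e_\L^+$ shows that $m_0^\flat = 0$ once we pass to the deformed algebra (since the curvature is absorbed into a multiple of $\e_\L^+$, which by the definition of the flat deformation is the strict unit of ${\mc F}_{\mf b}^\flat(\L)$). Strict unitality of $\e_\L^+$ in ${\mc F}_{\mf b}^+(\L)$ descends to strict unitality of $\e_\L^+$ in ${\mc F}_{\mf b}^\flat(\L)$, so we write ${\bm 1}_{{\mc F}^\flat_{\mf b}({\bm L})} = \e_\L^+$.

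Next I would identify the cohomology algebra. By construction, $H^\bullet({\mc F}_{\mf b}^\flat(\L), m_1^\flat) = \qhf^\bullet_{\mf b}(\L; \Lambda_{\ov{\mb Q}})$, and the multiplication induced by $m_2^\flat$ coincides with the Floer product studied in Proposition \ref{prop_ring_structure}. Since $\L$ corresponds to a nondegenerate critical point of $W_{\mf b}$, the Hessian $(\partial^2 W_{\mf b}/\partial x_i \partial x_j)$ is a nondegenerate symmetric matrix, and Proposition \ref{prop_ring_structure} identifies the cohomology algebra with the Clifford algebra $Cl_n$ of a nondegenerate quadratic form on an $n$-dimensional $\Lambda_{\ov{\mb Q}}$-vector space.

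Then Corollary \ref{HH_computation_2} applies verbatim to ${\mc F}_{\mf b}^\flat({\bm L})$: any flat $A_\infty$ algebra over $\Lambda_{\ov{\mb Q}}$ whose cohomology algebra is isomorphic to $Cl_n$ has Hochschild cohomology isomorphic to $\Lambda_{\ov{\mb Q}}$, via the intrinsic formality of $Cl_n$ and the Morita invariance of Hochschild cohomology. Finally, since ${\mc F}_{\mf b}^\flat({\bm L})$ is strictly unital, the remark following Corollary \ref{HH_computation_2} gives ${\bm 1}_{{\mc F}^\flat_{\mf b}({\bm L})} \neq 0$ in $\hh^\bullet({\mc F}_{\mf b}^\flat({\bm L}))$, whence it generates this one-dimensional $\Lambda_{\ov{\mb Q}}$-module. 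There is no substantive obstacle here; the only point requiring minor care is checking that the strict unitality of ${\mc F}_{\mf b}^+(\L)$ is preserved under the deformation by $b_\L = W_{\mf b} \f_\L$, which follows from $m_k^+(\ldots, \e_\L^+, \ldots) = 0$ for $k \neq 2$ together with the absence of the strict unit $\e_\L^+$ among the terms of $b_\L$.
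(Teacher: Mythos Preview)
Your proposal is correct and follows essentially the same route as the paper: identify the cohomology algebra of ${\mc F}_{\mf b}^\flat(\L)$ with a Clifford algebra via Proposition~\ref{prop_ring_structure}, then invoke the Hochschild computation for such $A_\infty$ algebras. The paper's two-sentence proof cites Proposition~\ref{HH_computation_1} directly, while you cite Corollary~\ref{HH_computation_2} (the $A_\infty$ version, which is the more precisely applicable statement) and spell out the verification of flatness and strict unitality; these are the same argument at different levels of detail.
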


\begin{proof}
We know that the cohomology of ${\mc F}_{\mf b}^\flat({\bm L})$ is isomorphic to a Clifford algebra over $\Lambda_{\ov{\mb Q}}$. This proposition follows from Proposition \ref{HH_computation_1}.
\end{proof}

\begin{rem}
When the bulk-deformation ${\mf b}$ is convenient, we can formally define the quasimap Fukaya category as the disjoint union of the $A_\infty$ algebras ${\mc F}_{\mf b}^\flat(\L)$ for $\L$ corresponding to all critical points of $W_{\mf b}$ inside the moment polytope. However, what we need is only the direct sum of these Hochschild cohomology.
\end{rem}

\section{Open string theory II. Closed-open maps}\label{section10}

In this section, we prove Theorem \ref{thm:main-2}. It is the consequence of the following theorem.

\begin{thm}\label{thm_CO}
Let ${\mf b}$ be a convenient bulk deformation ${\mf b}$.

\begin{enumerate}
    \item There is an isomorphism of $\Lambda_{\ov{\mb Q}}$-algebras
\beqn
{\rm CO}_\bb: \vhf_\bullet^{\mf b} (V; \Lambda_{\ov{\mb Q}}) \to  \bigoplus_{\L \in {\rm Crit}_X W_{\mf b}} \hh^\bullet ({\mc F}_{\mf b}^\flat (\L)) \cong (\Lambda_{\ov{\mb Q}})^{{\rm Crit}_X W_{\mf b}}.
\eeqn

\item The operator on $\vhf_\bullet^{\mf b}( V; \Lambda_{{\mb  Z}[{\bf i}]} )$ defined by the pair-of-pants product with the (bulk-deformed) first Chern class (see Definition \ref{defn_first_Chern_class}) has distinct eigenvalues in $\Lambda_{\ov{\mb Q}}$.
\end{enumerate}
\end{thm}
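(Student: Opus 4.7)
The plan is to construct a closed-open string map $\mathrm{CO}_\bb$ in the GLSM framework, show it is a unital ring homomorphism, compute it explicitly on the bulk-deformed first Chern class, and then deduce both parts simultaneously from the convenience hypothesis on $W_\bb$. For each $\L = (L({\bf u}), {\bf y}) \in \mathrm{Crit}_X W_\bb$, I would introduce moduli spaces of \emph{treed Hamiltonian vortex disks}: a surface part that is a disk with one interior puncture carrying a cylindrical Hamiltonian end converging to an equivariant $1$-periodic orbit ${\mf x}$, with $K$-invariant Lagrangian boundary on the lift $\wh L({\bf u})$, together with a tree part of broken gradient trajectories for $f_L$ carrying $k$ boundary inputs and one output. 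Counting rigid configurations weighted by the bulk factor $\prod_j c_j^{u\cap V_j}$, the holonomy ${\bf y}^{\partial u}$, the area $T^{E}$, and orientation signs, and then running this count through the homotopy-unit construction of Subsection~\ref{subsection_homotopy_unit} with the canonical weakly bounding cochain $b_\L = W_\bb\,\f_\L$, assembles into a chain-level map $\mathrm{CO}_{\bb,\L}: \vcf^\bb_\bullet(\wh H,\wh J;\Lambda_{\ov{\mb Q}}) \to \CC^\bullet({\mc F}^\flat_\bb(\L))$. Transversality follows from combining the admissible perturbation formalism upstairs (Remark~\ref{rem_admissible}) with the coherent domain-dependent perturbations downstairs (Definition~\ref{defn_coherent_perturbation}); contractibility of $V$ again rules out sphere bubbles and keeps everything within the classical framework.

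The next step is to check that $\mathrm{CO}_\bb = \bigoplus_\L \mathrm{CO}_{\bb,\L}$ is a unital ring homomorphism. Unitality, $\mathrm{CO}_\bb({\bm 1}_\bb^\glsm) = {\bm 1}_{{\mc F}^\flat_\bb(\L)}$, will come from comparing the cigar moduli defining ${\bm 1}_\bb^\glsm$ in Section~\ref{section4} with the unit Hochschild cochain; multiplicativity is a standard TQFT argument based on degenerating a sphere with two incoming closed ends and one outgoing puncture capped off by a treed disk with Morse inputs. By Proposition~\ref{prop_ring_structure} and Corollary~\ref{HH_computation_2}, $\hh^\bullet({\mc F}^\flat_\bb(\L)) \cong \Lambda_{\ov{\mb Q}}$ and is generated by its identity, so the target of $\mathrm{CO}_\bb$ is the product of fields $\Lambda_{\ov{\mb Q}}^{\#\mathrm{Crit}_X W_\bb}$ with Yoneda product given by coordinatewise multiplication.

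The decisive computation is to identify $\mathrm{CO}_\bb({\mc U})$, where ${\mc U}$ is the bulk-deformed first Chern class viewed downstairs as the divisor class $\sum_j c_j V_j$. In each factor $\hh^\bullet({\mc F}^\flat_\bb(\L))$ I expect the projection of $\mathrm{CO}_\bb({\mc U})$ to equal the critical value $W_\bb(\L)$. The argument is a divisor-equation calculation: a rigid vortex disk through the coordinate hyperplane $V_j$ contributes the Hori--Vafa monomial $c_j T^{-\lambda_j} y^{v_j}$ at the Lagrangian $L({\bf u}_\L)$ via the Blaschke parametrization (Theorem~\ref{thm_Blaschke}), matching the chain-level potential identity in Theorem~\ref{thm_divisor_eqn}, and summing over $j$ recovers $W_\bb({\bf u}_\L)({\bf y}_\L)$. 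Given this, convenience of $\bb$ (Definition~\ref{defn:convenient}) makes $\{W_\bb(\L)\}_\L$ pairwise distinct, so multiplication by $\mathrm{CO}_\bb({\mc U})$ on the right-hand side is a regular diagonal operator with simple spectrum. Since $\mathrm{CO}_\bb$ is unital and multiplicative, it must therefore be injective; combined with the dimension equality $\dim_{\Lambda_{\ov{\mb Q}}} \vhf_\bullet^\bb(V;\Lambda_{\ov{\mb Q}}) = \dim H_\bullet(X) = \#\mathrm{Crit}_X W_\bb$ coming from Proposition~\ref{computation} and Proposition~\ref{prop_same_rank}, it is an isomorphism, proving (1). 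Part (2) is then immediate: under this isomorphism ${\mc U}\ast_\bb$ becomes the diagonal operator with entries $W_\bb(\L)$, which are distinct and lie in $\Lambda_{\ov{\mb Q}}$.

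The principal difficulty will be the chain-level divisor computation $\mathrm{CO}_\bb({\mc U})|_\L = W_\bb(\L)$: matching the GLSM count of Maslov-two disks with the monomials of $W_\bb$ requires carefully tracking cappings, the equivariant action functional, and the homotopy-unit deformation by $b_\L$, as well as ruling out stray contributions from higher-Maslov configurations or from affine-vortex-type bubbling near the Lagrangian boundary. The absence of sphere bubbles in $V$ is what makes the whole program feasible without virtual perturbations, but the interaction of the upstairs vortex equation with the downstairs Morse--Fukaya perturbations along the treed boundary is where the bulk of the analytic work lies.
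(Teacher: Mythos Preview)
Your strategy is essentially the paper's, and it is correct in outline. A few points of comparison and one logical slip.

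The computation $\mathrm{CO}_\bb({\mc U})|_\L = W_\bb(\L)$ is the heart of the matter, but it is not a one-line consequence of Blaschke and the divisor equation. The bulk-deformed first Chern class is \emph{defined} as $\kappa_\bb({\bf z}_1+\cdots+{\bf z}_N)$, a cigar count with tangency conditions at the origin (Definition~\ref{defn_first_Chern_class}); composing with $\mathrm{CO}_\bb$ and identifying the result with the Hori--Vafa monomials requires a cobordism between ``cigar-with-tangency glued to a closed-open disk'' and ``single quasidisk with an interior tangency''. The paper formalizes this via a quantum Kirwan map $\kappa_\bb$ and a Kodaira--Spencer map $\mf{ks}_\bb$ (Proposition~\ref{prop_ks_computation}), and the cobordism is the proof that a commutative square relating them holds (Proposition~\ref{prop_commutative_diagram}). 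Your sketch is morally this argument, but you should not expect it to collapse to a direct Maslov-two disk count without that homotopy.

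Your deduction ``$\mathrm{CO}_\bb$ is unital and multiplicative, hence injective'' is not right as stated: a unital ring homomorphism to a product of fields can have nontrivial kernel unless you already know the source has no proper ideals, which is exactly the semisimplicity you are trying to prove. The argument you want is \emph{surjectivity}: since $\mathrm{CO}_\bb({\mc U}) = (W_\bb(\L))_\L$ has pairwise distinct entries, its powers span the target by Vandermonde, so $\mathrm{CO}_\bb$ is onto, and then the dimension count finishes. The paper instead proves surjectivity by showing the full Kodaira--Spencer map (on all monomials ${\bf z}^I$, not just the linear ones) is surjective, which does not use distinctness of critical values; your route is more economical given the convenience hypothesis.

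For part~(2), your deduction from (1) together with $\mathrm{CO}_\bb({\mc U}) = \mathrm{diag}(W_\bb(\L))$ is cleaner than the paper's. The paper gives a separate cobordism argument (Theorem~\ref{thm_CO_spectrality}), using a moduli space with an ``offset angle'' constraint on an extra interior marking, to show directly that $\mathrm{CO}_{\L,\bb}^0$ annihilates every eigenspace of ${\mc U}\ast_\bb$ except the one with eigenvalue $W_\bb(\L)$.

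Finally, multiplicativity here is not quite the naive TQFT degeneration. With treed boundary inputs one needs a ``balanced'' constraint on the relative position of the two interior punctures (so that the resulting moduli is one-dimensional), and the real boundary strata then produce exactly the pair-of-pants product on one side and the Yoneda product on the other (Theorem~\ref{thm_CO_multiplicative}). This is a genuine technical point you will have to set up carefully.
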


\begin{rem}
A closed-open map on the level of Floer cohomology, also in the setting of vortex Floer theory, was constructed in \cite{Wu_Xu}. The method of using quilted objects to prove the multiplicative property was learned from Woodward, see \cite{Venugopalan_Woodward_Xu}.
\end{rem}

\subsection{Moduli spaces for the closed-open map}

\subsubsection{Based trees and closed-open domain types}

Recall our conventions about trees and ribbon trees given in the last section. To model curves with spherical components or Floer cylinders, we consider a broader class of trees called {\it based trees}. A {\bf based tree} is a pair $(\Gamma, \uds\Gamma)$ where $\uds\Gamma$ is a subtree with a ribbon structure containing the root $v_{\rm root}$ and adjacent semi-infinite edge. In a based tree, vertices in $V_{\uds\Gamma}$ are called a {\it boundary vertex}, and other vertices are called {\it interior vertices}. Similarly, an edge is either an interior edge or a boundary edge. A {\bf metric based tree} is a based tree $\Gamma$ together with a metric on its base $\uds\Gamma$. 

Now specify domains responsible for the definition of the closed-open map on the chain level. Consider based trees with a distinguished interior vertex at infinity $v_{\rm Ham}^\infty \in V_\Gamma^\infty\setminus V_{\uds\Gamma}$. For each such tree $\Gamma$, let $v_\ham \in V_{\uds\Gamma}^{\rm finite}$ be the distinguished vertex in the base $\uds\Gamma$ which is closest to $v_\ham^\infty$. We also assume that such trees always have exactly one boundary output $v_{\rm out}$. We call such a tree $\Gamma$ (with a metric type on the base $\uds\Gamma$) a {\bf closed-open domain type}. We say $\Gamma$ is minimal if its base is minimal (see Subsection \ref{subsec:trees}), i.e., the base has no finite edges of length zero or infinite edges. For a minimal $\Gamma$, the base $\uds\Gamma$ has a moduli space $\mc{MT}_{\uds\Gamma}^{\rm CO}$ and universal tree $\mc{UT}_{\uds\Gamma}^{\rm CO}$. One also has a compactification (see Section \ref{subsec:trees}) denoted by $\ov{\mc{MT}}{}_{\uds\Gamma}^{\rm CO}$. 

Given a closed-open domain $\Gamma$, a closed-open domain of type $\Gamma$ is a treed disk $C = S \cup T$ which has a distinguished ``parametrized component'' $C_\ham$ corresponding to the vertex $v_\ham$ which has a nonempty boundary. See Figure \ref{figure:CO_domain} for an illustration of a closed-open domain.

\begin{figure}[h]
    \centering
    \includegraphics{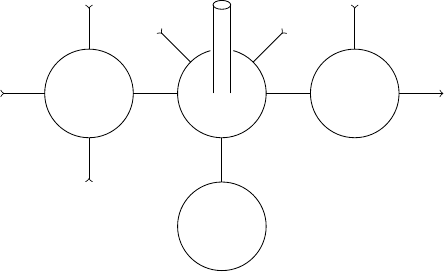}
    \caption{A closed-open domain. The component with a cylindrical end is the component $C^\ham$.}
    \label{figure:CO_domain}
\end{figure}

We define a type of ``mixed equation'' on closed-open domains. Fix an admissible bulk-avoiding pair $(\wh H, \wh J)$ for which the bulk-deformed vortex Floer chain complex $\vcf_\bullet^{\mf b} (\wh H, \wh J; \Lambda_{\ov{\mb Q}})$ is defined. Let $C= S \cup T$ be a closed-open domain with distinguished component $C_\ham$. Because there is at least one boundary output, $C_\ham$ together with the interior puncture and boundary nodes is stable. Hence can always identify $C_\ham \cong {\mb D} \setminus \{0\} \cong (-\infty, 0]\times S^1$ and equip it with the cylindrical metric. Using a cut-off function supported in $(-\infty, -1]$, one can homotope the pair $(\wh H, \wh J)$ with the pair $(0, J_V)$ where $J_V$ is the standard complex structure on the vector space $V \cong {\mb C}^N$, giving rise to a domain-dependent pair $(\wh H_z, \wh J_z)$ for $z \in C_{\rm Ham}$. Given the above data, we consider tuples
\beqn
\Big( (u_v)_{v\in V_\Gamma}, (x_e)_{e\in E_\Gamma} \Big)
\eeqn
where 
\begin{enumerate}
    \item For each vertex $v$ belong to the path connecting $v_{\rm Ham}^\infty$ and $v_\ham$ (not included), $u_v = [u_v, \xi_v, \eta_v]$ is a gauge equivalence class of solutions to the vortex equation
    \begin{align*}
    &\ \partial_s u_v + {\mc X}_{\xi_v} + \wh J_t( \partial_t u_v + {\mc X}_{\eta_v} - X_{\wh H_t}(u_v)) = 0,\ &\ \partial_s \eta_v - \partial_t \xi_v + \mu(u_v) = 0.
    \end{align*}

    \item For $v = v_{\rm Ham}$, $u_v = [u_v, \xi_v, \eta_v]$ is a gauge equivalence class of solutions to 
       \begin{align}\label{CO_equation}
    &\ \partial_s u_v + {\mc X}_{\xi_v} + \wh J_z ( \partial_t u_v + {\mc X}_{\eta_v} - X_{\wh H_z}(u_v)) = 0,\ &\ \partial_s \eta_v - \partial_t \xi_v + \mu(u_v) = 0.
    \end{align}
    Moreover, $u_v$ satisfies the Lagrangian boundary condition
    \beq\label{CO_boundary_condition}
    u_v (\partial C_\ham) \subset \wh L.
    \eeq

    \item For all other $v$, $u_v$ is a $K$-orbit of quasidisk with boundary in $\wh L$.

    \item For each edge $e \in E_\Gamma$, $x_e$ is a (perturbed) negative gradient line/ray/segment of the Morse function $f_L: L \to {\mb R}$.

    \item These objects must have finite energy and must satisfy the obvious matching condition at interior and boundary nodes.
\end{enumerate}

The finite energy condition forces the component $u_v$ whose domain $C_v$ has the distinguished input $v_\ham^\infty$ to converge to an equivariant 1-periodic orbit of $\wh H$. Given a closed-open domain type $\Gamma$, a {\bf closed-open map type} over $\Gamma$, denoted by ${\bm \Gamma}$, consists of topological types of objects for each component. A closed-open map type is called {\bf essential} if there is no interior node and all finite boundary edges have positive length and there is no breaking. 

\subsubsection{Transversality}

Given a closed-open domain type $\Gamma$, a domain-dependent perturbation consists of a domain-dependent smooth function $f_\Gamma$ depending on positions on the universal tree $\ov{\mc{UT}}_\Gamma$ and a domain-dependent almost complex structure $\wh J^{\rm CO}$ depending only on positions on the component $C_\ham \cong (-\infty, 0]\times S^1$. In other words, we keep using the standard complex structure over disk components without interior marked point. As before, the perturbation function $f_\Gamma$ also depends on a function $\vec{n}: V_{\uds\Gamma}^{\rm finite} \setminus \{ v_{\rm Ham}\} \to {\mb Z}_{\geq 0}$. To achieve transversality, one can first fix $\wh J^{\rm CO}$ which is equal to the given $\wh J_t$ near $-\infty$. 

Next we need to extend the perturbation we have chosen to define the (bulk-deformed) quasimap $A_\infty$ algebra of $L$. Notice that for any closed-open domain type $\Gamma$, the base $\uds\Gamma$ has a distinguished finite vertex $v_{\ham}$. The tree $\Gamma$ degenerates to another tree $\Pi$ which has an unbroken component $\Pi'$ that does not contain the distinguished vertex. For such unbroken components $\Pi'$, the domain-dependent perturbation has been chosen as before to define the $A_\infty$ structure. Hence we look for a system of domain-dependent perturbations
\beqn
P_{\Gamma, \vec{n}}^{\rm CO}: \ov{\mc{UT}}_{\uds\Gamma} \to C^\infty(L)
\eeqn
which respect similar conditions as Definition \ref{defn_coherent_perturbation}. We omit the complete definition here. Moreover, we require that, once $\Gamma$ has an unbroken component $\Gamma'$ which does not contain $v_{\rm Ham}$, the perturbation on this component agrees with the existing one chosen before. 

Now we consider relevant moduli spaces. Given a closed-open map type ${\bm \Gamma}$. Let $\vec{n}: V_\Gamma^{\rm finite}\setminus \{v_\ham\} \to {\mb Z}_{\geq 0}$ be the function whose value on $v$ is half of the Maslov index of the disk class $\beta_v$ contained in the data ${\bm\Gamma}$. The moduli space ${\mc M}_{\bm \Gamma}^{\rm CO}$ is the space of solutions to the mixed equation described above for the complex structure $\wh J_z$ in \eqref{CO_equation}, and the negative gradient flow equation with the Morse function $f_L$ perturbed by $P_{\Gamma, \vec{n}}^{\rm CO}$. Then as before, one can find a coherent family of perturbations making all such moduli spaces regular. We omit the details. 

Furthermore, one can incorporate the perturbations used for defining the homotopy units. For this we allow that the inputs of an closed-open domain type to be weighted or unweighted and require similar properties of perturbations on domains with weighted inputs as in Subsection \ref{subsection_homotopy_unit} (the almost complex structure $\wh J^{\rm CO}$ is independent of the weighting parameters $\rho$).

\subsection{The closed-open map}

Having regularized all relevant moduli spaces, we define the relevant counts for the closed-open maps. A closed-open map type ${\bm \Gamma}$ is called {\bf essential} if it is stable, has no breakings and no sphere bubbles, 
no boundary edges of length zero. Given a $k+1$-tuple of generators ${\bf x} = (x_1, \ldots, x_k; x_\infty)$\footnote{Notice that among $x_1, \ldots, x_k$ some of them could be the weighted element $\f$.}, an equivariant 1-periodic orbit ${\mf x}$ of the bulk-avoiding Hamiltonian $\wh H$ 
and a disk class $\beta$, denote by
\beqn
{\mc M}_\beta^{\rm CO}({\mf x}, {\bf x})_i,\ i = 0, 1
\eeqn
the union of moduli spaces ${\mc M}_{\bm \Gamma}^{\rm CO}$ of essential closed-open map types ${\bm \Gamma}$ whose boundary inputs/output are labelled by ${\bf x}$, whose (only) interior input $v_{\rm Ham}^\infty$ is labeled by ${\mf x}$, and whose total disk class is $\beta$, and whose virtual dimension is $i$. Given $E \geq 0$, let 
\beqn
{\mc M}_\beta^{\rm CO}({\mf x}, {\bf x})_i^{\leq E}\subset {\mc M}_\beta^{\rm CO}({\mf x}, {\bf x})_i
\eeqn
be the subset of configurations whose (analytic) energy is at most $E$. 

It is standard to prove the following theorem.

\begin{thm}
\begin{enumerate}
\item ${\mc M}{}_\beta^{\rm CO}({\mf x}, {\bf x})_i$ is an oriented topological manifold of dimension $i$.

\item For all $E\geq 0$, ${\mc M}{}_\beta^{\rm CO}({\mf x}, {\bf x})_0^{\leq E}$ is a finite set. 

\item For all $E \geq 0$, ${\mc M}{}_\beta^{\rm CO}({\mf x}, {\bf x})_1^{\leq E}$ is compact up to at most one 1) interior breaking, 2) boundary breaking, 3) bubbling of holomorphic disks, or 4) the length of a finite boundary edge shrinks to zero.

\item By the standard gluing construction and identifying fake boundary strata, one can compactify the $1$-dimensional moduli space to $\ov{\mc M}{}_\beta^{\rm CO}({\mf x}, {\bf x})_1$ which is an oriented topological 1-manifold with boundary whose cut-off at any energy level $E$ is compact. 
\end{enumerate}
\end{thm}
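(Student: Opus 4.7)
My plan is to treat the four items in turn, leveraging the fact that the closed–open moduli spaces are modeled on the already well-understood pieces: Hamiltonian-perturbed symplectic vortices on the cylindrical component $C_\ham$ (as in Section \ref{section4}), ordinary holomorphic quasidisks with boundary on $\wh L$ on the remaining disk components (as in Section \ref{section9}), and perturbed negative gradient lines/segments of $f_L$ on the tree. For item (1), the moduli space ${\mc M}{}_{\bm\Gamma}^{\rm CO}$ of each essential closed–open map type is cut out as the zero locus of a Fredholm section of a Banach bundle whose linearization splits as a direct sum of the Hamiltonian–perturbed vortex operator on $C_\ham$ (Fredholm by the exponential-decay estimates at the cylindrical end and by the admissibility of $(\wh H,\wh J)$), of the Cauchy--Riemann operators with totally real Lagrangian boundary on the quasidisk components (Fredholm by \cite{Cho_Oh, Woodward_toric}), and of the linearizations of the Morse equations along edges. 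Transversality has been arranged by the coherent choice of $P^{\rm CO}_{\Gamma,\vec n}$ and $\wh J^{\rm CO}$, so the zero locus is a manifold whose dimension equals the sum of these indices; an index additivity over the tree identifies this sum with the quoted virtual dimension~$i$. Orientations are assembled by combining the coherent orientations on vortex Floer trajectories from Section \ref{section4}, the spin-structure–induced orientations on quasidisk moduli from Section \ref{section9}, and the standard unstable-manifold orientations for $f_L$; compatibility at all interior and boundary nodes is exactly the one already verified for the two separate theories.

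For items (2) and (3), I would argue by the usual Uhlenbeck–Gromov–Floer compactness. Given a sequence $\{{\mf u}_k\}$ of configurations with energy at most $E$, the \emph{a priori} energy identity for vortices combined with the energy of Floer trajectories and Morse segments pins down the total energy along each piece. Since the upstairs target $V$ is symplectically aspherical, no holomorphic sphere bubbles can form at any interior point, either of the parametrized cylinder $C_\ham$, of the Floer cylinders, or of the auxiliary quasidisk components; moreover, affine-vortex bubbles are excluded because we work at finite parameter (no adiabatic rescaling) and any would-be bubble would carry positive energy quantum of the principal $K$-bundle, which is impossible on a contractible domain without a Hamiltonian term over a neighborhood of a single interior point. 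The only remaining degenerations are the four listed: interior breaking of a Floer cylinder at the cylindrical end, boundary breaking along a Morse edge, disk bubbling at a boundary point with boundary in $\wh L$ (for which standard Floer--Hofer--Salamon removal of singularities and the Blaschke description Theorem \ref{thm_Blaschke} apply), and collapse of a finite boundary edge length to zero. Each such degeneration lands in a moduli space ${\mc M}{}_{\bm\Pi}^{\rm CO}$ with ${\bm\Pi}\prec{\bm\Gamma}$. For item (2), the limiting stratum would have virtual dimension $-1$, hence is empty by transversality; any convergent sequence must therefore already be constant up to gauge, so finiteness follows from the fact that only finitely many essential topological types are compatible with fixed $({\mf x},{\bf x},\beta)$ and $E$. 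For item (3), the limits of sequences in the $1$-dimensional stratum lie in moduli spaces of virtual dimension $0$, exactly producing the four listed codimension-one degenerations and nothing else.

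For item (4), the standard gluing theorems in the three pieces assemble: cylindrical gluing of a Floer trajectory to the main configuration (as in Section \ref{section4}, by exponential decay and the contraction mapping argument), boundary gluing of a Morse trajectory (as in Morse-theoretic Fukaya categories, cf.\ \cite{Woodward_toric, Charest_Woodward_2015}), and disk-bubble gluing in the quasimap setting (by the regularity of the strata of $\ov{\mc M}{}_{k+1}^{\rm disk}(\beta)$). These glueings give local charts $[0,\varepsilon)\hookrightarrow\ov{\mc M}{}_\beta^{\rm CO}({\mf x},{\bf x})_1$ near each boundary endpoint. The ``fake boundary'' phenomenon occurs at two loci: a boundary edge length shrinking to zero inside the base $\uds\Gamma$, which matches a different essential type through the coherence of the perturbations in Definition \ref{defn_coherent_perturbation} (item (2)); and the $\rho\to 0$ limit at a weighted input, which is pulled back from the corresponding unweighted perturbation, hence cancels identically against itself. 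Identifying these paired strata produces a topological $1$-manifold with boundary, oriented by the previous paragraph, whose restriction to energy $\leq E$ is compact by item (3). The main subtlety I expect is the careful bookkeeping of signs and of the fake-boundary identifications near nodes where a quasidisk component touches the parametrized cylinder $C_\ham$, where one must check that the gluing orientation induced from the vortex side matches the one from the quasimap side; this is essentially the same matching already done in \cite{Venugopalan_Woodward_Xu} and transfers to our setting because $\wh L$ is $K$-invariant and the orientation of ${\mc M}{}_{k+1}^{\rm disk}(\beta)$ was fixed independently of the moment-map fiber.
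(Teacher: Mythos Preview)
Your proposal is correct and is precisely the kind of argument the paper has in mind: the paper itself gives no proof at all, merely prefacing the theorem with the phrase ``It is standard to prove the following theorem.'' Your sketch---Fredholm theory on each piece with transversality from the coherent perturbations $P^{\rm CO}_{\Gamma,\vec n}$ and $\wh J^{\rm CO}$ for item (1), Uhlenbeck--Gromov--Floer compactness with sphere bubbles excluded by asphericality of $V$ for items (2)--(3), and standard gluing plus fake-boundary identification for item (4)---is exactly what ``standard'' means here, so there is nothing to compare.
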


Now given a local system ${\bf y}$, denote the brane with this local system by $\L = (L, {\bf y})$. We define a count
\beqn
n_{\L, {\mf b}}^{\rm CO} (\beta, {\mf x}, {\bf x}) = \sum_{[{\mf u}] \in {\mc M}_\beta^{\rm CO}({\mf x}, {\bf x})_0} \exp \left( \sum_{j=1}^N \log c_j\ [{\mf u}] \cap V_j \right) T^{E(\beta)} {\bf y}^{\partial \beta} \epsilon([{\mf u}])  \in \Lambda_{\ov{\mb Q}}
\eeqn
where ${\mf b} = \sum_{j=1}^N \log c_j V_j$.
By Gromov compactness one has the following result.
\begin{lemma}
$n_{\L, {\mf b}}^{\rm CO}(\beta, {\mf x}, {\bf x})$ converges in $\Lambda_{\ov{\mb Q}}$.
\end{lemma}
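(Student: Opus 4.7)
My plan is to show that the moduli space ${\mc M}_\beta^{\rm CO}({\mf x}, {\bf x})_0$ is itself finite, so that the sum defining $n_{\L, {\mf b}}^{\rm CO}(\beta, {\mf x}, {\bf x})$ reduces to a finite sum of elements of $\Lambda_{\ov{\mb Q}}$. The key ingredient to establish is a uniform upper bound on the analytic energy of any configuration $[{\mf u}]$ in this moduli space, after which finiteness follows from the preceding theorem.

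First I would derive an energy identity for the closed-open configurations in the spirit of Proposition \ref{prop_continuation_energy}. On the parametrized component $C_\ham \cong (-\infty, 0] \times S^1$, the vortex-type energy is expressed as $\mc A_H({\mf x})$ plus a correction bounded in terms of the $s$-derivative of the homotoped Hamiltonian $\wh H_z$; since this homotopy is supported in the cylindrical region $(-\infty, -1]$ and uses a fixed cutoff between $\wh H$ and $0$, the correction is uniformly bounded in $C^0$. On the other components, the unperturbed quasidisk equation contributes symplectic area $\omega_V$ of the underlying disk classes, whose total is determined by $\beta \in H_2(V, \wh L)$, while the gradient segments on the tree part contribute no symplectic energy. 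Summing these contributions yields a uniform bound $E = E(\beta, {\mf x})$ depending only on $\beta$, ${\mf x}$, and the fixed closed-open perturbation data.

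Once this bound is in hand, I invoke part (2) of the preceding theorem, which states that ${\mc M}_\beta^{\rm CO}({\mf x}, {\bf x})_0^{\leq E}$ is a finite set for every $E \geq 0$. Since every element of ${\mc M}_\beta^{\rm CO}({\mf x}, {\bf x})_0$ has analytic energy bounded by $E(\beta, {\mf x})$, this forces the entire moduli space to be finite. The weighting factor $\prod_{j=1}^N c_j^{[{\mf u}] \cap V_j}$ lies in ${\mb Z}[{\bf i}] \subset \ov{\mb Q}$ because $c_j \in {\mb Z}[{\bf i}]$, the factor $T^{E(\beta)}$ is a monomial in $\Lambda_{\ov{\mb Q}}$, the local system contribution ${\bf y}^{\partial \beta}$ lies in $\exp(\Lambda_{0, \ov{\mb Q}}) \subset \Lambda_{\ov{\mb Q}}$, and the sign $\epsilon([{\mf u}])$ is $\pm 1$. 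A finite sum of elements of $\Lambda_{\ov{\mb Q}}$ lies in $\Lambda_{\ov{\mb Q}}$.

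The only step demanding real care is verifying the uniform energy bound on $C_\ham$: because the cylindrical end is semi-infinite in the negative $s$-direction, one must ensure that the homotopy between $(\wh H, \wh J)$ and $(0, J_V)$ contributes a genuinely bounded error term. This follows from the compact support of $\wh H$ together with the fact that the cutoff is confined to a fixed compact $s$-interval, so that $\partial_s \wh H_z$ is uniformly bounded and supported in a region of bounded measure, exactly as in the derivation of \eqref{continuation_energy_inequality}.
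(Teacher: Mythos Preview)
Your approach is correct and is exactly what the paper's one-line proof (in full: ``By Gromov compactness one has the following result'') is shorthand for: fixing $\beta$ and ${\mf x}$ yields a uniform analytic-energy bound, so part (2) of the preceding theorem gives finiteness of the zero-dimensional moduli space and the sum is finite. The only imprecision is in the intermediate bookkeeping---the energy on $C_\ham$ also carries a boundary term depending on its own disk class $\beta_0$, and the remaining quasidisk components contribute $\omega(\beta-\beta_0)$ rather than $\omega(\beta)$---but these combine to give the bound depending only on $(\beta,{\mf x})$ that you assert.
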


Then define a sequence of linear map 
\beqn
\widetilde {\rm CO}{}_{\L, {\mf b}}^n: \vcf_\bullet^{\mf b}(V; \Lambda_{\ov{\mb Q}} ) \to {\rm Hom}_{\Lambda_{\ov{\mb Q}}} \left( {\mc F}_{\mf b}^+(\L)^{\otimes n}, {\mc F}_{\mf b}^+(\L) \right), n = 0, 1, \ldots
\eeqn
by 
\beqn
\widetilde {\rm CO}{}_{\L,{\mf b}}^n ({\mf x}) (x_n, \ldots, x_1) = \sum_{x_\infty}  n_{\L,{\mf b}}^{\rm CO} ({\mf x}, {\bf x})  x_\infty
\eeqn
and linear extension. 

We use the canonical weakly bounding cochain $b_\L$ to turn it into a chain map. Define 
\beqn
{\rm CO}_{\L, {\mf b}}^n: \vcf_\bullet^{\mf b}(V; \Lambda_{\ov{\mb Q}} ) \to {\rm Hom}_{\Lambda_{\ov{\mb Q}}} \left( {\mc F}_{\mf b}^+(\L)^{\otimes n}, {\mc F}_{\mf b}^+(\L) \right), n = 0, 1, \ldots
\eeqn
by 
\beqn
{\rm CO}_{\L,{\mf b}}^n ({\mf x}) (x_n, \ldots, x_1) = \sum_{l_n, \ldots, l_0} \widetilde{\rm CO}{}_{\L, {\mf b}}^{n+l_0+ \cdots + l_n} \left( \underbrace{b_\L, \ldots, b_\L}_{l_n}, x_n, \cdots, x_1, \underbrace{b_\L, \ldots, b_\L}_{l_0} \right).
\eeqn
The whole sequence $\{{\rm CO}_{\L, {\mf b}}^n\}_{n= 0, \ldots}$ is then a linear map 
\beqn
{\rm CO}_{L, {\mf b}}: \vcf_\bullet^{\mf b}(V; \Lambda_{\ov{\mb Q}}) \to CC^\bullet ( {\mc F}_{\mf b}^+(\L)).
\eeqn

\begin{prop}
${\rm CO}_{\L, {\mf b}}$ is a chain map.
\end{prop}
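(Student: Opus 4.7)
The plan is to identify the terms $\delta_{\CC}({\rm CO}_{\L,{\mf b}}({\mf x})) - {\rm CO}_{\L,{\mf b}}(\partial^{\mf b}{\mf x})$ with the signed count of boundary points of appropriate $1$-dimensional moduli spaces of closed-open configurations. Concretely, for fixed ${\mf x}$ and boundary labels ${\bf x}=(x_1,\ldots,x_n;x_\infty)$ with $b_\L$'s inserted at arbitrary positions, we consider the compactified one-dimensional moduli spaces $\ov{\mc M}{}_\beta^{\rm CO}({\mf x},{\bf x})_1$ and enumerate the codimension-one boundary strata. These strata come in four flavors: (i) an interior Floer cylinder breaks off at the cylindrical end, producing a term of the form $\widetilde{\rm CO}{}^n(\partial^{\mf b}{\mf x})$; (ii) a boundary Morse gradient segment breaks along an unbroken quasidisk-tree portion of the configuration, producing terms in which some $m_k^+$ composition (with possible $b_\L$ insertions) acts either ``outside'' the CO output (contributing to $x_{n+1}\cdot\widetilde{\rm CO}(\ldots)$ and $\widetilde{\rm CO}(\ldots)\cdot x_1$ type terms) or ``inside'' (contributing to the bracketed terms of $\delta_{\CC}$); (iii) a finite boundary edge shrinks to zero length, which by the coherence axiom in Definition \ref{defn_coherent_perturbation} is a fake boundary and cancels with itself after gluing; and (iv) degenerations of the weighting parameter $\rho \in [0,1]$ on weighted inputs labelled by $\f_\L$, which by construction match compositions with $m_1^+(\f_\L)=\e^+-\e$.

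Before deforming by the bounding cochain, the contributions (i)--(ii) assemble into an identity of the form
\beqn
\widetilde{\rm CO}(\partial^{\mf b}{\mf x}) = \delta_\CC(\widetilde{\rm CO}({\mf x})) - \widetilde{\rm CO}({\mf x})\circ m^+ + m^+\circ \widetilde{\rm CO}({\mf x})
\eeqn
modulo sign conventions, where the extra terms arise because ${\mc F}_{\mf b}^+(\L)$ is curved and the map $\widetilde{\rm CO}$ is only a map to the \emph{curved} Hochschild complex. The next step is to insert the canonical weakly bounding cochain $b_\L = W_{\mf b}\f_\L$ at every admissible position along the boundary of the closed-open domain; this converts $\widetilde{\rm CO}$ into ${\rm CO}_{\L,{\mf b}}$ and simultaneously promotes $m_k^+$ to the flat operations $m_k^\flat$ of ${\mc F}_{\mf b}^\flat(\L)$. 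The curvature contributions combine into the sum $\sum_k m_k^+(b_\L,\ldots,b_\L) = W_{\mf b}\e^+$, which telescopes against the corresponding strict-unit insertion terms and thereby cancels. After this cancellation the identity reduces to
\beqn
{\rm CO}_{\L,{\mf b}}(\partial^{\mf b}{\mf x}) = \delta_\CC({\rm CO}_{\L,{\mf b}}({\mf x})),
\eeqn
establishing the chain map property.

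The main obstacles are bookkeeping of signs and convergence. For signs, one invokes the standard coherent orientation scheme for treed-disk moduli spaces together with the conventions \eqref{heartsuit} and \eqref{dcc} used throughout; the verification is parallel to \cite{Ganatra_thesis}, \cite{Sheridan_2016}, \cite{Charest_Woodward_2015}, \cite{Venugopalan_Woodward_Xu} and the calculations for vortex Floer theory in \cite{Wu_Xu}. Convergence of the infinite sums implicit in both ${\rm CO}_{\L,{\mf b}}$ and the insertion of $b_\L$'s is guaranteed by the Gromov compactness already used to verify that $n_{\L,{\mf b}}^{\rm CO}(\beta,{\mf x},{\bf x})$ lies in $\Lambda_{\ov{\mb Q}}$, combined with the positivity of valuations of $W_{\mf b}$ and of the energies $E(\beta)$ entering each term. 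The absence of sphere bubbles in $V$ (contractibility of the target upstairs) and the boundedness of disk images in $\wh L$ ensure that no additional bubbling phenomena beyond those in (i)--(iv) contribute to the codimension-one stratum.
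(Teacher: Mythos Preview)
Your approach is correct and matches the paper's: both analyze the codimension-one boundary of the one-dimensional closed-open moduli spaces, identifying interior breakings with ${\rm CO}_{\L,{\mf b}}\circ\partial^{\mf b}$ and boundary breakings with the two Gerstenhaber-product halves $m^\flat\circ{\rm CO}_{\L,{\mf b}}(-)$ and ${\rm CO}_{\L,{\mf b}}(-)\circ m^\flat$ of the Hochschild differential. The paper is slightly more direct---it works with the $b_\L$-deformed map from the outset rather than passing through an intermediate identity for $\widetilde{\rm CO}$---and is more precise about the key cancellation: when a boundary breaking separates off a treed quasidisk carrying only $b_\L$ insertions, that piece contributes $W_{\mf b}\,\e_\L^+$, and re-inserting the strict unit $\e_\L^+$ into the remaining closed-open configuration vanishes by the forgetful property of the perturbation data, which is the mechanism you allude to as ``telescoping.''
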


\begin{proof}
We analyze the boundary of 1-dimensional moduli spaces ${\mc M}{}_\beta^{\rm CO}({\mf x}, x)_1$. Given any map type ${\bm \Gamma}$ contributing to this moduli space, the true boundaries of ${\mc M}_{\bm \Gamma}^{\rm CO}$ consists of configurations where either there is exactly one interior breaking (at an equivariant 1-periodic orbit) or exactly one boundary breaking (see Figure \ref{Figure_CO_chain}).

\begin{figure}[h]
    \centering
    \includegraphics[scale = 0.9]{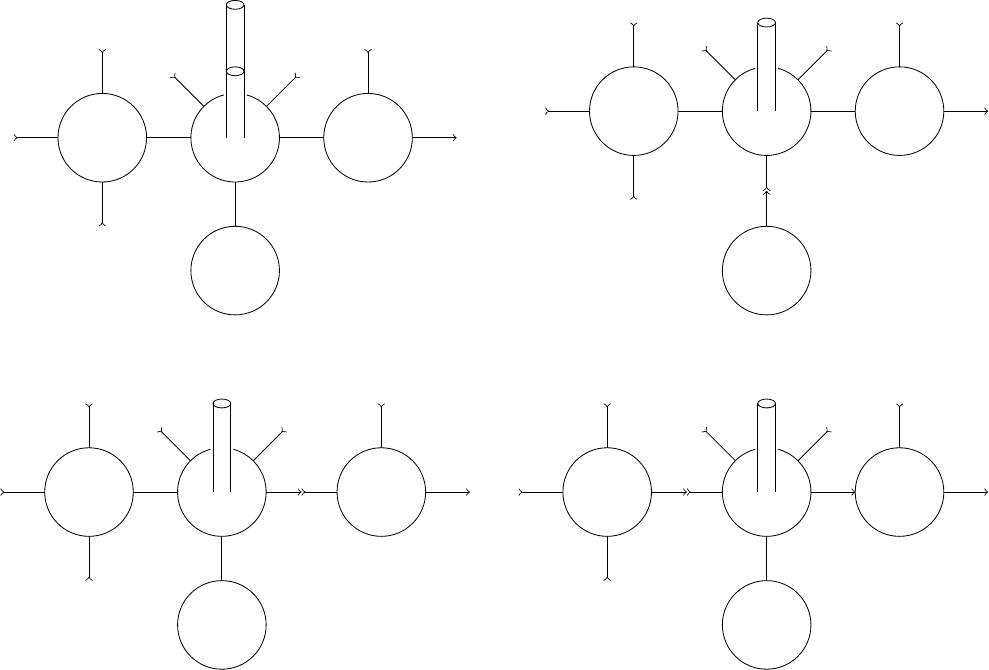}
    \caption{True boundaries of a 1-dimensional moduli space. The pictures represent the case when the weakly bounding cochain is zero and the insertions are all variables of the Hochschild cochains. One can draw the picture for general cases by arbitrarily inserting weakly bounding cochains on the boundary.}
    \label{Figure_CO_chain}
\end{figure}

The configurations with interior breakings contribute to the composition ${\rm CO}_{\L, {\mf b}} \circ \delta_{\vcf}$ (the upper left in Figure \ref{Figure_CO_chain}). On the other hand, there are three types of configurations with boundary breakings, described as follows. 
\begin{enumerate}

\item The first (corresponding to the upper right in Figure \ref{Figure_CO_chain}) is where the breaking separates off a treed disk with no interior puncture or boundary insertions except for an arbitrary number of the weakly bounding cochain $b$. As we have
\beqn
\sum_{k \geq 0} m_k^+(b, \ldots, b) = W_{\mf b} \e^+.
\eeqn
Such configuration contributes by a multiple of the counting of a closed-open moduli space with a boundary insertion $\e^+$, which vanishes by the forgetful property of the perturbation. 

\item The second (corresponding to the lower left in Figure \ref{Figure_CO_chain}) is where the interior puncture and the output are separated by the breaking. This kind of broken configuration contributes to the Gersternhaber product $m^\flat \circ {\rm CO}_{\L, {\mf b}}(-)$ (up to a sign).

\item The third (corresponding to the lower right in Figure \ref{Figure_CO_chain}) is where the interior puncture and the output are not separated by the breaking. This kind of broken configuration contributes to the Gernstenhaber product ${\rm CO}_{\L, {\mf b}}(-) \circ m^\flat$ (up to a sign).
\end{enumerate}
Therefore, up to sign verifications which we skip here, ${\rm CO}_{\L, {\mf b}}$ is a chain map.
\end{proof}

Standard TQFT type argument shows that up to chain homotopy the closed-open map is well-defined, i.e., independent of the pair $(\wh H, \wh J)$ defining the vortex Floer chain complex and independent of the choice of all relevant perturbations.

There is another map on the cohomology level which we also need. Namely, if we do not use any boundary inputs, by counting treed vortices over closed-open domains one can obtain a linear map 
\beq\label{CO2}
{\rm CO}_{\L, {\mf b}}^0: \vhf_\bullet^{\mf b} (V; \Lambda_{\ov{\mb Q}}) \to \qhf^\bullet_{\mf b} (\L; \Lambda_{\ov{\mb Q}}).
\eeq
It was firstly defined in \cite{Wu_Xu} in a slightly different way. Here we can easily generalize to the bulk-deformed case. Moreover, this map sends the identity ${\bm 1}_{\mf b}^\glsm$ to the identity in the Lagrangian Floer cohomology.

Summing over all Floer-nontrivial Lagrangian branes, we define the {\bf closed-open map} 
\beqn
{\rm CO}_{\mf b}:= \bigoplus_{\L \in {\rm Crit}_X W_{\mf b}} {\rm CO}_{\L, {\mf b}} : \vhf_\bullet^{\mf b}(V;\Lambda_{\ov{\mb Q}}) \to \bigoplus_{\L \in {\rm Crit}_X W_{\mf b}} \hh^\bullet( {\mc F}_{\mf b}^+ (\L)).
\eeqn

\subsection{The closed-open map is multiplicative}

Now we establish the following important property of the closed-open map.

\begin{thm}\label{thm_CO_multiplicative}
The map ${\rm CO}_{\L, {\mf b}}$ is multiplicative and maps the unit to the unit. 
\end{thm}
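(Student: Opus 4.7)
The plan is to establish both the unitality and multiplicativity by standard TQFT cobordism arguments, running moduli spaces of closed-open domains with an additional real parameter interpolating between the two sides of the desired identity. The general principle is familiar from ordinary Floer theory, and in the GLSM setting it is simplified by the absence of sphere bubbles in $V$; the main new features to track are the bulk deformation weights, the weakly bounding cochain $b_{\L}$, and the domain-dependent Hamiltonian perturbations on the distinguished component $C_\ham$.

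For unitality, I would use the auxiliary map ${\rm CO}^0_{\L,{\mf b}}$ of \eqref{CO2}. The length-zero component of ${\rm CO}_{\L,{\mf b}}({\bm 1}_{{\mf b}}^{\glsm})$ (with no boundary insertions other than the canonical $b_{\L}$) agrees up to chain homotopy with the image of ${\bm 1}_{{\mf b}}^{\glsm}$ under ${\rm CO}^0_{\L,{\mf b}}$, which is the Floer-theoretic unit in $\qhf^\bullet_{\mf b}(\L;\Lambda_{\ov{\mb Q}})$. To show that the higher-length components are cohomologous to zero, I would introduce an additional weighted boundary input of weight $\rho\in[0,1]$ attached to $C_\ham$, required to be labelled by $\e^+$. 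At $\rho=1$ the perturbation pulls back via the forgetful map and the count vanishes; at $\rho=0$ one recovers the given Hochschild cochain up to strict unital shifts; a count of the boundaries of the resulting 1-dimensional moduli spaces identifies ${\rm CO}_{\L,{\mf b}}({\bm 1}_{{\mf b}}^{\glsm})-{\bm 1}_{{\mc F}_{\mf b}^\flat(\L)}$ with a Hochschild coboundary.

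For multiplicativity, I would construct a moduli space of \emph{closed-open domains with two interior punctures}, parametrised by a real number $r\in[0,+\infty]$ governing the relative position of the two interior inputs on the distinguished component. At $r=+\infty$ the domain degenerates to a standard pair-of-pants with cylindrical ends on the two $\alpha,\beta$-inputs, glued along a new equivariant orbit to a single-interior-puncture closed-open domain; counting such configurations reproduces ${\rm CO}_{\L,{\mf b}}(\alpha\ast_{\mf b}\beta)$. At $r=0$ the two interior punctures separate the underlying disk into two closed-open sub-domains joined along a boundary node, and the associated broken trajectories contribute precisely to the Gerstenhaber-product expression for ${\rm CO}_{\L,{\mf b}}(\alpha)\star{\rm CO}_{\L,{\mf b}}(\beta)$ as defined in \eqref{Yoneda}. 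Additional boundary strata correspond to (i) boundary edge breakings, which assemble into the differentials on both sides (and hence vanish on cohomology), (ii) strata where a substring of $b_{\L}$-insertions is pinched off, which by the weak Maurer--Cartan equation $\sum m_k^+(b_{\L},\ldots,b_{\L})=W_{\mf b}\,\e^+$ introduce insertions of the strict unit $\e^+$ and hence vanish by the forgetful property of the perturbation system. Counting boundary points of the resulting oriented 1-manifolds yields the required identity.

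The main technical obstacle will be assembling a coherent system of regular domain-dependent perturbations on this enlarged class of parametrised moduli spaces while remaining compatible with all previously chosen perturbations used to define $\vcf_\bullet^{\mf b}$, the pair-of-pants product, the $A_\infty$ algebra ${\mc F}_{\mf b}^+(\L)$, and the map ${\rm CO}_{\L,{\mf b}}$; this is an inductive construction along the partial order of domain types and indexing functions $\vec n$, analogous to the arguments already used in Sections~\ref{section4} and~\ref{section9}. The sign analysis, in particular matching the sign conventions of the Yoneda product \eqref{Yoneda} with the orientations of the gluing strata at $r=0$, will also require care but is a bookkeeping exercise following the conventions of \cite{Ganatra_thesis, Mescher_ainfinity}. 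Bulk-deformation weights behave straightforwardly under the interpolation because topological intersection numbers with the divisors $V_j$ are additive under gluing.
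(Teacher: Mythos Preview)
Your approach to multiplicativity is essentially the paper's: a one-parameter family of closed-open domains with two interior inputs, whose extreme boundary strata produce ${\rm CO}_{\L,{\mf b}}(\alpha\ast_{\mf b}\beta)$ on one side and ${\rm CO}_{\L,{\mf b}}(\alpha)\star{\rm CO}_{\L,{\mf b}}(\beta)$ on the other, with the remaining boundaries giving Hochschild coboundaries or vanishing via the strict-unit/forgetful mechanism. The paper packages the parameter using Woodward's \emph{balanced treed disks} (the two interior markings and the output lie on a tangent circle, or satisfy a signed edge-length constraint along the ``bridge'' when on different components); your $r\in[0,+\infty]$ is an informal version of the same geometry. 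One imprecision in your description: at the Yoneda-product end the relevant degeneration is a \emph{pair} of boundary breakings along the bridge, yielding \emph{three} unbroken pieces (two single-input closed-open domains and a central treed disk carrying the output and the $m^\flat$-operation from \eqref{Yoneda}), not two pieces joined at a single boundary node.

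For unitality the paper takes a shorter and genuinely different route. Since $\hh^\bullet({\mc F}_{\mf b}^\flat(\L))\cong\Lambda_{\ov{\mb Q}}$ is one-dimensional (Corollary~\ref{HH_computation_2}, via the Clifford algebra computation), the class ${\rm CO}_{\L,{\mf b}}({\bm 1}_{\mf b}^{\glsm})$ is automatically a scalar multiple of ${\bm 1}_{{\mc F}_{\mf b}^\flat(\L)}$; to pin down the scalar it suffices to project to length zero and check that ${\rm CO}^0_{\L,{\mf b}}$ sends ${\bm 1}_{\mf b}^{\glsm}$ to the Floer unit in $\qhf^\bullet_{\mf b}(\L)$, which is already in \cite{Wu_Xu}. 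Your proposed direct cobordism argument for the higher-length components is the standard approach in settings where $\hh^\bullet$ is not known to be one-dimensional, and would also work here, but the paper's shortcut avoids it entirely.
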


We use an analogue of ``quilted'' moduli spaces to prove the multiplicativity, in the same way as in \cite[Section 3.6]{Venugopalan_Woodward_Xu}.

\begin{defn}[Balanced marked disks and balanced treed disks] \hfill
\begin{enumerate}
    \item A stable marked disk $S \cong {\mb D}$ with two interior markings $z', z''\in {\rm Int} S$ and $k+1$ boundary markings $\uds z = (z_0, \ldots, z_k)$ is called {\bf balanced} if $z', z'', z_0$ lies on a circle in ${\mb D}$ tangent to $\partial {\mb D}$ at $z_0$.

    \item A treed disk with two interior leaves $z', z''$, $k$ boundary inputs and one boundary output is called {\bf balanced} if the following conditions are satisfied.
    \begin{enumerate}

    \item $z', z''$ are contained in the same spherical component. 
    
    \item If $z', z''$ are contained in the same disk component $S_v$. Let $z_0'\in \partial S_v$ be the boundary node connecting $S_v$ to the output. Then $(S_v, z', z'', z_0')$ is a balanced marked disk.

    \item If $z', z''$ are contained in two different disk components, $S_{v'}$ and $S_{v''}$ respectively. Let $e_1, \ldots, e_l$ be the unique path connecting $v'$ and $v''$ in the tree, then 
    \beqn
    \sum_{i=1}^l \pm {\bm l} (e_i) = 0
    \eeqn
    where the sign is positive resp. negative if the edge $e_i$ is oriented toward resp. against the output. We call the unique path $e_1, \ldots, e_l$ the {\bf bridge}.
    \end{enumerate}
\end{enumerate}
\end{defn}

Consider any stable domain type $\Gamma$ with two interior inputs, $k$ boundary inputs and one boundary output. Consider the moduli space ${\mc M}_\Gamma^{\rm balanced}$ of balanced treed disks of type $\Gamma$. The list of codimension one boundary strata is different from the unbalanced case, as the balanced condition cuts down the dimension by 1. See Figure \ref{figure:CO_balanced}. 

\begin{figure}[h]
    \centering
    \includegraphics[scale = 0.5]{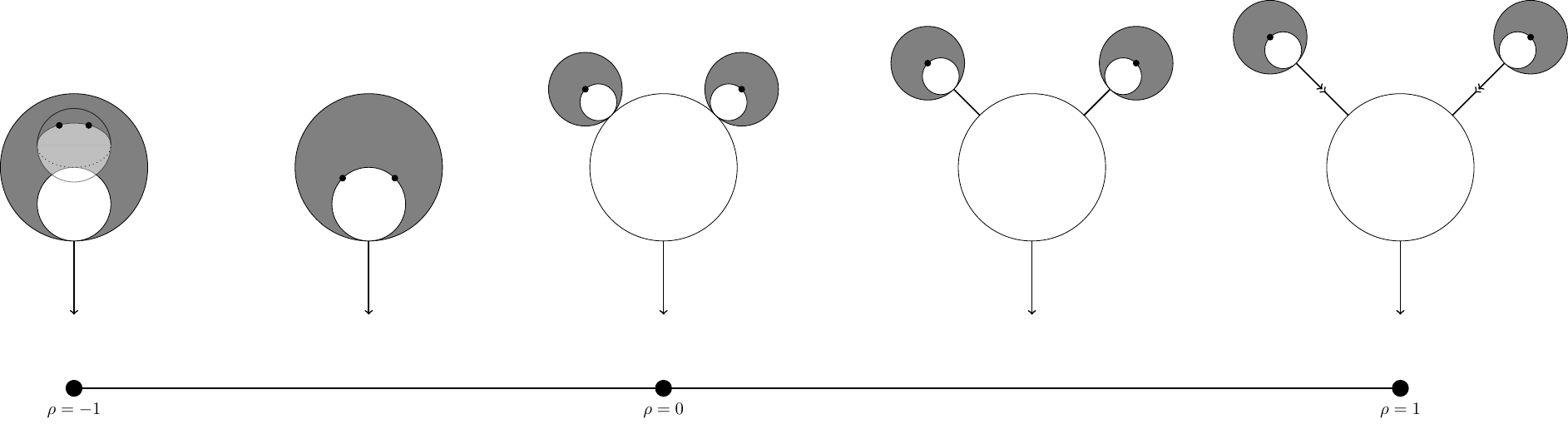}
    \caption{The moduli space of balanced treed disks with two interior inputs and one boundary inputs. This moduli space is parametrized by one variable $\rho \in [-1, 1]$.}
    \label{figure:CO_balanced}
\end{figure}

Notice that a real boundary ${\mc M}_\Pi^{\rm balanced} \subset \partial {\mc M}_\Gamma^{\rm balanced}$ could be the product of several other moduli spaces whose types may have either one interior input or zero interior inputs. We have chosen surface metrics with cylindrical ends for stable closed-open domains (with one interior inputs); hence we can extend the choices to a family of surface metrics with cylindrical ends for the moduli space of stable closed-open domains with two interior inputs. We omit the details.

Now we can consider the following mixed equation for domains with two interior cylindrical ends. Choose two bulk-avoiding admissible pairs $(\wh H_t', \wh J_t')$ and $(\wh H_t'', \wh J_t'')$. Turn on the Hamiltonian perturbation on cylindrical ends. Consider the mixed equation similar to that for the closed-open map. We can extend the existing perturbation to this new type of moduli spaces to achieve transversality.

\begin{proof}[Proof of Theorem \ref{thm_CO_multiplicative}]
Choose two Floer cycles ${\mf x}_1$ and ${\mf x}_2$. We only need to show that  
\beq\label{CO_multiplicative_chain}
{\rm CO}_{\mf b}( {\mf x}_1 \ast_{\mf b} {\mf x}_2) - {\rm CO}_{\mf b}({\mf x}_1) \star {\rm CO}_{\mf b}({\mf x}_2) \in {\rm Im} \delta_{\CC}.
\eeq
As one can choose perfect Morse functions on toric manifolds, we can assume that ${\mf x}_1$ and ${\mf x}_2$ are two single equivariant 1-periodic orbits.  

Consider 1-dimensional moduli spaces of treed disks with two cylindrical ends labelled by ${\mf x}_1$ and ${\mf x}_2$ and arbitrary boundary output $x_\infty$ and inputs
\beqn
\underbrace{b_\L, \ldots, b_\L}_{j_k}, x_k, \cdots, x_1, \underbrace{b_\L, \ldots, b_\L}_{j_0}.
\eeqn
We call $x_k, \ldots, x_1$ {\bf regular inputs}. Consider the true boundaries of such moduli spaces. {\it a priori} There are five types of them, listed as below. We count their contributions (weighted by the bulk deformation), whose sum should be zero.
\begin{enumerate}
    \item Breaking of Floer cylinders at one interior input. As ${\mf x}_1$ and ${\mf x}_2$ are cycles, the contribution of this type of boundary points is zero.

    \item Two cylindrical ends merge together to form a pair of pants. The contribution of this type of boundary is 
    \beqn
    {\rm CO}_{\mf b}( {\mf x}_1 \ast_{\mf b} {\mf x}_2).
    \eeqn

    \item One boundary edge not belonging to the bridge breaks and the piece broken off is not a disk without regular input. The contribution of this type of boundary is a Hochschild coboundary. 

    \item One boundary edge not belonging to the bridge breaks and the piece broken off is a disk without regular input. The broken off piece sums to a multiple of the strict unit $e_\L^+$. By the property of the perturbation data, the contribution of this type of boundary is zero. 

    \item A pair of boundary edges belonging to the bridge break. The contribution of this type of boundary is the Yoneda product 
    \beqn
    {\rm CO}_{\mf b}({\mf x}_1) \star {\rm CO}_{\mf b}({\mf x}_2).
    \eeqn    
\end{enumerate}
Therefore, one obtains \eqref{CO_multiplicative_chain}.

Now we prove the unitality. By the choice of the small bulk deformation, the Hochschild cohomology of the quasimap Fukaya category is semisimple and splits as the direct sum of 1-dimensional pieces. Moreover, each piece is the Hochschild cohomology of the $A_\infty$ algebra ${\mc F}_{\mf b}^+(\L)$, which is linearly spanned by the identity. Hence we only need to prove that the linear map \eqref{CO2} sends the identity ${\bm 1}_{\mf b}^\glsm \in \vcf_\bullet^{\mf b}(V; \Lambda_{\ov{\mb Q}} )$ to the identity element of $\qhf_{\mf b}(\L)$. This verification can be found in \cite[Theorem 6.11]{Wu_Xu} (this verification does not need to consider the homotopy unit and weakly bounding cochains).
\end{proof}

\subsection{The Kodaira--Spencer map}

To prove the first item of Theorem \ref{thm_CO}, it remains to show that the closed-open map is a linear isomorphism. Proposition \ref{prop_same_rank} shows that the domain and the codomain of ${\rm CO}_{\mf b}$ have the same rank
\beqn
{\rm dim}_{\Lambda_{\ov{\mb Q}}} \vhf_\bullet^{\mf b}(V; \Lambda_{\ov{\mb Q}}) = {\rm dim} H^\bullet(X) = \# {\rm Crit}_X W_{\mf b}.
\eeqn
Hence we only need to show that ${\rm CO}_{\mf b}$ is either injective or surjective. 

Following \cite{FOOO_mirror}, we define another closed-open type map which we call the {\it Kodaira--Spencer map} at ${\mf b}$, denoted by
\beqn
\mf{ks}_{\mf b}: \Lambda_{\ov{\mb Q}}[{\bf z}_1, \ldots, {\bf z}_N] \to (\Lambda_{\ov{\mb Q}})^{{\rm Crit}_X W_{\mf b}}.
\eeqn
It is formally the derivative of the bulk-deformed potential function taken at the bulk ${\mf b}$ evaluated at critical points of the Morse function. We only need to use the standard complex structure to define this map. 

\subsubsection{Moduli spaces of quasidisks with tangency conditions}

We go toward the definition of the Kodaira--Spencer map. Fix a Lagrangian $L = L({\bf u})$ for a moment. Let $I = (\alpha_1, \ldots, \alpha_N)$ be a multiindex of nonnegative integers, which defines a monomial
\beqn
{\bf z}^I = {\bf z}_1^{\alpha_1} \cdots {\bf z}_N^{\alpha_N}.
\eeqn
Consider a holomorphic disk $u: ({\mb D}, \partial {\mb D}) \to (V, \wh L)$, which can be classified by Theorem \ref{thm_Blaschke}. We write $u = (u_1, \ldots, u_N)$ in coordinates. We say that $u$ satisfies the $I$-tangency condition at $z\in {\rm Int}{\mb D}$ if $u_i$ vanishes to the order of $\alpha_i$ at $z$, for all $i = 1, \ldots, N$. In particular, when $\alpha_i = 0$, there is no restriction to $u_i$. Given a multiindex $I$ and a disk class $\beta$, denote the moduli space of quasidisks with boundary in $\wh L$ (with one output)  satisfying the $I$-tangency condition at the origin by 
\beqn
{\mc M}_{I,1}^{qd}(\beta).
\eeqn
Its virtual dimension is 
\beqn
{\rm dim}^{\rm vir} {\mc M}_{I,1}^{qd}(\beta) = n + m(\beta) - 2|I| -2.
\eeqn

\begin{rem}
We can put the above moduli space into an infinitely dimensional Banach space where we can specify the tangency conditions for arbitrary maps with sufficiently high regularity. For example, using the setup of Cieliebak--Mohnke \cite[Section 6]{Cieliebak_Mohnke}. Hence we can examine whether the moduli space of quasididks subject to tangency conditions is regular or not.
\end{rem}

\begin{prop}\label{prop_tangency_regularity}
Suppose $\beta = \sum_{j=1}^N d_j \beta_j$ with $d_j \in {\mb Z}$. Then ${\rm dim} {\mc M}_{I, 1}^{qd}(\beta) \neq \emptyset$ only if $d_j \geq \alpha_j$ for all $j$. Moreover, the moduli space ${\mc M}_{I, 1}^{qd}(\beta)$ is smooth and the evaluation map at the boundary marking is a submersion.
\end{prop}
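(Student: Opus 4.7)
The plan is to exploit the explicit Blaschke description of holomorphic quasidisks (Theorem~\ref{thm_Blaschke}) in order to write down a finite-dimensional smooth parametrization of ${\mc M}_{I,1}^{qd}(\beta)$ and to verify submersivity of the boundary evaluation map by a direct calculation, bypassing any abstract Fredholm transversality argument.

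For the necessity of $d_j \geq \alpha_j$, I would read it off directly from the Blaschke formula: if $u \colon ({\mb D},\partial{\mb D}) \to (V,\wh L)$ lies in the class $\beta = \sum_j d_j \beta_j$, then each coordinate $u_j$ is a Blaschke product of degree $d_j$, so its order of vanishing at the origin is at most $d_j$. Since the $I$-tangency condition requires $u_j$ to vanish to order at least $\alpha_j$ at $0$, the inequality $d_j \geq \alpha_j$ must hold for every $j$.

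Assuming $d_j \geq \alpha_j$, I would parametrize the moduli space explicitly by collecting the mandated zeros at $0$: write
\[
u_j(z) \;=\; \sqrt{\tau_j}\, e^{i\theta_j}\, z^{\alpha_j}\prod_{k=1}^{d_j-\alpha_j}\frac{z-a_{j,k}}{1-\overline{a_{j,k}}\,z},
\]
with parameters $\theta_j \in {\mb R}/2\pi{\mb Z}$, $a_{j,k}\in {\mb D}$, together with a boundary marking $z_0 \in \partial{\mb D}$, modulo the $K$-action on the angle tuple $(\theta_j)$ and the $S^1 \subset {\rm Aut}({\mb D},0)$ action on $z_0$ and the $a_{j,k}$. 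Since the $K$-action on $\mu^{-1}(0)$, and hence on $\wh L$, is free, the induced action on the angle coordinates is free; combined with the free $S^1$-action on $z_0$, this produces a smooth finite-dimensional quotient. A standard Banach manifold setup (as in Cieliebak--Mohnke) would then identify this concrete quotient with the zero set of the Cauchy--Riemann section subject to the jet constraints at $0$, and the existence of the explicit parametrization makes transversality automatic.

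For submersivity of the boundary evaluation $\ev \colon {\mc M}_{I,1}^{qd}(\beta)\to L$, I observe that an infinitesimal variation $\delta\theta_j$ rotates the $j$-th coordinate of $u(z_0)\in \wh L$ by exactly $\delta\theta_j$, so the submap $T^N \to \wh L$ obtained by freezing every other parameter and varying the angles alone is a local diffeomorphism. Quotienting by $K$ produces a surjective submersion $T^N/K \cong T^n \to L$, which gives the claim. The only point requiring genuine care is checking that the angle directions remain linearly independent after the $K$-quotient, which is immediate from the freeness of the $K$-action on $\wh L$; I therefore expect the argument to amount to careful bookkeeping rather than posing a serious analytic obstacle.
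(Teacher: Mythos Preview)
Your argument for necessity and for the submersion is essentially what the paper does (the paper just says the submersion ``follows easily from the Blaschke formula'' without spelling out the angle variation). The regularity step, however, is handled differently.

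The paper does \emph{not} build a finite-dimensional parametrization. Instead it works directly in the Fredholm framework alluded to in the Remark preceding the proposition: it reduces to $N=1$ (coordinates and tangency constraints decouple for the standard complex structure), sets up the Cieliebak--Mohnke Banach pair $W_0(\beta)\subset W(\beta)$ and $E_0(\beta)\subset E(\beta)$ encoding the jet constraint at $0$, and shows the restricted linearization $F_0$ is onto. The mechanism is: Cho--Oh gives surjectivity of the unconstrained $F$, and any preimage $\xi$ of $\eta_0\in E_0$ is then corrected into $T_{u_0}W_0$ by subtracting an element of $\ker F$ that realizes the unwanted low-order jet of $\xi$ at $0$; such kernel elements exist precisely because the Blaschke description shows holomorphic disks of degree $\ge k+1$ can be deformed through arbitrary $(k+1)$-jet data at the origin.

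Your route---writing down the Blaschke family with $\alpha_j$ zeros pinned at $0$ and quotienting by $K\times S^1$---is a legitimate and more elementary alternative, but the sentence ``the existence of the explicit parametrization makes transversality automatic'' hides exactly the step the paper makes explicit. A smooth solution set of the right dimension does not by itself force $\mathrm{coker}\,F_0=0$; you still need that the tangent space of your Blaschke family exhausts $\ker F_0$ (equivalently, that holomorphic first-order deformations satisfying the boundary and jet constraints all integrate inside the family), so that $\dim\ker F_0=\mathrm{ind}\,F_0$ forces the cokernel to vanish. That is true here and easy, but it should be said. You should also note that the $a_{j,k}$ are unordered, so the honest parameter space involves symmetric products $\mathrm{Sym}^{d_j-\alpha_j}({\mb D})$; these are smooth, so no harm done. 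The trade-off: the paper's argument plugs straight into the regularity language used for every other moduli space in the paper, while yours is cleaner and more geometric for this particular computation.
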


\begin{proof}
By Theorem \ref{thm_Blaschke}, the $j$-th coordinate of the map $u$ of the form \eqref{Blaschke_formula} needs to vanish at least to the order $\alpha_j$ at the origin. Hence $d_j \geq \alpha_j$. 

To prove the regularity of the moduli space ${\mc M}_{I, 1}^{qd}(\beta)$, one only needs to prove the regularity of the corresponding moduli space of holomorphic disks in $V$ with boundary in $\wh L$ (before quotienting the $K$-action) as the $K$-action is free. Since the complex structure on $V\cong {\mb C}^N$ is the standard one, and the tangency condition is imposed on each coordinate independently, one only needs to prove the Fredholm regularity for the $N= 1$ case. In this case, we consider holomorphic disks in ${\mb C}$ with boundary contained in the unit circle, which also vanish to a given order $k$ at the origin. Choose $p>2$ and $m$ sufficiently large, so that one has the Sobolev embedding $W^{m, p} \hookrightarrow C^k$ in dimension two. 

Now fix the disk class $\beta$. Consider the Banach space $W(\beta)$ of maps from $({\mb D}, \partial {\mb D})$ to $({\mb C}, S^1)$ of regularity $W^{m+1, p}$. Let $W_0(\beta) \subset W(\beta)$ be the subspace of maps which vanish at $0$ to the order $k+1$. Let $E(\beta) \to W(\beta)$ be the Banach space bundle, whose fiber over $u$ is the space of $(0, 1)$-forms of regularity $W^{m, p}$, and let $E_0(\beta)\subset E(\beta)$ be the subbundle of those forms which vanish at $0$ to the order $k$. Suppose $u_0: {\mb D} \to {\mb C}$ is a holomorphic disk in $W_0(\beta)$. Then there is a commutative diagram (see \cite[Section 6]{Cieliebak_Mohnke})
\beqn
\xymatrix{ T_{u_0} W(\beta) \ar[r]^{F}  & E(\beta)|_{u_0} \\
           T_{u_0} W_0(\beta) \ar[r]_{F_0} \ar[u] & E_0(\beta)|_{u_0} \ar[u] }
\eeqn
where $F$ resp. $F_0$ is the standard Cauchy--Riemann operator, restricted to corresponding Banach spaces. One needs to prove that $F_0$ is surjective. Notice that by Cho--Oh's theorem \cite[Theorem 6.1]{Cho_Oh}, $F$ is surjective. Hence for each $\eta_0 \in E_0(\beta)|_{u_0}$, there exists $\xi \in T_{u_0}W(\beta)$ such that $F(\xi) = \eta_0$. One only needs to modify $\xi$ to some $\xi_0 \in T_{u_0} W_0(\beta)$ with $F(\xi) = F(\xi_0)$. Indeed, as $u_0$ vanishes up to order $k+1$ at the origin, the disk class $\beta$, which is only the degree of the map $u_0$, is at least $k+1$. Then by the Blaschke formula \eqref{Blaschke_formula}, one can easily deform $u_0$ by $k+1$-jet data. Such deformations are in the kernel of $F$. Hence we can obtain the desired $\xi_0$. This proves the Fredholm regularity of the moduli spaces.

The fact that the evaluation map at the output is a submersion onto $L$ follows easily from the Blaschke formula.
\end{proof}

\subsubsection{The derivative of the potential}

Now we can define the Kodaira--Spencer map. For each critical point $\L \in {\rm Crit}_X W_{\mf b}$ (lying inside the moment polytope), we will define a linear map 
\beqn
\wt{\mf{ks}}_{\L, {\mf b}}: \Lambda_{\ov{\mb Q}}[{\bf z}_1, \ldots, {\bf z}_N] \to \qcf_\bullet^+ (\L; \Lambda_{\ov{\mb Q}})
\eeqn
using the counts of certain zero-dimensional moduli spaces. It will turn out that the value of this map is always a multiple of the unique maximum $\e_\L = x_{\rm max}\in {\rm Crit} f_L$, hence descends to a map 
\beqn
\wt{\mf{ks}}_{\L, {\mf b}}: \Lambda_{\ov{\mb Q}}[{\bf z}_1, \ldots, {\bf z}_N] \to \qhf_\bullet^{\mf b}(\L; \Lambda_{\ov{\mb Q}}).
\eeqn
We define the coefficients to be ${\mf{ks}}_{\L, {\mf b}}$, i.e., 
\beqn
\wt{\mf{ks}}_{\L, {\mf b}}({\bf z}^I) = \mf{ks}_{\L, {\mf b}}( {\bf z}^I) [\e_\L].
\eeqn

We first fix a multiindex $I = (\alpha_1, \ldots,\alpha_N)$. Denote 
\beqn
\beta^I = \alpha_1 \beta_1 + \cdots + \alpha_N \beta_N \in H_2(V, \wh L).
\eeqn
For each disk class $\beta \in H_2(V, \wh L)$ and each critical point $x \in {\rm Crit} f_{L({\bf u})}$ of the Morse function $f_{L({\bf u})}: L({\bf u}) \to {\mb R}$, consider the moduli space 
\beqn
{\mc M}_{I, 1}^{qd}(\beta; x)
\eeqn
where we require that the output converges to the critical point $x$. Proposition \ref{prop_tangency_regularity} implies that this moduli space is regular. Moreover, 
\beqn
{\mc M}_{I, 1}^{qd}(\beta; x) \neq \emptyset\ {\rm and}\ {\rm dim}{\mc M}_{I, 1}^{qd}(\beta; x) = 0 \Longrightarrow \beta = \beta^I\ {\rm and}\ x = x_{\rm max}.
\eeqn
Moreover, in this case, the moduli space has exactly one point because of the Blaschke formula. We count the unique element weighted by the bulk deformation and the local system. 

\begin{rem}
{\it A priori} we should consider treed holomorphic disks with one boundary output and one interior marking with certain tangency condition. It is similar to the case of proving that $m_0$ is a multiple of ${\bm e}_{\bm L}$ that one can prove for zero-dimensional moduli spaces, only those treed disks with one disk component contribute. 
\end{rem}

The count of the above moduli spaces (with a single point) defines the Kodaira--Spencer map. More explicitly, define 
\beqn
\wt{\mf{ks}}_{\L, {\mf b}}: \Lambda_{\ov{\mb Q}}[{\bf z}_1, \ldots, {\bf z}_N] \to \qhf_{\mf b}^\bullet( \L; \Lambda_{\ov{\mb Q}})
\eeqn
by 
\beqn
\wt{\mf{ks}}_{\L, {\mf b}}({\bf z}^I)= {\mf b}^I T^{E(\beta^I)} {\bf y}^{\partial \beta^I} [\e_\L] = \mf{ks}_{\L, {\mf b}}({\bf z}^I) [\e_\L].
\eeqn
Here for ${\mf b} = \sum_{j=1}^N \log c_j V_j$, the notation ${\mf b}^I$ denotes the quantity
\beqn
c_1^{\alpha_1} \cdots c_N^{\alpha_N},
\eeqn
which is the exponential of the intersection number between the above unique quasidisk in ${\mc M}_{I, 1}^{qd}(\beta^I; x_{\rm max})$ and the bulk ${\mf b}$.

The Kodaira--Spencer map takes a very simple form. Recall that we have written 
\beqn
W_{\mf b} = W_{{\mf b}, 1} + \cdots + W_{{\mf b}, N} = c_1 W_1 + \cdots + c_N W_N.
\eeqn

\begin{prop}\label{prop_ks_computation}
For each multiindex $I$, one has 
\beq\label{ks_formula}
\mf{ks}_{\mf b}( {\bf z}^I ) = W_{\mf b}^I:= W_{{\mf b}, 1}^{\alpha_1} \cdots W_{{\mf b}, N}^{\alpha_N}.
\eeq
\end{prop}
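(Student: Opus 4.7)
The plan is to compute both sides explicitly and match them term by term. The right-hand side is built from the basic monomials $W_{{\mf b},j} = c_j T^{-\lambda_j} y^{v_j}$ in the deformed Givental--Hori--Vafa potential. Under the identification between local systems on $L({\bf u})$ and points in the algebraic torus (as explained just before the statement of Proposition~\ref{prop_same_rank}), the factor $T^{-\lambda_j} y^{v_j}$ is precisely $T^{E(\beta_j)} {\bf y}^{\partial\beta_j}$, where $\beta_j \in H_2(V,\wh L)$ is the $j$-th Maslov two disk class from Theorem~\ref{thm_Blaschke}. Consequently
\[
W_{\mf b}^I = \prod_{j=1}^N W_{{\mf b},j}^{\alpha_j} = \Big(\prod_j c_j^{\alpha_j}\Big)\, T^{\sum_j \alpha_j E(\beta_j)}\, {\bf y}^{\sum_j \alpha_j \partial\beta_j} = {\mf b}^I\, T^{E(\beta^I)}\, {\bf y}^{\partial\beta^I},
\]
which is by definition $\mf{ks}_{\mf b}({\bf z}^I)$, matching the right-hand side formally.

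To justify the left-hand side rigorously, I would invoke the Blaschke classification (Theorem~\ref{thm_Blaschke}) to analyze ${\mc M}_{I,1}^{qd}(\beta; x_{\rm max})$ for any disk class $\beta = \sum_j d_j \beta_j$. The tangency condition of order $\alpha_j$ on the $j$-th coordinate $u_j$ forces at least $\alpha_j$ of the zeros in the Blaschke product $\prod_k (z-a_{j,k})/(1-\bar{a}_{j,k}z)$ to collide at $0 \in {\mb D}$; in particular $d_j \geq \alpha_j$, in line with the dimension count $n + m(\beta) - 2|I| - 2$ from Proposition~\ref{prop_tangency_regularity}. The moduli space with output at $x_{\rm max}$ has virtual dimension $2 m(\beta) - 4|I|$ (after accounting for the evaluation), so rigidity forces $\beta = \beta^I$. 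In this extremal case the Blaschke formula collapses completely: $d_j = \alpha_j$ means every zero $a_{j,k}$ must equal $0$, and the only remaining moduli are the phases $\theta_j$, which are absorbed by the $K$-action. Therefore ${\mc M}_{I,1}^{qd}(\beta^I; x_{\rm max})$ consists of exactly one $K$-equivalence class, and its contribution to $\wt{\mf{ks}}_{\L,{\mf b}}({\bf z}^I)$ is the asserted monomial ${\mf b}^I T^{E(\beta^I)} {\bf y}^{\partial\beta^I}\,[\e_\L]$.

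The one point that requires some care, and that I expect to be the main technical obstacle, is confirming that no other contributions appear: namely that $(\beta,x) \neq (\beta^I, x_{\rm max})$ never gives rise to a rigid component of ${\mc M}_{I,1}^{qd}(\beta; x)$, and that treed configurations with nontrivial edges or additional disk bubbles contribute trivially (in parallel with how Theorem~\ref{cohomological_unit} shows $m_0$ is a multiple of $\e_\L$). For the first point, the Fredholm index computation together with $d_j \geq \alpha_j$ forces the equality $\beta = \beta^I$ in the zero-dimensional locus; for the latter, one observes that any disk bubble either is a constant (unstable, ruled out by stability) or carries positive Maslov index, which would violate the dimension constraint once the tangency conditions are accounted for, so the only contributing configurations are those with a single disk component. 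Combining the unique count with the weight computation and matching against the factorized monomial $W_{\mf b}^I$ yields the identity \eqref{ks_formula}.
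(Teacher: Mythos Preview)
Your proof is correct and follows the same approach as the paper: both simply compute the weight ${\mf b}^I T^{E(\beta^I)}{\bf y}^{\partial\beta^I}$ attached to the unique rigid quasidisk and match it against the factorized monomial $\prod_j W_{{\mf b},j}^{\alpha_j}$. Note that by the time this proposition is stated, the paper has \emph{already} established (in the paragraphs and remark immediately preceding it) that the relevant zero-dimensional moduli space is nonempty only for $(\beta,x)=(\beta^I,x_{\rm max})$, consists of a single point by the Blaschke formula, and that treed configurations do not contribute; so your second and third paragraphs re-derive background rather than contribute to the proof proper. One small slip: your virtual dimension ``$2m(\beta)-4|I|$'' for the constrained moduli space is off---after imposing the output at $x_{\rm max}$ the correct count gives $m(\beta)-2|I|$, which indeed vanishes exactly when $\beta=\beta^I$---but this does not affect your conclusion.
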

\begin{proof}
The calculation is carried out in a straightforward way. The area of a disk in class $\beta^I$ is 
\beqn
E(\beta^I) = \alpha_1 l_1({\bf u}) + \cdots + \alpha_N l_N({\bf u}).
\eeqn
The contribution of the local system is 
\beqn
{\bf y}^{\partial \beta^I} = \prod_{j=1}^N (y_1^{v_{j, 1}} \cdots y_n^{v_{j, n}})^{\alpha_j}.
\eeqn
Hence the formula \eqref{ks_formula} follows.
\end{proof}

Define the Kodaira--Spencer map by
\beqn
\mf{ks}_{\mf b}:= \bigoplus_{\L \in {\rm Crit}_X W_{\mf b}} \mf{ks}_{\L, {\mf b}}.
\eeqn

\begin{thm}\label{thm_ks_surjective}
The Kodaira--Spencer map $\mf{ks}_{\mf b}$ is surjective.
\end{thm}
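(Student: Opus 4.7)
The plan is to reduce surjectivity of $\mf{ks}_{\mf b}$ to a finite multivariate Lagrange interpolation problem over the field $\Lambda_{\ov{\mb Q}}$, and then to verify the separation-of-points hypothesis using the smoothness of the toric variety $X$.

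First, by Proposition \ref{prop_ks_computation}, for any multiindex $I=(\alpha_1,\ldots,\alpha_N)$ and any $\L \in {\rm Crit}_X W_{\mf b}$, the $\L$-component of $\mf{ks}_{\mf b}({\bf z}^I)$ equals $W_{{\mf b},1}(\L)^{\alpha_1}\cdots W_{{\mf b},N}(\L)^{\alpha_N}$. Hence the map $\mf{ks}_{\mf b}$ factors as the composition of the $\Lambda_{\ov{\mb Q}}$-algebra homomorphism
\[
\Lambda_{\ov{\mb Q}}[{\bf z}_1,\ldots,{\bf z}_N] \longrightarrow \prod_{\L \in {\rm Crit}_X W_{\mf b}} \Lambda_{\ov{\mb Q}}, \qquad P({\bf z}) \longmapsto \bigl(P(\xi_\L)\bigr)_\L,
\]
where $\xi_\L := (W_{{\mf b},1}(\L),\ldots,W_{{\mf b},N}(\L)) \in (\Lambda_{\ov{\mb Q}})^N$. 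Thus $\mf{ks}_{\mf b}$ is surjective as soon as the points $\xi_\L$ are pairwise distinct, since over a field this forces the evaluation homomorphism onto a product of points to be surjective by multivariate Lagrange interpolation: for each $\L_0$ one constructs a polynomial $P_{\L_0}$ with $P_{\L_0}(\xi_{\L_0})=1$ and $P_{\L_0}(\xi_\L)=0$ for $\L \neq \L_0$ by taking a product over $\L \neq \L_0$ of linear factors $\frac{z_{j(\L)}-\xi_{\L,j(\L)}}{\xi_{\L_0,j(\L)}-\xi_{\L,j(\L)}}$, where $j(\L)$ is any coordinate in which $\xi_{\L_0}$ and $\xi_\L$ differ.

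The main step is therefore to prove the separation statement: if $\L_1 \neq \L_2$ in ${\rm Crit}_X W_{\mf b}$, then $\xi_{\L_1}\neq \xi_{\L_2}$. Since a critical point is determined by its coordinates ${\bm y}=(y_1,\ldots,y_n) \in (\Lambda_{\ov{\mb Q}}^*)^n$, distinct critical points have distinct ${\bm y}$-tuples, so it suffices to check that the map
\[
({\bm y}) \longmapsto \bigl(c_1 T^{-\lambda_1}{\bm y}^{v_1},\ldots,c_N T^{-\lambda_N}{\bm y}^{v_N}\bigr)
\]
is injective on $(\Lambda_{\ov{\mb Q}}^*)^n$. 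This is where the smoothness of $X$ enters: by the defining property of a smooth toric manifold, at any vertex of $\Delta$ the inward normals ${\bf v}_{j_1},\ldots,{\bf v}_{j_n}$ of the $n$ adjacent facets form a ${\mb Z}$-basis of ${\mb Z}^n$, so the corresponding $n$ Laurent monomials ${\bm y}^{v_{j_k}}$ already define an automorphism of the algebraic torus. By Theorem \ref{thm_good_bulk} we may (and do) choose the convenient bulk so that the coefficients $c_{j_1},\ldots,c_{j_n}$ are all nonzero; this is compatible with the Morseness requirement and the distinct-critical-values requirement since those hold on a Zariski open subset of $({\mb Z}[{\bf i}])^N$. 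Then the $n$ components $W_{{\mf b},j_k}$ of $\xi_\L$ alone already distinguish any two ${\bm y}$-tuples, completing the separation.

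The one subtlety worth anticipating is the Lagrange interpolation over the non-Archimedean field $\Lambda_{\ov{\mb Q}}$: this causes no difficulty because $\Lambda_{\ov{\mb Q}}$ is a genuine field (its units are all nonzero series) and the interpolation argument only requires dividing by the nonzero elements $\xi_{\L_0,j(\L)}-\xi_{\L,j(\L)}$. Given this, Theorem \ref{thm_ks_surjective} follows. In tandem with Proposition \ref{prop_same_rank} (which equates the ranks of domain and codomain of ${\rm CO}_{\mf b}$ with the number of critical points) and the multiplicativity established in Theorem \ref{thm_CO_multiplicative}, one obtains the ring isomorphism of Theorem \ref{thm_CO}(1) by observing that ${\rm CO}_{\mf b}$ necessarily factors through a map whose composition with a natural projection from the Hochschild cohomology onto $(\Lambda_{\ov{\mb Q}})^{{\rm Crit}_X W_{\mf b}}$ agrees with $\mf{ks}_{\mf b}$ on the divisor classes $V_j$, forcing surjectivity and hence bijectivity of ${\rm CO}_{\mf b}$.
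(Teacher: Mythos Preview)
Your argument is correct and is genuinely simpler than the paper's. Both proofs recognize that, by Proposition~\ref{prop_ks_computation}, the map $\mf{ks}_{\mf b}$ is evaluation of polynomials in the $W_{{\mf b},j}$ at the critical points, so surjectivity reduces to separating those points. The paper carries this out indirectly: it invokes \cite[Lemma~3.12]{FOOO_toric_2} to express an arbitrary (suitably scaled) Laurent polynomial in $y_1,\ldots,y_n$ as a polynomial in $W_1,\ldots,W_N$, passes to the convergent Novikov field and specializes $T=t\in{\mb C}$ to build interpolating Laurent polynomials over ${\mb C}$, and then lifts back. You bypass all of this by working directly in $(\Lambda_{\ov{\mb Q}})^N$: the smoothness of $X$ gives, at any vertex of the polytope, a subset of $n$ monomials $y^{v_{j_1}},\ldots,y^{v_{j_n}}$ which form an automorphism of the torus, hence (provided the corresponding $c_{j_k}$ are nonzero) the points $\xi_\L$ are pairwise distinct, and ordinary multivariate interpolation over the field $\Lambda_{\ov{\mb Q}}$ finishes. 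The paper in fact already records exactly this separation-by-monomials observation in the proof of Theorem~\ref{thm_good_bulk}; you simply apply it at the right spot. Your route avoids both the external FOOO lemma and the convergent-coefficient machinery.

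One remark worth making explicit: your proof needs the coefficients $c_{j_1},\ldots,c_{j_n}$ at some vertex to be nonzero, which is not part of the definition of ``convenient'' (Definition~\ref{defn:convenient}). The paper's proof has the same hidden requirement (since $\mf{ks}_{\mf b}({\bf z}_j)=c_jW_j$, one cannot recover $W_j$ in the image when $c_j=0$). This is harmless for the application, because the convenient ${\mf b}$ produced in Theorem~\ref{thm_good_bulk} is generic and has all $c_j\neq 0$; but strictly speaking you are proving the theorem under a mild extra hypothesis, and you should say so rather than phrase it as ``we may (and do) choose'' inside a proof whose ${\mf b}$ is already fixed. Your closing paragraph on deducing Theorem~\ref{thm_CO}(1) is a bit loose; the clean statement is that $\mf{ks}_{\mf b}$ factors through ${\rm CO}_{\mf b}\circ\kappa_{\mf b}$ via the commutative diagram in Proposition~\ref{prop_commutative_diagram}, so surjectivity of $\mf{ks}_{\mf b}$ forces surjectivity of ${\rm CO}_{\mf b}$, and the rank equality from Proposition~\ref{prop_same_rank} then gives bijectivity.
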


\begin{proof}
By \cite[Lemma 3.12]{FOOO_toric_2}, the monomials $W_1, \ldots, W_N$ generate (over the ring $\Lambda_{0, \ov{\mb Q}}$) the ring 
\beqn
\Lambda_0^P \llangle y_1^\pm, \ldots, y_n^\pm \rrangle
\eeqn
which is a ring of formal Laurent series satisfying a particular valuation condition determined by the moment polytope $P$. Let ${\bm \eta}_1, \ldots, {\bm \eta}_s$ be the critical points of $W_{\mf b}$ inside the moment polytope. Using the notion of convergent Novikov field $\Lambda_{\ov{\mb Q}}^{\rm conv}$, we see that for $T = t$ being a sufficiently small nonzero complex number, ${\bm \eta}_1^t, \ldots, {\bm \eta}_s^t$ are distinct points in $({\mb C}^*)^n$. Then there exist $s$ complex Laurent polynomials
\beqn
F_1, \ldots, F_s \in {\mb C}[y_1, \ldots, y_n, y_1^{-1}, \ldots, y_n^{-1}]
\eeqn
such that the matrix $\left[ F_a({\bm \eta}_b^t) \right]_{1\leq a, b \leq s}$ is invertible. Regard $F_1, \ldots, F_s$ as Laurent polynomials with Novikov coefficients, we see the determinant of the matrix
\beqn
\det \left[ F_a({\bm \eta}_b) \right]_{1 \leq a, b \leq s} \neq 0 \in \Lambda_{\ov{\mb Q}}.
\eeqn
The above is still true if we replace $F_a$ by $T^A F_a$ for any $A\in {\mb R}$. On the other hand, for $A$ sufficiently large, $T^A F_a \in \Lambda_0^P \llangle y_1^\pm, \ldots, y_n^\pm \rrangle$. This implies that, the restriction of $\mf{ks}_{\mf b}$ to the finite-dimensional subspace spanned by $T^A F_a$ is subjective due to the generation property of the monomials $W_1, \dots, W_N$. Hence $\mf{ks}_{\mf b}$ is also surjective.
\end{proof}

\subsection{A quantum Kirwan map}

The set of small bulk-deformations is contained in the larger set of equivariant cohomology upstairs. Classically, there is the Kirwan map 
\beqn
\kappa^{\rm classical}: H_K^\bullet(V) \to H^\bullet(X).
\eeqn
In principle, by incorporating vortices one can define a quantization of the Kirwan map. This has been pursued by Ziltener \cite{Ziltener_book} in the symplectic setting and worked out by Woodward \cite{Woodward_15} in the algebraic setting. Here we define a variant of the quantum Kirwan map, denoted by 
\beq\label{qkirwan}
\kappa_{\mf b}: \Lambda_{{\mb Z}[{\bf i}]}[{\bf z}_1, \ldots, {\bf z}_N] \to \vhf_\bullet^{\mf b} (V; \Lambda_{{\mb Z}[{\bf i}]})
\eeq
such that the image of the unit $1$ is the identity ${\bm 1}_{\mf b}^\glsm$. 

We define the above map by imposing tangency conditions at the origin of the cigar. Fix a regular bulk-avoiding admissible pair $(\wh H_\infty, \wh J_\infty)$ which defines a bulk-deformed vortex Floer complex $\vhf_\bullet^{\mf b}( \wh H, \wh J; \Lambda_{{\mb Z}[{\bf i}]})$. Consider a domain-dependent almost complex structure $\wh J$ (resp. Hamiltonian perturbation $\wh H$) parametrized by points on the cigar $\Sigma^{\rm cigar} \cong {\mb C}$ which is equal to the standard almost complex structure $\wh J_V$ (resp. vanishes) in a specified neighborhood of $0\in \Sigma^{\rm cigar}$ and which agrees with $\wh J_\infty$ (resp. $\wh H_\infty$) near infinity. Consider the vortex equation with the data $(\wh H, \wh J)$ on the cigar. Any finite energy solution should converge to a critical point of ${\mc A}_{H_\infty}$. Moreover, as the almost complex structure is standard near $0$, one can impose the tangency condition corresponding to $I$ at the origin. Such a tangency condition is gauge invariant. Then for each critical point ${\mf x} \in {\rm Crit} {\mc A}_{H_\infty}$, there is a moduli space
\beqn
{\mc M}{}_I^{\rm cigar}({\mf x}) \subset {\mc M}{}^{\rm cigar}({\mf x}).
\eeqn
By using domain-dependent perturbations, one can achieve transversality for such a moduli space. Then one has 
\beqn
{\rm dim} {\mc M}_I^{\rm cigar}({\mf x}) = {\rm dim} {\mc M}{}^{\rm cigar}({\mf x}) - 2 |I|.
\eeqn
On the other hand, as the Hamiltonian is bulk-avoiding, each solution has well-defined topological intersection numbers with $V_j$. Then define 
\beqn
\kappa_{\mf b}({\bf z}^I) = \sum_{{\mf x}\atop {\rm dim}{\mc M}_I^{\rm cigar}({\mf x}) = 0 } \left( \sum_{[{\mf u}] \in {\mc M}_I^{\rm cigar} ({\mf x})} \left( \prod_{j=1}^N c_j^{[{\mf u}] \cap V_i} \right) \epsilon ([{\mf u}]) \right)  {\mf x}.
\eeqn

\begin{thm}[Properties of the bulk-deformed quantum Kirwan map]\hfill
\begin{enumerate}
\item The element $\kappa_{\mf b}({\bf z}^I)$ is a legitimate element of $\vcf_\bullet^{\mf b} (\wh H_\infty, \wh J_\infty; \Lambda_{{\mb Z}[{\bf i}]})$ and is $\partial^{\mf b}$-closed. Moreover, its homology class is independent of the choice of perturbation and its corresponding element in $\vhf_\bullet^{\mf b}(V; \Lambda_{{\mb Z}[{\bf i}]})$ is well-defined.    

\item $\kappa_{\mf b}(1) = {\bm 1}_{\mf b}^\glsm$.

\end{enumerate}
\end{thm}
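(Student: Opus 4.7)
The plan is first to establish the well-definedness of $\kappa_{\mf b}({\bf z}^I)$ as an element of $\vcf_\bullet^{\mf b}(\wh H_\infty, \wh J_\infty; \Lambda_{{\mb Z}[{\bf i}]})$. By choosing the domain-dependent pair $(\wh H_z, \wh J_z)$ on the cigar to equal $(0, \wh J_V)$ in a neighborhood of $0\in \Sigma^{\rm cigar}$, the tangency condition of type $I$ at the origin becomes well-posed and gauge-invariant, and a cigar analogue of Proposition \ref{prop_tangency_regularity} renders the moduli spaces ${\mc M}_I^{\rm cigar}({\mf x})$ transversely cut out for generic perturbation data. Their zero-dimensional components are finite below any energy bound: asphericity of $V$ rules out sphere bubbles, the absence of Lagrangian boundary rules out disk bubbles, and the Hamiltonian perturbation on the cigar is compactly supported. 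The energy identity $E({\mf u}) = -{\mc A}_{H_\infty}({\mf x}) + C$ with $C$ depending only on the cigar perturbation then implies that only finitely many contributing ${\mf x}$ fall in each action window, so the formal sum defining $\kappa_{\mf b}({\bf z}^I)$ lies in the downward completion over $\Lambda_{{\mb Z}[{\bf i}]}$.

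To prove $\partial^{\mf b}\kappa_{\mf b}({\bf z}^I) = 0$, I would analyze the codimension-one boundary of the compactified one-dimensional moduli spaces $\ov{\mc M}{}_I^{\rm cigar}({\mf x})$. By the compactness analysis just sketched, the only such boundary consists of Floer breakings at the cylindrical end of the cigar; the tangency condition is preserved along the principal component because no bubble can form at $0$ there. The topological intersection number with each bulk divisor $V_j$ is additive under breaking, so the bulk weight $\prod_j c_j^{[{\mf u}]\cap V_j}$ factorizes across a broken configuration; signed counting identifies the boundary count with the $\partial^{\mf b}$-image of $\kappa_{\mf b}({\bf z}^I)$, yielding closedness. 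Independence of the homology class from the choice of cigar perturbation then follows from a one-parameter cobordism argument: a generic homotopy between two choices produces a parametric moduli space whose rigid locus defines a chain homotopy between the two resulting cycles.

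For item (2), the empty multi-index imposes no tangency condition, so ${\mc M}_\emptyset^{\rm cigar}({\mf x}) = {\mc M}^{\rm cigar}({\mf x})$ and the bulk weighting reduces to the weighting already used in Section \ref{section4} to define ${\bm 1}_{{\mf b}, \wh H_\infty}^{\glsm}$. For compatible choices of cigar perturbation, $\kappa_{\mf b}(1) = {\bm 1}_{{\mf b}, \wh H_\infty}^{\glsm}$ on the chain level, and the homological equality $\kappa_{\mf b}(1) = {\bm 1}_{\mf b}^{\glsm}$ follows from the independence of perturbation proved above. The main technical point throughout is that affine vortex bubbling on the cigar, which is the only bubbling mechanism not ruled out by asphericity and which would in principle appear in codimension one, is in fact a codimension at least two phenomenon under the order $|I|$ tangency constraint with generic domain-dependent perturbation data. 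This is parallel to the compactness analysis underlying the quantum Kirwan map of Ziltener and Woodward, and is somewhat simpler in our Hamiltonian-perturbed setting because the cigar carries a fixed volume form (no adiabatic limit is taken) and the asymptotic action bounds the energy uniformly.
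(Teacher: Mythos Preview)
Your proposal is correct and aligns with the paper's approach; the paper's own proof is the single sentence ``The first conclusion follows from the standard argument and the second one follows from the definition of ${\bm 1}_{\mf b}^\glsm$,'' and what you have written is precisely that standard argument made explicit. One small remark: your discussion of affine vortex bubbling is unnecessary here, since the cigar carries a \emph{fixed} cylindrical volume form and no adiabatic limit is taken; energy concentration at an interior point would produce only a holomorphic sphere in $V$, which is constant by asphericality, so the sole codimension-one degeneration is Floer breaking at the cylindrical end, exactly as you use.
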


\begin{proof}
The first conclusion follows from the standard argument and the second one follows from the definition of ${\bf 1}_{\mf b}^\glsm$.
\end{proof}

We define another element in the vortex Floer homology which can be viewed as the first Chern class in the bulk-deformed Hamiltonian Floer homology, or the image of thee first Chern class under the bulk-deformed PSS map. Recall that the first Chern class of a toric manifold is naturally represented by the union of toric divisors. Upstairs, they are the union of all coordinate hyperplanes. 

\begin{defn}\label{defn_first_Chern_class}
The {\bf ${\mf b}$-deformed first Chern class} is the element
\beqn
\kappa_{\mf b}({\bf z}_1 + \cdots + {\bf z}_N) \in \vhf_\bullet^{\mf b}(V; \Lambda_{{\mb Z}[{\bf i}]})
\eeqn
Denote the operator on $\vhf_\bullet^{\mf b}(V; \Lambda_{{\mb Z}[{\bf i}]})$ defined by the pair-of-pants product with the ${\mf b}$-deformed first Chern class by 
\beq
{\mb E}_{\mf b}: \vhf_\bullet^{\mf b}(V; \Lambda_{{\mb Z}[{\bf i}]}) \to \vhf_\bullet^{\mf b}(V; \Lambda_{{\mb Z}[{\bf i}]}).
\eeq
\end{defn}

\subsection{The commutative diagram}

We prove the following proposition.

\begin{prop}\label{prop_commutative_diagram}
When the bulk deformation ${\mf b}$ is convenient, the following diagram commutes.
\beq\label{commutative_diagram}
\vcenter{\xymatrix{   \Lambda_{\ov{\mb Q}} [{\bf z}_1, \ldots, {\bf z}_N] \ar[rr]^-{\mf{ks}_{\mf b}}  \ar[d]_{\kappa_{\mf b}}  & & 
  (\Lambda_{\ov{\mb Q}})^{{\rm Crit}_X W_{\mf b}} \ar[d] \\ \vhf_{\mf b} (V; \Lambda_{\ov{\mb Q}}) \ar[rr]_{{\rm CO}_{\mf b}}  & &  \displaystyle \bigoplus_{\L \in {\rm Crit}_X W_{\mf b}} HH^\bullet( {\mc F}_{\mf b}^\flat( \L))    }}
\eeq
Here the right vertical arrow is the natural identification induced by the individual isomorphisms $\hh^\bullet( {\mc F}_{\mf b}^\flat( \L)) \cong \Lambda_{\ov{\mb Q}}$.
\end{prop}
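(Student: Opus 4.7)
The plan is to compare $\mathrm{CO}_{\mf b}(\kappa_{\mf b}({\bf z}^I))$ and $\mf{ks}_{\mf b}({\bf z}^I)$ component by component for each critical brane $\L \in {\rm Crit}_X W_{\mf b}$. Since ${\rm CO}_{\L, {\mf b}}$ is unital and multiplicative by Theorem \ref{thm_CO_multiplicative}, and $\hh^\bullet({\mc F}_{\mf b}^\flat(\L)) \cong \Lambda_{\ov{\mb Q}}$ is generated by the unit, it suffices to show that the length-zero part of ${\rm CO}_{\L,{\mf b}}(\kappa_{\mf b}({\bf z}^I))$ equals $\wt{\mf{ks}}_{\L, {\mf b}}({\bf z}^I)$ in $\qhf_{\mf b}^\bullet(\L; \Lambda_{\ov{\mb Q}})$, where the length-zero part is the map $\mathrm{CO}^0_{\L,{\mf b}}$ from \eqref{CO2}. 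This reduces the statement to a single identity per brane, involving counts of treed configurations with one interior and one boundary output.

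To prove this identity, I would construct an interpolating family of moduli spaces parametrized by $\lambda \in [0, +\infty]$. For each $\lambda$, consider treed holomorphic configurations whose principal surface component $C_\lambda$ is a once-punctured disk with a distinguished interior marked point $z_\ast$, equipped with a domain-dependent pair $(\wh H^{(\lambda)}_z, \wh J^{(\lambda)}_z)$ that agrees with the standard $(0, \wh J_V)$ on a fixed neighborhood $U$ of $z_\ast$ and which is interpolated, over a neck region of conformal modulus $\lambda$, to the given bulk-avoiding admissible pair $(\wh H_\infty, \wh J_\infty)$ near the output end. We impose the $I$-tangency condition at $z_\ast$; this is well-defined and, by the argument of Proposition \ref{prop_tangency_regularity}, cuts out a smooth moduli space because $\wh J_V$ is integrable there. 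After gluing on a treed ensemble of quasidisks on the boundary end in the usual fashion (including weakly-bounding-cochain insertions so the result defines an $\qhf_{\mf b}^\bullet(\L; \Lambda_{\ov{\mb Q}})$-valued count), one obtains a one-parameter family ${\mc M}^{\rm int}_{I, \lambda}({\mf b}, x_\infty)$, whose rigid elements define a chain in $\qcf^+_{\mf b}(\L; \Lambda_{\ov{\mb Q}})$ that is closed and has a class independent of $\lambda$.

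The two endpoints of this family yield the two sides of the desired identity. As $\lambda \to +\infty$, the neck stretches and every rigid configuration degenerates into the concatenation of (i) a Hamiltonian-perturbed cigar vortex with $I$-tangency at the origin, converging at its cylindrical end to an equivariant periodic orbit ${\mf x}$, and (ii) a closed-open configuration with interior input ${\mf x}$ and boundary output $x_\infty$; the bulk-weighted, signed count of such broken objects is exactly the matrix coefficient of $\mathrm{CO}^0_{\L, {\mf b}}(\kappa_{\mf b}({\bf z}^I))$. As $\lambda \to 0$, the Hamiltonian is turned off and each surviving configuration collapses to a single treed quasidisk with $I$-tangency at $z_\ast$. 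By the Blaschke formula (Theorem \ref{thm_Blaschke}) the only rigid contribution has underlying disk of class $\beta^I$, boundary output $\e_\L = x_{\max}$, and carries the weights $T^{E(\beta^I)} {\bf y}^{\partial \beta^I} {\mf b}^I$, exactly matching $\wt{\mf{ks}}_{\L, {\mf b}}({\bf z}^I)$ by Proposition \ref{prop_ks_computation}. The $\lambda$-cobordism thus identifies the two classes in $\qhf^\bullet_{\mf b}(\L; \Lambda_{\ov{\mb Q}})$.

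The main obstacle is controlling the additional boundary strata of ${\mc M}^{\rm int}_{I, \lambda}$ appearing over the interior $\lambda \in (0, +\infty)$, namely disk bubbling on the boundary and edge-breakings in the tree part. Disk bubbles carrying no regular boundary insertions contribute, through the relation $\sum_k m_k^+(b_\L, \dots, b_\L) = W_{\mf b} \e_\L^+$ together with the strict-unit vanishing property of the perturbation system, only multiples of the strict unit and hence drop out, just as in the proof of Theorem \ref{thm_CO_multiplicative}. Because $V$ is symplectically aspherical there are no sphere bubbles, so the topological intersection number of the interior component with each divisor $V_j$ remains well-defined and constant along the family, ensuring that the bulk weights ${\mf b}^I$ assemble consistently. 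The tangency condition itself is preserved under degeneration since $\wh J_V$ stays integrable on $U$. Once these boundary contributions are verified to be Hochschild coboundaries, the desired equality $\mathrm{CO}^0_{\L, {\mf b}}(\kappa_{\mf b}({\bf z}^I)) = \wt{\mf{ks}}_{\L, {\mf b}}({\bf z}^I)$ holds in cohomology, completing the commutativity of the diagram.
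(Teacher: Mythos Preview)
Your proposal is correct and follows essentially the same approach as the paper: reduce to the length-zero closed-open map ${\rm CO}_{\L,{\mf b}}^0$ landing in $\qhf_{\mf b}^\bullet(\L)$, then run a one-parameter cobordism (your $\lambda\in[0,+\infty]$, the paper's $\nu\in[0,1]$) that turns the Hamiltonian on/off and stretches the neck, identifying the two ends with $\wt{\mf{ks}}_{\L,{\mf b}}$ and ${\rm CO}_{\L,{\mf b}}^0\circ\kappa_{\mf b}$, and handling interior-parameter boundary strata via the strict-unit forgetful property. One small correction: the residual boundary breakings separating the interior marking from the output contribute an $m_1^\flat$-coboundary in $\qcf_{\mf b}^+(\L)$, not a Hochschild coboundary; since you are comparing classes in $\qhf_{\mf b}^\bullet(\L)$ this is exactly what you need, but the terminology should be adjusted.
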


\begin{proof}
We turn on Hamiltonian perturbations on disks to construct a homotopy between the Kodaira--Spencer map and the closed-open map composed with the quantum Kirwan map. Fix a critical point of $W_{\mf b}$ lying in the interior of the moment polytope with the corresponding Lagrangian brane $\L = (L({\bf u}), {\bf y})$. We claim that the following diagram commutes.
\beq\label{commutative_diagram_2}
\vcenter{ \xymatrix{   \Lambda_{\ov{\mb Q}} [{\bf z}_1, \ldots, {\bf z}_N] \ar[rrr]^-{\wt{\mf{ks}}_{\L, {\mf b}}}  \ar[d]_{\kappa_{\mf b}}  & & & 
  \qhf_{\mf b}^\bullet( \L; \Lambda_{\ov{\mb Q}})  \ar[d]  \\ \vhf_{\mf b} (V; \Lambda_{\ov{\mb Q}}) \ar[rrr]_{{\rm CO}_{\L, {\mf b}}^0}  & & & \qhf_{\mf b}^\bullet( \L; \Lambda_{\ov{\mb Q}})   }   }
\eeq
Once this is established, it follow that the image ${\rm CO}_{\L, {\mf b}}^0 \circ \kappa_{\mf b}$ is contained in the line spanned by the identity element of $\qhf_{\mf b}^\bullet(\L;\Lambda_{\ov{\mb Q}})$. Hence on the chain level, one has 
\beqn
{\rm CO}_{\L, {\mf b}}^0 ( \kappa_{\mf b}({\bf z}^I)) - 
 \mf{ks}_{\L, {\mf b}}({\bf z}^I) \e_\L^+  \in {\rm Im} (m_1^\flat).
\eeqn
As the Hochschild cohomology of $\L$ is spanned by the identity element, it follows that the diagram \eqref{commutative_diagram} also commutes. 

Now we prove that \eqref{commutative_diagram_2} commutes. Consider closed-open domains with one interior marking. Define a 1-parameter family of equations parametrized by $\nu \in [0, 1]$ such that when $\nu = 0$, the equation is the quasidisk equation with tangency condition at the marking. When $\nu$ is positive, we stretch a neighborhood of the interior marking and turn on a Hamiltonian perturbation by a bulk-avoiding pair $(\wh H, \wh J)$. We always require the tangency condition at the interior marking. As for boundary insertions, we only allow the boundary inputs to be labelled by the canonical weakly bounding cochain $b_\L$, while the boundary output can be labelled by any critical point of $f_L$. We can consider such moduli spaces with the tangency condition corresponding to multiindex $I$, total disk class $\beta$, and the output labelled by $x \in {\rm Crit} f_L$. 

One can use similar arguments as before to regularize relevant moduli spaces using perturbations which naturally extend existing perturbations defining the $A_\infty$ structure, the closed-open map, and the quantum Kirwan map. Then by counting elements in zero-dimensional moduli spaces, one can define a linear map 
\beqn
R_\L: \Lambda_{\ov{\mb Q}}[{\bf z}_1, \ldots, {\bf z}_N] \to \qcf^\bullet( \L; \Lambda_{\ov{\mb Q}}) \subset \qcf^\bullet( \L; \Lambda_{\ov{\mb Q}})^+.
\eeqn

Now we consider boundaries of 1-dimensional moduli spaces. There are the following types of boundary strata.
\begin{enumerate}
    \item The boundary at $\nu = 0$. This side of the boundary consists of points in zero-dimensional moduli spaces used to define the Kodaira--Spencer map. The contribution of these boundary points is equal to $\wt{\mf{ks}}_{\L, {\mf b}}$.

    \item The boundary at $\nu = 1$. This side of the boundary consists of configurations having exactly one interior breaking at certain equivariant 1-periodic orbit of the Hamiltonian $\wh H$. As the perturbation extends the perturbations chosen for the closed-open map and the quantum Kirwan map, the contribution of these boundary points is equal to 
    \beqn
    {\rm CO}_{\L, {\mf b}}^0 \circ \kappa_{\mf b}.
    \eeqn

    \item Boundary points at $\nu \in (0, 1)$. These configurations have exactly one boundary breakings. There are two possibilities. First, the interior puncture and the boundary output are in the same unbroken component. In this case, the other unbroken component is a treed quasidisk with only boundary insertions being the canonical weakly bounding cochain $b_\L$. As the perturbation satisfies the forgetful property when one input is unweighted (the strict unit $\e^+$), the contribution of this kind of boundary points is zero. Second, the interior puncture and the boundary output are in two different unbroken component. The contribution of such configurations is 
    \beqn
    m_1^\flat ( R_\L( {\bf z}^I))
    \eeqn
    which is exact.     
\end{enumerate}
Therefore, it follows that on the chain level, one has 
\beqn
\wt{\mf{ks}}_{\L, {\mf b}} ({\bf z}^I) - {\rm CO}_{\L, {\mf b}}^0( \kappa_{\mf b}({\bf z}^I)) \in {\rm Im} (m_1^\flat).
\eeqn
Hence on the cohomology level the diagram \eqref{commutative_diagram_2} commutes.
\end{proof}

Because the Kodaira--Spencer map is surjective, this finishes the proof of item (1) of Theorem \ref{thm_CO}. 

\subsection{Quantum multiplication by the first Chern class}

Now we prove item (2) of Theorem \ref{thm_CO}. We prove the following theorem.

\begin{thm}\label{thm_CO_spectrality}
When ${\mf b}$ is a convenient small bulk deformation, the operator ${\mb E}_{\mf b}$ on $\vhf_\bullet^{\mf b} (V; \Lambda_{\ov{\mb Q}})$ has an eigenspace decomposition 
\beqn
\vhf_\bullet^{\mf b} (V; \Lambda_{\ov{\mb Q}}) = \bigoplus_{\L \in {\rm Crit}_X (W_{\mf b})} \vhf_\bullet^{\mf b} (V; \Lambda_{\ov{\mb Q}})_{W_{\mf b}(\L)}.
\eeqn
\end{thm}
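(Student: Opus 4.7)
The plan is to deduce Theorem \ref{thm_CO_spectrality} as an almost formal consequence of the algebra isomorphism in item (1) of Theorem \ref{thm_CO} combined with the commutative diagram of Proposition \ref{prop_commutative_diagram} and the explicit computation of the Kodaira--Spencer map in Proposition \ref{prop_ks_computation}. The key observation is that under ${\rm CO}_{\mf b}$, quantum multiplication by the first Chern class is intertwined with multiplication by a concrete diagonal element in the product ring $(\Lambda_{\ov{\mb Q}})^{{\rm Crit}_X W_{\mf b}}$, and the convenience hypothesis on ${\mf b}$ guarantees that this diagonal element has distinct, nonzero components.

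First, I would unwind the definition of ${\mb E}_{\mf b}$. By Definition \ref{defn_first_Chern_class}, ${\mb E}_{\mf b}$ is pair-of-pants product with $\kappa_{\mf b}({\bf z}_1 + \cdots + {\bf z}_N)$. Since ${\rm CO}_{\mf b}$ is a $\Lambda_{\ov{\mb Q}}$-algebra isomorphism by item (1) of Theorem \ref{thm_CO}, for any class $\alpha \in \vhf_\bullet^{\mf b}(V; \Lambda_{\ov{\mb Q}})$,
\begin{equation*}
{\rm CO}_{\mf b}({\mb E}_{\mf b}(\alpha)) = {\rm CO}_{\mf b}(\kappa_{\mf b}({\bf z}_1+\cdots+{\bf z}_N)) \cdot {\rm CO}_{\mf b}(\alpha),
\end{equation*}
where the product on the right is the Yoneda/componentwise product on $\bigoplus_\L \hh^\bullet({\mc F}_{\mf b}^\flat(\L)) \cong (\Lambda_{\ov{\mb Q}})^{{\rm Crit}_X W_{\mf b}}$.

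Next, I would compute the image of the first Chern class. By the commutativity of diagram \eqref{commutative_diagram} in Proposition \ref{prop_commutative_diagram},
\begin{equation*}
{\rm CO}_{\mf b}(\kappa_{\mf b}({\bf z}_1+\cdots+{\bf z}_N)) = \mf{ks}_{\mf b}({\bf z}_1+\cdots+{\bf z}_N) \in (\Lambda_{\ov{\mb Q}})^{{\rm Crit}_X W_{\mf b}}.
\end{equation*}
By Proposition \ref{prop_ks_computation}, the $\L$-component of $\mf{ks}_{\mf b}({\bf z}_j)$ is $W_{{\mf b},j}$ evaluated at the critical point $\L$. Summing, the $\L$-component of $\mf{ks}_{\mf b}({\bf z}_1+\cdots+{\bf z}_N)$ equals $W_{\mf b}(\L)$. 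Thus under ${\rm CO}_{\mf b}$, the operator ${\mb E}_{\mf b}$ corresponds to multiplication by the tuple $(W_{\mf b}(\L))_{\L}$ on the product ring.

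Finally, since ${\mf b}$ is convenient (Definition \ref{defn:convenient}), $W_{\mf b}$ is Morse with all critical values distinct, so the eigenvalues $\{W_{\mf b}(\L)\}_{\L \in {\rm Crit}_X W_{\mf b}}$ are pairwise distinct. Diagonal multiplication by a tuple of distinct scalars on the product ring is tautologically diagonalized by the standard idempotent basis, giving the eigenspace decomposition of $(\Lambda_{\ov{\mb Q}})^{{\rm Crit}_X W_{\mf b}}$ with one-dimensional eigenspaces corresponding to $W_{\mf b}(\L)$. Pulling this back via the isomorphism ${\rm CO}_{\mf b}$ yields the desired decomposition
\begin{equation*}
\vhf_\bullet^{\mf b}(V; \Lambda_{\ov{\mb Q}}) = \bigoplus_{\L \in {\rm Crit}_X W_{\mf b}} \vhf_\bullet^{\mf b}(V; \Lambda_{\ov{\mb Q}})_{W_{\mf b}(\L)}.
\end{equation*}

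The potential obstacles are essentially already handled by the preceding results: the multiplicativity of ${\rm CO}_{\mf b}$ (proved in Theorem \ref{thm_CO_multiplicative}), the rank count matching (Proposition \ref{prop_same_rank}), and the Morse/distinct-critical-values condition built into ``convenient." The only substantive remark is that since we already used the assumption that ${\rm CO}_{\mf b}$ is an algebra isomorphism in Theorem \ref{thm_CO} (1), no further analytical input is needed. In particular, this argument also recovers the stronger statement that all eigenvalues $W_{\mf b}(\L)$ are nonzero whenever the convenient bulk is chosen so that $W_{\mf b}$ does not vanish at any of its critical points, a condition that can be arranged in tandem with the Morse and distinct-critical-value conditions of Theorem \ref{thm_good_bulk} by a further genericity argument.
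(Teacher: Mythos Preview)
Your proof is correct and takes a genuinely different, more algebraic route than the paper. You leverage the full strength of item (1) of Theorem \ref{thm_CO} together with the commutative diagram in Proposition \ref{prop_commutative_diagram} and the explicit formula of Proposition \ref{prop_ks_computation} to transport ${\mb E}_{\mf b}$ through the algebra isomorphism ${\rm CO}_{\mf b}$, where it becomes visibly diagonal with entries $W_{\mf b}(\L)$. This is clean and requires no new moduli spaces.

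The paper instead gives a direct geometric ``spectrality'' argument in the spirit of Auroux: it introduces moduli spaces of treed disks with \emph{two} interior markings (one Hamiltonian input and one Chern-class constraint) subject to an offset-angle condition, and reads off from the boundary of the $1$-dimensional moduli the relation $\lambda\cdot {\rm CO}_{\L,{\mf b}}^0(a)=W_{\mf b}(\L)\cdot {\rm CO}_{\L,{\mf b}}^0(a)$ for any eigenvector $a$. This argument only uses semisimplicity from item (1) and the length-zero closed-open map ${\rm CO}_{\L,{\mf b}}^0$ into Lagrangian Floer cohomology, not the full Hochschild-level ring isomorphism or the commutative diagram. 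Your approach is shorter and more formal; the paper's approach is more self-contained at the level of moduli-space analysis and makes the mechanism (disk bubble at the fixed offset angle producing the factor $W_{\mf b}(\L)$) geometrically transparent.

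One small comment: your final remark about arranging $W_{\mf b}(\L)\neq 0$ is extraneous to the statement of Theorem \ref{thm_CO_spectrality} itself, which only asserts the eigenspace decomposition; the nonvanishing of eigenvalues is a separate claim contained in item (2) of Theorem \ref{thm_CO} and would indeed require an additional genericity step beyond Definition \ref{defn:convenient}.
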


\begin{proof}
By item (1) of Theorem \ref{thm_CO}, $\vhf_\bullet^{\mf b} (V; \Lambda_{\ov{\mb Q}})$ is semisimple. Hence ${\mb E}_{\mf b}$ is diagonalizable, and let the eigenvalues be $\lambda_1, \ldots, \lambda_m$. Now take an eigenvalue $\lambda = \lambda_i$ and a critical point $\L = (L({\bf u}), {\bf y})\in {\rm Crit}_X W_{\mf b}$. We consider the restriction of ${\rm CO}_{\L, {\mf b}}^0: \vhf_\bullet^{\mf b}(V;\Lambda_{\ov{\mb Q}}) \to \qhf_{\mf b}^\bullet(\L; \Lambda_{\ov{\mb Q}})$ to the $\lambda$-eigenspace. We prove that this map is nonzero only when $\lambda$ coincides with the critical value. Consider closed-open domains with two interior markings, one boundary output, and arbitrarily many boundary inputs (to be labelled by the canonical weakly bounding cochain of $\L$). We distinguish the two interior markings. The first one is $v_{\ham}$, which will be labelled by an equivariant 1-periodic orbit. The second one is denoted by $v_{\rm Chern}$, which will be labelled by components of the equivariant toric divisor. Given any such closed-open domain $C = S \cup T$ where $S$ is the surface part and $T$ is the tree part, the marking corresponding to $v_{\rm Ham}$ becomes a puncture while the marking corresponding to $v_{\rm Chern}$ is denoted by $z_{\rm Chern} \in {\rm Int} S$. 

We would like to include one more constraints on the position of $v_{\rm Chern}$. In the same way as defining the closed-open map, there is a distinguished component $C_\ham$ of such domains $C = S \cup T$. Because the domain $C$ has a distinguished output, we can identify $C_\ham$ with ${\mb D}\setminus \{0\}$ canonically such that the boundary node on $C_{\rm Ham}$ leading towards the output is the point $1 \in S^1 \cong \partial {\mb D}$.
Define the {\bf offset angle} of $z_{\rm Chern}$ as follows.

\begin{enumerate}
    \item If $z_{\rm Chern}$ is in a cylindrical component, it does not have an offset angle.

    \item If $z_{\rm Chern}$ is on $C_\ham \cong {\mb D} \setminus \{0\}$, then the offset angle is the angular coordinate of $z_{\rm Chern}$.

    \item If $z_{\rm Chern}$ is not on $C_\ham$ or any cylindrical component, then there is a unique boundary node on $C_\ham$ connecting $C_\ham$ to $z_{\rm Chern}$. The offset angle is the angular coordinate of this boundary node. 
\end{enumerate}

We fix $\theta \in S^1 \setminus \{1\}$ and only consider closed-open domains described as above such that the offset angle of $z_{\rm Chern}$ is equal to $\theta$ or does not have an offset angle. Consider the same equation defining the closed-open maps on such domains with possibly different perturbation data, where on the cylindrical end one has the Hamiltonian perturbation by a regular bulk-avoiding pair $(\wh H, \wh J)$, and along the boundary one imposes the Lagrangian boundary condition from $\L$.  

We analyze the true boundaries of 1-dimensional such moduli spaces. We assume that the cylindrical end is labelled by a cycle $a$ in $\vcf_\bullet^{\mf b}(V; \Lambda_{\ov{\mb Q}})$. The true boundary components corresponding to configurations which have exactly one breaking, either an interior one or a boundary one. See Figure \ref{Figure_CO_spectrality}.

\begin{figure}[h]
    \centering
    \includegraphics[scale=0.85]{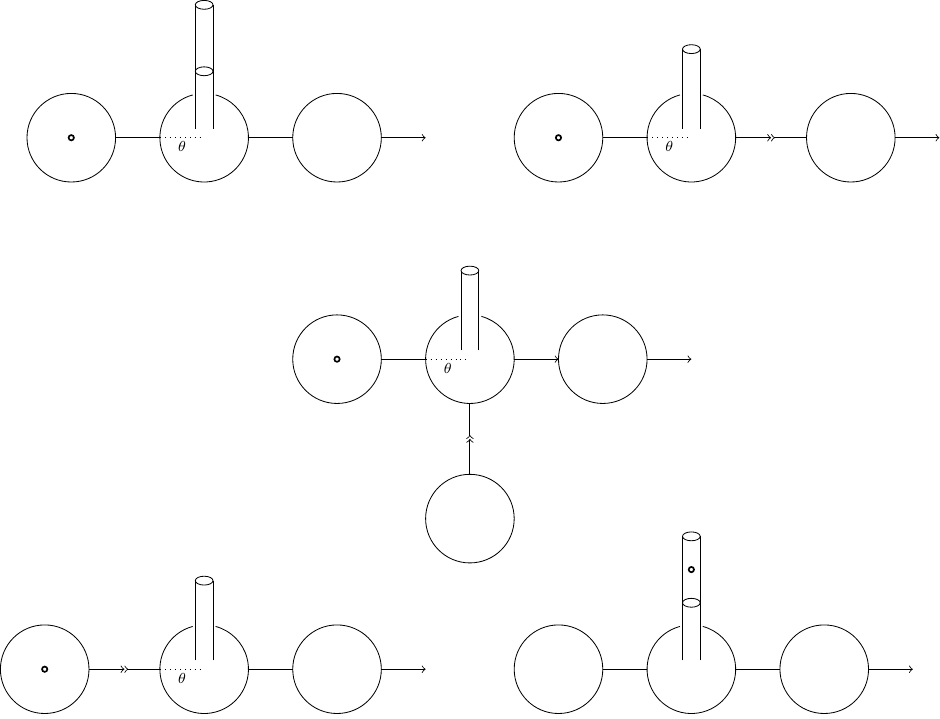}
    \caption{Boundary of 1-dimensional moduli spaces with one special interior marking.}
    \label{Figure_CO_spectrality}
\end{figure}

\begin{enumerate}

\item The breaking is interior and the special marking $z_{\rm Chern}$ is not on a cylindrical component. The sum of this kind of contributions is zero as the interior input is a cycle. Note that as we are counting treed holomorphic disks, the line segment connecting the component on which $z_{\rm Chern}$ lies and $C_{\rm Ham}$ is not meant to be a breaking.

\item The breaking is boundary at the offset angle $1 \in S^1$ (which is different from $\theta$) hence separates $C_\ham$ and the output. The sum of this kind of configuration is a coboundary in $\qcf_{\mf b}(\L)$, which is zero in cohomology.

\item The breaking is boundary at an offset angle different from $1 \in S^1$ and $\theta$ hence does not separate $C_\ham$ and the output. The disk bubble contributes to a multiple of the strict unit $\e_\L^+$. Hence by the forgetful property of the perturbation data, the contribution of such configurations is zero.

\item The breaking is boundary at the specified offset angle $\theta$ which separates the special marking $z_{\rm Chern}$ and the component $C_\ham$. The disk bubble always has Maslov index 2, hence the interior constraint imposed at $z_{\rm Chern}$ gives a factor $1$. Hence the disk bubble contributes to $W_{\mf b}(\L) e_\L^+$. However, as the offset angle is fixed, there are such rigid configurations, and the counting is equal to 
\beqn
W_{\mf b}(\L) \cdot {\rm CO}_{\L, {\mf b}}^0 (a).
\eeqn

\item The breaking is interior and the special marking $z_{\rm Chern}$ is on the cylindrical component that breaks off. This kind of configuration contributes to 
\beqn
{\rm CO}_{\L, {\mf b}}^0 ({\mb E}_{\mf b}(a)) = \lambda\cdot {\rm CO}_{\L, {\mf b}}^0 (a),
\eeqn
due to the appearance of the pair-of-pants product in the upper component.
\end{enumerate}
The analysis above shows that in cohomology, one has 
\beqn
\lambda\cdot {\rm CO}_{\L, {\mf b}}^0 (a) = W_{\mf b}(\L) \cdot {\rm CO}_{\L, {\mf b}}^0 (a).
\eeqn
Hence if $\lambda \neq W_{\mf b}(\L)$, the map ${\rm CO}_{\L, {\mf b}}^0$ vanishes on this eigenspace. 

On the other hand, the linear map 
\beqn
\bigoplus_{\L \in {\rm Crit}_X W_{\mf b}} {\rm CO}_{\L,{\mf b}}^0: \vhf_\bullet^{\mf b}(V; \Lambda_{\ov{\mb Q}}) \to \bigoplus_{\L\in {\rm Crit}_X W_{\mf b}} \qhf_{\mf b}^\bullet (\L; \Lambda_{\ov{\mb Q}})
\eeqn
is injective, because when we take the component generated by the identity elements of $\qhf_{\mf b}^\bullet( \L; \Lambda_{\ov{\mb Q}})$, it descends to the isomorphism ${\rm CO}_{\mf b}$ onto the direct sum of the Hochschild cohomology. Therefore, one has
\beq\label{eqn_spectrality}
{\rm Spec} ({\mb E}_{\mf b}) \subset W_{\mf b} ({\rm Crit}_X W_{\mf b}).
\eeq
On the other hand, for each critical point $\L \in {\rm Crit}_X (W_{\mf b})$, the closed-open map ${\rm CO}_{\L, {\mf b}}^0$ is unital hence nonzero. This implies that $W_{\L, {\mf b}} \in \Lambda$ is also an eigenvalue of ${\mb E}_{\mf b}$. Hence \eqref{eqn_spectrality} is an identity. As when ${\mf b}$ is convenient, all critical values are distinct, it follows that all eigenspaces of ${\mb E}_{\mf b}$ are 1-dimensional.
\end{proof}

\bibliography{reference}
\bibliographystyle{amsalpha}

\end{document}